\definecolor{winered}{rgb}{0.7,0,0}
\definecolor{lessblue}{rgb}{0,0,0.7}
\newcommand{\myitem}[3]{\item[#2]\def\@currentlabel{#3}\label{#1}}
\def\@tocline#1#2#3#4#5#6#7{
\begingroup
  \par
    \parindent\z@ \leftskip#3 \relax \advance\leftskip\@tempdima\relax
                  \rightskip\@pnumwidth plus 4em \parfillskip-\@pnumwidth
    \ifcase #1 
       \vskip 0.6em \hskip 0em 
       \or
       \or \hskip 0em 
       \or \hskip 1em 
    \fi%
    %
    #6
    %
    \nobreak\relax{\leavevmode\leaders\hbox{\,.}\hfill}
    \hbox to\@pnumwidth {\@tocpagenum{#7}}
  \par
\endgroup
}
 \def\l@section{\@tocline{0}{0pt}{0pc}{}{}}
\renewcommand{\tocsection}[3]{%
  \indentlabel{\@ifnotempty{#2}{ 
    \ignorespaces\bfseries{#2. #3}}}
  \indentlabel{\@ifempty{#2}{\ignorespaces\bfseries{#3}}{}} 
    \vspace{1.5pt}}
\renewcommand{\tocsubsection}[3]{%
  \indentlabel{\@ifnotempty{#2}{
    \ignorespaces#2. #3}}
  \indentlabel{\@ifempty{#2}{\ignorespaces #3}{}}
    \vspace{1.5pt}}
\renewcommand{\tocsubsubsection}[3]{%
  \indentlabel{\@ifnotempty{#2}{
    \ignorespaces#2. #3}}
  \indentlabel{\@ifempty{#2}{\ignorespaces #3}{}}
    \vspace{1.5pt}}
\def\@nomenstarted{0}
\newlength{\@nomenoldtabcolsep}
\newcommand{\nomenstart}
  {%
    \def\@nomenstarted{1}%
    \setlength{\@nomenoldtabcolsep}{\tabcolsep}%
    \setlength{\tabcolsep}{3.5pt}%
    \begin{longtable}{p{0.11\textwidth} p{0.86\textwidth}}
  }
\newcommand{\nomenitem}[2]{%
    \ifcase\@nomenstarted%
      \or 
      \or \\ 
    \fi%
    #1\,{\leavevmode\leaders\hbox{\,.}\hfill} & #2%
    \def\@nomenstarted{2}%
  }%
\newcommand{\nomenend}
  {\\%
      \end{longtable}%
      \setlength{\tabcolsep}{\@nomenoldtabcolsep}%
      \def\@nomenstarted{0}%
  }
\newcommand{\vast}{\bBigg@{4}}
\newcommand{\Vast}{\bBigg@{5}}
\numberwithin{equation}{section}
\numberwithin{figure}{section}
\newtheorem{thm}{Theorem}[section]
\newtheorem{prop}[thm]{Proposition}
\newtheorem{lemma}[thm]{Lemma}
\newtheorem{cor}[thm]{Corollary}
\newtheorem*{thm*}{Theorem}
\newtheorem*{prop*}{Proposition}
\newtheorem*{cor*}{Corollary}
\newtheorem*{conj*}{Conjecture}
\theoremstyle{definition}
\newtheorem{definition}[thm]{Definition}
\theoremstyle{remark}
\newtheorem{rmk}[thm]{Remark}
\newtheorem{example}[thm]{Example}
\newcommand{\mc}{\mathcal}
\newcommand{\cA}{\mc A}
\newcommand{\cC}{\mc C}
\newcommand{\cE}{\mc E}
\newcommand{\cL}{\mc L}
\newcommand{\cM}{\mc M}
\newcommand{\cO}{\mc O}
\newcommand{\cR}{\mc R}
\newcommand{\cU}{\mc U}
\newcommand{\cV}{\mc V}
\newcommand{\cX}{\mc X}
\newcommand{\cY}{\mc Y}
\newcommand{\ms}{\mathscr}
\newcommand{\sC}{\ms C}
\newcommand{\sD}{\ms D}
\newcommand{\scri}{\ms I}
\newcommand{\sR}{\ms R}
\newcommand{\sV}{\ms V}
\newcommand{\sY}{\ms Y}
\newcommand{\C}{\mathbb{C}}
\newcommand{\N}{\mathbb{N}}
\newcommand{\R}{\mathbb{R}}
\newcommand{\one}{\mathbbm{1}}
\newcommand{\Sph}{\mathbb{S}}
\newcommand{\sfM}{\mathsf{M}}
\newcommand{\bfB}{\mathbf{B}}
\newcommand{\fX}{\mathfrak{X}}
\newcommand{\fY}{\mathfrak{Y}}
\newcommand{\sld}{\slashed{d}{}}
\newcommand{\slg}{\slashed{g}{}}
\newcommand{\slG}{\slashed{G}{}}
\newcommand{\slK}{\slashed{K}{}}
\newcommand{\slalpha}{\slashed{\alpha}{}}
\newcommand{\slGamma}{\slashed{\Gamma}{}}
\newcommand{\sldelta}{\slashed{\delta}{}}
\newcommand{\slDelta}{\slashed{\Delta}{}}
\newcommand{\slnabla}{\slashed{\nabla}{}}
\newcommand{\sltr}{\operatorname{\slashed\tr}}
\newcommand{\ringrho}{\mathring\rho}
\newcommand{\End}{\operatorname{End}}
\newcommand{\Hom}{\operatorname{Hom}}
\renewcommand{\Re}{\operatorname{Re}}
\renewcommand{\Im}{\operatorname{Im}}
\newcommand{\Id}{\operatorname{Id}}
\newcommand{\supp}{\operatorname{supp}}
\newcommand{\tr}{\operatorname{tr}}
\newcommand{\dv}{\operatorname{div}}
\newcommand{\II}{I\!I}
\newcommand{\dS}{{\mathrm{dS}}}
\newcommand{\CD}{{\mathrm{CD}}}
\newcommand{\Ups}{\Upsilon}
\newcommand{\eps}{\epsilon}
\newcommand{\ff}{\tn{ff}}
\newcommand{\Lra}{\Longrightarrow}
\newcommand{\Llra}{\Longleftrightarrow}
\newcommand{\hra}{\hookrightarrow}
\newcommand{\la}{\langle}
\newcommand{\extcup}{\operatorname{\ol\cup}}
\newcommand{\ol}{\overline}
\newcommand{\pa}{\partial}
\newcommand{\ra}{\rangle}
\newcommand{\tn}{\textnormal}
\newcommand{\ul}[1]{\underline{#1}{}}
\newcommand{\wh}{\widehat}
\newcommand{\wt}{\widetilde}
\newcommand{\xra}{\xrightarrow}
\newcommand{\bop}{{\mathrm{b}}}
\newcommand{\cp}{{\mathrm{c}}}
\newcommand{\scl}{{\mathrm{sc}}}
\newcommand{\Diff}{\mathrm{Diff}}
\newcommand{\Vf}{\mathcal V}
\newcommand{\Vb}{\Vf_\bop}
\newcommand{\Diffb}{\Diff_\bop}
\newcommand{\Tb}{{}^{\bop}T}
\newcommand{\Tbeta}{{}^{\beta}T}
\newcommand{\rcTbdual}[1][]{\ensuremath{\overline{{}^{\bop}T^*\ifthenelse{\isempty{#1}}{}{_#1}}}}
\newcommand{\Nb}{{}^{\bop}N}
\newcommand{\bdiff}{{}^{\bop}d}
\newcommand{\Tsc}{{}^{\scl}T}
\newcommand{\half}{\tfrac{1}{2}}
\newcommand{\bhm}[1][]{\ensuremath{M_{\bullet\ifthenelse{\isempty{#1}}{}{,#1}}}}
\newcommand{\loc}{{\mathrm{loc}}}
\newcommand{\CI}{\cC^\infty}
\newcommand{\CIdot}{\dot\cC^\infty}
\newcommand{\CIc}{\cC^\infty_\cp}
\newcommand{\Hloc}{H_{\loc}}
\newcommand{\Hb}{H_{\bop}}
\newcommand{\Hbloc}{H_{\bop,\loc}}
\newcommand{\Hbext}{\bar H_{\bop}}
\newcommand{\Hbsupp}{\dot H_{\bop}}
\newcommand{\Hext}{\bar H}
\newcommand{\Hsupp}{\dot H}
\newcommand{\Hbeta}{H_{\beta}}
\newcommand{\Hbetab}{H_{\beta,\bop}}
\newcommand{\Hscri}{H_{\scri}}
\newcommand{\Hscrib}{H_{\scri,\bop}}
\newcommand{\phg}{{\mathrm{phg}}}
\newcommand{\tdel}{\wt{\delta}{}}
\newcommand{\Ric}{\mathrm{Ric}}
\newcommand{\Ein}{\mathrm{Ein}}
\newcommand{\openbigpmatrix}[1]
  {%
    \def\@bigpmatrixsize{#1}%
    \addtolength{\arraycolsep}{-#1}%
    \begin{pmatrix}%
  }
\newcommand{\closebigpmatrix}
  {%
    \end{pmatrix}%
    \addtolength{\arraycolsep}{\@bigpmatrixsize}%
  }
\newcommand{\setarraystretch}{\def\arraystretch{1.25}}
\newcommand{\usref}[1]{{\upshape\ref{#1}}}
\begin{document}

\title[Stability of Minkowski space]{Stability of Minkowski space and polyhomogeneity of the metric}

\date{November 1, 2017. Final revision: August 18, 2019.}
\subjclass[2010]{Primary 35B35, Secondary 35C20, 83C05, 83C35}

\author{Peter Hintz}
\address{Department of Mathematics, University of California, Berkeley, CA 94720-3840, USA}
\curraddr{Department of Mathematics, Massachusetts Institute of Technology, Cambridge, Massachusetts 02139-4307, USA}
\email{phintz@mit.edu}

\author{Andr\'as Vasy}
\address{Department of Mathematics, Stanford University, Stanford, CA 94305-2125, USA}
\email{andras@math.stanford.edu}

\begin{abstract}
  We study the nonlinear stability of the $(3+1)$-dimensional Minkowski spacetime as a solution of the Einstein vacuum equation. Similarly to our previous work on the stability of cosmological black holes, we construct the solution of the nonlinear initial value problem using an iteration scheme in which we solve a linearized equation globally at each step; we use a generalized harmonic gauge and implement constraint damping to fix the geometry of null infinity. The linear analysis is largely based on energy and vector field methods originating in work by Klainerman. The weak null condition of Lindblad and Rodnianski arises naturally as a nilpotent coupling of certain metric components in a linear model operator at null infinity. Upon compactifying $\R^4$ to a manifold with corners, with boundary hypersurfaces corresponding to spacelike, null, and timelike infinity, we show, using the framework of Melrose's b-analysis, that polyhomogeneous initial data produce a polyhomogeneous spacetime metric. Finally, we relate the Bondi mass to a logarithmic term in the expansion of the metric at null infinity and prove the Bondi mass loss formula.
\end{abstract}

\maketitle

\setlength{\parskip}{0.00in}
\tableofcontents
\setlength{\parskip}{0.05in}

\section{Introduction}
\label{SI}

We prove the nonlinear stability of $(3+1)$-dimensional Minkowski space as a vacuum solution of Einstein's field equation and obtain a precise full expansion of the solution, in a mildly generalized harmonic gauge, in all asymptotic regions, i.e.\ near spacelike, null, and timelike infinity. On a conceptual level, we show how some of the methods we developed for our proofs of black hole stability in cosmological spacetimes \cite{HintzVasyKdSStability,HintzKNdSStability} apply in this more familiar setting, studied by Christodoulou--Klainerman \cite{ChristodoulouKlainermanStability}, Lindblad--Rodnianski \cite{LindbladRodnianskiGlobalExistence,LindbladRodnianskiGlobalStability}, and many others: this includes the use of an iteration scheme for the construction of the metric in which we solve a \emph{linear} equation \emph{globally} at each step, keeping track of the precise asymptotic behavior of the iterates by working on a suitable \emph{compactification} $M$ of the spacetime, and the implementation of \emph{constraint damping}.

The estimates we prove for the linear equations---which arise as linearizations of the gauge-fixed Einstein equation around metrics which lie in the precise function space in which we seek the solution---are largely based on energy estimates and a version of the vector field method \cite{KlainermanNullCondition}. The estimates are rather refined in terms of a splitting of the symmetric 2-tensor bundle (different metric components behave differently at null infinity); the vector fields we use are closely related to those in~\cite{KlainermanNullCondition,ChristodoulouKlainermanStability,LindbladRodnianskiGlobalExistence,LindbladRodnianskiGlobalStability}. In our systematic approach, both the relevant notion of regularity (matching \cite{LindbladAsymptotics}) and the determination of the precise asymptotic behavior of the solution follow readily from an inspection of the geometric and algebraic properties of the linearized gauge-fixed (or `reduced') Einstein equation; correspondingly, once $M$ and the required function spaces are defined (\S\S\ref{SCpt}--\ref{SEin}), the proof of stability itself is rather concise (\S\S\ref{SBg}--\ref{SPf}).

The weak null condition of Lindblad--Rodnianski \cite{LindbladRodnianskiWeakNull} manifests itself in our linearization approach as a nilpotent coupling of certain metric components for a linear model operator at null infinity: the logarithmic growth (relative to the typical decay rate of $r^{-1}$ of waves on $(3+1)$-dimensional Minkowski space near null infinity) of one metric component is rendered harmless due to its coupling (to leading order) only to a metric component $g_{0 0}$ which governs the `long range' behavior of outgoing light cones and which decays faster than $r^{-1}$ by a factor of $r^{-\gamma}$ for some $\gamma>0$ (see the discussions in~\S\S\ref{SssISysNull} and \ref{SsEinN}). For the reader already familiar with the weak null condition, we mention here that the better decay of $g_{0 0}$ in \cite{LindbladRodnianskiGlobalStability} (corresponding, roughly, to $g_{L L}$ in the reference) is a consequence of the harmonic gauge condition being satisfied by the nonlinear solution, while in the present paper we have decay of the $(0,0)$-component of every iterate in our iteration scheme since we arrange constraint damping, which, roughly speaking, ensures that our gauge condition is satisfied to high accuracy (in the sense of decay) even though we are only solving `nongeometric' (linear) equations. (This makes constraint damping attractive for numerical analysis, see \cite{GundlachCalabreseHinderMartinConstraintDamping,PretoriusBinaryBlackHole} and Remark~\ref{RmkICD} below.)

We proceed to state a simple version of our main theorem, before returning to an in-depth discussion of our approach, the relevant estimates, and the structure of the Einstein equation in~\S\ref{SsISys}. Recall that in Einstein's theory of general relativity, a vacuum spacetime is described by a $4$-manifold $M^\circ$ which is equipped with a Lorentzian metric $g$ with signature $(+,-,-,-)$ satisfying the \emph{Einstein vacuum equation}
\begin{equation}
\label{EqIEin}
  \Ric(g)=0.
\end{equation}
The simplest solution is the \emph{Minkowski spacetime} $(M^\circ,g)=(\R^4,\ul g)$,
\begin{equation}
\label{EqIMink}
  \ul g:=d t^2-d x^2,\quad \R^4=\R_t\times\R^3_x.
\end{equation}
The far field of an isolated gravitational system $(M^\circ,g)$ with total (ADM) mass $m$ is usually described by the \emph{Schwarzschild metric}
\begin{equation}
\label{EqISchw}
  g \approx g_m^S = \Bigl(1-\frac{2 m}{r}\Bigr)d t^2 - \Bigl(1-\frac{2 m}{r}\Bigr)^{-1}d r^2 - r^2\slg,\ \ r\gg 1,
\end{equation}
where $\slg$ denotes the round metric on $\Sph^2$; the Minkowski metric $\ul g=g_0^S$ differs from this by terms of size $\cO(m r^{-1})$. In the study of weak nonlinear gravity in vacuum (in particular, black holes are excluded), one then works with metrics $g$ which are smooth extensions of (a short range perturbation of) $g_m^S$ to all of $\R^4$. Such spacetimes are \emph{asymptotically flat}: letting $|t|+|x|\to\infty$ in $\R^4$, the metric $g$ (in a suitable gauge) approaches the flat Minkowski metric $\ul g$ in a quantitative fashion.

Suitably interpreted, the field equation~\eqref{EqIEin} has the character of a quasilinear wave equation; in particular, it predicts the existence of gravitational waves, which were recently observed experimentally \cite{LIGOBlackHoleMerger}. Correspondingly, the evolution and long time behavior of solutions of~\eqref{EqIEin} can be studied from the perspective of the \emph{initial value problem}: given a $3$-manifold $\Sigma^\circ$ and symmetric 2-tensors $\gamma,k\in\CI(\Sigma^\circ;S^2 T^*\Sigma^\circ)$, with $\gamma$ a Riemannian metric, one seeks a vacuum spacetime $(M^\circ,g)$ and an embedding $\Sigma^\circ\hra M^\circ$ such that
\begin{equation}
\label{EqIEinIVP}
  \Ric(g) = 0\ \ \tn{on}\ M^\circ,\quad
  g|_{\Sigma^\circ}=-\gamma,\ \II_g=k\ \ \tn{on}\ \Sigma^\circ,
\end{equation}
where $\II_g$ denotes the second fundamental form of $\Sigma^\circ$, and where we use the embedding $\Sigma^\circ\hra M^\circ$ to identify the tensors $\gamma,k$ on $\Sigma^\circ$ with (tangential) tensors on the image of $\Sigma^\circ$ in $M^\circ$. (The minus sign in~\eqref{EqIEinIVP} is due to our sign convention for Lorentzian metrics.) A fundamental result due to Choquet-Bruhat and Geroch \cite{ChoquetBruhatLocalEinstein,ChoquetBruhatGerochMGHD} states that necessary and sufficient conditions for the well-posedness of this problem are the \emph{constraint equations} for $\gamma$ and $k$,
\begin{equation}
\label{EqIConstraints}
  R_\gamma+(\tr_\gamma k)^2-|k|_\gamma^2 = 0,\ \ 
  \delta_\gamma k+d\tr_\gamma k=0,
\end{equation}
where $R_\gamma$ is the scalar curvature of $\gamma$, and $\delta_\gamma$ is the (negative) divergence. Concretely, if these are satisfied, there exists a maximal globally hyperbolic solution $(M^\circ,g)$ of~\eqref{EqIEinIVP} which is unique up to isometries. By the \emph{future development} of an initial data set $(\Sigma^\circ,\gamma,k)$, we mean the causal future of $\Sigma^\circ$ as a Lorentzian submanifold of $(M^\circ,g)$. Our main theorem concerns the long time behavior of solutions of~\eqref{EqIEinIVP} with initial data close to those of Minkowski space:

\begin{thm}
\label{ThmIBaby}
  Let $b_0>0$. Suppose that $(\gamma,k)$ are smooth initial data on $\R^3$ satisfying the constraint equations~\eqref{EqIConstraints} which are small in the sense that for some small $\delta>0$, a cutoff $\chi\in\CIc(\R^3)$ identically $1$ near $0$, and $\wt\gamma:=\gamma-(1-\chi)(-g_m^S)|_{\{t=0\}}$,\footnote{We use polar coordinates on $\R^3$ and define $-g^S_m|_{t=0}:=(1-\frac{2 m}{r})^{-1}d r^2+r^2\slg$.} where $|m|<\delta$, we have
  \begin{equation}
  \label{EqIBabySmall}
    \sum_{j\leq N+1} \| \la r\ra^{-1/2+b_0} (\la r\ra\nabla)^j \wt\gamma \|_{L^2} + \sum_{j\leq N} \| \la r\ra^{1/2+b_0} (\la r\ra\nabla)^j k \|_{L^2} < \delta,
  \end{equation}
  where $N$ is some large fixed integer ($N=26$ works). Assume moreover that the weighted $L^2$ norms in~\eqref{EqIBabySmall} are finite for all $j\in\N$.

  Then the future development of the data $(\R^3,\gamma,k)$ is future causally geodesically complete and decays to the flat (Minkowski) solution. More precisely, there exist a smooth manifold with corners $M$ with boundary hypersurfaces $\Sigma$, $I^0$, $\scri^+$, $I^+$, and a diffeomorphism of the interior $M^\circ$ with $\{t>0\}\subset\R^4$, as well as an embedding $\R^3\cong\Sigma^\circ$ of the Cauchy hypersurface, and a solution $g$ of the initial value problem~\eqref{EqIEinIVP} which is \ul{conormal} (see below) on $M$ and satisfies $|g-\ul g|\lesssim (1+t+|r|)^{-1+\eps}$ for all $\eps>0$. See Figure~\usref{FigIBaby}. For fixed ADM mass $m$, the solution $g$ depends continuously on $\wt\gamma$, $k$, see Remark~\usref{RmkPfCts}.

  If the normalized initial data $(\la r\ra\wt\gamma,\la r\ra^2 k)$ are in addition $\cE$-smooth, i.e.\ \ul{polyhomogeneous} at infinity with index set $\cE$ (see below), then the solution $g$ is also polyhomogeneous on $M$, with index sets given explicitly in terms of $\cE$.
\end{thm}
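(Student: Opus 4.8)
The plan is to solve the Einstein vacuum equation in a wave map/DeTurck gauge by a global iteration scheme on the compactification $M$, and then to upgrade conormality to polyhomogeneity by a normal operator bootstrap. I would first build $M$ as a manifold with corners obtained from the radial compactification of $\{t>0\}\subset\R^4$ by blowing up the light cone at spacelike infinity (when $m\neq 0$, the light cone logarithmically modified to account for the bending of outgoing null geodesics); its boundary hypersurfaces are the Cauchy surface $\Sigma$, spacelike infinity $i^0$, null infinity $\scri^+$, and timelike infinity $i^+$. Fix a background metric $g_m$ which equals the Schwarzschild metric $g_m^S$ for large $r$ near $i^0$ and $\ul g$ near $i^+$, and form the reduced (gauged) Einstein operator $P(g):=\Ric(g)-\delta_g^*\Upsilon(g)$, where $\Upsilon(g)$ is the one-form measuring the failure of $\mathrm{id}\colon(M^\circ,g)\to(M^\circ,g_m)$ to be a wave map; crucially — and this is where our black hole technology enters — I would add to $P$ a lower order constraint damping term so that $\Upsilon$ is forced to decay at $\scri^+$ and the geometry of null infinity is fixed throughout the iteration. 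One then solves the quasilinear wave equation $P(g)=0$ with initial data for $(g,\pa_t g)$ on $\Sigma$ determined from $(\gamma,k)$ via the constraint equations \eqref{EqIConstraints} (which hold by hypothesis), and checks that $\Upsilon(g)=0$ is propagated, so that $g$ in fact solves \eqref{EqIEinIVP}.

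The core is the linear theory for $L_g:=D_g P$, the linearization of $P$ about a metric $g$ in the conormal function space in which the solution lives. Here I would split the symmetric 2-tensor bundle into the components that decay at distinct rates at $\scri^+$ — schematically the $\scri^+$-tangential, mixed, and transversal parts, together with the distinguished component carrying the mass aspect function — and observe that the normal operator $N_{\scri^+}(L_g)$ couples these components only nilpotently: this nilpotent triangular coupling is exactly the weak null condition of Lindblad--Rodnianski, and the constraint damping guarantees that the one component which could a priori grow in fact decays. With this structure I would prove global weighted energy estimates for $L_g$ on b-Sobolev spaces on $M$, together with their versions carrying iterated regularity under the module of vector fields generating rotations, boosts, dilations and the translations tangent to $\scri^+$ (close relatives of the Klainerman vector fields), the positivity arising from positive commutator arguments with these fields as commutants. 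The outcome is solvability of $L_g u=f$ with a fixed loss of weight and of finitely many derivatives, uniformly for $g$ near $g_m$.

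Granted the linear estimates, which lose a fixed number of derivatives, I would run a Nash--Moser iteration: $g_0=g_m$, and at each step solve $L_{g_k}u_k=-P(g_k)$ after suitable smoothing, setting $g_{k+1}=g_k+u_k$; using the uniform linear solvability together with tame quadratic estimates for the nonlinearity $P$ on the conormal spaces, one shows convergence to a conormal solution $g$ of $P(g)=0$ on $M$, with $|g-\ul g|\lesssim(1+t+|r|)^{-1+\eps}$ read off from the weights. From the decay of $g-\ul g$ at $\scri^+$ and the structure of $M$ near $i^+$ one then reads off future causal geodesic completeness; and the uniformity of the linear estimates in $g$, applied to the difference of two solutions, gives continuous dependence on $(\wt\gamma,k)$ for fixed ADM mass $m$. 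Finally, for $\cE$-polyhomogeneous normalized data, since $g$ is already conormal I would bootstrap using $P(g)=0$: the nonlinearity applied to a polyhomogeneous tensor is again polyhomogeneous, and successively inverting the normal operators at $i^0$, then $\scri^+$, then $i^+$ produces expansions whose index sets are computed from $\cE$ and the indicial roots of the model operators, with the logarithmic terms — in particular the one related to the Bondi mass at $\scri^+$ — forced by the resonances and the nilpotent coupling; this yields polyhomogeneity of $g$ on $M$ with explicit index sets.

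The step I expect to be the main obstacle is the analysis at null infinity: arranging the constraint damped gauge and the tensor splitting so that $N_{\scri^+}(L_g)$ is a genuinely nilpotent/triangular system with no growing modes, and then closing the weighted energy estimates there in spite of the borderline $r^{-1}$ decay. This is precisely the point at which the weak null condition enters, and it forces a delicate coordination of the choice of background metric $g_m$, the weights, and the vector field module.
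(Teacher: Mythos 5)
Your proposal follows essentially the same route as the paper: the blow-up of the (logarithmically corrected) compactification at the light cone at infinity, the wave-map/DeTurck gauge relative to a Schwarzschild-to-Minkowski background $g_m$ with a zeroth-order constraint damping modification of $\delta_g^*$, the splitting of the symmetric 2-tensor bundle at $\scri^+$ into a decaying block, a bounded block, and a logarithmically growing block coupled only nilpotently (the weak null condition), global weighted b-energy estimates via positive commutators plus normal operator/resonance analysis at $i^+$, Nash--Moser iteration, and an iterative normal-operator bootstrap for polyhomogeneity. The outline is correct as a strategy and matches the paper's; the substance of the proof lies in the details you defer (the two-tier weights $b_I<b'_I$ at $\scri^+$, the explicit coupling matrices, the transfer between the blown-up and blown-down spaces at $i^+$, and the non-smoothness of $\{t=0\}$ in ${}^m\ol{\R^4}$ forcing a preliminary local solve), all of which the paper carries out as you anticipate.
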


More precise versions will be given in Theorem~\ref{ThmIDetail} and in \S\ref{SPf}. The condition~\eqref{EqIBabySmall} allows for $\wt\gamma$ to be pointwise of size $r^{-1-b_0-\eps}$, $\eps>0$; since $b_0>0$ is arbitrary, this means that we allow for the initial data to be Schwarzschildean modulo $\cO(r^{-1-\eps})$ for any $\eps>0$.

In Theorem~\ref{ThmIBaby}, \emph{conormality} is a (local) regularity notion on a manifold with corners $\sfM$ which is equivalent to smoothness in $\sfM^\circ$, but differs from it near $\pa\sfM$: in the model case $\sfM=[0,\infty)_x^p\times\R_y^q$, and with $\alpha\in\R^p$, a function $u\in x^\alpha L^\infty_\loc(\sfM)$ is called \emph{conormal} relative to the space $x^\alpha L^\infty_\loc(\sfM)$ if
\[
  V_1\cdots V_N u \in x^\alpha L^\infty_\loc(\sfM)\quad \forall\ N\in\N,
\]
where each $V_j$ is one of the vector fields $x_k\pa_{x_k}$, $\pa_{y_l}$, $1\leq k\leq p$, $1\leq l\leq q$. (A typical example of a conormal function is $x^\beta$, where $\beta\in\R^p$, $\beta\geq\alpha$ component-wise.) We say that a distribution $u$ is \emph{conormal} if it is conormal relative to $x^\alpha L^\infty_\loc(\sfM)$ for some vector $\alpha\in\R^p$ of weights. In the context of Theorem~\ref{ThmIBaby}, the weights are specified in Theorem~\ref{ThmIDetail} and Remark~\ref{RmkIDetailDecay} below; at this point we simply content ourselves with taking them to be $0$ at each hypersurface.

Before continuing the discussion of Theorem~\ref{ThmIBaby}, we remark that the assumption that \emph{all} weighted norms in~\eqref{EqIBabySmall} are finite is \emph{only} needed to conclude the conormality of $g$. If one is only interested in controlling a \emph{finite} number of derivatives of $g$, we only need to require the finiteness of \emph{finitely} many weighted norms~\eqref{EqIBabySmall} (as can be seen by inspecting the Nash--Moser theorem we use in our nonlinear iteration).

Next, $\cE$-smoothness is a refinement of conormality: the assumption of $\cE$-smoothness, i.e.\ polyhomogeneity with index set $\cE\subset\C\times\N_0$, means, roughly speaking, that $\la r\ra\wt\gamma$ (similarly $\la r\ra^2 k$) has a full asymptotic expansion as $r\to\infty$ of the form
\begin{equation}
\label{EqIBabyPhg}
  \la r\ra\wt\gamma \sim \sum_{(z,k)\in\cE} r^{-i z}(\log r)^k\wt\gamma_{(z,k)}(\omega),\ \ \omega=x/|x|\in\Sph^2,\ \wt\gamma_{(z,k)}\in\CI(\Sph^2;S^2 T^*\R^3),
\end{equation}
with $\Im z<-b_0$, where for any fixed $C$, the number of $(z,k)\in\cE$ with $\Im z>-C$ is finite. (That is, $\la r\ra\wt\gamma$ admits a generalized Taylor expansion into powers of $r^{-1}$, except the powers may be fractional or even complex---that is, oscillatory---and logarithmic terms may occur. A typical example is that all $z$ are of the form $z=-i k$, $k\in\N$, in which case~\eqref{EqIBabyPhg} is an expansion into powers $r^{-k}$, with potential logarithmic factors.) The polyhomogeneity of $g$ on the manifold with corners $M$ means that at each of the hypersurfaces $I^0$, $\scri^+$, and $I^+$, the metric $g$ admits an expansion similar to~\eqref{EqIBabyPhg}, with $r^{-1}$ replaced by a defining function of the respective boundary hypersurface (for example $\scri^+$) such that moreover each term in the expansion (which is thus a tensor on $\scri^+$) is itself polyhomogeneous at the other boundaries (that is, at $\scri^+\cap I^0$ and $\scri^+\cap I^+$). We refer the reader to~\S\ref{SsCptF} for precise definitions, and to Examples~\ref{ExPhgSchwartz} and \ref{ExPhgSmooth} for the list of index sets for two natural classes of polyhomogeneous initial data.

Christodoulou \cite{ChristodoulouNoPeeling} showed that, generically, one can only expect the metric $g$, suitably rescaled to a non-degenerate metric on a compactification of $\R^4$, to be of class $\cC^{1,\alpha}$, $\alpha<1$, due to the presence of logarithmic terms in the expansion of certain geometric quantities at null infinity; polyhomogeneity of the metric (rather than smoothness of a conformal multiple down to $\scri^+$) is thus the best one can hope for, and this is what we establish here. (We also prove that the metric is indeed conformal to a non-degenerate metric of class $\cC^{1,\alpha}$, $\alpha<\min(b_0,1)$, down to $\scri^+$; see Remark~\ref{RmkLBSReg}).

If the initial data do not have a full polyhomogeneous expansion, but only a partial expansion (containing only finitely many terms) plus a sufficiently regular remainder decaying faster than the terms in the expansion, the solution $g$ will itself have a finite partial expansion at each boundary hypersurface, plus a faster decaying remainder; we shall not, however, record results of this nature here.

\begin{figure}[!ht]
\includegraphics{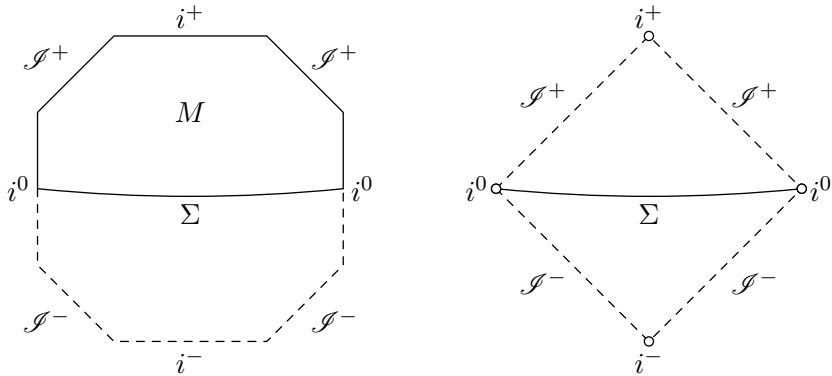}
\caption{\textit{Left:} the compact manifold $M$ (solid boundary), containing a compactification $\Sigma$ of the initial surface $\Sigma^\circ$. The boundary hypersurfaces $I^0$, $\scri^+$, and $I^+$ are called spatial infinity, (future) null infinity, and (future) timelike infinity, respectively. One can think of $M$ as the blow-up of a Penrose diagram at timelike and spatial infinity. A global compactification would extend across $\Sigma$ to the past, with additional boundary hypersurfaces $\scri^-$ (past null infinity) and $I^-$ (past timelike infinity). \textit{Right:} for comparison, the Penrose diagram of Minkowski space.}
\label{FigIBaby}
\end{figure}

Applying a suitable version of this theorem both towards the future and the past, we show that the maximal globally hyperbolic development is given by a causally geodesically complete metric $g$, with analogous regularity and polyhomogeneity statements as in Theorem~\ref{ThmIBaby}, on a suitable manifold with corners whose interior is diffeomorphic to $\R^4$ (and contains $\Sigma^\circ$), which now has the additional boundary hypersurfaces $\scri^-$ and $I^-$; see Theorem~\ref{ThmPfGlobal} and the end of~\S\ref{SPhg}.

Like many other approaches to the stability problem (see the references below), our arguments apply to the Einstein--massless scalar field system $\Ric(g)=|\nabla\phi|_g^2$, $\Box_g\phi=0$, with small initial data for the scalar field in order to obtain global stability. They also give the stability of the far end of a Schwarzschild black hole spacetime with any mass $m\in\R$, i.e.\ of the domain of dependence of the complement of a sufficiently large ball in the initial surface, without smallness assumptions on the data: in this case, we control the solution up to some finite point along the radiation face $\scri^+$. See Remark~\ref{RmkPfLargeMass}.

The compactification $M$ only depends on the ADM mass $m$ of the initial data set;\footnote{By the positive mass theorem \cite{SchoenYauPMT,WittenPMT}, we have $m\geq 0$, but we will not use this information. In fact, our analysis of the Bondi mass, summarized in Theorem~\ref{ThmIBondi} below, \emph{implies} the positive mass theorem for the restricted class of data considered in Theorem~\ref{ThmIBaby}.} for the class of initial data considered here, the mass gives the only long range contribution to the metric that significantly (namely, logarithmically) affects the bending of light rays: for the Schwarzschild metric~\eqref{EqISchw}, radially outgoing null-geodesics lie on the level sets of $t-r-2 m\log(r-2 m)$. Concretely, near $I^0\cup\scri^+$, $M$ will be the Penrose compactification of the region $\{t/r<2,\,r\gg 1\}\subset\R^4$ within the Schwarzschild spacetime, i.e.\ equipped with the metric $g_m^S$, blown up at spacelike and future timelike infinity. As in our previous work \cite{HintzVasyKdSStability,HintzKNdSStability} on Einstein's equation, we prove Theorem~\ref{ThmIBaby} using a Newton-type iteration scheme (more precisely: Nash--Moser) in which we solve a \emph{linear} equation \emph{globally} on $M$ at each step. While this approach brings many advantages (cf.\ Remark~\ref{RmkIItVsBoot}), a disadvantage of using a Nash--Moser iteration is the typically rather large number of derivatives needed compared to other approaches.

We do not quite use the wave coordinate gauge as in Lindblad--Rodnianski \cite{LindbladRodnianskiGlobalStability,LindbladRodnianskiGlobalExistence}, but rather a wave map gauge with background metric given by the Schwarzschild metric with mass $m$ near $I^0\cup\scri^+$, glued smoothly into the Minkowski metric elsewhere; this is a more natural choice than using the Minkowski metric itself as a background metric (which would give the standard wave coordinate gauge), as the solution $g$ will be a short range perturbation of $g_m^S$ there. This gauge, which can be expressed as the vanishing of a certain 1-form $\Ups(g)$, fixes the long range part of $g$ and hence the main part of the null geometry at $\scri^+$. In order to ensure the gauge condition to a sufficient degree of accuracy (i.e.\ decay) at $\scri^+$ throughout our iteration scheme, we implement \emph{constraint damping}, first introduced in the numerics literature in~\cite{GundlachCalabreseHinderMartinConstraintDamping}, and crucially used in~\cite{HintzVasyKdSStability}. This means that we use the 1-form encoding the gauge condition in a careful manner when passing from the Einstein equation~\eqref{EqIEin} to its `reduced' quasilinear hyperbolic form: we can arrange that for each iterate $g_k$ in our iteration scheme, the gauge 1-form $\Ups(g_k)$ vanishes sufficiently fast at $\scri^+$ so as to fix the long range part of $g$. In order to close the iteration scheme and control the nonlinear interactions, we need to keep precise track of the leading order behavior of the remaining metric coefficients at $\scri^+$. We discuss this in detail in \S\ref{SsIEin}.

\begin{rmk}
\label{RmkICD}
  Fixing the geometry at $\scri^+$ in this manner, the first step of our iteration scheme, i.e.\ solving the linearized gauge-fixed Einstein equation with the given (nonlinear) initial data of size $\delta$, produces a solution \emph{with the correct long range behavior} and which is $\delta^2$ close to the nonlinear solution in the precise function spaces on $M$ in which we measure the solution. (Subsequent iteration steps give much more accurate approximations since the convergence of the iteration scheme is \emph{exponential}.) This suggests that our formulation of the gauge-fixed Einstein equation could allow for improvements of the accuracy of post-Minkowskian expansions---which are iterates of a Picard-type iteration scheme as in~\cite[Equation~(1.7)]{LindbladRodnianskiGlobalStability}---used to study gravitational radiation from isolated sources \cite{BlanchetRadiation}.
\end{rmk}

The global stability of Minkowski space was established, building in particular on \cite{KlainermanNullCondition,ChristodoulouGlobalSolutionsSmallData}, in the monumental work of Christodoulou--Klainerman \cite{ChristodoulouKlainermanStability} for asymptotically Schwarzschildean data (similar to those in~\eqref{EqIBabySmall} but with $b_0\geq\half$, though requiring only $N=3$ derivatives) and precise control at null infinity, with an alternative proof using double null foliations by Klainerman--Nicol\`o \cite{KlainermanNicoloEvolution}; and more recently in~\cite{LindbladRodnianskiGlobalExistence,LindbladRodnianskiGlobalStability} using the wave coordinate gauge, for initial data as in Theorem~\ref{ThmIBaby} (but requiring only $N=10$ derivatives on the initial data). Friedrich \cite{FriedrichStability} (see~\cite{FriedrichEinsteinMaxwellYangMills} for the Einstein--Yang--Mills case) established non-linear stability, using a conformal method, for a restrictive class (shown to be nonempty in \cite{CorvinoScalar}) of initial data, but with precise information on the asymptotic structure of the spacetime. Bieri \cite{BieriZipserStability} studied the problem for a very general class of data which are merely decaying like $\la r\ra^{-1/2-\delta}$ for some $\delta>0$---thus more slowly even than the $\cO(r^{-1})$ terms of Schwarzschild---and even less regularity than Christodoulou--Klainerman; in this case, the `correct' compactification on which the metric has a simple description will have to depend on more than just the ADM mass (this is clear e.g.\ for the initial data constructed by Carlotto--Schoen \cite{CarlottoSchoenData}, which are nontrivial only in conic wedges); Bieri and Chru\'sciel \cite{BieriChruscielADMBondi,ChruscielScriPiece} construct a piece of $\scri^+$ for the data considered in \cite{BieriZipserStability} but without a smallness assumption. Further works on the stability of Minkowski space for the Einstein equations coupled to other fields, in the wake of \cite{ChristodoulouKlainermanStability,LindbladRodnianskiGlobalExistence,LindbladRodnianskiGlobalStability}, include those by Speck \cite{SpeckEinsteinMaxwell} on (a generalization of) the Einstein--Maxwell system, Taylor \cite{TaylorEinsteinVlasov}, Lindblad--Taylor \cite{LindbladTaylorVlasov}, and Fajman--Joudioux--Smulevici \cite{FajmanJoudiouxSmuleviciEinsteinVlasov} for both the massless and the massive Einstein--Vlasov system. We also mention Keir's very general quasilinear results \cite{KeirWeak} which in particular imply the global solvability for small data of the gauge-fixed Einstein equation in harmonic coordinates (but without constraint damping) even when the gauge condition is violated, albeit at the expense of losing the precise asymptotic control at null infinity. The global stability for a minimally coupled massive scalar field was proved by LeFloch--Ma \cite{LeFlochMaEinsteinMassive} and Wang \cite{WangEinsteinKleinGordon}.

The present paper contains the first proof of full conormality and polyhomogeneity of small nonlinear perturbations of Minkowski space in $3+1$ dimensions. Lindblad--Rodnianski also established high conormal regularity, see \cite[Equation~(1.14)]{LindbladRodnianskiGlobalStability}, though, in the context of the present paper, on the compactification corresponding to Minkowski rather than on $M$, and hence with a loss in the decay rates. This was improved by Lindblad~\cite{LindbladAsymptotics} who proved sharp decay for the metric at null infinity (albeit in a slightly different gauge), and uses them to establish a relationship between the ADM mass and the total amount of gravitational radiation. The decay in \cite{LindbladAsymptotics} corresponds to the leading order decay which we prove at $\scri^+$; we improve this by proving definite decay rates towards the leading order terms at $\scri^+$, and we strengthen the decay rate towards $I^+$ to $t^{-1}$; in fact, we show decay at a faster rate to an $\cO(t^{-1})$ leading order term, see the proof of Theorem~\ref{ThmLBSLoss}. (Neither improvement requires polyhomogeneous initial data.)

Previously, polyhomogeneity was established in spacetime dimensions $\geq 9$ for the Einstein vacuum and Einstein--Maxwell equations, for initial data stationary outside of a compact set, by Chru\'sciel--Wafo \cite{ChruscielWafoPhg}; this relied on earlier work by Chru\'sciel-\L{}eski \cite{ChruscielLeskiPolyhomogeneous} on the polyhomogeneity of solutions of hyperboloidal initial value problems\footnote{This means that the initial data are posed on a spacelike but asymptotically null hypersurface transversal to $\scri^+$.} for a class of semilinear equations, and Loizelet's proof \cite{LoizeletPhD,LoizeletEinsteinMaxwell} of the electrovacuum extension (using wave coordinate and Lorenz gauges) of \cite{LindbladRodnianskiGlobalExistence}; see also \cite{BeigChruscielElectroVac}. Lengard \cite{LengardThesis} studied hyperboloidal initial value problems and established the propagation of weighted Sobolev regularity for the Einstein equation, and of polyhomogeneity for nonlinear model equations. In spacetime dimensions $5$ and above, Wang~\cite{WangThesis,WangRadiation} obtained the leading term (i.e.\ the `radiation field') of $g-\ul g$ at $\scri^+$, and proved high conormal regularity. Baskin--Wang \cite{BaskinWangRad} and Baskin--S\'a Barreto \cite{BaskinSaBarretoRad} defined radiation fields for linear waves on Schwarzschild as well as for semilinear wave equations on Minkowski space. For initial data which are exactly Schwarzschildean outside a compact set and in \emph{even} spacetime dimensions $\geq 6$, a simple conformal argument, which requires very little information on the structure of the Einstein(--Maxwell) equation, stability and smoothness of $\scri^+$ were proved by Choquet-Bruhat--Chru\'sciel--Loizelet \cite{ChoquetBruhatChruscielLoizeletEinsteinMaxwell}; see also \cite{AndersonChruscielSimple} for a different approach in the vacuum case. The construction of the required initial data sets as well as questions of their smoothness and polyhomogeneity were taken up in the hyperboloidal context by Andersson--Chru\'sciel--Friedrich \cite{AnderssonChruscielFriedrich} and extended in \cite{AnderssonChruscielHypPhg,AnderssonChruscielHyp}, see also \cite{ChruscielLengardPhg}. Paetz and Chru\'sciel \cite{ChruscielPaetzCharScriI,PaetzCharScriII} studied this for \emph{characteristic} data; we refer to Corvino \cite{CorvinoScalar}, Chru\'sciel--Delay \cite{ChruscielDelayMapping}, and references therein for the case of asymptotically flat data sets.

The backbone of our proof is a \emph{systematic treatment} of the stability of Minkowski space as a problem of proving regularity and asymptotics for a quasilinear (hyperbolic) equation on a \emph{compact}, but geometrically \emph{complete} manifold with corners $M$. That is, we employ analysis based on complete vector fields on $M$ and the corresponding natural function spaces, which in this paper are \emph{b-vector fields}, i.e.\ vector fields tangent to $\pa M$, and spaces with conormal regularity or (partial) polyhomogeneous expansions; following Melrose \cite{MelroseAPS,MelroseDiffOnMwc}, this is called \emph{b-analysis} (`b' for `boundary'). The point is that once the smooth structure (the manifold $M$) and the algebra of differential operators appropriate for the problem at hand give a simple background on which to do analysis;\footnote{This is akin to how making use of the notion of a smooth manifold allows one to study PDE in an invariant, coordinate-free manner. Indeed, viewing a global PDE, a priori on a noncompact space, as a (typically degenerate) PDE on a compactification $M$ (typically a compact manifold with boundary or corners), one frees oneself from any particular local coordinate expression, and, for instance, gains the flexibility of being able to work with the local coordinate system (or, more narrowly, a set of boundary defining functions) appropriate for calculations in the region/asymptotic regime of interest. Moreover, if one defines function spaces by using only the smooth structure on $M$ (and possibly using some extra data, such as fibrations of boundary hypersurfaces), it becomes simple to verify whether estimates, done in convenient local coordinates, do give estimates of the invariantly defined function spaces.} we will give examples and details in \S\ref{SsISys}. In this context, it is often advantageous to work on a more complicated manifold $M$ if this simplifies the algebraic structure of the equation at hand. While this point of view has a long history in the study of elliptic equations, see e.g.~\cite{MazzeoMelroseHyp,MelroseAPS,SchulzePsdoSing,MazzeoEdge,GilSchulzeSeilerEdge}, its explicit use in hyperbolic problems is, to a large part, rather recent \cite{MelroseEuclideanSpectralTheory,VasyThreeBody,MelroseWunschConic,MelroseVasyWunschDiffraction,BaskinVasyWunschRadMink,BaskinVasyWunschRadMink2,HintzVasySemilinear,HintzQuasilinearDS,HintzVasyQuasilinearKdS,HintzVasyKdSStability}. We also point out that fixing the smooth structure on $M$, one gains the 

A (clean) description of polyhomogeneous expansions, in particular at the transitions between different regimes such as near $I^0\cap\scri^+$ or $\scri^+\cap I^+$, \emph{requires} working on a manifold with corners. More generally, it is often easier to define function spaces on $M^\circ$ by working uniformly up to $\pa M$, and decay rates from the perspective of $M^\circ$ can be encoded as orders of vanishing at $\pa M$ (the latter making sense since $M$ is equipped with a smooth structure).\footnote{As an example, reminiscent of the behavior of linear waves on Minkowski space near null infinity, consider the space $\cX$ of smooth functions on $[1,\infty)_r$ which for any $N\in\N$ can be written as an $N$-th degree polynomial in $1/r$, without constant term, plus a $\cO(r^{-N})$ remainder. Passing to the compactification $I$, which is diffeomorphic to a closed interval, with boundary defining functions $(r-1)/r$ for the left endpoint and $x:=1/r$ for the right endpoint (thus the point $x=0$ is a rigorous definition of `$r=\infty$'), we simply have $\cX=x\CI(I)$: smooth functions on $I$ vanishing simply at the right endpoint $x=0$.}

Working in a compactified setting furthermore makes the structures allowing for global existence clearly visible in the form of linear model operators defined \emph{at the boundary hypersurfaces}. Among the key structures for Theorem~\ref{ThmIBaby} are the symmetries of the model operator $L^0$ \emph{at} $\scri^+$, which is essentially the product of two transport ODEs, as well as constraint damping and a certain \emph{null structure}, both of which are simply a certain Jordan block structure of $L^0$, with the null structure corresponding to a nilpotent Jordan block. At $I^+$, the model operator will be closely related (via a conformal transformation) to the conformal Klein--Gordon equation on static de~Sitter space, which enables us to determine the asymptotic behavior of $g$ there via \emph{resonance expansions} from known results on the asymptotics of conformal waves on de~Sitter space.

A closely related reason for viewing a global problem (i.e.\ to be solved, at first glance, on a noncompact set) as a (degenerate) problem on a compact manifold with boundary or corners is that asymptotic data of the solution become \emph{restrictions} of the solution to boundary hypersurfaces: it was for the purpose of giving a simple and conceptually clean description of the radiation field of scalar, electromagnetic, or gravitational waves, and also of solutions of the full nonlinear Einstein equation, that Penrose introduced his compactifications and diagrams. (These restrictions may solve interesting equations by themselves, as is the case for the Bondi mass loss formula at $\scri^+$, and in the case of the scattering argument which we will use at $I^+$ to prove the vanishing of the final Bondi mass at the future boundary of $\scri^+$.) While a compactified perspective is often not strictly necessary for the description of asymptotic data and relations between them, it is usually conceptually advantageous, and brings to light the key features of a PDE problem which may be difficult to detect from the noncompact point of view, cf.\ the references above. (For example, finding the linearized version of the weak null structure of Lindblad--Rodnianski does not require any careful inspection, but simply the calculation of a partial Jordan block decomposition of a coefficient of a model operator defined at null infinity.)

We also note that the symmetries and dynamical/geometric features of (asymptotically) Minkowski metrics relevant in each of these regimes are different. Hence, we find it advantageous to adapt our descriptions of coordinates, operators, and function spaces to the various asymptotic regimes and symmetries of the problem, rather than e.g.\ working throughout with standard $(t,x)$-coordinates on $\R^4$: the latter seem to be most useful for capturing the (approximate) translation-invariance of wave equations on (asymptotically) Minkowski spacetimes---which does not play a role in the stability proof---while scaling, boosts and rotations, while of course expressible in $(t,x)$ coordinates, become very simple on $M$, simply becoming smooth vector fields on $M$ with some extra properties, such as tangency to $\pa M$.

While the manifold $M$ is compact, our analysis of the linear equations (arising from a linearization of the gauge-fixed Einstein equation) on $M$ lying at the heart of this paper is \emph{not} a short-time existence/regularity analysis near the interiors of $I^0$, resp.\ $I^+$, but rather a global in space, resp.\ global in time analysis. (Conformal methods such as~\cite{FriedrichSpaceConformal} bringing  $I^0$ to a \emph{finite} place have the drawback of imposing very restrictive regularity conditions on the initial data.) At $\scri^+$, we use a version of Friedlander's rescaling \cite{FriedlanderRadiation} of the wave equation, which does give equations with singular (conormal or polyhomogeneous) coefficients; but since $\scri^+$ is a null hypersurface, conormality or polyhomogeneity---which are notions of regularity defined with respect to (b-)vector fields, which are \emph{complete}---are essentially transported along the generators of $\scri^+$. At the past and future boundaries of $\scri^+$, i.e.\ at $I^0\cap\scri^+$ and $\scri^+\cap I^+$, the two pictures fit together in a simple and natural fashion. We discuss this in detail in \S\S\ref{SssISysLin} and \ref{SssISysPhg}.

We reiterate that our goal is to exhibit the conceptual simplicity of our approach, which we hope will allow for advances in the study of related stability problems which have a more complicated geometry on the base, i.e.\ on the level of the spacetime metric, on the fibers, i.e.\ for equations on vector bundles, or both. In particular, we are not interested in optimizing the number of derivatives needed for our arguments based on Nash--Moser iteration.

Following our general strategy, one can also prove the stability of Minkowski space in spacetime dimensions $n+1$, $n\geq 4$, for sufficiently decaying initial data, with the solution conormal (or polyhomogeneous, if the initial data are such), thus strengthening Wang's results \cite{WangRadiation}. There are a number of simplifications due to the faster decay of linear waves in $\R^{1+n}$: the compactification $M$ of $\R^{1+n}$ does not depend on the mass anymore and can be taken to be the blow-up of the Penrose diagram of Minkowski space at spacelike and future timelike infinity; we do not need to implement constraint damping as metric perturbations no longer have a long range term which would change the geometry of $\scri^+$; and we do not need to keep track of the precise behavior (such as the existence of leading terms at $\scri^+$) of the metric perturbation. We shall not discuss this further here.

\subsection{Aspects of the systematic treatment; examples}
\label{SsISys}

Consider a nonlinear partial differential equation $P(u)=0$, with $P$ encoding boundary or initial data as well, whose global behavior one wishes to understand for high regularity data which have small norm; denote by $L_u:=D_u P$ the linearized operators. In the present problem, $P$ will be the map assigning a metric to the value of the (gauge-fixed) Einstein operator on it, as well as its pair of initial data. Our strategy, with references to their implementation for the present problem, is:
\begin{enumerate}
\myitem{ItISysSmooth}{1.}{1} fix a $\CI$ structure, that is, a \emph{compact} manifold $M$, with boundary or corners, on which one expects the solution $u$ to have a simple description (regularity, asymptotic behavior)---see~\S\ref{SsCptA} for the definition of the compactification of $\R^4$ on which we will work;
\myitem{ItISysAlg}{2.}{2} choose an algebra of differential operators and a scale of function spaces on $M$, say $\cX^s,\cY^s$, encoding the amount $s\in\R$ of regularity as well as relevant asymptotic behavior, such that for $u\in\cX^\infty:=\bigcap_{s>0}\cX^s$ small in some $\cX^s$ norm, the operator $L_u$ lies in this algebra and maps $\cX^\infty\to\cY^\infty:=\bigcap_{s>0}\cY^s$---see~\S\S\ref{SsCptF} and \ref{SsEinF} for the function spaces we will use: conormal sections of certain vector bundles together with certain leading order terms at null infinity; and \S\ref{SsEinP} for the verification of the mapping property;
\myitem{ItISysMap}{3.}{3} show that for such small $u$, the operator $L_u$ has a (right) inverse
  \begin{equation}
  \label{EqISysMap}
    (L_u)^{-1}\colon\cY^\infty\to\cX^\infty
  \end{equation}
  on these function spaces---see~\S\S\ref{SBg}, \ref{SIt}, discussed below;
\myitem{ItISysSolv}{4.}{4} solve the nonlinear equation using a global iteration scheme, schematically
  \begin{equation}
  \label{EqISysSolv}
    u_0=0;\quad u_{k+1}=u_k+v_k,\ v_k=-(L_{u_k})^{-1}(P(u_k));\quad u=\lim_{k\to\infty} u_k\in\cX^\infty.
  \end{equation}
  See~\S\ref{SPf}.
\myitem{ItISysMore}{5.}{5} (Optional.) Improve on the regularity of the solution $u\in\cX^\infty$, provided the data has further structure such as polyhomogeneity or better decay properties, by using the PDE $P(u)=0$ directly, or its approximation by linearized model problems in the spirit of $0=P(u)\approx L_0 u+P(0)$ and a more precise analysis of $L_0$. See~\S\ref{SPhg}, where we prove the polyhomogeneity for asymptotically Minkowski metrics.
\end{enumerate}

We stress that steps~\ref{ItISysSmooth} and \ref{ItISysAlg} are nontrivial, as they require significant insights into the geometric and analytic properties of the PDE in question, and are thus intimately coupled to step~\ref{ItISysMap}; the function spaces in step~\ref{ItISysAlg} must be large enough in order to contain the solution $u$, but precise (i.e.\ small) enough so that the nonlinearities and linear solution operators are well-behaved on them.

Note that if one has arranged~\ref{ItISysMap}, then the iteration scheme~\eqref{EqISysSolv} formally closes, i.e.\ all iterates $u_k$ lie in $\cX^\infty$ modulo checking their required smallness in $\cX^s$. Checking the latter, thus making~\eqref{EqISysSolv} rigorous, is however easy in many cases, for example by using Nash--Moser iteration \cite{HamiltonNashMoser,SaintRaymondNashMoser}, which requires $(L_u)^{-1}$ to satisfy so-called tame estimates; these in turn are usually automatic from the proof of~\eqref{EqISysMap}, which is often ultimately built out of simple algebraic operations like multiplications and taking reciprocals of operator coefficients or symbols, and energy estimates, for all of which tame estimates follow from the classical Moser estimates. The precise bookkeeping, done e.g.\ in \cite{HintzVasyQuasilinearKdS}, can be somewhat tedious but is only of minor conceptual importance: it only affects the number of derivatives of the data which need to be controlled, i.e.\ the number $N$ in~\eqref{EqIBabySmall}; in this paper, we shall thus be generous in this regard.

As a further guiding principle, which applies in the context of our proof of Theorem~\ref{ThmIBaby}, one can often separate step~\ref{ItISysMap}, i.e.\ the analysis of the equation $L_u v=f$, into two pieces:
\begin{enumerate}
\myitem{ItISysMapReg}{3.1.}{3.1} prove infinite regularity of $v$ but without precise asymptotics---see~\S\ref{SBg}, where we accomplish this using simple energy estimates;
\myitem{ItISysMapDec}{3.2.}{3.2} improve on the asymptotic behavior of $v$ to show $v\in\cX^\infty$---see~\S\ref{SIt}, where we use integration along approximate characteristics as well as spectral theory/normal operator arguments for this purpose.
\end{enumerate}

The point is that a `background estimate' from step~\ref{ItISysMapReg} may render many terms of $L_u$ lower order, thus considerably simplifying the analysis of asymptotics and decay; see e.g.\ the discussion around~\eqref{EqISysLinTransport}.

\begin{rmk}
\label{RmkIItVsBoot}
  Let us compare this strategy to proofs using bootstrap arguments, which are commonly used for global existence problems for nonlinear evolution equations as e.g.\ in \cite{ChristodoulouKlainermanStability,LindbladRodnianskiGlobalStability,LukKerrNonlinear}. The choice of bootstrap assumptions is akin to choosing the function space $\cX^\infty$ (and thus implicitly $\cY^\infty$) in step~\ref{ItISysAlg}, while the consistency of the bootstrap assumptions, without obtaining a gain in the constants in the bootstrap, is similar to proving~\eqref{EqISysMap}. However, note that the bootstrap operates on a solution of the nonlinear equation, whereas we only consider linear equations; the gain in the bootstrap constants thus finds its analogue in the fact that one can make the iteration scheme~\eqref{EqISysSolv} rigorous, e.g.\ using Nash--Moser iteration, and keep low regularity norms of $u_k$ bounded (and $v_k$ decaying with $k$) throughout the iteration scheme. In the context in particular of Einstein's equation, a bootstrap argument has the advantage that the gauge condition is automatically satisfied as one is dealing with solutions of the nonlinear equation; thus the issue of constraint damping does not arise, whereas we do have to arrange this. In return, we gain significant flexibility in the choice of analytic tools for the global study of the linearized equations (e.g.\ methods from microlocal analysis, scattering theory), as used extensively in~\cite{HintzVasyKdSStability}; bootstrap arguments on the other hand are strongly tied to the character of $P(u)$ as a (nonlinear) \emph{hyperbolic} (or parabolic) and \emph{differential} operator, or at least to its locality in `time', and it is much less clear how to exploit global information (e.g.\ resonances).
\end{rmk}

Before discussing Einstein's equation in \S\ref{SsIEin}, we first describe this strategy for scalar nonlinear wave equations on Minkowski space. The most significant part of the work required to implement this strategy is the analysis of the linear operators called $L_u$ above; we thus begin in~\S\ref{SssISysLin} by explaining how we obtain estimates for solutions of \emph{linear wave equations} on Minkowski space in a manner that will work for linearizations of the gauge-fixed Einstein equation in~\S\ref{SBg}. In~\S\ref{SssISysNull}, we then put a few examples of nonlinear scalar equations into the abstract general framework described above, including a discussion of polyhomogeneity (step~\ref{ItISysMore} above) in~\S\ref{SssISysPhg}.

\subsubsection{Linear waves in Minkowski space}
\label{SssISysLin}

For step~\ref{ItISysSmooth}, we seek a convenient compactification $M$ of $\R^4$. The goal, from the PDE perspective, is for the asymptotic behavior of linear waves on $\R^4$ to have a simple description on $M$; closely related to this is that the asymptotic behavior of natural geometric objects such as (null)geodesics should be simple. Consider first `null infinity': a (rescaled) linear wave on $\R^4$ has a limit as $r\to\infty$ along any null-geodesic, e.g.\ the one defined by $t-r=s_0$, $\omega=\omega_0\in\Sph^2$ (using polar coordinates on $\R^3$) for $(s_0,\omega_0)\in\R\times\Sph^2$. Thus, we want to define $M$ in such a way that a sequence of points, with $r\to\infty$, along such a ray has a unique limit in $M$; that is, one boundary hypersurface of $M$ should be equal to (the closure\footnote{We also want to capture the asymptotics of the radiation field itself, leading us to consider the limits $s_0\to\pm\infty$ of such limiting points.} of) all such limiting points, with a bijection between $(s_0,\omega_0)$ and points in (the interior of) this boundary hypersurface, and such a boundary hypersurface then deserves the name $\scri^+$. (The interior of $\scri^+$ is thus $(\scri^+)^\circ\cong\R\times\Sph^2$.) The radiation field is then the restriction of the rescaled wave, extended from $\R^4$ to $M$ by continuity, to $\scri^+\subset\pa M$ (or $(\scri^+)^\circ$ in standard terminology).

For other asymptotic regimes, there are a number of choices one can make \emph{on Minkowski space}: the Penrose diagram, or the conformal embedding of Minkowski space into the Einstein universe give two (closely related) compactifications of $\R^4$ in which future timelike and spacelike geodesic rays have limit points. In order to facilitate the generalization to compactifications of \emph{asymptotically} Minkowskian spacetimes in~\S\ref{SCpt}, we choose to work with a compactification in which the closure of the set of these limiting points, called future timelike infinity $I^+$ and spacelike infinity $I^0$, are 3-dimensional (rather than 2-dimensional, as in the Penrose compactification); coordinates in their interiors are $x/t$ with $|x/t|<1$, $t^{-1}=0$ in $(I^+)^\circ$, and $(t/r,\omega)$ with $|t/r|<1$, $r^{-1}=0$ in $(I^0)^\circ$.

At future timelike infinity $I^+$, the asymptotic behavior of waves is governed, quite generally on suitable asymptotically Minkowski spacetimes, by quantum resonances \cite{BaskinVasyWunschRadMink};\footnote{See \cite[Theorem~1.1]{BaskinVasyWunschRadMink} for the rough theorem. Here, quantum resonances $\sigma_j\in\C$ are poles of the meromorphic continuation of the resolvent of an asymptotically hyperbolic Laplacian (plus a potential) arising naturally by Mellin-transforming the wave operator, or rather $L$ as in~\eqref{EqISysLinOp}, in $(t^2-r^2)^{1/2}$; linear waves then have expansions into $t^{i\sigma_j}a_j(x/t)$ for suitable distributions $a_j$, smooth in $|x/t|<1$. For present purposes, one can deduce the asymptotic behavior of linear waves equivalently by relating the linear scalar wave equation to the conformal wave equation on static de~Sitter space and the asymptotics of its solutions; see~\S\ref{SsItip}. Even so, we shall use spectral theoretic methods to accomplish the latter.} also, nonlinear interactions are much simpler to deal with than near $\scri^+$. (This is a further reason to keep $(\scri^+)^\circ$ and $(I^+)^\circ$ separate: it keeps the delicate analysis at $\scri^+$ separate on $M$ from the more straightforward analysis at $I^+$. The analysis at $I^0$ is even simpler.) We also point out that it is a specific feature of \emph{exact} Minkowski space that one can `blow down' $I^+$; that is, suitably rescaled linear waves are smooth directly on the Penrose compactification, and the blow-up of timelike infinity $i^+$ and spacelike infinity $i^0$ in the Penrose diagram, as in Figure~\ref{FigIBaby}, is not required; on more general asymptotically Minkowski spacetimes on the other hand, one needs to resolve $i^+$ and $i^0$ via real blow-up, obtaining $I^+$ and $I^0$, in order to exhibit linear waves as polyhomogeneous (read: having a simple asymptotic description) functions on the compactification.

Thus, we begin by defining $\ol{\R^4}$:
\begin{definition}
\label{DefISysLinCpt}
  The radial compactification of $\R^4$ is defined as
  \begin{equation}
  \label{EqISysLinCpt}
    \ol{\R^4}:=\R^4\sqcup([0,1)_R\times\Sph^3)/\sim,
  \end{equation}
  where $\sim$ identifies $(R,\omega)$, $R>0$, $\omega\in\Sph^3$, with the point $R^{-1}\omega\in\R^4$. The quotient carries the smooth structure in which the smooth functions are precisely those which over $\R^4$ (the interior of $\ol{\R^4}$) are smooth in the usual sense, and which over $[0,1)_R\times\Sph^3_\omega$ are smooth in $(R,\omega)$ down to $R=0$.
\end{definition}

The function $\rho:=(1+t^2+r^2)^{-1/2}\in\CI(\ol{\R^4})$ is a boundary defining function, i.e.\ $\pa\ol{\R^4}=\rho^{-1}(0)$ with $d\rho$ nondegenerate everywhere on $\pa\ol{\R^4}$. Letting $v=(t-r)/r$ away from $r=0$, all future null-geodesics tend to $S^+=\{\rho=0,\,v=0\}$, and we then define $M$ as the closure of $t\geq 0$ within the blow-up\footnote{The prototypical example of a blow-up is that of the origin in $\R^n$: we have $[\R^n;\{0\}]\cong[0,\infty)_r\times\Sph^{n-1}$, i.e.\ the origin in $\R^n$ is resolved, and $r=0$ is no longer merely a point, but a full $(n-1)$-sphere. The front face of this blow-up is $\{r=0\}\cong\Sph^{n-1}$, and the blow-down map is the map $(r,\omega)\mapsto r\omega$: it is a diffeomorphism in $r>0$, but at $r=0$ collapses an $(n-1)$-sphere to a single point (the origin). In the setting of interest for us, the blow-up $[M;X]$ of an embedded boundary submanifold $X\subset\pa M$ is, in a similar manner, the union $(M\setminus X)\sqcup S N^+X$ of the complement of $X$ and the inward pointing spherical (i.e.\ the quotient by the $\R_+$ action in the fibers of the) normal bundle of $X$ in $M$. See the local coordinate descriptions below, as well as~\cite[Chapter~5]{MelroseDiffOnMwc} for a detailed discussion of blow-ups.} $[\ol{\R^4};S^+]$ of $\ol{\R^4}$ at $S^+$ (see Figure~\ref{FigIBaby}), i.e.\ the smooth manifold obtained by declaring polar coordinates around $S^+$ to be smooth down to the origin. We refer to the front face $\scri^+$ of this blow-up as \emph{null infinity} or the \emph{radiation face}; it has a natural fibration by the fibers of the map $\scri^+\to S^+$, which we call the \emph{fibers of the radiation face/null infinity/$\scri^+$}. (The interior of a typical fiber is equal to $\R_{s_0}\times\{\omega_0\}$ for some fixed $\omega_0\in\Sph^2$.)

We can equivalently describe $M$ by giving a list of local coordinate patches and how (pieces of) $\R^4$ are glued to them. We describe two exemplary coordinate charts here: the first one is
\[
  [0,1)_{\rho_0} \times [0,1)_{\rho_I} \times \Sph^2_\omega,
\]
and we identify $(\rho_0,\rho_I,\omega)$ for $\rho_0,\rho_I>0$ with the point $(t,x)\in\R\times\R^3$ for $t=\rho_0^{-1}(\rho_I^{-1}-1)$, $x=\rho_0^{-1}\rho_I^{-1}\omega$. Thus,
\begin{equation}
\label{EqISysLin0IChart}
  \rho_0 = (r-t)^{-1},\quad
  \rho_I = (r-t)/r;
\end{equation}
then $I^0$, resp.\ $\scri^+$ is locally given by $\rho_0=0$, resp.\ $\rho_I=0$; thus, this chart describes a neighborhood of $I^0\cap\scri^+$, i.e.\ the transition from spacelike to null infinity. (For example, $\{\rho_0=0,\ \rho_I=c\}$ for some fixed $c\in(0,1)$ consists of the points `at (spacelike) infinity' of a spacelike cone in Minkowski space, while $\{\rho_0=c,\ \rho_I=0\}$ consists of the points `at (null) infinity' of a null cone.) See Figure~\ref{Fig0IChart}.

\begin{figure}[!ht]
\includegraphics{0IChart}
\caption{Illustration of the coordinate chart~\eqref{EqISysLin0IChart}. Shown are a number of level sets of $\rho_0$ (red dashed lines) and $\rho_I$ (blue dashed lines) projected onto the $(t,r)$ plane. Indicated on the top right is the $(\rho_0,\rho_I,\omega)$ coordinate system including the boundary hypersurfaces $I^0$ and $\scri^+$ which are glued onto $\R^4$.}
\label{Fig0IChart}
\end{figure}

The second coordinate chart is
\[
  [0,1)_{\tilde\rho_I} \times [0,1)_{\rho_+} \times \Sph^2_\omega,
\]
and $(\tilde\rho_I,\rho_+,\omega)$ for $\tilde\rho_I,\rho_+>0$ is identified with $(t,x)$ for $t=\rho_+^{-1}(\tilde\rho_I^{-1}+1)$, $x=\tilde\rho_I^{-1}\rho_+^{-1}\omega$; thus
\begin{equation}
\label{EqISysLinIPChart}
  \tilde\rho_I = (t-r)/r,\quad \rho_+=(t-r)^{-1}.
\end{equation}
(Now $\{\tilde\rho_I=c,\ \rho_+=0\}$ for $c\in(0,1)$ consists of the points `at (future timelike) infinity' of a timelike cone in Minkowski space.) When the coordinate system in which we work is clear, we simply write $\rho_I$ instead of $\tilde\rho_I$.

To motivate a preliminary choice of function spaces for step~\ref{ItISysAlg}, recall that the behavior of solutions of $\Box_{\ul g}u:=-u_{;\mu}{}^\mu$ near $\scri^+$ can be studied using the Friedlander rescaling
\begin{equation}
\label{EqISysLinOp}
  L:=\rho^{-3}\Box_{\ul g}\rho.
\end{equation}
This operator has smooth coefficients down to the interior $(\scri^+)^\circ$ of null infinity: it is equal to the conformal wave operator $\Box_{\rho^2\ul g}-\tfrac{1}{6}R_{\rho^2\ul g}$, and $\rho^2\ul g$ is a smooth, nondegenerate Lorentzian metric down to $(\scri^+)^\circ$: in local coordinates $\rho=r^{-1}\geq 0$, $x^1=t-r\in\R$, $\omega\in\Sph^2$ near $(\scri^+)^\circ$, we have $\rho^2\ul g=-2\,d x^1\,d\rho-\slg+\rho^2(d x^1)^2$. Thus, solutions of $L u=0$, with $\CIc(\R^3)$ initial data, are smooth up to $\scri^+$ and typically nonvanishing there. We shall refer to this reasoning as \emph{Friedlander's argument} below. (A more sophisticated version of this observation lies at the heart of Friedrich's conformal approach \cite{FriedrichConformalFE} to the study of Einstein's equation.) However, for more general initial data, and, more importantly, in many nonlinear settings (see \S\S\ref{SssISysNull} and \ref{SsIEin} below), smoothness will not be the robust notion, and we must settle for less: \emph{conormality at $\pa M$}. Namely, let $\Vb(M)$ denote the Lie algebra of \emph{b-vector fields}, i.e.\ vector fields tangent to the boundary hypersurfaces of $M$ other than the closure $\Sigma$ of the initial surface $\Sigma^\circ=\{t=0\}$, a function $u$ on $M$ is conormal iff it remains in a fixed weighted $L^2$ space on $M$ upon application of any finite number of b-vector fields. For $M$ defined above, $\Vb(M)$ is spanned over $\CI(M)$ by translations $\pa_t$ and $\pa_{x^i}$ as well as the scaling vector field $t\pa_t+x\pa_x$, boosts $t\pa_{x^i}+x^i\pa_t$, and rotation vector fields $x^i\pa_{x^j}-x^j\pa_{x^i}$.\footnote{In the coordinate chart~\eqref{EqISysLin0IChart}, $\Vb(M)$ is spanned by $\rho_0\pa_{\rho_0}=-t\pa_t-r\pa_r$, $\rho_I\pa_{\rho_I}=-r(\pa_t+\pa_r)$, and rotation vector fields. In the chart~\eqref{EqISysLinIPChart}, $\Vb(M)$ is spanned by $\rho_I\pa_{\rho_I}=-r(\pa_t+\pa_r)$, $\rho_+\pa_{\rho_+}=-t\pa_t-r\pa_r$, and rotation vector fields. It is then straightforward to check, in either of these two coordinate systems, that translations, scaling, and boosts are linear combinations, with $\CI(M)$ coefficients, of these vector fields.} (Note however that the definition of $\Vb(M)$ depends \emph{only on the smooth structure} of $M$.\footnote{The smoothness of elements of $\Vb(M)$ \emph{on the compactification $M$} in particular constrains their growth as one leaves every compact set of $\R^4$. As `counterexamples', one can check that the vector field $t^3\pa_t$, expressed in local coordinates near $\pa M$, is singular near any point of $\pa M$ (though of course it is smooth on $\R^4$!); similarly, the vector field $t\pa_t$ is singular at $\scri^+$ in the sense that it does \emph{not} extend, by continuity from $\R^4$, to be tangent to $\scri^+$ as is required from b-vector fields on $M$; it is, on the other hand, a smooth b-vector field down to $(I^0)^\circ$ and $(I^+)^\circ$.})

Let us now explain how to obtain a background estimate, step~\ref{ItISysMapReg} above, for the forcing problem $L u=f$ with trivial initial data. First, we can estimate $u$ in $H^1$ on any compact subset of $\R^4\cap\{t\geq 0\}$ by $f$ on another compact set. Then, on a neighborhood of $(I^0)^\circ$ which is diffeomorphic to $[0,1)_{\rho_0}\times(0,1)_\tau\times\Sph^2$, where
\[
  \rho_0 := r^{-1},\ \ \tau:= t/r,
\]
with $\rho_0$ a local boundary defining function of $I^0$, this problem roughly takes the form
\begin{equation}
\label{EqISysLinEq0}
  \bigl(D_\tau^2 - (\rho_0 D_{\rho_0})^2-\slDelta\bigr)u = f,
\end{equation}
where we use the standard notation
\begin{equation}
\label{EqISysLinD}
  D = \frac{1}{i}\pa,\quad i=\sqrt{-1}.
\end{equation}
In~\eqref{EqISysLinEq0}, $\slDelta=\Delta_{\slg}\geq 0$ is the Laplacian on $\Sph^2$, and $f$ has suitable decay properties making its norms in the estimates below finite. This is a wave equation on the (asymptotically) cylindrical manifold $[0,1)_{\rho_0}\times\Sph^2$. Let
\[
  U_0=\{0\leq\tau\leq c,\ \rho_0\leq 1\},\quad c\in(0,1).
\]
For any weight $a_0\in\R$, we can run an energy estimate using the vector field multiplier $\rho_0^{-2 a_0}\pa_\tau$ and obtain
\begin{equation}
\label{EqISysLinEq0Est}
  \| u \|_{\rho_0^{a_0}\Hb^1(U_0)} \lesssim \| f \|_{\rho_0^{a_0}L^2_\bop(U_0)}
\end{equation}
for $f$ supported in $U_0$; see Figure~\ref{FigISysLin0}. Here $L^2_\bop$ is the $L^2$ space with respect to the b-density $d\tau\frac{d\rho_0}{\rho_0}|d\slg|$, the weighted $L^2_\bop$ norm is defined by $\|f\|_{\rho_0^{a_0}L^2_\bop}=\|\rho_0^{-a_0}f\|_{L^2_\bop}$, and $\Hb^1$ is the space of all $u\in L^2_\bop$ such that $V u\in L^2_\bop$ for all $V\in\Vb(M)$; in $U_0$, $\Vb(M)$ is spanned (over $\CI(M)$ by $\pa_\tau$, $\rho_0\pa_{\rho_0}$, $\slnabla$, so we let
\[
  \|u\|_{\rho_0^{a_0}\Hb^1(U_0)} := \|u\|_{\rho_0^{a_0}L^2_\bop(U_0)} + \|\pa_\tau u\|_{\rho_0^{a_0}L^2_\bop(U_0)} + \|\rho_0 D_{\rho_0}u\|_{\rho_0^{a_0}L^2_\bop(U_0)} + \|\slnabla u\|_{\rho_0^{a_0}L^2_\bop(U_0)}.
\]
In order to obtain a higher regularity estimate, one can commute any number of b-vector fields through~\eqref{EqISysLinEq0}; the estimate~\eqref{EqISysLinEq0Est} only relies on the \emph{principal (wave) part} of $L$; lower order terms arising as commutators are harmless. Thus, $f\in\rho_0^{a_0}\Hb^\infty$ (weighted $L^2_\bop$-regularity with respect to any finite number of b-vector fields) implies $u\in\rho_0^{a_0}\Hb^\infty$, with estimates.

The same conclusion holds for the initial value problem for $L u=0$ with initial data which near $I^0$ are $(u|_{\tau=0},\pa_\tau u|_{\tau=0})=(u|_{t=0},r \pa_t u|_{t=0})=(u_0,u_1)$, $u_j\in\rho_0^{a_0}\Hb^\infty(\ol{\R^3})$, where $\ol{\R^3}$ is the radial compactification of $\R^3$, defined analogously to~\eqref{EqISysLinCpt}, which has boundary defining function $\rho_0=r^{-1}$. The assumption~\eqref{EqIBabySmall} on the size of initial data is a smallness condition on $\|\la r\ra\wt\gamma\|_{\rho_0^{b_0}\Hb^{N+1}}+\|\la r\ra^2 k\|_{\rho_0^{b_0}\Hb^N}$.

\begin{figure}[!ht]
\includegraphics{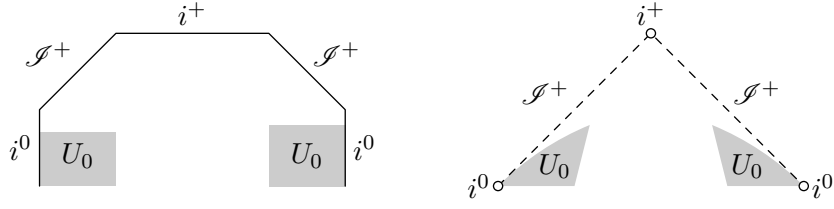}
\caption{The domain $U_0$ on which the energy estimate~\eqref{EqISysLinEq0Est} holds. \textit{Left:} as a subset of $M$. \textit{Right:} as a subset of the Penrose compactification.}
\label{FigISysLin0}
\end{figure}

Re-defining $\rho=r^{-1}$ near $S^+$, a neighborhood of $I^0\cap\scri^+$ is diffeomorphic to $[0,1)_{\rho_0}\times[0,1)_{\rho_I}\times\Sph^2$, where (as in~\eqref{EqISysLin0IChart})
\begin{equation}
\label{EqISysLinCoords0I}
  \rho_0 := -\rho/v = (r-t)^{-1},\ \ \rho_I := -v = (r-t)/r
\end{equation}
are boundary defining functions of $I^0$ and $\scri^+$, respectively. (Thus, a function bounded by $\rho_0^{a_0}\rho_I^{a_I}$ decays like $r^{-a_0}$ near $(I^0)^\circ$ and like $r^{-a_I}$ near $(\scri^+)^\circ$.) The lift of $L$ to $M$ is singular as an element of $\Diffb^2(M)$ but with a very precise structure at $\scri^+$: the equation $L u=f$ is now of the form
\begin{equation}
\label{EqISysLinEqI}
  \bigl(2\pa_{\rho_I}(\rho_0\pa_{\rho_0}-\rho_I\pa_{\rho_I}) - \slDelta\bigr)u = f
\end{equation}
modulo terms with more decay; here, ignoring weights, $\rho_I\pa_{\rho_I}\sim\pa_t+\pa_r$ and $\rho_0\pa_{\rho_0}-\rho_I\pa_{\rho_I}\sim\pa_t-\pa_r$ are the radial null vector fields. Assuming $f$ vanishes far away from $\scri^+$, we can run an energy estimate using $V=\rho_0^{-2 a_0}\rho_I^{-2 a_I}V_0$ as a multiplier, where $V_0=-c\rho_I\pa_{\rho_I}+\rho_0\pa_{\rho_0}$ is future timelike in $M\setminus\scri^+$ if we choose $c<1$; note that $V_0$ is \emph{tangent to $I^0$ and $\scri^+$} (and null at $\scri^+$); it is necessary to arrange this tangency for compatibility with our conormal function spaces, but it comes at the expense of giving control at $\scri^+$ that is weaker (but more robust, i.e.\ holds for a larger class of spacetimes) than the smoothness following from Friedlander's argument. A simple calculation, cf.\ Lemma~\ref{LemmaBgscriKCurrent}, shows that for $a_I<a_0$ and $a_I<0$,
\begin{equation}
\label{EqISysLinEqIEst}
  \|u\|_{\rho_0^{a_0}\rho_I^{a_I}L^2_\bop} + \|(\rho_0\pa_{\rho_0},\rho_I\pa_{\rho_I},\rho_I^{1/2}\slnabla)u\|_{\rho_0^{a_0}\rho_I^{a_I}L^2_\bop} \lesssim \|\rho_I f\|_{\rho_0^{a_0}\rho_I^{a_I}L^2_\bop}\ \ \tn{in}\ U_I,
\end{equation}
see Figure~\ref{FigISysLinI}, where $L^2_\bop$ is the $L^2$ space with integration measure $\frac{d\rho_0}{\rho_0}\frac{d\rho_I}{\rho_I}|d\slg|$. The assumptions on the weights are natural: since $\pa_t-\pa_r$ transports mass from $I^0$ to $\scri^+$, we certainly need $a_I\leq a_0$, while $a_I<0$ is necessary since, in view of the behavior of linear waves discussed after~\eqref{EqISysLinOp}, the estimate must apply to $u$ which are smooth and nonzero down to $\scri^+$. In~\eqref{EqISysLinEqIEst}, derivatives of $u$ along b-vector fields tangent to the fibers of the radiation face are controlled without a loss in weights, while general derivatives such as spherical ones lose a factor of $\rho_I^{1/2}$.\footnote{This is to be expected: indeed, letting $x:=\rho_I^{1/2}$, the rescaled metric $x^{-2}(\rho^2 \ul g)$ is an \emph{edge metric} \cite{MazzeoEdge}, i.e.\ a quadratic form in $\frac{d\rho_0}{\rho_0}$, $\frac{d x}{x}$, $\frac{d\theta^a}{x}$, with $\theta^a$ coordinates on $\Sph^2$, for which the natural vector fields are precisely those tangent to the fibers of $\scri^+$, that is, $\rho_0\pa_{\rho_0}$, $x\pa_x=2\rho_I\pa_{\rho_I}$, and $x\pa_{\theta^a}=\rho_I^{1/2}\pa_{\theta^a}$.} When controlling error terms later on, we thus need to separate them into terms involving fiber-tangent b-derivatives and general b-derivatives, and check that the coefficients of the latter have extra decay in $\rho_I$; see \S\ref{SsCptScri}.

\begin{figure}[!ht]
\includegraphics{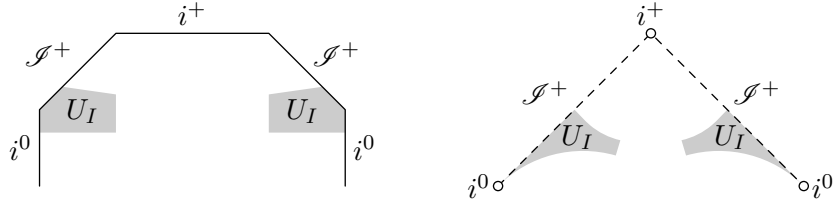}
\caption{The domain $U_I$ on which the energy estimate~\eqref{EqISysLinEqIEst} holds.}
\label{FigISysLinI}
\end{figure}

From~\eqref{EqISysLinEqI}, $L\in\rho_I^{-1}\Diffb^2(M)$ is equal to the model operator
\[
  L^0 := 2\pa_{\rho_I}(\rho_0\pa_{\rho_0}-\rho_I\pa_{\rho_I})
\]
modulo $\Diffb^2(M)$ (i.e.\ ignoring second order differential operators, such as $\slDelta$, which are sums of at most twofold products of b-vector fields). The commutation properties \emph{of this model} are what allows for higher regularity estimates:\footnote{See the discussion after~\eqref{EqISysNullLin} for an even stronger statement.} ($\rho_I$ times) equation~\eqref{EqISysLinEqI} commutes with $\rho_0\pa_{\rho_0}$ (scaling), $\rho_I\pa_{\rho_I}$ (roughly a combination of scaling and boosts), and spherical vector fields which are independent of $\rho_0$ and $\rho_I$.\footnote{We briefly sketch the argument: denoting the collection of these vector fields---which span $\Vb(M)$ locally---by $\{V_j\}$, this gives $L(V_j u)=V_j f+[L,V_j]u$ with $[L,V_j]\in\Diffb^2$ (modulo multiples of $L$ which arise for $V=\rho_I\pa_{\rho_I}$, and which we drop here), which is one order better in the sense of decay than the a priori expected membership in $\rho_I^{-1}\Diffb^2$ due to these commutation properties. Write $[L,V_j]=C_{j k}V_k$ with $C_{j k}\in\Diffb^1$ and apply the estimate~\eqref{EqISysLinEqIEst} to $V_j u$; then the additional forcing term $[L,V_j]u$ obeys the bound $\sum_k\|\rho_I C_{j k}V_k u\|_{\rho_0^{a_0}\rho_I^{a_I}L^2_\bop}\lesssim\sum_k\|V_k u\|_{\rho_0^{a_0}\rho_I^{a_I-1}\Hb^1}$, which close to $\scri^+$ is bounded by a small constant times the left hand side of~\eqref{EqISysLinEqIEst}, with $V_j u$ in place of $u$ and summed over $j$, due to the gain (of at least $\half$) in the weight in $\rho_I$.} In the end, we obtain $u\in\rho_0^{a_0}\rho_I^{a_I}\Hb^\infty$ when $f\in\rho_0^{a_0}\rho_I^{a_I-1}\Hb^\infty$.

Lastly, near $I^+$, one can use energy estimate with weight $\rho_I^{-2 a_I}\rho_+^{-2 a_+}$, $a_+<a_I$ large and negative, multiplying a timelike extension of the above $V_0$; higher regularity follows by commuting with the scaling vector field $\rho_+\pa_{\rho_+}$, where $\rho_+$ is a defining function of $I^+$, and elliptic regularity for $C(\rho_+ D_{\rho_+})^2-L$, $C>0$ large, in $I^+$ away from $\scri^+$, which is a consequence of the timelike nature of the scaling vector field $\rho_+\pa_{\rho_+}$ in $(I^+)^\circ$. See Figure~\ref{FigISysLinP}. Note that it is only at this stage that one uses the asymptotically Minkowskian nature of the metric in a neighborhood \emph{of all of} $I^+$; when dealing with a more complicated geometry, as e.g.\ in the study of perturbations of a Schwarzschild black hole, establishing this part of the background estimate (as well as the more precise asymptotics at $I^+$ discussed momentarily) becomes a major difficulty.

\begin{figure}[!ht]
\includegraphics{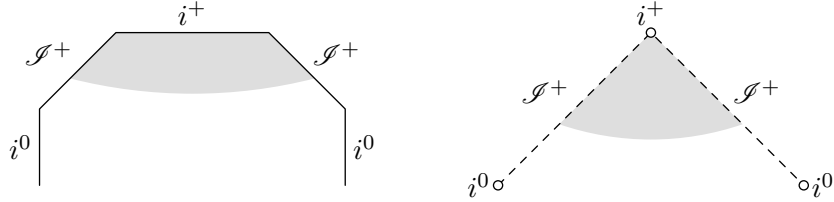}
\caption{The neighborhood (shaded) of $I^+$ on which we use a global (in $I^+$) weighted energy estimate.}
\label{FigISysLinP}
\end{figure}

Putting everything together, we find that
\begin{equation}
\label{EqISysLinBg}
  f\in\rho_0^{a_0}\rho_I^{a_I-1}\rho_+^{a_+}\Hb^\infty(M),\ f\equiv 0\ \tn{near}\ \Sigma\ \Lra\ u\in\rho_0^{a_0}\rho_I^{a_I}\rho_+^{a_+}\Hb^\infty(M),
\end{equation}
for $a_I<\min(a_0,0)$ and $a_+<a_I$.\footnote{Proving this estimate for large, negative, but nonexplicit $a_+$ is easy, while obtaining an explicit value of $a_+$ does require explicit straightforward (albeit lengthy) calculations. We accomplish this in \S\ref{SsBgExpl} by identifying $L$ with the conformal wave operator on static de~Sitter space for a suitable choice of $\rho$.}

For nonlinear applications, the information~\eqref{EqISysLinBg} on $u$ is not sufficient: the decay rate at $\scri^+$ is limited, and we do not have a good decay rate at $I^+$ either, cf.\ the discussion of $\rho_0^{a_0}\rho_I^{a_I}$ following~\eqref{EqISysLinCoords0I}. Let us thus turn to step~\ref{ItISysMapDec} and analyze $L u=f$ for $f$, vanishing near $\Sigma$, having more decay,
\begin{equation}
\label{EqISysLinY}
  f\in \cY^\infty := \rho_0^{b_0}\rho_I^{-1+b_I}\rho_+^{b_+}\Hb^\infty(M);\quad
  b_+<b_I<b_0,\ \ b_I\in(0,1).
\end{equation}
The background estimate~\eqref{EqISysLinBg} gives $u\in\rho_0^{b_0}\rho_I^{-\eps}\rho_+^{a_+}\Hb^\infty$ for all $\eps>0$. Near $I^0\cap\scri^+$ then, the conormality of $u$ allows for equation~\eqref{EqISysLinEqI} to be written as
\begin{equation}
\label{EqISysLinTransport}
  \rho_I\pa_{\rho_I}(\rho_0\pa_{\rho_0}-\rho_I\pa_{\rho_I})u = \half(\rho_I f + \rho_I\slDelta u) \in \rho_0^{b_0}\rho_I^{b_I}\Hb^\infty\ \ \tn{on}\ U_I,
\end{equation}
i.e.\ $L$ effectively becomes the composition of (linear) transport equations along the two radial null directions. See Figure~\ref{FigISysLinTrans}. Integration of $\rho_0\pa_{\rho_0}-\rho_I\pa_{\rho_I}$ is straightforward, while integrating $\rho_I\pa_{\rho_I}$, which is a regular singular ODE with indicial root $0$, implies that $u$ has a leading order term at $\scri^+$; one finds that
\[
  u = u^{(0)} + u_\bop;\ \ u^{(0)}\in\rho_0^{b_0}\Hb^\infty(\scri^+),\ u_\bop\in\rho_0^{b_0}\rho_I^{b_I}\Hb^\infty(M)\ \ \tn{near}\ I^0\cap\scri^+,
\]
which implies the existence of the radiation field.\footnote{For rapidly decaying $f$, one can plug this improved information into the right hand side of~\eqref{EqISysLinTransport}, thereby obtaining an expansion of $u$ into integer powers of $\rho_I$ and recovering the smoothness of $u$ at $\scri^+$.} The procedure to integrate along (approximate) characteristics to get sharp decay is frequently employed in the study of nonlinear waves on (asymptotically) Minkowski spaces, see e.g.\ \cite[\S2.2]{LindbladRodnianskiGlobalStability}, \cite{LindbladAsymptotics}, and their precursors~\cite{LindbladLifespan,LindbladGlobalNonlinear}.

\begin{figure}[!ht]
\includegraphics{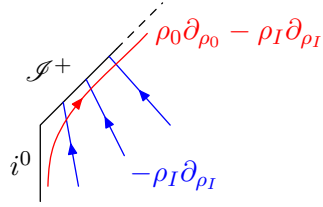}
\caption{The integral curves of the vector fields $\pa_t+\pa_r\sim-\rho_I\pa_{\rho_I}$ and $\pa_t-\pa_r\sim\rho_0\pa_{\rho_0}-\rho_I\pa_{\rho_I}$. Integration along the former gives the leading term at $\scri^+$, while integration along the latter transports weights (and polyhomogeneity) from $I^0$ to $\scri^+$.}
\label{FigISysLinTrans}
\end{figure}

At $\scri^+\cap I^+$, the same argument works, showing that $u^{(0)}$ and $u_\bop$ are bounded by $C\rho_+^{a_+}$ and $C\rho_I^{b_I}\rho_+^{a_+}$ near $I^+$ (i.e.\ by $t^{-a_+}$ as $t\to\infty$ with $r/t$ in compact subsets of $[0,1)$). Improving this weight however does \emph{not} follow from such a simple argument. Indeed, at $I^+$, the behavior of $u$ is governed by \emph{scattering theoretic phenomena}: the asymptotics are determined by scattering resonances of a model operator at $I^+$, namely the \emph{normal operator} of the b-differential operator $L$ at $I^+$, obtained by freezing its coefficients at $I^+$, see equation~\eqref{EqCptNormOp}. We thus use the arguments introduced in~\cite{VasyMicroKerrdS}, see also \cite[Theorem~2.21]{HintzVasySemilinear}, based on Mellin transform in $\rho_+$, inversion of a `spectral family' $\wh{L}(\sigma)$, which is the conjugation of the model operator (called `normal operator' in b-parlance) of $L$ at $I^+$ by the Mellin transform in $I^+$, with $\sigma$ the dual parameter to $\rho_+$, and contour shifting in the inverse Mellin transform to find the correct asymptotic behavior at $I^+$: the resonances $\sigma\in\C$, which are the poles of $\wh{L}(\sigma)^{-1}$, give rise to a term $\rho_+^{i\sigma}v$, $v$ a function on $I^+$, in the asymptotic expansion of $u$. (See \S\S\ref{SsItip} and \ref{SPhg} for details.) The resonances can easily be calculated explicitly in the present context, and they all satisfy $-\!\Im\sigma\geq 1>b_+$. The upshot is that
\begin{equation}
\label{EqISysLinX}
  f \in \cY^\infty\ \Rightarrow\ u \in \cX^\infty := \bigl\{ \chi u^{(0)} + u_\bop \colon u^{(0)}\in\rho_0^{b_0}\rho_+^{b_+}\Hb^\infty(\scri^+),\ u_\bop\in\rho_0^{b_0}\rho_I^{b_I}\rho_+^{b_+}\Hb^\infty(M) \bigr\},
\end{equation}
where $\chi$ cuts off to a neighborhood of $\scri^+$.

For later use as a simple model for constraint damping, consider a more general equation,
\begin{equation}
\label{EqISysLinDamping}
  L_\gamma u \equiv \rho^{-3}(\Box_{\ul g} - 2\gamma t^{-1}\pa_t)(\rho u) = f,
\end{equation}
for $\gamma\in\R$; near $\scri^+$ and $I^0$, this now roughly takes the form
\[
  \bigl(2\rho_I^{-1}(\rho_I\pa_{\rho_I}-\gamma)(\rho_0\pa_{\rho_0}-\rho_I\pa_{\rho_I})-\slDelta\bigr)u = f.
\]
Once the conormality of $u$ is known, integrating the first vector field on the left gives a leading term $\rho_I^\gamma$, which is decaying for $\gamma>0$. (One can show that the background estimate~\eqref{EqISysLinBg} holds for $a_I<\gamma$, but even an ineffective bound $a_I\ll 0$ would be good enough, as the transport ODE argument automatically recovers the optimal bound.)

\begin{rmk}
\label{RmkISysLinResonances}
  Note that for small $\gamma$, the normal operator of $L_\gamma$ at $I^+$ is close to the normal operator for $\gamma=0$, hence one would like to conclude that mild decay $\rho_+^{b_+}$, $b_+<1$, at $I^+$ still holds in this case. This is indeed true, but the argument has a technical twist: $L_\gamma$ does not have smooth coefficients at $\scri^+$ as a differential operator (unlike $L$ in Friedlander's argument) due to the presence of derivatives which are not tangential to $S^+$. However, we still have $L_\gamma\in\Diffb^2(\ol{\R^4})$; we thus deduce asymptotics at $I^+$ via normal operator analysis \emph{on the blown-down space} $\ol{\R^4}$, analogously to \cite{BaskinVasyWunschRadMink,BaskinVasyWunschRadMink2}. See~\S\ref{SsItip}.
\end{rmk}

\begin{rmk}
\label{RmkISysLinRadial}
  The improved decay at $\scri^+$ translates into higher b-regularity of $u$ on the blown-down space $\ol{\R^4}$, as we will show in Lemma~\ref{LemmaItipFn}; in the language of~\cite[Proposition~4.4]{BaskinVasyWunschRadMink}, this corresponds to a shift of the threshold regularity at the radial set by $\gamma$ coming from the skew-symmetric part of $L_\gamma$.
\end{rmk}

\subsubsection{Non-linearities and null structure}
\label{SssISysNull}

Equipped with this understanding of linear waves, we now discuss steps~\ref{ItISysAlg}--\ref{ItISysSolv} of the abstract strategy of~\S\ref{SsISys}. In particular, we will show how the absence of a `null structure' for a semilinear wave equation well-known to exhibit finite-time blow-up manifests itself from the global, Newton iteration scheme perspective; we will also discuss examples of equations that do satisfy a null condition, of the type arising when studying the linearization of the gauge-fixed Einstein equation.

To begin, recall that if $u$ is conormal on $M$, then its derivatives along $\pa_0:=\pa_t+\pa_r$ or size $1$ spherical derivatives $r^{-1}\slnabla$ have faster decay by one order at $\scri^+$, whereas its `bad' derivative along $\pa_1:=\pa_t-\pa_r$ does not gain decay there; indeed, modulo vector fields with more decay at $\scri^+$, we calculate near $I^0\cap\scri^+$ using~\eqref{EqISysLinCoords0I}
\[
  \pa_0 = -\half\rho_0\rho_I\,\rho_I\pa_{\rho_I},\ \ 
  \pa_1 = \rho_0(\rho_0\pa_{\rho_0}-\rho_I\pa_{\rho_I});
\]
note the extra factor of $\rho_I$ in $\pa_0$. All these derivatives gain an order of decay at $I^0$, hence the structure of nonlinearities is relevant mainly at $\scri^+$; let us thus restrict the discussion to a neighborhood of $I^0\cap\scri^+$. (Similar considerations apply to a neighborhood of $I^+\cap\scri^+$.) Consider the nonlinear equation $\Box_{\ul g}u-(\pa_1 u)^2=f$, or rather the closely related equation
\begin{equation}
\label{EqISysNullViolated}
  P(u) = L u - \rho^{-1}(\pa_1 u)^2 - f,\ \ f\in\cY^\infty\ \ \tn{small},
\end{equation}
with $L$ given by~\eqref{EqISysLinOp}; this is well-known to violate the \emph{null condition} introduced by Christodoulou~\cite{ChristodoulouGlobalSolutionsSmallData} and Klainerman~\cite{KlainermanNullCondition}. From our compactified perspective, the issue is the following. For $u\in\cX^\infty$, the linearization $L_u=L-2\rho^{-1}(\pa_1 u)\pa_1$ is, to leading order as a b-operator,
\[
  2\rho_I^{-1}(\rho_I\pa_{\rho_I}-\pa_1 u)(\rho_0\pa_{\rho_0}-\rho_I\pa_{\rho_I}),
\]
so the indicial root at $\scri^+$ is shifted from $0$ to $\pa_1 u|_{\scri^+}$. Therefore, a step $L_u v=-P(u)$ in the Newton iteration scheme~\eqref{EqISysSolv} does not give $v\in\cX^\infty$. A Picard iteration, solving $L_0 v=-P(u)$ would, due to the leading term of $\rho^{-1}(\pa_1 u)^2$ of size $\rho_I^{-1}$, cause $v$ to have a logarithmic leading term when integrating the analogue of~\eqref{EqISysLinTransport}. Neither iteration scheme closes, and this will remain true for any modification of the space $\cX^\infty$, e.g.\ if one allowed elements of $\cX^\infty$ to have leading terms involving higher powers of $\log\rho_I$. In fact, solutions of global versions of this equation blow up in finite time \cite{JohnBlowupQuasi}.

Assuming initial data to have sufficient decay, the nonlinear system $L u_1^c=0$, $L u_1-\rho^{-1}(\pa_1 u_1^c)^2=0$ on the other hand can be solved easily if we design the function space $\cX^\infty$ in step~\ref{ItISysAlg} to encode a $\rho_I^0$ leading term for $u_1^c$ at $\scri^+$, as in~\eqref{EqISysLinX}, and two leading terms, of size $\log\rho_I$ and $\rho_I^0$, for $u_1$. Extending this model slightly, let $\gamma>0$, recall $L_\gamma$ from~\eqref{EqISysLinDamping}, and consider the system for $u=(u_0,u_1^c,u_1)$,
\begin{equation}
\label{EqISysNull}
\begin{split}
  P(u) = \bigl( L_\gamma u_0,\ L u_1^c-\rho^{-1}(\pa_1 u_0)^2,\ L u_1-\rho^{-1}(\pa_1 u_1^c)^2 \bigr) = 0;
\end{split}
\end{equation}
which is a toy model for the nonlinear structure of the gauge-fixed Einstein equation with constraint damping, as we will argue in \S\ref{SsIEin}. Only working in $(\scri^+)^\circ$, i.e.\ ignoring weights at $I^0$ and $I^+$ for brevity, the above discussions suggest taking $b_I\in(0,\gamma)$ and working with the space\footnote{Here as well as in the previous example, one could of course work with much less precise function spaces since the full nonlinear system is lower triangular; for the Einstein equation on the other hand, we will need this kind of precision.}
\begin{equation}
\label{EqISysNullX}
  \cX^\infty = \{ u=(u_0,u_1^c,u_1) \colon (u_0,u_1^c-u_1^{c(0)},u_1-u_1^{(1)}\log\rho_I-u_1^{(0)}) \in \rho_I^{b_I}\Hb^\infty(M) \},
\end{equation}
where $u_1^{c(0)}$, $u_1^{(1)}$, $u_1^{(0)}\in\CI((\scri^+)^\circ)$ are the leading terms. Then
\[
  P \colon \cX^\infty \to \cY^\infty = \{ f=(f_0,f_1^c,f_1) \colon (f_0,f_1^c,f_1-\rho_I^{-1}f_1^{(0)})\in\rho_I^{-1+b_I}\Hb^\infty \},
\]
where $f_1^{(0)}\in\CI((\scri^+)^\circ)$. The linearization $L_u$ of $P$ around $u\in\cX^\infty$ then has as its model operator at $\scri^+$
\begin{equation}
\label{EqISysNullLin}
  L_u^0 = 2\rho_I^{-1}(\rho_I\pa_{\rho_I}-A_u)(\rho_0\pa_{\rho_0}-\rho_I\pa_{\rho_I}), \quad
  A_u=\begin{pmatrix}
      \gamma & 0 & 0 \\
      0 & 0 & 0 \\
      0 & \pa_1 u_1^{c(0)} & 0
    \end{pmatrix},
\end{equation}
which has a (lower triangular) \emph{Jordan block structure}, with all blocks either having positive spectrum (the upper $1\times 1$ entry) or being \emph{nilpotent} (the lower $2\times 2$ block). Thus, by integrating $\rho_I\pa_{\rho_I}-A_u$, we conclude that for $L_u v=-P(u)$, we have $v\in\cX^\infty$, thus closing the iteration scheme~\eqref{EqISysSolv}. A background estimate as well as its higher regularity version, which is the prerequisite for $L_u^0$ being of any use, can be proved as before. Error terms arising from commutation with $A_u$ have lower differential order and can thus be controlled inductively; that is, only the commutation properties \emph{of the principal part of $L_u^0$} matter for this.

\begin{rmk}
\label{RmkISysWeakNull}
  A tool for the study of the long time behavior of nonlinear wave equations on Minkowski space introduced by H\"ormander \cite{HormanderLifespan} is the \emph{asymptotic system}, see also~\cite[\S6.5]{HormanderNonlinearLectures} and \cite{LindbladRodnianskiWeakNull}: this arises by making an ansatz $u\sim \eps r^{-1} U(t-r,\eps\log r,\omega)$ for the solution and evaluating the $\eps^2$ coefficient, which gives a PDE in $1+1$ dimensions in the coordinates $t-r$ and $\ell:=\eps\log r$ which one expects to capture the behavior of the nonlinear equation near the light cone; if the classical null condition is satisfied, the PDE is linear, otherwise it it nonlinear. The weak null condition~\cite{LindbladRodnianskiWeakNull} is the assumption that solutions of the asymptotic system grow at most exponentially in $\ell$, and for the Einstein vacuum equation in harmonic gauge, solutions are \emph{polynomial} (in fact, linear) in $\ell$. The latter finds its analogue in our framework in the \emph{nilpotent} structure of the coupling matrix in~\eqref{EqISysNullLin}. (However, quasilinear equations with variable long-range perturbations, see the discussion around~\eqref{EqIEinGood}, cannot be treated directly with our methods, corresponding to the difficulty in assigning a geometric meaning to the asymptotic system in such situations.) For works which establish global existence of nonlinear equations even when the asymptotic system has merely exponentially bounded (in $\ell$) solutions, we refer to Lindblad~\cite{LindbladGlobalNonlinear,LindbladGlobalQuasilinear} and Alinhac~\cite{AlinhacBlowupAtInfty}.
\end{rmk}

\subsubsection{Polyhomogeneity}
\label{SssISysPhg}

Consider again equation~\eqref{EqISysLinEq0} near $(I^0)^\circ$, now assuming that $f$ is polyhomogeneous. For simplicity, let $f=\rho_0^{i z}f_z+\wt f$, where $f_z\in\CI(\pa\ol{\R^4})$, $z\in\C$, and $\wt f$ decays faster than the leading term, so $\wt f\in\rho_0^{b_0}\Hb^\infty$ with $b_0>-\Im z$. A useful characterization of the polyhomogeneity of $f$ is that the decay of $f$ improves upon application of the vector field $\rho_0 D_{\rho_0}-z$ in the notation~\eqref{EqISysLinD}. The solution $u$ satisfies $u\in\rho_0^{a_0}\Hb^\infty$ for any $a_0<-\Im z$; but $u':=(\rho_0 D_{\rho_0}-z)u$ solves\footnote{Commutator terms have improved decay at $\rho_0=0$ as before, hence are dropped here for clarity.}
\[
  L u' = (\rho_0 D_{\rho_0}-z)f = (\rho_0 D_{\rho_0}-z)\wt f \in \rho_0^{b_0}\Hb^\infty,
\]
so $u'\in\rho_0^{b_0}\Hb^\infty$. This is exactly the statement that $u$ has the form $u=\rho_0^{i z}u_z+\wt u$ for some $u_z\in\CI(\pa\ol{\R^4})$, $\wt u\in\rho_0^{b_0}\Hb^\infty$. If $f$ has a full polyhomogeneous expansion, an iteration of this argument shows that $u$ has one too, with the same index set.

Near the corner $I^0\cap\scri^+$ then, one can proceed iteratively as well, picking up the terms of the expansion at $\scri^+$ one by one, by analyzing the solution of the product of transport equations in equation~\eqref{EqISysLinTransport} when the right hand side has a partial polyhomogeneous expansion at $\scri^+$: the point is that $\rho_0\pa_{\rho_0}-\rho_I\pa_{\rho_I}$ transports expansions from $I^0$ to $\scri^+$, ultimately since it annihilates $\rho_0^{i z}\rho_I^{i z}$. See Lemmas~\ref{LemmaPhgODE1d}--\ref{LemmaPhgODE2}.

To obtain the expansion at $I^+$, we argue iteratively again, using the resonance expansion obtained via normal operator analysis as in the proof of \cite[Theorem~2.21]{HintzVasySemilinear}. One needs to invert the normal operator family of $L$ on spaces of functions which are polyhomogeneous at the boundary $\pa I^+$, which is easily accomplished by solving away polyhomogeneous terms formally and using the usual inverse, defined on spaces of smooth functions, to solve away the remainder; see Lemma~\ref{LemmaPhgNormalPhg}.

\subsection{Analysis of Einstein's equation}
\label{SsIEin}

For Einstein's equation, the strategy outlined in \S\ref{SsISys} needs to be supplemented by a preliminary step, the choice of the nonlinear operator $P$, which in particular means \emph{choosing a gauge}, i.e.\ a condition on the solution $g$ of $\Ric(g)=0$ which breaks the diffeomorphism invariance; by the latter we mean the fact that for any diffeomorphism $\phi$ of $M$, $\phi^*g$ also solves $\Ric(\phi^*g)=0$. Following DeTurck~\cite{DeTurckPrescribedRicci}, the presentation by Graham--Lee~\cite{GrahamLeeConformalEinstein}, and \cite{HintzVasyKdSStability}, we consider the gauge-fixed Einstein equation
\begin{equation}
\label{EqIEinOp}
  P_0(g) = \Ric(g) - \tdel^*\Ups(g) = 0,
\end{equation}
where $\tdel^*$ is a first order differential operator with the same principal symbol (which is independent of $g$) as the symmetric gradient $(\delta_g^*u)_{\mu\nu}=\half(u_{\mu;\nu}+u_{\nu;\mu})$; we comment on the choice of $\tdel^*$ below. Further, the \emph{gauge 1-form} is
\begin{equation}
\label{EqIEinUps}
  \Ups(g;g_m)_\mu := (g g_m^{-1}\delta_g G_g g_m)_\mu = g_{\mu\nu}g^{\kappa\lambda}(\Gamma(g)_{\kappa\lambda}^\nu - \Gamma(g_m)_{\kappa\lambda}^\nu),
\end{equation}
where $\delta_g$ is the adjoint of $\delta_g^*$, i.e.\ the (negative) divergence, $G_g=1-\half g\tr_g$ is the trace reversal operator, and $g_m$ is a fixed \emph{background metric}; \textit{we write $\Ups(g)\equiv\Ups(g;g_m)$ from now on.} This is a manifestly coordinate invariant generalization of the wave coordinate gauge, where one would choose $g_m=\ul g$ to be the Minkowski metric on $\R^4$ and demand that a global coordinate system $(x^\mu)\colon(M^\circ,g)\to(\R^4,\ul g)$ be a wave map. (Friedrich describes $\Ups(g)=0$ and more general gauge conditions using \emph{gauge source functions}, see in particular \cite[Equation~(3.23)]{FriedrichHyperbolicityEinstein}.)

Two fundamental properties of $P_0(g)$ are: \begin{enumerate*} \item $P_0(g)$ is a \emph{quasilinear wave equation}, hence has a well-posed initial value problem; \item by the second Bianchi identity---the fact that the Einstein tensor $\Ein(g):=G_g\Ric(g)$ is divergence-free---the equation $P_0(g)=0$ implies the wave equation \end{enumerate*}
\begin{equation}
\label{EqIEinUpsProp}
  \delta_g G_g\tdel^*\Ups(g)=0
\end{equation}
for $\Ups(g)$, which thus vanishes identically provided its Cauchy data are trivial; we call $\delta_g G_g\tdel^*$ the \emph{gauge propagation operator}. Therefore, solving~\eqref{EqIEinOp} with Cauchy data for which $\Ups(g)$ has trivial Cauchy data is equivalent to solving Einstein's equation~\eqref{EqIEin} in the gauge $\Ups(g)=0$.

Since we wish to solve the initial value problem~\eqref{EqIEinIVP}, we need to choose the Cauchy data for $g$, i.e.\ the restrictions $g_0$ and $g_1$ of $g$ and its transversal derivative to the initial surface $\Sigma^\circ$ \emph{as a Lorentzian metric on $M^\circ$} such that $\gamma$ is the pullback of $g_0$ to $\Sigma^\circ$ and $k$ is the second fundamental form of any metric with Cauchy data $(g_0,g_1)$; note that $k$ only depends on up to first derivatives of the ambient metric, hence can indeed be expressed purely in terms of $(g_0,g_1)$. These conditions do not determine $g_0,g_1$ completely, and one can arrange in addition that $\Ups(g)$ vanishes at $\Sigma^\circ$ as a 1-form on $M$. Provided then that $P_0(g)=0$, with these Cauchy data for $g$, holds near $\Sigma^\circ$, the constraint equations at $\Sigma^\circ$ can be shown to imply that also the transversal derivative of $\Ups(g)$ vanishes at $\Sigma^\circ$ (see the proof of Theorem~\ref{ThmPf}), and then the argument involving~\eqref{EqIEinUpsProp} applies.

If the initial data in Theorem~\ref{ThmIBaby} are \emph{exactly} Schwarzschildean for $r\geq R\gg 1$, the solution $g$ is equal (i.e.\ isometric) to the Schwarzschild metric in the domain of dependence of the region $r\geq R$; more generally, for initial data which are equal to those of mass $m$ Schwarzschild modulo decaying corrections, we expect all outgoing null-geodesics to be bent in approximately the same way as for the metric $g_m^S$. Thus, we should define the manifold $M$ in step~\ref{ItISysSmooth} so that $\scri^+$ is null infinity of the Schwarzschild spacetime. Now, along radial null-geodesics of $g_m^S$, the difference $t-r_*$ is constant, where
\begin{equation}
\label{EqIEinTortoise}
  r_*=r+2 m\log(r-2 m)
\end{equation}
is the \emph{tortoise coordinate} up to an additive constant, see~\cite[Equation~(6.4.20)]{WaldGR}. Correspondingly, we define the compactification ${}^m\ol{\R^4}$ near $t\sim r_*$ such that $\rho=r^{-1}$ is a boundary defining function, and ${}^m v:=(t-r_*)/r$ is smooth up to the boundary; ${}^m\!M$ is defined by blowing this up at $S^+=\{\rho=0,\,{}^m v=0\}$. (This is smoothly extended away from $t\sim r_*$ to a compactification of all of $\R^4$.) Thus, ${}^m\ol{\R^4}$ and the Minkowski compactification $\ol{\R^4}={}^0\ol{\R^4}$ are canonically identified by continuity from $\R^4$, but have slightly different smooth structures; see~\S\ref{SsCptM} and \cite[\S7]{BaskinVasyWunschRadMink2}.) The interior of the front face $\scri^+$ of the blow-up is diffeomorphic to $\R_s\times\Sph^2$, where $s:={}^m v/\rho=t-r_*$ is an affine coordinate along the fibers of the blow-up. We denote defining functions of $I^0$ (the closure of $\{\rho=0,\,{}^m v<0\}$ in ${}^m\!M$), $\scri^+$, and $I^+$ (the closure of $\{\rho=0,\,{}^m v>0\}$ in ${}^m\!M$) by $\rho_0$, $\rho_I$, and $\rho_+$, respectively.

It is then natural to fix the background metric $g_m$ to be equal to $g_m^S$ near $I^0\cup\scri^+$ and smoothly interpolate with the Minkowski metric near $r=0$ (which is nonsingular there, unlike $g_m^S$). We then work with the gauge $\Ups(g;g_m)=0$, and seek the solution of
\begin{equation}
\label{EqIEinMetric}
  P(h) := \rho^{-3} P_0(g) = 0,\ \ g = g_m + \rho h,
\end{equation}
with $h$ to be determined; the factors $\rho$ are introduced in analogy with the discussion of the scalar wave equation~\eqref{EqISysLinOp}.\footnote{Note that we use $g_m$ in two distinct roles: once as a background metric in the gauge condition, and once as a rough first guess of the solution of the initial value problem which \begin{enumerate*} \item already has the correct long range behavior at null infinity and \item is \emph{globally} close to a solution of the Einstein vacuum equation if $m$ is small. \end{enumerate*} See also Remark~\ref{RmkPfLargeMass}.} Here, $\rho$ is a global boundary defining function of ${}^m\ol{\R^4}$; one can e.g.\ take $\rho=r^{-1}$ away from the axis $r/t=0$, and $\rho=t^{-1}$ near $r/t=0$. Now, due to the quasilinear character of~\eqref{EqIEinOp}, the principal part of $L_h:=D_h P$ depends on $h$: it is given by $\half\Box_g$. Thus, one needs to ensure that throughout the iteration scheme~\eqref{EqISysSolv}, the null-geometry of $g$ is compatible with ${}^m\!M$, in the sense that the long range term of $g$ determining the bending of light rays remains unchanged. To see what this means concretely, consider a metric perturbation $h$ in~\eqref{EqIEinMetric} which is not growing too fast at $\scri^+$, say $|h|\lesssim \rho_I^{-\eps}$ for $\eps<1/2$ (that is, $|h|\lesssim r^\eps$ when $t-r_*$ remains in a bounded interval); one can then check that, modulo terms with faster decay at $\scri^+$,
\begin{equation}
\label{EqIEinLongRange}
  \Box_g = 2\rho_I^{-1}\bigl(\rho_I\pa_{\rho_I}+2\rho_0 h_{0 0}(\rho_0\pa_{\rho_0}-\rho_I\pa_{\rho_I})\bigr)(\rho_0\pa_{\rho_0}-\rho_I\pa_{\rho_I})\ \ \tn{near}\ I^0\cap\scri^+,
\end{equation}
which identifies
\begin{equation}
\label{EqIEinGood}
  h_{0 0}=h(\pa_0,\pa_0),\ \ \pa_0=\pa_t+\pa_{r_*},
\end{equation}
as the (only) long range component of $h$; see the calculation~\eqref{EqEinFinverse}.\footnote{In the case that $h_{0 0}$ vanishes at $\scri^+$, the approximate null directions $\rho_I\pa_{\rho_I}$ and $\rho_0\pa_{\rho_0}-\rho_I\pa_{\rho_I}$ have the same form as in the discussion surrounding~\eqref{EqISysLinTransport}, however, \emph{due to our choice of compactification ${}^m\!M$}, they are now the radial null directions of \emph{Schwarzschild} with mass $m$. (Integration along these more precise characteristics was key in Lindblad's proof of sharper asymptotics in~\cite{LindbladAsymptotics}.)} Indeed, the first vector field in~\eqref{EqIEinLongRange} is approximately tangent to outgoing null cones, so for $h_{0 0}\neq 0$ at $\scri^+$, outgoing null cones do \emph{not} tend to $(\scri^+)^\circ$. (Rather, if $h_{0 0}>0$, say, they are less strongly bent, like in a Schwarzschild spacetime with mass smaller than $m$.) Whether or not $h_{0 0}$ vanishes at $\scri^+$ depends on the choice of gauge. A calculation, see~\eqref{EqCoPertUpsLower}, shows that the gauge condition $\Ups(g)=0$ implies the constancy of $h_{0 0}$ along $\scri^+$; but since $h_{0 0}$ is initially $0$ due to $g_m$ already capturing the long range part of the initial data, this means that $h_{0 0}|_{\scri^+}=0$ indeed---\emph{provided that $P(h)=0$} with Cauchy data satisfying the gauge condition, as we otherwise cannot conclude the vanishing of $\Ups(g)$. We remark that $\Ups(g)=0$ implies the vanishing of further components of $h$, namely $r^{-1}h_{0 a}\equiv h(\pa_0,r^{-1}\pa_{\theta^a})$ and $r^{-2}\slg^{a b}h_{a b}$, $h_{a b}:=h(\pa_{\theta^a},\pa_{\theta^b})$, which we collectively denote by $h_0$; see~\eqref{EqEinFGood} and~\eqref{EqEinFSubbProj0}, where the notation $h_0=:\pi_0 h$ is introduced.

As we are solving approximate (linearized) equations at each step of our Newton-type iteration scheme in step~\ref{ItISysSolv}, we thus need an extra mechanism to ensure that $\Ups(g)$, $g=g_m+\rho h$, is decaying sufficiently fast at $\scri^+$ to guarantee the vanishing of $h_{0 0}$ at $\scri^+$. This is where constraint damping comes into play. Roughly speaking, if one only has an approximate solution of $P_0(g)\approx 0$, then we still get $\delta_g G_g\tdel^*\Ups(g)\approx 0$; if one chooses $\tdel^*$ carefully, solutions of this can be made to decay at $\scri^+$ sufficiently fast so as to imply the vanishing of $h_{0 0}$. We shall show that the choice
\[
  \tdel^* u = \delta_{g_m}^*u + 2\gamma\tfrac{d t}{t}\otimes_s u - \gamma(\iota_{t^{-1}\nabla t}u)g_m,\ \ 
  \gamma>0,
\]
accomplishes this.\footnote{For technical reasons related to the definition of the smooth structure on ${}^m\ol{\R^4}$, we shall modify $t$ slightly; see Definition~\ref{DefCptFtInv} and equation~\eqref{EqEinTdel}.} As a first indication, one can check that $2\delta_{g_m}G_{g_m}\tdel^*$ has a structure similar to~\eqref{EqISysLinDamping} with $\gamma>0$, for which we had showed the improved decay at $\scri^+$.

Regarding steps~\ref{ItISysAlg} and \ref{ItISysMap} of our general strategy, the correct function spaces can now be determined easily (after some tedious algebra): solving $L_0 u=0$, where $L_h=D_h P$ as usual, one finds that $u_0=\pi_0 u$, so in particular the long range component $u_{0 0}$ of $u$ decays at $\scri^+$, while the remaining components, denoted $u_0^c$, have a size $1$ leading term at $\scri^+$, just like solutions of the linear scalar wave equation. This follows from the schematic structure
\[
  \rho_I^{-1}\biggl(\rho_I\pa_{\rho_I}-\begin{pmatrix} \gamma & 0 \\ 0 & 0 \end{pmatrix}\biggr)(\rho_0\pa_{\rho_0}-\rho_I\pa_{\rho_I}) \begin{pmatrix} u_0 \\ u_0^c \end{pmatrix}
\]
of the model operator at $\scri^+$ in this case. However, for such $u$ then, solutions of $L_u u'=-P(u)$ have slightly more complicated behavior. Indeed, the model operator at $\scri^+$ has a schematic structure similar to~\eqref{EqISysNullLin}, acting on $(u'_0,(u')_{1 1}^c,u'_{1 1})$, where we separate the components of $(u')_0^c$ into two sets, one of which consists of the single component
\begin{equation}
\label{EqIEinBad}
  u'_{1 1}=u(\pa_1,\pa_1),\ \ \pa_1=\pa_t-\pa_{r_*},
\end{equation}
while $(u')_{1 1}^c$ captures the remaining components, which are $u_{0 1}$, $r^{-1} u_{1 b}$, and the part $r^{-2}(u_{a b}-\half\slg_{a b}\slg^{c d}u_{c d})$ of the spherical part of $u$ which is trace-free with respect to $\slg$. Correspondingly, we need to allow $u'_{1 1}$ to have a logarithmic leading order term, just like the component called $u_1$ in the definition of the function space~\eqref{EqISysNullX}. In the next iteration step, $L_{u'}u''=-P(u')$, no further adjustments are necessary: the structure of the model operator at $\scri^+$ is unchanged, hence the asymptotic behavior of $u''$ does not get more complicated.\footnote{The coupling matrix, called $A_u$ in~\eqref{EqISysNullLin}, is in fact slightly more complicated here, see Lemma~\ref{LemmaEinNscri}, necessitating a more careful choice of the weights of the remainder terms of elements of the spaces $\cX^\infty$ and $\cY^\infty$ at $\scri^+$, whose precise definitions we give in Definitions~\ref{DefEinF} and \ref{DefEinFY}.} We remark that due to our precise control over each iterate, encoded by membership in $\cX^\infty$, the relevant structure of the model operators and the regularity of the coefficients of the linearized equations are the same at each iteration step; in particular, the fact that equation~\eqref{EqIEinMetric} is quasilinear rather than semilinear does not cause any complications beyond the need for constraint damping.

The decoupling of the model operator at $\scri^+$ into three pieces---one for the decaying components $u_0$, one for the components $u_{1 1}^c$ which have possibly nontrivial leading terms at $\scri^+$, and one for the logarithmically growing component $u_{1 1}$---is the key structure making our proof of global stability work. The fact that the equation for the components $u_0$ decouples is not coincidental, as they are governed by the gauge condition and thus are expected to decouple to leading order in view of the second Bianchi identity as around~\eqref{EqIEinUpsProp}.\footnote{In practice, it is easier to analyze $u_0$ directly using the structure of the linearized gauge-fixed Einstein equation, rather than via an (approximate) linearized second Bianchi identity, so this is how we shall proceed.} The decoupling of $u_{1 1}$ and $u_{1 1}^c$ on the other hand is the much more subtle manifestation of the weak null condition, as discussed in Remark~\ref{RmkISysWeakNull}.

The solution $h$ of~\eqref{EqIEinMetric} is a symmetric 2-tensor in $M^\circ$; as part of step~\ref{ItISysSmooth}, we still need to specify the smooth vector bundle on $M$ which $h$ will be a section of. Consider first the Minkowski metric $\ul g$ on the radial compactification ${}^0\ol{\R^4}$. In $\R^4$, $\ul g$ is a quadratic form, with constant coefficients, in the 1-forms $d t$ and $d x^i$, which extend smoothly to the boundary as sections of the \emph{scattering cotangent bundle} $\Tsc^*\,{}^0\ol{\R^4}$ first introduced in \cite{MelroseEuclideanSpectralTheory}; in a collar neighborhood $[0,1)_\rho\times\R^3_X$ of a point in $\pa{}^0\ol{\R^4}$, the latter is by definition spanned by the 1-forms $\frac{d\rho}{\rho^2}$, $\frac{d X^i}{\rho}$, which are smooth and linearly independent sections of $\Tsc^*\,{}^0\ol{\R^4}$ \emph{down to the boundary}. For instance, near $r=0$, we can take $\rho=t^{-1}$ and $X=x/t$, in which case $\frac{d\rho}{\rho^2}=-d t$ and $\frac{d X^i}{\rho}=d x^i-X^i\,d t$. Similarly then, $g_m$ will be a smooth section of the second symmetric tensor power $S^2\,\Tsc^*\,{}^m\ol{\R^4}$. Since our nonlinear analysis takes place on the blown-up space ${}^m\!M$, we seek $h$ as a section of the pullback bundle $\beta^*S^2\,\Tsc^*\,{}^m\ol{\R^4}$, where $\beta\colon{}^m\!M\to{}^m\ol{\R^4}$ is the blow-down map. For brevity, we shall suppress the bundle from the notation here.

\begin{thm}
\label{ThmIDetail}
  Suppose the assumptions of Theorem~\usref{ThmIBaby} are satisfied, i.e.\ for some small $m\in\R$ and $b_0>0$ fixed, the normalized data $\rho_0^{-1}\wt\gamma$ and $\rho_0^{-2}k\in\rho_0^{b_0}\Hb^\infty(\ol{\R^3})$ are small in $\rho_0^{b_0}\Hb^{N+1}$ and $\rho_0^{b_0}\Hb^N$, respectively. Then there exists a solution $g$ of the initial value problem \eqref{EqIEinIVP} satisfying the gauge condition $\Ups(g)=0$, see~\eqref{EqIEinUps}, which on ${}^m\!M$ is of the form $g=g_m+\rho h$, $h\in\rho_0^{b_0}\rho_I^{-\eps}\rho_+^{-\eps}\Hb^\infty({}^m\!M)$ for all $\eps>0$; here $\rho$ is a boundary defining function of ${}^m\ol{\R^4}$, and $\rho_0$, $\rho_I$, and $\rho_+$ are defining functions of $I^0$, $\scri^+$, and $I^+$, respectively.
  
  More precisely, near $\scri^+$ and using the notation introduced after~\eqref{EqIEinGood} and \eqref{EqIEinBad}, the components $h_{0 0}$, $r^{-1}h_{0 b}$, and $r^{-2}\slg^{a b}h_{a b}$ lie in
  \begin{equation}
  \label{EqIDetailSpace}
    \rho_0^{b_0}\rho_I^{b_I}\rho_+^{-\eps}\Hb^\infty({}^m\!M)
  \end{equation}
  for all $b_I<\min(1,b_0)$ and $\eps>0$, while $h_{0 1}$, $r^{-1}h_{1 b}$, and $r^{-2}(h_{a b}-\half\slg_{a b}\slg^{c d}h_{c d})$ have size $1$ leading terms at $\scri^+$ plus a remainder in the space~\eqref{EqIDetailSpace} for all such $b_I,\eps$, and $h_{1 1}$ has a logarithmic and a size $1$ leading term at $\scri^+$ plus a remainder in the space~\eqref{EqIDetailSpace} for all such $b_I,\eps$. At $I^+$ on the other hand, $h$ has a size $1$ leading term: there exists $h^+\in\rho_I^{-\eps}\Hb^\infty(I^+)$ such that $h-h^+\in\rho_I^{-\eps}\rho_+^{b_+}\Hb^\infty({}^m\!M)$ near $I^+$ for any $b_+<\min(b_0,1)$.
\end{thm}

\begin{rmk}
\label{RmkIDetailDecay}
  Near ${}^m\scri^+$, and indeed for $r\gg 1$ and $t-r_*\leq\half r$, the membership $u\in\rho_0^{b_0}\rho_I^{b_I}\rho_+^{b_+}\Hb^\infty({}^m\!M)$ (e.g.\ $u$ being a metric coefficient of $h$, and $b_+=-\eps$ as in~\eqref{EqIDetailSpace}) is equivalent, up to arbitrarily small losses in decay (due to switching from $L^2$ to $L^\infty$ via Sobolev embedding), to
  \[
    |V_1\cdots V_N u| \lesssim r^{-b_I}(1+(r_*-t)_+)^{-b_0+b_I} (1+(t-r_*)_+)^{b_++b_I}
  \]
  for all $N\in\N_0$, where each $V_j$ is a rotation vector field in $\R^3$ or one of the vector fields $t\pa_t+r_*\pa_{r_*}$, $t\pa_{r_*}+r_*\pa_t$, $\pa_t$, $\pa_x$.
\end{rmk}

See Theorem~\ref{ThmPf} for the full statement, which in particular allows for the decay rate $b_0$ of the initial data to be larger and gives the corresponding weight at $I^0$ for the solution. The final conclusion follows from resonance considerations, as indicated before~\eqref{EqISysLinX}, and will follow from the arguments used to establish polyhomogeneity in \S\ref{SPhg}. We discuss continuous dependence on initial data in Remark~\ref{RmkPfCts}. A typical example of a polyhomogeneous expansion of $h$ arises for initial data which are smooth functions of $1/r$ in $r\gg 1$: in this case, the leading terms of the expansion of $h$ are schematically (and not showing the coefficients, which are functions on $\scri^+$)
\begin{equation}
\label{EqIDetailLogPowers}
  h_0\sim\rho_I\log^{\leq 3}\rho_I,\ \ 
  h_{1 1}^c\sim 1+\rho_I\log^{\leq 4}\rho_I,\ \ 
  h_{1 1}\sim\log^{\leq 1}\rho_I+\rho_I\log^{\leq 6}\rho_I
\end{equation}
at $\scri^+$, and $h\sim 1+\rho_+\log^{\leq 8}\rho_+$ at $I^+$; see Example~\ref{ExPhgSmooth}. Here, $\log^{\leq k}\rho_I$ stands for functions which are sums of products $|\log\rho_I|^\ell a_\ell$, $0\leq\ell\leq k$, with $a_\ell$ functions on $\scri^+$.

While a solution $g$ of $\Ric(g)=0$ in the gauge $\Ups(g)=0$ of course solves equation~\eqref{EqIEinOp} for any choice of $\tdel^*$, we argued why a careful choice is crucial to make our global iteration scheme work. Another perspective is the following: implementing constraint damping allows us to solve the gauge-fixed equation~\eqref{EqIEinOp} for \emph{any} sufficiently small Cauchy data; whether or not these data come from an initial data set satisfying the constraint equations is \emph{irrelevant}. Only at the end, once one has a solution of~\eqref{EqIEinOp}, do we use the constraint equations and the second Bianchi identity to deduce $\Ups(g)=0$.

In contrast, consider the choice $\tdel^*=\delta_g^*$ in~\eqref{EqIEinOp}; the linearization of $P_0(g)$ around the Minkowski metric $g=\ul g$ is then equal to $\half\Box_{\ul g}$, which is $\half$ times the scalar wave operator acting component-wise on the components of a symmetric 2-tensor in the frame $d x^\mu\otimes d x^\nu+d x^\nu\otimes d x^\mu$, where $x^0=t$, $x^i$, $i=1,2,3$, are the standard coordinates on $\R^{1+3}_{t,x}$. Solving $\Box_{\ul g}(\rho h)=0$ with given initial data, which would be the first step in our iteration scheme for initial data with mass $m=0$, does not imply improved behavior for any components of $h$, in particular $h_{0 0}$; this means that constraint damping fails for this choice of $\tdel^*$. Thus, the next iterate $\ul g+\rho h$ in general has a different long range behavior, and correspondingly ${}^0\!M$ is no longer the correct place for the analysis of the linearized operator in the next iteration step---even though the final solution of Einstein's equation \emph{is} well-behaved on ${}^0\!M$ for such initial data. With constraint damping on the other hand, the linearized equation always produces behavior consistent with the qualitative properties of the nonlinear solution.

\subsection{Bondi mass loss formula}
\label{SsIBondi}

The description of the asymptotic behavior of the metric $g=g_m+\rho h$ in Theorem~\ref{ThmIDetail} on the compact manifold ${}^m\!M$ and in the chosen gauge allows for a precise description of outgoing light cones close to the radiation face $\scri^+$. Work on geometric quantities at $\scri^+$ started with the seminal works of Bondi--van der Burg--Metzner \cite{BondiGravity,BondivdBMetznerGravity}, Sachs \cite{SachsGravity,SachsAsymptoticSymm}, Newman--Penrose \cite{NewmanPenroseSpin}, and Penrose \cite{PenroseAsymptotics}; the precise decay properties of the curvature tensor---in particular `peeling estimates' or their failure---were discussed in \cite{KlainermanNicoloPeeling,ChristodoulouNoPeeling}, see also \cite{DafermosChristodoulouExpose}. (For studies on conditions on initial data which ensure or prevent smoothness of the metric at $\scri^+$ in suitable coordinates, see~\cite{FriedrichConformalFE,FriedrichHyperbolicityEinstein,ChruscielMacCallumSingletonPhg,AnderssonChruscielHypPhg,ValienteKroonNonsmooth} and~\cite[\S8.2]{KlainermanNicoloEvolution}.)

As remarked before, the logarithmic bending of light cones is controlled by the ADM mass $m$, which measures mass on \emph{spacelike}, asymptotically flat, Cauchy surfaces. A more subtle notion is the \emph{Bondi mass} \cite{BondivdBMetznerGravity}, see also \cite{ChristodoulouNonlinear}, which is a function of retarded time $x^1=t-r_*$ that can be defined as follows: let $S(u)\subset\scri^+$ denote the $u$-level set of $x^1$ at null infinity; $S(u)$ is a 2-sphere, and naturally comes equipped with the round metric. If $C_u$ denotes the outgoing light cone which limits to $S(u)$ at null infinity and which asymptotically approaches the radial Schwarzschild light cone $\{x^1=u\}$, one can define a natural \emph{area radius} $\mathring r$ on $C_u$, equal to the coordinate $r$ plus lower order correction terms; the Bondi mass $M_{\rm B}(u)$ is then the limit of the \emph{Hawking mass} of the 2-sphere $\{x^1=u,\ \mathring r=R\}$ as $R\to\infty$. See~\S\ref{SL} for the precise definitions. A change $\frac{d}{d u}M_{\rm B}(u)$ of the Bondi mass reflects a flux of gravitational energy to $\scri^+$ along $C_u$. We shall calculate these quantities explicitly and show:

\begin{thm}
\label{ThmIBondi}
  Suppose we are given a metric constructed in Theorem~\usref{ThmIDetail}, and write $h_{1 1}=h_{1 1}^{(1)}\log(r)+\cO(1)$ near $\scri^+$, where $h_{1 1}^{(1)}\in\rho_0^{b_0}\rho_+^{-\eps}\Hb^\infty(\scri^+)$ is the logarithmic leading term. Then the Bondi mass is equal to
  \begin{equation}
  \label{EqIBondi}
    M_{\rm B}(u) = m + \frac{1}{4\pi}\int_{S(u)} \half h_{1 1}^{(1)}\,d\slg.
  \end{equation}
  The Bondi mass loss formula takes the form $\frac{d}{d u}M_{\rm B}(u)=-E(u)$, where
  \[
    E(u) = \frac{1}{32\pi}\int_{S(u)} |N|_{\slg}^2\,d\slg,\ \ N_{a b}=r^{-2}\pa_1 h_{a b}|_{\scri^+},
  \]
  is the outgoing energy flux. Finally, $M_{\rm B}(-\infty)=m$ and $M_{\rm B}(+\infty)=0$.
\end{thm}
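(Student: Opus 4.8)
\emph{Proof strategy.} The plan is to compute the Hawking mass of the spheres $S_{u,R}=\{x^1=u,\ \mathring r=R\}$ directly from the expansion of $g=g_m+\rho h$ at $\scri^+$ furnished by Theorem~\ref{ThmIDetail}, and then let $R\to\infty$. First I would fix Bondi-type coordinates near $\scri^+$: use $x^1=t-r_*$ as retarded time, build the outgoing light cones $C_u$ as level sets of an optical function asymptotic to $\{x^1=u\}$ so that $C_u$ limits to $S(u)\subset\scri^+$ and asymptotically approaches the radial Schwarzschild cone of mass $m$, and define the area radius $\mathring r$ on $C_u$ so that the metric induced on $\{x^1=u,\ \mathring r=R\}$ equals $R^2\slg+o(R^2)$; this determines $\mathring r-r$ in terms of the trace-free leading term of $r^{-2}h_{ab}$ and of the logarithm in $r_*=r+2 m\log(r-2 m)$. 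In these coordinates all metric components entering the expansions of $|S_{u,R}|$ and of the null expansions $\theta_\pm$ of $S_{u,R}$ are read off from Theorem~\ref{ThmIDetail}: the gauge block $\pi_0 h$ decays like $\rho_I^{b_I}$ at $\scri^+$, the components $h_{01}$, $r^{-1}h_{1b}$ and the trace-free part of $r^{-2}h_{ab}$ have size $1$ leading terms, and $h_{11}=h_{11}^{(1)}\log r+\cO(1)$.

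Next I would insert these expansions into
\[
  m_H(S_{u,R})=\sqrt{\tfrac{|S_{u,R}|}{16\pi}}\Bigl(1+\tfrac{1}{16\pi}\int_{S_{u,R}}\theta_+\theta_-\,dA\Bigr)
\]
and expand in inverse powers of $R$ (with logarithmic coefficients). One finds $\theta_+\theta_-=-4 R^{-2}+\cO(R^{-3})$, where the potentially divergent $h_{11}^{(1)}\log R$ contribution to the ingoing expansion $\theta_-$ is cancelled by the $\log R$ arising from $r_*$ and from $\mathring r-r$; thus $\sqrt{|S_{u,R}|/16\pi}=\tfrac R2(1+o(1))$ while $1+\tfrac1{16\pi}\int\theta_+\theta_-\,dA=c(u)R^{-1}+o(R^{-1})$, so $m_H(S_{u,R})\to\tfrac12 c(u)=:M_{\rm B}(u)$. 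The Schwarzschild background $g_m$ contributes exactly $m$ (large Schwarzschild spheres have Hawking mass $m$); in the contribution of $h$, the shear (the leading trace-free part of $r^{-2}h_{ab}$), the term $h_{01}$ and their angular derivatives enter $\theta_\pm$ but drop out of $\tfrac1{16\pi}\int\theta_+\theta_-\,dA$ once integrated over $S(u)$, using the leading constraints at $\scri^+$ obtained from $P(h)=0$ together with $\pi_0 h|_{\scri^+}=0$; what remains is $\tfrac1{8\pi}\int_{S(u)}\tfrac12 h_{11}^{(1)}\,d\slg$, which is \eqref{EqIBondi}.

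For the flux I would apply $\pa_1=2\pa_u$ and restrict to $\scri^+$. The crucial input is the component of $P(h)=0$ governing $\pa_u h_{11}^{(1)}$ at $\scri^+$: expanding $P(h)$ to the relevant order there and using $\pi_0 h|_{\scri^+}=0$ produces a transport equation on $\scri^+$ of the schematic form $\pa_u\bigl(\tfrac12 h_{11}^{(1)}\bigr)=\operatorname{div}_{\slg}(\cdots)-\tfrac18|N|_{\slg}^2$ with $N_{ab}=r^{-2}\pa_1 h_{ab}|_{\scri^+}$; integrating over the round sphere $S(u)$ annihilates the divergence and yields $\tfrac{d}{d u}M_{\rm B}(u)=-\tfrac1{32\pi}\int_{S(u)}|N|_{\slg}^2\,d\slg=-E(u)$. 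The endpoints follow from the two ends of $\scri^+$: as $u\to-\infty$ we approach $\scri^+\cap i^0$, where $h_{11}^{(1)}\in\rho_0^{b_0}\rho_+^{-\eps}\Hb^\infty(\scri^+)$ with $b_0>0$ forces $h_{11}^{(1)}\to0$ uniformly on $S(u)$, hence $M_{\rm B}(-\infty)=m$; as $u\to+\infty$ we approach $\scri^+\cap i^+$, where the resonance expansion of $h$ at $i^+$ from Theorem~\ref{ThmIDetail} and \S\ref{SPhg} determines the limiting value of $h_{11}^{(1)}$, and a short computation gives $\tfrac1{8\pi}\int_{S(u)}h_{11}^{(1)}\,d\slg\to-m$, so $M_{\rm B}(+\infty)=0$; since $E\ge0$, this in particular reproves $m\ge0$ for the data considered here.

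I expect the main obstacle to lie in two coupled points. First, organizing the several sources of $\log r$ --- from $h_{11}^{(1)}$, from $r_*$, and from the area-radius normalization $\mathring r-r$ --- so that the Hawking mass limit exists and collapses to the clean form \eqref{EqIBondi}; this hinges on a careful, mutually consistent choice of the optical function defining $C_u$ and of $\mathring r$. Second, extracting from $P(h)=0$ the precise constraint and evolution equations at $\scri^+$ --- the analogues of the Bondi mass-aspect equations --- in the present wave map/DeTurck gauge rather than in Bondi gauge; these are what make both the cancellation of the shear and $h_{01}$ contributions to the mass and the emergence of $|N|_{\slg}^2$ in the flux work, and deriving them requires expanding the gauged Einstein operator to subleading order at $\scri^+$ and exploiting $\pi_0 h|_{\scri^+}=0$.
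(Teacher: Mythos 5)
Your treatment of the first three assertions runs essentially parallel to the paper's: one constructs the asymptotically radial null congruence and the optical function $u$, defines the area radius $\mathring r$, reads off the metric in Bondi--Sachs form, identifies the mass aspect (the divergence term $\slnabla_a\slnabla_b h^{\bar a\bar b}$ integrates to zero, which is your cancellation of the shear/$h_{0 1}$ contributions), and obtains the flux law from the vanishing of the leading term of $P(h)_{1 1}$ at $\scri^+$; the Hawking-mass limit you start from is exactly the paper's Lemma~\ref{LemmaLBSHawking}. The technical obstacle you flag (consistently tracking the $\log r$ contributions of $h_{1 1}^{(1)}$, $r_*$, and $\mathring r-r$) is real: in the paper it is resolved by the subleading computation of $\pa_1\mathring r$ via a Jacobi-field argument along the radial geodesics (Lemma~\ref{LemmaLBSAreaRder}), which uses the gauge conditions $\Ups(g)=0$ to kill several curvature terms; you have not supplied this, but you have correctly located where the work lies.

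The genuine gap is in $M_{\rm B}(+\infty)=0$. You assert that the resonance expansion of $h$ at $i^+$ "determines the limiting value of $h_{1 1}^{(1)}$" and that "a short computation" gives $\frac1{8\pi}\int h_{1 1}^{(1)}\,d\slg\to-m$. This does not follow from the local asymptotics at $i^+$: the leading term $h^+=h|_{i^+}$ is a nontrivial tensor of size $1$, and the boundary value of the logarithmic coefficient at $\pa i^+$ is a \emph{global} quantity on $i^+$, not something read off termwise from the expansion. What is actually needed (and what the paper does in the proof of Theorem~\ref{ThmLBSLoss}) is to observe that $h^+$ solves the zero-frequency normal operator equation $\wh{\ul L}(0)h^+=-P(0)|_{i^+}$, with forcing proportional to $m$ coming from the failure of the glued background $g_m$ to be Ricci flat, and with "incoming data" at $\pa i^+$ given by $h^+_\ell$, whose spherical average is the total radiated energy by the flux law. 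One then computes the relevant piece of the scattering matrix of $\wh{\ul L}(0)$ explicitly (the solutions $u_0,u_1,u_2$ in~\eqref{EqLBSLossODE}, plus a pure-gauge ansatz to absorb the Schwarzschild forcing) and compares the $(0,0)$ component of the output with $h_{0 0}|_{\pa i^+}$, which vanishes by the constraint-damped gauge; this yields $\frac1{4\pi}\int_{\pa i^+}h^+_\ell\,d\slg=2m$, i.e.\ total flux $=m$. Without this global argument on $i^+$ (or a substitute for it), your proof establishes only $M_{\rm B}(+\infty)=m-\int E(u)\,du$ with no evaluation of the right-hand side, so the final claim, and with it the positivity consequence you mention, is unproved.
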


We prove this for all initial data which are small and asymptotically flat in the sense of~\eqref{EqIBabySmall}. The Bondi mass was shown to be well-defined (and to satisfy a mass loss formula) for the weakly decaying initial data used in~\cite{BieriZipserStability} by Bieri--Chru\'sciel \cite{BieriChruscielADMBondi} in the geometric framework of \cite{ChristodoulouKlainermanStability}, but the question of how to define Bondi--Sachs coordinates remained open. Our result is the first to accomplish this for a large class of initial data, and to identify the Bondi mass in a (generalized) wave coordinate gauge setting. (The $\cC^{1,\min(b_0,1)-0}$ regularity of a conformally rescaled non-degenerate metric down to $\scri^+$ is a by-product of our analysis.) The key to establishing the first part of Theorem~\ref{ThmIBondi} is the construction and precise control of the aforementioned geometric quantities leading to the identification~\eqref{EqIBondi}; the mass loss formula itself is then equivalent to the vanishing of the leading term of the $(1,1)$ component of the gauge-fixed Einstein equation at $\scri^+$. The vanishing of $M_{\rm B}(u)$ as $u\to-\infty$ follows immediately from the decay properties of $h$ there. On the other hand, the proof that the total radiated energy $\int E(u)\,d u$ equals the initial mass $m$ proceeds by studying the leading order term $h|_{I^+}$ as the solution of a linear equation on $I^+$ (obtained by restricting the nonlinear gauge-fixed Einstein equation to $I^+$), with a forcing term that comes from the failure of our glued background metric $g_m$ to satisfy the Einstein equation and which is thus proportional to $m$. This equation now is closely related to the spectral family of exact hyperbolic space at the bottom of the essential spectrum;\footnote{This linear operator acts on the symmetric scattering 2-tensor bundle restricted to $I^+$; see~\cite{HadfieldSymm} for the relation with the hyperbolic Laplacian acting on its intrinsic 2-tensor bundle. The spectral parameter here is \emph{fixed}, and the definition of the scattering matrix (incoming data having logarithmic rather than algebraic growth) is specific to working at the bottom of the spectrum; this is in contrast to the description of the scattering matrix depending on the spectral parameter as e.g.\ in~\cite{GrahamZworskiScattering}.} a calculation of the scattering matrix acting on the incoming data given by $h_{1 1}^{(1)}$ and comparing the $(0,0)$ component of the outgoing data with $h_{0 0}$---which vanishes by construction!---then establishes the desired relationship.

Theorem~\ref{ThmIBondi} shows that the logarithmic term in the asymptotic expansion of $h_{1 1}$ carries physical meaning. Its vanishing forces $m=0$, which by the positive mass theorem means that the spacetime is exact Minkowski space. (The observation that $\int E(u)\,d u\geq 0$ immediately implies the nonnegativity of the ADM mass of the \emph{small} initial data under consideration here, which in this case was first proved by Choquet-Bruhat--Marsden~\cite{ChoquetBruhatMarsdenMass}.)

Further geometric properties of the vacuum metrics constructed in this paper, such as the identification of $(\scri^+)^\circ\subset M$, resp.\ $(I^+)^\circ$, as the set of endpoints of future-directed null, resp.\ timelike, geodesics,  will be discussed elsewhere.

\subsection{Outline of the paper}
\label{SsIOut}

In \S\S\ref{SCpt} and \ref{SEin}, we set the stage for the analysis (steps~\ref{ItISysSmooth} and \ref{ItISysAlg}): we give the precise definition of the compactification $M={}^m\!M$ on which we will find the solution of~\eqref{EqIEinIVP} in \S\ref{SsCptA}; the relevant function spaces are defined in \S\ref{SsCptF}, and the relationships between different compactifications are discussed in \S\ref{SsCptM}. In \S\ref{SsCptScri}, we prepare the invariant formulation of estimates such as~\eqref{EqISysLinEqIEst}; the results there are not needed until \S\ref{SBg}. In \S\ref{SsEinF}, we define the spaces $\cX^\infty$ and $\cY^\infty$ on $M$ in which we shall find the solution $h$ in Theorem~\ref{ThmIDetail}, and calculate the mapping properties and model operators of the (linearized) gauge-fixed Einstein operator in \S\S\ref{SsEinP} and \ref{SsEinN}, respectively. (The necessary algebra is moved to Appendix~\ref{SCo}.) The key structures (constraint damping, null structure) critical for our proof will be discussed there as well. We accomplish part~\ref{ItISysMapReg} of step~\ref{ItISysMap}---the proof of a high regularity background estimate with imprecise weights---by exploiting these structures in \S\ref{SBg}. The recovery of the precise asymptotic behavior in \S\ref{SIt} finishes step~\ref{ItISysMapDec}. Putting this into a Nash--Moser framework allows us to finish the proof of Theorem~\ref{ThmIDetail} in \S\ref{SPf}; the proof of polyhomogeneity, thus of the last part of Theorem~\ref{ThmIBaby}, is proved in \S\ref{SPhg}.  Finally, a finer description of the resulting asymptotically flat spacetime near null infinity, leading to the proof of Theorem~\ref{ThmIBondi}, is given in~\S\ref{SL}.

For the reader only interested in the key parts of the proof, we recommend reading \S\S\ref{SsCptA} and \ref{SsCptF} for the setup, \S\ref{SsEinF} for the form of metric perturbations we need to consider, and \S\ref{SsEinP} for an explanation of the main features of the linearized problem; taking the background estimate, Theorem~\ref{ThmBg} (which uses material from~\S\ref{SsCptScri}, and whose proof roughly follows the steps outlined in \S\ref{SssISysLin}), as a black box, the argument formally concludes in \S\ref{SIt}. (Getting the actual nonlinear solution in \S\ref{SPf} is then routine.)

\subsection*{Acknowledgments}

We are very grateful to Rafe Mazzeo for his encouragement to work on this project, and to Piotr Chru\'sciel for many comments and corrections. We would also like to thank Sergiu Klainerman, Hans Lindblad, Jonathan Luk, Richard Melrose, Michael Singer, Gunther Uhlmann, and Maciej Zworski for useful discussions, comments, and their interest in this project. A.V.\ gratefully acknowledges support by the NSF under grant numbers DMS-1361432 and DMS-1664683 as well as from a Simons Fellowship. Part of this research was conducted during the period P.H.\ served as a Clay Research Fellow; in its early stages, this project was supported by a Miller Research Fellowship. Last but not least, we would like to thank three thorough referees for many helpful comments and suggestions which significantly improved the readability and accessibility of the paper.

\section{Compactification}
\label{SCpt}

As explained in~\S\ref{SsIEin}, we shall find the metric $g$ in Theorem~\ref{ThmIDetail} as a perturbation of a background metric $g_m$ which interpolates between mass $m$ Schwarzschild in a neighborhood $\{r\gg 1,\ |t|<2 r\}$ of $I^0\cup\scri$ and the Minkowski metric elsewhere. In \S\ref{SsCptA}, we define such a metric $g_m$ as a smooth scattering metric on a suitable partial compactification ${}^m\ol{\R^4}$ of $\R^4$ to a manifold with boundary which is closely related to the radial compactifications of asymptotically Minkowski spaces used in \cite{BaskinVasyWunschRadMink,BaskinVasyWunschRadMink2}. The ideal boundaries $I^0$, $\scri^+$, and $I^+$ are then the boundary hypersurfaces of a manifold with corners obtained by blowing up ${}^m\ol{\R^4}$ at the `light cone at infinity.' The spaces of conormal and polyhomogeneous functions on this manifold are defined in \S\ref{SsCptF}.

Let us recall the notion of the \emph{scattering cotangent bundle} $\Tsc^*X$ over an $n$-dimensional manifold $X$ with boundary $\pa X$. Over the interior $X^\circ$, $\Tsc^*_{X^\circ}X:=T^*_{X^\circ}X$ is the usual cotangent bundle. Near the boundary, let
\begin{equation}
\label{EqCptCoord}
  \rho\geq 0,\quad y=(y^1,\ldots,y^{n-1})\in\R^{n-1}
\end{equation}
denote local coordinates in which $\pa X$ is given by $\rho=0$; then the 1-forms $\frac{d\rho}{\rho^2}$, $\frac{d y^j}{\rho}$ ($j=1,\ldots,n-1$) are a smooth local frame of $\Tsc^*X$, i.e.\ smooth scattering 1-forms are precisely the linear combinations $a(\rho,y)\frac{d\rho}{\rho^2}+a_j(\rho,y)\frac{d y^j}{\rho}$ with $a,a_j$ smooth. (Equivalently, we can use $d(1/\rho)$ and $d(y^j/\rho)$ as a smooth local frame.) The point is that, viewed from the perspective of $X^\circ$, such 1-forms have a very specific behavior as one approaches $\pa X$. Tensor powers and their symmetric versions $S^k\,\Tsc^*X$, $k\in\N$, are defined in the usual manner; the dual bundle is denoted $\Tsc X$ and called \emph{scattering tangent bundle}. In the case that $\pa X=Y\times Z$ and $X=[0,1)_\rho\times\pa X$ are products, so $T^*Y\subset T^*X$ is a well-defined subbundle, then the rescaling $\rho^{-1}T^*Y\subset\Tsc^*X$, spanned by covectors of the form $\rho^{-1}\eta$, $\eta\in T^*Y$, is a smooth subbundle.

To give an example, calculations similar to the ones prior to Theorem~\ref{ThmIDetail} show that the differentials of the standard coordinates on $\R^n$ extend to the radial compactification $\ol{\R^n}$ as smooth scattering 1-forms; they are in fact a basis of $\Tsc^*\ol{\R^n}$, and any metric on $\R^n$ with constant coefficients, such as the Minkowski or Euclidean metric, is a \emph{scattering metric}, i.e.\ an element of $\CI(\ol{\R^n};S^2\,\Tsc^*\ol{\R^n})$.

The \emph{b-cotangent bundle} $\Tb^*X$ is locally spanned by the 1-forms $\frac{d\rho}{\rho}$, $d y^j$ ($j=1,\ldots,n-1$); its dual is the \emph{b-tangent bundle} $\Tb X$, spanned locally by $\rho\pa_\rho$ and $\pa_{y^j}$. The space $\Vb(X)$ of b-vector fields on $X$, consisting of those vector fields $V$ on $X$ which are tangent to $\pa X$, is then canonically identified with $\CI(X;\Tb X)$. A \emph{b-metric} is a nondegenerate section of $S^2\,\Tb X$. The space $\Diffb^k(X)$ of b-differential operators of degree $k$ consists of finite sums of $k$-fold products of b-vector fields. Fixing a collar neighborhood $[0,\eps)_\rho\times\pa X$ and choosing local coordinates $y^j$ on $\pa X$ as before, the \emph{normal operator} of an operator $L\in\Diffb^k(X)$ given in the coordinates~\eqref{EqCptCoord} by $L=\sum_{j+|\alpha|\leq k} a_{j\alpha}(\rho,y)(\rho D_\rho)^j D_y^\alpha$ is defined by freezing coefficients at $\rho=0$,
\begin{equation}
\label{EqCptNormOp}
  N(L) := \sum_{j+|\alpha|\leq k} a_{j\alpha}(0,y)(\rho D_\rho)^j D_y^\alpha \in \Diffb^k([0,\infty)_\rho\times\pa X).
\end{equation}
This depends on the choice of collar neighborhood only through the choice of normal vector field $\pa_\rho|_{\pa X}$; see \cite[\S4.15]{MelroseAPS} for an invariant description. The \emph{Mellin-transformed normal operator family} $\wh{L}(\sigma)$, $\sigma\in\C$, is the conjugation of $N(L)$ by the Mellin transform in $\rho$; thus, in view of $\rho^{-i\sigma}\rho D_\rho(\rho^{i\sigma})=\sigma\rho^{i\sigma}$, one obtains $\wh{L}(\sigma)$ by formally replacing $\rho D_\rho$ by $\sigma$:
\[
  \wh{L}(\sigma) := \sum_{j+|\alpha|\leq k} a_{j\alpha}(0,y)\sigma^j D_y^\alpha.
\]
This is a holomorphic family of elements of $\Diff^k(\pa X)$. Analogous constructions can be performed for b-operators acting on vector bundles.

\subsection{Analytic structure}
\label{SsCptA}

Fix the mass $m\in\R$; for now, $m$ does not have to be small. The Schwarzschild metric, written in polar coordinates on $\R\times\R^3$, takes the form
\begin{align}
  g_m^S &= (1-\tfrac{2 m}{r})dt^2 - (1-\tfrac{2 m}{r})^{-1}dr^2 - r^2\slg \nonumber\\
\label{EqCptASchw}
        &= (1-\tfrac{2 m}{r})ds^2 + 2 ds\,dr - r^2\slg,
\end{align}
where $\slg$ denotes the round metric on $\Sph^2$, and where we let
\begin{equation}
\label{EqCptARstar}
  s := t-r_*,\ \ r_* := r + 2 m\log(r-2 m),
\end{equation}
so $dr_*=\tfrac{r}{r-2 m}dr$. Note that level sets of $s$ are radial outgoing null cones. Define
\begin{equation}
\label{EqCptACp1}
  \rho := r^{-1}, \ \ v := r^{-1}\bigl(t - r - \chi(t/r)2 m\log(r-2 m)\bigr),
\end{equation}
where $\chi(x)\equiv 1$, $x<2$, and $\chi(x)\equiv 0$, $x>3$. Let then
\begin{equation}
\label{EqCptACp1Space}
  C_1 := [0,\eps_0)_\rho \times (-\tfrac74,5)_v \times \Sph^2_\omega,
\end{equation}
where we shrink $\eps_0>0$ so that $t$ is well-defined and depends smoothly on $\rho>0$ and $v$, via the implicit function theorem applied to~\eqref{EqCptACp1}. This will provide the compactification near the future light cone (and part of spatial infinity). Near future infinity, we use standard coordinates $(t,x)\in\R\times\R^3$ on $\R^4$; define
\begin{equation}
\label{EqCptACp2}
  \rho'_+=t^{-1}, \ \ X=x/|t|,
\end{equation}
and put
\begin{equation}
\label{EqCptACp2Space}
  C_2 := [0,\eps_0)_{\rho'_+} \times \{ X\in\R^3 \colon |X|<\tfrac14 \}.
\end{equation}
For $\eps_0>0$ small enough, we can consider the interiors $C_1^\circ$, $C_2^\circ$ as smooth submanifolds of $\R^4$ using the identifications \eqref{EqCptACp1} and \eqref{EqCptACp2}. (Note in particular that the smooth structures agree with the induced smooth structure of $\R^4$.) Let us consider the transition map between $C_1^\circ$ and $C_2^\circ$ in more detail: in $C_1^\circ\cap C_2^\circ$ and for $t^{-1}$ small enough, we have $\chi(t-r)\equiv 0$ and $\tfrac{r}{t}>\tfrac17$, so the map
\begin{equation}
\label{EqCptACp12}
  (\rho'_+,X) \mapsto (\rho=\rho'_+ / |X|,\ v=|X|^{-1}-1,\ \omega=X/|X|)
\end{equation}
extends smoothly (with smooth inverse) to $\rho'_+=0$. We then let
\[
  \ol{\R^4} := \bigl( \R^4 \sqcup C_1 \sqcup C_2 \bigr) / \sim
\]
where $\sim$ identifies $C_1$ and $C_2$ with subsets of $\R^4$ as above, and the boundary points of $C_1$ and $C_2$ are identified using the map \eqref{EqCptACp12}. This is thus a smooth manifold with boundary,\footnote{Different choices of $\chi$ produce the same topological space, indeed $\cC^\alpha$ manifold ($\alpha<1$); on the other hand, the smooth structure at the boundary does depend on $\chi$, but only in the gluing region $C_1\cap C_2$. All resulting smooth structures work equally well.} though both $\ol{\R^4}$ and $\pa{\ol{\R^4}}=(\pa C_1\sqcup\pa C_2)/\sim$ are noncompact. In other words, $\ol{\R^4}$ is only a compactification of the region $v>-\tfrac74$. See Figure~\ref{FigCptAR4}.

\begin{figure}[!ht]
\includegraphics{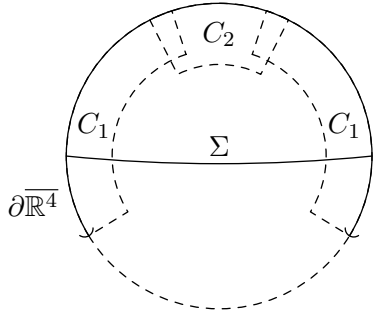}
\caption{The partial compactification $\ol{\R^4}$ of $\R^4$, constructed from $\R^4$, $C_1$, and $C_2$. Also shown is the hypersurface $\Sigma$ from~\eqref{EqCptASigma}.}
\label{FigCptAR4}
\end{figure}

The scattering cotangent bundle of $\ol{\R^4}$ near the light cone at infinity has a smooth partial trivialization $\Tsc^*_{C_1}\ol{\R^4}=\la dr\ra \oplus \la d(v/\rho)\ra \oplus \rho^{-1}T^*\Sph^2$, thus if $\psi$ is a smooth function with $\psi(v)\equiv 1$ for $v<1$ and $\psi(v)\equiv 0$ for $v>2$, then
\begin{equation}
\label{EqCptAMetric1}
  g_{m,1} := (1-\tfrac{2 m \psi(v)}{r})d(v/\rho)^2 + 2 d(v/\rho)\,dr - r^2\slg \in \CI(C_1;S^2\,\Tsc^*_{C_1}\ol{\R^4}).
\end{equation}
In $v>3$ and for $\eps_0>0$ small enough, we simply have $g_{m,1}=dt^2-dr^2-r^2\slg$, which is thus equal to
\[
  g_{m,2} := d(1/\rho'_+)^2 - d(X/\rho'_+)^2 \in \CI(C_2;S^2\,\Tsc^*_{C_2}\ol{\R^4})
\]
on the overlap $C_1\cap C_2$. Thus, we can glue $g_{m,1}$ and $g_{m,2}$ together to define a Lorentzian scattering metric $\wt g_m$ on $C_1\cup C_2$. We extend $\wt g_m$ to a global metric:

\begin{definition}
\label{DefCptAMetric}
  Fix $\phi\in\CI(\ol{\R^4})$ such that $\supp\phi\subset C_1\cup C_2$, and so that $\phi\equiv 1$ near $\pa\ol{\R^4}$. With $\tilde g_m$ as above, we then define
  \begin{equation}
  \label{EqCptAMetric}
    g_m := \phi\wt g_m + (1-\phi)(dt^2-dx^2) \in \CI(\ol{\R^4};S^2\,\Tsc^*\ol{\R^4}),
  \end{equation}
  thus gluing $\wt g_m$ to the Minkowski metric away from $C_1\cup C_2$.
\end{definition}

By construction, $g_m$ is equal to the Minkowski metric in a compact region of $\R^4$ as well as in a closed subcone of the interior of the future light cone, which we glue together with the Schwarzschild metric near spacelike and null infinity.

Next, denote the light cone at future infinity by
\begin{equation}
\label{EqCptASplus}
  S^+ := \{ \rho=0,\ v=0 \} \subset \pa\ol{\R^4}
\end{equation}
and let
\[
  M' := [\ol{\R^4};S^+]
\]
denote the blow-up of $\ol{\R^4}$ at $S^+$, see Figure~\ref{FigCptABlowup}. That is, as a set,
\[
  M'=\bigl(\ol{\R^4}\setminus S^+\bigr) \sqcup \bigl([-\pi/2,\pi/2]_\sigma \times\Sph^2_\omega\bigr),
\]
which can be endowed with the structure of a smooth (noncompact) manifold with corners by writing it as
\[
  M' = \left(\bigl(\ol{\R^4}\setminus S^+\bigr) \sqcup \bigl([0,1)_{\rho_I}\times[-\pi/2,\pi/2]_\sigma \times \Sph^2_\omega\bigr)\right) / \sim,
\]
where we identify a point in $\ol{\R^4}$ with coordinates $(\rho,v,\omega)$, $(\rho,v)\neq(0,0)$, with the point $(\rho_I=\sqrt{\rho^2+v^2},\,\sigma=\arctan(v/\rho),\,\omega)$. The map
\begin{equation}
\label{EqCptABlowdown}
  \beta \colon M' \to \ol{\R^4},
\end{equation}
equal to the identity map away from $S^+$, and given by $\beta(\rho_I,\sigma,\omega)=(\rho=\rho_I\cos\sigma,\,v=\rho_I\sin\sigma,\,\omega)$, is called the \emph{blow-down map}. Note that $\beta$ is a local diffeomorphism away from $S^+$, but is not injective at the \emph{front face}
\[
  \ff([\ol{\R^4};S^+]) := \rho_I^{-1}(0)
\]
of the blow-up. The point of doing this blow-up is that curves tending to $S^+$ but at different angles $\sigma$ have distinct limiting points on the front face. Concretely, $s=\tan(\sigma)=v/\rho=t-r_*$ is an affine parameter on the fibers $\beta^{-1}(p)$, $p\in S^+$, of the blow-down map, so $\beta^{-1}(S^+)$ is the set of all endpoints of future-directed outgoing radial null-geodesics of mass $m$ Schwarzschild, and radial null-geodesics with different $t-r_*$ are separated all the way up to $\beta^{-1}(S^+)$. It is thus natural to define:
\begin{definition}
\label{DefCptAScri}
  Null infinity $\scri^+$ is defined as the front face of the blowup of $S^+\subset\ol{\R^4}$,
  \[
    \scri^+ := \ff([\ol{\R^4};S^+]).
  \]
\end{definition}

\begin{figure}[!ht]
\includegraphics{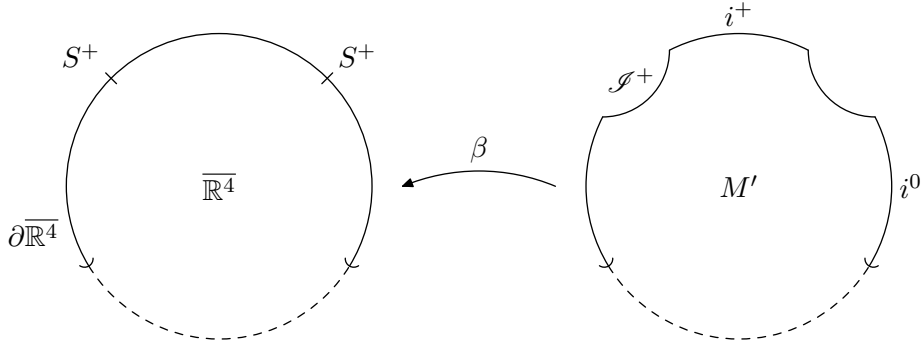}
\caption{\textit{Left:} the partial compactification $\ol{\R^4}$ and its light cone at infinity $S^+$. \textit{Right:} the blow-up $M'=[\ol{\R^4};S^+]\xra{\beta}\ol{\R^4}$, with front face $\scri^+$ (null infinity) and side faces $I^0$ (spatial infinity), $I^+$ (future timelike infinity).}
\label{FigCptABlowup}
\end{figure}

The \emph{side faces} of the blow-up are the connected components of the lift of the original boundary hypersurface $\pa\ol{\R^4}$, i.e.\ of the closure of the preimage of $\pa\ol{\R^4}\setminus S^+$ under $\beta$. In the present situation, there are two side faces:

\begin{definition}
  The \emph{future temporal face} is
  \[
    I^+=\ol{\beta^{-1}\bigl((\pa C_2\cap\pa\ol{\R^4}) \cup \{v>0\}\bigr)},
  \]
whose image $\beta(I^+)$ is a closed $3$-ball with boundary $S^+$. The \emph{spatial face} (more precisely: the part of it that we chose to include in the compactification $\ol{\R^4}$) is defined by
  \[
    I^0 := \ol{\beta^{-1}\bigl(\pa\ol{\R^4}\cap\{v<0\}\bigr)}.
  \]
\end{definition}

Using $\beta$, one can pull back natural vector bundles on $\ol{\R^4}$ to $M'$; for instance, the pullback $\beta^*g_m$, which we simply denote by $g_m$ for brevity, is an element of $\CI(M';\beta^* S^2\,\Tsc^*\ol{\R^4})$ (and constant along the fibers of $\beta$).

Let $\rho_0=r^{-1}$ for $|v+1|\leq\tfrac34$ and $r>R$, $R\gg 1$, and extend it to a smooth positive function on all of $\R^4$. Denote then by $t_\bop$ the smooth function
\begin{equation}
\label{EqCptATimeB}
  t_\bop= \rho_0(t - 2 m \chi_0(r) \log(r-2 m)),
\end{equation}
defined for $|t|/\la r\ra<\half$, where $\chi_0\equiv 0$ for $r<R$ and $\chi_0\equiv 1$ for $r>2 R$; this extends the function $v+1$ smoothly into the interior $\R^4$, and $dt_\bop$ is timelike on
\begin{equation}
\label{EqCptASigma}
  \Sigma:=t_\bop^{-1}(0).
\end{equation}
(The main point of this construction is to write the initial hypersurface $\Sigma$ in a nondegenerate way, i.e.\ as the zero set of a function whose differential does not vanish anywhere on it.) Note that the function $\rho_0$ is, in a neighborhood of $\Sigma$, a boundary defining function of $I^0$; below, we shall use different boundary defining functions adapted to our needs, but keep the same notation. See also Remark~\ref{RmkCptABdfs}.

We restrict our analysis from now on to the following smooth manifold with corners:
\begin{definition}
\label{DefCptAMfd}
  The compact manifold with corners $M$ is defined by
  \[
    M:=M'\cap\bigl(\{t_\bop\geq 0\}\cup\{t>\tfrac13 \la r\ra\}\bigr).
  \]
\end{definition}

One should think of this as (the compactification of) the causal future of $\Sigma$; and this is indeed what it is if we endow $M^\circ$ with the Minkowski metric.

We regard the boundary $\Sigma\subset M$ as `artificial,' i.e.\ incomplete, from the point of view of b-analysis; recall Figure~\ref{FigIBaby}; abusing notation slightly, we shall denote the part $I^0\cap M$ of spatial infinity contained in $M$ again by $I^0$. We denote by $\rho_0$, $\rho_I$, and $\rho_+\in\CI(M)$ defining functions of $I^0$, $\scri^+$, and $I^+$, respectively; we further let $\rho\in\CI(M)$ denote a total boundary defining function, e.g.\ $\rho=\rho_0\rho_I\rho_+$. Defining functions are well-defined up to multiplication by smooth positive functions. We shall often make concrete choices to simplify local calculations; by a \emph{local defining function} of $I^0$, say, on some open subset $U\subset M$ we then mean a function $\rho_0\in\CI(U)$ so that for any $K\Subset U$, $\rho_0|_K$ can be extended to a globally defined defining function of $I^0$. We remark that $\rho_0|_\Sigma\in\CI(\Sigma)$ is a defining function of $\pa\Sigma$ within $\Sigma$.

\begin{rmk}
\label{RmkCptABdfChar}
  The causal character (spacelike, null, timelike) of level sets of $\rho_0$, i.e.\ of $d\rho_0$, depends on the particular choice of $\rho_0$. On the other hand, the \emph{vector field} $\rho_0\pa_{\rho_0}$, defined using any local coordinate system, is well-defined as an element of $\Tb_{I^0}M$, and thus so is its causal character at $I^0$ with respect to the b-metric $\rho^2 g_m$: it is the scaling vector field at infinity, see the discussion after equation~\eqref{EqISysLinOp}, and spacelike away from the corner $I^0\cap\scri^+$. Likewise, $\rho_+\pa_{\rho_+}$ is the scaling vector field at $I^+$, which is timelike.
\end{rmk}

Let us relate $\Sigma$ to the radial compactification $\ol{\R^3}$ of Euclidean $3$-space; recall that the latter is defined using polar coordinates $(r,\omega)\in(0,\infty)\times\Sph^2$ on $\R^3$ as the closed $3$-ball
\[
  \ol{\R^3} := \bigl(\R^3 \sqcup ([0,\infty)_{\rho_0}\times\Sph^2)\bigr) / \sim,\ \ 
  \tn{where}\ (r,\omega) \sim (\rho_0,\omega),\ \rho_0=r^{-1},\ r>0.
\]
Consider the map $\iota\colon\R^3\ni x=(r,\omega)\mapsto(2 m\chi(r)\log(r-2 m),x)\in\Sigma^\circ\subset\R_t\times\R^3_x$, which is the projection along the flow of $\pa_t$. Expressed near $\pa\ol{\R^3}$, i.e.\ for small $\rho_0$, this takes the form $\iota(\rho_0,\omega)=(\rho,v,\omega)$ for $\rho=\rho_0$ and $v=-1$; thus, $\iota$ extends to a diffeomorphism
\begin{equation}
\label{EqCptASigmaR3}
  \Sigma \cong \ol{\R^3}.
\end{equation}

Whenever necessary, we shall make the mass parameter $m$ in these constructions explicit by writing
\begin{equation}
\label{EqCptAExpl}
  {}^m\ol{\R^4},\ {}^m\!M',\ {}^m\!M,\ {}^m\Sigma,\ {}^m t_\bop,\ {}^m I^0,\ {}^m\!\scri^+,\ {}^m I^+,\ {}^m\beta,\ {}^m\!\rho,\ \text{etc.}
\end{equation}
In particular, ${}^0\ol{\R^4}$ is the radial compactification of $\R^4$ with the closed subset $\{|t|^{-1}=0,\ t/r\leq-\tfrac34\}$ of the boundary removed; note here that on their respective domains of definition, $r^{-1}$ and $|t|^{-1}$ are indeed local boundary defining functions of ${}^0\ol{\R^4}$. Moreover, the metric $g_m$ for $m=0$ is equal to the Minkowski metric $\ul g$. We shall explore the relationships between ${}^m\ol{\R^4}$ etc.\ for different values of $m$ in \S\ref{SsCptM}.

\begin{rmk}
\label{RmkCptABdfs}
  For $m=0$, it is easy to write down global expressions for boundary defining functions in $t\geq\half|r|$, for instance (using notation similar to~\cite{LindbladAsymptotics})
  \begin{equation}
  \label{EqCptABdfs}
    {}^0\rho_0 = (1+q_-)^{-1}, \quad
    {}^0\rho_I = t^{-1}(1+q_-)(1+q_+),\quad
    {}^0\rho_+ = (1+q_+)^{-1}, \quad
    {}^0\rho = t^{-1};
  \end{equation}
  here $q_+=\phi_+(t-\la r\ra)$ and $q_-=\phi_+(\la r\ra-t)$, where $\phi_+(x)$ is a smooth function, $\phi_+(x)=x$ for $x\geq 1$, and $\phi_+(x)=0$, $x\leq 0$. One can write down similar expressions for general $m$ by using $r_*$ instead of $r$ near $\scri^+\cup I^0$, and inserting suitable partitions of unity to obtain expressions which are globally smooth. While expressions such as~\eqref{EqCptABdfs} offer a quick way to relate bounds by $({}^0\rho_0)^{a_0}({}^0\rho_I)^{a_I}({}^0\rho_+)^{a_+}$ into bounds in terms of standard coordinates on $\R^4$, they are of course cumbersome to work with if one used them as parts of local coordinate systems on ${}^m\!M$. Furthermore, since we fixed a smooth structure of ${}^m\!M$, boundary defining functions on ${}^m\!M$ are well-defined up to multiplication by smooth, positive functions with smooth, positive reciprocals; therefore, decay rates, such as $a_0,a_I,a_+$ above, with respect to one particular set of choices of boundary defining functions of ${}^m\!M$ are the same as for any other set of choices on the same manifold ${}^m\!M$. The advantage of defining ${}^m\!M$ is then that one can work with any convenient choices of (local) boundary defining functions for any particular local coordinate calculation or estimate for a PDE on ${}^m\!M$, and the decay rates in such an estimate, when expressed in terms of one's chosen defining functions, make invariant sense.
\end{rmk}

Working on ${}^m\ol{\R^4}$, the following coordinates are convenient for performing calculations near the light cone at infinity $S^+$:
\begin{definition}
\label{DefCptADoubleNull}
  We define the coordinates $q=x^0$ and $s=x^1$ as follows:
  \[
    q:=x^0:=t+r_*,\ \ s:=x^1:=t-r_*.
  \]
\end{definition}
Their level sets are null hypersurfaces for the mass $m$ Schwarzschild metric. Using $d q=d s+2 d r_*$ and \eqref{EqCptARstar},
\begin{equation}
\label{EqCptASpl}
  \Tsc^*\ol{\R^4} = \la dq \ra \oplus \la ds \ra \oplus r\,T^*\Sph^2
\end{equation}
therefore defines a smooth partial trivialization near $S^+$; recall that $\rho=r^{-1}$ there. Similarly,
\[
  \pa_0 \equiv \pa_{x^0} = \pa_q = \half(\pa_t+\pa_{r_*}),\ \  \pa_1 \equiv \pa_{x^1} = \pa_s = \half(\pa_t-\pa_{r_*})
\]
are smooth scattering vector fields on $\ol{\R^4}$, and together with $r^{-1} T\Sph^2$, they give a smooth partial trivialization of $\Tsc\ol{\R^4}$ near $S^+$.\footnote{On the other hand, $t^{-1}$ is \emph{not} smooth on ${}^m\ol{\R^4}$ for $m\neq 0$; see Lemma~\ref{LemmaCptFtInv} below.} Letting $x^a$, $a=2,3$, denote local coordinates on $\Sph^2$, we will denote spherical indices by early alphabet Latin letters $a,b,c,d,e$, and general indices ranging from $0$ to $3$ by Greek letters. The components of a section $\omega$ of $\Tsc^*\ol{\R^4}$ in the splitting \eqref{EqCptASpl} are denoted with barred indices:
\begin{equation}
\label{EqCptASplBar}
  \omega_{\bar 0}:=\omega(\pa_0),\ \ 
  \omega_{\bar 1}:=\omega(\pa_1),\ \ 
  \omega_{\bar a}:=\omega(\rho\pa_a)=r^{-1}\omega(\pa_a).
\end{equation}
Thus, the components of a tensor with respect to this splitting have size comparable to the components in the coordinate basis of $T^*\R^4$. The splitting \eqref{EqCptASpl} induces the splitting
\begin{equation}
\label{EqCptASpl2}
\begin{split}
  S^2\,\Tsc^*\ol{\R^4} &= \la dq^2 \ra \oplus \la 2 dq\,ds \ra \oplus (2 dq\otimes_s r\,T^*\Sph^2) \\
    &\qquad \oplus \la ds^2 \ra \oplus (2 ds\otimes_s r\,T^*\Sph^2) \oplus r^2\,S^2 T^*\Sph^2,
\end{split}
\end{equation}
as well as the dual splittings of the dual bundles $\Tsc\ol{\R^4}$ and $S^2\,\Tsc\ol{\R^4}$. We will occasionally use the further splitting
\begin{equation}
\label{EqCptASplS2}
  S^2 T^*\Sph^2=\la\slg\ra \oplus \la\slg\ra^\perp.
\end{equation}

For calculations of geometric quantities associated with the metric, the bundle splittings induced by the coordinates $q,s,x^2,x^3$, i.e.
\begin{equation}
\label{EqCptASplProd}
\begin{split}
  T^*\R^4 &= \la dq\ra \oplus \la ds\ra \oplus T^*\Sph^2, \\
  S^2 T^*\R^4 &= \la dq^2 \ra \oplus \la 2 dq\,ds \ra \oplus (2 dq\otimes_s T^*\Sph^2) \oplus \la ds^2 \ra \oplus (2 ds\otimes_s T^*\Sph^2) \oplus S^2 T^*\Sph^2,
\end{split}
\end{equation}
are more convenient. Components are denoted without bars, that is, for a 1-form $\omega$ and for $\mu=0,1$, we have $\omega_\mu:=\omega(\pa_\mu)=\omega_{\bar\mu}$, while we let $\omega_a:=\omega(\pa_a)=r\omega_{\bar a}$. In short, we have
\begin{equation}
\label{EqCptASplSphWeight}
  \omega_{\bar\mu}=r^{-s(\mu)}\omega_\mu,\quad
  s(\mu_1,\ldots,\mu_N) := \#\{\lambda\colon\mu_\lambda\in\{2,3\} \},
\end{equation}
likewise for tensors of higher rank.

On the resolved space $M$, the null derivatives $\pa_0,\pa_1$ can be computed as follows: near $I^0\cap\scri^+$, we can take
\begin{equation}
\label{EqCptASplCoords}
  \rho_0=-\rho/v=(r_*-t)^{-1},\ \ \rho_I=-v=(r_*-t)/r,\ \ \rho=\rho_0\rho_I=r^{-1};
\end{equation}
then
\begin{equation}
\label{EqCptASplNullExpl}
\begin{split}
  \pa_0 &= -\half \rho_0\rho_I(1-2 m\rho)\rho_I\pa_{\rho_I}, \\
  \pa_1 &= \rho_0\bigl(\rho_0\pa_{\rho_0} - (1-\half\rho_I(1-2 m\rho))\rho_I\pa_{\rho_I}\bigr),
\end{split}
\end{equation}
and dually
\begin{equation}
\label{EqCptASplNullDual}
  \rho\,d q =-\tfrac{2}{1-2 m\rho}\bigl(\tfrac{d\rho_0}{\rho_0}+\tfrac{d\rho_I}{\rho_I}\bigr) + \rho_I\tfrac{d\rho_0}{\rho_0}, \ \ 
  \rho\,d s = \rho_I\tfrac{d\rho_0}{\rho_0}.
\end{equation}
A similar calculation near $I^+\cap\scri^+$ yields
\begin{equation}
\label{EqCptASplNull}
  \pa_0 = f_0\rho_0\rho_I\rho_+\cdot\rho_I\pa_{\rho_I},\ \ \pa_1 \in \rho_0\rho_+\Vb(M),
\end{equation}
for some $f_0\in\CI(M)$, $f_0>0$, depending on the choices of boundary defining functions.

\subsection{Function spaces}
\label{SsCptF}

We first recall the notion of b-Sobolev spaces on $\R^{n,d}_+:=[0,\infty)^d_x\times\R^{n-d}_y$: first, we set $\Hb^0(\R^{n,d}_+)\equiv L^2_\bop(\R^{n,d}_+):=L^2(\R^{n,d}_+;|\frac{d x^1}{x^1}\ldots\frac{d x^d}{x^d}d y|)$; for $k\in\N$ then, $\Hb^k(\R^{n,d}_+)$ consists of all $u\in L^2_\bop$ such that $V_1\ldots V_j u\in L^2_\bop$ for all $0\leq j\leq k$, where each $V_\ell$ is equal to either $x^p\pa_{x^p}$ or $\pa_{y^q}$ for some $p=1,\ldots,d$, $q=1,\ldots,n-d$. For general $s\in\R$, one defines $\Hb^s(\R^{n,d}_+)$ by interpolation and duality. One can define b-Sobolev spaces on compact manifolds with corners by localization and using local coordinate charts; we give an invariant description momentarily. Note that the logarithmic change of coordinates $t^j:=-\log x^j$, $j=1,\ldots,d$, induces an isometric isomorphism $\Hb^s(\R^{n,d}_+)\cong H^s(\R^n)$ with the standard Sobolev space on $\R^n$.

Now on $M'$, fix any smooth b-density, i.e.\ in local coordinates as above a smooth positive multiple of $|\frac{d x^1}{x^1}\ldots\frac{d x^d}{x^d}d y|$, then the space $L^2_\bop(M')$ with respect to this density is well-defined; the space $L^2_\bop(M)$ of restrictions of elements $u\in L^2_\bop(M')$ to $M$ is similarly well-defined, and since $M$ is compact, any two choices of b-densities on $M'$ yield equivalent norms on $L^2_\bop(M)$. More generally, if $b_0,b_I,b_+\in\R$ are weights, we define the weighted $L^2$ space
\[
  \rho_0^{b_0}\rho_I^{b_I}\rho_+^{b_+}\Hb^0(M) \equiv \rho_0^{b_0}\rho_I^{b_I}\rho_+^{b_+}L^2_\bop(M) := \bigl\{ u \colon \rho_0^{-b_0}\rho_I^{-b_I}\rho_+^{-b_+}u\in L^2_\bop(M) \bigr\}.
\]
The b-Sobolev spaces of order $k=0,1,2,\ldots$ are defined using a finite collection of vector fields $\sV\subset\Vb(M')$ such that at each point $p\in M$, the collection $\sV_p$ spans $\Tb_p M$, namely
\[
  \Hb^k(M) := \{ u\in L^2_\bop(M) \colon V_1\dots V_j u \in L^2_\bop(M),\ 0\leq j\leq k,\ V_\ell\in\sV \};
\]
the norm on this space is the sum of the $L^2_\bop(M)$-norms of $u$ and its up to $k$-fold derivatives along elements of $\sV$. One defines $\rho_0^{b_0}\rho_I^{b_I}\rho_+^{b_+}\Hb^k(M)$ and its norm correspondingly. Note that the vector fields in $\sV$ are required to be tangent to $I^0$, $\scri^+$, and $I^+$, but \emph{not} to $\Sigma$; thus, we measure standard Sobolev regularity near $\Sigma$, and b- (conormal) regularity at $I^0$, $\scri^+$, and $I^+$. (Thus, our space $\Hb^k(M)$ would be denoted $\Hbext^k(M)$ in the notation of \cite[Appendix~B]{HormanderAnalysisPDE3}.) Due to the compactness of $M$, any two choices of collections $\sV$ and boundary defining functions $\rho_0,\rho_I,\rho_+$ give rise to the same b-Sobolev space, up to equivalence of norms. (For instance, any other defining function $\rho_0'$ of $I^0$ is related to $\rho_0$ by $\rho_0'=a\rho_0$ where $0<a\in\CI(M)$ (and thus by compactness of $M$, $C^{-1}\leq a\leq C$ for some $C>1$); the equality of the weighted spaces defined using $\rho_0$ or $\rho_0'$ is then a consequence of the fact that multiplication by $a^{b_0}$, or in fact by any smooth nonzero function on $M$ with smooth reciprocal, is an isomorphism on $\Hb^k(M)$.) The space $\Hb^\infty(M)=\bigcap_{k\geq 1}\Hb^k(M)$ and its weighted analogues have natural Fr\'echet space structures; we refer to their elements as \emph{conormal} functions. We shall also use function spaces with infinitely decaying weights, so for instance
\begin{equation}
\label{EqCptFInftyW}
  \rho_I^\infty\Hb^k(M) := \bigcap_{b_I\in\R}\rho_I^{b_I}\Hb^k(M),
\end{equation}
as well as spaces of the form
\[
  \rho_I^{b_I-0}\Hb^k(M) := \bigcap_{\eps>0}\rho_I^{b_I-\eps}\Hb^k(M),
\]
similarly for spaces with more weights.

Weighted b-Sobolev spaces of sections of vector bundles on $M$ are defined using local trivializations. We will in particular use the space
\begin{equation}
\label{EqCptFX}
  \Hb^{k;b_0,b_I,b_+}(E)\equiv\Hb^{k;b_0,b_I,b_+}(M;E) := \rho_0^{b_0}\rho_I^{b_I}\rho_+^{b_+}\Hb^k(M;E),
\end{equation}
with $E$ denoting the trivial bundle $\ul\C:=M\times\C\to M$, or $E=\beta^*\Tsc^*\ol{\R^4}$, or $E=\beta^*S^2\,\Tsc^*\ol{\R^4}$. When the bundle $E$ is clear from the context, we will simply write $\Hb^{k;b_0,b_I,b_+}$. When estimating error terms, we will often use the inclusion
\[
  \CI(\ol{\R^4}) \subset \CI(M) \subset \Hb^{\infty;-0,-0,-0} := \bigcap_{\eps>0}\Hb^{\infty;-\eps,-\eps,-\eps}.
\]

For the last part of Theorem~\ref{ThmIBaby}, we need to define the notion of \emph{polyhomogeneity} (or \emph{$\cE$-smoothness}) and discuss its basic properties; see \cite[\S2A]{MazzeoEdge} and \cite[\S4.15]{MelroseDiffOnMwc} for detailed accounts and proofs. An \emph{index set} is a discrete subset $\cE\subset\C\times\N_0$ such that
\begin{subequations}
\begin{gather}
\label{EqCptFPhg1}
  (z,j)\in\cE\ \Lra\ (z,j')\in\cE\ \ \forall\,j'\leq j; \\
\label{EqCptFPhg2}
  (z_\ell,j_\ell)\in\cE,\ |z_\ell|+j_\ell\to\infty\ \Lra\ \Im z_\ell\to-\infty; \\
\label{EqCptFPhg3}
  (z,j)\in\cE\ \Lra\ (z-i,j)\in\cE.
\end{gather}
\end{subequations}
We shall write
\begin{equation}
\label{EqCptFPhgBound}
  \Im\cE < c\ :\Longleftrightarrow\ \Im z<c\ \ \forall\,(z,k)\in\cE,
\end{equation}
likewise for the nonstrict inequality sign. Note that by condition~\eqref{EqCptFPhg2}, every index set $\cE$ has an upper bound $\Im\cE<C$ for some $C$; more precisely, if $\cE$ is an index set and $C'\in\R$, then there are only finitely many points $(z,k)\in\cE$ with $\Im z>C'$.

Let now $X$ denote a compact manifold with boundary $\pa X$, and let $\rho\in\CI(X)$ be a boundary defining function. The choice of a collar neighborhood $[0,1)_\rho\times\pa X$ makes the vector field $\rho D_\rho=\frac{1}{i}\rho\pa_\rho$ well-defined, and any two choices of collars give the same vector field $\rho D_\rho$ modulo elements of $\rho\Vb(X)$. Let $\cE$ be an index set. The space $\cA_\phg^\cE(X)$ then consists of all $u\in\rho^{-\infty}\Hb^\infty(X)=\bigcup_{N\in\R}\rho^N\Hb^\infty(X)$ for which
\begin{equation}
\label{EqCptFTesting}
  \prod_{\genfrac{}{}{0pt}{}{(z,j)\in\cE}{\Im z\geq -N}} (\rho D_\rho-z)u\in\rho^N\Hb^\infty(X)\ \ \tn{for all}\ N\in\R;
\end{equation}
equivalently, there exist $a_{(z,j)}\in\CI(X)$, $(z,j)\in\cE$, such that
\begin{equation}
\label{EqCptFPhg}
  u - \sum_{\genfrac{}{}{0pt}{}{(z,j)\in\cE}{\Im z\geq -N}} \rho^{i z}(\log\rho)^j a_{(z,j)} \in\rho^N\Hb^\infty(X).
\end{equation}
(Condition~\eqref{EqCptFPhg3} ensures that this is independent of the choice of $\rho D_\rho$.) In particular, $u\in\rho^{-\Im\cE-0}\Hb^\infty(X)$. When no confusion can arise, we write
\begin{equation}
\label{EqCptFPhgShorthand}
  (a,k) := \{(a-i n,j)\colon n\in\N_0,\,0\leq j\leq k\},\ \ 
  a:=(a,0).
\end{equation}
For example, $\cA_\phg^{-i a}(X)=\rho^a\CI(X)$. We also recall the notion of the \emph{extended union} of two index sets $\cE_1$, $\cE_2$, defined by
\[
  \cE_1 \extcup \cE_2 = \cE_1 \cup \cE_2 \cup \{ (z,k) \colon \exists\,(z,j_\ell)\in \cE_\ell,\ k\leq j_1+j_2+1 \},
\]
so e.g.\ $0\extcup 0=(0,1)$, as well as their \emph{sum}
\[
  \cE_1+\cE_2 := \{ (z,j) \colon \exists\,(z_\ell,j_\ell)\in\cE_\ell,\ z=z_1+z_2,\,j=j_1+j_2 \};
\]
thus $\cA_\phg^{\cE_1}(X)\cdot\cA_\phg^{\cE_2}(X)\subset\cA_\phg^{\cE_1+\cE_2}(X)$. For $j\in\N$ and an index set $\cE$, we define
\[
  j\cE_1:=\cE_1+\cdots+\cE_1,
\]
with $j$ summands.

If $X$ is a manifold with corners with embedded boundary hypersurfaces $H_1,\ldots,H_k$ to each of which is associated an index set $\cE_i$, we define $\cA_\phg^{\cE_1,\ldots,\cE_k}(X)$ as the space of all $u\in\rho^{-\infty}\Hb^\infty(X)$, with $\rho\in\CI(X)$ a total boundary defining function, such that for each $1\leq i\leq k$, there exist weights $b_j\in\R$, $j\neq i$, such that, with $\rho_i\in\CI(X)$ denoting a defining function of $H_i$,\footnote{As before, the vector fields $\rho_i D_{\rho_i}$, defined using a collar neighborhood of $H_i$, are in fact well-defined modulo $\rho_i\Vb(X)$, which is all that matters in this definition.}
\[
  \prod_{\genfrac{}{}{0pt}{}{(z,j)\in\cE_i}{\Im z\geq-N}} (\rho_i D_{\rho_i}-z)u \in\rho_i^N\prod_{j\neq i}\rho_j^{b_j} \Hb^\infty(X)\ \ \tn{near}\ H_i.
\]
This is equivalent to $u$ admitting an asymptotic expansion at each $H_i$ as in~\eqref{EqCptFPhg}, with each $a_{(z,j)}$ polyhomogeneous with index set $\cE_j$ at each nonempty boundary hypersurface $H_j\cap H_i$ of $H_i$.

We shall also need spaces encoding polyhomogeneous behavior at one hypersurface but not others; for brevity, we only discuss this in the case of two boundary hypersurfaces $H_1,H_2$: for an index set $\cE$ and $\alpha\in\R$, $\cA^{\cE,\alpha}_{\phg,\bop}$ consists of all $u\in\rho^{-\infty}\Hb^\infty$ such that
\[
  \prod_{\genfrac{}{}{0pt}{}{(z,j)\in\cE_i}{\Im z\geq-N}} (\rho_1 D_{\rho_1}-z)u\in\rho_1^N\rho_2^\alpha\Hb^\infty\ \ \tn{near}\ H_1,\ \ \tn{for all}\ N\in\R;
\]
this is equivalent to $u$ having an expansion at $H_1$ with terms $a_{(z,j)}\in\rho_2^\alpha\Hb^\infty(H_2)$.

We briefly discuss nonlinear properties of b-Sobolev and polyhomogeneous spaces; for brevity, we work on an $n$-dimensional compact manifold $X$ with boundary $\pa X$, and leave the statements of the obvious generalizations to the setting of manifolds with corners to the reader. Thus, if $s>n/2$, then $\Hb^s(X)$ is a Banach algebra, and more generally $u_1\cdot u_2\in\rho^{a_1+a_2}\Hb^s(X)$ if $u_j\in\rho^{a_j}\Hb^s(X)$, $j=1,2$. Regarding the interaction with polyhomogeneous spaces, if $\cE$ is an index set, then $\cA_\phg^\cE(X)\cdot\rho^a\Hb^s(X)\subset\rho^{a-e}\Hb^s(X)$ for all $a,s\in\R$ when $e>\Im\cE$; in the case that $\cE=(a_0,0)\cup\cE'$ with $\Im\cE'<\Im a_0$, we may take $e=\Im a_0$. One can also take inverses, to the effect that $u/(1-v)\in\Hb^s(X)$ provided $u,v\in\Hb^s(X)$, $s>n/2$, and $v\leq C<1$, which follows readily from the corresponding results on $\R^n$, see e.g.\ \cite[\S13.10]{TaylorPDE}, by a logarithmic change of coordinates.

For comparisons with the Minkowski metric, we study the regularity properties of $t^{-1}$ on ${}^m\ol{\R^4}$. Define the index set
\begin{equation}
\label{EqCptIndexSetLog}
  \cE_{\rm log}:=\{(-i k,j)\colon k\in\N_0,\ 0\leq j\leq k\},\ \ 
  \cE'_{\rm log} := \cE_{\rm log}\setminus\{(0,0)\}.
\end{equation}

\begin{lemma}
\label{LemmaCptFtInv}
  Letting $U=\ol{\{t>\tfrac23 r\}}\subset{}^m\ol{\R^4}$, we have
  \begin{equation}
  \label{EqCptFtInvPhg}
    t^{-1} \in \rho\cdot\cA_\phg^{\cE_{\rm log}}(U) \subset \rho\,\CI(U)+\rho^{2-0}\Hb^\infty(U)\subset\rho^{1-0}\Hb^\infty(U),
  \end{equation}
  and $t^{-1}/\rho\in\CI(U\cap\pa\ol{\R^4})$ is everywhere nonzero.
\end{lemma}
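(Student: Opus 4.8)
The plan is to reduce the claim to an explicit description of $\rho t$ near $\pa{}^m\ol{\R^4}$ and then take a reciprocal. Away from the boundary, i.e.\ on $U^\circ$, one has $t>\tfrac23 r>0$, so $t^{-1}/\rho\in\CI(U^\circ)$ is a positive multiple of $\rho$ and there is nothing to do; all the content sits near $\pa{}^m\ol{\R^4}\cap U$, where the key geometric input is that $v\geq-\tfrac13$ on $U$ (in particular on all of $i^+$), so that the leading behavior $\rho t\sim 1+v$ stays bounded away from $0$.

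First I would work in the chart $C_1$ with coordinates $(\rho,v,\omega)$, $\rho=r^{-1}$. Inverting the defining relation~\eqref{EqCptACp1} for $v$, and using $\rho\log(r-2 m)=-\rho\log\rho+\rho\log(1-2 m\rho)$, gives
\[
  \rho t = (1+v) - 2 m\,\chi(t/r)\,\rho\log\rho + 2 m\,\chi(t/r)\,\rho\log(1-2 m\rho) =: W ,
\]
where the last summand is a smooth function of $\rho$ vanishing to second order. The ratio $\tau=t/r$ entering the cutoff solves an equation whose right-hand side is polyhomogeneous in $\rho$ with index set $\cE_{\rm log}$ and with coefficients smooth in $(\tau,v,\omega)$, and whose $\tau$-derivative is $1+\cO(\rho\log\rho)$, hence invertible for $\rho$ small. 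An implicit function theorem in the category of functions polyhomogeneous in $\rho$ (with coefficients smooth in the remaining variables) then shows that $\tau$, and therefore $W$, is polyhomogeneous in $\rho$ with index set $\cE_{\rm log}$ — no new exponents arise because $\cE_{\rm log}$ is closed under addition — with $W|_{\rho=0}=1+v$. In the subregions where $\chi(t/r)$ is locally constant, $\tau$ is given explicitly and this step is immediate; I expect the transition band $t/r\in(2,3)$, where one must genuinely invoke polyhomogeneity of the solution of an equation containing a $\rho\log\rho$ term, to be the only point requiring care.

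Next I would invert $W$. Since $W|_{\rho=0}=1+v\geq\tfrac23$ on $U$, for $\rho$ small we may write $W=(1+v)(1+e)$ with $e\in\cA_\phg^{\cE'_{\rm log}}$ (polyhomogeneous with index set $\cE_{\rm log}$ and $e\to 0$ at $\rho=0$). The geometric series $W^{-1}=(1+v)^{-1}\sum_{k\geq 0}(-e)^k$ then converges in the conormal topology — because $\cE_{\rm log}+\cE_{\rm log}\subseteq\cE_{\rm log}$ while $\Im(k\cE'_{\rm log})\leq-k$ — so $t^{-1}/\rho=W^{-1}\in\cA_\phg^{\cE_{\rm log}}$ near $S^+$ and the lower part of $i^+$, with nonvanishing leading coefficient $(1+v)^{-1}$. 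Near the interior of $i^+$, i.e.\ on $C_2$, there is nothing to prove: $\rho'_+=t^{-1}$ is a smooth boundary defining function, so $t^{-1}/\rho=\rho'_+/\rho\in\CI$ is a positive ratio of defining functions, consistent with the $C_1$ description on the overlap (where $\chi\equiv 0$, so $(1+v)^{-1}=|X|=\rho'_+/\rho$). Patching the two descriptions gives $t^{-1}\in\rho\,\cA_\phg^{\cE_{\rm log}}(U)$ and that $t^{-1}/\rho$ restricts to a smooth, everywhere positive function on $U\cap\pa\ol{\R^4}$, which is the last assertion.

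Finally, the chain of inclusions is bookkeeping: splitting off the $(0,0)$-term of the polyhomogeneous expansion of $t^{-1}/\rho$ — a smooth function on $U$ — leaves a remainder with index set $\cE'_{\rm log}$, hence in $\rho^{1-0}\Hb^\infty(U)$; multiplying through by $\rho$ gives $t^{-1}\in\rho\,\CI(U)+\rho^{2-0}\Hb^\infty(U)$, which is trivially contained in $\rho^{1-0}\Hb^\infty(U)$. The only non-routine ingredient is thus the first step, namely establishing polyhomogeneity of $t/r$ in $\rho$ across the support of $\chi'$; this is a standard implicit-function-theorem-with-logarithms argument, but it is where the logarithmic term in the tortoise coordinate — the reason $\cE_{\rm log}$ rather than $\{(-i k,0)\}$ appears — enters.
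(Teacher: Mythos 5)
Your argument is correct and is essentially the paper's proof: both reduce to the chart $C_1$ (with $C_2$ trivial since $t^{-1}$ is smooth there) and hinge on the implicit relation $\rho t = 1+v-2m\chi(t/r)\rho\bigl(\log\rho-\log(1-2m\rho)\bigr)$, which the paper solves by a contraction mapping in $\rho^{1-\delta}\Hb^k$ followed by an iterative bootstrap of the expansion — precisely the hands-on version of the ``polyhomogeneous implicit function theorem'' you invoke for the transition band of $\chi$. The subsequent reciprocal step (leading term $1+v$ bounded away from zero, geometric series within $\cA_\phg^{\cE_{\rm log}}$) and the final chain of inclusions match the paper's use of closure of $\cA_\phg^{\cE_{\rm log}}$ under multiplication.
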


\begin{definition}
\label{DefCptFtInv}
  We define $\rho_t\in\CI({}^m\ol{\R^4})$ to be any boundary defining function satisfying $\rho_t/\rho=t^{-1}/\rho$ at $U\cap\pa{}^m\ol{\R^4}$.
\end{definition}

By Lemma~\ref{LemmaCptFtInv}, this fixes $\rho_t$ in $U$ modulo $\rho^2\CI({}^m\ol{\R^4})$; away from $U$, $\rho_t$ is merely well-defined modulo $\rho\,\CI({}^m\ol{\R^4})$.

\begin{proof}[Proof of Lemma~\usref{LemmaCptFtInv}]
  Using the notation of~\S\ref{SsCptA}, we have $t^{-1}\in\CI(C_2)$. Thus, it suffices to work in $C_1\cap\{v>-\half\}$, where we can take $\rho=r^{-1}$; we then need to prove $f:=\rho t\in\cA_\phg^{\cE_{\rm log}}$ and $f|_{\pa\ol{\R^4}}\neq 0$ there, which implies the claim about $t^{-1}/\rho=1/f$ as $\cA_\phg^{\cE_{\rm log}}$ is closed under multiplication. Note that $f\in\CI(\R^4)$, and $f>\tfrac14$. Let $\tilde\chi(x)=\chi(x^{-1})\in\CI((0,\infty);[0,1])$ in the notation \eqref{EqCptACp1}, so $\tilde\chi(x)\equiv 0$, $x<\tfrac13$, and $\tilde\chi(x)\equiv 1$, $x>\half$, then
  \begin{equation}
  \label{EqCptFtInvRecursion}
    f = 1 + v - 2 m\rho \tilde\chi(f) \bigl(\log\rho-\log(1-2 m\rho)\bigr).
  \end{equation}
  Note that near $\rho=0$, $f=\rho^{-1}t^{-1}$ is the \emph{unique} positive function satisfying this equation: indeed, if $f'$ is another such function, then $|f-f'|\lesssim(\rho\log\rho)|f-f'|$. At $\rho=0$, we have $f=1+v$. Thus, let $k\geq 2$ be an integer, and consider the map
  \[
    T \colon \tilde f \mapsto - 2 m\rho(\log\rho-\log(1-2 m\rho))\tilde\chi(1+v+\tilde f)
  \]
  on $\rho^{1-\delta}\Hb^k([0,\eps_k)_\rho\times(-1/2,5)_v)$, where $\delta\in(0,1)$ is fixed. Now
  \[
    \|T(\tilde f)-T(\tilde f')\|_{\rho^{1-\delta}\Hb^k}\leq C_k\|\rho\log\rho-\rho\log(1-2 m\rho)\|_{\Hb^k} \|\tilde\chi\|_{\cC^k}\|\tilde f-\tilde f'\|_{\rho^{1-\delta}\Hb^k};
  \]
  choosing $\eps_k>0$ sufficiently small, the first norm on the right can be made arbitrarily small. By the contraction mapping principle, this gives $f-1-v\in\rho^{1-\delta}\Hb^\infty$ since $k$ was arbitrary. We can now improve the remainder term by plugging this into \eqref{EqCptFtInvRecursion}, which gives
  \[
    f - \bigl(1 + v - 2 m\tilde\chi(1+v)\bigl(\rho\log\rho-\rho\log(1-2 m\rho)\bigr)\bigr) \in \rho^{2-\delta}\Hb^\infty,
  \]
  so $f\in\cA_\phg^{\cE_{\rm log}}+\rho^{2-\delta}\Hb^\infty$. Using that $\chi\circ(\cdot)$ maps $\cA_\phg^{\cE_{\rm log}}$ into itself, as follows from the testing definition~\eqref{EqCptFTesting}, the desired conclusion follows from an iterative argument.
\end{proof}

\subsection{Relationships between different compactifications}
\label{SsCptM}

The only difference between the compactifications ${}^m\ol{\R^4}$ for different values of $m$ is the manner in which a smooth collar neighborhood of $\pa{}^m\ol{\R^4}$ is glued together with $\R^4$. Since this difference is small due to the logarithmic correction in~\eqref{EqCptACp1} being only of size $r^{-1}\log r$, different compactifications are closely related; see also \cite[\S7]{BaskinVasyWunschRadMink2}. Indeed:

\begin{lemma}
\label{LemmaCptAComp}
  The identity map $\R^4\to\R^4$ induces a homeomorphism $\phi\colon{}^m\ol{\R^4}\to{}^0\ol{\R^4}$, which in fact is a polyhomogeneous diffeomorphism with index set $\cE_{\rm log}$; that is, in smooth local coordinate systems near $\pa{}^m\ol{\R^4}$ and $\pa{}^0\ol{\R^4}$, the components of both $\phi$ and $\phi^{-1}$ are real-valued functions on $[0,\infty)\times\R^3$ of class $\cA_\phg^{\cE_{\rm log}}$. Moreover, $\phi$ induces a smooth diffeomorphism $\pa{}^m\ol{\R^4}\cong\pa{}^0\ol{\R^4}$, which restricts to ${}^m\beta({}^m I^+)\cong{}^0\beta({}^0 I^+)$, and also induces a smooth diffeomorphism ${}^m I^+\cong{}^0 I^+$.
\end{lemma}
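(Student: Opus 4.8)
The plan is to reduce everything to an explicit computation in the single chart $C_1$ of~\eqref{EqCptACp1Space}. First, $\phi$ is literally the identity on $\R^4$, hence a diffeomorphism of the interiors; moreover it is the identity in the charts $C_2$, and in any chart supported where the cutoff $\chi$ of~\eqref{EqCptACp1} is locally constant, since there the defining data of the two compactifications (the coordinates $\rho'_+=t^{-1}$, $X=x/|t|$ of~\eqref{EqCptACp2} and the gluing map~\eqref{EqCptACp12}) do not involve $m$. So the only work is in $C_1$. There $\phi$ preserves $\rho=r^{-1}$ and $\omega\in\Sph^2$ and carries the coordinate $v$ of ${}^m\ol{\R^4}$ to the coordinate $v_0$ of ${}^0\ol{\R^4}$, where, using $\log(r-2m)=-\log\rho+\log(1-2m\rho)$ and $t/r=\rho t$,
\[
  v_0 = v + w,\qquad w := 2m\rho\,\chi(\rho t)\bigl(\log(1-2m\rho)-\log\rho\bigr).
\]

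Next I would pin down the regularity of $w$. The crucial input is that $\rho t$ is polyhomogeneous on $C_1$ with index set $\cE_{\rm log}$ and strictly positive: this is Lemma~\ref{LemmaCptFtInv} near $i^+$, and the same proof applies near the light cone — where $\chi(\rho t)\equiv 1$, the recursion~\eqref{EqCptFtInvRecursion} is already an explicit $\cE_{\rm log}$-polyhomogeneous formula for $f=\rho t$, and on the transition region one runs the contraction-mapping argument on weighted $\Hb^k$ spaces and bootstraps using closedness of $\cA_\phg^{\cE_{\rm log}}$ under composition with smooth functions. Granting this, $\chi(\rho t)\in\cA_\phg^{\cE_{\rm log}}$, $\log(1-2m\rho)\in\CI\subset\cA_\phg^{\cE_{\rm log}}$, and the factor $\rho\log\rho$ has index set contained in $\cE_{\rm log}$, which is closed under addition; hence $w\in\cA_\phg^{\cE_{\rm log}}$ with leading term $-2m\,\chi(1+v)\,\rho\log\rho$, so in particular $w$ vanishes at $\rho=0$. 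This shows the components of $\phi$ are of class $\cA_\phg^{\cE_{\rm log}}$ in $C_1$, and trivially so in $C_2$ and the interior.

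To see that $\phi$ is a \emph{diffeomorphism} in this sense, note its Jacobian in $C_1$ is $1+\pa_v w = 1+\cO(\rho\log\rho)$, nonzero after shrinking $\eps_0$; since $\phi$ is a bijection of $\R^4$ extending continuously to the boundary (as $w\to 0$ there), it is a homeomorphism restricting to a diffeomorphism of interiors, and solving $v_0=v+w(\rho,v,\omega)$ for $v$ by a contraction argument as in the proof of Lemma~\ref{LemmaCptFtInv} gives $\phi^{-1}(\rho,v_0,\omega)=(\rho,v_0+w',\omega)$ with $w'\in\cA_\phg^{\cE_{\rm log}}$. Since $w|_{\rho=0}=0$, $\phi$ restricted to $\{\rho=0\}\subset C_1$ and $\{\rho'_+=0\}\subset C_2$ is the identity in the coordinates $(v,\omega)$, resp.\ $X$; as these coordinates and the gluing~\eqref{EqCptACp12} on $\{\rho'_+=0\}$ are $m$-independent, $\pa{}^m\ol{\R^4}$ and $\pa{}^0\ol{\R^4}$ are canonically one and the same smooth manifold and $\phi|_{\pa}$ is the identity, in particular a smooth diffeomorphism, restricting to the identity on the ($m$-independent) closed $3$-ball $\beta(i^+)$. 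Finally, by the structure of the blow-up $M'=[\ol{\R^4};S^+]$ (see~\cite[Chapter~5]{MelroseDiffOnMwc}) the blow-down map restricts to a diffeomorphism ${}^m\beta\colon{}^m i^+\to{}^m\beta({}^m i^+)$ — it is the identity on interiors, and near the corner $i^+\cap\scri^+$, in the projective coordinate $R'=(\rho^2+v^2)^{1/2}$ on $i^+$, it reads $(R',\omega)\mapsto(v=R',\omega)$ — so conjugating $\phi|_{\beta(i^+)}=\mathrm{id}$ by these diffeomorphisms yields the asserted smooth diffeomorphism ${}^m i^+\cong{}^0 i^+$.

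The only genuinely non-formal ingredient is the polyhomogeneity, with index set exactly $\cE_{\rm log}$, of $\rho t$ on the light-cone chart — equivalently, the fact that the implicit logarithmic correction defining $v$ feeds back only $\cE_{\rm log}$-type terms — which is handled exactly as in Lemma~\ref{LemmaCptFtInv}. Everything else is manipulation of asymptotic expansions together with standard facts about blow-ups.
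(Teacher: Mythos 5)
Your argument is correct and essentially the paper's own: the heart is the $\cE_{\rm log}$-polyhomogeneity of $\rho t$ obtained from the recursion~\eqref{EqCptFtInvRecursion} as in Lemma~\ref{LemmaCptFtInv}, the boundary statement reduces to $v={}^0 v$ at $\rho=0$, and the identification ${}^m i^+\cong{}^0 i^+$ follows because $v$, resp.\ ${}^0 v$, is a defining function of $\pa i^+$ within $i^+$ (equivalently, as you phrase it, the blow-down restricts to a diffeomorphism of $i^+$ onto $\beta(i^+)$), exactly as in the paper. Two harmless remarks: the paper obtains $\phi^{-1}$ more directly by writing $v={}^0 v-r^{-1}\chi(t/r)\,2m\log(r-2m)$ and noting $t/r\in\CI({}^0\ol{\R^4})$, so no second contraction argument is needed; and your assertion that $\rho t$ is strictly positive on all of $C_1$ is false where $v<-1$ (there $t<0$), but positivity is never actually used in your argument.
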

\begin{proof}
  We have $\cA_\phg^{\cE_{\rm log}}\subset\CI+\rho^{1-0}\Hb^\infty\subset\cC^0$, so it suffices to prove the polyhomogeneity statement. Defining the smooth coordinates $\rho$ and $v$ as in~\eqref{EqCptACp1}, and the corresponding smooth coordinates ${}^0\rho=r^{-1}$ and ${}^0 v=r^{-1}(t-r)$ on ${}^0\ol{\R^4}$, we then observe that ${}^0\rho=\rho$, while in the notation of equation~\eqref{EqCptFtInvRecursion}, we established that $1+{}^0 v=f\in\cA_\phg^{\cE_{\rm log}}$ on ${}^m\ol{\R^4}$, giving the desired conclusion for $\phi$. For $\phi^{-1}$, we write $v={}^0 v-r^{-1}\chi(t/r)2 m\log(r-2 m)$ and note that $t/r\in\CI({}^0\ol{\R^4})$. For the last claim, we observe that
  \begin{equation}
  \label{EqCptACompBdy}
    v={}^0 v\ \ \tn{at}\ \pa{}^m\ol{\R^4}
  \end{equation}
  under the identification with $\pa{}^0\ol{\R^4}$ given by $\phi$. This also shows that the sets ${}^m\beta({}^m I^+)=\{v\geq 0\}$ and ${}^0\beta({}^0 I^+)=\{{}^0 v\geq 0\}$ are diffeomorphic. On ${}^m\!M$, resp.\ ${}^0\!M$ then, $v$, resp.\ ${}^0 v$, are local defining functions of the boundaries $\pa{}^m I^+$, resp.\ $\pa{}^0 I^+$, hence by~\eqref{EqCptACompBdy}, the identification ${}^m I^+\cong{}^0 I^+$ in the interior of ${}^m I^+$ indeed extends smoothly to its boundary.
\end{proof}

In a similar vein, the scattering (co)tangent bundles can be naturally identified over the boundary:
\begin{lemma}
\label{LemmaCptACompBundle}
  The identity map $T^*\R^4\to T^*\R^4$ extends by continuity to a continuous bundle map $\Tsc^*\,{}^m\ol{\R^4}\to\Tsc^*\,{}^0\ol{\R^4}$ which restricts to a smooth bundle isomorphism over the boundary.
\end{lemma}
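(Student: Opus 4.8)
The plan is to reduce the statement to the computation of transition matrices in the coordinate charts $C_1,C_2$ of~\S\ref{SsCptA}. Over the interior, both $\Tsc^*\,{}^m\ol{\R^4}$ and $\Tsc^*\,{}^0\ol{\R^4}$ restrict to the ordinary cotangent bundle $T^*\R^4$, so the identity map is defined there; it covers the base map $\phi\colon{}^m\ol{\R^4}\to{}^0\ol{\R^4}$ of Lemma~\ref{LemmaCptAComp}. What must be shown is that this identity extends continuously up to $\pa\,{}^m\ol{\R^4}$, and that the extension restricts over the boundary to a smooth bundle isomorphism. Since continuity, smoothness, and the isomorphism property are all local, I would argue near an arbitrary boundary point, treating the charts $C_1$ and $C_2$ separately.

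Away from the light cone at infinity---namely in $C_2$, and in the part of $C_1$ on which the cutoff $\chi(t/r)$ in~\eqref{EqCptACp1} vanishes identically---the two compactifications have the same smooth structure, since the coordinates defining them ($\rho'_+=t^{-1}$ and $X$ in $C_2$; $\rho=r^{-1}$, $v=r^{-1}(t-r)$, and the sphere coordinates $\omega$ in the relevant part of $C_1$) are common to both. Hence there the scattering cotangent bundles and their coordinate frames coincide, and the bundle map is the identity---manifestly smooth up to the boundary. The only region requiring real work is $C_1$ near (and on either side of) the light cone, including the transition region where $\chi(t/r)\in(0,1)$. There I would use the common boundary defining function $\rho=r^{-1}=\,{}^0\!\rho$, the common sphere coordinates $\omega$, and the transverse coordinates $v$, resp.\ ${}^0 v=v+\chi(t/r)\,2m\rho\log(r-2m)$, which by~\eqref{EqCptACompBdy} restrict to the same function on $\pa\,{}^m\ol{\R^4}$. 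Using the scattering frame $e_0=d(1/\rho)$, $e_1=d(v/\rho)$, $e_a=d(\omega^a/\rho)$ of $\Tsc^*\,{}^m\ol{\R^4}$, and the analogous frame $f_\bullet$ of $\Tsc^*\,{}^0\ol{\R^4}$ with ${}^0 v$ in place of $v$, one has $e_0=f_0$, $e_a=f_a$, and, since $v/\rho-{}^0 v/{}^0\!\rho=-\chi(t/r)\,2m\log(r-2m)$,
\[
  e_1-f_1=-d\bigl(\chi(t/r)\,2m\log(r-2m)\bigr).
\]
Inserting $\log(r-2m)=-\log\rho+\log(1-2m\rho)$ and using that $t/r$ and $\rho$ are smooth on ${}^0\ol{\R^4}$ (as in the proof of Lemma~\ref{LemmaCptAComp}), together with the elementary fact that $d(a\log\rho)$ has scattering-frame coefficients $O(\rho\log\rho)$ when $a\in\CI$, one sees that $e_1-f_1$ is a scattering $1$-form over ${}^0\ol{\R^4}$ whose coefficients in the frame $f_\bullet$ are polyhomogeneous with index set contained in $\cE_{\rm log}$ and vanish at $\rho=0$ (the leading behavior being $O(\rho\log\rho)$ off the $\chi'$ term, and merely $O(\rho)$ where $\chi$ is locally constant). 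Thus in these frames the bundle map has matrix $I+R$ with $R\in\cA_\phg^{\cE_{\rm log}}$ and $R|_{\rho=0}=0$.

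To finish, I would invoke $\cA_\phg^{\cE_{\rm log}}\subset\CI+\rho^{1-0}\Hb^\infty\subset\cC^0$ (the inclusion noted around~\eqref{EqCptFtInvPhg}) together with the continuity of $\phi$ from Lemma~\ref{LemmaCptAComp}: this gives that the identity on $T^*\R^4$ extends to a continuous bundle map up to $\pa\,{}^m\ol{\R^4}$. For the boundary restriction, $\phi|_{\pa}\colon\pa\,{}^m\ol{\R^4}\to\pa\,{}^0\ol{\R^4}$ is a smooth diffeomorphism by Lemma~\ref{LemmaCptAComp}; by~\eqref{EqCptACompBdy} the two coordinate charts on $C_1$ agree on the boundary, so $e_\bullet|_{\pa}$ and $f_\bullet|_{\pa}$ are smooth frames of the boundary-restricted bundles; and the bundle map is represented there by $(I+R)|_{\rho=0}=I$, which is smooth and invertible. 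Together with the easy regions, this shows that the extension restricts over the boundary to a smooth bundle isomorphism.

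The step I expect to be the main obstacle---or rather the main subtlety---is precisely the cutoff transition region in $C_1$: there the change of coordinates genuinely involves $\rho\log\rho$, so the bundle map is \emph{not} smooth up to $\pa\,{}^m\ol{\R^4}$, and one cannot simply quote the smoothness used near the light cone or in $C_2$. What rescues the statement is exactly that these logarithmic corrections vanish at the boundary, i.e.~\eqref{EqCptACompBdy}; so the only care needed is to organize the computation so that all error terms are exhibited as polyhomogeneous $1$-forms (with index set $\cE_{\rm log}$) whose frame coefficients carry at least one power of $\rho$.
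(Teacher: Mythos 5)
Your proof is correct and follows essentially the same route as the paper's: both reduce the problem to the fact that the $dr$- and rescaled spherical directions are common smooth subbundles of the two scattering bundles away from $r=0$, and that the one remaining frame direction changes only by the differential of the logarithmic shift, whose scattering-frame coefficients are of size $\rho\log\rho$ (polyhomogeneous with index set $\cE_{\rm log}$) and hence vanish continuously at the boundary, leaving the identity matrix there. The only cosmetic difference is that the paper singles out $dt$ and quotes Lemma~\ref{LemmaCptFtInv} (writing $t=\rho^{-1}f$, $f\in\cA_\phg^{\cE_{\rm log}}$), whereas you differentiate the explicit correction $\chi(t/r)\,2 m\log(r-2 m)$ directly in the $C_1$ chart.
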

\begin{proof}
  Since away from $r=0$, $\la d(r^{-1})\ra$ and $r\,T^*\Sph^2$ are smooth subbundles of $\Tsc^*\,{}^m\ol{\R^4}$ for any $m$, it suffices to show that $d(t^{-1})$, which is a smooth section of $\Tsc^*\,{}^0\ol{\R^4}$, extends by continuity from $\R^4$ to $\pa{}^m\ol{\R^4}$ and restricts to a smooth section of $\Tsc^*_{\pa{}^m\ol{\R^4}}{}^m\ol{\R^4}$. By Lemma~\ref{LemmaCptFtInv}, we have $t=\rho^{-1}f$, $f\in\cA_\phg^{\cE_{\rm log}}$, so $d t=f\,d(\rho^{-1})+\rho^{-1}d f$; but $f|_{\pa{}^m\ol{\R^4}}$ is smooth indeed, while in a local product neighborhood $[0,1)_\rho\times\R_X^3$ of a point in $\pa{}^m\ol{\R^4}$, $\rho^{-1}d f=(\rho\pa_\rho f)\frac{d\rho}{\rho^2}+(\pa_X f)\frac{d X}{\rho}$ restricts to the smooth scattering 1-form $(\pa_X f)\frac{d X}{\rho}$ on $\pa{}^m\ol{\R^4}$.
\end{proof}

Let us discuss this on the level of function spaces. The map $\phi$ in Lemma~\ref{LemmaCptAComp} induces $\CI({}^m\ol{\R^4})\subset\cA_\phg^{\cE_{\rm log}}({}^0\ol{\R^4})$ and vice versa. Moreover, it induces an isomorphism
\begin{equation}
\label{EqCptACompHb}
  ({}^m\rho)^\alpha \Hbloc^s({}^m\ol{\R^4})\cong({}^0\rho)^\alpha\Hbloc^s({}^0\ol{\R^4}),\ \ s,\alpha\in\R,
\end{equation}
as follows from $\phi\in\cA_\phg^{\cE_{\rm log}}$. The corresponding statement is not quite true on the blown-up spaces ${}^m\!M$, the failure happening at ${}^m\!\scri^+$; there, let us use
\[
  {}^m\rho={}^0\rho=r^{-1}; \quad
  {}^m v={}^0 v-2 m({}^0\rho)\log(({}^0\rho)^{-1}-2 m),\ 
  {}^0 v=r^{-1}(t-r).
\]
Now, the b-tangent bundle on ${}^0\!M$ is spanned near ${}^0\!\scri^+$ by spherical derivatives,
\[
  {}^0\rho\pa_{{}^0\rho} \in {}^m\rho\pa_{{}^m\rho} + \cA_\phg^{\cE'_{\rm log}}\cdot\pa_{{}^m v},\ \ 
  {}^0 v\pa_{{}^0 v} \in \bigl({}^m v+\cA_\phg^{\cE'_{\rm log}}\bigr)\pa_{{}^m v},
\]
and ${}^0\rho\pa_{{}^0 v}={}^m\rho\pa_{{}^m v}$; due to the logarithmic loss at $\scri^+$, we thus only have
\[
  ({}^m\rho_0)^{b_0}({}^m\rho_I)^{b_I}({}^m\rho_+)^{b_+}\Hbloc^s({}^m\!M)
  \subset ({}^0\rho_0)^{b_0}({}^0\rho_I)^{b_I-\eps}({}^0\rho_+)^{b_+}\Hbloc^s({}^0\!M)
\]
for all $\eps>0$, but the inclusion fails for $\eps=0$. That is, conormal function spaces are the same on ${}^m\!M$ and ${}^0\!M$ up to an arbitrarily small loss in the weight at $\scri^+$.

Polyhomogeneous spaces on ${}^m\ol{\R^4}$ for different values of $m$ are related in a simple manner: if $\cE\subset\C\times\N_0$ is an index set and $\cE_{\rm log}$ is given by~\eqref{EqCptIndexSetLog}, then $\phi$ induces inclusions
\begin{equation}
\label{EqCptACompPhg}
  \cA_\phg^\cE({}^m\ol{\R^4}) \hra \cA_\phg^{\cE+\cE_{\rm log}}({}^0\ol{\R^4}),\ \ 
  \cA_\phg^\cE({}^0\ol{\R^4}) \hra \cA_\phg^{\cE+\cE_{\rm log}}({}^m\ol{\R^4});
\end{equation}
this is only nontrivial where the two compactifications differ, i.e.\ away from $r=0$, i.e.\ where we can use $r^{-1}$ as a boundary function for both ${}^0\ol{\R^4}$ and ${}^m\ol{\R^4}$. Considering a single term $r^{-i z}(\log r)^k f({}^m v,\omega)$, with $\omega\in\Sph^2$ and $f$ smooth, in the expansion of an element of $\cA_\phg^\cE({}^m\ol{\R^4})$, the first inclusion in~\eqref{EqCptACompPhg} follows from $f\circ\phi\in\cA_\phg^{\cE_{\rm log}}({}^0\ol{\R^4})$, which in turn can be seen by Taylor expanding $f({}^0 v-2 m({}^0\rho)\log(({}^0\rho)^{-1}-2 m),\omega)$ in the first argument around ${}^0 v$. The proof of the second inclusion is similar. See \cite[Proposition~7.8]{BaskinVasyWunschRadMink2} for an alternative argument.

Polyhomogeneity on different spaces ${}^m\!M$ on the other hand is much less well-behaved: for instance, a function $u\in\CI({}^m\!M)$ compactly supported near a point in $({}^m\!\scri^+)^\circ$, $m>0$, so $u\in\cA_\phg^{\emptyset,0,\emptyset}({}^m\!M)$, is not polyhomogeneous on ${}^0\!M$: it vanishes near $({}^0\!\scri^+)^\circ$ and $({}^0 I^+)^\circ$, but is nontrivial at the corner ${}^0\!\scri^+\cap {}^0 I^+$.

\subsection{Bundles and connections near null infinity}
\label{SsCptScri}

In the energy estimate~\eqref{EqISysLinEqIEst} for the toy problem~\eqref{EqISysLinEqI}, derivatives of $u$ along vector fields tangent to the fibers of $\beta\colon\scri^+\to S^+$ are better controlled than general b-derivatives. In this section, we introduce analytic structures on the blow-up $M$ of $\ol{\R^4}$ capturing this in an invariant manner.

\begin{definition}
\label{DefCptScriModule}
  For vector bundles $E_j\to\ol{\R^4}$, $j=1,2$, let
  \[
    \cM_{\beta^*E_1,\beta^*E_2} \subset \Diffb^1(M;\beta^*E_1,\beta^*E_2)
  \]
  denote the $\CI(M)$-module of all first order b-differential operators $A$ which satisfy the following condition near $\scri^+$: if $E_j\cong\cU\times\C^{k_j}$, $j=1,2$, is a local trivialization of $E_j$, with $\cU\subset\ol{\R^4}$ a neighborhood of $S^+$, see \eqref{EqCptASplus}, and we pull these trivializations back to $\beta^*E_j\cong\beta^{-1}(\cU)\times\C^{k_j}$, then $A=V+f$, where $V$ is a $k_2\times k_1$ matrix of vector fields $V_{i j}\in\Vb(M)$ which are tangent to the fibers of $\beta$, and $f\in\CI(M)^{k_2\times k_1}$. Let moreover
  \[
    {}^0\cM_{\beta^*E_1,\beta^*E_2}\subset\cM_{\beta^*E_1,\beta^*E_2}
  \]
  denote the submodule for which $f|_{\scri^+}=0$

  For a single vector bundle $E\to\ol{\R^4}$, we write ${}^{(0)}\cM_{\beta^*E}:={}^{(0)}\cM_{\beta^*E,\beta^*E}$. Whenever the bundle $E$ is clear from the context, we shall simply write ${}^{(0)}\cM:={}^{(0)}\cM_{\beta^*E}$. For $k\in\N$, we write $\cM^k\subset\Diffb^k$ for sums of $k$-fold products of elements of $\cM$.
\end{definition}

It is easy to check that the definition of $\cM_{\beta^*E_1,\beta^*E_2}$ is independent of the choice of local trivializations; for ${}^0\cM$, this is true as well, since vector fields tangent to the fibers of $\beta$ annihilate the matrices for changes of frames of $E_1$ and $E_2$ which lift to be \emph{constant} along the fibers of $\beta$. We make some elementary observations:

\begin{lemma}
\label{LemmaCptScriModule}
  We have:
  \begin{enumerate}
  \item\label{ItCptScriModuleIncl} $\rho_I\Diffb^1(M;\beta^*E)\subset{}^0\cM_{\beta^*E}\subset\cM_{\beta^*E}$;
  \item\label{ItCptScriModuleComm} if $A,B\in\cM_{\beta^*E}$, and $A$ has a scalar principal symbol, then $[A,B]\in\cM_{\beta^*E}$. Strengthening the assumption to $A,B\in{}^0\cM_{\beta^*E}$, we have $[A,B]\in{}^0\cM_{\beta^*E}$;
  \item\label{ItCptScriModuleBdl} there is a well-defined map
     \[
       \cM_{\ul\C}\ni A\mapsto A\otimes\Id\in{}^0\cM_{\beta^*E}/\rho_I\,\CI(M;\End(\beta^*E)).
     \]
  \end{enumerate}
\end{lemma}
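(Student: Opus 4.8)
\textbf{Plan for the proof of Lemma~\ref{LemmaCptScriModule}.}

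The three parts are all local near $\scri^+$ (away from $\scri^+$, $\cM$ is just all of $\Diffb^1$ and everything is standard), so the plan is to fix a local trivialization of $E$ pulled back from $\ol{\R^4}$, adapted coordinates $(\rho_0,\rho_I,\omega)$ near $i^0\cap\scri^+$ (and the analogous ones near $i^+\cap\scri^+$), and recall that near $\scri^+$ the fibers of $\beta$ are the level sets of $(\rho_0,\omega)$, so the fiber-tangent b-vector fields are spanned over $\CI(M)$ by $\rho_I\pa_{\rho_I}$; a general b-vector field is a $\CI(M)$-combination of $\rho_I\pa_{\rho_I}$, $\rho_0\pa_{\rho_0}$, and the spherical vector fields $\pa_{\omega^a}$.

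\emph{Part~\usref{ItCptScriModuleIncl}.} For the first inclusion I would simply note that for $V\in\Vb(M)$, the vector field $\rho_I V$ is fiber-tangent: writing $V=a\rho_0\pa_{\rho_0}+b\rho_I\pa_{\rho_I}+c^a\pa_{\omega^a}$ with $a,b,c^a\in\CI(M)$, we have $\rho_I V = (\rho_I a)\rho_0\pa_{\rho_0}+\cdots$, and $\rho_I a$, $\rho_I c^a$ vanish at $\scri^+$; since any element of $\rho_I\CI(M)$ is of the form $\rho_I\pa_{\rho_I}(\rho_I \cdot)$-type... more directly, $\rho_I\cdot\rho_0\pa_{\rho_0}=(\rho_0\pa_{\rho_0})(\rho_I\,\cdot)-(\rho_0\pa_{\rho_0}\rho_I)(\cdot)$—cleaner: just observe $\rho_I\rho_0\pa_{\rho_0}$ and $\rho_I\pa_{\omega^a}$ each lie in $\rho_I\CI(M)\cdot\Vb(M)$, and multiplication by the function $\rho_I\in\CI(M)$ (which vanishes at $\scri^+$) turns any b-vector field into a fiber-tangent one up to a $\CI(M)$-multiple of $\rho_I\pa_{\rho_I}$; moreover $\rho_I\cdot\rho_I\pa_{\rho_I}$ is itself fiber-tangent. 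Hence $\rho_I\Diffb^1\subset\rho_I\Vb(M)+\rho_I\CI(M)\subset {}^0\cM$, using $f\in\rho_I\CI(M)\Rightarrow f|_{\scri^+}=0$. The inclusion ${}^0\cM\subset\cM$ is immediate from the definitions.

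\emph{Part~\usref{ItCptScriModuleComm}.} Write $A=V+f$, $B=W+g$ with $V,W$ matrices of fiber-tangent b-vector fields and $f,g\in\CI(M)$ (matrices); the hypothesis that $A$ has scalar principal symbol means $V=v\,\Id$ for a single fiber-tangent $v\in\Vb(M)$. Then $[A,B]=[v\,\Id+f,\,W+g]=[v,W]+v(g)-W(f)\cdot(\text{as operators})+[f,W]+[f,g]$; the key points are: (i) $[v,W]$ is again a matrix of fiber-tangent b-vector fields, because the fiber-tangent vector fields form a Lie subalgebra of $\Vb(M)$ (the fibration of $\scri^+$ is a genuine fibration, so its tangent distribution is involutive, and near $\scri^+$ this distribution together with tangency to $\pa M$ is preserved under bracket); (ii) $v(g)$, $W(f)$, and the commutators of the $\CI(M)$-matrices $f,g$ with each other and with $W$ are all in $\CI(M)^{k\times k}$, hence the zeroth-order part of $[A,B]$ is smooth. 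So $[A,B]\in\cM$. For the strengthened statement $A,B\in{}^0\cM$: now $f,g\in\CI(M)$ with $f|_{\scri^+}=g|_{\scri^+}=0$, i.e.\ $f,g\in\rho_I\CI(M)$; then $v(g), W(f)\in\rho_I\Vb(M)(\text{applied trivially})$... more precisely $v(g)$ with $v$ fiber-tangent and $g\in\rho_I\CI(M)$: since $v$ is fiber-tangent it is a $\CI$-combination of $\rho_I\pa_{\rho_I}$, and $\rho_I\pa_{\rho_I}(\rho_I h)=\rho_I\pa_{\rho_I}(\rho_I)\cdot h+\rho_I^2\pa_{\rho_I}h\in\rho_I\CI(M)$; similarly $W(f)\in\rho_I\CI(M)$ and $[f,W]\in\rho_I\Vb(M)$-coefficients which vanish at $\scri^+$, and $[f,g]\in\rho_I^2\CI(M)$. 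Hence the zeroth order part of $[A,B]$ vanishes at $\scri^+$ and $[A,B]\in{}^0\cM$.

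\emph{Part~\usref{ItCptScriModuleBdl}.} Given $A\in\cM_{\ul\C}$, near $\scri^+$ write $A=v+f$ with $v\in\Vb(M)$ fiber-tangent and $f\in\CI(M)$; define $A\otimes\Id := v\otimes\Id_{\beta^*E}+f\,\Id$ acting on $\beta^*E$ in the given local trivialization, where $v\otimes\Id_{\beta^*E}$ means the action of $v$ on coefficients in that trivialization. This is manifestly in ${}^0\cM_{\beta^*E}$ (it is a scalar matrix of fiber-tangent vector fields plus a smooth function times the identity). The only thing to check is well-definedness modulo $\rho_I\CI(M;\End(\beta^*E))$, i.e.\ independence of the choice of local trivialization of $E$ (pulled back from $\ol{\R^4}$). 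Changing the trivialization by a smooth bundle automorphism, whose matrix $\Phi\in\CI(\ol{\R^4};\mathrm{GL})$ pulls back to be \emph{constant along the fibers of $\beta$}, the operator $v\otimes\Id$ in the new frame is $\Phi^{-1}(v\otimes\Id)\Phi = v\otimes\Id + \Phi^{-1}(v\Phi)$; but $v$ is fiber-tangent and $\beta^*\Phi$ is constant along the fibers, so $v(\beta^*\Phi)=0$ in the interior of $\scri^+$, hence $v(\beta^*\Phi)\in\rho_I\CI(M)$ (it is smooth and vanishes at $\scri^+$), giving $\Phi^{-1}(v\Phi)\in\rho_I\CI(M;\End(\beta^*E))$. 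The same computation shows that two choices of fiber-tangent lift $v$ of the ``symbol'' of $A$ differ by an element of $\rho_I\Vb(M)$, hence of ${}^0\cM$, so the $\End$-valued ambiguity beyond $\rho_I\CI$ is already accounted for in ${}^0\cM$. Patching with a partition of unity on $\ol{\R^4}$ subordinate to trivializing neighborhoods, these local definitions agree modulo $\rho_I\CI(M;\End(\beta^*E))$ and glue to a global map; smoothness and the module property are routine.

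The main obstacle I anticipate is purely bookkeeping rather than conceptual: being careful in Part~\usref{ItCptScriModuleComm} with which terms land in $\rho_I\CI(M)$ versus merely in $\CI(M)$ when passing from the $\cM$-statement to the ${}^0\cM$-statement, and in Part~\usref{ItCptScriModuleBdl} verifying that ``constant along the fibers of $\beta$'' is exactly the condition that makes the frame-change correction term fall into $\rho_I\CI(M)$—which is the content already used in the sentence after Definition~\ref{DefCptScriModule}. Everything else reduces to the elementary fact that the fiber-tangent b-vector fields near $\scri^+$ are a Lie submodule spanned by $\rho_I\pa_{\rho_I}$, together with $\rho_I$ being a defining function of $\scri^+$.
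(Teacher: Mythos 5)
Your overall strategy is the paper's (the published proof is just a few lines: (1) and (2) are declared clear from the definitions, and (3) is exactly the fiber-constant transition-matrix computation you perform), but your geometric setup contains a genuine error. You identify the fibers of $\beta$ near $\scri^+$ as the level sets of $(\rho_0,\omega)$ and conclude that the fiber-tangent b-vector fields are spanned over $\CI(M)$ by $\rho_I\pa_{\rho_I}$ alone. This is backwards: the fibers of $\scri^+\to S^+$ are the curves on which $\omega$ is fixed and the affine parameter $s=v/\rho$ (equivalently $\rho_0$) varies, so the fiber direction at $\scri^+$ is $\rho_0\pa_{\rho_0}$; and since tangency to the fibers is a condition only at $\scri^+$ (on the image of a b-vector field in $T_{\scri^+}M$), the relevant module is spanned over $\CI(M)$ by $\rho_0\pa_{\rho_0}$, $\rho_I\pa_{\rho_I}$ and $\rho_I\pa_{x^a}$, i.e.\ it is $\CI(M;\Tbeta M+\rho_I\Tb M)$, as the paper records immediately after the lemma (cf.\ Definition~\ref{DefCptScriSubb}, where $\Tbeta_{\scri^+}M$ is rank $2$, spanned by $\rho_I\pa_{\rho_I}$ and $\rho_0\pa_{\rho_0}$). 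With your span, part~\usref{ItCptScriModuleIncl} is actually false as argued—$\rho_I\pa_{x^a}$ and $\rho_I\rho_0\pa_{\rho_0}$ are not $\CI(M)$-multiples of $\rho_I\pa_{\rho_I}$—which is why that step of your write-up degenerates into the unjustified phrase ``up to a $\CI(M)$-multiple of $\rho_I\pa_{\rho_I}$''; your module would moreover exclude $\rho_0\pa_{\rho_0}$ and hence the memberships $\rho^{-1}\pa_0,\ \rho_0^{-1}\rho_+^{-1}\pa_1\in{}^0\cM_{\ul\C}$ on which the later analysis relies. With the correct reading, (1) is immediate: for $V\in\Vb(M)$ the field $\rho_I V$ vanishes at $\scri^+$ as an element of $T_{\scri^+}M$, hence is trivially tangent to the fibers, and $\rho_I f$ vanishes at $\scri^+$.

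Once the module is identified correctly, the remainder of your argument is sound and coincides with the paper's. For (2), closure of the fiber-tangent b-vector fields under brackets follows, as you say, from involutivity of the vertical distribution of the fibration $\scri^+\to S^+$ together with the fact that b-vector fields restrict to $\scri^+$, and the ${}^0\cM$ refinement uses only that any b-vector field maps $\rho_I\,\CI(M)$ to itself (your verification should be run with the full spanning set $\rho_0\pa_{\rho_0},\rho_I\pa_{\rho_I},\rho_I\pa_{x^a}$, not just $\rho_I\pa_{\rho_I}$, but it goes through verbatim). For (3), your computation is the paper's: a change of trivialization $C$ pulls back to be constant along the fibers, and $C^{-1}(A\otimes\Id)C-(A\otimes\Id)=C^{-1}V(C)$ involves only the vector-field part of $A$, which annihilates $\beta^*C$ at $\scri^+$ because $V|_{\scri^+}$ is tangent to the fibers along which $\beta^*C$ is constant; in particular the zeroth-order part of $A$ drops out of this commutator, so your separate discussion of ``two choices of fiber-tangent lift'' is unnecessary (and, as stated, not quite meaningful, since two such decompositions of a fixed $A$ differ by a function, not a vector field).
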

\begin{proof}
  \eqref{ItCptScriModuleIncl} and \eqref{ItCptScriModuleComm} are clear from the definition. The map in \eqref{ItCptScriModuleBdl} is given in a local trivialization $E\cong\cU\times\C^k$ of $E$ near $S^+$ as $A\cdot\Id_{k\times k}\in\Diffb^1(M)^{k\times k}$; the transition function between two different trivializations is given by $C\in\CI(\cU;\C^{k\times k})$, which pulls back to $M$ to be constant along the fibers of $\beta$; but then $C^{-1}(A\cdot\Id_{k\times k})C-(A\cdot\Id_{k\times k})=C^{-1}A(C)\in\CI(M;\C^{k\times k})$, with $A$ acting component-wise, vanishes on $\scri^+$ by definition of $\cM_{\ul\C}$.
\end{proof}

In local coordinates $[0,\eps_0)_{\rho_0}\times[0,\eps_0)_{\rho_I}\times\R_{x^2 x^3}^2$ near $I^0\cap\scri^+$ as in~\eqref{EqISysLinCoords0I}, with $\R^2$ a local coordinate patch on $\Sph^2$, elements of $\cM_{\ul\C}$ are linear combinations of $\rho_0\pa_{\rho_0}$, $\rho_I\pa_{\rho_I}$, and $\rho_I\pa_{x^a}$, $a=2,3$, plus smooth functions. We thus see that ${}^{(0)}\cM_{\ul\C}$ is generated over $\CI(M)$ by $(\rho_I)\CI(M)$ and lifts of elements $V\in\Vb(\ol{\R^4})$ which vanish at $S^+$ as incomplete vector fields, i.e.\ $V|_{S^+}=0\in T_{S^+}\ol{\R^4}$. (This should be compared to the larger space $\Vb(M)$, which is generated by lifts of elements $V\in\Vb(\ol{\R^4})$ which are merely \emph{tangent} to $S_+$.) Note that by \eqref{EqCptASplNull}, we have
\begin{equation}
\label{EqCptScriNull}
  \rho^{-1}\pa_0,\ \rho_0^{-1}\rho_+^{-1}\pa_1 \in {}^0\cM_{\ul\C};
\end{equation}
for a fixed choice of $\rho$, the operators $\rho^{-1}\pa_0$ and $\pa_1$ acting on sections of any bundle $\beta^*E$ are therefore well-defined, modulo $\rho_I\,\CI$ and $\rho_0\rho_I\rho_+\CI$ valued in $\End(\beta^*E)$, respectively.

The modules defined above are closely related to a natural subbundle of $\Tb_{\scri^+}M$:

\begin{definition}
\label{DefCptScriSubb}
  Denote by
  \[
    \Tbeta_{\scri^+}M\subset\Tb_{\scri^+}M
  \]
  the rank $2$ subbundle generated by all $V\in\Tb_{\scri^+}M$ which are tangent to the fibers of $\beta$, see \eqref{EqCptABlowdown}, and let $\Tbeta M$ be any smooth rank $2$ extension of $\Tbeta_{\scri^+}M$ to a neighborhood of $\scri^+$. Let then
  \[
    (\Tbeta M)^\perp := \{ \alpha\in\Tb^* M \colon \alpha(V)=0\ \tn{for all}\ V\in\Tbeta M \} \subset \Tb^*M
  \]
  denote the annihilator of $\Tbeta M$ in $\Tb^*M$.
\end{definition}

Near $I^0\cap\scri^+$, we can for instance take $\Tbeta M\subset\Tb M$ to be the subbundle whose fibers are spanned by $\rho_I\pa_{\rho_I}$ and $\rho_0\pa_{\rho_0}$.

\begin{rmk}
  Another equivalent characterization of $\cM$ is that the principal symbols of its elements vanish on $(\Tbeta_{\scri^+}M)^\perp$. We also note that for $p\in\scri^+$, there is a natural isomorphism
  \begin{equation}
  \label{EqCptScriSubbIso}
    (\Tbeta M)^\perp_p \cong T^*_{\beta(p)}S^+.
  \end{equation}
  Indeed, given $V\in\Tb_p M$, note that $\beta_*V\in\Tb_{S^+}\ol{\R^4}$ is tangent to $S^+$, hence has a well-defined image in $T_p S^+$; and $V\in\Tbeta_p M$ is precisely the condition that this image be $0$. Thus, the isomorphism \eqref{EqCptScriSubbIso} is obtained by mapping $\eta\in T^*_{\beta(p)}S^+$ to $\Tb_p M\ni V\mapsto\eta(\beta_*V)$.
\end{rmk}

Using this subbundle, we have
\[
  \cM_{\ul\C} = \CI(M;\Tbeta M+\rho_I\Tb M) + \CI(M) \subset \Diffb^1(M),
\]
where we write
\begin{equation}
\label{EqCptScriSections}
  \CI(M;\Tbeta M+\rho_I\Tb M) := \CI(M;\Tbeta M) + \rho_I\,\CI(M;\Tb M).
\end{equation}
Note here that the sum of the first two spaces on the right is globally well-defined on $M$ even though we only defined $\Tbeta M$ in a neighborhood of $\scri^+$: this is due to $\Tbeta M\subset\Tb M$. The general modules $\cM_{\beta^*E_1,\beta^*E_2}$ have a completely analogous description obtained by tensoring the bundles with $\Hom(\beta^*E_1,\beta^*E_2)$.

We next prove some lemmas allowing us to phrase energy estimates for bundle-valued waves invariantly.
\begin{lemma}
\label{LemmaCptScriConn}
  Let $E\to\ol{\R^4}$ be a vector bundle, and let $d^E\in\Diff^1(\ol{\R^4};E,T^*\ol{\R^4}\otimes E)$ be a connection. Then $d^E$ induces a b-connection, i.e.\ a differential operator
  \begin{equation}
  \label{EqCptScriConnDiffb}
    d^E \in \Diffb^1(M;\beta^*E,\Tb^*M \otimes \beta^*E),
  \end{equation}
  on $\beta^*E\to M$. If $\tilde d^E$ is another connection on $E$, then, with notation analogous to \eqref{EqCptScriSections},
  \begin{equation}
  \label{EqCptScriConnDiff}
    d^E-\tilde d^E \in \CI\bigl(M;((\Tbeta M)^\perp+\rho_I\Tb^*M)\otimes\End(\beta^*E)\bigr).
  \end{equation}
\end{lemma}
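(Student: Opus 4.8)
The statement has two parts: (1) any connection $d^E$ on $E\to\ol{\R^4}$ induces a \emph{b}-connection on $\beta^*E\to M$, i.e.\ extends to an element of $\Diffb^1(M;\beta^*E,\Tb^*M\otimes\beta^*E)$, and (2) the difference of two such induced connections lies in the space \eqref{EqCptScriConnDiff}. The plan is to work in local trivializations near the three boundary hypersurfaces, with the only non-trivial analysis taking place near $\scri^+$ (away from $\scri^+$ the map $\beta$ is a diffeomorphism onto its image and the claim reduces to the standard fact that a smooth connection on a bundle over a manifold with corners is a b-differential operator).

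For part (1): pick a local trivialization $E\cong\cU\times\C^k$ over a neighborhood $\cU\subset\ol{\R^4}$ of $S^+$, so that $d^E = d + A$ with $d$ the trivial (flat) connection on $\cU$ and $A\in\CI(\cU;T^*\ol{\R^4}\otimes\C^{k\times k})$ the connection 1-form. Pulling back by $\beta$, the connection 1-form $\beta^*A$ is a section of $\beta^*T^*\ol{\R^4}\otimes\End(\beta^*E)$ which, because $\Vb(\ol{\R^4})\subset$ the span of coordinate derivatives with smooth coefficients, lies in fact in $\CI(M;\Tb^*M\otimes\End(\beta^*E))$ — here one uses that $T^*\ol{\R^4}\subset\Tb^*\ol{\R^4}$ (smooth 1-forms are b-1-forms) and that $\beta^*\Tb^*\ol{\R^4}\subset\Tb^*M$ since $\beta_*$ maps b-vector fields on $M$ to b-vector fields on $\ol{\R^4}$ (this last point is the content of the blow-up construction, cf.\ \cite[Chapter~5]{MelroseDiffOnMwc}). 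Thus $\beta^*A$ is a genuine zeroth-order b-operator. It remains to check that $d$ itself, the exterior derivative on functions composed with the identification $\beta^*T^*\ol{\R^4}\hookrightarrow\Tb^*M$, is in $\Diffb^1(M;\ul\C,\Tb^*M)$; this is immediate since $df = \sum (V_j f)\,\theta^j$ for any local b-frame $\{V_j\}$ of $\Tb M$ with dual b-coframe $\{\theta^j\}$, and $V_j f\in\CI$ for $f\in\CI(M)$. Hence $d^E=d+\beta^*A$ extends as claimed.

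For part (2): two connections $d^E,\tilde d^E$ on $E$ differ by a tensor, $d^E-\tilde d^E = B\in\CI(\ol{\R^4};T^*\ol{\R^4}\otimes\End(E))$. Pulling back, $\beta^*B\in\CI(M;\beta^*T^*\ol{\R^4}\otimes\End(\beta^*E))$, and the claim is that $\beta^*T^*\ol{\R^4}$, viewed inside $\Tb^*M$ near $\scri^+$, lies in $(\Tbeta M)^\perp + \rho_I\Tb^*M$. This is a purely linear-algebra computation in the coordinates $\rho_0,\rho_I,x^a$ of \eqref{EqISysLinCoords0I} near $i^0\cap\scri^+$ (and the analogous coordinates near $i^+\cap\scri^+$): the smooth scattering-type 1-forms generating $T^*\ol{\R^4}$, namely $dq$, $ds$, and $r\,d\theta^a$ in the splitting \eqref{EqCptASpl}, pull back and can be expressed via \eqref{EqCptASplNullDual}–\eqref{EqCptASplNull}. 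Concretely $\rho\,ds = \rho_I\frac{d\rho_0}{\rho_0}$ lands in $\rho_I\Tb^*M$, the spherical 1-forms $r\,d\theta^a$ correspond to $\rho_I\cdot(\rho_I^{-1}\text{-weighted spherical coforms})$ modulo the relevant powers and are likewise in $\rho_I\Tb^*M$ (this is exactly the edge-structure rescaling noted in the footnote after \eqref{EqISysLinEqIEst}), while $\rho\,dq = -\frac{2}{1-2m\rho}(\frac{d\rho_0}{\rho_0}+\frac{d\rho_I}{\rho_I})+\rho_I\frac{d\rho_0}{\rho_0}$ — and one checks that $\frac{d\rho_0}{\rho_0}+\frac{d\rho_I}{\rho_I}$, being the differential of $\log(\rho_0\rho_I)=\log\rho=-\log r$ pulled back from $S^+$'s \emph{transverse} direction, annihilates $\Tbeta_{\scri^+}M = \langle\rho_0\pa_{\rho_0},\rho_I\pa_{\rho_I}\rangle$ only after taking the appropriate combination; more precisely, one must observe that the span of $\{dq, ds, r\,d\theta^a\}$ over $\CI$ equals $(\Tbeta M)^\perp + \rho_I\Tb^*M$ near $\scri^+$, using that $(\Tbeta M)^\perp$ is spanned by (the pullback of) $dv = d\rho_I$-type forms dual to the transverse directions — this matches the isomorphism \eqref{EqCptScriSubbIso}. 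Tensoring with $\End(\beta^*E)$ then gives \eqref{EqCptScriConnDiff}.

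The main obstacle I expect is \emph{bookkeeping at the corner and the precise identification of $(\Tbeta M)^\perp$}: one has to verify carefully, in the two coordinate patches covering $i^0\cap\scri^+$ and $i^+\cap\scri^+$, that the $\CI(M)$-span of the pulled-back smooth 1-forms $dq$, $ds$, $r\,d\theta^a$ (equivalently, the image of $\beta^*T^*\ol{\R^4}$ in $\Tb^*M$) is exactly $(\Tbeta M)^\perp + \rho_I\Tb^*M$ and not something larger — in particular that the ``bad'' coframe element $dq$ contributes only to $(\Tbeta M)^\perp$ modulo $\rho_I\Tb^*M$, which is where the factor $\frac{1}{1-2m\rho}$ and the structure of \eqref{EqCptASplNullDual} enter. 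Once this is pinned down, both \eqref{EqCptScriConnDiffb} and \eqref{EqCptScriConnDiff} follow, and the argument near $i^+$ is identical using \eqref{EqCptASplNull} in place of \eqref{EqCptASplNullDual}.
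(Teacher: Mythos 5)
Your part (1) is correct and is essentially the paper's argument (the paper phrases it with a local frame $e^i$ of $E$, defining $\beta^*f^i(V):=f^i(\beta_*V)$ for $V\in\Tb M$, where $f^i=d^E e^i$; your "$\beta^*A$ is a smooth section of $\Tb^*M\otimes\End(\beta^*E)$" is the same computation in a trivialization). The gap is in part (2), and it is a real one: you treat $dq$, $ds$, $r\,d\theta^a$ as "generating $T^*\ol{\R^4}$", but these are a frame of the \emph{scattering} bundle $\Tsc^*\ol{\R^4}$ from \eqref{EqCptASpl}, not of $T^*\ol{\R^4}$; as ordinary 1-forms they are singular at $\pa\ol{\R^4}$ (e.g.\ $ds=\rho^{-1}\,dv-v\rho^{-2}\,d\rho$), and their pullbacks are not even smooth b-1-forms on $M$. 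The difference $d^E-\tilde d^E$ of two connections is a smooth section of $T^*\ol{\R^4}\otimes\End(E)$ — the \emph{ordinary} cotangent bundle — so what you must place in $(\Tbeta M)^\perp+\rho_I\Tb^*M$ is the pullback of an ordinary smooth 1-form. Moreover, the identity you single out as the "main obstacle", that the $\CI$-span of the (suitably weighted) scattering coframe equals $(\Tbeta M)^\perp+\rho_I\Tb^*M$, is false: by \eqref{EqCptASplNullDual} the smooth b-form $\rho\,dq$ contains the term $-\tfrac{2}{1-2m\rho}\tfrac{d\rho_I}{\rho_I}$, whose pairing with $\rho_I\pa_{\rho_I}\in\Tbeta M$ does not vanish at $\scri^+$, so $\rho\,dq\notin\CI(M;(\Tbeta M)^\perp+\rho_I\Tb^*M)$. (Also, $(\Tbeta M)^\perp$ is not spanned by $dv$-type forms: by \eqref{EqCptScriSubbIso} it consists of the spherical codirections, spanned at $\scri^+$ by the $d\theta^a$.) So the computation you outline checks the wrong bundle and, carried out literally, would not close; the conclusion \eqref{EqCptScriConnDiff} is genuinely special to ordinary 1-forms and would fail for scattering-valued zeroth order terms.

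The repair is short and is exactly the paper's second step: since $\Tbeta_{\scri^+}M\subset\ker\beta_*$ — in the coordinates \eqref{EqCptASplCoords} one has $\beta_*(\rho_0\pa_{\rho_0})=\rho\pa_\rho$ and $\beta_*(\rho_I\pa_{\rho_I})=\rho\pa_\rho+v\pa_v$, both of which vanish as honest tangent vectors at $\rho=v=0$ — the pullback of the difference tensor annihilates $\Tbeta M$ at $\scri^+$, and a smooth section of $\Tb^*M\otimes\End(\beta^*E)$ with this property automatically lies in $\CI\bigl(M;((\Tbeta M)^\perp+\rho_I\Tb^*M)\otimes\End(\beta^*E)\bigr)$. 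Equivalently, in your coordinate style: check the \emph{ordinary} coframe $d\rho$, $dv$, $d\theta^a$ near $S^+$, for which $\beta^*d\rho=\rho\bigl(\tfrac{d\rho_0}{\rho_0}+\tfrac{d\rho_I}{\rho_I}\bigr)$ and $\beta^*dv=-d\rho_I$ lie in $\rho_I\,\CI(M;\Tb^*M)$ while $d\theta^a\in\CI(M;(\Tbeta M)^\perp)$; with the correct bundle, no delicate bookkeeping at the corners is needed.
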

\begin{proof}
  Fix a local frame $e^i$ of $E$, then for $u_i\in\CIdot(M)\subset\CIdot(\ol{\R^4})$, we have
  \[
    d^E(u_i e^i) = d u_i \otimes e^i + u_i\,d^E e^i.
  \]
  Now the map $u_i\mapsto d u_i$ extends to $M$ as the map $u_i\mapsto \bdiff u_i$, with $\bdiff\in\Diffb^1(M;\ul\C,\Tb^*M)$; and $f^i:=d^E e^i\in\CI(\ol{\R^4};T^*\ol{\R^4}\otimes E)$ canonically induces $\beta^*f^i\in\CI(M;\Tb^*M\otimes\beta^*E)$ by $\beta^*f^i(V)=f^i(\beta_*V)$, $V\in\Tb M$. Therefore, the expression $d^E(u_i\cdot\beta^*e^i)=\bdiff u_i\otimes \beta^*e^i + u_i\cdot\beta^*f^i$ proves \eqref{EqCptScriConnDiffb}.
  
  Letting $\tilde f^i:=\tilde d^E e^i$, we have $(d^E-\tilde d^E)(u_i\cdot\beta^*e^i)=u_i\cdot(\beta^*f^i-\beta^*\tilde f^i)$. But $\Tbeta_{\scri^+}M\subset\ker\beta_*$, so the bundle map $d^E-\tilde d^E$ annihilates $\Tbeta M$ at $\scri^+$, giving \eqref{EqCptScriConnDiff}.
\end{proof}

\begin{lemma}
\label{LemmaCptScriConnDKD}
  In the notation of Lemma~\usref{LemmaCptScriConn}, suppose $E$ is equipped with a fiber metric $\la\cdot,\cdot\ra_E$, and let
  \begin{equation}
  \label{EqCptScriConnDKDK}
    K \in \CI\bigl(M;(S^2\,\Tbeta M+\rho_I\,S^2\,\Tb M)\otimes\End(\beta^*E)\bigr).
  \end{equation}
  Moreover, let $B\in\CI(M;\Hom(\Tb M,\Tb^*M))$ denote a fiber metric on $\Tb M$. Then, acting on sections of $\beta^*E$, we have
  \begin{equation}
  \label{EqCptScriConnDKDKDiff}
    (d^E)^* B K d^E - (\tilde d^E)^*B K \tilde d^E \in \rho_I\Diffb^1(M;\beta^*E),
  \end{equation}
  where we take adjoints with respect to the fiber metrics on $\Tb M$ and $E$, and any fixed b-density on $M$. Moreover, if $(d^E)^\dag$ denotes the adjoint with respect to another fiber metric on $E$, then $(d^E)^\dag B K d^E-(d^E)^*B K d^E\in\rho_I\Diffb^1(M;\beta^*E)$.
\end{lemma}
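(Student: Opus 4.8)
The plan is to localize to a neighborhood of $\scri^+$ and then combine two complementary facts: connections (and anything else) pulled back from $\ol{\R^4}$ are differentiated only trivially modulo $\rho_I$ along $\Tbeta M$-directions, while $K$, by \eqref{EqCptScriConnDKDK}, only couples $\Tbeta M$-directions modulo $\rho_I$. Setting $C:=d^E-\tilde d^E$, a zeroth order bundle map $\beta^*E\to\Tb^*M\otimes\beta^*E$, and using linearity of the adjoint (for fixed $B$, $\langle\cdot,\cdot\rangle_E$ and b-density),
\[
  (d^E)^*BKd^E-(\tilde d^E)^*BK\tilde d^E=(d^E)^*BKC+C^*BKd^E-C^*BKC,
\]
and each summand is visibly in $\Diffb^1(M;\beta^*E)$; away from $\scri^+$ this is already $\rho_I\Diffb^1$, so it remains to treat a neighborhood of $\scri^+$. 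There I use a local frame of $E$ pulled back from $\ol{\R^4}$ and, near $i^0\cap\scri^+$ (other regions of $\scri^+$ being analogous), local b-coordinates $(\rho_0,\rho_I,x^2,x^3)$, with b-frame $W_1:=\rho_0\pa_{\rho_0}$, $W_2:=\rho_I\pa_{\rho_I}$, $W_3:=\pa_{x^2}$, $W_4:=\pa_{x^3}$, dual b-coframe $\theta^1,\dots,\theta^4$, so that $\Tbeta M=\langle W_1,W_2\rangle$ and $(\Tbeta M)^\perp=\langle\theta^3,\theta^4\rangle$.

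The two facts are: first, by \eqref{EqCptScriConnDiff} together with the elementary observation that $W f\in\rho_I\CI(M)$ whenever $f\in\CI(M)$ is pulled back from $\ol{\R^4}$ and $W\in\CI(M;\Tbeta M)$ (since $\Tbeta_{\scri^+}M\subset\ker\beta_*$), one gets $d^E_W\phi\in\rho_I\CI$ for any connection $d^E$, any $W\in\CI(M;\Tbeta M)$, and any $\phi$ pulled back from $\ol{\R^4}$ (a section of $\beta^*E$ or of $\End(\beta^*E)$); in particular, writing $C=\sum_c\theta^c\otimes C_c$, we have $C_c\in\rho_I\CI$ for $c\in\{1,2\}$. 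Second, writing $K=\sum_{c,d}K^{cd}\,W_c\otimes_s W_d$ with $K^{cd}\in\CI(M;\End(\beta^*E))$, the hypothesis $K\in S^2\Tbeta M+\rho_I\,S^2\Tb M$ forces $K^{cd}\in\rho_I\CI$ whenever $c\in\{3,4\}$ or $d\in\{3,4\}$. Finally, since $\rho_I$ is a boundary defining function of $\scri^+$ one has $\bdiff\rho_I\in\rho_I\CI(M;\Tb^*M)$, hence $[(d^E)^*,\rho_I]\in\rho_I\CI(M;\End(\beta^*E))$, and $\rho_I\Diffb^1$ is stable under adjoints and under composition on either side with $\CI$ bundle maps, so scalar factors of $\rho_I$ can be moved at will.

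For the first claim: by the two facts, $BKC$ is a $\CI$ bundle map every component of which carries a factor of $\rho_I$ ($K$ annihilates the $(\Tbeta M)^\perp$-part of $C$ modulo $\rho_I$, and the remaining part of $C$ is $O(\rho_I)$); write $BKC=\rho_I D$, $D\in\CI$. Then $(d^E)^*BKC=\rho_I(d^E)^*D+[(d^E)^*,\rho_I]D\in\rho_I\Diffb^1$ and $C^*BKC=\rho_I C^*D\in\rho_I\Diffb^1$. For the middle term I compute in the frame, using $\sum_c(B^{-1})^{ce}B_{cd}=\delta^e_d$ — the one step where the adjoint metric on $\Tb M$ must be exactly $B$ —
\[
  C^*BKd^E u=\sum_{c,d,e,f}(B^{-1})^{ce}\,C_e^*\,B_{cd}K^{df}(W_f u)=\sum_{e,f}C_e^*K^{ef}(W_f u),
\]
$C_e^*$ denoting the fiberwise $\langle\cdot,\cdot\rangle_E$-adjoint; in every term either $e\in\{1,2\}$, so $C_e\in\rho_I\CI$, or $e\in\{3,4\}$, so $K^{ef}\in\rho_I\CI$. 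Hence $C^*BKd^E\in\rho_I\Diffb^1$, which proves \eqref{EqCptScriConnDKDKDiff}.

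For the last claim, let $S\in\CI(M;\End(\beta^*E))$ be the positive, $\langle\cdot,\cdot\rangle_E$-selfadjoint endomorphism relating the two fiber metrics ($\langle\cdot,\cdot\rangle'_E=\langle\cdot,S\,\cdot\rangle_E$); as it comes from two fiber metrics on $E\to\ol{\R^4}$, it is pulled back from $\ol{\R^4}$. A Leibniz computation gives $(d^E)^\dag=(d^E)^*-S^{-1}\mc C$, with $\mc C\omega=\sum_{c,e}(B^{-1})^{ce}(d^E_{W_e}S)^*\omega_c$ the adjoint of the zeroth order bundle map $u\mapsto(d^E_\bullet S)u$. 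The same frame computation as above yields $\mc C BKd^E u=\sum_{e,f}(d^E_{W_e}S)^*K^{ef}(W_f u)$, and again in each term either $e\in\{3,4\}$, so $K^{ef}\in\rho_I\CI$, or $e\in\{1,2\}$, i.e.\ $W_e\in\Tbeta M$, in which case $d^E_{W_e}S\in\rho_I\CI$ by the general form of the first fact (using that $S$ is pulled back). Therefore $\mc C BKd^E\in\rho_I\Diffb^1$ and $(d^E)^\dag BKd^E-(d^E)^*BKd^E=-S^{-1}\mc C BKd^E\in\rho_I\Diffb^1$. The main obstacle is the first fact: it is the reason the difference gains the full factor $\rho_I$ rather than merely being first order, and it rests on identifying $\Tbeta M$ at $\scri^+$ with $\ker\beta_*$; beyond this the argument is bookkeeping, the only delicate point being the metric matching in $\sum_c(B^{-1})^{ce}B_{cd}=\delta^e_d$.
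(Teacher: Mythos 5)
Your proof is correct and follows essentially the same route as the paper: the difference is expanded in $C=d^E-\tilde d^E$, and the gain of $\rho_I$ comes from pairing the $(\Tbeta M)^\perp+\rho_I\Tb^*M$ structure of $C$ (Lemma~\ref{LemmaCptScriConn}) against the $S^2\,\Tbeta M+\rho_I\,S^2\,\Tb M$ structure of $K$, exactly as in the paper's observation that $K(d^E-\tilde d^E)\in\rho_I\,\CI$. The only cosmetic difference is in the last claim, where the paper reduces to the first part by noting that $(d^E)^{\dag *}$ is again a connection pulled back from $\ol{\R^4}$, while you redo the computation via the Leibniz identity $(d^E)^\dag=(d^E)^*-S^{-1}\mathcal{C}$ with $S$ fiber-constant along $\beta$ -- the same mechanism, verified directly.
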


Note that for $K$ as in \eqref{EqCptScriConnDKDK} with both the $S^2\,\Tbeta M$ and the $S^2\,\Tb M$ summands positive definite, and adding weights, the pairing $\la (d^E)^*B K d^E u,u\ra$ provides the control on fiber-tangential derivatives of $u$ as in the toy model~\eqref{EqISysLinEqIEst}, but is weaker by $\rho_I^{1/2}$ for general b-derivatives; we will take care of this in Definition~\ref{DefBgHscri}. The space in \eqref{EqCptScriConnDKDKDiff} will be weak enough to be treated as an error term (similar to the $\Diffb$ spaces arising as error terms in Lemma~\ref{LemmaEinNscri} below).

\begin{proof}[Proof of Lemma~\usref{LemmaCptScriConnDKD}]
  We write the left hand side of \eqref{EqCptScriConnDKDKDiff} as
  \[
    (d^E)^* B K (d^E-\tilde d^E) + (d^E-\tilde d^E)^* B K \tilde d^E,
  \]
  with one summand being the adjoint of the other. Now, $(d^E)^*B\in\Diffb^1(M;\Tb M\otimes\beta^*E,\allowbreak\beta^*E)$, while Lemma~\ref{LemmaCptScriConn} implies
  \[
    K(d^E-\tilde d^E) \in \rho_I\,\CI(M;\Tb M\otimes\End(\beta^*E)).
  \]
  This proves \eqref{EqCptScriConnDKDKDiff}. (Alternatively, one can analyze the second summand directly, using that over $p\in M$, $\la(d^E-\tilde d^E)^*(B(V)\otimes e),e'\ra_E = \la e, (d^E-\tilde d^E)(V\otimes e')\ra_E$ for $V\in\Tb_p M$, $e,e'\in E_{\beta(p)}$.) For the second part, note that the two adjoints are related via $(d^E)^\dag=C^{-1}(d^E)^*C$ for some $C\in\CI(\ol{\R^4};\End(E))$, hence $\tilde d^E:=(d^E)^{\dag *}=d^E+C^*[d^E,(C^{-1})^*]$ is a connection on $E$, and therefore
  \[
    ((d^E)^\dag-(d^E)^*)B K d^E=(\tilde d^E-d^E)^*B K d^E \in \rho_I\Diffb^1(M;\beta^*E)
  \]
  by what we already proved.
\end{proof}

\begin{lemma}
\label{LemmaCptScriWWstar}
  Equip $E\to\ol{\R^4}$ with a fiber metric and fix a b-density on $\ol{\R^4}$. Then for principally scalar $W\in{}^0\cM_{\beta^*E}$, with principal symbol equal to that of the real vector field $W_1\in\Vb(M)$, we have $W+W^*\in-\dv W_1+\rho_I\,\CI(M;\End(\beta^*E))$.
\end{lemma}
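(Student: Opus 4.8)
The plan is to reduce everything to a local computation near $\scri^+$, since away from $\scri^+$ the statement $W+W^*\in\rho_I\,\CI(M;\End(\beta^*E))=\CI(M;\End(\beta^*E))$ is automatic: there $W$ is a genuine first-order b-differential operator with smooth coefficients whose principal symbol matches that of $W_1\in\Vb(M)$, so $W=W_1+f$ with $f\in\CI$, and then $W+W^*=W_1+W_1^*+f+f^*=-\dv W_1+(f+f^*)$ by the standard formula for the formal adjoint of a real vector field acting on sections of a bundle with a fixed fiber metric and b-density. So the real content is the behavior at $\scri^+$.

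Near $\scri^+$ I would work in a local trivialization $E\cong\cU\times\C^k$ with $\cU$ a neighborhood of $S^+$, pulled back to $\beta^*E\cong\beta^{-1}(\cU)\times\C^k$, and pick local coordinates $(\rho_0,\rho_I,x^2,x^3)$ near $i^0\cap\scri^+$ as in~\eqref{EqISysLinCoords0I} (the computation near $i^+\cap\scri^+$ is entirely analogous using~\eqref{EqCptASplNull}). By definition of ${}^0\cM_{\beta^*E}$, in this trivialization $W=V+f$ where $V$ is a matrix of b-vector fields tangent to the fibers of $\beta$ and $f\in\CI(M)^{k\times k}$ with $f|_{\scri^+}=0$, i.e. $f\in\rho_I\,\CI(M)^{k\times k}$. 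Hence $f+f^*\in\rho_I\,\CI(M;\End(\beta^*E))$, and I only need to handle $V+V^*$. Since $W$ is principally scalar with principal symbol that of $W_1\in\Vb(M)$, the matrix $V$ must be scalar modulo operators of lower order, i.e. $V=W_1\cdot\Id+(\text{zeroth order})$; writing the zeroth-order part as $f'\in\CI(M)^{k\times k}$, fiber-tangency forces $W_1$ itself to be fiber-tangent, so modulo $\rho_I\,\CI$ we may assume $W_1$ is a genuine fiber-tangent b-vector field and $V=W_1\cdot\Id+f'$. Then $V+V^*=(W_1+W_1^*)\cdot\Id+(f'+f'^*)$, and for the real b-vector field $W_1$ the formal adjoint with respect to the fixed fiber metric and b-density is $W_1^*=-W_1-\dv W_1$ (the $\dv$ being computed against the chosen b-density), so $(W_1+W_1^*)=-\dv W_1$ and $f'+f'^*\in\CI(M;\End(\beta^*E))$.

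The one point requiring care is that the decomposition $W=V+f$ and the identification of $f'$ depend on the choice of local trivialization, so I must check the claimed conclusion is trivialization-independent; this is exactly the content of Lemma~\ref{LemmaCptScriModule}\eqref{ItCptScriModuleBdl}: a change of frame is given by $C\in\CI(\cU;\C^{k\times k})$ which pulls back to be constant along the fibers of $\beta$, and since $W_1$ is fiber-tangent it annihilates $C$ at $\scri^+$, so $C^{-1}W C-W=C^{-1}W(C)\in\CI(M;\C^{k\times k})$ vanishes at $\scri^+$, i.e. lies in $\rho_I\,\CI$; thus the error $f'+f'^*$ is well-defined modulo $\rho_I\,\CI(M;\End(\beta^*E))$ and the quantity $-\dv W_1$ is intrinsic. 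The remaining issue—matching the two local coordinate patches near $i^0\cap\scri^+$ and near $i^+\cap\scri^+$ and the overlap with the region away from $\scri^+$—is handled by a partition of unity, each term of which contributes an error in $\CI(M;\End(\beta^*E))\subset\rho_I\,\CI(M;\End(\beta^*E))$ is \emph{not} quite right, so more precisely: on the overlap the two local expressions for $W+W^*+\dv W_1$ must agree up to $\rho_I\,\CI$, which follows from the frame-change computation above together with the fact that $\dv W_1$ is computed intrinsically. I expect the main (though still routine) obstacle to be bookkeeping the principally-scalar hypothesis correctly: one must use it to ensure $V$ is scalar-plus-zeroth-order rather than a genuine matrix of distinct vector fields, since otherwise $V+V^*$ would not reduce to $-\dv W_1\cdot\Id$ plus an $\End$-valued zeroth order term.
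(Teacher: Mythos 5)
Your overall route is the same as the paper's---work in a local trivialization, peel off the scalar part $W_1\otimes 1$, and use constancy along the fibers of $\beta$ (of data pulled back from $\ol{\R^4}$) together with fiber-tangency of $W_1$ to gain a factor of $\rho_I$---but as written there is a genuine gap at the crux of the lemma. The asserted formula $W_1^*=-W_1-\dv W_1$ is \emph{not} correct for the component-wise action of $W_1$ in an arbitrary trivialization when the adjoint is taken with respect to the given fiber metric on $E$: if $C$ denotes the Gram matrix relating that fiber metric to the standard one of the trivialization, then $W_1^*=-W_1-\dv W_1-C^{-1}W_1(C)$. The entire point of the lemma is that this extra zeroth-order term vanishes at $\scri^+$: $C$ is smooth on $\ol{\R^4}$, hence constant along the fibers of $\beta$, and $W_1$ is tangent to those fibers, so $C^{-1}W_1(C)\in\rho_I\,\CI(M;\End(\beta^*E))$. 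This is exactly how the paper argues (there $W^*=C^{-1}W^\dag C$, producing the correction $C^{-1}[W_1^\dag\otimes 1,C]$, which lies in $\rho_I\,\CI$ by fiber-constancy of $C$). You do have this mechanism in hand, but you apply it only to changes of frame of the trivialization and never to the fiber metric; alternatively you could repair the step by explicitly choosing a frame of $E$ over $\ol{\R^4}$ that is orthonormal for the given fiber metric, but you must say so---as stated, the adjoint formula you rely on is false in general.

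A second, smaller mismatch: your bookkeeping only yields $W+W^*\in-\dv W_1+\CI(M;\End(\beta^*E))$, since you allow a zeroth-order part $f'$ of $V$ and conclude merely $f'+(f')^*\in\CI$, whereas the lemma requires the remainder beyond $-\dv W_1$ to lie in $\rho_I\,\CI$, i.e.\ to vanish at $\scri^+$. This is easily fixed: $V$ is by definition a matrix of honest vector fields, so $V-W_1\cdot\Id$ is a matrix of vector fields with vanishing principal symbol, hence zero, i.e.\ $f'=0$; the only zeroth-order term in the ${}^0\cM$-decomposition of $W$ is the $f$ with $f|_{\scri^+}=0$, which is where the $\rho_I$ gain comes from. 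But as written your conclusion is strictly weaker than the statement. (The partition-of-unity discussion at the end is unnecessary once these local memberships are established: $W+W^*+\dv W_1$ is a globally defined operator, and membership in $\CI$ away from $\scri^+$ and in $\rho_I\,\CI$ near it are local conditions, so the half-corrected patching sentence should simply be removed.)
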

\begin{proof}
  In a local trivialization on $E$, we have $W=W_1\otimes 1+W_0$, $W_0\in\rho_I\,\CI(M;\End(\beta^*E))$, while the fiber inner product $k$ on $E$ is related to the standard Euclidean fiber inner product $\ul k$ in the trivialization by $k(e,e')=\ul k(\wt C e,\wt C e')$ for some $\wt C$ smooth on $\ol{\R^4}$, hence fiber constant on $M$. Denoting adjoints with respect to $\ul k$ by $\dag$, and letting $C:=\wt C^*\wt C$, we thus have
  \begin{align*}
    W+W^*&=\bigl(W_1\otimes 1+C^{-1}(W_1^\dag\otimes 1)C\bigr) + (W_0+W_0^*) \\
      &\in -(\dv W_1)\otimes 1 + C^{-1}[W_1^\dag\otimes 1,C] + \rho_I\,\CI,
  \end{align*}
  with the second term also lying in $\rho_I\,\CI$ since $C$ is fiber-constant.
\end{proof}

\section{Gauge-fixed Einstein equation}
\label{SEin}

As motivated in \S\ref{SsIEin}, we work in the wave map gauge with respect to the background metric $g_m$ constructed in~\S\ref{SsCptA}, since we expect the solution $g$ of the initial value problem \eqref{EqIEinIVP} for the Einstein vacuum equation with initial data asymptotic to mass $m$ Schwarzschild to be well-behaved on the space ${}^m\!M$. The gauge condition reads
\begin{equation}
\label{EqEinUps}
  \Ups(g;g_m)_\mu := (g g_m^{-1}\delta_g G_g g_m)_\mu = g_{\mu\nu}g^{\kappa\lambda}(\Gamma(g)_{\kappa\lambda}^\nu - \Gamma(g_m)_{\kappa\lambda}^\nu) = 0,
\end{equation}
where we recall the notation $G_g=1-\half g\tr_g$, and $(\delta_g u)_\mu=-u_{\mu\nu;}{}^\nu$. For brevity, we shall write
\[
  \Ups(g) \equiv \Ups(g;g_m),
\]
when the background metric $g_m$ is clear from the context. A simple calculation shows that if $h\in\Hb^{\infty;-\eps,-\eps,-\eps}({}^m\!M)$, $\eps>0$ small, is a metric perturbation, and $g=g_m+\rho h$, then the gauge condition $\Ups(g;g_m)=0$ implies that the $\pa_1$-derivatives of the \emph{good components} $h_{0 0}$, $h_{0\bar b}$, and $\sltr h:=\slg^{a b}h_{\bar a\bar b}$ decay towards $\scri^+$. (See equation~\eqref{EqCoPertUpsLower} for this calculation for $h$ with special structure.) A key ingredient of our iteration scheme is therefore constraint damping, which ensures that the gauge condition, or, more directly, the improved decay of the good components at $\scri^+$, is satisfied to leading order for each iterate $h$. We implement constraint damping by considering the gauge-fixed Einstein operator
\begin{equation}
\label{EqEin}
  P(h) := \rho^{-3}P_0(g_m + \rho h),\ \ \ P_0(g) := \Ric(g) - \tdel^*\Ups(g;g_m),
\end{equation}
where on 1-forms $u$
\begin{equation}
\label{EqEinTdel}
  \tdel^*u = \delta_{g_m}^*u - 2\gamma \tfrac{d\rho_t}{\rho_t}\otimes_s u + \gamma (\iota_{\rho_t^{-1}\nabla^{g_m}\rho_t}u)g_m
\end{equation}
is a modification of the symmetric gradient $\delta_{g_m}^*$ by a $0$-th order term; here $\rho_t$ is fixed according to Definition~\ref{DefCptFtInv}. We discuss the effect of this modification in~\S\ref{SsEinN}, see in particular~\eqref{EqEinNscriPi0}. \emph{From now on, the mass parameter $m$ will be fixed and dropped from the notation whenever convenient.}

\subsection{Form of metric perturbations}
\label{SsEinF}

One can easily establish the existence of a solution of \eqref{EqIEinIVP} near $I^0\setminus(I^0\cap\scri^+)$ for normalized initial data (see Theorem~\ref{ThmIDetail}) which lie merely in $\rho_0^{1/2+0}\Hb^\infty$; this is due to nonlinear interactions being weak at $I^0$, which in turn can ultimately be traced back to the null derivatives \eqref{EqCptASplNull} coming with extra factors of $\rho_0$.\footnote{This is related to the solvability of semilinear equations with initial data or forcing terms which are mildly growing at spatial infinity, see \cite[Theorem~5.14]{HintzVasySemilinear}, where one can take the weight $l<-1/2$ in certain circumstances. This is also the level of decay for which Bieri \cite{BieriZipserStability} establishes the \emph{global} stability of Minkowski space.} However, we will use (and prove) the existence of leading terms of the perturbation $h$ of $g=g_m+\rho h$ at $\scri^+$; as discussed around \eqref{EqISysLinEqI}, this requires the initial data to be \emph{decaying} to mass $m$ Schwarzschild data. At $I^+$ however, weak control, i.e.\ $h\in\rho_+^{-1/2+0}\Hb^\infty$ away from $\scri^+$, suffices due to the nonlinear interactions being as weak there as they are at $I^0$. (The decay of our initial data does imply the existence of a leading term at $I^+$, see \S\ref{SPhg}.) Motivated by this and the discussion of constraint damping above, and recalling the notation \eqref{EqCptFX} and the bundle splittings \eqref{EqCptASpl} and \eqref{EqCptASpl2}, we will seek the solution $h$ of $P(h)=0$ in the function space $\cX^{k;b_0,b_I,b'_I,b_+}$:

\begin{definition}
\label{DefEinF}
  Let $k\in\N_0\cup\{\infty\}$, and fix weights\footnote{The imposed upper bound of $\half$ for $b_I$ and $b'_I$ simplifies the arithmetic in \S\ref{SBg} but is otherwise artificial; the natural bound is $b_I<b'_I<\min(1,b_0)$, with the upper bound $1$ arising from the expected presence of lower order terms in expansion of the metric at $\scri^+$ as well as from the requirement that the function space be independent of the choice of collar neighborhood of $\scri^+$.}
  \[
    -1 < b_+ < 0 < b_I < b'_I < \min(\half,b_0);
  \]
  let further $\chi\in\CI(M)$ be identically $1$ near $\scri^+$, with support in a small neighborhood of $\scri^+$ where the bundle splitting~\eqref{EqCptASpl} is defined; different choices of $\chi$ will produce the same function space, as we shall discuss below. The space $\cX^{k;b_0,b_I,b'_I,b_+}$ consists of all $h\in\Hb^{k;b_0,-1,b_+}(M;\beta^*S^2\,\Tsc^*\ol{\R^4})$ such that
  \begin{align}
  \label{EqEinFGood}
     &\chi h_{0 0},\ \chi\sltr h\in \Hb^{k;b_0,b'_I,b_+}(\ul\C),\ \chi h_{0\bar b}\in\Hb^{k;b_0,b'_I,b_+}(\beta^*(r\,T^*\Sph^2)), \\
  \label{EqEinF11}
     &\chi h_{1 1} = \chi h_{1 1}^{(1)}\log\rho_I + \chi h_{1 1}^{(0)} + h_{1 1,\bop}, \\
  \label{EqEinFRest}
     &\chi(h_{0 1},h_{1\bar b},h_{\bar a\bar b}) = \chi(h_{0 1}^{(0)},h_{1\bar b}^{(0)},h_{\bar a\bar b}^{(0)}) + (h_{0 1,\bop},h_{1\bar b,\bop},h_{\bar a\bar b,\bop}),
  \end{align}
  where the leading and remainder terms are
  \begin{gather*}
    h_{1 1}^{(\ell)},\ h_{0 1}^{(0)},\ h_{1\bar b}^{(0)},\ h_{\bar a\bar b}^{(0)} \in \rho_0^{b_0}\rho_+^{b_+}\Hb^k(\scri^+), \\
    h_{0 1,\bop},\ h_{1 1,\bop},\ h_{1\bar b,\bop},\ h_{\bar a\bar b,\bop} \in\Hb^{k;b_0,b_I,b_+},
  \end{gather*}
  the latter supported on $\supp\chi$ and valued in the bundles $\ul\C$ (for $\ell=0,1$), $\ul\C$, $\beta^*(r\,T^*\Sph^2)$, and $\beta^*(r^2\,S^2 T^*\Sph^2)$, respectively; we describe the topology on $\cX^{k;b_0,b_I,b'_I,b_+}$ below. Here, we use a collar neighborhood to extend functions from $\scri^+$ to a neighborhood of $\scri^+$ in $M$, and to extend the relevant bundles from $\scri^+$ to smooth subbundles of $\beta^*S^2\,\Tsc^*\ol{\R^4}$ near $\scri^+$; all choices of collar neighborhoods and extensions give the same function space. We shall suppress the parameters $b_0,b_I,b'_I,b_+$ from the notation when they are clear from the context, so
  \[
    \cX^k := \cX^{k;b_0,b_I,b'_I,b_+}.
  \]
\end{definition}

\begin{rmk}
  The partial expansions amount to a statement of partial polyhomogeneity: for example, the condition on $h_{0 1}$ in~\eqref{EqEinFRest} for $k=\infty$ can be phrased as $h_{0 1}\in\cA_{\bop,\phg,\bop}^{b_0,0,b_+}+\Hb^{\infty;b_0,b_I,b_+}$, and similarly for $k<\infty$ if one replaces the first summand by a function space capturing the finite regularity of the leading term at $\scri^+$. In view of the existence of at most logarithmically growing leading terms of $h\in\cX^k$ at $\scri^+$, we automatically have $h\in\Hb^{k;b_0,-0,b_+}$.
\end{rmk}

Thus, $h\in\cX^k$ decays at $I^0$, while \eqref{EqEinFGood} encodes the vanishing of the good components at $\scri^+$; \eqref{EqEinF11} and \eqref{EqEinFRest} assert the existence of leading terms of the remaining components, in the case of $h_{1 1}$ allowing for a logarithmic term;\footnote{The slightly faster decay $b_I'$ of the good components as compared to the decay $b_I$ of the remainder terms of the other components is needed to handle the logarithmically large size of the coefficients coupling good components into the others, encoded in the $(4,1)$ entries of $A_h$ and $B_h$ in Lemma~\ref{LemmaEinNscri}; see the discussion following~\eqref{EqEinNscriPi11}.} at $I^+$ finally, $h$ is allowed to have mild growth. The existence of leading terms of $h\in\cX^{k;b_0,b_I,b'_I,b_+}$ at $\scri^+$ implies in particular that
\begin{equation}
\label{EqEinFLeading}
\begin{gathered}
  \rho_I\pa_{\rho_I}h_{\bar\mu\bar\nu}\in\Hb^{k-1;b_0,b_I,b_+},\ \ (\bar\mu,\bar\nu)=(0,1),(1,\bar b),(\bar a,\bar b), \\
  \rho_I\pa_{\rho_I}h_{1 1}\in h_{1 1}^{(1)}+\Hb^{k-1;b_0,b_I,b_+},\ (\rho_I\pa_{\rho_I})^2 h_{1 1}\in\Hb^{k-2;b_0,b_I,b_+},
\end{gathered}
\end{equation}
which we will frequently use without further explanation.

For $h\in\cX^{\infty;b_0,b_I,b'_I,b_+}$, we describe $P(h)$ using a closely related function space:
\begin{definition}
\label{DefEinFY}
  For $k\in\N_0\cup\{\infty\}$ and weights $b_0,b_I,b'_I,b_+$ as above, the function space $\cY^{k;b_0,b_I,b'_I,b_+}$ consists of all $f\in\Hb^{k;b_0,-2,b_+}(M;\beta^*S^2\,\Tsc^*\ol{\R^4})$ so that near $\scri^+$,
  \begin{equation}
  \label{EqEinFGoodY}
  \begin{gathered}
    f_{0 0},\ f_{0\bar b},\ \sltr f \in \Hb^{k;b_0,-1+b'_I,b_+}, \ \ 
    f_{0 1},\ f_{1\bar b},\ f_{\bar a\bar b} \in \Hb^{k;b_0,-1+b_I,b_+}, \\
    f_{1 1} = f_{1 1}^{(0)}\rho_I^{-1} + f_{1 1,\bop},\ \ f_{1 1}^{(0)}\in\rho_0^{b_0}\rho_+^{b_+}\Hb^k(\scri^+),\ f_{1 1,\bop}\in\Hb^{k;b_0,-1+b_I,b_+}.
  \end{gathered}
  \end{equation}
\end{definition}
The shift by $-1$ in the decay order at $\scri^+$ is due to the linearized gauge-fixed Einstein equation, or even the linear scalar wave equation, being $\rho_I^{-1}$ times a b-differential operator at $\scri^+$, cf.\ \eqref{EqISysLinEqI}. A calculation will show that for $h$ as above, the gauge-fixed Einstein operator $P(h)$ defined in~\eqref{EqEin} satisfies $P(h)\in\cY^{\infty;b_0,b_I,b'_I,b_+}$, see Lemma~\ref{LemmaEinP} for a more precise statement. Note here that $P(h)$ is well-defined (i.e.\ $g_m+\rho h$ is a nondegenerate symmetric 2-tensor, making $P(h)$ computable) in a neighborhood of $\pa M$ due to the decay (in $L^\infty$) of $g=g_m+\rho h$ to $g_m$. In order for $P(h)$ to be defined globally, we need to assume $\rho h$ to be small in $L^\infty$.

Fixing a smooth cutoff $\chi$ as in Definition~\ref{DefEinF}, we can define a norm on $\cY^{k;b_0,b'_I,b_I,b_+}$ using the notation of Definition~\ref{DefEinFY} by setting
\begin{align*}
  \|f\|_{\cY^{k;b_0,b'_I,b_I,b_+}} &:= \|(\chi f_{0 0},\ \chi f_{0\bar b},\ \chi\sltr f)\|_{\Hb^{k;b_0,-1+b'_I,b_+}} + \| (\chi f_{0 1},\ \chi f_{1\bar b},\ \chi f_{\bar a\bar b}) \|_{\Hb^{k;b_0,-1+b_I,b_+}} \\
    &\qquad + \|\chi f_{1 1}^{(0)}\|_{\rho_0^{b_0}\rho_+^{b_+}\Hb^k(\scri^+)} + \|\chi(f_{1 1}-f_{1 1}^{(0)})\|_{\Hb^{k;b_0,-1+b_I,b_+}} + \|f\|_{\Hb^{k;b_0,-2,b_+}},
\end{align*}
where the choice of $\rho_I$-weight in the remainder term is arbitrary (as long as it is fixed and less than $-1$). Equipped with this norm, $\cY^{k;b_0,b_I,b'_I,b_+}$ is a Banach space. A completely analogous definition gives a norm $\|\cdot\|_{\cX^{k;b_0,b_I,b'_I,b_+}}$. The spaces $\cX^{\infty;b_0,b_I,b'_I,b_+}$ and $\cY^{\infty;b_0,b_I,b'_I,b_+}$, equipped with the projective limit topologies, are Fr\'echet spaces.

In particular, using the Sobolev embedding $\Hb^3(M)\hra L^\infty(M)$ (which uses that $3>\dim(M)/2$), we have an embedding $\cX^3\hra\rho_0^{b_0}\rho_I^{-1}\rho_+^{b_+}L^\infty$; thus, $P(h)$ is well-defined globally on $M$ provided $h$ is small in $\cX^3$.

It will occasionally be useful to write
\begin{equation}
\label{EqEinFDecomp}
  \cX^k = \cX^k_\phg \oplus \cX^k_\bop, \ \ 
  \cY^k = \cY^k_\phg \oplus \cY^k_\bop,
\end{equation}
where $\cY^k_\phg=\{ \chi f_{1 1}^{(0)} \colon f_{1 1}^{(0)}\in \rho_0^{b_0}\rho_+^{b_+}\Hb^k(\scri^+) \}$ encodes the leading term of elements of $\cY^k$, while $\cY^k_\bop=\{f\in\cY^k\colon f_{1 1}^{(0)}=0\}$ captures the remainder terms (i.e.\ with vanishing leading terms at $\scri^+$); the spaces $\cX^k_\phg$ and $\cX^k_\bop$ are defined analogously.

In order to exhibit the `null structure,' or upper triangular block structure, of the linearized gauge-fixed Einstein operator $D_h P$ for $h\in\cX$ at $\scri^+$ in a compact fashion, we introduce subbundles of the symmetric 2-tensor bundle. We use the following notation: given a nowhere vanishing section $e$ of a complex vector bundle $E\to U$ over base manifold $U$, we denote by $\la e\ra$ the line subbundle of $E$ whose fiber of $p\in U$ is given by $\{\lambda e(x)\colon\lambda\in\C\}$.

\begin{definition}
\label{DefEinFSubb}
  Define the subbundles
  \[
    K_{1 1}^c := \la 2\,ds\,dq\ra \oplus (2\,ds\otimes r\,T^*\Sph^2) \oplus \la r^2\slg \ra^\perp,\ \ 
    K_0^c := K_{1 1}^c \oplus \la ds^2\ra,
  \]
  of $S^2\,\Tsc^*\ol{\R^4}|_{S^+}$, which we extend in a smooth but otherwise arbitrary fashion to a neighborhood of $S^+$ as rank $5$, resp.\ $6$, subbundles of $S^2\,\Tsc^*\ol{\R^4}$, still denoted by $K_{1 1}^c$ and $K_0^c$. Furthermore, define near $S^+$ the subbundles
  \begin{equation}
  \label{EqEinFK0}
    K_0=\la dq^2\ra\oplus\la 2 dq\otimes r\,T^*\Sph^2\ra\oplus\la r^2\slg\ra,\ \ 
    K_{1 1}=\la ds^2\ra.
  \end{equation}
\end{definition}

The only property of $K_0$ and $K_{1 1}$ which we will need is
\[
  K_0^c \oplus K_0 = S^2\,\Tsc^*\ol{\R^4},\ \ 
  K_{1 1}^c \oplus K_{1 1} = K_0^c.
\]
Denote by
\begin{equation}
\label{EqEinFSubbProj0}
\begin{gathered}
  \pi_0\colon S^2\,\Tsc^*\ol{\R^4}\to S^2\,\Tsc^*\ol{\R^4}/K_0^c \cong K_0, \\
  \tilde\pi_{1 1}\colon K_0^c \to K_0^c/K_{1 1}^c\cong K_{1 1}
\end{gathered}
\end{equation}
the projections onto the quotient bundles,
\[
  \pi_0^c := 1-\pi_0 \colon S^2\,\Tsc^*\ol{\R^4}\to K_0^c,
\]
and
\begin{equation}
\label{EqEinFSubbProj11}
  \pi_{1 1}:=\tilde\pi_{1 1}\pi_0^c \colon S^2\,\Tsc^*\ol{\R^4}\to K_{1 1},\ \ 
  \pi_{1 1}^c := (1-\tilde\pi_{1 1})\pi_0^c \colon S^2\,\Tsc^*\ol{\R^4} \to K_{1 1}^c.
\end{equation}
Writing
\begin{equation}
\label{EqEinFbetaS2}
  \beta^*S^2 \equiv \beta^*S^2\,\Tsc^*\ol{\R^4}
\end{equation}
from now on, the improved decay \eqref{EqEinFGood} of the good components of $h\in\cX^{k;b_0,b_I,b'_I,b_+}$ can then be expressed, using local coordinates $(\theta^2,\theta^3)$ on $\Sph^2$, as
\[
  \pi_0 h = h_{0 0}\,d q^2 + h_{0 a}\,d q\,d\theta^a + (\sltr h)\slg_{a b}\,d\theta^a\,d\theta^b\in\Hb^{k;b_0,b'_I,b_+}(\beta^*K_0),
\]
similarly for \eqref{EqEinFGoodY}. The refinement $K_{1 1}^c\subset K_0^c$,
\[
  \pi_{1 1}^c h = 2 h_{0 1}\,d s\,d q + 2 h_{0 b}\,d s\,d\theta^b + (h_{a b}-(\half\sltr h)\slg_{a b})\,d\theta^a\,d\theta^b
\]
will be used to encode part of the `null structure' of the linearized gauge-fixed Einstein equation at $\scri^+$, as discussed in \S\ref{SIt}; the component
\[
  \pi_{1 1} h = h_{1 1}\,d s^2
\]
will capture the logarithmically growing (relative to $r^{-1}$) component at $\scri^+$.

Consider now a fixed $h\in\cX^\infty$ which is small in $\cX^3$ so that $g:=g_m+\rho h$ is a Lorentzian metric on $\R^4$. Working near $\scri^+$, we recall $g_m=(1-\tfrac{2 m}{r})dq\,ds-r^2\slg$ and the barred index notation~\eqref{EqCptASplBar}, so with $\rho=r^{-1}$, the coefficients of $g$ in the product splitting \eqref{EqCptASplProd} are
\begin{equation}
\label{EqEinFMetric}
  \setarraystretch
  \begin{array}{lll}
    g_{0 0}=r^{-1}h_{0 0}, & g_{0 1}=\half+r^{-1}(h_{0 1}-m), & g_{0 b}=h_{0\bar b}, \\
    g_{1 1}=r^{-1}h_{1 1}, & g_{1 b}=h_{1\bar b}, & g_{a b}=-r^2\slg_{a b}+r h_{\bar a\bar b};
  \end{array}
\end{equation}
the coefficients $g^{\mu\nu}$ of the inverse metric $g^{-1}=g_m^{-1}-r^{-1}g_m^{-1}h g_m^{-1}+r^{-2}g_m^{-1}h g_m^{-1}h g_m^{-1}+\Hb^{\infty;3+3 b_0,3-0,3+3 b_+}$ are
\begin{equation}
\label{EqEinFinverse}
  \setarraystretch
  \begin{array}{ll}
    g^{0 0} \in -4 r^{-1}h_{1 1} + \Hb^{\infty;2+b_0,2-0,2+2 b_+}, & g^{0 1} \in 2 + 4 r^{-1}(m-h_{0 1}) + \Hb^{\infty;2-0,2-0,2+2 b_+}, \\
    g^{0 b} \in 2 r^{-2}h_1{}^{\bar b} + \Hb^{\infty;3+b_0,3-0,3+2 b_+}, & g^{1 1} \in -4 r^{-1}h_{0 0} + \Hb^{\infty;2+b_0,2+b'_I,2+2 b_+}, \\
    g^{1 b} \in 2 r^{-2}h_0{}^{\bar b} + \Hb^{\infty;3+b_0,3+b'_I,3+2 b_+}, & g^{a b} \in -r^{-2}\slg^{a b} - r^{-3}h^{\bar a\bar b} + \Hb^{\infty;4+2 b_0,4-0,4+2 b_+},
  \end{array}
\end{equation}
where we raise spherical indices using the round metric $\slg$, i.e.\ $h_0{}^{\bar a}=\slg^{a b}h_{0\bar b}$ etc. Thus,
\begin{equation}
\label{EqEinFMetricRough}
  g_{\bar\mu\bar\nu},\ g^{\bar\mu\bar\nu}\in\CI+\Hb^{\infty;1+b_0,1-0,1+b_+};\ \ 
  g^{\bar a\bar b}+\slg^{a b},\ g^{0 1}-2 \in \rho\,\CI+\Hb^{\infty;1+b_0,1-0,1+b_+}.
\end{equation}
The calculation of the connection coefficients, components of Riemann and Ricci curvature, and other geometric quantities associated with the metric $g$ is then straightforward; the results of these calculations are given in Appendix~\ref{SCo}.

\subsection{Mapping properties of the gauge-fixed Einstein operator}
\label{SsEinP}

Let $h\in\cX^\infty=\cX^{\infty;b_0,b_I,b'_I,b_+}$. In order to compute the leading terms of the gauge-fixed Einstein operator $P(h)=\rho^{-3} P_0(g)$, $g=g_m+\rho h$, see \eqref{EqEin}, we first use the definition \eqref{EqEinTdel} of $2(\tdel^*-\delta_{g_m^S}^*)$ (given explicitly by~\eqref{EqCoSchwTdeldiff} in the case $m=0$) and the observation, from \eqref{EqCoPertUpsLower}, that $\Ups(g)\in\Hb^{\infty;2+b_0,1+b_I',2+b_+}$ (note that the explicit terms given in~\eqref{EqCoPertUpsLower} lie in this space in view of \eqref{EqCptASplNull} and the decay of the coefficients of $h$ in Definition~\ref{DefEinF}), to deduce that
\begin{equation}
\label{EqEinPTdelDelDiffUps}
  2(\tdel^*-\delta_{g_m}^*)\Ups(g) \in \Hb^{\infty;3+b_0,2+b_I',3+b_+}.
\end{equation}
The decay rate at $I^+$ holds \emph{globally} there---not only near $I^+\cap\scri^+$ where $g_m=g_m^S$. To see this, it suffices to show that $\Ups(g)\in\rho_+^{2+b_+}\Hb^\infty$ near $(I^+)^\circ$ (since $\tdel^*-\delta_{g_m}^*\in\rho_+\Diffb^1$, cf.\ \eqref{EqEinTdel}, then maps it into the stated space).\footnote{Recall that on ${}^0\!M$, we can take $t^{-1}$ as a local defining function of $(I^+)^\circ$; on ${}^m\!M$, this needs to be modified by a term of size $t^{-2}\log t$ due to the different smooth structure.} But this follows from the fact that there $g$ differs from the smooth scattering metric $g_m$ by an element of $\rho_+^{1+b_+}\Hb^\infty$ (with values in $S^2\,\Tsc^*\ol{\R^4}$). Concretely, choosing local coordinates $y^1,y^2,y^3$ in $\pa\ol{\R^4}$, near any point $p\in (I^+)^\circ$, we can introduce coordinates $z^0:=\rho_+^{-1}$, $z^a=\rho_+^{-1}y^a$ ($a=1,2,3$), in a neighborhood of $p$ intersected with $\rho>0$, and $\{\pa_{z^\mu}\colon \mu=0,\ldots,3\}$ is a frame of $\Tsc\ol{\R^4}$ there; but then, using $\pa_{z^\mu}\in\rho_+\Vb(\ol{\R^4})$, one sees that $\Gamma(g_m+\rho h)_{\kappa\lambda}^\nu-\Gamma(g_m)_{\kappa\lambda}^\nu$ is a sum of terms of the form
\[
    ((g_m+\rho h)^{\mu\nu}-(g_m)^{\mu\nu})\pa_{z^\kappa}(g_m)_{\lambda\sigma}\in \rho_+^{1+b_+}\Hb^\infty\cdot\rho_+\CI(\ol{\R^4})\subset\rho_+^{2+b_+}\Hb^\infty\ \ (\tn{near}\ p),
\]
and $(g_m+\rho h)^{\mu\nu}\pa_{z^\kappa}(\rho h_{\lambda\sigma})$, which likewise lies in $\rho_+^{2+b_+}\Hb^\infty$ near $p$. (The Christoffel symbols themselves satisfy $\Gamma(g_m)_{\kappa\lambda}^\nu\in\rho^+\CI(\ol{\R^4})$, $\Gamma(g_m+\rho h)_{\kappa\lambda}^\nu\in\rho_+\CI(\ol{\R^4})+\rho_+^{2+b_+}\Hb^\infty$.)

We can now prove:

\begin{lemma}
\label{LemmaEinP}
  For any $h\in\cX^\infty$, the tensor $P(h)$ is well-defined near $\pa M$ (in the sense explained in the paragraph after Definition~\usref{DefEinFY}), and we have $\chi P(h)\in\cY^\infty$ for any $\chi\in\CI(M)$ with support sufficiently close (depending on $h$) to $\pa M$. We have $P(h)\in\cY^\infty$ provided $\|h\|_{\cX^3}$ is small. More precisely, we have $P(h)_{\bar a\bar b}\in\Hb^{\infty;b_0,-1+b'_I,b_+}$ and
  \begin{equation}
  \label{EqEinP11}
    P(h)_{1 1} \in -2\rho^{-2}\pa_1\pa_0 h_{1 1} - \tfrac14 \rho^{-1}\pa_1 h^{\bar d\bar e}\pa_1 h_{\bar d\bar e} + \Hb^{\infty;b_0,-1+b_I,b_+}
  \end{equation}
  when $\rho=r^{-1}$ near $\scri^+$.
\end{lemma}
\begin{proof}
  We use the calculations (near $I^0\cup\scri^+$) of $\delta_{g_m}^*\Ups(g)$ in \eqref{EqCoPertDelUps} and of $\Ric(g)$ in \eqref{EqCoPertRic}; in view of the calculation~\eqref{EqEinPTdelDelDiffUps}, it suffices to prove that $\rho^{-3}(\Ric(g)-\delta_{g_m}^*\Ups(g))\in\cY^\infty$ near $\pa M$. In a neighborhood of $I^0\cup\scri^+$, this follows by subtracting~\eqref{EqCoPertDelUps} from \eqref{EqCoPertRic} and dividing by $\rho^3$ (thus shifting the three orders down by $3$); the expression~\eqref{EqEinP11} is a particular result of this subtraction.

  It remains to justify the decay rate globally at $I^+$, which is a slight extension of the calculations justifying~\eqref{EqEinPTdelDelDiffUps} above. We use local coordinates near $p\in(I^+)^\circ$ as above: firstly, the membership of $\delta_{g_m}^*\Ups(g)$ follows directly from the above arguments. Secondly, the difference of curvature components $R(g_m+\rho h)^\mu{}_{\nu\kappa\lambda}-R(g_m)^\mu{}_{\nu\kappa\lambda}$ is a sum of terms of the schematic forms $\pa_\mu(\Gamma(g_m+\rho h)^\kappa_{\nu\lambda}-\Gamma(g_m)^\kappa_{\nu\lambda})$ and $(\Gamma(g_m+\rho h)_{\mu\nu}^\kappa-\Gamma(g_m)_{\mu\nu}^\kappa)\Gamma(g_m+\rho h)_{\kappa\lambda}^\nu$, both of which lie in $\rho_+^{3+b_+}\Hb^\infty$ by the calculations above. But by construction, see equations~\eqref{EqCptAMetric1}--\eqref{EqCptAMetric}, $g_m$ differs from a \emph{flat} metric by a smooth symmetric scattering 2-tensor of class $\rho_+\CI(\ol{\R^4})$, which implies that $R(g_m)^\mu{}_{\nu\kappa\lambda}\in\rho_+^3\CI(\ol{\R^4})$ near $p$. Therefore, the Riemann curvature tensor satisfies
  \begin{equation}
  \label{EqEinPRiem}
    R(g_m+\rho h) \in \rho_+^{3+b_+}\Hb^\infty
  \end{equation}
  as a section of $\Tsc\ol{\R^4}\otimes(\Tsc^*\ol{\R^4})^{\otimes 3}$ near $(I^+)^\circ$, which a fortiori gives $\Ric(g)\in\rho_+^{3+b_+}\Hb^\infty$, as desired. (The vanishing of $P(h)$ modulo the \emph{faster} decaying space $\rho_0^{b_0}\Hb^\infty$ near $(I^0)^\circ$ requires more structure of $g_m$, namely the Ricci flatness of the background metric $g_m$.)
\end{proof}

Note that one component of $P(h)$ has a nontrivial leading term at $\scri^+$; in order for this to not create logarithmically growing terms in components (other than the $(1,1)$ component) of the next iterate of our Newton-type iteration scheme (which would cause the iteration scheme to not close), one needs to exploit the special structure of the operator $D_h P$. See also the discussion around~\eqref{EqISysNull}.

\subsection{Leading order structure of the linearized gauge-fixed Einstein operator}
\label{SsEinN}

For $h\in\cX^{\infty;b_0,b_I,b'_I,b_+}$ small, write
\begin{equation}
\label{EqEinNLin}
  L_h:=D_h P,
\end{equation}
and let $g=g_m+\rho h$. We shall now calculate the structure of $L_h$ `at infinity,' that is, its leading order terms at $I^0$, $\scri^+$, and $I^+$: at $\scri^+$, we will find that the equation $L_h u=f$ can be partially decoupled to leading order; this is the key structure for proving global existence for the nonlinear problem later. Recall from \cite{GrahamLeeConformalEinstein} that 
\begin{equation}
\label{EqEinNOp0}
\begin{split}
  D_g\Ric &= \half\Box_g - \delta_g^*\delta_g G_g + \sR_g, \\
    &\qquad \sR_g(u)_{\mu\nu}=(R_g)^\kappa{}_{\mu\nu\lambda}u_\kappa{}^\lambda + \half(\Ric(g)_\mu{}^\lambda u_{\lambda\nu}+\Ric(g)_\nu{}^\lambda u_{\lambda\mu}), \\
  D_g\Ups(g)u &= -\delta_g G_g u - \sC_g(u)+\sY_g(u),
\end{split}
\end{equation}
where (our notation differs from the one used in \cite{GrahamLeeConformalEinstein} by various signs)
\[
  \sC_g(u)_\kappa = g_{\kappa\lambda}C^\lambda_{\mu\nu}u^{\mu\nu},\ C^\lambda_{\mu\nu}=\Gamma(g)^\lambda_{\mu\nu}-\Gamma(g_m)^\lambda_{\mu\nu}; \qquad
  \sY_g(u)_\kappa = \Ups(g)^\lambda u_{\kappa\lambda}.
\]
Here, index raising and lowering as well as covariant derivatives are defined using the metric $g$, and $(\Box_g u)_{\mu\nu}=-u_{\mu\nu;\kappa}{}^\kappa$. Thus, recalling the definition \eqref{EqEinTdel} of $\tdel^*$, we have
\begin{equation}
\label{EqEinNOp}
  L_h = \rho^{-3}\bigl(\half\Box_g + (\tdel^*-\delta_g^*)\delta_g G_g + \tdel^*(\sC_g-\sY_g) + \sR_g\bigr)\rho,\qquad g=g_m+\rho h,
\end{equation}
which has principal symbol
\begin{equation}
\label{EqEinNSymbol}
  \sigma_2(L_h) = \half G_\bop := \half(g_\bop)^{-1},\ \ g_\bop:=\rho^2 g,
\end{equation}
where $G\in\CI(T^*\R^4)$ is the dual metric function $G(\zeta)=|\zeta|_G^2$. As a first step towards understanding the nature of $L_h$ as a b-differential operator on $M$, we prove:
\begin{lemma}
\label{LemmaEinNL0}
  We have $L_0\in\rho_I^{-1}\Diffb^2(M;\beta^*S^2)$ (see~\eqref{EqEinFbetaS2}).
\end{lemma}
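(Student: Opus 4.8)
\textbf{Proof plan for Lemma~\ref{LemmaEinNL0}.}

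The goal is to show that $L_0$, the linearization of the gauged Einstein operator $P$ at $h=0$ (equivalently around the background metric $g_0 = g_m$), lies in $\rho_I^{-1}\Diffb^2(M;\beta^*S^2)$. The plan is to examine each of the four building blocks in the expression \eqref{EqEinNOp} for $L_h$ at $h=0$, namely $\rho^{-3}\cdot(\tfrac12\Box_{g_m})\cdot\rho$, $\rho^{-3}(\tdel^*-\delta_{g_m}^*)\delta_{g_m}G_{g_m}\rho$, $\rho^{-3}\tdel^*(\sC_{g_m}-\sY_{g_m})\rho$, and $\rho^{-3}\sR_{g_m}\rho$, and show that each is an element of $\rho_I^{-1}\Diffb^2(M)$ acting on sections of $\beta^*S^2$. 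Since $\rho_I^{-1}\Diffb^2(M)$ is a $\CI(M)$-module closed under sums, it suffices to handle the blocks separately.

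First I would treat the principal (wave) part. The conjugated scalar wave operator model is already analyzed in the excerpt: near $\scri^+$ one has the local coordinate form \eqref{EqISysLinEqI}, i.e.\ $\rho^{-3}\Box_{\ul g}\rho = 2\rho_I^{-1}(\rho_I\pa_{\rho_I})(\rho_0\pa_{\rho_0}-\rho_I\pa_{\rho_I}) + \Diffb^2(M)$ modulo lower-decay terms, exhibiting membership in $\rho_I^{-1}\Diffb^2(M)$. For the Schwarzschild background metric $g_m$ near $i^0\cup\scri^+$, the same conclusion holds: using the null coordinates $q,s$ from~\S\ref{SsCptA} and the explicit formulas \eqref{EqCptASplNullExpl}--\eqref{EqCptASplNull} for $\pa_0,\pa_1$ in terms of b-vector fields on $M$, together with \eqref{EqEinFMetricRough} which records that $g_{\bar\mu\bar\nu}^\bop := \rho^2 g_m$ and its inverse $G_\bop$ have coefficients in $\CI(M)$ (with the off-diagonal $g^{01}-2$ and $g^{\bar a\bar b}+\slg^{ab}$ actually vanishing at $\pa M$), one computes directly that $\rho^{-3}\Box_{g_m}\rho = \rho_I^{-1}\cdot(\text{element of }\Diffb^2(M))$ near $\scri^+$; away from $\scri^+$, i.e.\ near $(i^0)^\circ$ and near all of $i^+$, $\rho_I$ is bounded away from $0$ so $\rho_I^{-1}\Diffb^2(M)=\Diffb^2(M)$ there and the statement reduces to showing $\rho^{-3}\Box_{g_m}\rho\in\Diffb^2(M)$, which follows because $g_m$ is a smooth scattering metric (element of $\CI(M;S^2\,\Tsc^*\ol{\R^4})$) and $\rho^{-3}\Box_\bop\rho\in\Diffb^2$ for any b-metric $g_\bop=\rho^2 g_m$ with smooth nondegenerate coefficients. (One must also note that $\Box_{g_m}$ acts componentwise on $\beta^*S^2$ modulo first-order terms, and those first-order correction terms, built from Christoffel symbols of $g_m$, contribute connection coefficients that are $O(\rho)$ scattering-smooth, hence land in $\Diffb^1(M)\subset\rho_I^{-1}\Diffb^2(M)$.)

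Next I would dispatch the three remaining, lower-order-in-the-b-sense, blocks. The term $\rho^{-3}\sR_{g_m}\rho$ is zeroth order: $\sR_{g_m}$ is the curvature endomorphism from \eqref{EqEinNOp0}, and from the computation in the proof of Lemma~\ref{LemmaEinP} (see \eqref{EqEinPRiem}) the Riemann and Ricci tensors of $g_m$ decay like $\rho_+^3\CI$ near $i^+$ and vanish to appropriate orders near $i^0$ and $\scri^+$ (since $g_m$ is Ricci-flat there and is a scattering metric everywhere); hence $\rho^{-3}\sR_{g_m}\rho$ is multiplication by a section of $\CI(M;\End(\beta^*S^2))$, lying in $\Diffb^0(M)\subset\rho_I^{-1}\Diffb^2(M)$. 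The term $\rho^{-3}\tdel^*(\sC_{g_m}-\sY_{g_m})\rho$: here $\sY_{g_m}(u)_\kappa=\Ups(g_m)^\lambda u_{\kappa\lambda}$, and $\Ups(g_m)=0$ (the background satisfies its own gauge condition trivially, as $g_m$ appears as the reference metric), so that piece vanishes; $\sC_{g_m}$ is the endomorphism $u\mapsto C_{\kappa\mu\nu}u^{\mu\nu}$ with $C_{\kappa\mu\nu}=\Gamma(g_m)_{\kappa\mu\nu}-\Gamma(g_m)_{\kappa\mu\nu}=0$ at $h=0$ — \emph{wait}: at $h=0$ the metric $g$ of the linearization is $g_m$ itself, so $C\equiv 0$ and this block vanishes identically. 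Finally $\rho^{-3}(\tdel^*-\delta_{g_m}^*)\delta_{g_m}G_{g_m}\rho$: by definition \eqref{EqEinTdel}, $\tdel^*-\delta_{g_m}^*$ is a zeroth-order operator with coefficients built from $\tfrac{d\rho_t}{\rho_t}$ and $\rho_t^{-1}\nabla^{g_m}\rho_t$; since $\rho_t$ is a boundary defining function of $\ol{\R^4}$ and $g_m$ is a scattering metric, $\tfrac{d\rho_t}{\rho_t}\in\CI(M;\Tb^*M)$ and $\rho_t^{-1}\nabla^{g_m}\rho_t\in\CI(M;\Tb M)$ (using $\Tsc\subset\rho^{-1}\Tb$ appropriately), so $\rho^{-3}(\tdel^*-\delta_{g_m}^*)\rho\in\Diffb^0(M)$; and $\rho^{-3}\delta_{g_m}G_{g_m}\rho$, being a first-order differential operator conjugated by the scattering weight, lies in $\rho^{-1}\Diffb^1(M)$ a priori, but composing a weight-$\rho^{-1}$ first-order piece with the zeroth-order $\rho^{-3}(\tdel^*-\delta^*_{g_m})\rho$ stays inside $\rho_I^{-1}\Diffb^2(M)$ once one checks — via the null-coordinate formulas \eqref{EqCptASplNullExpl} and \eqref{EqCptASplNullDual} — that the singular $\rho_I^{-1}$ factor is the only obstruction to b-smoothness and it appears to order exactly one. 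Assembling the four contributions gives $L_0\in\rho_I^{-1}\Diffb^2(M;\beta^*S^2)$.

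The main obstacle is bookkeeping the interaction between the two weight systems: the \emph{scattering} weights $\rho$ (from the conjugation $\rho^{-3}(\cdot)\rho$ and from $g_\bop=\rho^2 g$) and the \emph{b}-weight $\rho_I$ that measures the singularity of $\Box_g$ at the null hypersurface $\scri^+$. Concretely one must verify that, after conjugation, all the potentially-dangerous inverse powers of $\rho$ are absorbed into genuine b-vector fields and into the smooth coefficients of $G_\bop$, leaving behind at most a single overall factor of $\rho_I^{-1}$ and no inverse powers of $\rho_0$ or $\rho_+$ — this is exactly the content of the local computations \eqref{EqISysLinEqI} near $i^0\cap\scri^+$ and \eqref{EqCptASplNull} near $i^+\cap\scri^+$, which I would invoke rather than redo, supplemented by the global statement \eqref{EqEinFMetricRough} that $g_\bop$ is b-smooth and nondegenerate. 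Everything else — the vanishing of $\sC_{g_m}$, $\sY_{g_m}$, and the smallness of $\sR_{g_m}$ — is immediate from $g_m$ being a Ricci-flat scattering metric satisfying its own gauge condition.
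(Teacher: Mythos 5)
Your proposal is correct in substance but takes a genuinely different route from the paper. You decompose $L_0$ into the four blocks of \eqref{EqEinNOp} at $h=0$, observe that $\sC_{g_m}$ and $\sY_{g_m}$ vanish identically (true, since $g=g_m$ makes $C_{\kappa\mu\nu}=0$ and $\Ups(g_m;g_m)=0$), treat $\sR_{g_m}$ and $(\tdel^*-\delta_{g_m}^*)\delta_{g_m}G_{g_m}$ as lower-order smooth pieces, and then verify the membership of the wave part by direct computation in null/b-coordinates near $\scri^+$. The paper instead never leaves the blown-down space until the very end: since $g_m$ is a smooth scattering metric, $\Box_{g_m}\in\rho^2\Diffb^2(\ol{\R^4})$, $\delta_{g_m}\in\rho\,\Diffb^1(\ol{\R^4})$ and $\sR_{g_m}\in\rho^2\CI$, so $L_0\in\Diffb^2(\ol{\R^4};S^2\,\Tsc^*\ol{\R^4})$ with no blockwise case analysis; it then uses a general lifting criterion (a product $V_1V_2$ of b-vector fields on $\ol{\R^4}$ lifts to $\rho_I^{-1}\Diffb^2(M)$ provided at least one $V_j$ is tangent to $S^+$, while first-order operators always lift to $\rho_I^{-1}\Diffb^1(M)$), so that the whole lemma reduces to the single observation that the coefficient of $\pa_1^2$ is $g_m^{1 1}\equiv 0$. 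Your route buys a more explicit picture of which terms of $L_0$ actually occur (and that several vanish), at the cost of more bookkeeping; the paper's route buys brevity and isolates the one structural fact doing all the work.

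One caution: in your treatment of the wave part, the appeal to \eqref{EqEinFMetricRough} (b-smoothness of $\rho^2 g_m$ and its inverse) is not by itself sufficient — smooth coefficients of $G_\bop$ would in general only give $L_0\in\rho_I^{-2}\Diffb^2(M)$, because $\rho^{-2}\pa_1^2$ costs two powers of $\rho_I$. The decisive extra input, which in your write-up is left implicit in ``one computes directly'' and in the invocation of the model computation \eqref{EqISysLinEqI}, is precisely the absence of the $\pa_1^2$ term, i.e.\ $g_m^{1 1}\equiv 0$ (indeed vanishing of $g^{1 1}$ at $S^+$ suffices). If you make that one coefficient check explicit, your argument is complete; similarly, your remarks about the mixed first-order/weight bookkeeping in the $(\tdel^*-\delta_{g_m}^*)\delta_{g_m}G_{g_m}$ block are unnecessary, since that block is first order and any element of $\Diffb^1(\ol{\R^4})$ lifts to $\rho_I^{-1}\Diffb^1(M)$ with no condition on its coefficients.
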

\begin{proof}
  Since $g_m$ is a smooth scattering metric, we see, using local coordinates $z^\mu$ and the membership $\pa_{z^\mu}\in\rho\Vb(\ol{\R^4})$ as in the discussion preceding Lemma~\ref{LemmaEinP} to compute Christoffel symbols, that
  \[
    \sR_{g_m}\in\rho^2\,\CI(\ol{\R^4};\End(S^2\,\Tsc^*\ol{\R^4})),\ \ 
    \delta_{g_m}\in\rho\,\Diffb^1(\ol{\R^4};S^2\,\Tsc^*\ol{\R^4},\Tsc^*\ol{\R^4}),
  \]
  and $\Box_{g_m}\in\rho^2\,\Diffb^2(\ol{\R^4};S^2\,\Tsc^*\ol{\R^4})$. This gives $L_0\in\Diffb^2(\ol{\R^4};S^2\,\Tsc^*\ol{\R^4})$, and thus the desired conclusion away from $\scri^+$. Near $\scri^+$, any element of $\Diffb^1(\ol{\R^4})$ lifts to an element of $\rho_I^{-1}\Diffb^1(M)$; moreover, for $V_1,V_2\in\Vb(\ol{\R^4})$, the product $V_1 V_2$ lifts to an element of $\rho_I^{-1}\Diffb^2(M)$ provided at least one of the $V_j$ is tangent to $S^+$. Thus, expressing $\Box_{g_m}$ in the null frame $\pa_0,\pa_1,\pa_a$ ($a=2,3$), we merely need to check that the coefficient of $\pa_1^2$ vanishes at $S^+$; but this coefficient is $g_m^{1 1}\equiv 0$.
\end{proof}

As suggested by the toy estimate~\eqref{EqISysLinEqIEst} and explained in \S\ref{SsCptScri}, we need to describe lower order terms of $L_h$ near $\scri^+$ in two stages, one involving the module $\cM$ from Definition~\ref{DefCptScriModule}, the other being general b-differential operators but with extra decay at $\rho_I=0$. For illustration and for later use, we calculate the leading terms, i.e.\ the `normal operator,' of the scalar wave operator:

\begin{lemma}
\label{LemmaEinNScalarBox}
  The scalar wave operator $\Box_{g_\bop}$ (see~\eqref{EqEinNSymbol}) satisfies
  \begin{equation}
  \label{EqEinNScalarBox}
    \Box_{g_\bop} \in -4\rho^{-2}\pa_0\pa_1 +\Hb^{\infty;1+b_0,-1+b'_I,1+b_+}\cM^2_{\ul\C} + (\CI+\Hb^{\infty;1+b_0,-0,1+b_+})\Diffb^2(M).
  \end{equation}
\end{lemma}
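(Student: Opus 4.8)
The plan is to compute $\Box_{g_\bop}$ explicitly in the null frame $\pa_0,\pa_1,\rho\pa_a$ near $\scri^+$, and to carefully bookkeep which terms are principal ($-4\rho^{-2}\pa_0\pa_1$), which have the favorable structure of lying in the module $\cM_{\ul\C}^2$ with an extra decaying weight, and which may be dumped into the coarse error space $(\CI+\Hb^{\infty;1+b_0,-0,1+b_+})\Diffb^2(M)$. First I would recall that $\Box_{g_\bop}u = -|{\det g_\bop}|^{-1/2}\pa_\mu(|{\det g_\bop}|^{1/2}g_\bop^{\mu\nu}\pa_\nu u)$, so the operator is built from the inverse b-metric coefficients $g_\bop^{\mu\nu}=\rho^{-2}g^{\mu\nu}$ and first b-derivatives of the (log) volume density. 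The inverse metric coefficients are given in~\eqref{EqEinFinverse}, and the relations~\eqref{EqEinFMetricRough} summarize the rough structure: $g^{0 1}-2$ and $g^{\bar a\bar b}+\slg^{a b}$ lie in $\rho\,\CI + \Hb^{\infty;1+b_0,1-0,1+b_+}$, while $g^{0 0}\in\Hb^{\infty;1+b_0,1-0,1+b_+}$ (no $\rho_I$-improvement, reflecting $h_{1 1}$), $g^{1 1}\in\Hb^{\infty;1+b_0,1+b'_I,1+b_+}$ (the good component $h_{0 0}$, with the extra $b'_I$ decay at $\scri^+$), and the off-diagonal spherical terms $g^{0 b},g^{1 b}$ are two orders down.

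The main computation is then to expand $-\rho^2\,g_\bop^{\mu\nu}\pa_\mu\pa_\nu$ plus the first-order terms. The $g^{0 1}$ term produces $-4\rho^{-2}\pa_0\pa_1$ from the constant part $g^{0 1}=2$; the correction $g^{0 1}-2$, after being multiplied by $\rho^{-2}\pa_0\pa_1$, needs the factorization $\pa_0\in\rho\cdot{}^0\cM_{\ul\C}$, $\pa_1\in\rho_0\rho_+\cdot{}^0\cM_{\ul\C}$ from~\eqref{EqCptScriNull}: so $\rho^{-2}\pa_0\pa_1 = \rho^{-1}\rho_0^{-1}\rho_+^{-1}(\rho^{-1}\pa_0)(\rho_0^{-1}\rho_+^{-1}\pa_1)\in\rho_I^{-1}\cM_{\ul\C}^2$, and multiplying by $g^{0 1}-2\in\rho\CI+\rho_I\Hb^\infty\subset\Hb^{\infty;1+b_0,-0,1+b_+}+\rho_I\CI$ lands in the coarse space (the $\rho\CI$ piece gives $\rho\rho_I^{-1}\cM^2 = \rho_0\rho_+\cM^2\subset\Diffb^2$, absorbed into $\CI\cdot\Diffb^2$). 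The spherical Laplacian-type term $-\rho^2 g^{a b}\pa_a\pa_b$: writing $\rho\pa_a\in\cM_{\ul\C}$, we get $-\rho^2 g^{a b}\pa_a\pa_b = -g^{a b}(\rho\pa_a)(\rho\pa_b) + (\text{first order})$; since $g^{a b}\rho^{-2} = -\slg^{a b}+O(\rho)$, i.e.\ $g^{a b} = \rho^2(-\slg^{a b}+\rho\CI+\rho_I\Hb^\infty)$... here I must be careful: $-\rho^2 g^{a b}\pa_a\pa_b$ with $\rho^2 g^{a b}\in-\slg^{a b}+\rho\CI+\Hb^{\infty;1+b_0,1-0,1+b_+}$ (from $g^{\bar a\bar b}+\slg^{a b}$) gives $\slDelta$ as part of $\CI\cdot\Diffb^2(M)$ (since $\rho\pa_a\rho\pa_b\in\Diffb^2(M)$, its coefficient $\slg^{a b}$ smooth), and the error $\rho\CI+\Hb^{\infty;1+b_0,1-0,1+b_+}$ times $\Diffb^2$ goes into the coarse space. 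The crucial favorable term is $-\rho^2 g^{0 0}\pa_0^2$: here $g^{0 0}\in -4r^{-1}h_{1 1}+\Hb^{\infty;2+b_0,2-0,2+2 b_+}$, so $\rho^2 g^{0 0}\in\Hb^{\infty;1+b_0,1-0,1+b_+}$ — no $\rho_I$ gain — but $\pa_0^2\in\rho^2({}^0\cM_{\ul\C})^2$, so $\rho^2 g^{0 0}\pa_0^2\in\Hb^{\infty;1+b_0,1-0,1+b_+}\cdot\rho_I^2\cM^2\subset\Hb^{\infty;1+b_0,1-0+2,1+b_+}\cM^2$... wait, more simply: $\rho^2 g^{0 0}\cdot\rho^{-2}\pa_0^2$ where $\pa_0^2\in\rho^2\cM^2$ so this is $g^{0 0}\pa_0^2\in g^{0 0}\rho^2\cM^2$, and $g^{0 0}\rho^2\in\rho_I\Hb^{\infty;1+b_0,-0,1+b_+}$... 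The precise accounting is: $-\rho^2 g^{0 0}\pa_0^2 = -g^{0 0}(\pa_0)(\pa_0)$ and $\pa_0\in\rho_I\rho_0\cdot\rho_+\cM_{\ul\C}$ roughly, giving a net $\rho_I^2$; multiplied by $g^{0 0}$ which is merely $\Hb^{\infty;1+b_0,1-0,1+b_+}$ lands safely in $\Hb^{\infty;1+b_0,-1+b'_I,1+b_+}\cM^2$ provided $2-0+(1-0)\geq b'_I$, which holds. The term $-\rho^2 g^{1 1}\pa_1^2$: here the good weight $g^{1 1}\in\Hb^{\infty;1+b_0,1+b'_I,1+b_+}$ meets $\pa_1^2\in\rho_0^2\rho_+^2\cM^2$, net weight lands in $\Hb^{\infty;1+b_0,-1+b'_I,1+b_+}\cM^2$ as claimed.

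First-order terms: these come from $\pa_\mu g_\bop^{\mu\nu}$ and $g_\bop^{\mu\nu}\pa_\mu\log|\det g_\bop|^{1/2}$. Each is a b-vector-field coefficient times a single derivative; tracking the weights, these are manifestly in the coarse $(\CI+\Hb^{\infty;1+b_0,-0,1+b_+})\Diffb^1(M)$, except for the piece coming from differentiating the constant part of $g^{0 1}$, which produces terms controlled by derivatives of the smooth density — also in $\CI\cdot\Diffb^1(M)$. The hard part will be the meticulous weight bookkeeping to verify that every term which does \emph{not} carry an $\rho_I$-improvement on the metric coefficient nonetheless acquires, through the factorizations $\pa_0\in\rho_I\cdot\rho_0\rho_+\cM_{\ul\C}$ and $\pa_1\in\rho_0\rho_+\cM_{\ul\C}$ and $\rho\pa_a\in\cM_{\ul\C}$ in~\eqref{EqCptScriNull}, enough powers of $\rho_I$ to land in $\Hb^{\infty;1+b_0,-1+b'_I,1+b_+}\cM_{\ul\C}^2$; and dually, to check that the genuinely problematic combination — a metric coefficient with no decay multiplying two $\pa_1$'s — does not occur, which is precisely the content of $g^{1 1}$ (equivalently $g_\bop^{1 1}$, equivalently $g_m^{1 1}=0$ at leading order, cf.\ the proof of Lemma~\ref{LemmaEinNL0}) having the good weight $b'_I$. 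I would organize the proof as a short table-driven case check keyed to the six entries of~\eqref{EqEinFinverse}, invoking~\eqref{EqCptScriNull} and the module calculus of Lemma~\ref{LemmaCptScriModule} to place each resulting term, and citing Appendix~\ref{SCo} for the detailed Christoffel symbol computations underlying the first-order part.
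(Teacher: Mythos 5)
Your proposal is correct in its essential structure and reaches the right conclusions, but it is organized differently from the paper's proof. The paper does not use the divergence form of $\Box_{g_\bop}$; instead it invokes the conformal invariance of the conformal wave operator, writing $\rho^{-3}(\Box_g-\tfrac16 R_g)\rho=\Box_{g_\bop}-\tfrac16 R_{g_\bop}$, notes that $\rho^{-2}R_g$ lies in the error space by the Ricci computation~\eqref{EqCoPertRic}, and then expands $\Box_g$ in the frame $\pa_0,\pa_1,\pa_a$ using the inverse metric~\eqref{EqEinFinverse} together with the Christoffel symbols already computed in Appendix~\ref{SCo} (in particular the contracted symbol~\eqref{EqEinNscriContractedGamma} and the rough bound~\eqref{EqEinNscriGamma}). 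The structural content is the same as yours: the principal term comes from $g^{0 1}=2$, the $(1,1)$ term is the borderline one and lands in $\Hb^{\infty;1+b_0,-1+b'_I,1+b_+}\cM^2_{\ul\C}$ precisely because $g^{1 1}\sim -4 r^{-1}h_{0 0}$ carries the gauge-improved $b'_I$ weight, the $(0,0)$ term is saved by the two factors of $\rho$ in $\pa_0\in\rho\,{}^0\cM_{\ul\C}$, and the spherical block goes into the coarse space. What your route buys is that the dangerous $\cO(r)\,\pa_1$ first-order terms cancel automatically inside the combination $|\det g_\bop|^{1/2}g_\bop^{0 1}$ (which equals $\sqrt{\det\slg}$ exactly for the Schwarzschild background); what it costs is that you must separately control the perturbed density $|\det g_\bop|^{1/2}$ and its b-derivatives and check that the perturbations of $|\det g_\bop|^{1/2}g_\bop^{\mu\nu}$ decay appropriately at $\scri^+$ — a computation not supplied by the appendix, whereas the paper's route recycles the $\Gamma$'s and $\Ric(g)$ verbatim and handles the first-order/zeroth-order bookkeeping through the single identity $\Box_{g_\bop}=A-A(1)$.

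One claim in your sketch needs correction: the first-order terms are \emph{not} all "manifestly in the coarse space $(\CI+\Hb^{\infty;1+b_0,-0,1+b_+})\Diffb^1(M)$." The term $-|\det g_\bop|^{-1/2}\pa_1\bigl(|\det g_\bop|^{1/2}g_\bop^{1 1}\bigr)\pa_1$ has coefficient comparable to $\rho^{-2}\pa_1(\rho\,h_{0 0})\in\Hb^{\infty;b_0,-1+b'_I,b_+}$, which is $\rho_I^{-1+b'_I}$-singular at $\scri^+$ (since $\pa_1$ gains $\rho_0\rho_+$ but no power of $\rho_I$), so it does not fit in the coarse space; it does fit in $\Hb^{\infty;1+b_0,-1+b'_I,1+b_+}\cM_{\ul\C}^2$ upon writing $\pa_1\in\rho_0\rho_+\,{}^0\cM_{\ul\C}$, exactly as for the second-order $(1,1)$ term — this is the same phenomenon the paper absorbs via the remainder of~\eqref{EqEinNscriContractedGamma}. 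Apart from this, there are minor bookkeeping slips (e.g.\ writing $\rho^2 g^{0 0}$ where the relevant coefficient is $g_\bop^{0 0}=\rho^{-2}g^{0 0}$, and the prefactor in the factorization $\rho^{-2}\pa_0\pa_1\in\rho_I^{-1}\cM_{\ul\C}^2$, which should come from $\rho^{-2}\cdot\rho\cdot\rho_0\rho_+=\rho_I^{-1}$) that do not affect your conclusions, and the cross terms $g_\bop^{0 b},g_\bop^{1 b}$ are indeed harmless as you indicate. With the first-order term reassigned to the module space and the determinant estimates carried out, your argument closes.
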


For the linearized gauge-fixed Einstein operator $L_h$, the analogous result is:

\begin{lemma}
\label{LemmaEinNscri}
  For $h\in\cX^\infty$ small in $\cX^3$, we have
  \[
    L_h = L_h^0 + \wt L_h
  \]
  where, using the notation~\eqref{EqEinFbetaS2} and fixing $\rho=r^{-1}$ near $\scri^+$,
  \begin{equation}
  \label{EqEinNscri}
  \begin{split}
    L_h^0 & = -\rho^{-1}\bigl((2\rho^{-1}\pa_0+A_h)\pa_1 - B_h\bigr), \\
    \wt L_h &\in \Hb^{\infty;1+b_0,-1+b'_I,1+b_+}\cM_{\beta^*S^2}^2 + (\CI+\Hb^{\infty;1+b_0,-0,1+b_+})\Diffb^2(M;\beta^*S^2);
  \end{split}
  \end{equation}
  here $\rho^{-1}\pa_0$ and $\pa_1$ are defined using equation~\eqref{EqCptScriNull} and Lemma~\usref{LemmaCptScriModule}\eqref{ItCptScriModuleBdl}. In the refinement of the bundle splitting \eqref{EqCptASpl2} by \eqref{EqCptASplS2}, $A_h$ and $B_h$ are given by
  \[
    A_h=
     \openbigpmatrix{2pt}
       2\gamma & 0 & 0 & 0 & 0 & 0 & 0 \\
       -2\pa_1 h_{0 1} & 0 & 0 & 0 & 0 & 0 & 0 \\
       0 & 0 & \gamma & 0 & 0 & 0 & 0 \\
       0 & 0 & -2\pa_1 h_1{}^{\bar a} & 0 & 0 & \gamma+2\pa_1 h_{0 1} & \half\pa_1 h^{\bar a\bar b} \\
       -2\pa_1 h_{1\bar b} & 0 & \gamma & 0 & 0 & 0 & 0 \\
       2\gamma & 0 & 0 & 0 & 0 & \gamma & 0 \\
       -2\pa_1 h_{\bar a\bar b} & 0 & 0 & 0 & 0 & 0 & 0
     \closebigpmatrix
  \]
  and
  \[
    B_h=
    \begin{pmatrix}
      0 & 0 & 0 & 0 & 0 & 0 & 0 \\
      2\pa_1\pa_1 h_{0 1} & 0 & 0 & 0 & 0 & 0 & 0 \\
      0 & 0 & 0 & 0 & 0 & 0 & 0 \\
      2\pa_1\pa_1 h_{1 1} & 0 & 0 & 0 & 0 & 0 & 0 \\
      2\pa_1\pa_1 h_{1\bar b} & 0 & 0 & 0 & 0 & 0 & 0 \\
      0 & 0 & 0 & 0 & 0 & 0 & 0 \\
      2\pa_1\pa_1 h_{\bar a\bar b} & 0 & 0 & 0 & 0 & 0 & 0
    \end{pmatrix}.
  \]
\end{lemma}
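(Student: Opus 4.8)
The plan is to reduce everything to an explicit (if tedious) computation of the terms in the linearized gauged Einstein operator $L_h = \rho^{-3}\bigl(\half\Box_g + (\tdel^*-\delta_g^*)\delta_g G_g + \tdel^*(\sC_g-\sY_g)+\sR_g\bigr)\rho$ from \eqref{EqEinNOp}, tracking each contribution modulo the error space $\wt L_h$. First I would use the metric coefficient expansions \eqref{EqEinFMetric}--\eqref{EqEinFMetricRough} together with the appendix formulae (referenced as \eqref{EqCoPertRic}, \eqref{EqCoPertDelUps}, \eqref{EqCoSchwTdeldiff}, \eqref{EqCoPertUpsLower}) to read off which coefficient functions of $L_h$ are smooth plus $\Hb^{\infty;1+b_0,-0,1+b_+}$, which come with an extra $\rho_I^{b_I'}$-type gain (hence land in $\Hb^{\infty;1+b_0,-1+b_I',1+b_+}\cM^2$), and which contribute to the `non-error' part $L_h^0$. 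The key point, already isolated in Lemma~\ref{LemmaEinNL0}, is that $L_h\in\rho_I^{-1}\Diffb^2$, so $L_h^0$ is obtained by retaining only the coefficient of the single `bad' factor $\rho_I^{-1}$ at $\scri^+$; concretely this is the coefficient of $\pa_1$ acting after the scalar-wave-type factor $2\rho^{-1}\pa_0$.

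Next I would organize the computation by the bundle splitting \eqref{EqCptASpl2} refined by \eqref{EqCptASplS2}, i.e.\ work entry-by-entry in the $7$-component decomposition $(h_{0 0}, h_{0 1}, h_{0\bar b}, h_{1 1}, h_{1\bar b}, \sltr h, (h_{\bar a\bar b})_{\rm tf})$. The principal part $\half\Box_g$ contributes the diagonal scalar model $-2\rho^{-2}\pa_0\pa_1$ to each entry by Lemma~\ref{LemmaEinNScalarBox} (applied componentwise, the bundle corrections being absorbed into $\wt L_h$ via Lemma~\ref{LemmaCptScriConn}-type arguments and Lemma~\ref{LemmaCptScriConnDKD}); this is the $-2\rho^{-2}\pa_0\pa_1$ summand displayed in $L_h^0$. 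The modification $(\tdel^*-\delta_g^*)\delta_g G_g$: here $\tdel^*-\delta_{g_m}^*$ is the zeroth-order term $-2\gamma_1\frac{d\rho_t}{\rho_t}\otimes_s(\cdot)+\gamma_2(\iota_{\cdot}(\cdot))g_m$ from \eqref{EqEinTdel}, which near $\scri^+$ has $\frac{d\rho_t}{\rho_t}\sim$ a specific combination of $\frac{d\rho_0}{\rho_0},\frac{d\rho_I}{\rho_I}$; paired with $\delta_g G_g$ (a first-order b-operator, which near $\scri^+$ contributes a factor $\rho^{-1}\pa_1$ when hitting the relevant components) this produces the constant $\gamma_1,\gamma_2$ entries of $A_h$ — this is exactly the constraint-damping mechanism, structurally the same as in \eqref{EqISysLinDamping}. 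The remaining pieces $\tdel^*(\sC_g-\sY_g)$ and $\delta_{g_m}^*\delta_g G_g - (\text{Schwarzschild model})$ and $\sR_g$ produce the $h$-dependent entries: the $\pa_1 h_{0 1}$, $\pa_1 h_{1 1}$, $\pa_1 h_{1\bar b}$, $\pa_1 h_{\bar b}{}^{\bar a}$, $\pa_1 h^{\bar a\bar b}$ terms in $A_h$ come from first-derivative-of-coefficient terms in the connection and Ricci expansions evaluated against the appropriate projections, while the $B_h$ column — second-order transport corrections $2\pa_1\pa_1 h_{0 1}$ etc.\ coupling the $(1,1)$-type output back from the bad component — comes from the curvature term $\sR_g$ and the second-derivative structure in $\delta_g^*\delta_g G_g$; these are precisely the off-diagonal couplings that encode the weak null structure (nilpotent lower-triangular block, cf.\ \eqref{EqISysNullLin}).

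The bookkeeping of error terms is what makes everything work: I would verify that every term not explicitly retained either (i) has a fiber-tangential-squared structure, i.e.\ lies in $\cM^2_{\beta^*S^2}$, with a coefficient in $\CI+\Hb^{\infty;1+b_0,-0,1+b_+}$ — these are the `$\pa_0$-type' or spherical second derivatives, controlled using \eqref{EqCptScriNull} and Lemma~\ref{LemmaCptScriModule}\eqref{ItCptScriModuleBdl} — and moreover that the genuinely $h$-dependent ones among these gain the extra $\rho_I^{b_I'}$ (because they involve the good components $\pi_0 h\in\Hb^{\infty;b_0,b_I',b_+}$ or a $\pa_0 h$ which gains a $\rho_I$), landing them in $\Hb^{\infty;1+b_0,-1+b_I',1+b_+}\cM^2$; or (ii) lies in $(\CI+\Hb^{\infty;1+b_0,-0,1+b_+})\Diffb^2(M;\beta^*S^2)$, i.e.\ is an honest (not $\rho_I^{-1}$-weighted) b-operator of order $2$. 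Point (i)'s $\rho_I^{b_I'}$-gain for the $h$-dependent $\cM^2$ terms is the subtle bit — it requires noticing that wherever a factor of a bad component $h_{1 1}$ or $h_{1\bar b}$ or $h_{0 1}$ appears in a would-be error coefficient, it is actually differentiated by $\pa_0$ or paired with a $\rho_I$-vanishing structure, so only good-component coefficients survive without gain, and those already live in the $b_I'$-weighted space by Definition~\ref{DefEinF}. I would then also handle the adjustment at $i^+$ exactly as in the proof of Lemma~\ref{LemmaEinP}: near $(i^+)^\circ$ the metric $g$ is a $\rho_+^{1+b_+}\Hb^\infty$-perturbation of the smooth scattering metric $g_m$, so all coefficients of $L_h$ beyond those of $L_0$ (itself in $\Diffb^2$ there) lie in $\rho_+^{1+b_+}\Hb^\infty\cdot\Diffb^2$, consistent with the claimed weights — and there is no $\scri^+$ to worry about, so the $\cM^2$ part is vacuous away from $\scri^+$.

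The main obstacle I expect is the sheer combinatorial precision of extracting $A_h$ and $B_h$: one must pair the connection-coefficient and Ricci-curvature expansions from Appendix~\ref{SCo} against the six-fold splitting \eqref{EqCptASpl2}, keep straight the raised/lowered spherical indices (the $h_0{}^{\bar b}$ versus $h_{0\bar b}$ distinction and the trace-free projection on $S^2 T^*\Sph^2$), and correctly attribute each surviving $\pa_1$-coefficient to $A_h$ versus each $\pa_1\pa_1$-coefficient to $B_h$ — all while discarding dozens of genuinely-lower-order terms. This is not conceptually hard but is error-prone; the structural sanity checks are that $A_h$ must reproduce the scalar damping model \eqref{EqISysLinDamping} on the good-component block (the $\gamma_1$'s on the diagonal, the $2\gamma_1$ for $h_{0 0}$), that $B_h$ must be nilpotent and map only into the bad-component outputs (consistent with the weak null condition), and that for $h=0$ one recovers the Schwarzschild-model $L_0^0$, whose only nonzero $A_0$-entries are the $\gamma_i$'s and whose $B_0=0$.
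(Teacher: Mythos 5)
Your strategy coincides with the paper's own proof (Appendix~\ref{SAppEinN}): expand $L_h$ term by term from \eqref{EqEinNOp} using the Appendix~\ref{SCo} expansions of $g^{-1}$, the Christoffel symbols and the curvature, retain in each slot of the seven-fold splitting only the coefficients genuinely of size $\rho_I^{-1}$ at $\scri^+$, and push everything else into the two stated error classes — with the $\rho_I^{b'_I}$ gain for the $\cM^2$ part coming exactly from the decay of the good components (in the paper, from $g^{1 1}\in-4 r^{-1}h_{0 0}+\cdots$ killing the $\pa_1^2$ term of $\Box_g$), which you correctly single out as the subtle point. Executed carefully, this yields the lemma, and your sanity checks ($B_0=0$ at $h=0$, recovery of the scalar damping model, $B_h$ vanishing on the good-component rows) are the right ones.

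The one substantive inaccuracy is precisely in the part you defer as bookkeeping: the sources you assign to $A_h$ and $B_h$ are partly wrong. In the organization \eqref{EqEinNOp} you adopt, $\delta_g G_g$ enters only through $(\tdel^*-\delta_g^*)\delta_g G_g$, a zeroth-order operator composed with a first-order one; it contributes exactly the constant $\gamma_1,\gamma_2$ entries of $A_h$ (see \eqref{EqEinNscriCD}) and cannot produce any $\pa_1\pa_1 h$ terms, so ``the second-derivative structure in $\delta_g^*\delta_g G_g$'' is not where $B_h$ comes from. Both the $h$-dependent entries of $A_h$ and the entries $2\pa_1\pa_1 h_{0 1}$, $2\pa_1\pa_1 h_{1 1}$, $2\pa_1\pa_1 h_{1\bar b}$ of $B_h$ arise from the single term $\tdel^*\sC_g$: by \eqref{EqEinNscriCTensor}--\eqref{EqEinNscrisC} this is $\rho^{-1}\pa_1$ composed with a matrix of coefficients $\pa_1 h$ (the $\pa_1$ being the one inside $\delta_{g_m}^*$), and writing $\pa_1\circ(\pa_1 h\,\cdot)=(\pa_1 h)\pa_1+(\pa_1\pa_1 h)$ splits it into the $A_h$- and $B_h$-contributions. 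The curvature term $\sR_g$, see \eqref{EqEinNscriRg}, supplies the remaining entry $2\pa_1\pa_1 h_{\bar a\bar b}$ and cancels the $\pa_1\pa_1 h^{\bar a\bar b}$ and $\pa_1\pa_1 h_{\bar b}{}^{\bar a}$ terms produced by $\tdel^*\sC_g$ in the $(4,7)$ and $(5,3)$ slots — which is why those entries of $B_h$ vanish — while the Ricci tensor itself only yields error terms, so ``Ricci expansions'' play no role in $A_h$ either. Relatedly, the nonzero column of $B_h$ (and of the $h$-dependent part of $A_h$) is the $u_{0 0}$ input, i.e.\ a \emph{good} component, not ``the bad component'' as your parenthetical suggests; this is what makes the decoupling \eqref{EqEinNscriPi0}--\eqref{EqEinNscriPi11} work. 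None of this breaks your plan — an honest execution of the term-by-term computation would uncover these cancellations — but as written the sketch predicts the wrong provenance for the matrices it is supposed to establish.
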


The proofs of these lemmas only involve simple calculations and careful bookkeeping; they are given in Appendix~\ref{SAppEinN}. We thus see that at $\scri^+$, $L_h$ effectively becomes a differential operator in the null coordinates $x^0=q$ and $x^1=s$ only, as spherical derivatives have decaying coefficients; this is to be expected since $r^{-1}V$, $V\in\cV(\Sph^2)\subset\Vb(M)$, is the naturally appearing (scattering) derivative just like $\pa_0$ and $\pa_1$. We point out that a number of terms of $L_h$ which are not of leading order at $\scri^+$ do contribute to the normal operators at $I^0$ and $I^+$; this includes in particular the spherical Laplacian, which is crucial for proving an energy estimate.

For the analysis of the linearized operator $L_h$, the structure of the leading term $L_h^0$ will be key for obtaining the rough background estimate, Theorem~\ref{ThmBg}, as well as the precise asymptotic behavior at $\scri^+$, as encoded in the space $\cX^\infty$. To describe this structure concisely, recall the projection $\pi_0$ defined in \eqref{EqEinFSubbProj0} projecting a metric perturbation onto the bundle $K_0$ encoding the components which we expect to be decaying from the gauge condition; and the projection $\pi_{1 1}$ defined in \eqref{EqEinFSubbProj11} onto the bundle $K_{1 1}$ encoding the $(1,1)$ component, which we allow to include a logarithmic term. Thus, in the splitting used in Lemma~\ref{LemmaEinNscri}, $\pi_0$ picks out components $1,3,6$, $\pi_{1 1}$ picks out component $4$, and $\pi_{1 1}^c$ picks out components $2,5,7$. Suppose now $h'$ satisfies the asymptotic equation $L_h^0 h'=0$. Since $\pi_0 A_h|_{K_0^c}=0$ and $\pi_0 B_h|_{K_0^c}=0$, the components $\pi_0 h'$, which we hope to be decaying, satisfy a \emph{decoupled} equation
\begin{subequations}
\begin{equation}
\label{EqEinNscriPi0}
  (2\rho^{-1}\pa_0+A_\CD)\pa_1(\pi_0 h') = 0, \quad
  A_\CD
  :=\begin{pmatrix}
     2\gamma & 0 & 0 \\
     0 & \gamma & 0 \\
     2\gamma & 0 & \gamma
   \end{pmatrix},
\end{equation}
where $A_\CD\in\CI(M;\End(K_0))$ is the endomorphism induced by $\pi_0 A_h$ on $\beta^*S^2/K_0^c\cong K_0$. (Thus, this matrix is the expression for $A_{h,0}$ in the splitting of $K_0\cong\beta^*S^2/K_0^c$ induced by the splittings~\eqref{EqCptASpl2}--\eqref{EqCptASplS2} via the projection $\pi_0$.) Note that by equation~\eqref{EqCptASplNull}, $\rho^{-1}\pa_0$ is proportional to the dilation vector field $-\rho_I\pa_{\rho_I}$ (which is the asymptotic generator of dilations on outgoing light cones), hence equation~\eqref{EqEinNscriPi0} is, schematically, $(\rho_I\pa_{\rho_I}-A_\CD)(\pi_0 h')=0$. Choosing $\gamma>0$, the spectrum of $A_\CD$ is positive, which will allow us to prove that $\pi_0 h'$ decays at $\scri^+$, similarly to the discussion of the model equation~\eqref{EqISysLinDamping}; we will make this precise in \S\S\ref{SsBgscri} and \ref{SsIti0Scri}.

Next, using that $\pi_{1 1}^c A_h|_{K_{1 1}}=0$ and $\pi_{1 1}^c B_h|_{K_{1 1}}=0$, i.e.\ the logarithmic component $h_{1 1}$ does not couple into the other nondecaying components, we can obtain an equation for the nonlogarithmic components $\pi_{1 1}^c h'$ which only couples to \eqref{EqEinNscriPi0}, namely
\begin{align}
\label{EqEinNscriPi11c}
  &2\rho^{-1}\pa_0\pa_1(\pi_{1 1}^c h') = (-A_{h,1 1}^c\pa_1 + B_{h,1 1}^c)(\pi_0 h'), \\
  &\quad
  A_{h,1 1}^c=
    \openbigpmatrix{2pt}
      -2\pa_1 h_{0 1} & 0 & 0 \\
      -2\pa_1 h_{1\bar b} & \gamma & 0 \\
      -2\pa_1 h_{\bar a\bar b} & 0 & 0
    \closebigpmatrix,\ 
  B_{h,1 1}^c=
    \begin{pmatrix}
      2\pa_1\pa_1 h_{0 1} & 0 & 0 \\
      2\pa_1\pa_1 h_{1\bar b} & 0 & 0 \\
      2\pa_1\pa_1 h_{\bar a\bar b} & 0 & 0
    \end{pmatrix};
  \nonumber
\end{align}
the precise form of $A_{h,1 1}^c,B_{h,1 1}^c$, mapping sections of $K_0$ to sections of $K_{1 1}^c$, is irrelevant: only their boundedness matters (even mild growth towards $\scri^+$ would be acceptable). The operator on the left hand side of~\eqref{EqEinNscriPi11c} has the same structure as the model operator in~\eqref{EqISysLinTransport}; the fact that the forcing term in \eqref{EqEinNscriPi11c} is \emph{decaying} will thus allow us to prove that $\pi_{1 1}^c h'$ is bounded at $\scri^+$, consistent with what the function space $\cX^\infty$ encodes.

Lastly, $\pi_{1 1} h'$ couples to all previous quantities,
\begin{align}
\label{EqEinNscriPi11}
  &2\rho^{-1}\pa_0\pa_1(\pi_{1 1} h') = (-A_{h,1 1}\pa_1 + B_{h,1 1})\begin{pmatrix}\pi_0 h' \\ \pi_{1 1}^c h'\end{pmatrix}, \\
  &\qquad A_{h,1 1}=\begin{pmatrix} 0 & -2\pa_1 h_1{}^{\bar a} & \gamma+2\pa_1 h_{0 1} & 0 & 0 & \half\pa_1 h^{\bar a\bar b} \end{pmatrix}, \nonumber\\
  &\qquad B_{h,1 1}=\begin{pmatrix} 2\pa_1\pa_1 h_{1 1} & 0 & 0 & 0 & 0 & 0 \end{pmatrix}.\nonumber
\end{align}
\end{subequations}
The logarithmic growth of the first component of $B_{h,1 1}$ is more than balanced by the fast decay of the $(0,0)$-component of $h'$ that it acts on.

\begin{rmk}
  The fact that the logarithmic growth of $h_{1 1}$ is rendered harmless due to its coupling only to the faster decaying $\pi_0 h'$ is the manifestation of the weak null condition \cite{LindbladRodnianskiWeakNull} in our framework. Here, the faster decay of $\pi_0 h'$ is accomplished by means of constraint damping, whereas in \cite{LindbladRodnianskiGlobalExistence,LindbladRodnianskiGlobalStability} the faster decay of $\pi_0$ applied to the difference of the nonlinear solution and the background (Minkowski) metric follows from the gauge condition which the nonlinear solution verifies, cf.\ \cite[Corollary~9.7]{LindbladRodnianskiGlobalStability}.
\end{rmk}

More subtly, the $\rho_I^{b'_I}$ decay of $h'_{0 0}$ is required at this point to allow for an estimate of the remainder of $h_{1 1}$ with weight $\rho_I^{b_I}$ ($\gg \rho_I^{b'_I}\log\rho_I$). The last component of $A_{h,1 1}$, acting on the trace-free spherical part of $h'$, in general has a nonzero leading term at $\scri^+$;\footnote{The discussion of Theorem~\ref{ThmIBondi} shows that for nontrivial data, this leading term \emph{must} be nontrivial somewhere on $\scri^+$.} hence, solving the equation~\eqref{EqEinNscriPi11}, schematically $\rho_I\pa_{\rho_I}(\pa_1\pi_{1 1} h')\approx\pa_1 h^{\bar a\bar b}(\pa_1 h')_{\bar a\bar b}$, requires $\pi_{1 1} h'$ to have a $\log\rho_I$ term.

At the other boundaries $I^0$ and $I^+$, we only need crude information about $L_h$ for the purpose of obtaining an energy estimate in \S\ref{SBg}:
\begin{lemma}
\label{LemmaEinNi0p}
  We have $L_h-L_0\in\Hb^{\infty;1+b_0,-1-0,1+b_+}\Diffb^2(M;\beta^*S^2)$.
\end{lemma}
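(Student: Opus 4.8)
The plan is to read off the claim from the explicit formula \eqref{EqEinNOp} for $L_h$, together with the facts that $g-g_m=\rho h$ with $h\in\cX^\infty\subset\Hb^{\infty;b_0,-0,b_+}(M;\beta^*S^2)$ (the inclusion being the Remark following Definition~\ref{DefEinF}) and that $g_m$ is a smooth, Ricci-flat scattering metric. Comparing \eqref{EqEinNOp} for $g=g_m+\rho h$ and for $g_m$, the difference $L_h-L_0$ is a finite sum of terms $\rho^{-3}\bigl(\mathcal Q(g)-\mathcal Q(g_m)\bigr)\rho$, where $\mathcal Q(g)$ runs over $\tfrac12\Box_g$, $(\tdel^*-\delta_g^*)\delta_g G_g$, $\tdel^*(\sC_g-\sY_g)$, and $\sR_g$; note that the purely $0$-th order modification $\tdel^*-\delta_{g_m}^*$ is $g$-independent, so it enters the difference only through its composition with $\delta_g G_g-\delta_{g_m}G_{g_m}$, hence still produces a term proportional to $g-g_m$. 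Each $\mathcal Q(g)$ is, in the coordinate frame attached to the splitting \eqref{EqCptASplProd} near $\scri^+$ and to smooth scattering frames near $(i^0)^\circ$, $(i^+)^\circ$, a differential operator of order $\le 2$ whose coefficients are universal expressions in the components of $g$, $g^{-1}$, the Christoffel symbols $\Gamma(g),\Gamma(g_m)$, and---only in the $0$-th order part $\sR_g$---their first coordinate derivatives; these are exactly the quantities computed in Appendix~\ref{SCo}. A fundamental-theorem-of-calculus expansion along the path $s\mapsto g_m+s\rho h$, $s\in[0,1]$ (each member of which has the structure \eqref{EqEinFMetricRough}, \eqref{EqEinFinverse} uniformly in $s$ once $\|h\|_{\cX^3}$ is small), then exhibits every coefficient of $\mathcal Q(g)-\mathcal Q(g_m)$ as a conormal function times a component of $\rho h$ or one of its at most two coordinate derivatives.

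It remains to track weights in the three regions. Near $(i^0)^\circ$ lifting from $\ol{\R^4}$ to $M$ is trivial; a coordinate derivative of $\rho h$ lies in $\rho^2\Hb^{\infty;b_0,-0,b_+}$ (an extra factor of $\rho$ per scattering derivative), so after the prefactor $\rho^{-3}(\cdot)\rho$---which, with $\rho\approx\rho_0$ there, leaves one net power of $\rho_0$---one gets the weight $1+b_0$ at $i^0$. Near $(i^+)^\circ$ the same argument works with $\rho\approx\rho_+$, using in addition the Ricci-flatness of $g_m$ and the estimates from the proof of Lemma~\ref{LemmaEinP} ($\Gamma(g)-\Gamma(g_m)\in\rho_+^{2+b_+}\Hb^\infty$, $R(g)-R(g_m)\in\rho_+^{3+b_+}\Hb^\infty$, $\Ric(g)\in\rho_+^{3+b_+}\Hb^\infty$ there), yielding the weight $1+b_+$ at $i^+$. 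Near $\scri^+$, each $\mathcal Q(g)-\mathcal Q(g_m)$ is $\rho_I^{-1}$ times a b-operator on $M$ of order $\le 2$ with coefficients in $\Hb^{\infty;1+b_0,-0,1+b_+}$: the gain $\rho\approx\rho_0\rho_I\rho_+$ from $g-g_m=\rho h$ is partly spent to offset the loss (at most $\rho_I^{-2}$ for a second-order operator) incurred in lifting from $\ol{\R^4}$ to $M$, the one remaining $\rho_I^{-1}$ being precisely the singular prefactor already carried by $L_0$ (cf.\ Lemma~\ref{LemmaEinNL0}, \eqref{EqEinNscri}); the residual $\rho_I^{-0}$, i.e.\ a possible $\log\rho_I$, comes only from the logarithmic leading term of $h_{1 1}$ permitted by $\cX^\infty$ and from $h\in\Hb^{\infty;\cdot,-0,\cdot}$. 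Exactly as in the proof of Lemma~\ref{LemmaEinNL0}, the only coefficient that could a priori produce a worse power of $\rho_I$ is that of $\pa_1^2$; in $\Box_g-\Box_{g_m}$ it is a smooth multiple of $g^{1 1}-g_m^{1 1}=g^{1 1}$ (recall $g_m^{1 1}\equiv 0$ near $\scri^+$), and $g^{1 1}\in\Hb^{\infty;1+b_0,1+b'_I,1+b_+}$ by \eqref{EqEinFinverse}, vanishing at $\scri^+$ to order $1+b'_I>1$ and hence more than compensating the loss. Assembling the three regions gives $L_h-L_0\in\Hb^{\infty;1+b_0,-1-0,1+b_+}\Diffb^2(M;\beta^*S^2)$; smallness of $\|h\|_{\cX^3}$ is used only to ensure that $g$ is Lorentzian globally, so that $P(h)$ and $L_h$ are defined, and that the conormal bounds on the coefficients hold with uniform constants.

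The main obstacle is the weight bookkeeping near $\scri^+$ and at the corners $i^0\cap\scri^+$, $\scri^+\cap i^+$: one must keep track of which coordinate vector fields $\pa_0,\pa_1,\pa_a$ are tangent to $S^+$ (so lift to $M$ without loss in $\rho_I$) and which are not, and combine this with the component-dependent $\rho_I$-decay dictated by $\cX^\infty$ (via $\pi_0$, $\pi_{1 1}$, $\pi_{1 1}^c$) and with the behavior of $\pa_1$ near $i^+$ recorded in \eqref{EqCptASplNull}. Since the target $\scri^+$-weight $\rho_I^{-1-0}$ is deliberately generous---it is all the energy estimate of \S\ref{SBg} needs---this step is routine rather than delicate, and the computation is entirely parallel to, and indeed a perturbation of, those already carried out for $L_0$, $\Box_{g_\bop}$, and $L_h$ in Lemmas~\ref{LemmaEinNL0}, \ref{LemmaEinNScalarBox}, and \ref{LemmaEinNscri} (and in Appendix~\ref{SAppEinN}).
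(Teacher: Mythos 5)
Your argument is correct and follows essentially the same route as the paper: near $(i^+)^\circ$ both rely on the difference estimates from the proof of Lemma~\ref{LemmaEinP}, and near $\scri^+$ both come down to the explicit structure computed in Lemma~\ref{LemmaEinNscri} and Appendix~\ref{SCo} together with weight bookkeeping for the lifts of $\pa_0$, $\pa_1$, $r^{-1}\pa_a$ to $M$, your fundamental-theorem-of-calculus expansion along $g_m+s\rho h$ being just a packaged form of the paper's key observation that the $\CI$-coefficient (i.e.\ $h$-independent) terms of $L_h$ coincide with those of $L_0$ and cancel in the difference. One cosmetic remark: the Ricci-flatness of $g_m$ you invoke near $i^+$ neither holds in the gluing region (which reaches $i^+$) nor is needed, since only the differences $\Gamma(g)-\Gamma(g_m)$, $R(g)-R(g_m)$ enter there.
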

\begin{proof}
  Near $(I^+)^\circ$, the stated $\rho_+^{1+b_+}$ decay is a consequence of the calculation of differences of Christoffel symbols and curvature components as in the proof of Lemma~\ref{LemmaEinP}. Near $\scri^+$, we revisit the proof of Lemma~\ref{LemmaEinNscri}: in the notation of equation~\eqref{EqEinNscri}, the expressions for $A_h$ and $B_h$ give $L_h^0-L_0^0\in\Hb^{\infty;1+b_0,-0,1+b_+}\Diffb^1$. Regarding the second remainder term in $\wt L_h$, we note that the leading order terms, captured by the $\Diffb^2$ summand with $\CI$ coefficients, come from terms of the metric and the Christoffel symbols which do not involve $h$; thus, these are equal to the corresponding terms of $L_0$.
\end{proof}

In order to obtain optimal decay results at $I^+$ in \S\ref{SsItip}, we shall need the \emph{precise} form of the normal operator of $L_h$, which by Lemma~\ref{LemmaEinNi0p} is the same as that of $L_0$. Now, $g_m$ is itself merely a perturbation of the Minkowski metric, pulled back by a diffeomorphism, see \eqref{EqCptAMetric1}. It is convenient for the normal operator analysis at $I^+$ in \S\S\ref{SsItip} and \ref{SPhg} to relate this to the usual presentation of the Minkowski metric $\ul g=dt^2-dx^2$ on $\R^4$ in $U=\{t>\tfrac23 r\}$:
\begin{lemma}
\label{LemmaEinNgmMink}
  The metric $\ul g$ lies in $\cA_\phg^{\cE_{\rm log}}(U;S^2\,\Tsc^*\,{}^m\ol{\R^4})$ for the index set $\cE_{\rm log}$ defined in \eqref{EqCptIndexSetLog}, and $\ul g-g_m\in\cA_\phg^{\cE'_{\rm log}}(U;S^2\,\Tsc^*\,{}^m\ol{\R^4})\subset\rho^{1-0}\Hb^\infty(U;S^2\,\Tsc^*\,{}^m\ol{\R^4})$.
\end{lemma}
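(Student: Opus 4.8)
\textbf{Proof plan for Lemma~\ref{LemmaEinNgmMink}.}
The strategy is to trace through the explicit construction of $g_m$ near $i^+$ in \S\ref{SsCptA} and compare it coefficient-by-coefficient with the flat metric $\ul g=d t^2-d x^2$, using the polyhomogeneity of $t^{-1}$ established in Lemma~\ref{LemmaCptFtInv} as the only nontrivial input. Recall from~\eqref{EqCptAMetric1} that in $C_1\cap\{v>3\}$ (in particular in a neighborhood of $i^+\cap\scri^+$, once we are past the cutoff $\psi$) we simply have $g_{m,1}=d t^2-d r^2-r^2\slg$, i.e.\ $g_m=\ul g$ in polar coordinates; moreover $g_m$ differs from $\ul g$ only inside the gluing region, and there the difference is a smooth scattering $2$-tensor of class $\rho_+\CI(\ol{\R^4})$ (this was already used in the proof of Lemma~\ref{LemmaEinP}, cf.\ the discussion around~\eqref{EqCptAMetric1}). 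So the content of the lemma is really a statement about $\ul g$ itself, expressed in the smooth structure of ${}^m\ol{\R^4}$, which differs from that of ${}^0\ol{\R^4}$ precisely through the logarithmic term in~\eqref{EqCptACp1}.

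First I would write $\ul g=d t^2-d x^2$ and expand each of $d t$ and $d x^i$ in the scattering frame of $\Tsc^*\,{}^m\ol{\R^4}$ on $U$. For $d x^i$ this is immediate: away from $r=0$, the bundles $\la d(r^{-1})\ra$ and $r\,T^*\Sph^2$ are smooth subbundles of $\Tsc^*\,{}^m\ol{\R^4}$ for every $m$ (this is noted in the proof of Lemma~\ref{LemmaCptACompBundle}), so the angular and radial parts of $d x$ are already smooth. The only issue is $d t$. By Lemma~\ref{LemmaCptFtInv} we have $t=\rho^{-1}f$ with $f\in\cA_\phg^{\cE_{\rm log}}(U)$ and $f|_{\pa{}^m\ol{\R^4}}$ nowhere zero, so $d t=f\,d(\rho^{-1})+\rho^{-1}\,d f$; in a local product chart $[0,1)_\rho\times\R^3_X$ near a boundary point, $\rho^{-1}\,d f=(\rho\pa_\rho f)\tfrac{d\rho}{\rho^2}+(\pa_X f)\tfrac{d X}{\rho}$, which is a polyhomogeneous combination (index set $\cE_{\rm log}$, by closure of $\cA_\phg^{\cE_{\rm log}}$ under $\rho\pa_\rho$ and $\pa_X$) of the smooth scattering $1$-forms $\tfrac{d\rho}{\rho^2}$, $\tfrac{d X}{\rho}$. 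Since $d(\rho^{-1})=-\tfrac{d\rho}{\rho^2}$ is itself a smooth scattering $1$-form and $f\in\cA_\phg^{\cE_{\rm log}}$, we conclude $d t\in\cA_\phg^{\cE_{\rm log}}(U;\Tsc^*\,{}^m\ol{\R^4})$. Taking the symmetric square and using $\cA_\phg^{\cE_{\rm log}}\cdot\cA_\phg^{\cE_{\rm log}}\subset\cA_\phg^{\cE_{\rm log}+\cE_{\rm log}}=\cA_\phg^{\cE_{\rm log}}$ (the index set $\cE_{\rm log}$ is closed under sums, directly from~\eqref{EqCptIndexSetLog}) gives $d t^2\in\cA_\phg^{\cE_{\rm log}}(U;S^2\,\Tsc^*\,{}^m\ol{\R^4})$, and hence $\ul g=d t^2-d x^2\in\cA_\phg^{\cE_{\rm log}}(U;S^2\,\Tsc^*\,{}^m\ol{\R^4})$, which is the first assertion.

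For the second assertion, I would combine the above with the structure of $g_m$: on $U$, write $\ul g-g_m=(\ul g-g_m^S)+(g_m^S-g_m)$ where near $i^+\cap\scri^+$ one has $g_m=g_m^S$ so the second difference is supported in the gluing region and lies in $\rho_+\CI\subset\cA_\phg^{\cE'_{\rm log}}$. For the first difference, $\ul g-g_m^S=\tfrac{2 m}{r}(d t^2-\cdots)$-type terms; more efficiently, note that $\ul g$ and $g_m$ agree to leading order at $\pa{}^m\ol{\R^4}$ — indeed the leading ($\rho^0$, $\log^0$) term in the expansion of $\ul g$ coincides with the smooth scattering metric $g_m|_{\pa}$, because the $2 m\log$ correction and the Schwarzschild $\tfrac{2 m}{r}$ terms are $O(\rho\log\rho)$ — so $\ul g-g_m$ has no $(0,0)$ term in its polyhomogeneous expansion, i.e.\ $\ul g-g_m\in\cA_\phg^{\cE'_{\rm log}}(U;S^2\,\Tsc^*\,{}^m\ol{\R^4})$. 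Finally, by the general inclusion recorded in \S\ref{SsCptF} (namely $\cA_\phg^{\cE}\subset\rho^{-\Im\cE-0}\Hb^\infty$, and here $\Im\cE'_{\rm log}\le -1$), we get $\cA_\phg^{\cE'_{\rm log}}(U)\subset\rho^{1-0}\Hb^\infty(U)$, completing the proof. The only mild subtlety — the ``hard part,'' such as it is — is verifying that the leading terms of $\ul g$ and $g_m$ really do cancel, i.e.\ that the difference picks up no $(0,0)$ contribution; this is a direct consequence of Lemma~\ref{LemmaCptFtInv} (which gives $f=1+v+O(\rho\log\rho)$, so $d t$ has the same leading scattering-covector as in the $m=0$ case) together with the fact that $g_m$ was built in~\eqref{EqCptAMetric1} to have exactly the flat leading behavior at $i^+$.
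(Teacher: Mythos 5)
Your proposal is correct in its core and follows essentially the same route as the paper's (very terse) proof: the spatial part of $\ul g$ is a smooth scattering tensor, the whole content sits in $d t$, which is handled exactly as in Lemma~\ref{LemmaCptACompBundle} by writing $t=\rho^{-1}f$ with $f\in\cA_\phg^{\cE_{\rm log}}$ from Lemma~\ref{LemmaCptFtInv}, and the second assertion follows from the cancellation of the leading terms of $\ul g$ and $g_m$ at $\pa{}^m\ol{\R^4}$.

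Two of your auxiliary claims are inaccurate as stated, though neither is load-bearing. First, $C_1\cap\{v>3\}$ is not a neighborhood of $i^+\cap\scri^+$ (there $v$ is small, $\psi(v)\equiv 1$, and $g_m$ is Schwarzschild, not flat), and it is not true that $g_m$ differs from $\ul g$ only in the gluing region by a tensor of class $\rho_+\CI$: the fact you quote from the proof of Lemma~\ref{LemmaEinP} concerns the flat metric written in the shifted coordinates (with $v/\rho+r$ in place of $t$), not $\ul g$ in the standard coordinates, and in the region where $\chi(t/r)$ is non-constant the difference $\ul g-g_m$ is only polyhomogeneous with logarithmic terms --- which is precisely why the lemma asserts membership in $\cA_\phg^{\cE'_{\rm log}}$ rather than $\rho\,\CI$. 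For the same reason your first route to the second assertion, via $g_m^S-g_m\in\rho_+\CI$, does not work as written ($g_m^S$ itself is not smooth on ${}^m\ol{\R^4}$ in that transition region); but your ``more efficient'' argument --- no $(0,0)$ term because $f=1+v+\cO(\rho\log\rho)$, so the leading scattering covector of $d t$ agrees with the $m=0$ one and the leading term of $g_{m,1}$ is the flat one --- is correct and suffices, so the proof stands once those side remarks are excised.
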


The failure of smoothness (for $m\neq 0$) of $\ul g$ is due to the logarithmic correction, see \eqref{EqCptACp1}, in the definition of the compactification ${}^m\ol{\R^4}$. On the \emph{radial} compactification ${}^0\ol{\R^4}$ on the other hand, $\ul g$ \emph{is} a smooth scattering metric.

\begin{proof}[Proof of Lemma~\usref{LemmaEinNgmMink}]
  In the region $C_2$ defined in \eqref{EqCptACp2Space}, $g_m=\ul g$ is smooth, see the discussion after equation~\eqref{EqCptAMetric1}. In the region $C_1$, see equation~\eqref{EqCptACp1Space}, the spatial part $dr^2+r^2\slg$ is a smooth symmetric scattering 2-tensor on ${}^m\ol{\R^4}$. In the region $t\geq\tfrac23 r$ and for large $r$, the claim follows from Lemma~\ref{LemmaCptFtInv} in that region.
\end{proof}

Define
\begin{equation}
\label{EqEinNipulL}
  \ul L := \half\Box_{\ul g} + (\ul\tdel^*-\delta_{\ul g}^*)\delta_{\ul g}G_{\ul g}, \ \ (\ul\tdel^*-\delta_{\ul g}^*)u := 2\gamma t^{-1}\,dt\otimes_s u - \gamma t^{-1}(\iota_{\nabla^{\ul g}t}u)\ul g,
\end{equation}
cf.\ the definition~\eqref{EqEinTdel}, which is the linearization $\Ric(g)-\ul\tdel^*\ul\Ups(g)$ around $g=\ul g$, where $\ul\Ups(g)$ is defined like $\Ups(g)$ in \eqref{EqEinUps} with $\ul g$ in place of $g_m$. Using Lemma~\ref{LemmaEinNgmMink}, one finds $\ul L\in \cA_\phg^{\cE_{\rm log}}\cdot\Diffb^2(U;S^2\,\Tsc^*\ol{\R^4})$. Furthermore,
\begin{equation}
\label{EqEinNipulLL0}
  \ul L-L_0\in\cA_\phg^{\cE'_{\rm log}}(U)\cdot\Diffb^2(U;S^2\,\Tsc^*\ol{\R^4});
\end{equation}
but $\pa_v\in\rho_I^{-1}\Vb(M)$, while derivatives along b-vector fields tangent to $S^+$ lift to elements of $\Vb(M)$; thus,
\begin{equation}
\label{EqEinNipL0}
  \ul L-L_0\in\rho_I^{-1-0}\rho_+^{1-0}\Hb^\infty\cdot\Diffb^2\ \ (\tn{near}\ I^+\subset M).
\end{equation}

\section{Global background estimate}
\label{SBg}

We prove a global energy estimate for solutions of the linearized equation $L_h u=f$ with $h\in\cX^\infty$, and show that $u$ lies in a weighted conormal space provided $f$ does; recall here the definition~\eqref{EqEinNLin} of $L_h$. The weak asymptotics of $u$ at the boundaries $I^0$, $\scri^+$, and $I^+$ can be improved subsequently using normal operator arguments in \S\ref{SIt}. At $\scri^+$, the estimate loses a weight of $\rho_I^{1/2}$ for general b-derivatives, as we will explain in detail in \S\ref{SsBgscri}. We capture this using the function space $\Hscri^1$:

\begin{definition}
\label{DefBgHscri}
  Let $E\to\ol{\R^4}$ be a smooth vector bundle. With $\cM_{\beta^*E}$ defined in~\S\ref{SsCptScri}, let
  \begin{align*}
    \Hbeta^1(M;\beta^*E) &:= \{ u\in L^2_\bop(M;\beta^*E) \colon \cM_{\beta^*E}u\subset L^2_\bop(M;\beta^*E) \}, \\
    \Hscri^1(M;\beta^*E) &:= \{ u\in \Hbeta^1(M;\beta^*E) \colon \rho_I^{1/2}\Diffb^1(M;\beta^*E)u\subset L^2_\bop(M;\beta^*E) \}.
  \end{align*}
  For $k\in\N_0$ and $\bullet=\beta,\scri$, define
  \[
    H_{\bullet,\bop}^{1,k}(M;\beta^*E) := \{ u\in L^2_\bop(M;\beta^*E) \colon \Diffb^k(M;\beta^*E)u\subset H_{\bullet}^1(M;\beta^*E) \}.
  \]
    If $\{A_j\}\subset\cM_{\beta^*E}$ is a finite set spanning $\cM_{\beta^*E}$ over $\CI(M)$, we define norms on these spaces by
    \begin{align*}
      \| u \|_{\Hbetab^{1,k}(M;\beta^*E)} &:= \|u\|_{\Hb^k(M;\beta^*E)} + \sum_j \|A_j u\|_{\Hb^k(M;\beta^*E)}, \\
      \| u \|_{\Hscrib^{1,k}(M;\beta^*E)} &:= \|u\|_{\Hbetab^{1,k}(M;\beta^*E)} + \|\rho_I^{1/2}u\|_{\Hb^{k+1}(M;\beta^*E)}.
    \end{align*}
\end{definition}

Note that for $u\in\Hbeta^1$, we automatically have $\rho_I\Diffb^1(M)u\subset L^2_\bop$ by Lemma~\ref{LemmaCptScriModule}\eqref{ItCptScriModuleIncl}, so the subspace $\Hscri^1\subset\Hbeta^1$ encodes a $\rho_I^{1/2}$ improvement over this. Away from $\scri^+$, the spaces $\Hbetab^{1,k}$ and $\Hscrib^{1,k}$ are the same as $\Hb^{k+1}$.

Fix a vector field
\begin{equation}
\label{EqBgPartialNu}
  \pa_\nu\in\Vb(\ol{\R^4})
\end{equation}
transversal to the Cauchy surface $\Sigma$; we extend the action of $\pa_\nu$ to sections $u$ of a vector bundle $E$ using an arbitrary fixed b-connection $d^E$ on $E$, see~\eqref{EqCptScriConnDiffb}, by setting $\pa_\nu u:=(d^E u)(\pa_\nu)$.

\begin{thm}
\label{ThmBg}
  Fix weights $b_0,b'_I,b_I,b_+$ as in Definition~\usref{DefEinF}, let $\gamma>b'_I$ in the definition~\eqref{EqEinTdel} of $\tdel^*$, and fix $a_0,a_I,a'_I\in\R$ satisfying
  \[
    a_I<a'_I<a_0, \quad
    a_I<0, \quad
    a'_I<a_I+b'_I.
  \]
  Then there exists $a_+\in\R$ such that the following holds for all $h\in\cX^{\infty;b_0,b_I,b'_I,b_+}$ which are small in $\cX^3$: for $k\in\N$, $u_j\in\rho_0^{a_0}\Hb^{k-j}(\Sigma)$, $j=0,1$, and $f\in\Hb^{k-1;a_0,a_I-1,a_+}(M;\beta^*S^2)$ with $\pi_0 f\in\Hb^{k-1;a_0,a'_I-1,a_+}(M;\beta^*S^2)$, the linear wave equation
  \begin{equation}
  \label{EqBgEquation}
    L_h u = f, \ \ (u,\pa_\nu u)|_\Sigma = (u_0,u_1),
  \end{equation}
  has a unique global solution $u$ satisfying
  \begin{equation}
  \label{EqBgEstimate}
  \begin{split}
    &\| u \|_{\rho_0^{a_0}\rho_I^{a_I}\rho_+^{a_+}\Hscrib^{1,k-1}(M;\beta^*S^2)} + \| \pi_0 u \|_{\rho_0^{a_0}\rho_I^{a'_I}\rho_+^{a_+}\Hscrib^{1,k-1}(M;\beta^*S^2)} \\
    &\qquad\leq C\Bigl(\| u_0 \|_{\rho_0^{a_0}\Hb^k} + \| u_1 \|_{\rho_0^{a_0}\Hb^{k-1}} + \| f \|_{\Hb^{k-1;a_0,a_I-1,a_+}} + \| \pi_0 f \|_{\Hb^{k-1;a_0,a'_I-1,a_+}}\Bigr).
  \end{split}
  \end{equation}
  In particular, if the assumptions on $u_j$ and $f$ hold for all $k$, then
  \begin{equation}
  \label{EqBgEstDecay}
    u\in\Hb^{\infty;a_0,a_I,a_+},\ \ 
    \pi_0 u\in\Hb^{\infty;a_0,a'_I,a_+}.
  \end{equation}
\end{thm}

We refer the reader to Remark~\ref{RmkIDetailDecay} for a translation of the memberships~\eqref{EqBgEstDecay} to pointwise decay estimates. (For obtaining pointwise decay for any \emph{fixed} number of derivatives of $u$, the estimate of~\eqref{EqBgEstimate} for sufficiently large $k$ is of course sufficient.)

For completeness, we prove a version of such a background estimate with an \emph{explicit} weight $a_+$ in~\S\ref{SsBgExpl}. As we will see in \S\ref{SsItip}, this allows us to give an explicit bound on the number of derivatives needed to close the nonlinear iteration in~\S\ref{SPf}. A nonexplicit value of $a_+$ as in Theorem~\ref{ThmBg} is sufficient to prove Theorem~\ref{ThmIBaby} if one is content with a nonexplicit value for $N$.\footnote{One could obtain an explicit value for $N$ even from a nonexplicit weight $a_+$ if one improved the argument in~\S\ref{SPf}, which proves precise decay rates at $I^+$, to not lose regularity. We expect that this can be accomplished by microlocal propagation estimates along $\scri^+$ and radial point estimates at $\scri^+\cap I^+$, though we do not pursue this here.} We will prove Theorem~\ref{ThmBg} by means of energy estimates, as outlined in \S\ref{SssISysLin}. Microlocal techniques on $\ol{\R^4}$ on the other hand, as employed in \cite{BaskinVasyWunschRadMink}, would work well away from the light cone at infinity $S^+$, but since the coefficients of $L_h$ are singular at $S^+$, it is a delicate question how `microlocal' the behavior of $L_h$ is at $S^+$, i.e.\ whether or not and what strengths of singularities could `jump' from one part of the b-cotangent bundle to another at $S^+$; since we do not need precise microlocal control of $L_h$ for present purposes, we do not study this further.

Since $d t$ is globally timelike for $g=g_m+\rho h$ provided $\rho h$ is small in $\rho\cX^3\subset L^\infty$, existence and uniqueness of a solution $u\in\Hloc^k(M\cap\R^4;S^2 T^*\R^4)$ are immediate, together with an estimate for any compact set $K\Subset M\cap\R^4$,
\begin{equation}
\label{EqBgCompact}
  \| u \|_{H^k(K)} \leq C_K ( \| u_0 \|_{\rho_0^{a_0}\Hb^k} + \| u_1 \|_{\rho_0^{a_0}\Hb^{k-1}} + \| f \|_{\Hb^{k-1;a_0,a_I,a_+}} ),
\end{equation}
where one could equally well replace the norms on the right by standard Sobolev norms on sufficiently large compact subsets of $M\cap\R^4$ depending on $K$, due to the domain of dependence properties of solutions of \eqref{EqBgEquation}.

Using Lemma~\ref{LemmaEinNi0p}, it is straightforward to prove \eqref{EqBgEstimate} near any compact subset of $(I^0)^\circ$, where $\Hscrib^{1,k-1}$ is the same as $\Hb^k$. Let us define $\rho_0,\rho_I,\rho$ near $I^0$ as in equation~\eqref{EqCptASplCoords}. Fix $\eps>0$, and define for $\delta,\eta>0$ small
\[
  U := \{ \rho_I>\eps,\ \rho_0-\eta\rho_I<\delta \} \subset M,
\]
which for $\eps$ small is a neighborhood of any fixed compact subset of $M\cap (I^0)^\circ$. (Since $\rho_I$ is bounded from above, $U$ can be made to lie in any fixed neighborhood $\{\rho_0<\delta_0\}$ of $I^0$ provided $\delta$ and $\eta$ are sufficiently small.) In view of~\eqref{EqEinFinverse}, we have $G\in 4\pa_0\pa_1-r^{-2}\slG+\rho_0^{1-0}\Hb^\infty(U;S^2\,\Tsc^*\ol{\R^4})$, hence the calculation~\eqref{EqCptASplNullExpl} gives
\begin{equation}
\label{EqBgApproxMetric}
  G_\bop=G_{0,\bop} + \rho_0^{1-0}\Hb^\infty(U;S^2\,\Tb^*\ol{\R^4}),\ \ 
  G_{0,\bop} := 2\pa_{\rho_I}(\rho_I\pa_{\rho_I}-\rho_0\pa_{\rho_0})-\slG.
\end{equation}
Thus, $d\rho_I$ and $d(\rho_0-\eta\rho_I)$ are timelike in $U$ once we fix $\delta,\eta>0$ to be sufficiently small, and thus $U$ is bounded by $\Sigma\cap U$ and two spacelike hypersurfaces, $U^\pa_1=\{\rho_I=\eps\}$ and $U^\pa_2=\{\rho_0-\eta\rho_I=\delta\}$ (as well as by $U\cap\pa M$ at infinity), see Figure~\ref{FigBgi0}.

\begin{figure}[!ht]
\includegraphics{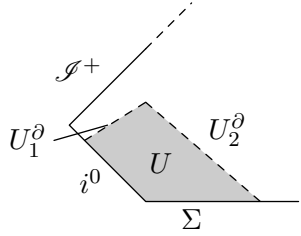}
\caption{The domain $U$ with its spacelike boundaries $U_1^\pa$, and $U_2^\pa$. We draw $I^0$ at a $45$ degree angle as the level sets of the chosen boundary defining function $\rho_0$ are approximately null (namely, $|d\rho_0|_{G_{0,\bop}}^2=0$). The level sets of $\rho_I$ are spacelike in $\rho_I>0$, but not uniformly so as $\rho_I\to 0$.}
\label{FigBgi0}
\end{figure}

\begin{prop}
\label{PropBgi0}
  Under the assumptions of Theorem~\usref{ThmBg}, we have
  \begin{equation}
  \label{EqBgi0Est}
    \| u \|_{\rho_0^{a_0}\Hb^k(U)} \leq C\bigl(\|u_0\|_{\rho_0^{a_0}\Hb^k(\Sigma\cap U)}+\|u_1\|_{\rho_0^{a_0}\Hb^{k-1}(\Sigma\cap U)} + \|f\|_{\rho_0^{a_0}\Hb^{k-1}(U)}\bigr).
  \end{equation}
\end{prop}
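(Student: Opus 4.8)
\textbf{Proof strategy for Proposition~\ref{PropBgi0}.} The plan is to run a standard weighted energy (positive commutator) estimate in the globally hyperbolic region $U$, using as multiplier a suitable future-timelike b-vector field adapted to the approximate b-metric $G_{0,\bop}$ from~\eqref{EqBgApproxMetric}. First I would reduce the proof of~\eqref{EqBgi0Est} to the case $k=0$ by commuting the equation $L_h u=f$ with a spanning set of b-vector fields tangent to $i^0$ (e.g.\ $\rho_0\pa_{\rho_0}$, $\rho_I\pa_{\rho_I}$, and spherical vector fields $\slnabla$), as is permissible near $(i^0)^\circ$ where $\Hscrib^{1,k-1}$ coincides with $\Hb^k$; by Lemma~\ref{LemmaEinNi0p}, the commutators $[L_h,V]$ lie in $\Hb^{\infty;1+b_0,-1-0,1+b_+}\Diffb^2$, which near $i^0$ (where $\rho_I$ is bounded above and below) is $\rho_0^{1+b_0}\Hb^\infty\cdot\Diffb^2$; thus the commutator terms, when fed back into the estimate, carry an extra factor of $\rho_0^{b_0}$ and can be absorbed into the left-hand side after shrinking $\delta$ (hence restricting to $\rho_0$ small). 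This is the usual inductive-on-$k$ bootstrap; the base case contains all the real work.

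For the base case, I would localize to the frame~\eqref{EqCptASplSphWeight} near $S^+$ and write $L_h = \tfrac12 G_\bop + (\text{lower order b-operator})$ with principal symbol $\tfrac12 G_\bop$ as in~\eqref{EqEinNSymbol}; by~\eqref{EqBgApproxMetric} the b-dual metric is $G_{0,\bop}=2\pa_{\rho_I}(\rho_I\pa_{\rho_I}-\rho_0\pa_{\rho_0})-\slG$ modulo $\rho_0^{1-0}\Hb^\infty$. Since $d\rho_I$ and $d(\rho_0-\eta\rho_I)$ are timelike in $U$ for $\delta,\eta$ small, the vector field $W := \rho_0^{-2 a_0}\,W_0$ with $W_0$ a future-timelike b-vector field of the form $-c\,\rho_I\pa_{\rho_I}+\rho_0\pa_{\rho_0}$ (or a convex combination picking out the two spacelike boundary pieces $U_1^\pa$, $U_2^\pa$) serves as the multiplier, in the spirit of Lemma~\ref{LemmaBgscriKCurrent} and the toy estimate~\eqref{EqISysLinEq0Est}. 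Pairing $\langle W u, L_h u\rangle_{L^2_\bop}$ and integrating by parts over $U$, the bulk term is, to leading order, $\tfrac12\langle \mathcal{L}_W G_{0,\bop}\cdot (\bdiff u),\bdiff u\rangle$ plus a weight-derivative term proportional to $a_0$ times a positive-definite quadratic form in $\bdiff u$; for $a_0$ in the allowed range and $\delta,\eta$ chosen appropriately, this deformation tensor is positive definite (uniformly in $U$, since $\rho_I\geq\eps$ there so there is no degeneration at $\scri^+$). The boundary terms on $U_1^\pa$ and $U_2^\pa$ are nonnegative (future-pointing energy fluxes through spacelike hypersurfaces), the boundary term on $\Sigma\cap U$ is controlled by the initial data norms $\|u_0\|_{\rho_0^{a_0}\Hb^1(\Sigma\cap U)}+\|u_1\|_{\rho_0^{a_0}L^2_\bop(\Sigma\cap U)}$ (the transversal derivative $\pa_\nu$ and the tangential derivatives together recovering the full b-gradient on $\Sigma$, using that $d\rho_I$ is timelike so $\Sigma\cap U$ is spacelike), and the error terms from the lower-order part of $L_h$ and from the $\rho_0^{1-0}\Hb^\infty$ discrepancy between $G_\bop$ and $G_{0,\bop}$ carry an extra $\rho_0$ power and so are absorbed after shrinking $\delta$. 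The forcing term is bounded by $\|f\|_{\rho_0^{a_0}L^2_\bop(U)}$ times the energy by Cauchy--Schwarz and Young's inequality. A Gr\"onwall argument in the time function $\rho_0-\eta\rho_I$ (or directly, since the deformation tensor is positive) then yields~\eqref{EqBgi0Est} for $k=0$.

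The main obstacle I anticipate is purely bookkeeping rather than conceptual: ensuring that the multiplier $W_0$ can be chosen future-timelike \emph{simultaneously} with respect to the true metric $g=g_m+\rho h$ (equivalently $G_\bop$) throughout $U$, not merely with respect to the model $G_{0,\bop}$, and that the resulting deformation tensor $\mathcal{L}_W G_\bop$ (including the weight and the $h$-dependent corrections) stays uniformly positive definite after the absorption of all $\rho_0^{b_0}$-small error terms. This is where the smallness of $\|h\|_{\cX^3}$ and the freedom to shrink $\delta,\eta,\delta_0$ are used; since $\rho h$ is small in $L^\infty$ (Sobolev embedding from $\cX^3$), $G_\bop$ is a small perturbation of $G_{0,\bop}$, so the timelike cone for $G_{0,\bop}$ stays inside that for $G_\bop$ after a mild adjustment of $c$, and the positivity is stable. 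The remaining care is the correct handling of the weight $\rho_0^{a_0}$: its logarithmic derivative $\rho_0\pa_{\rho_0}\log(\rho_0^{-2a_0})=-2a_0$ contributes a term of a definite sign only if $W_0$ has the right orientation relative to $d\rho_0$, which is exactly guaranteed by $d(\rho_0-\eta\rho_I)$ being timelike and $\rho_0$-level sets being approximately null (as noted in the caption of Figure~\ref{FigBgi0}); any sign issue here is corrected by the $-c\,\rho_I\pa_{\rho_I}$ piece together with $a_I<a_0$ being irrelevant at this stage since $\rho_I$ is bounded below in $U$. Once the $k=0$ estimate is in hand, the induction closes routinely.
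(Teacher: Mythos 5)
Your overall strategy is the same as the paper's: a weighted positive commutator/energy estimate in $U$ with a future timelike b-multiplier adapted to $G_{0,\bop}$ from~\eqref{EqBgApproxMetric}, boundary terms of the right sign at the spacelike hypersurfaces $U_1^\pa$, $U_2^\pa$, and higher regularity by commuting b-vector fields. However, two of your absorption arguments fail as written.

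First, the weight term. Differentiating $\rho_0^{-2a_0}$ produces a bulk term of the form $-2a_0\,T(\nabla\rho_0/\rho_0,W_0)(\bdiff u,\bdiff u)$, and this is \emph{not} "proportional to $a_0$ times a positive-definite quadratic form'': since the level sets of $\rho_0$ are approximately null, $T(\nabla\rho_0/\rho_0,W_0)$ is at best degenerate, and its overall sign depends on the sign of $a_0$, which in Theorem~\ref{ThmBg} is essentially arbitrary (only $a_0>a'_I$ is imposed, so $a_0$ may be negative or large positive). This term carries \emph{no} extra power of $\rho_0$ or $\rho_I$, so it can be absorbed neither by shrinking $\delta,\eta$ nor by adjusting the coefficient $c$ in the multiplier: for any future timelike $W_0$ the quantity $T(\text{null},W_0)$ has a fixed (semi-)sign, so changing $c$ cannot flip the sign of this contribution, only rescale it. The paper's device is to include an extra factor $e^{\digamma\rho_I}$ in the weight: since $\rho_I$ is bounded on $U$ this is harmless, and differentiating it produces $2\digamma\,T(\nabla\rho_I,V)$, a definite term with a \emph{large} prefactor $\digamma$ which dominates the indefinite $a_0$-term (and also the first-order part $L_1$ of $L_h$, whose coefficients are likewise not decaying in $\rho_0$). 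Your closing Gr\"onwall remark could play the same role, but only if you run it honestly — treating the indefinite weight term and the lower-order terms as bulk errors bounded by the energy on $\rho_I$-level sets over a compact range — rather than predicating it on positivity of the deformation tensor, which is false here.

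Second, the induction in $k$. You claim $[L_h,V]\in\Hb^{\infty;1+b_0,-1-0,1+b_+}\Diffb^2$, citing Lemma~\ref{LemmaEinNi0p}; that lemma concerns $L_h-L_0$, not commutators. Since $L_0$ has smooth but non-constant coefficients, $[L_0,V]$ is a second-order b-operator with $\cO(1)$ coefficients, so the commutator terms gain no factor of $\rho_0^{b_0}$ and cannot be absorbed by shrinking $\delta$. The correct mechanism (as in the paper) is structural: prove the base-case estimate for every principally scalar operator with principal symbol $\half G_\bop$ and arbitrary first-order part of class $(\CI+\rho_0^{1-0}\Hb^\infty)\Diffb^1$, acting on $\C^N$-valued functions; then write $[L,X]=L'\cdot X$ for a spanning tuple $X$ of b-operators and apply that estimate to $(L-L')(Xu)=Xf$. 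In this formulation the commutator is merely a modification of the admissible lower-order part, and its absorption again relies on the large-parameter/Gr\"onwall mechanism above — not on smallness in $\rho_0$. (Minor: the base case is $k=1$, an $H^1$-type estimate with Cauchy data $(u_0,u_1)$, and you should also control $\|u\|$ itself, e.g.\ by integrating $Vu$ as in the paper.)
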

\begin{proof}
  We give a positive commutator proof of this standard estimate, highlighting the connection to the more often encountered fashion in which energy estimates are phrased \cite{DafermosRodnianskiLectureNotes}. Let us work in a trivialization $\Tb^*\ol{\R^4}\cong\ol{\R^4}\times\R^4$, and fix the fiber inner product to be the Euclidean metric in this trivialization. For proving the case $k=1$ of the lemma, we set $L:=L_h$; it will be convenient however for showing higher regularity to allow $L\in\Diffb^2+\rho_0^{1-0}\Hb^\infty\Diffb^2$ to be any principally scalar operator with $\sigma_{\bop,2}(L)=\half G_\bop$, acting on $\C^N$-valued functions for some $N\in\N$; we equip $\C^N$ with the standard Hermitian inner product. (One may also phrase the proof invariantly, i.e.\ not using global bundle trivializations, as we shall do in \S\S\ref{SsBgscri} and \ref{SsBgip} for conceptual clarity.)
  
  We will use a positive commutator argument: let $V=-\nabla\rho_I\in\Vb(\ol{\R^4})$, with $\nabla$ defined with respect to $g_\bop$; this is future timelike. For $\digamma>0$ chosen later, let $w=\rho_0^{-a_0}e^{\digamma\rho_I}$, and let $\one_U$ denote the characteristic function of $U$. Put $W=\one_U w^2 V$. Write $L=L_2+L_1$, where $L_2=\half\Box_{g_\bop}\otimes 1_{10\times 10}$, $L_1\in(\CI+\rho_0^{1-0}\Hb^\infty)\Diffb^1$. We then calculate the commutator
  \begin{equation}
  \label{EqBgi0Pairing}
    2 \Re \la \one_U w f, \one_U w V u\ra = 2 \Re \la L u, W u\ra = \la A u, u\ra + 2\Re \la \one_U wL_1 u, \one_U w V u \ra
  \end{equation}
  using the $L^2_\bop$ inner product, where $A=[L_2,W]+(W+W^*)L_2$. A simple calculation gives $\sigma_{\bop,2}(A)(\xi)=K_W(\xi,\xi)$, where
  \begin{equation}
  \label{EqBgi0KCurrent}
    K_W := -\half(\cL_W G_\bop + (\dv_{g_\bop}W)G_\bop).
  \end{equation}
  (The $K$-current is often given in its covariant form $\half(\cL_W g_\bop-(\dv_{g_\bop}W)g_\bop)$.) Therefore, $A=d^*K_W d$, since the principal symbols of both sides agree, hence the difference is a scalar\footnote{That is, it is a scalar operator tensored with the identity operator on $\C^N$.} first order b-differential operator which has real coefficients and is symmetric---thus is in fact of order zero, and since it annihilates constant vectors in $\C^N$, the difference vanishes. Differentiation of the exponential weight in $W$ upon evaluating $K_W$ will produce the main positive term into which all other terms can be absorbed. Indeed, the identity $\cL_{f V}G_\bop=f\cL_V G_\bop - 2\nabla f\otimes_s V$ for $V\in\Vb$ and $f\in\CI$ gives
  \begin{equation}
  \label{EqBgi0KCurrentRule}
    K_{f V} = T(\nabla f,V) + f K_V,
  \end{equation}
  where
  \[
    T(X,Y)=X\otimes_s Y-\half g_\bop(X,Y)G_\bop
  \]
  denotes the (abstract) energy-momentum tensor. (The energy-momentum tensor of a scalar wave $u$, say, is given by $T(X,Y)(d u,d u)$.) Therefore, $K_W=w^2(2\digamma\one_U K_0+\one_U K_1+K_2)$, where
  \[
    K_0 = T(\nabla\rho_I,V), \ \ 
    K_1 = -2 a_0 T\bigl(\tfrac{\nabla\rho_0}{\rho_0},V\bigr), \ \ 
    K_2 = T(\nabla\one_U,V).
  \]
  Since $\nabla\rho_I$ is past timelike, the main term $K_0$ is negative definite; $K_2$ has support in $\pa U\setminus\pa M$, so $\nabla\one_U$ being past timelike at $U^\pa_1$ and $U^\pa_2$, $K_2$ has the same sign as $K_0$ there. Lastly, $K_1$ has no definite sign, but can be absorbed into $K_0$ by choosing $\digamma>0$ large: indeed, $|T(\tfrac{\nabla\rho_0}{\rho_0},V)(\xi,\xi)| \leq -C T(\nabla\rho_I,V)$ for some constant $C$ depending only on $K$, since $g_\bop$ is a b-metric. Thus, \eqref{EqBgi0Pairing} gives the estimate
  \begin{equation}
  \label{EqBgi0Pairing2}
  \begin{split}
    \la \one_U w (-2\digamma K_0-K_1)d u, \one_U d u\ra &\leq 2(\|\one_U w V u\|^2+\|\one_U w L_1 u\|^2) \\
      &\quad\qquad + \|\one_U w f\|^2 + C\| \one_U w(d u_0, u_1) \|^2.
  \end{split}
  \end{equation}
  In order to control $u$ itself, consider the `commutator'
  \begin{equation}
  \label{EqBgi0UItself}
    2\Re\la \one_U w u,\one_U w V u\ra = 2\Re\la u, W u\ra = \la -\one_U w(\dv V)u, \one_U w u\ra - \la V(\one_U w^2)u,u\ra,
  \end{equation}
  where $V(\one_U w^2)=2\digamma\one_U w^2(V\rho_I)-2 a_0\one_U w^2\tfrac{V\rho_0}{\rho_0}+w^2 V(\one_U)$. In the first, main, term, $V\rho_I=-|d\rho_I|_{g_\bop}^2\leq -c_0<0$ has a strictly negative upper bound on $U$; the third term gives $\delta$-distributions at $\pa U$ with the same sign as this main term at $U^\pa_1$ and $U^\pa_2$ since $V$ is outward pointing there. Choosing $\digamma$ large to absorb the contribution of the second term, we get
  \[
    c_0\digamma\|\one_U w u\|^2 \leq \eta\digamma\|\one_U w u\|^2 + C_\eta\digamma^{-1}\| \one_U w V u\|^2 + C\| \one_U w u_0 \|^2,
  \]
  so fixing $\eta=c_0/2$, this gives $\|\one_U w u\|^2 \leq C\digamma^{-2}\|\one_U w V u\|^2+C_\digamma\|\one_U w u_0\|^2$. Adding $C'$ times this to \eqref{EqBgi0Pairing2} yields
  \begin{align*}
    \la \one_U w&(-2\digamma K_0-K_1)d u,\one_U d u\ra + C' \|\one_U w u\|^2 \\
      &\leq (2+C C'\digamma^{-2})\|\one_U w V u\|^2 + 2\|\one_U w L_1 u\|^2 \\
      &\qquad\qquad + C_\digamma\bigl(\|\one_U w f\|^2+(C+C')\|\one_U w(u_0,d u_0,u_1)\|^2\bigr).
  \end{align*}
  Fixing $C'$ sufficiently large and then $\digamma>0$ large, we can absorb the two first terms on the right into the first term on the left hand side, using that $-\digamma K_0>-2\digamma K_0-K_1$ for large $\digamma$. This gives \eqref{EqBgi0Est} for $k=1$.

  We now proceed by induction, assuming \eqref{EqBgi0Est} holds for some value of $k$ for all operators $L$ of the form considered above. If $L u=f$, let $X\in(\Diffb^1(\ol{\R^4}))^N$ denote an $N$-tuple of b-differential operators which generate $\Diffb^1(\ol{\R^4})$ over $\CI(\ol{\R^4})$; writing $[L,X]=L'\cdot X$ for $L'$ an $N$-tuple of operators in $(\CI+\rho_0^{1-0}\Hb^\infty)\Diffb^1$, we then have $(L-L')(X u)=X f$. Applying \eqref{EqBgi0Est} to this equation, we obtain the estimate \eqref{EqBgi0Est} for $L u=f$ itself with $k$ replaced by $k+1$.
\end{proof}

Given the structure of the operator $L_h$ on the manifold with corners $M$ as described in \S\ref{SsEinN}, it is natural to proceed proving the estimate \eqref{EqBgEstimate} in steps: in \S\ref{SsBgscri}, we propagate the control given by Proposition~\ref{PropBgi0} uniformly up to a neighborhood of the past corner $I^0\cap\scri^+$ of null infinity and thus into $(\scri^+)^\circ$. In \S\ref{SsBgip}, we prove the energy estimate uniformly up to $I^+$; the last estimate cannot be localized near the corner $\scri^+\cap I^+$ since typically limits of future-directed null-geodesic tending to $\scri^+\cap I^+$ pass through points in $I^+$ far from $\scri^+$.

\subsection{Estimate up to null infinity}
\label{SsBgscri}

We work near the past corner $I^0\cap\scri^+$ of the radiation field; recall the definition of the boundary defining functions $\rho_0$ and $\rho_I$ of $I^0$ and $\scri^+$ from~\eqref{EqCptASplCoords}, and let $\rho=r^{-1}$. At $\scri^+$, we need to describe $G_\bop$ more precisely than was needed near $(I^0)^\circ$; we make extensive use of the structures defined in \S\ref{SsCptScri}. Equations~\eqref{EqEinFinverse} and \eqref{EqCptASplNullExpl} give
\begin{equation}
\label{EqBgscriGb}
  G_\bop = G_{0,\bop} + G_{1,\bop} + \wt G_\bop,\ \ 
  G_{1,\bop}:=\rho^{-2}g_m^{-1}-G_{0,\bop}\in\CI(M;S^2\,\Tbeta M),
\end{equation}
with $G_{0,\bop}=2\pa_{\rho_I}(\rho_I\pa_{\rho_I}-\rho_0\pa_{\rho_0})-\slG\in \rho_I^{-1}\CI(M;S^2\,\Tbeta M+\rho_I\,S^2\,\Tb M)$ as before, and
\[
  \wt G_\bop \in \rho_0^{1+b_0}\rho_I^{-1+b'_I}\Hb^\infty(M;S^2\,\Tbeta M+\rho_I\,S^2\,\Tb M).
\]
Dually, equation~\eqref{EqCptASplNullDual} gives
\begin{equation}
\label{EqBgscrig0Space}
  g_\bop \in (\CI+\rho_0^{1+b_0}\rho_I^{b'_I}\Hb^\infty)(M;S^2(\Tbeta M)^\perp + \rho_I\,S^2\,\Tb^*M)
\end{equation}
where the smooth term is $\rho^2 g_m=-2\rho_I\tfrac{d\rho_0}{\rho_0}\bigl(\tfrac{d\rho_0}{\rho_0}+\tfrac{d\rho_I}{\rho_I}\bigr) - \slg + \rho_I^2\,\CI(M;S^2\,\Tb^*M)$.

Fix $\beta\in(0,b'_I)$. For small $\eps>0$, we define the domain
\begin{equation}
\label{EqBgscriDom}
  U_\eps := \{ \rho_I < \eps,\ \rho_0-\rho_I^\beta<1 \} \subset M,\ \ 
  U_\eps^0 := U_\eps \cap \{ \half\eps < \rho_I < \eps \},
\end{equation}
see Figure~\ref{FigBgscri}. Thus, $U_\eps$ is bounded by $I^0$, $\scri^+$, $\{\rho_I=\eps\}$, and $U^\pa_\eps=\{\rho_0-\rho_I^\beta=1,\ \rho_I<\eps\}$. At $U^\pa_\eps$, we use \eqref{EqBgApproxMetric} and \eqref{EqBgscriGb} to compute
\begin{equation}
\label{EqBgscriFinalDefFn}
  |d(\rho_0-\rho_I^\beta)|_{G_\bop}^2 \in 2\beta\rho_I^{-1+\beta}(\rho_0+\beta\rho_I^\beta) + \rho_I^{2\beta}\CI + \rho_0^{1-0}\rho_I^{-1+b'_I}\Hb^\infty,
\end{equation}
hence $U_\eps^\pa$ is timelike for small enough $\eps$. As in the proof of Proposition~\ref{PropBgi0}, the main term is the $K$-current of a timelike vector field with suitable weights:

\begin{lemma}
\label{LemmaBgscriKCurrent}
  Fix $c_V\in\R$, let $W:=\rho_0^{-2 a_0}\rho_I^{-2 a_I} V$, and $V:=-(1+c_V)\rho_I\pa_{\rho_I}+\rho_0\pa_{\rho_0}$, then
  \begin{equation}
  \label{EqBgscriKCurrent}
  \begin{split}
    K_W &\in \rho_0^{-2 a_0}\rho_I^{-2 a_I-1}\Bigl(2 a_I(\rho_0\pa_{\rho_0}-\rho_I\pa_{\rho_I})^2 - 2 c_V(a_0-a_I)(\rho_I\pa_{\rho_I})^2 \\
      &\hspace{14em} -\half\bigl(1+2(a_0-a_I)+c_V(1-2 a_I)\bigr)\rho_I\slG\Bigr) \\
      &\quad + \rho_0^{-2 a_0}\rho_I^{-2 a_I}(\CI+\rho_0^{1+b_0}\rho_I^{-1+b'_I}\Hb^\infty)(M;S^2\,\Tbeta M + \rho_I\,S^2\,\Tb M).
  \end{split}
  \end{equation}
  Furthermore,
  \begin{equation}
  \label{EqBgscriKCurrentDiv}
  \begin{split}
    \dv_{g_\bop}W &\in -2\rho_0^{-2 a_0}\rho_I^{-2 a_I}\bigl(1+2(a_0-a_I)+c_V(1-2 a_I) \bigr) \\
      &\qquad + \rho_0^{-2 a_0}\rho_I^{-2 a_I+1}(\CI+\rho_0^{1+b_0}\rho_I^{-1+b'_I}\Hb^\infty)
   \end{split}
   \end{equation}
\end{lemma}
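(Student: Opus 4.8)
The plan is to compute $K_W$ and $\dv_{g_\bop}W$ explicitly, working modulo the error spaces appearing in the statement, using the structural decompositions of $G_\bop$ and $g_\bop$ from~\eqref{EqBgscriGb}--\eqref{EqBgscrig0Space} together with the calculus rules for the $K$-current recorded in the proof of Proposition~\ref{PropBgi0}, namely $K_{f V}=T(\nabla f,V)+f K_V$ in~\eqref{EqBgi0KCurrentRule}, and $\cL_{f V}G_\bop = f\cL_V G_\bop - 2\nabla f\otimes_s V$.

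First I would reduce to the model metric. Write $W = \rho_0^{-2 a_0}\rho_I^{-2 a_I}V$ and apply~\eqref{EqBgi0KCurrentRule} with $f=\rho_0^{-2 a_0}\rho_I^{-2 a_I}$, so that
\[
  K_W = T(\nabla f, V) + f K_V.
\]
Here $\nabla$ and $T$ are taken with respect to $g_\bop$, but by~\eqref{EqBgscriGb}--\eqref{EqBgscrig0Space} we may replace $g_\bop$ by $\rho^2 g_m$ — equivalently $G_\bop$ by $G_{0,\bop}+G_{1,\bop}$ — at the cost of an error in $\rho_0^{-2 a_0}\rho_I^{-2 a_I}\rho_0^{1+b_0}\rho_I^{-1+b'_I}\Hb^\infty(M; S^2\,\Tbeta M+\rho_I S^2\,\Tb M)$, which is exactly the second line of~\eqref{EqBgscriKCurrent}; I will also absorb the contribution of $G_{1,\bop}\in\CI(M;S^2\,\Tbeta M)$ into the smooth part of that same error term, so the computation really only involves $G_{0,\bop}=2\pa_{\rho_I}(\rho_I\pa_{\rho_I}-\rho_0\pa_{\rho_0})-\slG$. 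Now $\nabla f = -2 a_0\rho_I\tfrac{d\rho_0}{\rho_0}+\cdots$ must be converted to a vector field via $G_{0,\bop}$; using the dual description of $\rho^2 g_m$ from the line after~\eqref{EqBgscrig0Space}, one reads off that $\nabla(\log\rho_0)$ and $\nabla(\log\rho_I)$ are, modulo $\rho_I S^2\,\Tb M$-type corrections, certain combinations of $\rho_0\pa_{\rho_0}$ and $\rho_I\pa_{\rho_I}$, with a factor $\rho_I$ from the $\rho_I\tfrac{d\rho_0}{\rho_0}$ structure of the metric. Plugging into $T(\nabla f,V)=\nabla f\otimes_s V - \half g_\bop(\nabla f,V)G_\bop$ and collecting: the $\nabla f\otimes_s V$ piece produces the $2 a_I(\rho_0\pa_{\rho_0}-\rho_I\pa_{\rho_I})^2$ and $-2 c_V(a_0-a_I)(\rho_I\pa_{\rho_I})^2$ terms (the symmetrized products of the two null directions, weighted by the exponents $a_0,a_I$ and by $c_V$), while the $-\half g_\bop(\nabla f,V)G_\bop$ piece contributes to the coefficient of $\rho_I\slG$; the remaining contribution to that coefficient comes from $f K_V$, i.e.\ from $K_V = -\half(\cL_V G_{0,\bop}+(\dv_{g_\bop}V)G_{0,\bop})$, which one computes directly since $V$ is a simple linear combination of the commuting vector fields $\rho_0\pa_{\rho_0}$, $\rho_I\pa_{\rho_I}$ and $G_{0,\bop}$ is explicit. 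Assembling the $\slG$-coefficients gives $-\half(1+2(a_0-a_I)+c_V(1-2 a_I))$, and all other terms are of the form $\rho_I\slG$ times lower-order scalars or lie in the error space; multiplying through by the overall $\rho_0^{-2 a_0}\rho_I^{-2 a_I-1}$ yields~\eqref{EqBgscriKCurrent}.

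For the divergence~\eqref{EqBgscriKCurrentDiv}, I would use $\dv_{g_\bop}(f V)=f\dv_{g_\bop}V + V f$ with $f=\rho_0^{-2 a_0}\rho_I^{-2 a_I}$. The term $V f = \rho_0^{-2 a_0}\rho_I^{-2 a_I}(-2 a_0 (V\log\rho_0) - 2 a_I(V\log\rho_I))$; since $V\log\rho_0 = 1$ and $V\log\rho_I = -(1+c_V)$ (these being exact, from the definitions of $\rho_0\pa_{\rho_0}$, $\rho_I\pa_{\rho_I}$), this is $\rho_0^{-2 a_0}\rho_I^{-2 a_I}(-2 a_0 + 2 a_I(1+c_V))$. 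For $\dv_{g_\bop}V$, I replace $g_\bop$ by $\rho^2 g_m$ modulo $\rho_0^{1+b_0}\rho_I^{-1+b'_I}\Hb^\infty$ corrections (to the b-density), compute the b-divergence of the linear vector field $V=-(1+c_V)\rho_I\pa_{\rho_I}+\rho_0\pa_{\rho_0}$ against the smooth b-density of $\rho^2 g_m$ — this is a bounded smooth function whose value at $\scri^+$, after the arithmetic, combines with the $V f$ term to give the claimed constant $-2(1+2(a_0-a_I)+c_V(1-2 a_I))$, with everything else of order $\rho_I$ relative to this (hence the $\rho_I^{-2 a_I+1}$ remainder). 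The factor-of-$2$ relation between the leading constants in~\eqref{EqBgscriKCurrent} (the $\slG$-coefficient) and~\eqref{EqBgscriKCurrentDiv} is a useful consistency check, reflecting that both come from the same trace computation.

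The main obstacle is purely bookkeeping: one must carefully track which corrections land in $S^2\,\Tbeta M$ versus $\rho_I S^2\,\Tb M$ versus the weighted $\Hb^\infty$ error, since the estimate in the next section exploits precisely the distinction between fiber-tangent b-derivatives (controlled without weight loss) and general b-derivatives (controlled only with a $\rho_I^{1/2}$ loss); conflating these would make the error term in~\eqref{EqBgscriKCurrent} too weak to be absorbed later. In particular, the conversions $d\rho_0\mapsto\nabla\rho_0$ etc.\ via $G_{0,\bop}$ must be done keeping the $\rho_I S^2\,\Tb M$ part of $g_\bop$ visible, as it is responsible for the $\rho_I\slG$ term being genuinely of that structural type rather than a spurious $S^2\,\Tbeta M$ term. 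Everything else is a routine, if lengthy, substitution into the two product rules above.
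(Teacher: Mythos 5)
Your proposal follows essentially the same route as the paper's proof: decompose $G_\bop=G_{0,\bop}+G_{1,\bop}+\wt G_\bop$ as in~\eqref{EqBgscriGb}, relegate the contributions of $G_{1,\bop}$ and $\wt G_\bop$ to the remainder space, and compute the main term explicitly from the model $G_{0,\bop}$; organizing the explicit computation via $K_{f V}=T(\nabla f,V)+f K_V$ from~\eqref{EqBgi0KCurrentRule} instead of Lie-differentiating $W=\rho_0^{-2a_0}\rho_I^{-2a_I}V$ directly is only a cosmetic difference, and the coefficients do come out as stated (note, though, that the $(\rho_0\pa_{\rho_0}-\rho_I\pa_{\rho_I})^2$ and $(\rho_I\pa_{\rho_I})^2$ structure emerges only after combining $\nabla f\otimes_s V$ with the $G_{0,\bop}$-proportional part of $-\half g_\bop(\nabla f,V)G_\bop$, not from $\nabla f\otimes_s V$ alone).

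Two caveats. First, the one step of the error analysis that is not routine bookkeeping is precisely the one you assert without naming a mechanism: that $\cL_W\wt G_\bop$ lands in $\rho_0^{-2a_0+1+b_0}\rho_I^{-2a_I-1+b'_I}\Hb^\infty(M;S^2\,\Tbeta M+\rho_I\,S^2\,\Tb M)$, i.e.\ with no $S^2\,\Tb M$-valued part lacking the extra $\rho_I$. This uses that $V\in\cM_{\ul\C}$, so commutators with the $\Tbeta$-frame stay in $\Tbeta M+\rho_I\,\Tb M$ (Lemma~\ref{LemmaCptScriModule}\eqref{ItCptScriModuleComm}); for a general b-vector field, e.g.\ a unit spherical one, terms like $(\rho_0\pa_{\rho_0}f)\,\pa_{x^a}\otimes_s\rho_0\pa_{\rho_0}$ would violate the refined structure. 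The accompanying trace term $(\tr_{g_\bop}\wt\pi)G_\bop$ also needs~\eqref{EqBgscrig0Space}, as in the paper. Second, your "consistency check" cuts the other way: since $\rho_I\pa_{\rho_I}$ and $\rho_0\pa_{\rho_0}-\rho_I\pa_{\rho_I}$ are null for $g_\bop$ modulo $O(\rho_I^2)$ and $\tr_{g_\bop}\slG=-2+O(\rho_I)$, the identity $\dv_{g_\bop}W=-\tr_{g_\bop}K_W$ (or a direct computation against the density $\rho_I\,|\tfrac{d\rho_0}{\rho_0}\tfrac{d\rho_I}{\rho_I}d\slg|$) yields the leading constant $-\bigl(1+2(a_0-a_I)+c_V(1-2 a_I)\bigr)\rho_0^{-2a_0}\rho_I^{-2a_I}$, i.e.\ half the constant displayed in~\eqref{EqBgscriKCurrentDiv}; only the sign is used later (Remark~\ref{RmkBgscriDivSign}), so nothing downstream is affected, but you should not claim your computation reproduces the printed factor of $2$.
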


Here, $\rho_I^{-1}|V|_{g_\bop}^2\in 2 c_V+\rho_I\,\CI+\rho_0^{1+b_0}\rho_I^{b'_I}\Hb^\infty$, so $V$ is timelike for $c_V>0$. This calculation also shows that the level sets of $\rho_I$ are spacelike in $U_\eps$. The term $\rho_I K_W(d u,d u)$ will provide control of $u$ in $\rho_0^{a_0}\rho_I^{a_I}\Hscri^1$ (modulo control of $|u|^2$ itself, which we obtain by integration), similarly to~\eqref{EqISysLinEqIEst}.

\begin{rmk}
\label{RmkBgscriKCurrent0}
  For easier comparison with energy estimates expressed in standard coordinates on $\R^4$, consider the special case $m=0$, so $\rho_0=(r-t)^{-1}$ and $\rho_I=(r-t)/r$; then $\rho_0\pa_{\rho_0}=-(r\pa_r+t\pa_t)$ (scaling) and $\rho_I\pa_{\rho_I}=-r(\pa_t+\pa_r)$ (weighted outgoing derivative). Thus, the multiplier vector field $W$ in $t<r$, $r>0$, equals
  \[
    W = r^{2 a_I+1}(r-t)^{2(a_0-a_I)}\bigl(c_V\pa_r + (c_V+\tfrac{r-t}{r})\pa_t\bigr).
  \]
\end{rmk}

\begin{proof}[Proof of Lemma~\usref{LemmaBgscriKCurrent}]
  Recall that $K_W=\half(\pi-\half(\tr_{g_\bop}\pi)G_\bop)$, $\pi:=-\cL_W G_\bop$. Since $V\in\cM_{\ul\C}$, Lemma~\ref{LemmaCptScriModule}\eqref{ItCptScriModuleComm} shows that $\wt\pi:=-\cL_W \wt G_\bop$, expressed using vector field commutators, lies in the remainder space in \eqref{EqBgscriKCurrent}; using \eqref{EqBgscrig0Space}, this implies $\tr_{g_\bop}\wt\pi \in \rho_0^{-2 a_0+1+b_0}\rho_I^{-2 a_I+b'_I}\Hb^\infty$, so $(\tr_{g_\bop}\wt\pi)G_\bop$ also lies in the remainder space. Similarly, $G_{1,\bop}$ contributes a (weighted) smooth remainder term to $K_W$. Lastly, for $\pi_0=-\cL_W G_{\bop,0}$, the term $\half(\pi_0-\half(\tr_{g_\bop}\pi_0)G_\bop)$ contributes the main term, i.e.\ the first line of \eqref{EqBgscriKCurrent} after a short calculation, as well as two more error terms, one from $\wt G_\bop$, the other coming from the nonsmooth remainder term in \eqref{EqBgscrig0Space}. The calculation~\eqref{EqBgscriKCurrentDiv} drops out as a by-product of this, and can also be recovered by $\dv_{g_\bop}W=-\tr_{g_\bop}K_W$.
\end{proof}

In order to get the sharp weights\footnote{As explained before in the context of the weight at $I^+$, this is not necessary, but easy to accomplish here without lengthy calculations.} for the decaying components $\pi_0 u$ of $u$ at $\scri^+$ in Theorem~\ref{ThmBg}, we need to exploit the sign of the leading subprincipal part of $L_h$ at $\scri^+$, given by the term involving $\rho^{-1}A_h\pa_1$ in Lemma~\ref{LemmaEinNscri}, in the decoupled equation for $\pi_0 u$, see \eqref{EqEinNscriPi0} for the model. We thus prove:
\begin{lemma}
\label{LemmaBgscriImprovedK}
  Define $W=\rho_0^{-2 a_0}\rho_I^{-2 a'_I}(\rho_0\pa_{\rho_0}-(1+c_V)\rho_I\pa_{\rho_I})$ similarly to previous lemma. Let $\gamma\in\R$, and fix $a_0,a'_I\in\R$ such that $a'_I<\min(\gamma,a_0)$. Then for small $c_V>0$, there exists a constant $C>0$ such that
  \begin{equation}
  \label{EqBgscriImprovedK}
    K_W - 2\gamma W\otimes_s \rho^{-1}\pa_1 \leq -C \rho_0^{-2 a_0}\rho_I^{-2 a'_I-1}\bigl((\rho_I\pa_{\rho_I})^2+(\rho_0\pa_{\rho_0})^2+\rho_I\slG\bigr),
  \end{equation}
  in the sense of quadratic forms, in $U_\eps$, $\eps>0$ small.
\end{lemma}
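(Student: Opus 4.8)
The plan is to follow the same positive commutator template used in the proof of Lemma~\ref{LemmaBgscriKCurrent}, but to track the extra contribution coming from the first-order term $\rho^{-1}A_h\pa_1$ and show that, when $a'_I<\gamma$, the weighted version of $\gamma$ (coming from the $2\gamma_1$ or $\gamma$ entries of $A_{\CD}$) strengthens rather than spoils the sign of the leading term of $K_W$. First I would recall from Lemma~\ref{LemmaBgscriKCurrent} (with $a_I$ replaced by $a'_I$) that $K_W$ has leading part
\[
  \rho_0^{-2 a_0}\rho_I^{-2 a'_I-1}\Bigl(2 a'_I(\rho_0\pa_{\rho_0}-\rho_I\pa_{\rho_I})^2 - 2 c_V(a_0-a'_I)(\rho_I\pa_{\rho_I})^2 - \tfrac12\bigl(1+2(a_0-a'_I)+c_V(1-2 a'_I)\bigr)\rho_I\slG\Bigr)
\]
modulo the weighted smooth and $\Hb^\infty$ error terms valued in $S^2\,\Tbeta M+\rho_I S^2\,\Tb M$. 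Since $a'_I<0$ is \emph{not} assumed, the first term $2 a'_I(\rho_0\pa_{\rho_0}-\rho_I\pa_{\rho_I})^2$ need not be negative definite on its own; this is precisely why the $\rho^{-1}\pa_1$ correction is needed.

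Next I would compute the quadratic form $W\otimes_s\rho^{-1}\pa_1$. Using \eqref{EqCptScriNull} and \eqref{EqCptASplNullExpl}, near $i^0\cap\scri^+$ we have $\rho^{-1}\pa_0 \sim -\tfrac12\rho_I\pa_{\rho_I}$ (modulo $\rho_I\,\CI$ and $m$-corrections), and $\rho^{-1}\pa_1\sim\rho_0\pa_{\rho_0}-\rho_I\pa_{\rho_I}$ is the other radial null direction; thus $W=\rho_0^{-2 a_0}\rho_I^{-2 a'_I}V$ with $V=\rho_0\pa_{\rho_0}-(1+c_V)\rho_I\pa_{\rho_I}$ pairs with $\rho^{-1}\pa_1$ to give, to leading order, $\rho_0^{-2 a_0}\rho_I^{-2 a'_I}\cdot\tfrac12 g_\bop(V,\rho^{-1}\pa_1)\,\rho_I^{-1}\cdot(\text{rank one form in }\rho^{-1}\pa_1)$ plus a rank-one piece $(\rho^{-1}\pa_1)\otimes_s(\text{coefficient})$. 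The key point is that $g_\bop(V,\rho^{-1}\pa_1)$ equals (to leading order) $-2$ times a positive constant — since $\rho^{-1}\pa_1$ is past null and $V$ is future timelike — so that $-2\gamma W\otimes_s\rho^{-1}\pa_1$ contributes a term of the form $+c\gamma\,\rho_0^{-2 a_0}\rho_I^{-2 a'_I-1}\,(\rho_0\pa_{\rho_0}-\rho_I\pa_{\rho_I})^2$ with $c>0$. Adding this to $2 a'_I(\rho_0\pa_{\rho_0}-\rho_I\pa_{\rho_I})^2$ from $K_W$ yields a coefficient $2 a'_I - c'\gamma$ after normalizing $c$; I would choose the normalization of $\rho_I$ and the defining functions so that this coefficient is exactly $2(a'_I-\gamma)<0$ by hypothesis, hence negative definite in the $(\rho_0\pa_{\rho_0}-\rho_I\pa_{\rho_I})$-direction.

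Having fixed the sign in the $\rho^{-1}\pa_1$-direction, the remaining directions $\rho_I\pa_{\rho_I}$ (equivalently $\rho^{-1}\pa_0$) and $\rho_I^{1/2}\slnabla$ are handled exactly as in the proof of Lemma~\ref{LemmaBgscriKCurrent}: the $-2 c_V(a_0-a'_I)(\rho_I\pa_{\rho_I})^2$ term is negative for $c_V>0$ small (using $a'_I<a_0$), and the spherical term $-\tfrac12(1+2(a_0-a'_I)+c_V(1-2 a'_I))\rho_I\slG$ is negative for $c_V$ small since the constant $1+2(a_0-a'_I)$ is positive (this uses $a'_I<a_0$; if necessary one can also absorb using the freedom in $c_V$, but this should not be needed since the bound is on all of $U_\eps$ with $\rho_I<\eps$ small). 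The error terms from $\wt G_\bop$, $G_{1,\bop}$, and from the $\Hb^\infty$-parts of $A_h$ and of $g_\bop$ in \eqref{EqBgscrig0Space}–\eqref{EqBgscriGb} all carry an extra factor of at least $\rho_I^{\min(1,b'_I)}=\rho_I^{b'_I}$ (or $\rho_0^{b_0}$) relative to the main term, so on $U_\eps$ with $\eps$ small they are absorbed into the negative-definite main term; one also needs that these error terms are valued in $S^2\,\Tbeta M+\rho_I\,S^2\,\Tb M$, i.e.\ their non-fiber-tangential part has the extra $\rho_I$, which is exactly the content of \eqref{EqBgscriGb}, so that the bound \eqref{EqBgscriImprovedK} is compatible with the stated right-hand side (which has $\rho_I\slG$, not $\slG$). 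The main obstacle is bookkeeping: one must verify that the $\rho^{-1}\pa_1$-correction affects only the $(\rho_0\pa_{\rho_0}-\rho_I\pa_{\rho_I})^2$-coefficient and does not introduce a cross term with $\rho_I\pa_{\rho_I}$ or $\slnabla$ of the wrong sign — this is where the precise null structure of $g_\bop$ in \eqref{EqBgscrig0Space} (in particular $g_\bop(\rho^{-1}\pa_1,\rho^{-1}\pa_1)\in\rho_I^{1-0}\,\CI+\dots$, i.e.\ it vanishes to leading order at $\scri^+$) is used to control the rank-one piece $(\rho^{-1}\pa_1)\otimes_s(\dots)$, showing it too lies in the $S^2\,\Tbeta M+\rho_I S^2\,\Tb M$ error space after the $\gamma$-pairing.
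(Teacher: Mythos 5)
Your overall skeleton is the paper's: take the leading part of $K_W$ from Lemma~\ref{LemmaBgscriKCurrent} (with $a_I$ replaced by $a'_I$), compute the leading part of $W\otimes_s\rho^{-1}\pa_1$ in terms of $\rho_0\pa_{\rho_0},\rho_I\pa_{\rho_I}$, and absorb the $S^2\,\Tbeta M+\rho_I\,S^2\,\Tb M$-valued errors on $U_\eps$; your error-absorption discussion and the final coefficient $2(a'_I-\gamma)<0$ are correct. But the way you produce that coefficient contains a genuine error. The term $W\otimes_s\rho^{-1}\pa_1$ is a plain symmetric tensor product (a section of $S^2\,\Tb M$, evaluated on $d u$), not a metric contraction, so the pairing $g_\bop(V,\rho^{-1}\pa_1)$ you invoke plays no role; moreover its claimed value ``$-2$ times a positive constant'' is wrong (to leading order it equals $c_V$, since $\rho^{-1}\pa_1$ is null and $g_\bop(\rho_I\pa_{\rho_I},\rho_0\pa_{\rho_0}-\rho_I\pa_{\rho_I})\in-\rho_I+\rho_I^2\,\CI$). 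This detour leads to an internally inconsistent sign bookkeeping: you state that $-2\gamma W\otimes_s\rho^{-1}\pa_1$ contributes $+c\gamma\,(\rho_0\pa_{\rho_0}-\rho_I\pa_{\rho_I})^2$ with $c>0$, yet in the next sentence the total coefficient is $2a'_I-c'\gamma$; and the proposal to ``choose the normalization of $\rho_I$'' so that it comes out as $2(a'_I-\gamma)$ is not available, since the lemma is stated with the fixed constant $2\gamma$ and the prefactor is forced by the structure of $\pa_1$. The correct (and shorter) route is the direct substitution from~\eqref{EqCptASplNullExpl}: $\rho^{-1}\pa_1\in\rho_I^{-1}(\rho_0\pa_{\rho_0}-\rho_I\pa_{\rho_I})+\CI\cdot\rho_I\pa_{\rho_I}$, whence
\begin{equation*}
  \rho_0^{2a_0}\rho_I^{2a'_I+1}\,W\otimes_s\rho^{-1}\pa_1 \in (\rho_0\pa_{\rho_0}-\rho_I\pa_{\rho_I})^2 - c_V\,\rho_I\pa_{\rho_I}\otimes_s(\rho_0\pa_{\rho_0}-\rho_I\pa_{\rho_I}) + \rho_I\,\CI(M;S^2\,\Tbeta M),
\end{equation*}
so the $(\rho_0\pa_{\rho_0}-\rho_I\pa_{\rho_I})^2$ coefficient becomes exactly $2a'_I-2\gamma$ with no freedom to adjust.

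The second gap is that the cross term is real and must be dealt with, not excluded: the $c_V$-part of $W$ produces $+2\gamma c_V\,\rho_I\pa_{\rho_I}\otimes_s(\rho_0\pa_{\rho_0}-\rho_I\pa_{\rho_I})$ in the leading part of the left-hand side of~\eqref{EqBgscriImprovedK}, and the nullity of $\rho^{-1}\pa_1$ (your proposed mechanism) does nothing to remove it. The paper handles it by completing the square: writing the leading part as $-2(\gamma-a'_I)\bigl(\rho_0\pa_{\rho_0}-\rho_I\pa_{\rho_I}-\tfrac{\gamma c_V}{2(\gamma-a'_I)}\rho_I\pa_{\rho_I}\bigr)^2$ costs an extra $\tfrac{\gamma^2c_V^2}{2(\gamma-a'_I)}(\rho_I\pa_{\rho_I})^2$, which is dominated by the $-2c_V(a_0-a'_I)(\rho_I\pa_{\rho_I})^2$ term precisely because $a'_I<a_0$ and $c_V$ is taken small; this is where the smallness of $c_V$ is genuinely used (beyond making the spherical coefficient negative), and it is missing from your argument. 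With the direct leading-order identity above and this completion of the square, your outline closes and coincides with the paper's proof.
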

\begin{proof}
  Using the expression~\eqref{EqCptASplNullExpl} for $\rho_0^{-1}\rho_I^{-1}\pa_1$, we have
  \begin{align*}
    &\rho_0^{2 a_0}\rho_I^{2 a'_I+1}W\otimes_s\rho^{-1}\pa_1 \\
    &\qquad\in(\rho_0\pa_{\rho_0}-\rho_I\pa_{\rho_I})^2-c_V\rho_I\pa_{\rho_I}\otimes_s(\rho_0\pa_{\rho_0}-\rho_I\pa_{\rho_I}) + \rho_I\,\CI(M;\Tbeta M)
  \end{align*}
  We can then calculate the leading term of $\rho_0^{2 a_0}\rho_I^{2 a'_I+1}$ times the left hand side of \eqref{EqBgscriImprovedK} by completing the square:
  \begin{align*}
    &-2(\gamma-a'_I)\Bigl(\rho_0\pa_{\rho_0}-\rho_I\pa_{\rho_I}-\frac{\gamma c_V}{2(\gamma-a'_I)}\rho_I\pa_{\rho_I}\Bigr)^2 - c_V\Bigl(a_0-a'_I-\frac{\gamma^2 c_V}{2(\gamma-a'_I)}\Bigr)(\rho_I\pa_{\rho_I})^2 \\
    &\qquad - \half \bigl(1+2(a_0-a'_I)+c_V(1-2 a'_I)\bigr)\rho_I\slG.
  \end{align*}
  The first term is the negative of a square, and so is the second term if we choose $c_V>0$ sufficiently small; reducing $c_V$ further if necessary, the coefficient of the last term is negative as well, finishing the proof.
\end{proof}

\begin{rmk}
\label{RmkBgscriDivSign}
  For the value of $c_V$ determined in the proof, we have $\dv_{g_\bop}W\leq -C\rho_0^{-2 a_0}\rho_I^{-2 a'_I}$ near $\scri^+$ by inspection of the expression~\eqref{EqBgscriKCurrentDiv}.
\end{rmk}

Suppose now $u$ solves $L_h u=f$ with initial data $(u_0,u_1)$ as in \eqref{EqBgEquation}. Note that the estimates~\eqref{EqBgCompact} and \eqref{EqBgi0Est} provide control of $u$ on $U_\eps^0$ for any choice of $\eps>0$; thus, it suffices to prove an estimate in $U_\eps$ for any arbitrary but fixed $\eps>0$. Let $\chi\in\CI(\R)$ be a cutoff, $\chi(\rho_I)\equiv 1$ for $\rho_I<\eps/4$ and $\chi(\rho_I)\equiv 0$ for $\rho_I>\eps/2$, and put $\wt u:=\chi u$, then $\wt u$ solves the forward problem
\begin{equation}
\label{EqBgscriCutoffEqn}
  L_h\wt u = \wt f := \chi f + [L_h,\chi]u
\end{equation}
in $U_\eps$, with $\|\wt f\|_{\rho_0^{a_0}\rho_I^{a_I-1}\Hb^{k-1}(U_\eps)}+\|\pi_0\wt f\|_{\rho_0^{a_0}\rho_I^{a'_I-1}\Hb^{k-1}(U_\eps)}$ controlled by the corresponding norm of $f$ plus the right hand sides of \eqref{EqBgCompact} and \eqref{EqBgi0Est}. (Use Lemma~\ref{LemmaEinNi0p} to compute the rough form of the commutator term.) Note that $\wt u=\chi u$ is the \emph{unique} solution of $L_h\wt u=\wt f$ vanishing in $\rho_I>\half\eps$. See Figure~\ref{FigBgscri}.

\begin{figure}[!ht]
\includegraphics{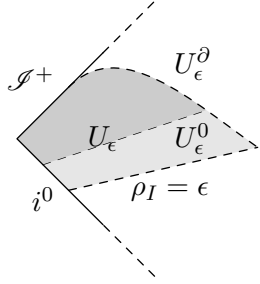}
\caption{The domain $U_\eps$ and its subdomain $U_\eps^0$ where we have a priori control of $u$, allowing us to cut off and study equation~\eqref{EqBgscriCutoffEqn} instead.}
\label{FigBgscri}
\end{figure}

Thus, the estimate~\eqref{EqBgEstimate} of $u$ in $U_\eps$ is a consequence of the following result (dropping the tilde on $\wt u$ and $\wt f$):

\begin{prop}
\label{PropBgscri}
 For weights $b_0,b'_I,b_I,a_0,a'_I,a_I$, and for $h\in\cX^\infty$, small in $\cX^3$, as in Theorem~\usref{ThmBg}, and for $k\in\N$, let $f\in\rho_0^{a_0}\rho_I^{a_I-1}\Hb^{k-1}(U_\eps)$, $\pi_0 f\in\rho_0^{a_0}\rho_I^{a'_I-1}\Hb^{k-1}(U_\eps)$; suppose $f$ vanishes in $\rho_I>\half\eps$. Let $u$ denote the unique forward solution of $L_h u=f$. Then
  \begin{equation}
  \label{EqBgscri}
  \begin{split}
    &\|u\|_{\rho_0^{a_0}\rho_I^{a_I}\Hscrib^{1,k-1}(U_\eps)} + \|\pi_0 u\|_{\rho_0^{a_0}\rho_I^{a'_I}\Hscrib^{1,k-1}(U_\eps)} \\
    &\qquad\qquad \leq C\Bigl(\|f\|_{\rho_0^{a_0}\rho_I^{a_I-1}\Hb^{k-1}(U_\eps)} + \|\pi_0 f\|_{\rho_0^{a_0}\rho_I^{a'_I-1}\Hb^{k-1}(U_\eps)}\Bigr).
  \end{split}
  \end{equation}
\end{prop}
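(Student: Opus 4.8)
The plan is to prove Proposition~\ref{PropBgscri} by a positive commutator argument in $U_\eps$, carefully orchestrating two weights: the weaker weight $\rho_I^{a_I}$ for the full system $u$ and the stronger weight $\rho_I^{a'_I}$ for the decoupled good components $\pi_0 u$. First I would reduce to the case $k=1$; the higher-regularity statement then follows by the induction argument already used in Proposition~\ref{PropBgi0}, commuting a spanning set of b-vector fields (locally $\rho_0\pa_{\rho_0}$, $\rho_I\pa_{\rho_I}$, $\slnabla$) through $L_h$, using Lemma~\ref{LemmaEinNscri} and Lemma~\ref{LemmaCptScriModule} to see that all commutators land in the error module $\Hb^{\infty;1+b_0,-1+b'_I,1+b_+}\cM^2 + (\CI+\Hb^{\infty;1+b_0,-0,1+b_+})\Diffb^2$, which is one order better in $\rho_I$ than $L_h$ itself, so that the new forcing terms can be absorbed into the left-hand side once $\eps$ is small. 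The commutation preserves the split structure at $\scri^+$ (the principal parts of the decoupled blocks in \eqref{EqEinNscriPi0}--\eqref{EqEinNscriPi11} are scalar and have the same form after commuting), so the $\pi_0$-refinement survives.

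For the base case $k=1$, I would run the commutator estimate with the multiplier $W=\rho_0^{-2 a_0}\rho_I^{-2 a_I}V$, $V=-(1+c_V)\rho_I\pa_{\rho_I}+\rho_0\pa_{\rho_0}$, exactly as in Lemma~\ref{LemmaBgscriKCurrent}. Writing $L_h=L_h^0+\wt L_h$ and computing $2\Re\la L_h u,\one_{U_\eps}w^2 V u\ra$ with $w=\rho_0^{-a_0}\rho_I^{-a_I}$, the principal-part contribution is $\la d^* K_W d\,u,u\ra$ with $K_W$ as in \eqref{EqBgscriKCurrent}; for $a_I<0$, $a_I<a'_I<a_0$, and $c_V>0$ small, the leading term of $K_W$ is negative definite on $\Tbeta M+\rho_I^{1/2}\Tb M$ (i.e.\ it controls $(\rho_I\pa_{\rho_I})^2$, $(\rho_0\pa_{\rho_0})^2$, and $\rho_I\slG$), giving exactly the $\Hscri^1$-control with the $\rho_I^{1/2}$ loss on spherical derivatives. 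The subprincipal term $\rho^{-1}A_h\pa_1$ of $L_h^0$ contributes a term of the form $\la W\otimes_s\rho^{-1}A_h\pa_1 d u,d u\ra$; since $A_h$ may grow logarithmically in its $(4,1)$ entry but multiplies the already-$\rho_I^{b'_I}$-decaying component $\pi_0 u$ (and $b'_I>0$), and since all other entries of $A_h$ are bounded, this is absorbable into $K_0$ for large $\digamma$ after adding the exponential weight $e^{\digamma\rho_I}$ as in Proposition~\ref{PropBgi0}; the boundary term at $U_\eps^\pa$ has the right sign by \eqref{EqBgscriFinalDefFn}, the term at $\{\rho_I=\eps\}$ is controlled by the a priori estimate on $U_\eps^0$, and the term at $\Sigma$ vanishes since $u$ is a forward solution. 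The zeroth-order control of $\|u\|^2$ itself is obtained, as in Proposition~\ref{PropBgi0}, from the pairing $2\Re\la u,\one_{U_\eps}w^2 V u\ra$, using $V\rho_I<0$ and Remark~\ref{RmkBgscriDivSign}.

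For the $\pi_0 u$ refinement, I would apply $\pi_0$ to the equation: since $\pi_0 A_h|_{K_0^c}=0$ and $\pi_0 B_h|_{K_0^c}=0$, the leading part of the equation satisfied by $\pi_0 u$ is the decoupled system \eqref{EqEinNscriPi0}, $(2\rho^{-1}\pa_0+A_\CD)\pa_1(\pi_0 u)=\pi_0 f + (\text{error})$, where $A_\CD$ has spectrum $\{2\gamma_1,\gamma_1,\gamma_2\}$, all $>b'_I>a'_I$ by hypothesis. I then run a second commutator estimate with the multiplier $W'=\rho_0^{-2 a_0}\rho_I^{-2 a'_I}(\rho_0\pa_{\rho_0}-(1+c_V)\rho_I\pa_{\rho_I})$ applied to this decoupled system, invoking Lemma~\ref{LemmaBgscriImprovedK} (with $\gamma$ replaced by each eigenvalue of $A_\CD$, or more precisely using that $A_\CD$ is upper triangular with those diagonal entries, so one handles the diagonal by the lemma and the single off-diagonal entry $2\gamma_2$ as a bounded, hence absorbable, coupling): the quadratic form $K_{W'}-2\gamma W'\otimes_s\rho^{-1}\pa_1$ is negative definite for $a'_I<\min(\gamma,a_0)$ and small $c_V>0$. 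The error terms here—the contribution of $\wt L_h$, the coupling of $\pi_0 u$ into the $\pi_0$-equation through lower-order terms, and the commutator $[L_h,\chi]$—all carry at least the weight $\rho_I^{a'_I-1+b'_I}$ or come with compact support away from $\scri^+$, hence are controlled by the right-hand side of \eqref{EqBgscri} together with the already-established weaker estimate on $u$. Combining the two estimates (multiplying the first by a large constant and adding) yields \eqref{EqBgscri}. The main obstacle, and the delicate point requiring care rather than new ideas, is the bookkeeping at the interface between the two weights: one must check that every error term coupling the full system $u$ (controlled only in $\rho_I^{a_I}$) back into the decoupled $\pi_0$-equation actually carries enough extra $\rho_I$-decay—precisely the gain encoded in the $b'_I$ versus $b_I$ discrepancy and in the structure of $A_\CD$, $B_h$—so that it is absorbable after multiplying by the weight $\rho_I^{-2a'_I}$; this is exactly where the constraints $a'_I<a_I+b'_I$ and $\gamma_1,\gamma_2>b'_I$ from the hypotheses of Theorem~\ref{ThmBg} are consumed.
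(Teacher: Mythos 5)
Your overall architecture—two coupled estimates at the weights $\rho_I^{a_I}$ and $\rho_I^{a'_I}$, the use of Lemma~\ref{LemmaBgscriImprovedK} for the constraint-damped $\pi_0$ block, closure via localization near $\scri^+$ and the weight gaps, and induction in $k$ via the scaling/boost/spherical commutators—is the same as the paper's. But the key absorption step in your weaker-weight estimate has a genuine gap. You propose to treat the whole subprincipal term $\rho^{-1}A_h\pa_1$ as an error, "absorbable into $K_0$ for large $\digamma$ after adding the exponential weight $e^{\digamma\rho_I}$ as in Proposition~\ref{PropBgi0}". That mechanism fails at $\scri^+$: by \eqref{EqBgi0KCurrentRule}, the exponential weight contributes $T(\nabla e^{2\digamma\rho_I},\cdot)\sim\digamma\rho_I$ relative to the contribution $\sim|a_I|$ of the power weight $\rho_I^{-2a_I}$, so it degenerates as $\rho_I\to 0$ and no largeness of $\digamma$ is available there; this is exactly why in Lemma~\ref{LemmaBgscriKCurrent} the definiteness of $K_W$ must come from $a_I<0$ and $c_V>0$, with constants only of size $|a_I|$, $c_V(a_0-a_I)$. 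Since $A_h$ contains entries of size $\gamma_1,\gamma_2$ (not small, and not required to be small relative to $|a_I|$), these terms cannot be absorbed into the main term as you assert. Moreover, your statement that the logarithmic $(4,1)$ entry "multiplies the already-$\rho_I^{b'_I}$-decaying component $\pi_0 u$" conflates $h$ with $u$: the improved decay of $\pi_0 u$ is precisely what is being proven, and at that stage it is only available through the companion estimate at weight $a'_I$, which has to be re-imported with a small constant coming from $\rho_I<\eps$ and the gaps \eqref{EqBgscriDelta}—a step you defer to "bookkeeping" but which is the actual content. The same objection applies to your aside that the off-diagonal entry $2\gamma_2$ of $A_\CD$ is "bounded, hence absorbable": an order-one coupling is never absorbable into $K_W$ by size alone; the paper instead fixes a fiber metric on $K_0$ making $A_\CD$ (essentially) self-adjoint, using its triangular structure with positive distinct eigenvalues.

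The paper avoids all of this by never estimating the full $u$ monolithically: it uses that $A_h$, $B_h$ are lower triangular with respect to $K_0\oplus K_{1 1}^c\oplus K_{1 1}$, that the diagonal blocks $\pi_{1 1}^cL_h\pi_{1 1}^c$ and $\pi_{1 1}L_h\pi_{1 1}$ have \emph{no} subprincipal term (so the $\gamma=0$ case of Lemma~\ref{LemmaBgscriImprovedK} applies, whence the requirement $a_I<0$), and that every non-decaying, non-small off-diagonal entry has input in $K_0$ (the one exception, $\half\pa_1 h^{\bar a\bar b}$ acting on $K_{1 1}^c$, is small with $\|h\|_{\cX^3}$); it then proves the three estimates \eqref{EqBgscri1}, \eqref{EqBgscri11c}, \eqref{EqBgscri11} and combines them with small constants. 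To repair your version you must replace the exponential-weight argument by exactly this routing: bound every $A_h$-, $B_h$-error acting on $\pi_0 u$ by $\eps^{\delta}$ times the $\rho_I^{a'_I}$-norm of $\pi_0 u$ and close the two (or three) estimates simultaneously. Two further, more minor, points for the higher-regularity step: a naive spherical derivative $\slnabla$ does \emph{not} give commutators one order better in $\rho_I$ on the bundle—one needs the operators $\Omega_i$ adapted as in \eqref{EqBgscriRotComm}, defined component-wise compatibly with the splitting of $K_0$—and $[L_h,\rho_I\pa_{\rho_I}]$ reproduces $L_h$ itself modulo better terms rather than landing in the error module; both are fixable but are why the paper's commutator set is chosen as it is.
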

\begin{proof}
  The idea is to exploit the decoupling of the leading terms of $L_h$ at $\scri^+$ given by equations~\eqref{EqEinNscriPi0}--\eqref{EqEinNscriPi11}: this allows us to prove an energy estimate (for the case $k=1$)
  \begin{equation}
  \label{EqBgscri1}
    \|\pi_0 u\|_{\rho_0^{a_0}\rho_I^{a'_I}\Hscri^1} \leq C\bigl( \|\pi_0 f\|_{\rho_0^{a_0}\rho_I^{a'_I-1}L^2_\bop} + \|\pi_0^c u\|_{\rho_0^{a_0}\rho_I^{a_I-\delta}\Hscri^1}\bigr),
  \end{equation}
  where $\delta>0$ fixed such that
  \begin{equation}
  \label{EqBgscriDelta}
    a'_I-b'_I<a_I-\delta,\quad
    a_I<a'_I-\delta.
  \end{equation}
  The estimate~\eqref{EqBgscri1} contains $\pi_0^c u$ as an error term, but with a \emph{weaker weight} due to the decay of the coefficients of the error term $\wt L_h$---which is dropped in \eqref{EqEinNscriPi0}. On the other hand, $\pi_0 u$ couples into $\pi_0^c u$ via at most logarithmic terms, hence we can prove
  \begin{equation}
  \label{EqBgscri2}
    \|\pi_0^c u\|_{\rho_0^{a_0}\rho_I^{a_I}\Hscri^1} \leq C\bigl( \|\pi_0^c f\|_{\rho_0^{a_0}\rho_I^{a_I-1}L^2_\bop} + \|\pi_0 u\|_{\rho_0^{a_0}\rho_I^{a_I+\delta}\Hscri^1}\bigr)
  \end{equation}
  Close to $\scri^+$, the last term in the estimate~\eqref{EqBgscri1}, resp.\ \eqref{EqBgscri2}, is controlled by a \emph{small} constant times the left hand side of~\eqref{EqBgscri2}, resp.\ \eqref{EqBgscri1}, hence summing the two estimates yields the full estimate~\eqref{EqBgscri}. The proof of~\eqref{EqBgscri2} and its higher regularity version will itself consist of two steps, corresponding to the weak null structure expressed by the decoupling of~\eqref{EqEinNscriPi11c} and \eqref{EqEinNscriPi11}.

  All energy estimates will use the vector field
  \[
    V_1=-(1+c_V)\rho_I\pa_{\rho_I}+\rho_0\pa_{\rho_0}
  \]
  from Lemma~\ref{LemmaBgscriKCurrent}, with $c_V>0$ chosen according to Lemma~\ref{LemmaBgscriImprovedK}. Denote $u_0:=\pi_0 u$, $u_{1 1}:=\pi_{1 1}u$, $u_{1 1}^c:=\pi_{1 1}^c u$, and $u_0^c:=\pi_0^c u=u_{1 1}+u_{1 1}^c$. We expand $L_h u=f$ as
  \begin{subequations}
  \begin{align}
  \label{EqBgscriEq0}
    \pi_0 L_h\pi_0 u_0 &= \pi_0 f - \pi_0 L_h\pi_0^c u_0^c , \\
  \label{EqBgscriEq11c}
    \pi_{1 1}^c L_h\pi_{1 1}^c u_{1 1}^c &= \pi_{1 1}^c f - \pi_{1 1}^c L_h\pi_0 u_0 - \pi_{1 1}^c L_h\pi_{1 1} u_{1 1} , \\
  \label{EqBgscriEq11}
    \pi_{1 1}L_h\pi_{1 1} u_{1 1} &= \pi_{1 1}f - \pi_{1 1}L_h\pi_0 u_0 - \pi_{1 1}L_h\pi_{1 1}^c u_{1 1}^c .
  \end{align}
  \end{subequations}
  Here, we regard $\beta^*K_0\to M$ as a vector bundle in its own right, and $u_0$ as a section of $\beta^*K_0$: the inclusion $K_0\hra S^2\,\Tsc^*\ol{\R^4}$ and the structures on the latter bundle induced by $g$ or $g_m$ play no role; likewise for $K_{1 1}$ and $K_{1 1}^c$.
  
  Starting the proof of the estimate~\eqref{EqBgscri1} using equation~\eqref{EqBgscriEq0}, let us abbreviate $L:=\pi_0 L_h\pi_0$. By Lemma~\ref{LemmaEinNscri} and recalling the definition of $A_\CD$ from equation~\eqref{EqEinNscriPi0}, we have
  \begin{equation}
  \label{EqBgscriLpi0}
    L = L^0 + \wt L,\ \ L^0 = -2\rho^{-2}\pa_0\pa_1 + L^0_1,\ L^0_1=-\rho^{-1}A_\CD\pa_1,
  \end{equation}
  with $\wt L$ lying in the same space as $\wt L_h$ in \eqref{EqEinNscri} with $\beta^*S^2$ replaced by $\beta^*K_0$. Here, $L^0_1$ denotes a fixed representative in $\rho_I^{-1}\cdot{}^0\cM_{\beta^*K_0}$, defined by fixing a representative of $\rho_0^{-1}\pa_1\in {}^0\cM_{\beta^*K_0}$, see equation~\eqref{EqCptScriNull}, in the image space of Lemma~\ref{LemmaCptScriModule}\eqref{ItCptScriModuleBdl}. Let $w=\rho_0^{-a_0}\rho_I^{-a'_I}$; let further $\one_{U_\eps}$ denote the characteristic function of $U_\eps$. Fix $V\in{}^0\cM_{\beta^*K_0}$, with scalar principal symbol equal to that of $V_1$. Let
  \[
    W:=\one_{U_\eps}W^\circ,\ \ W^\circ:=w^2 V.
  \]
  Fix a positive definite fiber inner product $B\colon\Tb M\to\Tb^*M$ on $\Tb M$, a connection $d\in\Diff^1(\ol{\R^4};K_0,T^*\ol{\R^4}\otimes K_0)$ on $K_0$, and a positive definite fiber metric $k_0$ on $K_0$ with respect to which $A_\CD=A_\CD^*$; note here that $A_\CD$ is constant on the fibers of $\scri^+$, hence indeed descends to an endomorphism of $K_0|_{S^+}$. Let $\la\cdot,\cdot\ra$ denote the $L^2$ inner product with respect to $k_0$ and the density $|d g_\bop|\sim|\frac{d\rho_0}{\rho_0}d\rho_I\,d\slg|$; defining the b-density $d\mu_\bop:=\rho_I^{-1}|d g_\bop|\sim|\frac{d\rho_0}{\rho_0}\frac{d\rho_I}{\rho_I}d\slg|$ to define $L^2_\bop(M)$, we then have
  \begin{equation}
  \label{EqBgscriL2Relation}
    \la u,v\ra = \la \rho_I u,v\ra_{L^2_\bop}.
  \end{equation}
  We shall evaluate
  \begin{equation}
  \label{EqBgscriPairing}
  \begin{split}
    &2\Re\la w L u_0, \one_{U_\eps}w V u_0\ra = \la \cC u_0, u_0\ra, \\
    &\qquad \cC := L^*W+W^*L = [L,W]+(W+W^*)L+(L^*-L)W.
  \end{split}
  \end{equation}
  Let $K_W$ denote the current associated with the scalar principal part of $W$, see~\eqref{EqBgi0KCurrent}, now understood as taking values in the bundle $S^2\,\Tb M\otimes\End(\beta^*K_0)$, acting on $\beta^*K_0$ by scalar multiplication. While $K_W$ provides positivity of $\cC$ near $\scri^+$ for suitable weights by Lemma~\ref{LemmaBgscriKCurrent}---in particular, this would require $a'_I<0$---we will show around~\eqref{EqBgscriC11Main} below how to obtain a better result by exploiting the sign of $A_\CD$ entering through $(L^*-L)W$.

  In the proof of Proposition~\ref{PropBgi0}, where we worked in a global trivialization, all terms of $W$ and $L$ other than the top order ones could be treated as error terms; we show that the same is true here by patching together estimates obtained from calculations in local coordinates and trivializations. Thus, let $\{\cU_j\}$ be a covering of a neighborhood of $S^+$ containing $U_\eps$ by open sets on which $K_0$ is trivial, and let $\{\chi_j\}$, $\chi_j\in\CIc(\cU_j)$, denote a subordinate partition of unity; let $\wt\chi_j\in\CIc(\cU_j)$, $\wt\chi_j\equiv 1$ on $\supp\chi_j$. Fix trivializations $(K_0)|_{\cU_j}\cong\cU_j\times\C^4$ and the induced trivializations of $\beta^*K_0$. Write
  \[
    L=L_{j,2} + L_{j,1},\ \ 
    W=W_{j,1} + W_{j,0},
  \]
  where $L_{j,2}:=\half\Box_{g_\bop}$ acts component-wise as the scalar wave operator and $L_{j,1}$ is a first order operator, while $W_{j,1}:=\one_{U_\eps}w^2 V_1$ acts component-wise, and $W_{j,0}\in\one_{U_\eps}w^2\rho_I\,\CI(\cU_j,\Tbeta M)$, with the extra factor of $\rho_I$ due to the choice of $V$. On $(K_0)|_{\cU_j}$, let moreover $d_j$ denote the standard connection, given component-wise as the exterior derivative on functions, and let $k_j$ denote the standard Hermitian fiber metric; we denote adjoints with respect to $k_j$ by $\dag$. Now,
  \begin{equation}
  \label{EqBgscriPairingLoc}
    \la\cC u_0,u_0\ra=\sum\la\cC_j u_0,\chi_j u_0\ra,
  \end{equation}
  where
  \[
    \cC_j = \sum_{k,\ell} \cC_{j,k\ell},\ \ 
    \cC_{j,k\ell} := L_{j,k}^*W_{j,\ell} + W_{j,\ell}^*L_{j,k}.
  \]
  The usual calculation in the scalar case, see the discussion around~\eqref{EqBgi0Pairing}, gives
  \[
    \ul\cC_{j,2 1}:=L_{j,2}^\dag W_{j,1}+W_{j,1}^\dag L_{j,2} = d_j^\dag B K_W d_j,
  \]
  so
  \begin{align}
  \label{EqBgscriC21}
    \la \cC_{j,2 1}u_0,\chi_j u_0\ra &= \la d^* B K_W d u_0, \chi_j u_0\ra + \la (\cC_{j,2 1}-\ul\cC_{j,2 1})u_0,\chi_j u_0\ra \\
      &\quad + \la(d_j^\dag-d_j^*)B K_W d_j u_0,\chi_j u_0\ra + \la(d_j^* B K_W d_j-d^* B K_W d)u_0,\chi_j u_0\ra.\nonumber
  \end{align}
  Summing the first term over $j$ yields
  \begin{equation}
  \label{EqBgscriC21Main}
    \int_{U_\eps} \rho_I K_{W^\circ}(d u_0,d u_0)\,d\mu_\bop + \int T(\rho_I\nabla\one_{U_\eps},W^\circ)(d u_0,d u_0)\,d\mu_\bop
  \end{equation}
  upon application of the formula~\eqref{EqBgi0KCurrentRule}. The first summand---after adding the term~\eqref{EqBgscriC11Main} below---is negative definite, controlling derivatives of $u_0$ as in \eqref{EqBgscri1}; the second term gives a contribution of the same sign: we have
  \[
    T(\rho_I\nabla\one_{U_\eps},W^\circ) = \delta_{U_\eps^\pa}\otimes w^2 T^\pa,
  \]
  with $T^\pa\leq 0$ since $-\nabla\one_{U_\eps}$ and $W^\circ$ are future causal. The remaining terms in~\eqref{EqBgscriC21} are error terms: the second term is equal to
  \[
    \la W_{j,1}u_0,(L_{j,2}-L_{j,2}^{\dag*})\chi_j u_0\ra + \la L_{j,2} u_0, (W_{j,1}-W_{j,1}^{\dag*})\chi_j u_0\ra.
  \]
  Now, $k_0$ and $k_j$ are related by $k_j(\cdot,\cdot)=k_0(\wt Q_j\cdot,\wt Q_j\cdot)$, with $\wt Q_j\in\CI(\cU_j;\End(K_0))$ invertible, and then $A^\dag=Q_j^{-1}A^*Q_j$ for $Q_j:=\wt Q_j^*\wt Q_j$ when $A$ is an operator acting on sections of $K_0$. Thus, $W_{j,1}-W_{j,1}^{\dag *}=[W_{j,1},Q_j^*](Q_j^{-1})^*$. On $M$, the constancy of $Q_j$, and hence of $Q_j^*$, along the fibers of $\beta$ and $V_1\in{}^0\cM$ give the extra vanishing factor $\rho_I$ in
  \[
    W_{j,1}-W_{j,1}^{\dag *} = \one_{U_\eps}\rho_I w^2 q_{j,1},\ \ 
    q_{j,1}\in \CI(\beta^{-1}(\cU_j);\End(\beta^*K_0)),
  \]
  with $q_{j,1}$ only depending on $Q_j$. Similarly, $L_{j,2}-L_{j,2}^{\dag *}=[L_{j,2},Q_j^*](Q_j^{-1})^*$; using Lemma~\ref{LemmaEinNScalarBox} and $[\pa_1,Q_j^*]\in\rho\,\CI$, we find (replacing the weight $-0$ there by $-1/2+b'_I$ for definiteness)
  \begin{equation}
  \label{EqBgscriLj2Adj}
    L_{j,2}-L_{j,2}^{\dag *} \in \rho_0^{1+b_0}\rho_I^{-1+b'_I}\Hb^\infty(M)\cM_{\beta^*K_0} + (\CI+\rho_0^{1+b_0}\rho_I^{-1/2+b'_I}\Hb^\infty)\Diffb^1(\cU_j;\beta^*K_0).
  \end{equation}
  Writing $L_{j,2}u_0=L u_0-L_{j,1}u_0$ and using the relationship~\eqref{EqBgscriL2Relation}, we thus get
  \begin{equation}
  \label{EqBgscriC21DiffEst}
  \begin{split}
    |\la(\cC_{j,2 1}-\ul\cC_{j,2 1})u_0,\chi_j u_0\ra| &\leq C\|\wt\chi_j w V_1 u_0\|_{L^2_\bop} \bigl(\|\wt\chi_j\rho_I^{b'_I}w u_0\|_{\Hbeta^1} + \|\wt\chi_j\rho_I^{1/2+b'_I}w u_0\|_{\Hb^1}\bigr) \\
      &\quad + C\bigl(\|\wt\chi_j\rho_I w L u_0\|_{L^2_\bop} + \|\wt\chi_j\rho_I w L_{j,1}u_0\|_{L^2_\bop}\bigr) \| \wt\chi_j \rho_I w u_0\|_{L^2_\bop},
  \end{split}
  \end{equation}
  where the norms are taken on $U_\eps$. Note that in all terms on the right, at least one factor comes with an extra decaying power of $\rho_I$ relative to $w u_0$, hence is \emph{small} compared to $w u_0$ if we localize to $U_\eps$ for small $\eps>0$, i.e.\ to a small neighborhood of $\scri^+$. Next, we combine Lemmas~\ref{LemmaCptScriConn} and~\ref{LemmaBgscriKCurrent} in the same fashion as in the proof of Lemma~\ref{LemmaCptScriConnDKD} to estimate the last two terms of \eqref{EqBgscriC21} by
  \begin{equation}
  \label{EqBgscriC21DiffEst2}
  \begin{split}
    &C\bigl( \| \wt\chi_j \rho_I w u_0 \|_{\Hb^1} \| \chi_j w u_0 \|_{L^2_\bop} \\
      &\qquad + (\| \wt\chi_j\rho_I w T^\pa(d u_0,d u_0)^{1/2} \|_{L^2_\bop(U_\eps^\pa)}+\|\wt\chi_j\rho_I w u_0\|_{L^2_\bop(U_\eps^\pa)}) \| \chi_j w u_0\|_{L^2_\bop(U_\eps^\pa)} \bigr);
  \end{split}
  \end{equation}
  where the second term in the inner parenthesis comes from the pointwise estimate $T^\pa(d_j u_0,\allowbreak d_j u_0)^{1/2}\leq C(T^\pa(d u_0,d u_0)^{1/2}+|u_0|)$.
  
  The next interesting term in \eqref{EqBgscriPairingLoc} is $\cC_{j,1 1}+\cC_{j,1 0}$, specifically the term coming from the `constraint damping part' $L^0_1$ defined in~\eqref{EqBgscriLpi0}. In a local trivialization, $L^0_1=-\rho^{-1}A_\CD\pa_1+L^0_{1,j}$, $L^0_{1,j}\in\CI(\cU_j)$ (using the discussion around~\eqref{EqCptScriNull} for this membership), so we have the pointwise equality
  \begin{align*}
    2\Re k_0(W u_0,L_1^0\chi_j u_0) &= -2\Re k_0(W_{j,1}u_0,\rho^{-1}A_\CD\pa_1 \chi_j u_0) \\
      &\qquad + 2\Re k_0(W_{j,0}u_0,L_1^0\chi_j u_0) + 2\Re k_0(W_{j,1} u_0,L^0_{1,j}\chi_j u_0);
  \end{align*}
  letting
  \begin{align*}
    K' &:= -2 w^2 (V_1\otimes_s \rho^{-1}\pa_1) \otimes A_\CD \\
       &\in \rho_0^{-2 a_0}\rho_I^{-2 a'_I-1}\CI\bigl(U_\eps; (S^2\,\Tbeta M+\rho_I\,S^2\,\Tb M)\otimes\End(\beta^*K_0)\bigr),
  \end{align*}
  the first term integrates to $\int \rho_I K'(d_j u_0,d_j\chi_j u_0)\,d\mu_\bop$, which equals
  \begin{equation}
  \label{EqBgscriC11Main}
    \int\rho_I K'(d u_0,d\chi_j u_0)\,d\mu_\bop
  \end{equation}
  plus error terms of the same kind as in the second line of~\eqref{EqBgscriC21}. The extra factor of $\rho_I$ in $W_{j,0}$ and $L^0_{1,j}$ (as compared to $W_{j,1}$ and $L^0_1$) allows the remaining two terms to be estimated in a fashion similar to~\eqref{EqBgscriC21DiffEst}. The remaining contributions to $\cC_{j,1 1}+\cC_{j,1 0}$ are error terms coming from $\wt L$ in~\eqref{EqBgscriLpi0} and can be estimated as in \eqref{EqBgscriC21DiffEst}.

  Lastly, the terms of \eqref{EqBgscriPairingLoc} involving $\cC_{j,2 0}$ can be rewritten and estimated as follows:
  \begin{align*}
    \bigl|2\Re\la (L&-L_{j,1})u_0, W_{j,0}\chi_j u_0\ra + \la W_{j,0}u_0,[L_{j,2},\chi_j]u_0\ra\bigr| \\
    & \leq 2\bigl(\|\rho_I w L u_0\|_{L^2_\bop}+\|\wt\chi_j\rho_I w L_{j, 1}u_0\|_{L^2_\bop}\bigr) \|\chi_j\rho_I w u_0 \|_{L^2_\bop} \\
    &\qquad + \|\wt\chi_j\rho_I w u_0\|_{L^2_\bop}\bigl(\|\wt\chi_j\rho_I^{b'_I}w u_0\|_{\Hbeta^1}+\|\wt\chi_j\rho_I^{1/2+b'_I}w u_0\|_{\Hb^1}\bigr);
  \end{align*}
  the norms are taken on $U_\eps$, and we use that $[L_{j,2},\chi_j]$ lies in the same space as \eqref{EqBgscriLj2Adj}. We note that by Lemma~\ref{LemmaEinNscri}, the terms involving $L_{j,1}$ here and in \eqref{EqBgscriC21DiffEst} can be estimated by
  \[
    \|\wt\chi_j\rho_I w L_{j,1}u_0\|_{L^2_\bop} \leq C\bigl(\| \bar\chi_j\rho_I^{b'_I}w u_0\|_{\Hbeta^1}+\| \bar\chi_j\rho_I^{1/2+b'_I} w u_0\|_{\Hb^1}\bigr),
  \]
  where $\bar\chi_j\in\CIc(\cU_j)$ is identically $1$ on $\supp\wt\chi_j$.

  This finishes the evaluation of \eqref{EqBgscriPairing}; we now turn to the estimate of $w u_0$ itself by $w V u$. As in the proof of Proposition~\ref{PropBgi0}, this follows from integration along $V$. Concretely, we consider a `commutator' as in \eqref{EqBgi0UItself}, that is,
  \begin{equation}
  \label{EqBgscriCommUItself}
    2\Re\la \one_{U_\eps}w V u_0, \rho_I^{-1}w u_0\ra = -\la \rho_I^{-1}\dv_{g_\bop}(\one_{U_\eps}w^2 V_1)u_0, u_0\ra + E,
  \end{equation}
  where $|E|\leq C\|w u_0\|_{L^2_\bop}\|\rho_I w u_0\|_{L^2_\bop}$ by Lemma~\ref{LemmaCptScriWWstar}. Using the negativity of the divergence near $\scri^+$ due to Lemma~\ref{LemmaBgscriKCurrent} and Remark~\ref{RmkBgscriDivSign}, and that $V_1$ is outward pointing at $U_\eps^\pa$, so $V_1(\one_{U_\eps})$ is a \emph{negative} $\delta$-distribution at $U_\eps^\pa$, we get
  \begin{equation}
  \label{EqBgscriUEst}
    \|w u_0\|_{L^2_\bop(U_\eps)} + \|w u_0\|_{L^2_\bop(U_\eps^\pa)} \leq C\|w V u_0\|_{L^2_\bop(U_\eps)};
  \end{equation}
  recall here that $u_0$ vanishes in $\rho_I>\tfrac{\eps}{2}$, hence there is no a priori control term on the right. Subtracting this estimate from~\eqref{EqBgscriPairing} (the latter having main terms which are \emph{negative} definite in $d u_0$), the main terms are the left hand side of \eqref{EqBgscriUEst} and $\int_{U_\eps} \rho_I(K'+K_{W^\circ})(d u_0,d u_0)\,d\mu_\bop$ from~\eqref{EqBgscriC21Main} and \eqref{EqBgscriC11Main}. By Lemma~\ref{LemmaBgscriImprovedK}, they control $\| w u_0 \|_{\Hscri^1(U_\eps)}$: the error terms in $U_\eps$ can be absorbed into this, while those at $U_\eps^\pa$ in~\eqref{EqBgscriC21DiffEst2} can be absorbed into the second terms of~\eqref{EqBgscriC21Main} and \eqref{EqBgscriUEst}, due to the extra decaying weights on at least one of the factors in each of those error terms as discussed after~\eqref{EqBgscriC21DiffEst}. Thus, we have proved
  \begin{equation}
  \label{EqBgscri1almost}
    \| u_0 \|_{\rho_0^{a_0}\rho_I^{a'_I}\Hscri^1} \leq C\bigl(\|\pi_0 f\|_{\rho_0^{a_0}\rho_I^{a'_I-1}L^2_\bop} + \| \pi_0 L_h\pi_0^c u_0^c \|_{\rho_0^{a_0}\rho_I^{a'_I-1}L^2_\bop}\bigr),
  \end{equation}
  valid for $a'_I<\min(a_0,\gamma)$. Since $L_h$ is principally scalar, $\pi_0 L_h\pi_0^c$ is a \emph{first} order operator, and by Lemma~\ref{LemmaEinNscri}, we have
  \begin{equation}
  \label{EqBgscriPi0LPi0c}
    \pi_0 L_h\pi_0^c \in \rho_0^{1-0}\rho_I^{-1+b'_I}\cM_{\beta^*K_0} + (\CI+\rho_0^{1-0}\rho_I^{-0}\Hb^\infty)\Diffb^1(M;\beta^*K_0);
  \end{equation}
  since $a'_I<a_I+b'_I<a_I+\half$, the second term in \eqref{EqBgscri1almost} is bounded by $\|u_0^c\|_{\rho_0^{a_0}\rho_I^{a_I-\delta}\Hscri^1}$ for sufficiently small $\delta>0$ (by the assumptions on the weights in Theorem~\ref{ThmBg}), which establishes the estimate~\eqref{EqBgscri1}.

  The proof of the estimate~\eqref{EqBgscri2} proceeds along completely analogous lines, using the weight $w=\rho_0^{-a_0}\rho_I^{-a_I}$ and positive commutator estimates for the equations~\eqref{EqBgscriEq11c} and \eqref{EqBgscriEq11}. The main difference is that $\pi_{1 1}L_h\pi_{1 1}$ and $\pi_{1 1}^c L_h\pi_{1 1}^c$ have no leading order subprincipal terms like $\pi_0 L_h\pi_0$ does, hence we need $a_I<\min(a_0,0)$ for $K_{w^2 V}$ to have a sign---this is the case $a'_I=a_I$, $\gamma=0$ in the notation of Lemma~\ref{LemmaBgscriImprovedK}. In order to estimate the coupling terms on the right hand side of~\eqref{EqBgscriEq11c}, we use Lemma~\ref{LemmaEinNscri}, so
  \begin{align}
  \label{EqBgscri11cLh0Space}
    \pi_{1 1}^c L_h\pi_0 &\in (\rho_I^{-1}\CI + \rho_0^{1-0}\rho_I^{-1-0}\Hb^\infty)\cM + (\CI+\rho_0^{1-0}\rho_I^{-0}\Hb^\infty)\Diffb^1, \\
    \pi_{1 1}^c L_h\pi_{1 1} &\in \rho_0^{1-0}\rho_I^{-1+b'_I}\cM + (\CI+\rho_0^{1-0}\rho_I^{-0})\Diffb^1, \nonumber
  \end{align}
  which gives
  \begin{equation}
  \label{EqBgscri11c}
    \| u_{1 1}^c \|_{\rho_0^{a_0}\rho_I^{a_I}\Hscri^1} \leq C\bigl(\|\pi_{1 1}^c f\|_{\rho_0^{a_0}\rho_I^{a_I-1}L^2_\bop} + \| u_0 \|_{\rho_0^{a_0}\rho_I^{a_I+\delta}\Hscri^1} + \| u_{1 1} \|_{\rho_0^{a_0}\rho_I^{a_I-\delta}\Hscri^1}\bigr);
  \end{equation}
  for our choice~\eqref{EqBgscriDelta} of $\delta$, the second term is bounded by a \emph{small} constant times the left hand side of \eqref{EqBgscri1}. For analyzing the equation~\eqref{EqBgscriEq11} for $u_{1 1}$, we observe that $\pi_{1 1}L_h\pi_0$ lies in the space \eqref{EqBgscri11cLh0Space}, while
  \[
    \pi_{1 1}L_h\pi_{1 1}^c \in \bigl(\rho_0^{1+b_0}\rho_I^{-1}\Hb^\infty(\scri^+\cap U_\eps)+\rho_0^{1-0}\rho_I^{-1+b_I}\Hb^\infty\bigr)\cM + (\CI+\rho_0^{1-0}\rho_I^{-0}\Hb^\infty)\Diffb^1,
  \]
  where we exploit that $h^{\bar a\bar b}$ has a leading term at $\scri^+$. Thus,
  \begin{equation}
  \label{EqBgscri11}
    \| u_{1 1} \|_{\rho_0^{a_0}\rho_I^{a_I}\Hscri^1} \leq C'\bigl(\|\pi_{1 1} f\|_{\rho_0^{a_0}\rho_I^{a_I-1}L^2_\bop} + \| u_0 \|_{\rho_0^{a_0}\rho_I^{a_I-\delta}\Hscri^1} + \| u_{1 1}^c \|_{\rho_0^{a_0}\rho_I^{a_I}\Hscri^1}\bigr).
  \end{equation}
  In order to obtain the estimate~\eqref{EqBgscri2}, we add \eqref{EqBgscri11c} and a small multiple, $\eta$, of \eqref{EqBgscri11}, so that $\eta C'<1$ and $u_{1 1}^c$ can be absorbed into the left hand side of \eqref{EqBgscri11c}; note that the $u_{1 1}$ term in \eqref{EqBgscri11c} is arbitrarily small compared to the left hand side of \eqref{EqBgscri11} when we localize sufficiently closely to $\scri^+$. As explained at the beginning of the proof, this establishes the desired estimate~\eqref{EqBgscri} for $k=1$.

  To prove~\eqref{EqBgscri} for $k\geq 2$, we proceed by induction on the level of the hierarchy~\eqref{EqBgscriEq0}--\eqref{EqBgscriEq11} and the corresponding estimates~\eqref{EqBgscri1}, \eqref{EqBgscri11c}, and \eqref{EqBgscri11}. The key structures for obtaining higher regularity are the symmetries of the normal operators of $\pi_0 L_h\pi_0$ etc.\ at $\scri^+$. Namely, $-2\rho^{-2}\pa_0\pa_1\in\pa_{\rho_I}(\rho_0\pa_{\rho_0}-\rho_I\pa_{\rho_I})+\Diffb^2$ commutes (modulo $\Diffb^2$) with $\rho_0\pa_{\rho_0}$, while for the vector field $\rho_I\pa_{\rho_I}$ generating dilations along approximate (namely, Schwarzschildean) light cones, we have
  \[
    [-2\rho^{-2}\pa_0\pa_1,\rho_I\pa_{\rho_I}] \in -2\rho^{-2}\pa_0\pa_1 + \Diffb^2.
  \]
  Commutation with spherical vector fields is more subtle: we need to define rotation `vector fields' somewhat carefully. We only define these on $\beta^*K_0$, the definition for the other bundles being analogous. Using the product splitting $\R_q\times\R_s\times\Sph^2$ of $\R^4$ near $S^+$, denote by $\{\Omega_{1,i}\colon i=1,2,3\}\subset\cV(\Sph^2)\hra\Vb(M)$ a spanning set of the space of vector fields on $\Sph^2$, e.g.\ rotation vector fields, though the concrete choice or their (finite) number are irrelevant; we can then define elements $\Omega_i\in\Diffb^1(M;\beta^*K_0)$ with scalar principal symbols equal to those of $\Omega_{1,i}$ such that
  \begin{equation}
  \label{EqBgscriRotComm}
    [\rho^{-1}\pa_0,\Omega_i],\ [\rho_0^{-1}\pa_1,\Omega_i] \in \rho_I\Diffb^1(M;\beta^*K_0),
  \end{equation}
  where $\rho^{-1}\pa_0,\rho_0^{-1}\pa_1$ denote elements in ${}^0\cM_{\beta^*K_0}$. (Note that the $\rho_I\,\CI$ indeterminacy of $\rho^{-1}\pa_0,\rho_0^{-1}\pa_1$ does not affect~\eqref{EqBgscriRotComm}.) Here, it is crucial that we \emph{fix} $\rho_0$ and $\rho$ to be given by \eqref{EqCptASplCoords} and thus rotationally invariant: $\Omega_{i,1}\rho_0=0$, so $[\Omega_i,\rho_0]\in\rho_I\,\CI$; we also have $[\Omega_i,\rho_I]\in\rho_I\,\CI$ independently of choices. Regarding \eqref{EqBgscriRotComm} then, we automatically have membership in $\Diffb^1$ by principal symbol considerations; to get the additional vanishing at $\rho_I$ is then exactly the statement that the normal operators of $\rho^{-1}\pa_0$, resp.\ $\rho_0^{-1}\pa_1$, and $\Omega_i$ commute. For $\rho^{-1}\pa_0$, whose normal operator is $-\half\rho_I\pa_{\rho_I}$, this is automatic, while for $\rho_0^{-1}\pa_1$, we merely need to arrange $[\rho_0\pa_{\rho_0},\Omega_i]=0$ at $\scri^+$, which holds if we define $\Omega_i$ in the decomposition~\eqref{EqEinFK0} by $\Omega_{1,i}\oplus\slnabla_{\Omega_{1,i}}\oplus\Omega_{1,i}$. We therefore obtain
  \[
    [-2\rho^{-2}\pa_0\pa_1, \Omega_i ],\ [ L^0, \Omega_i ] \in \Diffb^2,
  \]
  with $L^0$ given in \eqref{EqBgscriLpi0}, which improves over the a priori membership in $\rho_I^{-1}\Diffb^2$. Let us now assume that for the solution of equation~\eqref{EqBgscriEq0}, we have already established the estimate
  \begin{equation}
  \label{EqBgscriInductiveHyp}
    \| u_0 \|_{\rho_0^{a_0}\rho_I^{a'_I}\Hscrib^{1,k-1}} \leq C\bigl(\|\pi_0 f\|_{\rho_0^{a_0}\rho_I^{a'_I-1}\Hb^{k-1}} + \| \pi_0^c u\|_{\rho_0^{a_0}\rho_I^{a_I-\delta}\Hscrib^{1,k-1}}\bigr).
  \end{equation}
  We use $\{G_j\}:=\{\rho_0\pa_{\rho_0}, \rho_I\pa_{\rho_I},\ \Omega_1,\ \Omega_2,\ \Omega_3,\ 1\}$, which spans $\Diffb^1(M;\beta^*K_0)$ over $\CI(M)$, as a set of commutators. Writing $L=\pi_0 L_h\pi_0$, we then have
  \begin{equation}
  \label{EqBgscriInductiveStep}
    L G_j u_0 = f_j+[L,G_j]u_0,\ \ f_j := G_j\pi_0 f - G_j\pi_0 L_h\pi_0^c u_0^c.
  \end{equation}
  We estimate the first term by
  \[
    \|f_j\|_{\rho_0^{a_0}\rho_I^{a'_I-1}\Hb^{k-1}} \leq C\bigl(\|\pi_0 f\|_{\rho_0^{a_0}\rho_I^{a'_I-1}\Hb^k} + \|\pi_0^c u\|_{\rho_0^{a_0}\rho_I^{a_I-\delta}\Hscrib^{1,k}}\bigr).
  \]
  For the second, delicate, term, we use the above discussion to see that
  \begin{equation}
  \label{EqBgscriComm}
    [L,G_j] \in c_j L + \rho_0^{1-0}\rho_I^{-1+b'_I}\cM\circ\Diffb^1 + (\CI+\rho_0^{1-0}\rho_I^{-0})\Diffb^2
  \end{equation}
  with $c_j=1$ if $G_j=\rho_I\pa_{\rho_I}$, and $c_j=0$ otherwise. Thus, $[L,G_j]=c_j L + C_j^\ell G_\ell$ with $C_j^\ell\in\rho_0^{1-0}\rho_I^{-1+b'_I}\cM + (\CI+\rho_0^{1-0}\rho_I^{-0})\Diffb^1$, and therefore
  \begin{equation}
  \label{EqBgscriInductiveComm}
    \|[L,G_j]u_0\|_{\rho_0^{a_0}\rho_I^{a'_I-1}\Hb^{k-1}} \leq c_j\| L u_0 \|_{\rho_0^{a_0}\rho_I^{a'_I-1}\Hb^{k-1}} + C \sum_\ell \| G_\ell u_0 \|_{\rho_0^{a_0}\rho_I^{a_I-\delta-\delta'}\Hscrib^{1,k-1}}
  \end{equation}
  for $\delta'>0$ small; recall that our choice~\eqref{EqBgscriDelta} of $\delta$ leaves some extra room. Now, applying~\eqref{EqBgscriInductiveHyp} to $G_j u_0$ in equation~\eqref{EqBgscriInductiveStep} and summing over $j$, we can absorb the term~\eqref{EqBgscriInductiveComm} into the left hand side of the estimate due to the weaker weight. This establishes \eqref{EqBgscriInductiveHyp} for $k$ replaced by $k+1$. The higher regularity analogues of the estimates~\eqref{EqBgscri11c} and \eqref{EqBgscri11} are proved in the same manner; as before, this then yields the estimate~\eqref{EqBgscri} for all $k$.
\end{proof}

This proposition remains valid near \emph{any} compact subset of $\scri^+\setminus I^+$: the proof only required localization near $\scri^+$. At this point, we therefore have quantitative control of the solution of the initial value problem for $L_h u=f$ in any compact subset of $M\setminus I^+$.

\subsection{Estimate near timelike infinity}
\label{SsBgip}

Near the corner $I^+\cap\scri^+$, fix the local defining functions
\begin{equation}
\label{EqBgipNullDefFn}
  \rho_I := v = (t-r_*)/r,\ \ \ringrho_+:=(t-r_*)^{-1}
\end{equation}
of $\scri^+$ and $I^+$, and let $\rho:=\rho_I\ringrho_+=r^{-1}$; these only differ from the expressions for the defining functions $\rho_I$ and $\rho_0$ used in \S\ref{SsBgscri} by a sign. We thus have $G_\bop=\rho^{-2}G=G_{0,\bop}+G_{1,\bop}+\wt G_\bop$ for
\begin{equation}
\label{EqBgipDualMetric}
  G_{0,\bop} = -2\pa_{\rho_I}(\rho_I\pa_{\rho_I}-\ringrho_+\pa_{\ringrho_+})-\slG \in \rho_I^{-1}\CI(M;S^2\,\Tbeta M+\rho_I\,S^2\,\Tb M)
\end{equation}
and $G_{1,\bop}\in\CI(M;S^2\,\Tbeta M)$, $\wt G_\bop\in\rho_I^{-1+b'_I}\rho_+^{1+b_+}\Hb^\infty(M;S^2\,\Tbeta M+\rho_I\,S^2\,\Tb M)$, while
\[
  g_\bop \in (\CI+\rho_I^{b'_I}\rho_+^{1+b_+}\Hb^\infty)(M;S^2(\Tbeta M)^\perp + \rho_I\,S^2\,\Tb^*M)
\]
with smooth term given by $\rho^2 g_m=2\rho_I\tfrac{d\ringrho_+}{\ringrho_+}(\tfrac{d\ringrho_+}{\ringrho_+}+\tfrac{d\rho_I}{\rho_I})+\rho_I^2\,\CI(M;S^2\,\Tb^*M)$. In order to be able to work near \emph{all of} $I^+$, we first prove:

\begin{lemma}
\label{LemmaBgipDefFn}
  There exists a defining function $\rho_+\in\CI(M)$ of $I^+$ such that $d\rho_+/\rho_+$ is past timelike near $I^+$ for the dual b-metric $\rho^{-2}g_m^{-1}$. Moreover, if $C>0$ is fixed, then for any $h\in\cX^\infty$ with $\|h\|_{\cX^3}<C$ and for any $\eps>0$, there exists $\delta>0$ such that $d\rho_+/\rho_+$ is past timelike with $|d\rho_+/\rho_+|_{G_\bop}^2>0$ in $\{\rho_I\geq\eps,\ \rho_+\leq\delta\}$ for the dual b-metric $G_\bop=\rho^{-2}g^{-1}$, $g=g_m+\rho h$.
\end{lemma}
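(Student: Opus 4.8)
The assertion concerns only the causal character of the b-covector $d\rho_+/\rho_+$ — equivalently, via duality, of the dilation vector field $\rho_+\pa_{\rho_+}$ — with respect to the Lorentzian b-metrics $g_\bop=\rho^2 g_m$ and $\rho^2 g$. Remark~\ref{RmkCptABdfChar} already records that this vector field is timelike \emph{at} $i^+$, but it is only well-defined there modulo $\rho_+\Vb(M)$; the task is to fix one defining function $\rho_+\in\CI(M)$ for which timelikeness persists in a full one-sided collar of the (compact) boundary hypersurface $i^+$, uniformly up to the corner $i^+\cap\scri^+$, and then to transfer this to $g=g_m+\rho h$. Since the second, perturbed, statement is localized to $\{\rho_I\ge\eps\}$, i.e.\ away from $\scri^+$, I expect it to follow from the first by a soft perturbation argument; the real work is the construction of $\rho_+$ and the verification of timelikeness for the background metric, and the only delicate point is the behaviour at the corner.

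I would build $\rho_+$ by patching two local models. Away from the corner, on a neighborhood of $(i^+)^\circ$, I use the identification, carried out in \S\ref{SsBgExpl}, of $g_\bop=\rho^2 g_m$ with a conformal perturbation of the static de~Sitter metric: there is a defining function $\rho_+$ of $(i^+)^\circ$ — built, via Lemma~\ref{LemmaEinNgmMink} and Definition~\ref{DefCptFtInv}, from $\rho_t$ (for $m=0$ simply the Milne time $(t^2-r^2)^{-1/2}$) — with respect to which $\rho^2 g_m$ takes the form $\phi\bigl(\tfrac{d\rho_+^2}{\rho_+^2}-k_+\bigr)$ modulo $\rho_+\Hb^\infty$, with $\phi>0$ smooth and $k_+$ a positive-definite b-metric on the slices $\{\rho_+=\mathrm{const}\}$; hence $|d\rho_+/\rho_+|^2_{G_\bop}=\phi^{-1}+O(\rho_+)>0$ there, with $\nabla^{g_\bop}\log\rho_+$ past-directed since $\rho_+\searrow0$ toward $i^+$. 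Near the corner, by contrast, the canonical defining function of $i^+$ is $\ringrho_+$, and the leading part $G_{0,\bop}=-2\pa_{\rho_I}(\rho_I\pa_{\rho_I}-\ringrho_+\pa_{\ringrho_+})-\slG$ of \eqref{EqBgipDualMetric} makes $d\ringrho_+/\ringrho_+$ \emph{null} — its self-pairing is the (vanishing) $(\ringrho_+\pa_{\ringrho_+})^2$-entry of $G_{0,\bop}$ — so $\ringrho_+$ itself will not do. I would therefore take $\rho_+=\ringrho_+(1+c\rho_I)$ for a small constant $c>0$; a direct computation with the explicit form of $g_m^S$ near $i^+\cap\scri^+$ (where $g_m=g_m^S$) gives
\[
  \bigl|\tfrac{d\rho_+}{\rho_+}\bigr|^2_{G_\bop} = \frac{2\mu(1-\mu)}{\rho_I} + O(1), \qquad \mu := \rho_I\pa_{\rho_I}\log(1+c\rho_I)=\tfrac{c\rho_I}{1+c\rho_I}\in(0,1),
\]
which tends to $2c>0$ as $\rho_I\to0$ and, for $c$ small, stays positive on the whole relevant (bounded) range of $\rho_I$; the corresponding gradient is again past-directed. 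These two choices are defining functions of $i^+$ that are comparable in the overlap region, which is away from $\scri^+$, so I can patch them by a partition of unity in $\rho_I$; choosing a sufficiently wide transition region keeps the error term produced by differentiating the cutoff within the (uniform, by compactness) timelike margin of the convex combination, and the glued $\rho_+\in\CI(M)$ then has $d\rho_+/\rho_+$ past timelike near all of $i^+$ for $g_\bop=\rho^2 g_m$.

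For the perturbed claim, fix $C>0$ and $h\in\cX^\infty$ with $\|h\|_{\cX^3}<C$. On $\{\rho_I\ge\eps\}$ the b-metric $\rho^2 g_m$ is a fixed non-degenerate smooth Lorentzian b-metric and $d\rho_+/\rho_+$ is past timelike with a uniform margin on the compact set $i^+\cap\{\rho_I\ge\eps\}$. From $h\in\Hb^{\infty;b_0,-1,b_+}$ with $-\tfrac12<b_+<0$ one gets $\rho h\in\rho_+^{1+b_+}\Hb^\infty$ near $i^+$ (the $\rho_I^{-1}$ weight being harmless for $\rho_I\ge\eps$), so by Sobolev embedding $\|\rho h\|_{L^\infty(\{\rho_I\ge\eps,\ \rho_+\le\delta\})}\le C'\delta^{1+b_+}\to0$ as $\delta\to0$, uniformly over such $h$; since $G_\bop=\rho^{-2}g^{-1}$ depends smoothly on $g$ via matrix inversion, $G_\bop$ is then $L^\infty$-close to $\rho^{-2}g_m^{-1}$ on that set. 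Taking $\delta>0$ small enough that this perturbation lies within the margin yields the conclusion. The step I expect to be genuinely delicate — and the one carrying the geometric content — is the corner analysis: because the natural b-defining function of $i^+$ has lightlike differential there, one is forced into the $1+c\rho_I$ correction, and one must check both that its leading contribution is positive and that it survives the gluing with the de~Sitter model without sacrificing the uniform timelike margin needed afterwards for the perturbation.
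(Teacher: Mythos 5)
Your overall architecture is sound, and the second half of your argument — reducing the perturbed claim to $\{\rho_I\ge\eps\}$, noting that there $G_\bop-\rho^{-2}g_m^{-1}\in\rho_+^{1+b_+}L^\infty$ with norm controlled by $\|h\|_{\cX^3}$, and shrinking $\delta$ — is exactly the paper's argument. Your corner computation is also correct as far as it goes: $d\ringrho_+/\ringrho_+$ is indeed null, and for $\rho_+=\ringrho_+(1+c\rho_I)$ the $G_{0,\bop}$-pairing is $2\mu(1-\mu)/\rho_I=2c/(1+c\rho_I)^2$. But the positivity argument you draw from it is a non sequitur: your main term is at most $2c$, so for $c$ small it cannot dominate an error that you only claim to be ``$O(1)$''. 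What saves the computation is structure you did not invoke: near the corner $g_m=g_m^S$ exactly, so the $(\ringrho_+\pa_{\ringrho_+})^2$-entry of the full $G_\bop$ (not just of $G_{0,\bop}$) vanishes — this is precisely the exact nullity of $d\ringrho_+/\ringrho_+$ — whence the error pairs against $\mu\,d\rho_I/\rho_I$ and is $O(c\rho_I)$, beaten by $2c$ only on a sufficiently small $\rho_I$-collar, not on ``the whole relevant bounded range of $\rho_I$'' as you assert.

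The more serious gap is the gluing. For $F=\chi f_1+(1-\chi)f_2$ one has $d\log F=\tfrac{\chi f_1}{F}d\log f_1+\tfrac{(1-\chi)f_2}{F}d\log f_2+\tfrac{f_1-f_2}{F}\,d\chi$; the convex combination is fine, but the cutoff term is of order one uniformly down to $i^+$ unless $f_1-f_2$ is one order smaller than $f_1,f_2$ in the transition region, and ``choosing a sufficiently wide transition region'' cannot make it small, since the available width is bounded. Your two models do \emph{not} match to that accuracy (in the overlap your corner model is $\sim 1/s$ while a Milne-type time is $\sim 1/(2s)$), and your corner model's timelike margin there is only $\approx 2c$, which by your own choice of $c$ is small — so the order-one gluing error can destroy timelikeness, and the argument as written does not close. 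The paper avoids this by choosing the two local models to agree to higher order: $f_0=t/(t^2-r^2)$ near the axis and $f_*=t/(t^2-r_*^2)$ near $\scri^+$ (for which $|d\log f_*|^2_{G_\bop}$ is an explicit positive quantity for $g_m^S$, uniformly up to the corner), glued by $\chi(r/t)$ in $r/t\in(\tfrac14,\tfrac34)$ where $f_0-f_*\in\rho^{2-0}\Hb^\infty$ because $r_*-r$ is only logarithmic; the cutoff term then decays at $i^+$, and one finally replaces the glued function by a genuinely smooth defining function agreeing with it modulo $(\rho'_+)^{2-0}\Hb^\infty$ via Lemma~\ref{LemmaCptFtInv}. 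To salvage your route you would have to either verify an analogous higher-order matching of your two models in the overlap (which fails as chosen) or make quantitative choices of $c$ and of the overlap's location so that the margins there dominate the fixed gluing error; neither is done in your write-up.
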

\begin{proof}
  For the second claim, note that in $\rho_I\geq\eps>0$, we have $G_\bop-\rho^{-2}g_m^{-1}\in\rho_+^{1+b_+}L^\infty$ with norm controlled by $\|h\|_{\cX^3}$, so
  \begin{equation}
  \label{EqBgipDefFnEst}
    |d\rho_+/\rho_+|_{G_\bop}^2 \in |d\rho_+/\rho_+|^2_{\rho^{-2}g_m^{-1}} + \rho_+^{1+b_+}\Hb^\infty
  \end{equation}
  is indeed positive near $\rho_+=0$. To prove the first claim, we compute on Minkowski space $|f_0^{-1}d f_0|^2\equiv 1$, $f_0=t/(t^2-r^2)$ in $t>r$, computed with respect to the dual metric of $t^{-2}(dt^2-dr^2)$.\footnote{See also the related calculations and geometric explanations around equation~\eqref{EqBgExplDSIso}.} Similarly, in $r/t>\tfrac14$, and $t>r_*$ large, we have $|f_*^{-1}d f_*|_{\rho^{-2}g_m}^2>0$ for $f_*=t/(t^2-r_*^2)$: this is a simple calculation where $g_m=g_m^S$ is the Schwarzschild metric, and follows in general by an estimate similar to~\eqref{EqBgipDefFnEst} since $g_m$ differs from $g_m^S$ by a scattering metric of class $\rho^{1-0}\Hb^\infty$ in $r/t<\tfrac34$. Moreover, $f_*$ is (apart from minor smoothness issues, which we address momentarily) a defining function of $I^+$ near $\scri^+$. But $f_0-f_*\in\rho^{2-0}\Hb^\infty$ for $r/t\in(\tfrac14,\tfrac34)$, hence $f':=\chi f_*+(1-\chi)f_0$ has $|(f')^{-1}d f'|_{\rho^{-2}g_m}^2>0$ near $I^+$, where $\chi=\chi(r/t)$ is smooth and identically $0$, resp.\ $1$, in $r/t<\tfrac14$, resp.\ $r/t>\tfrac34$. Fixing any defining function $\rho'_+$ of $I^+$, Lemma~\ref{LemmaCptFtInv} implies $f'\in\rho'_+\,\CI(M)+(\rho'_+)^{2-0}\Hb^\infty(M)$ (with the nonsmooth summand supported away from $\scri^+$ by construction), so we may take $\rho_+\in\CI(M)$ to be any defining function of $I^+$ such that $f'-\rho_+\in(\rho'_+)^{2-0}\Hb^\infty$.
\end{proof}

\emph{For the remainder of this section, $\rho_+$ will denote this particular defining function.} Near $I^+\cap\scri^+$, we need to modify $\rho_+$ in the spirit of~\eqref{EqBgscriFinalDefFn} in order to get a \emph{timelike} (but not quite smooth) boundary defining function. Thus, fix $\beta\in(0,b'_I)$ and some small $\eta>0$, and let $p_\beta\in\rho_I^\beta\,\CI(U)$ be a nonnegative function in a neighborhood $U$ of $I^+$ such that $p_\beta\equiv\eta^\beta$ in $\rho_I\geq 2\eta$, $p_\beta(\rho_I)=\rho_I^\beta$ in $\rho_I\leq\half\eta$, and $0\leq p'_\beta\leq\beta\rho_I^{\beta-1}$; let then
\begin{equation}
\label{EqBgipDefFnMod}
  \wt\rho_+ := \rho_+(1+p_\beta) \in \rho_+(1+\rho_I^\beta)\,\CI(M).
\end{equation}
It is easy to see that $\wt\rho_+^{a_+}\Hb^k(M)=\rho_+^{a_+}\Hb^k(M)$, likewise for weighted $\Hscri$ and $\Hscrib$ spaces.

\begin{lemma}
  Fix $C>0$. Then there exist $\eta,\delta>0$ such that for all $h\in\cX^\infty$ with $\|h\|_{\cX^3}<C$, we have $|d\wt\rho_+/\wt\rho_+|_{G_\bop}^2>0$ in $\rho_+\leq\delta$.
\end{lemma}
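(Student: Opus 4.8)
The plan is to reduce this to two separate regions, glued along the region where $\wt\rho_+$ interpolates. The key point is that in the definition~\eqref{EqBgipDefFnMod}, $\wt\rho_+=\rho_+(1+p_\beta)$ where $p_\beta$ equals the constant $\eta^\beta$ for $\rho_I\geq 2\eta$, equals $\rho_I^\beta$ for $\rho_I\leq\frac12\eta$, and interpolates monotonically in between. Accordingly I would split into (i) the region $\{\rho_I\geq 2\eta\}$, where $\wt\rho_+=(1+\eta^\beta)\rho_+$ is a constant positive multiple of $\rho_+$, so $d\wt\rho_+/\wt\rho_+=d\rho_+/\rho_+$ and the conclusion $|d\wt\rho_+/\wt\rho_+|_{G_\bop}^2>0$ is exactly the second statement of Lemma~\ref{LemmaBgipDefFn} (choosing the $\delta$ there for the value $\eps=2\eta$); (ii) the region $\{\rho_I\leq 2\eta\}$ near $i^+\cap\scri^+$, which is the genuinely new case and where the weight $\rho_I^\beta$ does work.

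In region (ii) I would compute $d\wt\rho_+/\wt\rho_+$ directly. Writing $\log\wt\rho_+=\log\rho_+ + \log(1+p_\beta)$, we get $\frac{d\wt\rho_+}{\wt\rho_+}=\frac{d\rho_+}{\rho_+}+\frac{p'_\beta\,d\rho_I}{1+p_\beta}$, and near $i^+\cap\scri^+$ the relevant coordinates are $\rho_I$ and $\ringrho_+=(t-r_*)^{-1}$ from~\eqref{EqBgipNullDefFn}, with $\rho_+$ comparable to $\ringrho_+$. Using the expression~\eqref{EqBgipDualMetric} for $G_{0,\bop}=-2\pa_{\rho_I}(\rho_I\pa_{\rho_I}-\ringrho_+\pa_{\ringrho_+})-\slG$, together with $G_{1,\bop}\in\CI(M;S^2\,\Tbeta M)$ and the decaying error $\wt G_\bop\in\rho_I^{-1+b'_I}\rho_+^{1+b_+}\Hb^\infty$, the leading term of $|d\wt\rho_+/\wt\rho_+|_{G_\bop}^2$ is the pairing of $\frac{d\ringrho_+}{\ringrho_+}+\frac{p'_\beta\,d\rho_I}{1+p_\beta}$ against itself. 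Since $p'_\beta\,d\rho_I$ is (up to the bounded positive factor $(1+p_\beta)^{-1}$) of the form $\beta\rho_I^{\beta-1}d\rho_I=d(\rho_I^\beta)$, this is essentially the same completion-of-square computation as in~\eqref{EqBgscriFinalDefFn}: pairing $\frac{d\rho_I}{\rho_I}$ with $\frac{d\ringrho_+}{\ringrho_+}$ under $G_{0,\bop}$ produces a definite-sign cross term of size $\rho_I^{-1}$, and one checks it dominates, for $\eta>0$ small, both the $\rho_I^{2\beta-1}$ self-pairing of the $d\rho_I$ term, the smooth $G_{1,\bop}$ contribution, and the $\rho_I^{-1+b'_I}\rho_+^{1+b_+}$-size contribution of $\wt G_\bop$ once also $\rho_+\leq\delta$ is small. (Here one uses $\beta<b'_I\le\frac12$ so that $2\beta-1<-1+\beta<\min(-1+b'_I, 0)$ and the cross term wins; this is the direct analogue of the inequality used after~\eqref{EqBgscriFinalDefFn}.) Timelikeness of the covector, i.e.\ that the quadratic form is positive (in the Lorentzian convention of the paper), follows because $\frac{d\ringrho_+}{\ringrho_+}$ alone is timelike by Lemma~\ref{LemmaBgipDefFn} and we are perturbing it by the small spacelike-direction correction $p'_\beta\,d\rho_I/(1+p_\beta)$; the cross-term sign is exactly what keeps it in the timelike cone.

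The two regions overlap on $\{\frac12\eta\leq\rho_I\leq 2\eta\}$, where both arguments apply and in particular $p_\beta$ and $p'_\beta$ are bounded by fixed constants times $\eta^\beta$ and $\eta^{\beta-1}$; there the positivity is uniform after shrinking $\eta,\delta$ once more, so no genuine gluing estimate is needed beyond noting that both one-sided computations give a positive lower bound on their common domain. The uniformity over $h$ with $\|h\|_{\cX^3}<C$ is inherited from Lemma~\ref{LemmaBgipDefFn} in region (i) and, in region (ii), from the fact that the only $h$-dependence enters through $\wt G_\bop\in\rho_I^{-1+b'_I}\rho_+^{1+b_+}\Hb^\infty$ with $\Hb^\infty$ norm (hence $L^\infty$ norm, by Sobolev embedding using $\|h\|_{\cX^3}<C$) bounded in terms of $C$ — so the same $\delta$ works for all such $h$.

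The main obstacle I anticipate is purely bookkeeping rather than conceptual: tracking that the correction term $\frac{p'_\beta\,d\rho_I}{1+p_\beta}$, which is only conormal (not smooth) because of the $\rho_I^\beta$ factor, is paired correctly against the various pieces of $G_\bop$ — in particular making sure the cross term with $\frac{d\ringrho_+}{\ringrho_+}$ appears with the sign that enlarges the timelike region rather than shrinking it, exactly as the $\beta>0$ choice was engineered to do in~\eqref{EqBgscriFinalDefFn}. Once the sign is confirmed, all remaining terms are lower order in $\rho_I$ or carry a $\rho_+^{1+b_+}$ factor and are absorbed by choosing $\eta$ and then $\delta$ small.
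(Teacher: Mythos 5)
Your overall strategy is the paper's: expand $\bigl|\tfrac{d\wt\rho_+}{\wt\rho_+}\bigr|_{G_\bop}^2=\bigl|\tfrac{d\rho_+}{\rho_+}\bigr|^2+\tfrac{\rho_I p'_\beta}{1+p_\beta}\bigl(2\bigl\la\tfrac{d\rho_+}{\rho_+},\tfrac{d\rho_I}{\rho_I}\bigr\ra+\tfrac{\rho_I p'_\beta}{1+p_\beta}\bigl|\tfrac{d\rho_I}{\rho_I}\bigr|^2\bigr)$, use the positive cross term (of size $\sim\rho_I^{-1}$ before, $\sim\rho_I^{\beta-1}$ after, the $p'_\beta$-weight) to win near $\scri^+$, invoke Lemma~\ref{LemmaBgipDefFn} away from $\scri^+$, and get uniformity in $h$ from the $\rho_I^{-1+b'_I}\rho_+^{1+b_+}\Hb^\infty$ bound on $\wt G_\bop$. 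Two points need repair, though. First, your exponent chain ``$2\beta-1<-1+\beta<\min(-1+b'_I,0)$'' is backwards: $2\beta-1<\beta-1$ would force $\beta<0$. What you need is that the cross term is the \emph{most singular} term, i.e.\ $-1+\beta<-1+2\beta$ and $-1+\beta<-1+b'_I$, which holds precisely because $0<\beta<b'_I$; also note the cross term carries the prefactor $p'_\beta/(1+p_\beta)$ and so has size $\rho_I^{\beta-1}$, which is what must be compared against the $\rho_I^{2\beta-1}$ self-pairing and the $\rho_I^{-1+b'_I}$-size errors (this is the consistent bookkeeping behind ``the cross term wins'').

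Second, and more substantively, your treatment of the interpolation zone $\tfrac12\eta\le\rho_I\le 2\eta$ does not go through as written. Neither one-sided argument literally applies there: $\wt\rho_+$ is not a constant multiple of $\rho_+$ (so region (i)'s reduction to Lemma~\ref{LemmaBgipDefFn} with $\eps=2\eta$ does not cover it), and $p'_\beta$ is only known to lie in $[0,\beta\rho_I^{\beta-1}]$, so the cross-term boost may be absent. The proposed fix ``positivity is uniform after shrinking $\eta,\delta$'' read as a smallness statement also fails: the correction is bounded only by $C\eta^{\beta-1}$ there (the cross term $2\la\tfrac{d\rho_+}{\rho_+},\tfrac{d\rho_I}{\rho_I}\ra\sim 2\rho_I^{-1}$ does \emph{not} shrink with $\delta$, and $\eta^{\beta-1}\to\infty$ as $\eta\to0$), while the positive lower bound on $|d\rho_+/\rho_+|^2$ from Lemma~\ref{LemmaBgipDefFn} is not quantified. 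The missing idea — and how the paper handles this region — is a \emph{sign} argument rather than a smallness argument: the correction is the non-negative factor $\rho_I p'_\beta/(1+p_\beta)\ge 0$ times the parenthesis $2\la\tfrac{d\rho_+}{\rho_+},\tfrac{d\rho_I}{\rho_I}\ra+\tfrac{\rho_I p'_\beta}{1+p_\beta}|\tfrac{d\rho_I}{\rho_I}|^2$, which is positive for $\rho_I\le 2\eta$ and $\rho_+$ small (the $\approx 2\rho_I^{-1}$ cross term dominates the negative piece, whose size is only $\rho_I^{-1+\beta}$); hence the correction can only increase $|d\wt\rho_+/\wt\rho_+|^2$, and positivity there follows from Lemma~\ref{LemmaBgipDefFn} applied with $\eps=\eta/2$. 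With these two adjustments your argument coincides with the paper's proof; the region $\rho_I\le\tfrac12\eta$ and the uniformity in $\|h\|_{\cX^3}<C$ are handled correctly.
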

\begin{proof}
  We compute the $G_\bop$-norms
  \begin{equation}
  \label{EqBgipDefFnModNorm}
    \Bigl|\frac{d\wt\rho_+}{\wt\rho_+}\Bigr|^2 = \Bigl|\frac{d\rho_+}{\rho_+}\Bigr|^2 + \frac{\rho_I p_\beta'}{1+p_\beta}\Bigl(2\Big\la\frac{d\rho_+}{\rho_+},\frac{d\rho_I}{\rho_I}\Big\ra + \frac{\rho_Ip_\beta'}{1+p_\beta}\Bigl|\frac{d\rho_I}{\rho_I}\Bigr|^2\Bigr).
  \end{equation}
  In $\rho_I<2\eta$ and thus near $\scri^+$, we first note that $\rho_+=f\ringrho_+$ with $f>0$ smooth; since $d f/f$ thus vanishes at $\scri^+\cap I^+$ as a b-1-form, we have
  \[
    2\Big\la\frac{d\rho_+}{\rho_+},\frac{d\rho_I}{\rho_I}\Big\ra \in (2+\rho_I\,\CI+\rho_+\,\CI)\rho_I^{-1} + \rho_I^{-1+b'_I}\rho_+^{1+b_+}\Hb^\infty,
  \]
  thus the second summand of~\eqref{EqBgipDefFnModNorm} is $\gtrsim\rho_I^{-1+\beta}$ in $\rho_I\leq\half\eta$ and $\rho_+$ small. The first and third terms on the other hand are dominated by this, as they are bounded by $\rho_I^{-1+b'_I}$ and $\rho_I^{-1+2\beta}$, respectively. In $\half\eta<\rho_I<2\eta$ and $\rho_+$ small, the parenthesis in~\eqref{EqBgipDefFnModNorm} is positive, the second summand being bounded by $\rho_I^{-1+\beta}$; the prefactor being positive due to $p_\beta'\geq 0$, the claimed positivity thus follows from Lemma~\ref{LemmaBgipDefFn}.
\end{proof}

We also note that $\rho_+\pa_{\rho_+}$, which is well-defined as a b-vector field at $I^+$ and equals the scaling vector field in $(I^+)^\circ$, is past timelike in $(I^+)^\circ$. Let
\[
  U = \{ \wt\rho_+<\delta \} \subset M
\]
denote the neighborhood of $I^+\subset M$ on which we will formulate our energy estimate. Near $\scri^+$, we need to exploit the weak null structure as in \S\ref{SsBgscri}; thus, let
\begin{equation}
\label{EqBgipChi}
  \chi \in \CIc([0,\infty)_{\rho_I}),\ \ 
  \chi\equiv 1\ \ \tn{near}\ \rho_I=0,
\end{equation}
denote a smooth function on $U$ localizing in a neighborhood of $\scri^+$ where the projections $\pi_0$ etc.\ are defined, see the discussion around Definition~\ref{DefEinFSubb}.

\begin{prop}
\label{PropBgip}
  For weights $b'_I,b_I,b_+,a'_I,a_I$ as in Theorem~\usref{ThmBg}, there exists $a_+\in\R$ such that for all $h\in\cX^\infty$ which are small in $\cX^3$, the following holds: Let $f\in\rho_I^{a_I-1}\rho_+^{a_+}\Hb^{k-1}(U)$, $\chi\pi_0 f\in\rho_I^{a'_I-1}\rho_+^{a_+}\Hb^{k-1}(U)$, and suppose $f$ vanishes in $\wt\rho_+>\half\delta$. Let $u$ denote the unique forward solution of $L_h u=f$. Then
  \begin{equation}
  \label{EqBgip}
  \begin{split}
    &\|u\|_{\rho_I^{a_I}\rho_+^{a_+}\Hscrib^{1,k-1}(U)} + \|\chi\pi_0 u\|_{\rho_I^{a'_I}\rho_+^{a_+}\Hscrib^{1,k-1}(U)} \\
    &\qquad\qquad \leq C\Bigl(\|f\|_{\rho_I^{a_I-1}\rho_+^{a_+}\Hb^{k-1}(U)} + \|\chi\pi_0 f\|_{\rho_I^{a'_I-1}\rho_+^{a_+}\Hb^{k-1}(U)}\Bigr).
  \end{split}
  \end{equation}
\end{prop}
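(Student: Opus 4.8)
The plan is to prove~\eqref{EqBgip} by a global weighted positive commutator estimate on $U=\{\wt\rho_+<\delta\}$, matching the argument of Proposition~\ref{PropBgscri} in a collar of $\scri^+$ to a crude but fully global-in-$i^+$ energy estimate on $(i^+)^\circ$. As in \S\ref{SsBgscri}, Propositions~\ref{PropBgi0} and~\ref{PropBgscri} already give quantitative control of $u$ on $M\setminus i^+$; cutting off $u$ near $i^+$ and absorbing the cutoff commutator (supported where $u$ is a priori controlled, cf.\ Lemma~\ref{LemmaEinNi0p}) into $f$, matters reduce to estimating the forward solution of $L_h u=f$ in $U$, with $u$ vanishing near the incoming boundary $\{\wt\rho_+=\delta\}$ and no a priori term. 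Throughout, the weight is $w=\rho_I^{-a_I}\rho_+^{-a_+}$, resp.\ $w=\rho_I^{-a'_I}\rho_+^{-a_+}$ on the $\pi_0$-components near $\scri^+$; since $\wt\rho_+^{a}\Hb^k(M)=\rho_+^{a}\Hb^k(M)$ we may freely interchange $\rho_+$ and $\wt\rho_+$, the latter being the \emph{timelike} (not quite smooth) defining function from~\eqref{EqBgipDefFnMod}.

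First I would fix a future timelike b-vector field $V\in\Vb(M)$ on $U$ which near the corner $\scri^+\cap i^+$ — using the defining functions~\eqref{EqBgipNullDefFn} and the form~\eqref{EqBgipDualMetric} of $G_\bop$ — coincides with the multiplier of Lemmas~\ref{LemmaBgscriKCurrent} and~\ref{LemmaBgscriImprovedK}, and near $(i^+)^\circ$ is a timelike extension of $-\rho_+\pa_{\rho_+}$; note $V$ is tangent to $\scri^+$ and $i^+$. Setting $W=\one_U w^2 V$ and evaluating $2\Re\la w f,\one_U w V u\ra=\la\cC u,u\ra$ with $\cC=L_h^*W+W^*L_h$, the usual computation — carried out in local trivialisations and patched as in Proposition~\ref{PropBgscri}, using Lemmas~\ref{LemmaEinNScalarBox} and~\ref{LemmaCptScriConnDKD} for the bundle corrections — expresses $\cC$ as $d^*K_W d$ (with $K_W$ as in~\eqref{EqBgi0KCurrent}) modulo errors of lower differential order or with extra $\rho_I$-decay, plus the subprincipal contributions of $L_h$ from Lemma~\ref{LemmaEinNscri}. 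Near $\scri^+$, $K_W$ is controlled via Lemmas~\ref{LemmaBgscriKCurrent} and~\ref{LemmaBgscriImprovedK} — with the harmless factor $\rho_+^{-2 a_+}$ carried along — so the constraint-damping sign in~\eqref{EqEinNscriPi0}, \eqref{EqBgscriLpi0} yields the stronger weight $a'_I$ on $\chi\pi_0 u$ exactly as in~\eqref{EqBgscri1almost}. Away from $\scri^+$, differentiation of the weight $\rho_+^{-2 a_+}$ produces, via~\eqref{EqBgi0KCurrentRule}, the term $-2 a_+ w^2\,T(\nabla\wt\rho_+/\wt\rho_+,V)$, which is negative definite for $a_+<0$ since $d\wt\rho_+/\wt\rho_+$ is past timelike (Lemma~\ref{LemmaBgipDefFn} and the following lemma) and $V$ is future timelike; hence for $a_+<0$ \emph{sufficiently negative} it dominates all remaining indefinite contributions — the $\rho_I$-weight derivatives, the errors from $\wt L_h$ and Lemma~\ref{LemmaEinNi0p}, and the absence of refined structure on $(i^+)^\circ$, where the spherical Laplacian entering the normal operator of $L_h$ at $i^+$ (cf.\ \S\ref{SsEinN}) supplies the ``spatial'' positivity. (Here $a_+$ becomes non-explicit; an explicit choice is obtained in \S\ref{SsBgExpl}.) The only boundary contribution is at $\{\wt\rho_+=\delta\}$ and has the favourable sign since $V$ points outward there; there is none at $i^+$. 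Control of $\|w u\|_{L^2_\bop}$ is recovered by integrating along $V$ as in~\eqref{EqBgi0UItself}, \eqref{EqBgscriCommUItself}, using $\dv_{g_\bop}W<0$ near $i^+$ — Remark~\ref{RmkBgscriDivSign} near $\scri^+$, and $a_+\ll 0$ elsewhere.

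In $\supp\chi$ the argument is then verbatim that of Proposition~\ref{PropBgscri}: expand $L_h u=f$ along the hierarchy~\eqref{EqBgscriEq0}, prove the analogues of~\eqref{EqBgscri1}, \eqref{EqBgscri11c}, \eqref{EqBgscri11} now carrying the extra $\rho_+^{a_+}$ weight, and sum, the coupling terms being small thanks to their extra $\rho_I$-decay; in the transition region $\{0<\chi<1\}$, which is bounded away from $\scri^+$, only the crude $i^+$ estimate of the previous paragraph is needed, and the two are glued by a partition of unity in $\rho_I$. For $k\geq 2$ I would commute $L_h$ through a spanning set of $\Vb(M)$: near $\scri^+$ through $\rho_0\pa_{\rho_0}$, $\rho_I\pa_{\rho_I}$ and the carefully chosen bundle rotation operators $\Omega_i$ of Proposition~\ref{PropBgscri}, whose commutators with the leading part either gain a factor $\rho_I$ or reproduce $L_h^0$; near $(i^+)^\circ$ through $\rho_+\pa_{\rho_+}$ (automatic modulo faster-decaying terms) together with b-elliptic regularity for $C(\rho_+ D_{\rho_+})^2-L_h$, $C>0$ large — b-elliptic precisely because $\rho_+\pa_{\rho_+}$ is timelike in $(i^+)^\circ$ — to recover the remaining b-derivatives. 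Induction on $k$ yields~\eqref{EqBgip}, and hence, together with Propositions~\ref{PropBgi0} and~\ref{PropBgscri}, Theorem~\ref{ThmBg}.

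The main obstacle is exactly the global-in-$i^+$ nature of the estimate: unlike at $i^0\cap\scri^+$ one cannot localise near the corner $\scri^+\cap i^+$, since limits of null-geodesics tending there sweep through points of $i^+$ far from $\scri^+$; so the refined weak-null and constraint-damping structure, available only in a collar of $\scri^+$, must be married to a purely energy-theoretic argument valid on all of $(i^+)^\circ$, including the elliptic-regularity step for higher derivatives. The price for not analysing the Hamilton flow of $L_h$ at $i^+$ is the non-explicit, sufficiently negative weight $a_+$ needed to absorb every indefinite term in the region away from $\scri^+$.
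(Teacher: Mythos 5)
Your proposal follows essentially the same route as the paper's proof: a single positive-commutator estimate on $U$ with a commutant glued by the cutoff $\chi$ between the block-structured (constraint-damping/weak-null, with the stronger weight on the $\pi_0$ block) multiplier near $\scri^+$ and a timelike multiplier near $(i^+)^\circ$, globalized by putting the weight in the timelike function $\wt\rho_+$ with $a_+\ll 0$ so that the resulting energy--momentum term $T(W_0,-\nabla\wt\rho_+/\wt\rho_+)$ dominates all indefinite contributions away from $\scri^+$, followed by $L^2$ control via integration along the multiplier and the same higher-regularity scheme (commuting $\rho_+\pa_{\rho_+}$, ellipticity of $C(\rho_+D_{\rho_+})^2-L_h$ in $(i^+)^\circ$, and the $\rho_I\pa_{\rho_I}$, $\Omega_i$ commutators with absorption near $\scri^+$). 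Apart from cosmetic points — the paper splits $a_+=a_+^0+a_+^1$ and places the small factor $\eta$ on the $\pi_{1 1}$ block explicitly in the commutant~\eqref{EqBgipCommutant}, and $\rho_0\pa_{\rho_0}$ plays no role near $i^+$ — this is the argument given in the paper.
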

\begin{proof}
  We first consider $k=1$. Near $\pa I^+$, we will make use of the vector field $V_0'=(1-c_V)\rho_I\pa_{\rho_I}-\ringrho_+\pa_{\ringrho_+}$, $c_V>0$ small, analogously to Lemma~\ref{LemmaBgscriKCurrent}; away from $\pa I^+$, the vector field $V_0'':=-\nabla\rho_+/\rho_+$ is future timelike. Fix $a^0_+\leq-\half$ and consider the vector field $V_I:=\rho_I^{-2 a_I}\ringrho_+^{-2 a^0_+}V_0'$, then
  \begin{align*}
    K_{V_I} &\in \rho_I^{-2 a_I-1}\ringrho_+^{-2 a^0_+}\Bigl(2 c_V(a^0_+-a_I)(\rho_I\pa_{\rho_I})^2 + 2 a_I(\rho_I\pa_{\rho_I}-\ringrho_+\pa_{\ringrho_+})^2 \\
      &\hspace{14em} + \bigl(\half(1-c_V)+a^0_+-a_I + c_V a_I\bigr)\rho_I\slG \Bigr) \\
      &\quad + \rho_I^{-2 a_I}\rho_+^{-2 a^0_+}(\CI+\rho_I^{-1+b'_I}\rho_+^{1+b_+})(M;S^2\,\Tbeta M+\rho_I\,S^2\,\Tb M)
  \end{align*}
  is $\lesssim -\rho_I^{-2 a_I-1}\rho_+^{-2 a^0_+}$ as a quadratic form, and $\dv_{g_\bop} V_I\lesssim -\rho_I^{-2 a_I}\rho_+^{-2 a^0_+}$. Analogously to Lemma~\ref{LemmaBgscriImprovedK}, if $V_I'=\rho_I^{-2 a'_I}\ringrho_+^{-2 a_+^0}V_0'$, then $K_{V_I'}-2\gamma V_I'\otimes_s\rho^{-1}\pa_1$ is negative definite near $\pa I^+$ for $c_V>0$ sufficiently small.

  To explain the idea for obtaining a \emph{global} (near $I^+$) negative commutator, consider the timelike vector field $W_0:=\chi V_I+(1-\chi)\rho_+^{-2 a_+^0}V''_0$, and let $W=\wt\rho_+^{-2 a_+^1}W_0$; then formula~\eqref{EqBgi0KCurrentRule} gives
  \begin{equation}
  \label{EqBgi0Weight}
    K_W = \wt\rho_+^{-2 a_+^1}K_{W_0} + 2 a_+^1\wt\rho_+^{-2 a_+^1} T(W_0,-\tfrac{\nabla\wt\rho_+}{\wt\rho_+}).
  \end{equation}
  Letting
  \[
    a_+:=a_+^0+a_+^1,
  \]
  the first term gives control in $\rho_I^{a_I}\rho_+^{a_+}\Hscri^1$ near $\scri^+$ in a positive commutator argument. On the other hand, its size is bounded by a fixed constant times $\rho_+^{-2 a_+}$ in $\rho_I\geq\eps>0$; but there, $T(W_0,-\tfrac{d\wt\rho_+}{\wt\rho_+})\gtrsim \rho_+^{-2 a_+^0}$ in the sense of quadratic forms on $\Tb^*M$ since $W_0$ and $-d\wt\rho_+/\wt\rho_+$ are both future timelike. Therefore, choosing $a_+^1$ large and negative, we obtain
  \[
    K_W \leq -C\rho_I^{-2 a_I-1}\rho_+^{-2 a_+}K_W',
  \]
  where $K_W'$ is positive definite on $\Tb^*M$ in $\rho_I\geq\eps>0$, while near $\scri^+$, we have $K_W'=K_1+\rho_I K_2$, with $K_1$, resp.\ $K_2$, positive definite on $\Tb^*M$, resp.\ $(\Tbeta M)^\perp$. This gives global (near $I^+$) control in $\rho_I^{a_I}\rho_+^{a_+}\Hscri^1$.
  
  We now apply this discussion to the situation at hand. For brevity, let us use the same symbol to denote a b-vector field in ${}^0\cM$ and an arbitrary but fixed representative in ${}^0\cM_{\beta^*E}$ according to Lemma~\ref{LemmaCptScriModule}\eqref{ItCptScriModuleBdl}, similarly for b-vector fields with weights (such as $V_I$ and $V'_I$); the bundle $E\to\ol{\R^4}$ will be clear from the context. For $a^1_+\in\R$ chosen later, consider then the operator $W$ acting on sections of $\beta^*S^2$,
  \begin{equation}
  \label{EqBgipCommutant}
    W:=\wt\rho_+^{-2 a_+^1}W_0,\ \ W_0:=\chi\bigl(\pi_0 V'_I\pi_0 + \pi_{1 1}^c V_I\pi_{1 1}^c + \eta \pi_{1 1} V_I\pi_{1 1}\bigr) + (1-\chi)\rho_+^{-2 a_+^0}V''_0,
  \end{equation}
  where $\eta>0$ will be taken small, as in the discussion after~\eqref{EqBgscri11}. (Since $u$ vanishes in $\rho_+>\half\delta$, we do not need to include a cutoff term here.) `Integrating' along $W$ via a commutator calculation for $2\Re\la W u,\rho_I^{-1} u\ra$ as in~\eqref{EqBgscriCommUItself} gives control on $u$ in the function space appearing in~\eqref{EqBgi0Est} in terms of $W u$. The evaluation of the commutator $2\Re\la L_h u, W u\ra=\la\cC u,u\ra$, $\cC=[L_h,W]+(W+W^*)L_h+(L_h^*-L_h)W$, then combines the three separate calculations for the equations~\eqref{EqBgscriEq0}--\eqref{EqBgscriEq11} into one: near $\scri^+$, one writes $L_h$ in block form according to the bundle decomposition $\beta^*S^2=K_0\oplus K_{1 1}^c\oplus K_{1 1}$, with the diagonal elements $\pi_0 L_h\pi_0$ etc.\ giving rise to the main terms of the commutator, while the off-diagonal terms can be estimated using Cauchy--Schwarz and absorbed into the main terms due to the weak null structure, as explained in detail in the proof of Proposition~\ref{PropBgscri}. Away from $\scri^+$, all error terms can be absorbed in the main term, corresponding to the second term in~\eqref{EqBgi0Weight} upon choosing $a_+^1<0$ negative enough. This proves the proposition for $k=1$.

  Suppose now we have proved~\eqref{EqBgip} for some $k\geq 1$. First, the b-operator $L_h$ \emph{automatically} commutes with $\rho_+\pa_{\rho_+}$ to leading order at $I^+$; concretely, Lemma~\ref{LemmaEinNscri} gives
  \[
    [L_h,\rho_+\pa_{\rho_+}] \in \rho_I^{-1+b'_I}\rho_+^{1+b_+}\cM_{\beta^*S^2}^2 + (\rho_+\,\CI+\rho_I^{-0}\rho_+^{1+b_+}\Hb^\infty)\Diffb^2.
  \]
  Here, by an abuse of notation, $\rho_+\pa_{\rho_+}\in {}^0\cM_{\beta^*S^2}$ is defined by first extending the vector field $\rho_+\pa_{\rho_+}\in\CI(I^+,\Tb_{I^+}M)$ to an element of ${}^0\cM_{\ul\C}$, and then taking a representative of the image space in Lemma~\ref{LemmaCptScriModule}\eqref{ItCptScriModuleBdl}; for this particular vector field, such a representative is in fact well-defined modulo $\rho_I\rho_+\,\CI(M;\End(\beta^*S^2))$, the extra vanishing at $\rho_+$ being due to the special (b-normal) nature of $\rho_+\pa_{\rho_+}$.

  Therefore, commuting $\rho_+\pa_{\rho_+}$ through the equation $L_h u=f$, we have the estimate
  \begin{equation}
  \label{EqBgipHigher1}
  \begin{split}
    &\|\rho_+\pa_{\rho_+}u\|_{\rho_I^{a_I}\rho_+^{a_+}\Hscrib^{1,k-1}} + \|\chi\pi_0\rho_+\pa_{\rho_+}u\|_{\rho_I^{a'_I}\rho_+^{a_+}\Hscrib^{1,k-1}} \\
      &\qquad \leq C\Bigl( \|f\|_{\rho_I^{a_I-1}\rho_+^{a_+}\Hb^k} + \|\chi\pi_0 f\|_{\rho_I^{a'_I-1}\rho_+^{a_+}\Hb^k} + \|u\|_{\rho_I^{a_I-\delta}\rho_+^{a_+-(1+b_+)}\Hscrib^{1,k}}\Bigr)
  \end{split}
  \end{equation}
  by the inductive hypothesis, where we used $a_I-\delta>a'_I-b'_I$ for $\delta>0$ small to bound the forcing term $[L_h,\rho_+\pa_{\rho_+}]u$ by the third term on the right; see the related discussion around~\eqref{EqBgscriPi0LPi0c}.
  
  Second, the timelike character of $\rho_+\pa_{\rho_+}$ at $(I^+)^\circ$ for $\eps>0$ implies that $C(\rho_+ D_{\rho_+})^2-L_h$ is \emph{elliptic} in $\rho_I\geq\eps$ for large $C$ (depending on $\eps$); therefore, letting $\chi_j\in\CIc(U\setminus\scri^+)$, $j=1,2$, denote cutoffs with $\chi_1\equiv 1$ on $\supp(1-\chi)$ and $\chi_2\equiv 1$ on $\supp\chi_1$, we have an elliptic estimate away from $\scri^+$,
  \begin{equation}
  \label{EqBgipEll}
    \| \chi_1 u \|_{\rho_+^{a_+}\Hb^{k+1}} \leq C\bigl( \| \chi_2\rho_+\pa_{\rho_+}u \|_{\rho_+^{a_+}\Hb^k} + \| \chi_2 u \|_{\rho_+^{a_+}\Hb^k} + \| \chi_2 f \|_{\rho_+^{a_+}\Hb^{k-1}}\bigr),
  \end{equation}
  for $u$ supported in $\rho_+\leq\half\delta$. Near $\scri^+$ on the other hand, we have the symmetries of null infinity at our disposal, encoded by the operators $\rho_I\pa_{\rho_I}$ and the spherical derivatives $\Omega_j$, see the discussion around~\eqref{EqBgscriRotComm}. Let $\wt\chi\in\CI(U)$ be identically $1$ on $\supp\chi$, and supported close to $\scri^+$. Defining the set of (cut-off) commutators $\{\chi G_j\}:=\{\chi\rho_I\pa_{\rho_I},\ \chi\Omega_1,\ \chi\Omega_2,\ \chi\Omega_3\}$ which together with $\rho_+\pa_{\rho_+}$ spans $\Vb(M)$ near $\scri^+$, and recalling the commutation relations~\eqref{EqBgscriComm}, we find
  \begin{equation}
  \label{EqBgipHigher2}
  \begin{split}
    &\|\chi G_j u\|_{\rho_I^{a_I}\rho_+^{a_+}\Hscrib^{1,k-1}} + \|\chi\pi_0 G_j u\|_{\rho_I^{a'_I}\rho_+^{a_+}\Hscrib^{1,k-1}} \\
    &\qquad \leq C\Bigl(\|f\|_{\rho_I^{a_I-1}\rho_+^{a_+}\Hb^k} + \|\chi\pi_0 f\|_{\rho_I^{a'_I-1}\rho_+^{a_+}\Hb^k} \\
    &\qquad\qquad\qquad + \sum_\ell\|\wt\chi G_\ell u\|_{\rho_I^{a_I-\delta}\rho_+^{a_+}\Hscrib^{1,k-1}} + \|\wt\chi\rho_+\pa_{\rho_+}u\|_{\rho_I^{a_I-\delta}\rho_+^{a_+}\Hscrib^{1,k-1}} \Bigr).
  \end{split}
  \end{equation}
  But for any $\eta>0$, we have the estimate
  \[
    \|\wt\chi G_\ell u\|_{\rho_I^{a_I-\delta}\rho_+^{a_+}\Hscrib^{1,k-1}} \leq \eta\|\chi G_\ell u\|_{\rho_I^{a_I}\rho_+^{a_+}\Hscrib^{1,k-1}} + C_\eta\|\chi_1 u\|_{\rho_+^{a_+}\Hb^{k+1}},
  \]
  and the second term can in turn be estimated using~\eqref{EqBgipEll}. Summing the estimate~\eqref{EqBgipHigher2} over $j$ and fixing $\eta>0$ sufficiently small, we can thus absorb the terms involving $\wt\chi G_\ell u$ into the left hand side, getting control by the norm of $f$, plus a control term $C\|\rho_+\pa_{\rho_+}u\|_{\rho_+^{a_+}\Hb^k}$. Adding to this estimate $2 C$ times~\eqref{EqBgipHigher1}, this control term can be absorbed in the left hand side of~\eqref{EqBgipHigher1}. This gives control of $u$ as in the left hand side of~\eqref{EqBgip} with $k$ replaced by $k+1$, but with an extra term on the right coming from the last term in~\eqref{EqBgipHigher1}; however, this term has a weaker weight at $I^+$, $\rho_+^{a_+-(1-b_+)}\gg\rho_+^{a_+}$, hence can be absorbed. This gives~\eqref{EqBgip} for $k$ replaced by $k+1$.
\end{proof}

Combining the estimate~\eqref{EqBgCompact} in compact subsets of $M^\circ$ with Proposition~\ref{PropBgi0} near $(I^0)^\circ$, Proposition~\ref{PropBgscri} near $\scri^+\setminus(\scri^+\cap I^+)$, and Proposition~\ref{PropBgip} near $I^+$ proves Theorem~\ref{ThmBg}.

\subsection{Explicit weights for the background estimate}
\label{SsBgExpl}

We sketch the calculations needed to obtain explicit values for the weights in the background estimate. More precisely, we prove the following slight modification of Theorem~\ref{ThmBg}:

\begin{thm}
\label{ThmBgExpl}
  Let $a_+=-\tfrac32$. There exists an $\eps>0$ such that for $a_I<\bar a_I<a'_I<\min(0,a_0)$ with $|a_I|$, $|a'_I|$, $|\bar a_I|$, $b_I$, $b'_I<\gamma<\eps$ subject to the conditions in Definition~\ref{DefEinF}, as well for $h\in\cX^{\infty;b_0,b_I,b'_I,b_+}$ with $\|h\|_{\cX^3}<\eps$, the unique global solution of the linear wave equation
  \[
    L_h u = f, \ \ (u,\pa_\nu u)|_\Sigma = (u_0,u_1)
  \]
  satisfies the estimate
  \begin{equation}
  \label{EqBgExpl}
  \begin{split}
    &\|u\|_{\rho_0^{a_0}\rho_I^{a_I}\rho_+^{a_+}\Hscrib^{1,k-1}} + \|\pi_{1 1}^c u\|_{\rho_0^{a_0}\rho_I^{\bar a_I}\rho_+^{a_+}\Hscrib^{1,k-1}} + \|\pi_0 u\|_{\rho_0^{a_0}\rho_I^{a'_I}\rho_+^{a_+}\Hscrib^{1,k-1}} \\
    &\qquad \leq C\Bigl(\|u_0\|_{\rho_0^{a_0}\Hb^k}+\|u_1\|_{\rho_0^{a_0}\Hb^{k-1}} \\
    &\qquad\qquad\qquad + \|f\|_{\Hb^{k-1;a_0,a_I-1,a_+}} + \|\pi_{1 1}^c f\|_{\Hb^{k-1;a_0,\bar a_I-1,a_+}} + \|\pi_0 f\|_{\Hb^{k-1;a_0,a'_I-1,a_+}}\Bigr).
  \end{split}
  \end{equation}
\end{thm}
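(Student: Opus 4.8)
The plan is to revisit the proof of Theorem~\ref{ThmBg}, specifically the energy estimates in Propositions~\ref{PropBgi0}, \ref{PropBgscri}, and \ref{PropBgip}, and track the dependence of the weight $a_+$ on the choices made. The only place where $a_+$ was produced non-explicitly is in \S\ref{SsBgip}, in the construction of the global-in-$i^+$ commutant $W=\wt\rho_+^{-2 a_+^1}W_0$ in~\eqref{EqBgipCommutant}: there $a_+^1\ll 0$ was taken large and negative merely to dominate error terms via the positivity of $T(W_0,-\tfrac{d\wt\rho_+}{\wt\rho_+})$ in $\rho_I\geq\eps$. To make this explicit, I would identify the model operator $L_0$ at $i^+$ with (a conformal rescaling of) the conformal wave operator, or rather its tensorial analogue, on \emph{static de~Sitter space}, as already flagged in the footnote after~\eqref{EqISysLinX} and in Remark~\ref{RmkCptABdfChar}; the relevant computation $|f_0^{-1}d f_0|^2\equiv 1$ for $f_0=t/(t^2-r^2)$ appears already in the proof of Lemma~\ref{LemmaBgipDefFn}. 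With the de~Sitter identification, the natural energy estimate at the bottom of the spectrum, using $\rho_+\pa_{\rho_+}$ as the timelike multiplier, yields a sharp weight: $\rho_+^{-3/2}$ corresponds to the $L^2$-threshold for the scaling vector field in $(i^+)^\circ$, i.e.\ $a_+=-\tfrac32$ is exactly the borderline exponent for which the zeroth-order terms of the de~Sitter model operator still admit a positive commutator.

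First I would record the explicit form of the normal operator $N(L_0)$ at $i^+$ in the static de~Sitter coordinates, using Lemma~\ref{LemmaEinNi0p} (so $N(L_h)=N(L_0)$) and Lemma~\ref{LemmaEinNgmMink} to pass to the flat Minkowski presentation. The key quantitative input is that for the conformal wave operator on $(3+1)$-dimensional static de~Sitter space, acting on the scattering symmetric $2$-tensor bundle restricted to $i^+$, the energy estimate with multiplier a timelike dilation has threshold weight $-\tfrac32$ (this is $n/2$ for $n=3$ spatial dimensions, shifted by the conformal weight); the skew-symmetric, constraint-damping part of $L_h$ given by the $A_h$, $B_h$ terms in Lemma~\ref{LemmaEinNscri} with $\gamma_1,\gamma_2>0$ small only perturbs the relevant quadratic form by $O(\gamma_1+\gamma_2)$, hence for $\gamma_1,\gamma_2<\eps$ small the threshold survives with $a_+=-\tfrac32$. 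Concretely, I would redo the computation~\eqref{EqBgi0Weight}: the term $2 a_+^1\wt\rho_+^{-2 a_+^1}T(W_0,-\tfrac{\nabla\wt\rho_+}{\wt\rho_+})$ must dominate the contribution of the zeroth-order part of $L_h$ (the $\sR_g$ and $\tdel^*(\sC_g-\sY_g)$ terms in~\eqref{EqEinNOp}) to $K_W$; on de~Sitter this domination holds precisely when $a_+^0+a_+^1<-\tfrac32$, and one can take $a_+^1$ explicitly once $a_+^0$ (tied to the weights $a_I,a'_I$ at $\scri^+$, which are now required to satisfy $|a_I|,|a'_I|<\gamma_1$) is fixed. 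The refined splitting with the intermediate weight $\bar a_I$ on $\pi_{1 1}^c u$ is handled exactly as in Proposition~\ref{PropBgscri}, now carrying three distinct $\rho_I$-weights $a_I<\bar a_I<a'_I$ through the hierarchy~\eqref{EqBgscriEq0}--\eqref{EqBgscriEq11}; the conditions $a_I<\bar a_I<a'_I<\min(0,a_0)$ and the smallness of $|a_I|,|\bar a_I|,|a'_I|,b_I,b'_I$ relative to $\gamma_1,\gamma_2$ ensure all coupling error terms (governed by the $(4,1)$ and off-diagonal entries of $A_h,B_h$) are absorbable.

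The main obstacle is making the de~Sitter identification sufficiently precise at the level of the \emph{bundle}: $L_h$ acts on $\beta^*S^2\,\Tsc^*\ol{\R^4}$, and one must check that the fiber-metric and zeroth-order structure of the normal operator $N(L_0)$ at $i^+$ is, up to the conformal factor and up to the $O(\gamma_i)$ constraint-damping correction, the conformal wave operator on the symmetric scattering $2$-tensor bundle over static de~Sitter space — for which the $L^2$-threshold weight $-\tfrac32$ and the positivity of the dilation $K$-current below that threshold are known (cf.\ \cite{HadfieldSymm} for the relation between this bundle and the intrinsic de~Sitter $2$-tensor bundle). Once this is in place, the positive-commutator estimate near $i^+$ goes through verbatim as in Proposition~\ref{PropBgip}, with $a_+^1$ now an explicit (large negative) constant rather than an existentially quantified one, and the higher-regularity induction via commutation with $\rho_+\pa_{\rho_+}$ and the spherical $\Omega_j$ is unchanged. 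Assembling the pieces — the compact estimate~\eqref{EqBgCompact}, Proposition~\ref{PropBgi0} near $(i^0)^\circ$, the three-weight version of Proposition~\ref{PropBgscri} near $\scri^+\setminus i^+$, and the explicit-weight version of Proposition~\ref{PropBgip} near $i^+$ — yields~\eqref{EqBgExpl}.
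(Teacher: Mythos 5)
Your overall architecture is the same as the paper's: you correctly locate the only non-explicit choice in the proof of Theorem~\ref{ThmBg} (the exponent $a_+^1$ in the commutant~\eqref{EqBgipCommutant}), you propose to fix it by identifying the model at $i^+$ with a de~Sitter problem, and you handle $\scri^+$ by carrying the three weights $a_I<\bar a_I<a'_I$ through the hierarchy~\eqref{EqBgscriEq0}--\eqref{EqBgscriEq11} so that the coupling of $\pi_{1 1}^c u$ into the $\pi_{1 1}$ equation comes with a small prefactor (this is exactly why the intermediate weight is introduced in the paper: it removes the small factor $\eta$ from the block commutant, which is what later allows the commutant to be treated as nearly principally scalar).

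However, there is a genuine gap at the decisive step. The entire content of Theorem~\ref{ThmBgExpl} beyond Theorem~\ref{ThmBg} is the verification that the specific weight $a_+=-\tfrac32$ admits a positive commutator near \emph{all} of $i^+$, and you do not prove this: you defer it to a ``known'' statement that the conformal wave operator on the symmetric scattering $2$-tensor bundle over static de~Sitter space has $L^2$-threshold $-\tfrac32$ with positivity of the dilation $K$-current, citing \cite{HadfieldSymm}. That reference concerns resonances of the Laplacian on symmetric tensors on asymptotically hyperbolic spaces and contains no such energy/$K$-current statement; moreover the tensorial de~Sitter operator is the wrong (and harder) object here. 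The efficient observation is that for $\gamma_1=\gamma_2=0$, $h=0$, $m=0$ one has $2L_h=\Box_{\ul g}$ acting \emph{component-wise} in the coordinate-differential trivialization of $S^2\,T^*\R^4$, so the model at $i^+$ is the \emph{scalar} operator $\Box_{g_\dS}-2$ acting diagonally on $\ul\R^{10}$ (cf.\ \S\ref{SsItip}); no bundle-level de~Sitter identification is needed. What remains is then an explicit, elementary computation: in the coordinates $(\rho_+,Z)=(t/(t^2-r^2),x/t)$, with the global multiplier $W=\rho_I^{-2 a_I}\rho_+^{-2 a_+}V_0$, $V_0=-(1+R^2)\rho_+\pa_{\rho_+}-(1-c_V)(1-R^2)R\pa_R$ (note that $\rho_+\pa_{\rho_+}$ alone degenerates at $\scri^+$ and is not an adequate multiplier up to $\pa i^+$), one checks by hand that $K_W$ is negative definite and $\dv_{g_\dS}W$ has the favorable sign at $a_+=-\tfrac32$, $a_I<0$ small, $c_V>0$ small; the non-scalar terms (constraint damping of size $O(\gamma_1+\gamma_2)$ and the $h$-dependent entries of $A_h,B_h$) are then absorbed because the modified commutant is close to scalar when all weights and $\|h\|_{\cX^3}$ are small. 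Without this computation (or an equivalent), nothing in your argument singles out $-\tfrac32$; indeed your assertion that the domination holds ``precisely when $a_+^0+a_+^1<-\tfrac32$'' would, read literally, exclude the value $a_+=-\tfrac32$ demanded by the theorem, and the heuristic ``$n/2$ shifted by the conformal weight'' is not a proof that this value works for the operator at hand.
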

\begin{proof}
  The usage of an intermediate weight $\bar a_I\in(a_I,a'_I)$ allows for a small but useful modification of the argument following~\eqref{EqBgscri11}: namely, in the notation of that proof, we are presently estimating $u_{1 1}$ with weight $\rho_I^{a_I}$, while the term $u_{1 1}^c$ coupling into the equation for $u_{1 1}$ via $\pi_{1 1}L_h\pi_{1 1}^c$ is estimated with weight $\rho_I^{\bar a_I}\ll\rho_I^{a_I}$, hence automatically comes with a small prefactor if we work in a sufficiently small neighborhood of $\scri^+$. Correspondingly, in the proof of Proposition~\ref{PropBgip}, we would replace the third inner summand in~\eqref{EqBgipCommutant} by $\pi_{1 1}\bar V_I\pi_{1 1}$, with $\bar V_I=\rho_I^{-2\bar a_I}\ringrho_+^{-2 a_+^0}V'_0$ in order to obtain~\eqref{EqBgExpl} (with $a_+\ll 0$ not explicit at this point yet).

  The only part of the proof of Theorem~\ref{ThmBg} in which we did not get explicit control on the weights is the energy estimate near $I^+$. In order to obtain the explicit weights there, we note that for $\gamma=0$, $h=0$, and Schwarzschild mass $m=0$, we simply have $2 L_h=\Box_{\ul g}$, the wave operator of the Minkowski metric $\ul g=dt^2-dx^2$, which acts component-wise on $S^2 T^*\R^4$ in the trivialization given by coordinate differentials. Recalling from~\eqref{EqCptAExpl} that ${}^0\!M$ denotes the manifold with corners constructed in \S\ref{SsCptA} for $m=0$, we shall prove that the solution of the \emph{scalar wave equation} $t^3\Box_{\ul g}t^{-1}u=f$, with $f\in\rho_I^{a_I-1}\rho_+^{a_+}L^2_\bop$ supported in $\rho_+<1$, satisfies the estimate
  \begin{equation}
  \label{EqBgExplMink}
    \|u\|_{\rho_I^{a_I}\rho_+^{a_+}\Hscri^1} \lesssim \|f\|_{\rho_I^{a_I-1}\rho_+^{a_+}L^2_\bop}
  \end{equation}
  for $a_+=-\tfrac32$ and $a_I<0$ small, using a vector field multiplier argument; here, $\rho_I={}^0\rho_I$ and $\rho_+={}^0\rho_+$. But then, if the weights $a_I,a'_I,\bar a_I$ etc.\ are very close to one another, the nonscalar commutant used in~\eqref{EqBgipCommutant}, modified as above, is very close to being principally scalar away from $\scri^+$; correspondingly, a slight modification of our arguments below for the Minkowski case~\eqref{EqBgExplMink} yield the estimate~\eqref{EqBgExpl} for $k=1$. Higher b-regularity follows as in the proof of Proposition~\ref{PropBgip}.

  In order to prove the estimate~\eqref{EqBgExplMink}, we introduce explicit coordinates near the temporal face $I^+\subset M$ within the blow-up of compactified Minkowski space. First of all, the calculations in~\ref{SsCoMink} imply
  \begin{equation}
  \label{EqBgExplDS}
    t^3\Box_{\ul g}t^{-1}=\Box_{g_\dS}-2,
  \end{equation}
  where
  \begin{equation}
  \label{EqBgExplDStx}
    g_\dS=t^{-2}(dt^2-dx^2)
  \end{equation}
  is the de~Sitter metric; notice though that we are interested in $t\gg 1$. Thus, consider the isometry
  \begin{equation}
  \label{EqBgExplDSIso}
    (t,x)\mapsto (\hat\tau,\hat x)=\frac{1}{t^2-r^2}(t,x)\in[0,\infty)_{\hat\tau}\times\R^3_{\hat x}
  \end{equation}
  of $g_\dS$, defined in $t>r=|x|$: it maps $I^+$ to $(0,0)$ and $\scri^+$ to $\{\hat\tau=|\hat x|\}$, see Figure~\ref{FigBgExplConformal}. (The map~\eqref{EqBgExplDSIso} is the change of coordinates between the upper half space models of de~Sitter space associated with $q$ on the one hand and its antipodal point on the future conformal boundary of de~Sitter space on the other hand; see \cite[\S6.1]{HintzZworskiHypObs} for the relevant formulas.)

  \begin{figure}[!ht]
  \includegraphics{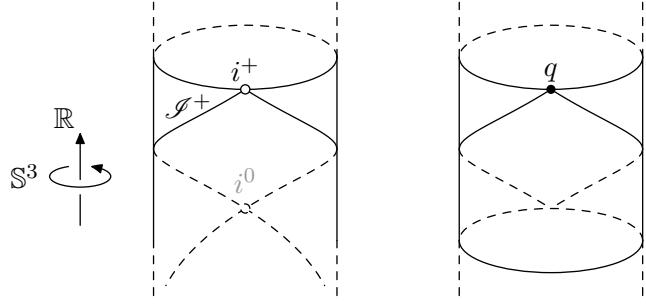}
  \caption{\textit{Left:} part of the conformal embedding of Minkowski space into the Einstein universe $(E,dt^2-g_{\Sph^3})$, $E=\R\times\Sph^3$. \textit{Right:} conformal embedding of de~Sitter space into $E$, and the backward light cone of a point $q$ on its conformal boundary, whose interior is the domain of the upper half space model~\eqref{EqBgExplDStx} of de~Sitter space, which near $q$ is equal to the static model of de~Sitter space near its future timelike infinity, $q$. The coordinates $(\hat\tau,\hat x)$ are regular near $q=(\hat\tau=0,\hat x=0)$.}
  \label{FigBgExplConformal}
  \end{figure}

  Define the blow-up $M':=\bigl[[0,\infty)_{\hat\tau}\times\R^3_{\hat x},\{(0,0)\}\bigr]$ at the image of $I^+$. Then the lift of $\{\hat\tau\leq |\hat x|\}$ to $M'$ is canonically identified with a neighborhood of $I^+\subset M$. Concretely,
  \[
    (\rho_+,Z):=(\hat\tau,\hat x/\hat\tau)=\bigl(t/(t^2-r^2),x/t\bigr) \in [0,\infty)\times\R^3
  \]
  gives coordinates on $M'$, in which $U:=[0,1)_{\rho_+}\times\{|Z|\leq 1\}$ is identified with a collar neighborhood of $I^+\subset M$ so that
  \begin{equation}
  \label{EqBgExplStaticdS}
    g_\dS=\hat\tau^{-2}(d\hat\tau^2-d\hat x^2) = (1-|Z|^2)\frac{d\rho_+^2}{\rho_+^2} - 2 Z\,dZ\frac{d\rho_+}{\rho_+} - dZ^2.
  \end{equation}
  Furthermore, $\rho_I:=1-|Z|^2=1-r^2/t^2$ is a defining function of $\scri^+$ in $U$. Let us write $R:=|Z|$. Instead of the vector field $V_\loc=(1-c_V)\rho_I\pa_{\rho_I}-\rho_+\pa_{\rho_+}$, which is defined locally near $\scri^+$ and was used in the proof of Proposition~\ref{PropBgip}, we use the global vector field
  \[
    V_0 = -(1+R^2)\rho_+\pa_{\rho_+} - (1-c_V)(1-R^2)R\pa_R
  \]
  which is equal to $V_\loc$ near $\scri^+$, up to an overall scalar and modulo $\rho_I\Vb+\rho_+\Vb$; moreover, $V_0$ is timelike in $U\setminus\scri^+$ for small $c_V\geq 0$. Considering the commutant/vector field multiplier $W:=\rho_I^{-2 a_I}\rho_+^{-2 a_+}V_0$ with $a_+=-\tfrac32$ and $a_I<0$ small, the expression for the $K$-current $K_W$ is somewhat lengthy, so we merely list its main features in $0\leq R\leq 1$, writing $\rho_I^{2 a_I+1}\rho_+^{2 a_+}K_W=:K_1+\slK\slG$, with $K_1$ a section of $S^2\la \rho_+\pa_{\rho_+},\pa_R\ra$ (considered a $2\times 2$ matrix in this frame) and $\slK$ a scalar:
  \begin{itemize}
  \item $\tr K_1|_{c_V=0}=-2(1-R^4-a_I R^2(4+R^2))<0$, which persists for small $c_V>0$;
  \item $\det K_1|_{c_V=0}=-4 a_I(1+a_I)R^2(1-R^2)\geq 0$ and
    \[
      (\pa_{c_V}\det K_1)|_{c_V=0} = -16 a_I^2(R^2-\tfrac{1}{1+4 a_I})(R^2+\tfrac{3}{3+4 a_I}) > 0,
    \]
    so $\det K_1>0$ for small $c_V>0$;
  \item $\slK|_{c_V=0}=-2(1+a_I R^2)<0$, which persists for small $c_V>0$;
  \item $\rho_I^{2 a_I}\rho_+^{2 a_+}\dv_{g_\dS}W|_{c_V=0}=6-(2-4 a_I)R^2>0$.
  \end{itemize}
  Thus, fixing $c_V>0$ to be small, the main term arising in the evaluation of the commutator $-2\Re\la(\Box_{g_\dS}-2)u,W u\ra$ is $\int_U -K_W(d u,d u)+4(\dv_{g_\dS}W)|u|^2\,\frac{d\rho_+}{\rho_+}\,dZ$, which thus gives the desired control on $u$ in $\Hscri^1$, except $|u|^2$ itself is only controlled in $\rho_I^{a_I}\rho_+^{a_+-1/2}L^2_\bop$ due to the weaker weight of $\dv_{g_\dS}W$ at $\scri^+$; control in $\rho_I^{a_I}\rho_+^{a_+}L^2_\bop$ is obtained by integrating $\rho_+\pa_{\rho_+}u\in\rho_I^{a_I}\rho_+^{a_+}L^2_\bop$ from $\rho_+=1$. This yields~\eqref{EqBgExplMink}.
\end{proof}

\section{Newton iteration}
\label{SIt}

Fix $b_0$, $b_I$, $b'_I$, $b_+$ and $\gamma$ as in Theorem~\ref{ThmBg}. Recall that we want to solve the symmetric 2-tensor-valued wave equation
\[
  P(h)=0,\ \ 
  (h,\pa_\nu h)|_\Sigma = (h_0, h_1)
\]
for initial data $(h_0,h_1)$, $h_j\in\rho_0^{b_0}\Hb^\infty(\Sigma)$, small in a suitable high regularity norm, and we hope to find a solution $h\in\cX^{\infty;b_0,b_I,b'_I,b_+}$. Following the strategy, outlined in \S\ref{SI}, of solving a linearized equation at each step of an iteration scheme, we consider, formally, the iteration scheme with initialization
\[
  L_0 h^{(0)}=0,\ \ 
  (h^{(0)},\pa_\nu h^{(0)})|_\Sigma = (h_0, h_1),
\]
and iterative step $h^{(N+1)}=h^{(N)}+u^{(N+1)}$, where
\[
  L_{h^{(N)}}u^{(N+1)} = -P(h^{(N)}),\ \ 
  (h^{(N+1)},\pa_\nu h^{(N+1)})|_\Sigma = 0.
\]
Assume that $h^{(N)}\in\cX^\infty$ has small $\cX^3$ norm. In order for this iteration scheme to close, we need to show that $h^{(N+1)}\in\cX^\infty$. Since $P(h^{(N)})\in\cY^\infty$ by Lemma~\ref{LemmaEinP}, this means that we need to prove:

\begin{thm}
\label{ThmIt}
  For weights as above, there exists $\eps>0$ such that for $h\in\cX^{\infty;b_0,b_I,b'_I,b_+}$ with $\|h\|_{\cX^3}<\eps$, the following holds: if $f\in\cY^{\infty;b_0,b_I,b'_I,b_+}$ and $h_0,h_1\in\rho_0^{b_0}\Hb^\infty(\Sigma)$, then the solution of the initial value problem
  \[
    L_h u = f,\ \ 
    (u,\pa_\nu u)|_\Sigma = (u_0,u_1),
  \]
  satisfies $u\in\cX^{\infty;b_0,b_I,b'_I,b_+}$.
\end{thm}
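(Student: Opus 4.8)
The plan is to combine the global background estimate of Theorem~\ref{ThmBg} with the normal operator analysis at $i^0$, $\scri^+$, and $i^+$ sketched in \S\ref{SssISysLin} and \S\ref{SssISysNull}, following the two-step philosophy~\ref{ItISysMapReg}--\ref{ItISysMapDec}: first establish conormality with imprecise weights, then recover the precise asymptotic structure encoded in $\cX^\infty$. Concretely, first I would apply Theorem~\ref{ThmBg} (or Theorem~\ref{ThmBgExpl}) with weights $a_0=b_0$, $a_I$ slightly negative, $a'_I$ slightly less than $0$ but with $a'_I<a_I+b'_I$, and the corresponding (explicit or implicit) $a_+<0$; since $f\in\cY^\infty$ means $f\in\Hb^{\infty;b_0,-2,b_+}$ with $\pi_0 f$ and the remainder of $f_{11}$ one order better, this gives the background regularity $u\in\Hb^{\infty;b_0,a_I,a_+}$, $\pi_0 u\in\Hb^{\infty;b_0,a'_I,a_+}$, and $\pi_{11}^c u$ one notch better than $u$ but not yet the optimal weights. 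The point of having the background estimate with weights \emph{below} the target weights of $\cX^\infty$ is that, once conormality is known, many lower order terms of $L_h$ become genuinely lower order (extra decay in $\rho_I$ or $\rho_+$), so the leading structure of $L_h$ at each boundary hypersurface---as computed in Lemmas~\ref{LemmaEinNscri}, \ref{LemmaEinNi0p}, \ref{LemmaEinNgmMink}---is all that matters for the asymptotics.

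Second, I would run the transport ODE argument at $\scri^+$, exploiting the block/null structure of $L_h^0$ in~\eqref{EqEinNscriPi0}--\eqref{EqEinNscriPi11}. Since $\pi_0 L_h\pi_0$ is (to leading order) $(2\rho^{-1}\pa_0+A_\CD)\pa_1$ with $\operatorname{spec}A_\CD$ positive (here $\gamma_1,\gamma_2>b'_I>0$ is used), integrating $\rho_I\pa_{\rho_I}-A_\CD$ along the approximate light cones---using that $\rho^{-1}\pa_0\sim-\tfrac12\rho_I\pa_{\rho_I}$ by~\eqref{EqCptScriNull}---upgrades $\pi_0 u$ from weight $\rho_I^{a'_I}$ to $\rho_I^{b'_I}$, with no leading term; this is the analogue of the constraint damping computation around~\eqref{EqISysLinDamping}, and one also needs the $\rho_0\pa_{\rho_0}-\rho_I\pa_{\rho_I}$ transport to carry the $\rho_0^{b_0}$ weight from $i^0$ to $\scri^+$ (Lemmas~\ref{LemmaPhgODE1d}--\ref{LemmaPhgODE2}, or their conormal analogues). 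Next, using that $\pi_{11}^c A_h|_{K_{11}}=\pi_{11}^c B_h|_{K_{11}}=0$, equation~\eqref{EqEinNscriPi11c} shows $2\rho^{-1}\pa_0\pa_1(\pi_{11}^c u)$ equals a $\rho_I$-\emph{decaying} forcing term; integrating the regular-singular ODE $\rho_I\pa_{\rho_I}$ (indicial root $0$) then produces a size-$1$ leading term $h_{0 1}^{(0)}$, $h_{1\bar b}^{(0)}$, $h_{\bar a\bar b}^{(0)}\in\rho_0^{b_0}\rho_+^{b_+}\Hb^\infty(\scri^+)$ plus a $\rho_I^{b_I}$ remainder, matching~\eqref{EqEinFRest}. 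Finally~\eqref{EqEinNscriPi11}, with its logarithmically large but $h_{00}$-suppressed coupling and its genuinely nonzero $\pa_1 h^{\bar a\bar b}|_{\scri^+}$ term, forces $\pi_{11}u=u_{11}$ to have a $\log\rho_I$ term $h_{11}^{(1)}$ plus a size-$1$ term plus a $\rho_I^{b_I}$ remainder, which is exactly~\eqref{EqEinF11}; here the slightly faster decay $b'_I$ of $\pi_0 u$ is what makes $\rho_I^{b'_I}\log\rho_I\ll\rho_I^{b_I}$ so the remainder estimate closes. Throughout, higher b-regularity of these leading terms along $\scri^+$ follows by commuting with $\rho_0\pa_{\rho_0}$, $\rho_I\pa_{\rho_I}$, and the rotation operators $\Omega_i$, using the commutation relations~\eqref{EqBgscriRotComm}--\eqref{EqBgscriComm} exactly as in the proof of Proposition~\ref{PropBgscri}.

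Third, at $i^+$ the transport argument only gives the weight $\rho_+^{a_+}$, and improving it to $\rho_+^{b_+}$ (with $b_+\in(-\tfrac12,0)$, below any resonance) requires the Mellin/normal-operator argument of \cite{VasyMicroKerrdS}, \cite[Theorem~2.21]{HintzVasySemilinear}: by Lemma~\ref{LemmaEinNi0p} the normal operator of $L_h$ at $i^+$ equals that of $L_0$, which by Lemma~\ref{LemmaEinNgmMink} and~\eqref{EqEinNipL0} is a perturbation (of order $\rho_+^{1-0}$) of $\ul L$, and via the conformal identification $t^3\Box_{\ul g}t^{-1}=\Box_{g_\dS}-2$ from~\eqref{EqBgExplDS} the relevant spectral family $\wh{\ul L}(\sigma)$ is that of the conformal wave operator on static de~Sitter space on the scattering 2-tensor bundle; its resonances are computed explicitly and all satisfy $-\Im\sigma\geq 1>b_+$. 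One inverts the full normal operator on polyhomogeneous-at-$\pa i^+$ spaces by solving away polyhomogeneous terms formally and using the de~Sitter inverse for the remainder (Lemma~\ref{LemmaPhgNormalPhg}), contour-shifts the inverse Mellin transform past $\operatorname{Im}\sigma=-b_+$, picks up no poles, and deduces $u-h^+\in\rho_+^{b_+}$-weighted space with $h^+$ the (necessarily present, from the $\rho_I^{b_0}$-decay of the data at $\scri^+\cap i^+$) leading term at $i^+$; matching at the corners $\scri^+\cap i^+$ is automatic since the two pictures glue, as in \S\ref{SssISysPhg}. Assembling the $i^0$, $\scri^+$, and $i^+$ pieces and checking they are consistent on overlaps gives $u\in\cX^{\infty;b_0,b_I,b'_I,b_+}$. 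The main obstacle is the $\scri^+$ analysis: keeping the bookkeeping of the seven-component splitting straight so that each coupling term in $A_h,B_h$ lands in a space with enough $\rho_I$-decay to be integrated without creating spurious leading terms---in particular that the only logarithm is the one in $h_{11}$, driven by $\pa_1 h^{\bar a\bar b}$, and that the $b'_I$-versus-$b_I$ gap is respected throughout---is exactly the manifestation of the weak null condition and is the delicate heart of the argument.
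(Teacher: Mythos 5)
Your strategy at $i^0$ and along $(\scri^+)^\circ$ is essentially the paper's: background estimate with slightly lossy weights, then the transport ODE hierarchy~\eqref{EqEinNscriPi0}--\eqref{EqEinNscriPi11} to recover the leading terms and the exact weights $b'_I$, $b_I$. (Two small caveats there: doing $\pi_0 u$ \emph{first}, with $\pi_0^c u$ only known conormal of weight $\rho_I^{-0}$, yields $\rho_I^{b'_I-0}$ rather than $\rho_I^{b'_I}$ on the first pass, so you need a second pass, or the paper's ordering in which $\pi_0 u$ is treated last, writing $u=u_\phg+u_\bop$ and using Lemma~\ref{LemmaIti0ScriAct}\eqref{ItIti0ScriActwtLh} to handle $\pi_0\wt L_h(\pi_0^c u_\phg)$; and ``matching at the corner is automatic'' glosses over the fact that after the $i^+$ step one must rerun the transport argument uniformly up to $\scri^+\cap i^+$, which is \S\ref{SsItipScri}.)

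The genuine gap is at $i^+$. You propose to use the normal operator of $L_h$ (equivalently $L_0$) \emph{at $i^+\subset M$}, identify it with $\Box_{g_\dS}-2$ via~\eqref{EqBgExplDS}, and invert it on spaces polyhomogeneous at $\pa i^+$ using Lemma~\ref{LemmaPhgNormalPhg}. But for the linearized operators arising in the iteration one must keep $\gamma_1,\gamma_2>b'_I>0$ (this positivity is exactly what you used one paragraph earlier to improve $\pi_0 u$), and with constraint damping switched on, $L_h$ and even $L_0$ have coefficients that are $\rho_I^{-1}$-singular at $\scri^+$, hence at $\pa i^+$: the singular terms come from $-\rho^{-1}A_h\pa_1$ in Lemma~\ref{LemmaEinNscri}. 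So there is no smooth-coefficient normal operator at the temporal face of the blown-up space, and the de~Sitter machinery and Lemma~\ref{LemmaPhgNormalPhg} are not applicable there; this is precisely the ``technical twist'' of Remark~\ref{RmkISysLinResonances}. The paper's actual argument (\S\ref{SsItip}) instead passes to the \emph{blown-down} space: $\ul L$ is a dilation-invariant element of $\Diffb^2$ on the radial compactification $N$, one proves Fredholm properties and high-energy estimates for $\wh{\ul L}(\sigma)$ on \emph{supported} distributions on $B$ with module regularity (Lemma~\ref{LemmaItipFred}), shows all resonances lie in $\Im\sigma<0$ for small $\gamma_1,\gamma_2$ (Lemma~\ref{LemmaItipResQual}), and converts between module regularity at $S^+$ on $\ol{\R^4}$ and $\rho_I$-decay on $M$ via Lemma~\ref{LemmaItipFn}, iterating the contour shift to reach $\rho_+^{b_+}$. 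The $i^+$-normal-operator-on-$M$ route you describe is only legitimate later, in the polyhomogeneity argument of \S\ref{SPhg}, where one may take $\gamma_1=\gamma_2=0$ because the nonlinear solution satisfies $\Ups(g)=0$ identically so the choice of $\tdel^*$ is immaterial---an option not available for the linear equations $L_h u=f$ of Theorem~\ref{ThmIt}. Relatedly, your claim that $u$ has a leading term $h^+$ at $i^+$ is more than $\cX^\infty$ encodes and is not needed (nor established by your sketch) for this theorem; all that is required is $u\in\rho_+^{b_+}\Hb^\infty$ near $i^+$, for which the qualitative statement $\Im\sigma<0$ on resonances suffices.
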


\begin{rmk}
\label{RmkItDecay}
  We recall that membership, of a scalar function $u$ for simplicity, in $\rho_0^{b_0}\Hb^\infty(\ol{\R^3})$ is equivalent (up to an arbitrarily loss in decay) to pointwise estimates $|V_1\cdots V_N u|\lesssim\la r\ra^{-b_0}$ where the $V_i$ are translation, rotation, or scaling vector fields on $\R^3$. The membership $h\in\cX^{\infty;b_0,b_I,b'_I,b_+}$ means pointwise decay of various components of $h$ towards leading order terms at $\scri^+$ or to zero; see Definition~\ref{DefEinF} and Remark~\ref{RmkIDetailDecay}.
\end{rmk}

According to Theorem~\ref{ThmBg}, we have the background estimate
\begin{equation}
\label{EqItBg}
  u\in\Hb^{\infty;b_0,-0,a_+}(M;\beta^*S^2),\ \ 
  \pi_0 u\in\Hb^{\infty;b_0,b'_I-0,a_+}(M;\beta^*S^2),
\end{equation}
for suitable $a_+$. We shall improve this to $u\in\cX^{\infty;b_0,b_I,b'_I,b_+}$ using normal operator analysis in several steps, which were outlined around~\eqref{EqISysLinTransport}: using the leading order form~\eqref{EqEinNscri} of $L_h$, or rather its decoupled versions~\eqref{EqEinNscriPi0}--\eqref{EqEinNscriPi11}, we obtain the precise behavior of $u$ near $\scri^+\setminus(\scri^+\cap I^+)$ in \S\ref{SsIti0Scri} by simple ODE analysis; the correct weight at $I^+$ but losing some precision at $\scri^+$ near its future boundary in \S\ref{SsItip} by normal operator analysis and a contour shifting argument; and finally the precise behavior near $\scri^+$, uniformly up to $\scri^+\cap I^+$, again by ODE analysis in \S\ref{SsItipScri}.

For later use, we record the mapping properties of $P$ and its linearization on the polyhomogeneous and conormal parts of $\cX^\infty$---recall \eqref{EqEinFDecomp}.
\begin{lemma}
\label{LemmaIti0ScriAct}
  Let $h\in\cX^{\infty;b_0,b_I,b'_I,b_+}$, with $\|h\|_{\cX^3}$ small; write $h=h_\phg+h_\bop$, $h_\phg\in\cX^\infty_\phg$, $h_\bop\in\cX^\infty_\bop$. Then:
  \begin{enumerate*}
  \item\label{ItIti0ScriActP} $P(h_\phg)\in\cY^\infty$,
  \item\label{ItIti0ScriActLh0phg} $L_h^0\colon\cX^\infty_\phg\to\cY^\infty$,
  \item\label{ItIti0ScriActLh0b} $L_h^0$, $\wt L_h\colon\cX^\infty_\bop\to\cY^\infty_\bop$,
  \item\label{ItIti0ScriActwtLh} $\wt L_h\colon\cX^\infty_\phg\to\cY^\infty_\bop$.
  \end{enumerate*}
\end{lemma}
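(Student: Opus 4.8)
<br>

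The plan is to prove each of the four assertions by tracking the $\rho_I$-weights at $\scri^+$ of the relevant operators against the structure of the spaces $\cX^\infty_\phg$, $\cX^\infty_\bop$, $\cY^\infty_\phg$, $\cY^\infty_\bop$ as encoded in Definitions~\ref{DefEinF} and \ref{DefEinFY}, using the decomposition $L_h = L_h^0 + \wt L_h$ of Lemma~\ref{LemmaEinNscri} together with the explicit form of $A_h$, $B_h$ and of the decoupled operators \eqref{EqEinNscriPi0}--\eqref{EqEinNscriPi11}. Away from $\scri^+$ (near $i^0$ and $i^+$) all four statements follow immediately from Lemma~\ref{LemmaEinNi0p} and the fact that elements of $\cX^\infty_\phg$ differ from elements of $\cX^\infty$ only by their leading terms at $\scri^+$; so the content is entirely near $\scri^+$, and I will work in the bundle splitting \eqref{EqCptASpl2}--\eqref{EqCptASplS2} used in Lemma~\ref{LemmaEinNscri} throughout.

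First I would prove \ref{ItIti0ScriActP}: this is just Lemma~\ref{LemmaEinP} applied to the metric $g_m + \rho h_\phg$, once one checks that $h_\phg \in \cX^\infty$, which is immediate since $\cX^\infty_\phg \subset \cX^\infty$ by construction (the conormal remainders are simply taken to vanish). Next, \ref{ItIti0ScriActwtLh}: by Lemma~\ref{LemmaEinNscri}, $\wt L_h \in \Hb^{\infty;1+b_0,-1+b'_I,1+b_+}\cM_{\beta^*S^2}^2 + (\CI + \Hb^{\infty;1+b_0,-0,1+b_+})\Diffb^2$. Applied to $h' \in \cX^\infty_\phg$, whose components have at most logarithmic leading terms at $\scri^+$ (so $h' \in \Hb^{\infty;b_0,-0,b_+}$, with the good components $\pi_0 h'$ in $\Hb^{\infty;b_0,b'_I,b_+}$), two $\cM$-derivatives or two b-derivatives produce something in $\Hb^{\infty;b_0,-0,b_+}$, and the $\Hb^{\infty;1+b_0,-1+b'_I,1+b_+}$ or $\Hb^{\infty;1+b_0,-0,1+b_+}$ coefficients then give $\wt L_h h' \in \Hb^{\infty;b_0,-1+b'_I-0,b_+} \subset \Hb^{\infty;b_0,-1+b_I,b_+}$ (using $b_I < b'_I$); crucially the $\CI\cdot\Diffb^2$ part of $\wt L_h$ is absent here since $\wt L_h$ by definition contains no such term — rather $L_h = L_h^0 + \wt L_h$ with $\wt L_h$ as displayed, and the $\CI\,\Diffb^2$ piece there applied to $h'$ with a logarithmic leading term stays conormal with no leading term because $\Diffb^2$ annihilates the $\log\rho_I$ modulo $\Diffb^1$ acting on something conormal — more carefully, $(\rho_I\pa_{\rho_I})^2(\log\rho_I) = 0$, so second b-derivatives kill leading logs and yield elements of $\Hb^{\infty;b_0,-0,b_+}$ with no phg part, hence $\wt L_h h' \in \cY^\infty_\bop$. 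Also one must check $(\wt L_h h')_{1 1}$ has no $f_{1 1}^{(0)}$ leading term, which is exactly the above: no surviving $\rho_I^0$ term at $\scri^+$. This gives \ref{ItIti0ScriActwtLh}, and the second half of \ref{ItIti0ScriActLh0b} (the $\wt L_h \colon \cX^\infty_\bop \to \cY^\infty_\bop$ part) is the same computation with $h' \in \cX^\infty_\bop$, which decays at $\scri^+$ like $\rho_I^{b_I}$, giving even more room.

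For the $L_h^0$ statements \ref{ItIti0ScriActLh0phg} and the first half of \ref{ItIti0ScriActLh0b}, I would use $L_h^0 = -\rho^{-1}\big((2\rho^{-1}\pa_0 + A_h)\pa_1 - B_h\big)$ from \eqref{EqEinNscri}, and analyze its action block-by-block via the projections $\pi_0$, $\pi_{1 1}^c$, $\pi_{1 1}$ and the decoupled forms \eqref{EqEinNscriPi0}--\eqref{EqEinNscriPi11}. For $h' \in \cX^\infty_\phg$: on $\pi_0 h'$ (the good components, decaying like $\rho_I^{b'_I}$), $(2\rho^{-1}\pa_0 + A_\CD)\pa_1(\pi_0 h')$ lies in $\Hb^{\infty;b_0,-1+b'_I,b_+}$ since $\rho^{-1}\pa_0 \sim \rho_I\pa_{\rho_I}$ costs $\rho_I^{-1}$ and $\pa_1 \in \rho_0\rho_+\Vb(M)$ by \eqref{EqCptScriNull}, consistent with the good-component requirement in \eqref{EqEinFGoodY}. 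On $\pi_{1 1}^c h'$ (size-$1$ leading term): $2\rho^{-1}\pa_0\pa_1(\pi_{1 1}^c h')$ again lands in $\Hb^{\infty;b_0,-1+b_I,b_+}$ because $\pa_1$ applied to the $\rho_I^0$ leading term produces $\Hb^{\infty;b_0,-1+b_I,b_+}$ by \eqref{EqEinFLeading} and the coupling $(-A_{h,1 1}^c\pa_1 + B_{h,1 1}^c)(\pi_0 h')$ from \eqref{EqEinNscriPi11c} inherits the $\rho_I^{b'_I-1}$ decay; and on $\pi_{1 1} h'$ (the $\log\rho_I$ leading term), \eqref{EqEinNscriPi11} shows $2\rho^{-1}\pa_0\pa_1(\pi_{1 1} h')$ acquires precisely the single $\rho_I^{-1}$-order leading term $f_{1 1}^{(0)}$ from $\rho^{-1}\pa_0$ hitting $\log\rho_I$ (which gives a constant, i.e.\ $\rho_I^0$, times $\rho^{-1} \sim \rho_I^{-1}$) plus a remainder in $\Hb^{\infty;b_0,-1+b_I,b_+}$ — this is exactly the structure $f_{1 1} = f_{1 1}^{(0)} + f_{1 1,\bop}$ of $\cY^\infty$. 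Hence $L_h^0 h' \in \cY^\infty$. For $h' \in \cX^\infty_\bop$ the same argument with $\rho_I^{b_I}$ decay replacing the leading terms shows there is no $\rho_I^{-1}$-order leading term produced (a b-derivative of $\rho_I^{b_I}$ stays $O(\rho_I^{b_I})$), so $L_h^0 h' \in \cY^\infty_\bop$.

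The main obstacle is the bookkeeping in \ref{ItIti0ScriActLh0phg}: one must verify that the \emph{only} leading term of $L_h^0 h'$ at $\scri^+$ of order $\rho_I^{-1}$ occurs in the $(1,1)$ component, i.e.\ that $\pi_0 L_h^0 h'$ and $\pi_{1 1}^c L_h^0 h'$ have no $\rho_I^{-1}$-order pieces even when $h'$ carries $\log\rho_I$ and $\rho_I^0$ leading terms. This is precisely the null/Jordan-block structure: $\pi_0 A_h|_{K_0^c} = 0$, $\pi_0 B_h|_{K_0^c} = 0$ and $\pi_{1 1}^c A_h|_{K_{1 1}} = 0$, $\pi_{1 1}^c B_h|_{K_{1 1}} = 0$, so the logarithmic component $h_{1 1}'$ never feeds a leading term into the other blocks, and the good components $\pi_0 h'$ — because of the extra $\rho_I^{b'_I}$ decay in \eqref{EqEinFGood} — only feed \emph{subleading} ($\rho_I^{b'_I - 1}$, hence $\rho_I^{b_I - 1}$-absorbed) terms into $\pi_{1 1}^c$ and $\pi_{1 1}$. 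Getting these cancellations exactly right, including checking the single surviving $f_{1 1}^{(0)}$ is correctly placed and that the $\rho_I^{b'_I}\log\rho_I$-sized cross terms (from $A_{h,1 1}$'s logarithmically growing $(1)$-entry acting on the $\rho_I^{b'_I}$-decaying $h'_{0 0}$) are dominated by $\rho_I^{b_I}$ — this last point being the reason $b_I < b'_I$ is imposed — is the delicate part; everything else is a weight count.
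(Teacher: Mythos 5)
Your handling of parts (1), (2), and (3) is essentially the paper's: (1) is Lemma~\ref{LemmaEinP}, and (2)--(3) are obtained by inspecting the block structure of \eqref{EqEinNscri} and \eqref{EqEinNscriPi0}--\eqref{EqEinNscriPi11}, as you do. The genuine gap is in (4). You try to prove $\wt L_h\colon\cX^\infty_\phg\to\cY^\infty_\bop$ directly from the crude membership $\wt L_h\in\Hb^{\infty;1+b_0,-1+b'_I,1+b_+}\cM^2+(\CI+\Hb^{\infty;1+b_0,-0,1+b_+})\Diffb^2$; your weight count yields only $\wt L_h h'\in\Hb^{\infty;b_0,-1+b'_I-0,b_+}$ (with logarithmic losses coming from the $\log\rho_I$ leading term of $h'_{1 1}$), which you then relax to $\Hb^{\infty;b_0,-1+b_I,b_+}$. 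But membership in $\cY^\infty$ (hence in $\cY^\infty_\bop$) requires more: by Definition~\ref{DefEinFY} the \emph{good} components $(\wt L_h h')_{0 0}$, $(\wt L_h h')_{0\bar b}$, $\sltr(\wt L_h h')$ must lie in $\Hb^{\infty;b_0,-1+b'_I,b_+}$ with the sharp weight $b'_I>b_I$, and nothing in \eqref{EqEinNscri} prevents, say, $\pi_0\wt L_h\pi_{1 1}$ from having $\cM^2$-coefficients of exact order $\rho_I^{-1+b'_I}$; acting on the size-$1$ or $\log\rho_I$ leading terms of $h'\in\cX^\infty_\phg$ this only produces $\rho_I^{-1+b'_I-0}$, not $\rho_I^{-1+b'_I}$. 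This is precisely why the paper remarks, right before its proof, that the mapping property (4) is ``not encoded in'' \eqref{EqEinNscri}: a direct weight count on the displayed error term cannot give it. (Your side claim that the $\CI\,\Diffb^2$ part of $\wt L_h$ is ``absent'' is also incorrect as stated---it is present---and ``$(\rho_I\pa_{\rho_I})^2\log\rho_I=0$'' does not control general elements of $\Diffb^2$; but that part of $\wt L_h$ is in fact harmless for the weights, so it is not the fatal issue.)

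The paper proves (4) indirectly: for $u_\phg\in\cX^\infty_\phg$ one writes $\wt L_h u_\phg=D_h P(u_\phg)-L_h^0 u_\phg$; the second term lies in $\cY^\infty$ by (2), and the first is computed as $\tfrac{d}{d s}P(h_\phg+s u_\phg+h_\bop)|_{s=0}$ after expanding $P(h_\phg+s u_\phg+h_\bop)=P(h_\phg+s u_\phg)+\int_0^1\bigl(L^0_{h_\phg+s u_\phg+t h_\bop}+\wt L_{h_\phg+s u_\phg+t h_\bop}\bigr)(h_\bop)\,d t$, so that each summand depends smoothly on $s$ with values in $\cY^\infty$ by (1), (2), and (3), respectively. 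In other words, the sharp $b'_I$ weight on the good components of $\wt L_h u_\phg$ is inherited from the nonlinear operator $P$ (Lemma~\ref{LemmaEinP}) and from $L^0_h$, not read off from the error term itself. To repair your argument you would need either to reproduce this indirect step or to compute the block $\pi_0\wt L_h\pi_0^c$ precisely enough to exhibit the extra decay; as written, the conclusion $\wt L_h h'\in\cY^\infty_\bop$ does not follow from what you established.
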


The point is that the behavior~\eqref{ItIti0ScriActLh0phg}--\eqref{ItIti0ScriActLh0b} of the leading term $L_h^0$ and simple information~\eqref{ItIti0ScriActP} on the nonlinear operator automatically imply precise mapping properties~\eqref{ItIti0ScriActwtLh} of the error term $\wt L_h$ which are not encoded in~\eqref{EqEinNscri}.

\begin{proof}[Proof of Lemma~\usref{LemmaIti0ScriAct}]
  Part~\eqref{ItIti0ScriActP} follows from Lemma~\ref{LemmaEinP}. One obtains~\eqref{ItIti0ScriActLh0phg} by inspection of~\eqref{EqEinNscri}; note that $L_h^0$ is only well-defined modulo terms in $(\CI+\rho_0^{1+b_0}\rho_I^{-0}\rho_+^{1+b_+}\Hb^\infty)\Diffb^1$ which always map $\cX^\infty_\phg\to\cY^\infty$. Likewise, the first part of~\eqref{ItIti0ScriActLh0b} follows from~\eqref{EqEinNscri}; the fact that the `good components' (encoded by the bundle $K_0$) have a better weight $b'_I$ than the weight $b_I$ of the remaining components (in $K_0^c$) is again due to the structure of $L_h^0$ discussed after Lemma~\ref{LemmaEinNscri}. The second part of~\eqref{ItIti0ScriActLh0b} is clear, since this concerns the remainder operator $\wt L_h$, whose coefficients are \emph{decaying} relative to $\rho_I^{-1}\Diffb^2$, acting on $\cX^\infty_\bop$, which consists of tensors \emph{decaying} at $\scri^+$.
  
  Finally, to prove~\eqref{ItIti0ScriActwtLh}, we take $u_\phg\in\cX^\infty_\phg$ and write $\wt L_h u_\phg=D_h P(u_\phg)-L_h^0 u_\phg$. The second term lies in $\cY^\infty$ by~\eqref{ItIti0ScriActLh0phg}, while the first term equals
  \begin{align*}
    &\frac{d}{d s}P(h_\phg+s u_\phg+h_\bop)\big|_{s=0} \\
    &=\frac{d}{d s}\biggl(P(h_\phg+s u_\phg) + \int_0^1 L^0_{h_\phg+s u_\phg+t h_\bop}(h_\bop) + \wt L_{h_\phg+s u_\phg+t h_\bop}(h_\bop)\,dt \biggr)\biggr|_{s=0};
  \end{align*}
  but each of the three terms in parentheses depends smoothly on $s$ as an element of $\cY^\infty$ by~\eqref{ItIti0ScriActP}, \eqref{ItIti0ScriActLh0phg}, and \eqref{ItIti0ScriActLh0b}, respectively.
\end{proof}

\subsection{Asymptotics near \texorpdfstring{$I^0\cap\scri^+$}{the past boundary of the radiation face}}
\label{SsIti0Scri}

With conormal regularity of $u$ at our disposal, all but the leading order terms of $L_h$ can be regarded as error terms at $\scri^+$: from~\eqref{EqItBg} and Lemma~\ref{LemmaEinNscri}, we get
\[
  L_h^0 u \in \cY^{\infty;b_0,b_I,b'_I,b_+} + \Hb^{\infty;b_0,-1+b'_I-0,a_+}.
\]
Let us now work in a neighborhood $U\subset M$ of $I^0\cap\scri^+$ and drop the weight at $I^+$ from the notation. To improve the asymptotics of $u_{1 1}^c:=\pi_{1 1}^c u$, we use part~\eqref{EqEinNscriPi11c} of the constraint damping/weak null structure hierarchy as well as $b'_I>b_I$: this gives
\[
  2\rho^{-2}\pa_0\pa_1 u_{1 1}^c \in \rho_0^{b_0}\rho_I^{b_I-1}\Hb^\infty.
\]
Using the local defining functions $\rho_0$ and $\rho_I$ from~\eqref{EqCptASplCoords} and multiplying by $\rho_I$, this becomes
\begin{equation}
\label{EqIti0Scri11c}
  \rho_I\pa_{\rho_I}(\rho_0\pa_{\rho_0}-\rho_I\pa_{\rho_I})u_{1 1}^c \in \rho_0^{b_0}\rho_I^{b_I}\Hb^\infty.
\end{equation}
We can integrate the second vector field from $\rho_I\geq\eps$, where $u_{1 1}^c\in\rho_0^{b_0}\Hb^\infty$, obtaining $\rho_I\pa_{\rho_I}u_{1 1}^c\in\rho_0^{b_0}\rho_I^{b_I}\Hb^\infty$; this uses $b_I<b_0$ (see Lemma~\ref{LemmaPhgODE2} for details). Integrating out $\rho_I\pa_{\rho_I}$ (see Lemma~\ref{LemmaPhgODE}) shows that $u_{1 1}^c$ is the sum of a leading term in $\rho_0^{b_0}\Hb^\infty(\scri^+\cap U)$ and a remainder in $\rho_0^{b_0}\rho_I^{b_I}\Hb^\infty(U)$. This then couples into the equation for $u_{1 1}=\pi_{1 1}u$, corresponding to part~\eqref{EqEinNscriPi11} of the hierarchy:
\begin{equation}
\label{EqIti0Scri11}
  \rho_I\pa_{\rho_I}(\rho_0\pa_{\rho_0}-\rho_I\pa_{\rho_I})u_{1 1} \in \rho_I\pi_{1 1}f - \half(\pa_1 h^{\bar a\bar b})\pa_1(u_{1 1}^c)_{\bar a\bar b} + \rho_0^{b_0}\rho_I^{b_I}\Hb^\infty.
\end{equation}
The first two summands lie in $\rho_0^{b_0}\Hb^\infty(\scri^+\cap U)+\rho_0^{b_0}\rho_I^{b_I}\Hb^\infty$; integrating this along $\rho_I\pa_{\rho_I}$ generates the logarithmic leading term of $u_{1 1}$. Thus, $u_{1 1}=u_{1 1}^{(1)}\log\rho_I+u_{1 1}^{(0)}+u_{1 1,\bop}$ with $u_{1 1}^{(j)}\in\rho_0^{b_0}\Hb^\infty(\scri^+\cap U)$ and $u_{1 1,\bop}\in\rho_0^{b_0}\rho_I^{b_I}\Hb^\infty$, as desired.

It remains to improve $u_0=\pi_0 u$. Write $u=u_\phg+u_\bop$, where $u_\phg\in\cX_\phg^\infty$ and $u_\bop\in\rho_0^{b_0}\rho_I^{b_I}\Hb^\infty$ according to what we have already established; note that the space $\cX_\phg^\infty$ is \emph{independent} of the choice of $b_I,b'_I\in(0,1)$. Then
\[
  \pi_0 L_h^0\pi_0 u_0 = \pi_0 f - \pi_0\wt L_h(\pi_0 u_0) - \pi_0\wt L_h(\pi_0^c u_\bop) - \pi_0\wt L_h(\pi_0^c u_\phg) \in \rho_0^{b_0}\rho_I^{b'_I-1}\Hb^\infty:
\]
for the first summand, this follows from $f\in\cY^{\infty;b_0,b_I,b'_I,b_+}$, for the second summand from $u_0\in\rho_0^{b_0}\rho_I^{b'_I-0}\Hb^\infty$ and the decay of the coefficients of $\wt L_h$, similarly for the third summand; and for the fourth summand, we use Lemma~\ref{LemmaIti0ScriAct}\eqref{ItIti0ScriActwtLh}. Using the notation of part~\eqref{EqEinNscriPi0} of the hierarchy, this means
\[
  (\rho_I\pa_{\rho_I}-A_\CD)(\rho_0\pa_{\rho_0}-\rho_I\pa_{\rho_I})u_0 \in \rho_0^{b_0}\rho_I^{b'_I}\Hb^\infty.
\]
Since we are taking $\gamma>b'_I$, all eigenvalues of $A_\CD$ are $>b'_I$, so integration of $\rho_I\pa_{\rho_I}-A_\CD$ and then of $\rho_0\pa_{\rho_0}-\rho_I\pa_{\rho_I}$ (using $b'_I<b_0$) gives $u_0\in\rho_0^{b_0}\rho_I^{b'_I}\Hb^\infty$. We have thus shown that $u\in\cX^{\infty;b_0,b_I,b'_I,b_+}$ near $I^0\cap\scri^+$; in fact, this holds away from $I^+$.

\subsection{Asymptotics at the temporal face}
\label{SsItip}

We work near $I^+$ now and drop the weight at $I^0$ from the notation. Recall from~\eqref{EqEinNipulL} the gauge-damped operator $\ul L$ on \emph{Minkowski} space; by Lemma~\ref{LemmaEinNi0p} and~\eqref{EqEinNipL0}, we have
\begin{equation}
\label{EqItipLhMinusL}
  L_h-\ul L\in \rho_I^{-1-0}\rho_+^{1+b_+}\Hb^\infty(M)\cdot\Diffb^2(M;\beta^*S^2).
\end{equation}
We shall deduce the asymptotic behavior of $u$ at $I^+$ from a study of the operator $\ul L$ (and its resonances) on a partial radial compactification $N$ of $\R^4$---\emph{without blowing up the latter at the light cone at future infinity}. Before making this precise, we study $\ul L$ in detail \emph{as a b-operator on $N$}. Let
\[
  \tau=t^{-1},\ \ X=x/t;
\]
these are smooth coordinates on the radial compactification
\[
  N:=[0,\infty)_\tau\times\R^3_X
\]
of $\R^4$ in $t>0$, see Figure~\ref{FigItipRadComp}. We have $d_X=t d_x$, $t\delta_e=\delta_X$, $t\delta_e^*=\delta_X^*$, and $t\pa_t=-\tau\pa_\tau-X\pa_X$. Thus, if we trivialize $S^2\,\Tsc^*\,{}^0\ol{\R^4}$ using coordinate differentials, the explicit expression of $\ul L$ given in \S\ref{SsCoMink} shows that $\ul L$ is a dilation-invariant element of $\Diffb^2(N;\ul\C^{10})$, i.e.\ $\ul L=N(\ul L)$, recalling the definition~\eqref{EqCptNormOp} of the normal operator.

Note that $L_h$ (and even $L_0$) has singular coefficients at $\pa I^+\subset{}^m\!M$ due to the gauge/con\-straint damping term: the singular terms come from $-\rho^{-1}A_h\pa_1$ in Lemma~\ref{LemmaEinNscri}. Likewise, $\ul L$, on the blow-up of $N$ at the light cone $\{\tau=0,\,|X|=1\}$ at infinity, has coefficients with $\rho_I^{-1}$ singularities, which would complicate the normal operator analysis at the temporal face ${}^0i^+$, the lift of
\[
  B:=\{\tau=0,\,|X|\leq 1\},
\]
On the other hand, $\ul L$ \emph{does} have smooth coefficients on the un-blown-up space $N$, and we recall its well-understood b- and normal operator analysis at $\pa N$ momentarily. The discussion of the relation between the blown-up and the un-blown-up picture starts with Lemma~\ref{LemmaItipFn} below.

\begin{figure}[!ht]
\includegraphics{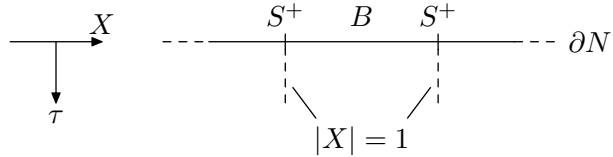}
\caption{Illustration of the compactification $N$ near its boundary at infinity $\pa N=\{\tau=0\}$. Shown are future timelike infinity $B={}^0\beta({}^0 I^+)$, its boundary $\pa B=S^+$, and, for illustration, the light cone $|x|=t$ (dashed).}
\label{FigItipRadComp}
\end{figure}

Conjugating $\ul L$ by the Mellin-transform in $\tau$, thus formally replacing $\tau\pa_\tau$ by $i\sigma$, gives the Mellin-transformed normal operator family $\wh{\ul L}(\sigma)\in\Diff^2(\pa N;\ul\C^{10})$, depending holomorphically on $\sigma\in\C$; the principal symbol of $\wh{\ul L}$ is independent of $\sigma$.

We already control $u$ in Theorem~\ref{ThmIt} away from $I^+\subset M$, so only need to study $u$ (and how $\ul L$ relates to it) near ${}^m I^+$, whose image under the blow-down map ${}^m\beta$ on ${}^mM$ is identified with $B$, see Lemma~\ref{LemmaCptAComp}. For $s\in\R$, we then define the function space $\Hsupp^s(B;\ul\C^{10})$ as the space of all $v\in\Hloc^s(\pa N;\ul\C^{10})$ which are supported in $B$. (We are using the notation of \cite[Appendix~B]{HormanderAnalysisPDE3}.) Let
\[
  \fX^s := \{ u\in\Hsupp^s(B;\ul\C^{10}) \colon \wh{\ul L}(0)u \in \Hsupp^{s-1}(B;\ul\C^{10})\},\ \ 
  \fY^s := \Hsupp^s(B;\ul\C^{10}).
\]
Semiclassical Sobolev spaces are defined by $\Hsupp_h^s=\Hsupp^s$ with $h$-dependent norm $\|u\|_{\Hsupp_h^s}=\|\la h D\ra^s u\|_{L^2}$ on $\pa N\cong\R_X^3$. Let further $\ul\cM\subset\Diff^1(\pa N;\ul\C^{10})$ denote the $\CI(\pa N)$-module of first order operators with principal symbol vanishing on $N^*\pa B$, and fix a finite set $\{A_j\}\subset\ul\cM$ of generators.\footnote{Near $\pa B$, and omitting the bundle $\ul\C^{10}$, one can take as generators the vector fields $(|X|-1)X\pa_X$, $X^j\pa_{X^i}-X^i\pa_{X^j}$.} For $k\in\N_0$, we then define
\[
  \Hsupp^{s,k}(B;\ul\C^{10}) = \{ u\in\Hsupp^s \colon A_{j_1}\cdots A_{j_\ell}u\in\Hsupp^s,\,0\leq\ell\leq k \}
\]
and the semiclassical analogue $\Hsupp_h^{s,k}=\Hsupp^{s,k}$ with norm
\[
  \|u\|_{\Hsupp_h^{s,k}}^2 = \|u\|_{\Hsupp_h^s}^2 + \sum_{0\leq\ell\leq k} \|(h A_{j_1})\cdots(h A_{j_\ell})u\|_{\Hsupp_h^s}^2.
\]
\begin{lemma}
\label{LemmaItipFred}
  Let $C>0$, and fix $s<\half-C$. Then $\wh{\ul L}(\sigma)\colon\fX^s\to\fY^{s-1}$ is an analytic family of Fredholm operators in $\{\sigma\in\C\colon\Im\sigma>-C\}$, with meromorphic inverse satisfying
  \[
    \| \wh{\ul L}(\sigma)^{-1}f\|_{\Hsupp_{\la\sigma\ra^{-1}}^{s,k}} \leq C'_k\la\sigma\ra^{-1}\|f\|_{\Hsupp_{\la\sigma\ra^{-1}}^{s-1,k}},\ \ 
    |\Im\sigma|\leq C,\ |\Re\sigma| \gg 1,
  \]
  for any $k\in\N_0$.
\end{lemma}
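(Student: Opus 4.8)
The plan is to treat $\wh{\ul L}(\sigma)$ as a family of second order operators on the closed ball $B\subset\pa N$, with Dirichlet-type conditions encoded by the support condition in $\fX^s$ and $\fY^{s-1}$, and to prove Fredholmness and the high-energy estimate by the standard microlocal machinery for operators with a \emph{radial point} structure at $\pa B=S^+$, in the spirit of \cite{VasyMicroKerrdS} and \cite[Theorem~2.21]{HintzVasySemilinear}. First I would establish, by an explicit calculation using the formulas for $\ul L$ in \S\ref{SsCoMink}, that $\wh{\ul L}(\sigma)$ is principally equal to the Mellin-transformed normal operator of (a suitable conjugate of) the conformal Klein--Gordon operator on static de~Sitter space---this is the content of \eqref{EqBgExplDS}---so that away from $\pa B$ its characteristic set consists of two smooth components fibered over $N^*\pa B$, with the Hamilton flow having $N^*\pa B$ as a source/sink (radial set). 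The skew-adjoint part of $\ul L$, which near $\scri^+$ is the term $-\rho^{-1}A_h\pa_1$ from Lemma~\ref{LemmaEinNscri} (with $A_h$ replaced by its value for $h=0$, $m=0$), shifts the threshold regularity at the radial set; this is exactly the mechanism of Remark~\ref{RmkISysLinRadial}, and it is why the admissible Sobolev order $s$ must satisfy $s<\half-C$ when we want the estimate uniformly for $\Im\sigma>-C$.

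The key steps, in order, are: (1) elliptic regularity away from $\Char(\wh{\ul L}(\sigma))$; (2) real principal type propagation of regularity along the null-bicharacteristic flow inside $B^\circ$, together with propagation up to and into the boundary $\pa B$ using the support condition (so that the relevant estimates are one-sided, using that the bicharacteristics carrying singularities into $B^\circ$ emanate from $\partial B$); (3) radial point estimates at $N^*\pa B$: at the source/sink, a threshold condition on $s$ relative to the subprincipal symbol (hence relative to $C$) allows one to propagate control into the radial set, giving an estimate of the form $\|u\|_{\fX^s}\le C(\|\wh{\ul L}(\sigma)u\|_{\fY^{s-1}}+\|u\|_{\fX^{s'}})$ for some $s'<s$; (4) the analogous estimate for the adjoint $\wh{\ul L}(\sigma)^*$ on the dual spaces, which by the standard functional-analytic argument (the inclusion $\fX^s\hookrightarrow\fX^{s'}$ being compact) upgrades the two estimates to Fredholmness with index $0$; (5) analyticity in $\sigma$ from the holomorphic dependence of $\wh{\ul L}(\sigma)$ and the analytic Fredholm theorem, giving a meromorphic inverse; (6) the semiclassical rescaling $h=\la\sigma\ra^{-1}$: rerunning steps (1)--(4) in the semiclassical calculus---with $\Re\sigma\to\infty$ playing the role of the large parameter and $\Im\sigma\in[-C,C]$ a bounded perturbation---yields the uniform high-energy bound, the gain of $\la\sigma\ra^{-1}$ coming from the second order nature of $\wh{\ul L}$, and the module regularity indexed by $k$ being propagated by commuting the generators $A_j\in\ul\cM$ through the equation (their principal symbols vanish on $N^*\pa B$, so commutators are lower order in the relevant sense, exactly as in \S\ref{SsBgscri}).

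The main obstacle, I expect, is step (3) combined with the correct identification of the threshold quantity: one must compute the precise subprincipal symbol of $\wh{\ul L}(\sigma)$ at the radial set $N^*\pa B$, including the contribution of the constraint-damping term $-2\gamma_1 t^{-1}\,dt\otimes_s(\cdot)$ and $+\gamma_2 t^{-1}(\iota_{\nabla t}(\cdot))\ul g$, and verify that it produces exactly the shift that makes $s<\half-C$ the sharp condition. Relatedly, one must be careful that the support/boundary condition at $\pa B$ is compatible with the flow---i.e.\ that $\pa B$ is a "sink" for the relevant component so that the one-sided estimates close---and that the module $\ul\cM$ is preserved under the flow and under taking adjoints; this bookkeeping near $\pa B$ is where the analogue of the edge-type structure from \S\ref{SssISysLin} enters. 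Once the radial point thresholds are pinned down, steps (4)--(6) are routine applications of the framework; I would lean on \cite[Theorem~2.21]{HintzVasySemilinear} and \cite{VasyMicroKerrdS} for the precise statements.
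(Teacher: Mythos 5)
Your overall framework (Vasy-style Fredholm analysis with radial point estimates at $N^*\pa B$, threshold shifted by the first-order constraint-damping term, semiclassical rescaling in $\la\sigma\ra^{-1}$ for the high-energy bound, and module commutation for $k\geq 1$) is the same as the paper's, and your instinct to verify that the $\gamma_1,\gamma_2$-terms do not worsen the threshold is exactly the computation the paper performs: the eigenvalues of $\sigma_1\bigl(t^3(\ul\tdel^*-\delta_{\ul g}^*)\delta_{\ul g}G_{\ul g}t^{-1}\bigr)|_{N^*\pa B}$ are $\geq 0$, so the threshold remains $s<\half+\Im\sigma$, which $s<\half-C$ guarantees on $\Im\sigma>-C$. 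Two points, however, do not survive contact with the actual geometry. First, a minor one: $\wh{\ul L}(\sigma)$ is \emph{elliptic} in $B^\circ$ (cf.\ $-2\wh{\ul L}(0)=R^{-2}D_R R^2(1-R^2)D_R+R^{-2}\slDelta_\omega+2$), so the entire characteristic set over $B$ is $N^*\pa B$; there is no interior null-bicharacteristic flow, and your step (2) as stated describes a structure that is not there. All of the microlocal content sits at $N^*\pa B$, which is simultaneously the radial set and the locus where the support condition is imposed.

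This is precisely why your step (4) — "the analogous estimate for the adjoint on the dual spaces" — is a genuine gap rather than a routine dualization. The dual of $\Hsupp^s(B)$ consists of \emph{extendible} distributions $\Hext^{1-s}(B^\circ)$, for which the equation $\wh{\ul L}(\sigma)^*v=g$ only holds in $B^\circ$; since the characteristic set over $B$ coincides with $N^*\pa B\subset T^*_{\pa B}\pa N$, there is no punctured neighborhood inside $B$ from which to propagate control into (or out of) the radial set, and the support condition gives no one-sided control for the adjoint problem. Treating $\pa B$ as a "Dirichlet-type" boundary does not close this estimate. The paper's device is to enlarge the domain: one works with supported spaces on $B_2:=\{|X|\leq 1+d\}$, so the radial set becomes interior, and the missing control for $\wh{\ul L}(\sigma)^*$ near $\pa B_2$ (as well as the corresponding piece of the high-energy estimate) is supplied by hyperbolic \emph{energy estimates}, since in $1<|X|$ the operator is principally a wave operator (cf.\ \cite[\S3.2]{ZworskiRevisitVasy}, \cite[Proposition~3.8]{VasyMicroKerrdS}). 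One then recovers the statement on $B$ a posteriori: if $\wh{\ul L}(\sigma)u=f$ with $\supp f\subset B$ and $u\in\Hsupp^s(B_2)$, energy estimates in $B_2\setminus B$ force $\supp u\subset B$, which yields the Fredholm property and the uniform bounds for $\wh{\ul L}(\sigma)\colon\fX^s\to\fY^{s-1}$. Without some such enlargement (or an equivalent mechanism for the adjoint estimate at $\pa B$), your proposed argument does not close.
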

\begin{proof}
  For $k=0$, this is almost the same statement as proved in \cite[\S5]{VasyMicroKerrdS}, see also \cite{BaskinVasyWunschRadMink} and the summary of the presently relevant results in \cite[\S6]{BaskinVasyWunschRadMink2}; adding higher module regularity, i.e.\ $k\geq 1$, follows by a standard argument, commuting (compositions of) a well-chosen spanning set of $\ul\cM$ through the equation $\wh{\ul L}(\sigma)u=f$; see \cite[Proof of Proposition~4.4]{BaskinVasyWunschRadMink} and the discussion prior to~\cite[Theorem~5.4]{HintzVasySemilinear} for details in the closely related b-setting (i.e.\ prior to conjugation by the Mellin transform). We shall thus be brief.
  
  The only two differences between the references and the present situation are:
  \begin{enumerate*}
  \item\label{ItItipFredVB} $\hat{\ul L}(\sigma)$ is an operator acting on a vector bundle;
  \item\label{ItItipFredSupp} we are working with \emph{supported} function spaces in $B$, i.e.\ future timelike infinity, rather than globally on the boundary of the radial compactification of Minkowski space. 
  \end{enumerate*}
  Since $\hat{\ul L}(\sigma)$ is principally scalar, \eqref{ItItipFredVB} only affects the threshold regularity at the radial set $N^*\pa B$. For $\gamma=0$, $\ul L$ is simply a conjugation of $\half$ times the scalar wave operator, acting diagonally on $\ul\C^{10}$, and in this case the threshold regularity is given as $s<\half+\Im\sigma$ in \cite[\S6]{BaskinVasyWunschRadMink2}, which is implied by our assumption $s<\half-C$. For small $\gamma>0$ (depending on the choice of $s$), this assumption is still sufficient. A straightforward calculation (which we omit) shows that the eigenvalues of $\sigma_1(t^3(\ul\tdel^*-\delta_{\ul g}^*)\delta_{\ul g}G_{\ul g}t^{-1})|_{N^*\pa B}$ are $\geq 0$, hence the threshold regularity is $s<\half+\Im\sigma$ for \emph{any} $\gamma\geq 0$. (This is closely related to the fact that the components of the solution of $\ul L u=f\in\CIc(\R^4)$ do not grow at $\scri^+$; see Lemma~\ref{LemmaItipFn} below for the relation between growth/decay on $M$ and regularity on $\ol{\R^4}$.)

  In order to deal with \eqref{ItItipFredSupp}, it is convenient to first study $\wh{\ul L}(\sigma)$ acting on supported distributions on a larger ball $B_d:=\{|X|\leq 1+d\}$. The only slightly delicate part of the argument establishing the Fredholm property of $\wh{\ul L}(\sigma)$ acting between $\Hsupp^s(B_2;\ul\C^{10})$-type spaces is the adjoint estimate: we need to show that $\wh{\ul L}(\sigma)^*$ satisfies an estimate
  \begin{equation}
  \label{EqItipFredAdjoint}
    \|u\|_{\Hext^{1-s}(B_2^\circ)}\lesssim \|\wh{\ul L}(\sigma)^*u\|_{\Hext^{-s}(B_2^\circ)}+\|u\|_{\Hext^{s_0}(B_2^\circ)}
  \end{equation}
  for some $s_0<1-s$; here $\Hext^s(B_2^\circ)$ denotes extendible distributions, i.e.\ restrictions of $\Hloc^s$ sections on $\pa N$ to $B_2^\circ$. This estimate however is straightforward to obtain by combining elliptic, real principal type, and radial point estimates in $B_1$, as in the references, with energy estimates for $\wh{\ul L}(\sigma)^*$ which is a \emph{wave} operator (on the principal symbol level) in $B_2\setminus B_{1/2}$, see e.g.\ \cite[\S3.2]{ZworskiRevisitVasy} where our $\wh{\ul L}(\sigma)^*$ is denoted $P$. High energy estimates for $\wh{\ul L}(\sigma)$ on $\Hsupp^s(B_2)$-type spaces follow by similar arguments (using \cite[Proposition~3.8]{VasyMicroKerrdS} for the energy estimate).

  Suppose now $\wh{\ul L}(\sigma)u=f\in\Hsupp^{s-1}(B)$ with $u\in\Hsupp^s(B_2)$. Then energy estimates in $B_2\setminus B$ imply $\supp u\subset B$. This and the Fredholm property of $\wh{\ul L}$ on $B_2$ yield the desired Fredholm property of $\wh{\ul L}\colon\fX^s\to\fY^{s-1}$ (specifically, the finite codimensionality of the range). Similarly, the high energy estimates on $B_2$ imply those on $B$, finishing the proof.
\end{proof}

\begin{lemma}
\label{LemmaItipResQual}
  For small $\gamma\geq 0$, all resonances $\sigma\in\C$ of $\ul L$ satisfy $\Im\sigma<0$.
\end{lemma}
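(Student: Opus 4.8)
The claim is a perturbation statement off the case $\gamma_1=\gamma_2=0$, where $\ul L$ reduces to $\half\Box_{\ul g}$ acting diagonally on $\ul\C^{10}$ (in the coordinate-differential trivialization of $S^2\,\Tsc^*\,{}^0\ol{\R^4}$), conjugated by the density-type factor $t^3(\cdot)t^{-1}$. So the first step is to recall, from the references \cite{BaskinVasyWunschRadMink,VasyMicroKerrdS,BaskinVasyWunschRadMink2}, the precise resonance structure of the \emph{scalar} wave operator on the radial compactification: the relevant normal operator family $\wh{t^3\Box_{\ul g}t^{-1}}(\sigma)$ has no resonances in $\Im\sigma\geq 0$ at all (equivalently, via~\eqref{EqBgExplDS}, this is the conformal Klein--Gordon operator $\Box_{g_{\dS}}-2$ on static de~Sitter space, whose resonances in the closed upper half plane are well understood and all satisfy $\Im\sigma<0$; indeed the first resonance sits at $\sigma=-i$, cf.\ the threshold $-\!\Im\sigma\geq 1$ mentioned after~\eqref{EqISysLinX}). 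Since $\wh{\ul L}(\sigma)$ for $\gamma_1=\gamma_2=0$ is block-diagonal with ten identical scalar blocks, it too has all resonances strictly in the lower half plane.

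\textbf{Perturbation argument.} Next I would fix a large $C>0$ and the regularity $s<\tfrac12-C$ as in Lemma~\ref{LemmaItipFred}, so that $\wh{\ul L}(\sigma)\colon\fX^s\to\fY^{s-1}$ is an analytic family of Fredholm operators on $\{\Im\sigma>-C\}$ with meromorphic inverse, for \emph{all} $\gamma_1,\gamma_2\geq 0$ small (the threshold condition $s<\tfrac12+\Im\sigma$ holds uniformly in the small parameters, as recorded in the proof of Lemma~\ref{LemmaItipFred} via the nonnegativity of the eigenvalues of $\sigma_1(t^3(\ul\tdel^*-\delta_{\ul g}^*)\delta_{\ul g}G_{\ul g}t^{-1})|_{N^*\pa B}$). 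Write $\wh{\ul L}_{\gamma}(\sigma)$ for the family with damping parameters $\gamma=(\gamma_1,\gamma_2)$, so $\wh{\ul L}_\gamma(\sigma)-\wh{\ul L}_0(\sigma)$ is a zeroth order (bundle endomorphism-valued, polynomial in $\sigma$) perturbation, depending smoothly on $\gamma$, and \emph{compact} as a map $\fX^s\to\fY^{s-1}$ relative to the leading part. The set $\{\Im\sigma\geq 0\}$ is, for $\gamma=0$, free of resonances. The goal is to propagate this to small $\gamma>0$. The mechanism is the high-energy estimate of Lemma~\ref{LemmaItipFred}: there is a half-plane $\{\Im\sigma\geq 0,\ |\Re\sigma|\geq R\}$ in which $\wh{\ul L}_\gamma(\sigma)^{-1}$ exists and is polynomially bounded, \emph{uniformly} for $\gamma$ in a small neighborhood of $0$ (the constants in Lemma~\ref{LemmaItipFred} depend only on $s,C,k$, not on the small damping parameters). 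It remains to control the \emph{compact} region $K:=\{\Im\sigma\geq 0,\ |\Re\sigma|\leq R\}$. On $K$, invertibility of $\wh{\ul L}_0(\sigma)$ together with analyticity gives a uniform lower bound $\|\wh{\ul L}_0(\sigma)^{-1}\|_{\fY^{s-1}\to\fX^s}\leq M$ on the compact set $K$; since $\|\wh{\ul L}_\gamma(\sigma)-\wh{\ul L}_0(\sigma)\|_{\fX^s\to\fY^{s-1}}\leq C_R|\gamma|$ on $K$ (the difference is bounded by $|\gamma|$ times a $\sigma$-polynomial, restricted to the bounded set $K$), a Neumann series argument shows $\wh{\ul L}_\gamma(\sigma)$ is invertible on $K$ once $C_R M|\gamma|<1$. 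Combining the two regions, $\wh{\ul L}_\gamma(\sigma)^{-1}$ is holomorphic on all of $\{\Im\sigma\geq 0\}$ for $|\gamma|$ small, i.e.\ there are no resonances there; equivalently, all resonances satisfy $\Im\sigma<0$.

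\textbf{Main obstacle.} The one point requiring care is the \emph{uniformity} of the high-energy resolvent estimate of Lemma~\ref{LemmaItipFred} in the damping parameters, so that the `tail' $|\Re\sigma|\geq R$ can be cut off at an $R$ independent of $\gamma$. Inspecting the proof of that lemma, the estimate is built from elliptic, real-principal-type, and radial-point estimates plus energy estimates for the (principally) wave operator $\wh{\ul L}_\gamma(\sigma)^*$ near $B_2\setminus B_{1/2}$; the only $\gamma$-dependence enters through the subprincipal/skew-symmetric terms, which affect the radial-point threshold (shifting it by an amount $\to 0$ as $\gamma\to 0$, hence harmless for fixed $s<\tfrac12-C$) and contribute lower-order terms with $\gamma$-bounded coefficients to the semiclassical estimates. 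Thus the constants $C'_k$ in Lemma~\ref{LemmaItipFred} can be taken locally uniform in $\gamma$, which is exactly what the perturbation argument needs. A secondary, purely bookkeeping, point is that the perturbation $\wh{\ul L}_\gamma(\sigma)-\wh{\ul L}_0(\sigma)$ is genuinely relatively compact and polynomially (not worse) bounded in $\sigma$ — this follows since $(\ul\tdel^*-\delta_{\ul g}^*)$ is zeroth order and $\delta_{\ul g}G_{\ul g}$ is first order, so the damping correction to $\ul L$ is a first order operator, and after Mellin transform its $\sigma$-dependence is at most linear; on the compact set $K$ this is uniformly bounded, and the relative compactness follows from the standard Rellich-type compactness used already for the Fredholm property in Lemma~\ref{LemmaItipFred}. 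With these two observations in place the argument closes.
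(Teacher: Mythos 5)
Your proposal is correct and follows essentially the same route as the paper: the paper's proof simply notes that for $\gamma_1=\gamma_2=0$ the operator is (up to conjugation and rescaling) the scalar wave operator with divisor $-i$, and that for small $\gamma_j$ the statement follows by perturbation, citing \cite[\S2.7]{VasyMicroKerrdS}. Your write-up just makes the perturbation step explicit (uniformity of the high-energy estimates of Lemma~\ref{LemmaItipFred} in $\gamma$ to cut the tail at a $\gamma$-independent $R$, plus a Neumann series on the remaining compact set), which is exactly the standard argument the paper leaves to the references.
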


\begin{rmk}
\label{RmkItipDivisor}
  One can in fact compute the divisor of $\ul L$, i.e.\ the set of $(z,k)\in\C\times\N_0$ such that $\wh{\ul L}(\sigma)^{-1}$ has a pole of order $\geq k+1$ at $\sigma=z$, quite explicitly for any $\gamma$: it is contained in $-i \extcup -2 i \extcup -i(1+\gamma) \extcup -i(1+2\gamma)$, using the shorthand notation~\eqref{EqCptFPhgShorthand}.
\end{rmk}

\begin{proof}[Proof of Lemma~\usref{LemmaItipResQual}]
  For $\gamma=0$, and in the trivialization of $S^2 T^*\R^4$ by coordinate differentials, $\ul L$ acts, up to conjugation and rescaling, component-wise as the scalar wave operator on Minkowski space, for which the divisor is known to be $-i$, see \cite[\S10.1]{BaskinVasyWunschRadMink}. For small $\gamma$, $\ul L$ is a small perturbation of this, and the lemma follows. (See also \cite[\S2.7]{VasyMicroKerrdS}.)
\end{proof}

Since by equation~\eqref{EqEinNipulLL0}, $L_0-\ul L\in\rho^{1-0}\Hb^\infty\Diffb^2({}^m\ol{\R^4})$, the normal operators \emph{as b-differential operators on ${}^m\ol{\R^4}$} are the same, $N(L_0)=N(\ul L)$, hence the above results hold for $N(L_0)$ in place of $\ul L$.

We next relate the relevant function spaces on ${}^m\!M$, ${}^m\ol{\R^4}$. We only need to consider supported distributions near ${}^mi^+\subset{}^m\!M$. We drop $m$ from the notation. If $\rho_+\in\CI(M)$ denotes a defining function of $I^+$ such that $\rho_+>2$ at $I^0$, let
\[
  U := \{ \rho_+ \leq 1 \} \subset M.
\]

Let $\cM_\bop\subset\Diffb^1(\ol{\R^4})$ be the $\CI(\ol{\R^4})$-module of b-differential operators with b-principal symbol vanishing on $\Nb^*S^+$,\footnote{The b-conormal bundle $\Nb^*S^+\subset\Tb^*_{S^+}\ol{\R^4}$ is the annihilator of the space of b-vector fields tangent to $S^+$. In the coordinates~\eqref{EqCptACp1Space}, $\cM_\bop$ is spanned by $\rho\pa_\rho$, $\rho\pa_v$, $v\pa_v$, and spherical vector fields.} and define $\Hbloc^{s,k}(\ol{\R^4})$ to consist of all $u\in\Hbloc^s(\ol{\R^4})$ for which $A_1\cdots A_\ell u\in\Hbloc^s(\ol{\R^4})$ for all $0\leq\ell\leq k$, $A_j\in\cM_\bop$. Supported distributions on a compact set $V\subset\ol{\R^4}$ are denoted $\Hbsupp^{s,k}(V)$.

\begin{lemma}
\label{LemmaItipFn}
  For $a_+\in\R$, $d>-\half$, and $k\in\N_0$, the map $\beta|_{U\setminus\pa M}\colon U\setminus\pa M\xra{\cong}\beta(U)\setminus\pa\ol{\R^4}$ induces a continuous inclusion
  \begin{equation}
  \label{EqItipFnBlowDown}
    \rho_I^{a_++d-1/2}\rho_+^{a_+} \Hbsupp^{k+d}(U) \hra \rho^{a_+}\Hbsupp^{d,k}(\beta(U)),
  \end{equation}
  and conversely
  \begin{equation}
  \label{EqItipFnBlowDown2}
    \rho^{a_+}\Hbsupp^{d,k}(\beta(U)) \hra \rho_I^{a_++d-1/2}\rho_+^{a_+}\Hbsupp^k(U).
  \end{equation}
\end{lemma}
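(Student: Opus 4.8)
The plan is to establish the two inclusions \eqref{EqItipFnBlowDown} and \eqref{EqItipFnBlowDown2} by a careful local analysis near the front face $\scri^+$, away from which $\beta$ is a diffeomorphism and the statement is trivial. I will work in the local coordinates $(\rho_0,\rho_I,\omega)$ on $M$ near $i^0\cap\scri^+$ of \eqref{EqCptASplCoords}, and switch to coordinates adapted to a point of $(\scri^+)^\circ$ when appropriate; near $i^+\cap\scri^+$ one uses \eqref{EqBgipNullDefFn} instead, but the computation is structurally identical. Recall that $\beta$ blows up $S^+=\{\rho=0,\,v=0\}\subset\ol{\R^4}$, so downstairs a defining function of the boundary is $\rho=\rho_0\rho_I$ while $\rho_I$ is an extra defining function of $\scri^+$ upstairs; the fiber variable along $\scri^+$ is (up to bounded factors) $s=v/\rho$, and polar coordinates around $S^+$ are $(\rho_0,\rho_I)\mapsto(\rho=\rho_0\rho_I,\ v=-\rho_I)$ near $i^0$. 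The key book-keeping fact is the relation between the two $L^2$ densities: the b-density $|\tfrac{d\rho_0}{\rho_0}\tfrac{d\rho_I}{\rho_I}\,d\slg|$ on $M$ and the b-density $|\tfrac{d\rho}{\rho}\tfrac{dv}{v}\,d\slg|$ on $\ol{\R^4}$ (wherever the latter makes sense, i.e.\ away from $v=0$) differ by a Jacobian factor; more relevantly, pushing forward $\beta_*$ an $L^2_{\bop}$-function on $M$ to $\ol{\R^4}$, one loses a power of $\rho_I^{1/2}$, which is the origin of the shift $a_++d-1/2$ in the weights. This is exactly the phenomenon flagged in the footnote after \eqref{EqISysLinEqIEst}, where $x=\rho_I^{1/2}$ is the natural edge variable.

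First I would prove \eqref{EqItipFnBlowDown}. Fix $u$ supported in $U$ with $\rho_I^{-(a_++d-1/2)}\rho_+^{-a_+}u\in\Hbsupp^{k+d}(U)$. Since $\beta^*(\rho\pa_\rho),\ \beta^*(\rho\pa_v),\ \beta^*(v\pa_v)$ and spherical vector fields on $\ol{\R^4}$ all lift to $\Vb(M)$ — indeed, $\cM_{\bop}$ downstairs, whose principal symbol vanishes on $\Nb^*S^+$, lifts into $\Vb(M)$ (these are the b-vector fields \emph{tangent} to $S^+$, cf.\ the discussion after \eqref{EqCptScriNull}) — applying $\ell\leq k$ such operators $A_{j}\in\cM_{\bop}$ to $u$ and then estimating the resulting function in $\Hbloc^d$ downstairs reduces to the case $k=0$ for each component. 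For $k=0$ one must show that $\rho^{-a_+}\beta_*u\in\Hbsupp^{d}(\beta(U))$. Away from $\scri^+$ this is immediate since $\beta$ is a diffeomorphism there and $\rho_I$ is bounded above and below. Near $\scri^+$, I would argue by interpolation: for $d$ a nonnegative integer, $\Hb^{k+d}(U)$ control means $\rho_I^{-(a_++d-1/2)}\rho_+^{-a_+}u$ and up to $k+d$ of its b-derivatives on $M$ lie in $L^2_{\bop}(M)$; changing variables to $\ol{\R^4}$, each b-derivative downstairs in $\cM_{\bop}$ corresponds to a b-derivative upstairs (no loss), but a general b-derivative on $\ol{\R^4}$, e.g.\ $\pa_v$ itself, corresponds to $\rho_I^{-1}$ times a fiber-tangent b-derivative upstairs (cf.\ $\pa_v\in\rho_I^{-1}\Vb(M)$, used repeatedly in \S\ref{SsEinN}), so that $d$ \emph{general} b-derivatives downstairs cost $d$ powers of $\rho_I$ — this is where the $+d$ in the regularity upstairs and the $+d$ in the weight exponent are consumed simultaneously. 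Tracking the density Jacobian contributes the remaining $\rho_I^{-1/2}$, and the hypothesis $d>-1/2$ guarantees the weight $a_++d-1/2$ is admissible (i.e.\ the leftover power of $\rho_I$ in the integrand is locally integrable against $\tfrac{dv}{v}$, equivalently against the smooth density in $v$). For non-integer $d$ one interpolates, or equivalently uses that all the relevant maps are continuous on the scale of weighted b-Sobolev spaces. The converse inclusion \eqref{EqItipFnBlowDown2} is proved by running the same computation in reverse: given $v\in\rho^{a_+}\Hbsupp^{d,k}(\beta(U))$ one pulls back, and now the gain is that $k$-fold $\cM_{\bop}$-regularity downstairs pulls back to $k$-fold $\Vb(M)$-regularity upstairs with the \emph{same} weight, so one only needs the base $d$ regularity upstairs (hence $\Hbsupp^k$, not $\Hbsupp^{k+d}$, on the right of \eqref{EqItipFnBlowDown2}); the weight again shifts by $d-1/2$ because of the Jacobian and the $\rho_I^{-1}$ in $\pa_v$.

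The main obstacle, and the step I would spend the most care on, is the precise accounting of the $\rho_I$-weight shift: one must verify that (a) the density Jacobian $\beta^*|\tfrac{d\rho}{\rho}\tfrac{dv}{v}\,d\slg| = (\text{bounded, nonvanishing})\cdot\rho_I\cdot|\tfrac{d\rho_0}{\rho_0}\tfrac{d\rho_I}{\rho_I}\,d\slg|$ (so pushing forward $L^2_{\bop}(M)\to L^2_{\bop}(\ol{\R^4})$ is an isometry up to the factor $\rho_I^{\mp 1/2}$), and (b) a general element of $\cM_{\bop}$ on $\ol{\R^4}$, when lifted to $M$, lies in $\rho_I^{-1}$ times the module generated by fiber-tangent b-vector fields plus $\Vb(M)$ — but crucially, those lifts landing in the "bad" $\rho_I^{-1}$ part are exactly the ones transverse to the fibers of $\scri^+$, and each such costs exactly one power of $\rho_I$. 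I would make (b) explicit by writing out $\pa_\rho,\pa_v$ in terms of $\rho_0\pa_{\rho_0},\rho_I\pa_{\rho_I}$ using \eqref{EqCptASplCoords}, exactly as in \eqref{EqCptASplNullExpl}, and confirming that $v\pa_v,\rho\pa_\rho,\rho\pa_v$ lift to $\Vb(M)$ while $\pa_v$ lifts to $-\rho_I^{-1}\cdot\rho_I\pa_{\rho_I}$ modulo $\Vb(M)$. Once these two points are pinned down, both inclusions follow by the reduction-to-$k=0$ commutation argument and a change of variables in the integral; the hypothesis $d>-1/2$ enters precisely to keep the resulting weight on the "receiving" side within the range where the b-Sobolev space is defined (equivalently, to make the leftover power of $\rho_I$ paired against $dv$ locally $L^1$). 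I do not anticipate any difficulty at $i^+\cap\scri^+$ beyond replacing $(\rho_0,\rho_I)$ by $(\ringrho_+,\rho_I)$; the supported-distribution condition is preserved under $\beta$ since $\beta({}^mi^+)=B$ and $\beta$ restricts to a homeomorphism $i^+\to B$ by Lemma~\ref{LemmaCptAComp}.
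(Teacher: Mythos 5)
Your treatment of \eqref{EqItipFnBlowDown} is essentially the paper's: reduce to $a_+=0$, $k=0$ using that $\cM_\bop$ lifts into $\Diffb(M)$, get the $d=0$ case from the density relation ($\rho_I$ times a b-density on $M$ pushes forward to a b-density on $\ol{\R^4}$), handle integer $d\geq 0$ by Leibniz using $\Vb(\ol{\R^4})\subset\rho_I^{-1}\Vb(M)$, and interpolate. However, there are two genuine gaps. The most serious one concerns \eqref{EqItipFnBlowDown2}: it is \emph{not} obtained by ``running the same computation in reverse.'' The Jacobian factor and the relation $\pa_v\in\rho_I^{-1}\Vb(M)$ only convert weighted regularity upstairs into regularity downstairs; they cannot produce the extra decay $\rho_I^{d}$ upstairs from mere b-regularity downstairs. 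Concretely, after pulling back, your bookkeeping gives (roughly) $W(\beta^*v)\in\rho_I^{1/2}L^2_\bop(U)$ for $W\in\Vb(M)$, but to conclude $\beta^*v\in\rho_I^{1/2}L^2_\bop(U)$ one must integrate this derivative bound from a region where $\beta^*v$ vanishes; a conormal function equal to $1$ on $\beta(U)$ near $S^+$ pulls back with no decay at $\scri^+$ whatsoever, and it is exactly the supported $H^d$-regularity across the lateral boundary that rules this out when $d\geq\half$. The paper's mechanism, absent from your sketch, is the one-dimensional Hardy inequality $\|x^{-1}u\|_{L^2}\lesssim\|u'\|_{L^2}$ (and its iterated version for integer $d$) applied in the direction transverse to $\beta(\pa U)$, where $x$ is a defining function of $\beta(\pa U)$ \emph{within} $\ol{\R^4}$; since $S^+$ lies in the closure of $\beta(\pa U)$, the pullback $\beta^*x$ vanishes at $\scri^+$ and is hence a bounded multiple of $\rho_I$, which is how regularity-plus-support downstairs becomes decay at $\scri^+$ upstairs. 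You mention the support condition only qualitatively at the very end and never use it quantitatively, so the claimed weight gain is unproved.

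The second gap is the range of $d$: you only treat $d\geq 0$ (integers plus interpolation), but the lemma is stated for all $d>-\half$, and in the application immediately following it in \S\ref{SsItip} one takes $d\in(-\half,-\half-a'_+)$, which is negative once $a'_+$ is close to $0$ — so the negative range is the one actually needed. The paper obtains $d\in(-\half,0]$ by dualizing the $d\in[0,\half)$ cases with respect to $L^2_\bop$, using that supported and extendible b-Sobolev spaces are naturally isomorphic for exponents of absolute value less than $\half$; this is also the true role of the hypothesis $d>-\half$, rather than the ``local integrability of a leftover power of $\rho_I$'' you invoke (weighted b-Sobolev spaces make sense for arbitrary weights, so admissibility of the weight is not the issue). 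Your interpolation remark cannot cover this case, since you have no negative-order endpoint to interpolate with.
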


Thus, given the condition on supports, b-regularity near $S^+$ is, apart from losses in module regularity, the same as decay at $\scri^+$. See Figure~\ref{FigItipNbh}. A version of the inclusion~\eqref{EqItipFnBlowDown2} is (implicitly) a key ingredient of~\cite{BaskinVasyWunschRadMink2}, see in particular \S9.2 there.

\begin{figure}[!ht]
\includegraphics{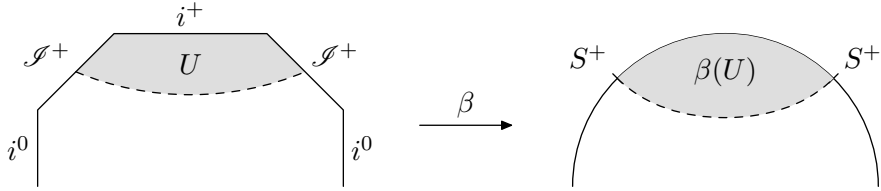}
\caption{The neighborhood $U$ of $I^+\subset M$ as well as its image in $\ol{\R^4}$ under the blow-down map $\beta$.}
\label{FigItipNbh}
\end{figure}

\begin{proof}[Proof of Lemma~\usref{LemmaItipFn}]
  First consider~\eqref{EqItipFnBlowDown}. Dividing by $\rho^{a_+}=\rho_I^{a_+}\rho_+^{a_+}$, it suffices to prove this for $a_+=0$. Furthermore, elements of $\cM_\bop$ lift to b-differential operators on $M$; in fact, $\Diffb^1(M)$ is generated, over $\CI(M)$, by the lift of $\cM_\bop$ to $M$. Therefore, it suffices to consider the case $k=0$ and prove
  \begin{equation}
  \label{EqItipFnBlowDownRed}
    \rho_I^{d-1/2} \Hbsupp^d(U) \hra \Hbsupp^d(\beta(U)),\ \ d>-\half.
  \end{equation}
  For $d=0$, this is a consequence of the fact that $\rho_I$ times a b-density on $M$ pushes forward to a b-density on $\ol{\R^4}$, cf.\ \eqref{EqBgscriL2Relation}. Next, note that $\Vb(\ol{\R^4})$ lifts to $\rho_I^{-1}\Vb(M)$ and thus maps $\rho_I^\alpha\Hbloc^s(M)\to\rho_I^{\alpha-1}\Hbloc^{s-1}(M)$; the Leibniz rule thus reduces the case $d\in\N$ to the already established case $d=0$. For general $d\geq 0$, \eqref{EqItipFnBlowDownRed} follows by interpolation; we discuss $d\in(-\half,0]$ below.

  For~\eqref{EqItipFnBlowDown2}, we again only need to consider $a_+=0$, $k=0$, and prove
  \begin{equation}
  \label{EqItipFnBlownDown2Red}
    \Hbsupp^d(\beta(U)) \hra \rho_I^{d-1/2}L^2_\bop(U) \cong \rho_I^d L^2_\bop(\beta(U)).
  \end{equation}
  For $d=0$, this is clear; for $d=1$, integrating the 1-dimensional Hardy inequality, $\|x^{-1}u\|_{L^2(\R_+)}\lesssim\|u'\|_{L^2(\R_+)}$, $u\in\CIc(\R_+)$, in fact gives $\Hbsupp^1(\beta(U))\hra x L^2_\bop(\beta(U))$, where $x$ is a defining function for $\beta(\pa U)$ \emph{within $\ol{\R^4}$}. In particular, $\beta^*x\in\CI(M)$ vanishes at $\scri^+$ and is hence a bounded multiple of $\rho_I$, from which~\eqref{EqItipFnBlownDown2Red} follows. For general $d\in\N$, we use the following generalization of the Hardy inequality: for $u\in\CIc(\R_+)$,
  \begin{align*}
    \|x^{-d}u\|_{L^2} &= \biggl\|\int_0^1\int_0^{s_2}\cdots\int_0^{s_d} u^{(d)}(t x)\,dt\,dt_d\cdots dt_2\,dx\biggr\|_{L^2} \\
      &\leq \int_0^1\int_0^{s_2}\cdots\int_0^{s_d} \|u^{(d)}(t\cdot)\|_{L^2}\,dt\,dt_d\cdots dt_2\,dx \\
      &= \frac{2^{2 d}d!}{(2 d)!} \|u^{(d)}\|_{L^2}.
  \end{align*}
  For real $d\geq 0$, \eqref{EqItipFnBlownDown2Red} again follows by interpolation.

  For $d\in(-\half,0]$, we dualize~\eqref{EqItipFnBlowDownRed} with respect to $L^2_\bop(\beta(U))$ and thus need to show $\Hbext^e(\beta(U))\hra\rho_I^{e-1/2}\Hbext^e(U)$, $e=-d\in[0,1/2)$. But this follows from~\eqref{EqItipFnBlowDown2}, as in this regularity range, supported and extendible Sobolev spaces are naturally isomorphic \cite[\S4.5]{TaylorPDE}. Similarly, \eqref{EqItipFnBlownDown2Red} for $d\in(-\half,0]$ follows from~\eqref{EqItipFnBlowDownRed} for $d\in[0,\half)$ by dualization.
\end{proof}

Returning to the proof of Theorem~\ref{ThmIt}, we have already proved $(1-\chi)u\in\cX^\infty$ where $\chi=\chi(\rho_+)$ is identically $1$ for $\rho_+\leq\half$ and vanishes for $\rho_+\geq 1$. Consider $\chi u\in\rho_I^{-0}\rho_+^{a_+}\Hbsupp^\infty(U)$, $a_+<b_+$, which satisfies
\[
  L_h \chi u=f_1:=\chi f+[L_h,\chi]u\in \rho_I^{-1-0}\rho_+^{b_+}\Hbsupp^\infty(U),
\]
where we use that $[L_h,\chi]u$ is supported away from $I^+$. Let
\[
  a'_+=\min(a_+ +1+b_+,b_+)<0,
\]
and fix $d\in(-\half,-\half-a'_+)$, then $L_h-N(L_0)\in \rho_I^{-1-0}\rho_+^{1+b_+}\Hb^\infty(M)\cdot\Diffb^2(M)$ (see Lemma~\ref{LemmaEinNi0p}) and Lemma~\ref{LemmaItipFn} yield
\begin{equation}
\label{EqItipEqn}
  N(L_0)\chi u =: f_2 \in \rho_I^{-1-0}\rho_+^{a'_+}\Hbsupp^\infty(U) \hra \rho^{a'_+}\Hbsupp^{d,\infty}(\beta(U)).
\end{equation}
Shrinking $U$ if necessary, we may assume that $t>1+r_*$ in $U$. It then suffices to use dilation-invariant operators on ${}^m\ol{\R^4}$ to measure module regularity at ${}^mS^+$. Indeed, for $m=0$ and thus $r_*=r$ (the discussion for general $m$ being similar), recall that with $R=|X|$, $\omega=X/|X|$, we can take $\tau\pa_\tau$, $(1-R)\pa_R$, $\pa_\omega$, and $\tau\pa_R$ as generators of $\cM_\bop$; but $\tau\pa_R=c(1-R)\pa_R$ with $c=\tau/(1-R)\in[0,1]$ bounded. Write~\eqref{EqItipEqn} using the Mellin transform in $\tau$ as
\[
  \chi u = \frac{1}{2\pi}\int_{\Im\sigma=-\alpha} \tau^{i\sigma}\wh{\ul L}(\sigma)^{-1}\wh{f_2}(\sigma)\,d\sigma,
\]
initially for $\alpha=-a_+$; then $\wh{f_2}(\sigma)$ is holomorphic in $\Im\sigma>-a'_+$ with values in $\Hsupp^{d,\infty}(B;\ul\C^{10})$, and in fact extends by continuity to
\begin{equation}
\label{EqItipF2Mellin}
  \wh{f_2}(\sigma) \in L^2\Bigl(\{\Im\sigma=-a'_+\}; \la\sigma\ra^{-d-N}\Hsupp_{\la\sigma\ra^{-1}}^{d,N}(B;\ul\C^{10})\Bigr) \quad (\forall\,N).
\end{equation}
By Lemmas~\ref{LemmaItipFred} and~\ref{LemmaItipResQual}, $\wh{\ul L}(\sigma)^{-1}\wh{f_2}(\sigma)$ is thus holomorphic in $\Im\sigma>-a'_+$ as well, with values in $\Hsupp^{d+1,\infty}$, extending by continuity to the space in~\eqref{EqItipF2Mellin} with $d$ replaced by $d+1$; therefore $\chi u\in \rho^{a'_+}\Hbsupp^{d+1,\infty}(\beta(U))$, so $\chi u\in\rho_I^{-0}\rho_+^{a'_+}\Hbsupp^\infty(U)$ by Lemma~\ref{LemmaItipFn}, as we may choose $d$ arbitrarily close to $-\half-a'_+$. This improves the weight of $u$ at $I^+$ by $a'_+-a_+$; iterating the argument gives $\chi u\in\rho_I^{-0}\rho_+^{b_+}\Hbsupp^\infty(U)$.

\subsection{Asymptotics near \texorpdfstring{$\scri^+\cap I^+$}{the future boundary of the radiation face}}
\label{SsItipScri}

It remains to show that the precise asymptotics at $\scri^+$ which we established away from $I^+$ in \S\ref{SsIti0Scri} extend all the way up to $I^+$, with the weight $\rho_+^{b_+}$ at $I^+$. This is completely parallel to the arguments in \S\ref{SsIti0Scri}: working near $I^+$, we now have $L^0_h u\in\cY^{\infty;b_0,b_I,b'_I,b_+}+\Hb^{\infty;b_0,-1+b'_I-0,b_+}$, so with coordinates $\rho_I,\rho_+$ as in~\eqref{EqBgipNullDefFn} (dropping the superscript `$\circ$'),
\[
  \rho_I\pa_{\rho_I}(\rho_+\pa_{\rho_+}-\rho_I\pa_{\rho_I})u_{1 1}^c\in\rho_I^{b_I}\rho_+^{b_+}\Hb^\infty;
\]
now, in $\rho_+>0$ (and away from $I^0$), $u_{1 1}^c$ has a leading term at $\scri^+$, plus a remainder in $\rho_I^{b_I}\Hb^\infty$, while in $\rho_I>0$, $u_{1 1}^c=\pi_{1 1}^c u$ lies in $\rho_+^{b_+}\Hb^\infty$. Using Lemma~\ref{LemmaPhgODE} to integrate the above equation for $u_{1 1}^c$, we conclude that $u_{1 1}^c$ is the sum of a leading term in $\rho_0^{b_0}\rho_+^{b_+}\Hb^\infty(\scri^+)$ and a remainder in $\rho_0^{b_0}\rho_I^{b_I}\rho_+^{b_+}\Hb^\infty$, as desired. Similarly, we obtain the desired asymptotic behavior, uniformly up to $I^+$, of $u_{1 1}$ and then of $u_0$. Therefore, $u\in\cX^{\infty;b_0,b_I,b'_I,b_+}$, completing the proof of Theorem~\ref{ThmIt}.

\section{Proof of global stability}
\label{SPf}

We now make Theorem~\ref{ThmIt} quantitative by keeping track of the number of derivatives used and proving \emph{tame estimates}, the crucial ingredient in Nash--Moser iteration. Fix the mass $m$; for weights $b_0,b_I,b'_I,b_+$ as in Definitions~\ref{DefEinF} and \ref{DefEinFY}, let
\[
  B^k := \cX^{k;b_0,b_I,b'_I,b_+}; \quad
  \bfB^k := \cY^{k;b_0,b_I,b'_I,b_+} \oplus D^{k;b_0},\ \ D^{k;b_0}:=\rho_0^{b_0}\Hb^{k+1}(\Sigma) \oplus \rho_0^{b_0}\Hb^k(\Sigma).
\]
Let us write $|\cdot|_s$, resp.\ $\|\cdot\|_s$, for the norm on $B^s$, resp.\ $\bfB^s$. Put
\[
  B^\infty = \bigcap_{k\in\N} B^k,\quad
  \bfB^\infty = \bigcap_{k\in\N} \bfB^k.
\]
We recall Saint-Raymond's version \cite{SaintRaymondNashMoser} of the Nash--Moser inverse function theorem:

\begin{thm}[See \cite{SaintRaymondNashMoser}]
\label{ThmPfNM}
  Let $\phi\colon B^\infty\to\bfB^\infty$ be a $\cC^2$ map, and assume that there exist $d\in\N$, $\eps>0$, and constants $C_1,C_2,(C_s)_{s\geq d}$ such that for any $h,u,v\in B^\infty$ with $|h|_{3 d}<\eps$,
  \begin{subequations}
  \begin{align}
  \label{EqPfNM1}
    \|\phi(h)\|_s &\leq C_s(1+|h|_{s+d})\ \ \forall\,s\geq d, \\
  \label{EqPfNM2}
    \|\phi'(h)u\|_{2 d} &\leq C_1|u|_{3 d}, \\
  \label{EqPfNM3}
    \|\phi''(h)(u,v)\|_{2 d} &\leq C_2|u|_{3 d}|v|_{3 d}.
  \end{align}
  \end{subequations}
  Moreover, assume that for such $h$, there exists an operator $\psi(h)\colon\bfB^\infty\to B^\infty$ satisfying $\phi'(h)\psi(h)f=f$ and the tame estimate
  \begin{equation}
  \label{EqPfNMTame}
    |\psi(h)f|_s \leq C_s(\|f\|_{s+d}+|h|_{s+d}\|f\|_{2 d}),\ \ \forall\,s\geq d,\ f\in\bfB^\infty.
  \end{equation}
  Then if $\|\phi(0)\|_{2 d}<c$, where $c>0$ is a constant depending on $\eps$ and $C_s$ for $s\leq D$, where $D=16 d^2+43 d+24$, there exists $h\in B^\infty$, $|h|_{3 d}<\eps$, such that $\phi(h)=0$.
\end{thm}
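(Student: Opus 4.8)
Since Theorem~\ref{ThmPfNM} is X.~Saint-Raymond's form of the Nash--Moser inverse function theorem, in the paper one simply cites \cite{SaintRaymondNashMoser}; what follows is a sketch of how a theorem of this type is proved. The plan is to run a modified Newton iteration with smoothing. First I would fix a family of smoothing operators $(S_\theta)_{\theta\geq 1}$ on the Banach scales $(B^s)$ and $(\bfB^s)$, with the standard estimates $|S_\theta u|_s\lesssim\theta^{(s-t)_+}|u|_t$ for all $s,t$ and $|(\mathrm{Id}-S_\theta)u|_s\lesssim\theta^{-(t-s)}|u|_t$ for $t\geq s$; for the b-Sobolev scales used here these exist by the logarithmic change of coordinates that reduces everything to the standard Sobolev situation on $\R^n$. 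Then I would set $h_0=0$, choose $\theta_n$ with $\theta_{n+1}=\theta_n^{\kappa}$ for a fixed $\kappa\in(1,2)$, and define $h_{n+1}=h_n+\delta h_n$ with $\delta h_n=-S_{\theta_n}\psi(h_n)g_n$, where $g_n$ is the accumulated error, $g_0=\phi(0)$ and $g_{n+1}=\phi(h_{n+1})-\sum_{k\le n}\phi'(h_k)\delta h_k$. Equivalently, $g_{n+1}$ collects the quadratic Newton remainder $\phi(h_{n+1})-\phi(h_n)-\phi'(h_n)\delta h_n$, bounded via \eqref{EqPfNM3}, together with the smoothing error $\phi'(h_n)(S_{\theta_n}-\mathrm{Id})\psi(h_n)g_n$, bounded via \eqref{EqPfNM2} and the smoothing estimates (using that $\phi'(h_n)\psi(h_n)=\mathrm{Id}$).

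The core of the argument is a simultaneous induction. Assuming $|h_n|_{3d}<\eps$, I would prove: (i) $|\delta h_k|_s\lesssim\theta_k^{\,s-a}$ for $s$ in a band containing $[2d,3d]$, using the tame estimate \eqref{EqPfNMTame} for $\psi$ together with interpolation and the smoothing bounds; (ii) consequently $|h_n|_s$ stays bounded for $s\leq a$ and grows at most like $\theta_n^{(s-a)_+}$ for $s>a$, using \eqref{EqPfNM1}; and (iii) $\|g_n\|_{2d}\lesssim\theta_n^{-b}$ for some $b>0$, which is where the quadratic gain of Newton's method enters and beats the fixed derivative loss $d$. Closing this induction pins down admissible ranges for $\kappa$, $a$, $b$ and forces the smallness threshold $c$ to depend on $\eps$ and on the $C_s$ only up to $s=D$, with $D=16d^2+43d+24$ being exactly the highest-order norm that appears once the bookkeeping is unwound. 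Granting the induction, $\sum_n|\delta h_n|_{3d}<\infty$ gives $h_n\to h$ in $B^{3d}$ with $|h|_{3d}<\eps$; boundedness of $(h_n)$ in every $B^s$ plus interpolation upgrades this to convergence in all $B^s$, so $h\in B^\infty$; and $\|\phi(h_n)\|_{2d}\to 0$ by (iii) (plus the tame bounds on the summed correction terms), whence $\phi(h)=0$.

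The main obstacle is steps (ii)--(iii): one must verify that the super-geometric decay of the errors $g_n$ survives both the polynomial-in-$\theta_n$ growth of the high norms $|h_n|_s$ and the fixed loss of $d$ derivatives in $\psi$ and in $\phi'$. This is a matter of tuning $\theta_n$ and the interpolation exponents so that every competing power of $\theta_n$ carries the right sign, and it is precisely the point at which the \emph{tameness} of the right inverse in \eqref{EqPfNMTame}---as opposed to mere boundedness---is indispensable: a non-tame $\psi$ would let the high norms grow too fast to be reabsorbed into the scheme. Once the numerology is arranged correctly (this is the content of Saint-Raymond's bookkeeping, and the origin of the explicit constant $D$), the remaining steps are routine applications of interpolation and of the hypotheses \eqref{EqPfNM1}--\eqref{EqPfNMTame}.
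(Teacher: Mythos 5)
The paper does not prove this theorem: it is quoted from Saint-Raymond and invoked as a black box, the only ingredient supplied in the surrounding text being the construction of the smoothing operators on the specific scales used here. Your sketch correctly reproduces the standard Newton-iteration-with-smoothing argument of the cited reference (tame right inverse, induction balancing quadratic error decay against polynomial growth of high norms), so it is consistent with the proof the paper relies on; the one small caveat is that for $B^s=\cX^s$ the smoothing family is not obtained purely by a logarithmic change of variables, but by smoothing the conormal remainder and the leading terms at $\scri^+$ separately, as the paper indicates.
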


This uses a family of smoothing operators $(S_\theta)_{\theta>1}\colon B^\infty\to B^\infty$ satisfying the estimates
\begin{equation}
\label{EqPfNMSmoothing}
  |S_\theta v|_s\leq C_{s,t}\theta^{s-t}|v|_t,\ \ s\geq t; \qquad |v-S_\theta v|_s\leq C_{s,t}\theta^{s-t}|v|_t,\ \ s\leq t.
\end{equation}
Acting on standard Sobolev spaces $H^s(\R^n)$, the existence of such a family is proved in \cite[Appendix]{SaintRaymondNashMoser}, and the extension to weighted b-Sobolev spaces on manifolds with corners is straightforward: the arguments on manifolds with boundary given in~\cite[\S11.2]{HintzVasyKdSStability} generalize directly to the corner setting. For the spaces $B^s=\cX^s$ at hand then, one writes $h\in B^\infty$ as $\chi_1 h+(1-\chi_1)h$, with $\chi_j\in\CI(M)$, $j=0,1,2$, identically $1$ in a small neighborhood of $\scri^+$, and $\chi_{j+1}\equiv 1$ on $\supp\chi_j$. We smooth out $(1-\chi_1)h\in\rho_0^{b_0}\rho_I^\infty\rho_+^{b_+}\Hb^\infty(M)$ (see~\eqref{EqCptFInftyW} for the notation $\rho_I^\infty$) as usual and cut the result off using $(1-\chi_0)$; since we are working away from $\scri^+$, the weight of $\rho_I$ plays no role here. (The proof of \cite[Lemma~5.9]{HintzVasyQuasilinearKdS} shows that cutting off the smoothing of $(1-\chi_1)h$ away from its support does not affect the estimates~\eqref{EqPfNMSmoothing}.) Near $\scri^+$ on the other hand, we have $\chi_1 h=(\chi_1 h_\alpha)$, where we denote by $h_\alpha$ the components of $h$ in the bundle splitting~\eqref{EqCptASpl2}. The decaying components~\eqref{EqEinFGood} as well as the remainder terms $h_{\alpha,\bop}$ in \eqref{EqEinF11}--\eqref{EqEinFRest} can then be smoothed out and cut off using $\chi_2$. To smooth out the leading terms, fix a collar neighborhood of $\scri^+$; considering for example $\chi_1 h_{0 1}=\chi_0 h_{0 1}^{(0)} + \chi_1 h_{0 1,\bop}$, see~\eqref{EqEinFRest}, we smooth out $h_{0 1}^{(0)}$ in the weighted b-Sobolev space $\rho_0^{b_0}\rho_+^{b_+}\Hb^\infty(\scri^+)$, extend the result to the collar neighborhood, and cut off using $\chi_0$; similarly for the other components of $h$.

Given initial data $(h_0,h_1)\in D^\infty$, we want to apply Theorem~\ref{ThmPfNM} to the map
\begin{equation}
\label{EqPfMap}
  \phi(h)=\bigl(P(h),\,(h,\pa_\nu h)|_\Sigma-(h_0,h_1)\bigr),
\end{equation}
with $P$ given in~\eqref{EqEin}. Note that the smallness of $\phi(0)$ in particular requires $P(0)=\rho^{-3}\Ric(g_m)$ to be small. Now, $P(0)$ is nonzero only in the region where we interpolate between the mass $m$ Schwarzschild metric and the Minkowski metric (both of which are Ricci-flat!), i.e.\ on $\supp d\psi\cup\supp d\phi$ in the notation of~\eqref{EqCptAMetric1}--\eqref{EqCptAMetric}; thus in fact $P(0)\in\cA_\phg^{\emptyset,\emptyset,0}$. It is then easy to see that $\|P(0)\|_{\cY^k}\leq C_k m$ for all $k\in\N$, which is the reason why we need to assume the ADM mass $m$ to be \emph{small} to get global solvability.

For $h\in\cX^\infty$ with $|h|_3$ small, the tensor
\[
  g=g_m+\rho h
\]
is Lorentzian (by Sobolev embedding) and hence $\phi(h)$ is defined; since $P$ is a second order (nonlinear) differential operator with coefficients which are polynomials in $g^{-1}$ and up to $2$ derivatives of $g$, and since $h\mapsto(h,\pa_\nu h)|_\Sigma$ is continuous as a map $\cX^k\to D^{k-3/2}$ for $k\geq 2$, the estimate~\eqref{EqPfNM1} follows for $d=3$. The estimate~\eqref{EqPfNM2} also holds for $d=3$ and $|h|_{3 d}<\eps$ small, since the first component of $\phi'(h)u$, namely $L_h u$, is a second order linear differential operator acting on $u$, with coefficients involving at most $2$ derivatives of $h$; similarly for~\eqref{EqPfNM3}.

The existence of the right inverse $\psi(u)\colon\bfB^\infty\to B^\infty$ is the content of Theorem~\ref{ThmIt}; we merely need to determine a value for $d$ such that the tame estimate~\eqref{EqPfNMTame} holds. (As stressed in the introduction, the mere \emph{existence} of such a $d$ is clear, since the estimates on $\psi(u)$ are obtained using energy methods, integration along approximate characteristics, and inversion of a linear, smooth coefficient, model operator in \S\ref{SBg}, \S\S\ref{SsIti0Scri} and \ref{SsItipScri}, and \S\ref{SsItip}, respectively.) Consider the first term on the right in~\eqref{EqPfNMTame}: we need to quantify the loss of derivatives of the solution $v$ of $L_h u=f$, $(u,\pa_\nu u)|_\Sigma=(u_0,u_1)$, relative to the regularity $k\geq 0$ of $(f,(u_0,u_1))\in\bfB^k$.

Now, dropping the $\Hscri^1$ regularity part of Theorem~\ref{ThmBg}, we obtain $u\in\rho_0^{b_0}\rho_I^{a_I}\rho_+^{a_+}\Hb^k$, $\pi_0 u\in\rho_0^{b_0}\rho_I^{a'_I}\rho_+^{a_+}\Hb^k$. The arguments near $I^0\cap\scri^+$ in~\S\ref{SsIti0Scri} first express $u_{1 1}^c$ as the solution of a transport equation~\eqref{EqIti0Scri11c}, with the right hand side involving up to two derivatives of $u$; since integration of this equation does not regain full b-derivatives, the leading terms (and the remainder term) of $u_{1 1}^c$ lie in $\Hb^{k-2}$, with the correct weight $b_0$ at $I^0$ (and $b_I$ at $\scri^+$); next, this couples into the transport equation~\eqref{EqIti0Scri11} for $u_{1 1}$, again with up to $2$ derivatives of $u$, so integrating this yields leading and remainder terms of $u_{1 1}$ in $\Hb^{k-4}$; and similarly then $u_0\in\rho_0^{b_0}\rho_I^{b'_I}\Hb^{k-6}$ near $I^0\cap\scri^+$.

On the other hand, improving the b-weight at $I^+$ by $1+b_+$, which we may take to be arbitrarily close to $1$ by taking $b_+<0$ close to $0$, uses the rewriting~\eqref{EqItipEqn}, which due to the second order nature of $L_h-N(L_0)$ involves an error term (subsumed into $f_2$ there) with $2$ derivatives on $u$. Passing to the blow-down using Lemma~\ref{LemmaItipFn} loses at most $1$ module derivative; inverting $N(L_0)$ gains $1$ b-derivative (which is used to recover the $\rho_I^{-0}$ bound at $\scri^+$), but no module derivatives, so passing back to the blow-up, we have lost at most $3$ b-derivatives. Thus, improving the weight at $I^+$ from $a_+$ to $b_+\approx 0$ loses at most $d_+:=1+3\lceil a_+\rceil$ derivatives relative to $\Hb^k$.

These two pieces of information are combined near $\scri^+\cap I^+$ in \S\ref{SsItipScri}, where we lose at most $6$ derivatives, just as in the discussion near $I^0\cap\scri^+$, relative to the less regular of the two spaces $\Hb^{k-6}$ and $\Hb^{k-d_+}$ from above; we thus take $d=6+\max(6,d_+)$. If we use the explicit background estimate, Theorem~\ref{ThmBgExpl}, so $a_+=\tfrac32$, this gives $d_+=7$ and therefore
\[
  d=13.
\]
For this value of $d$, one may then verify the tame estimate~\eqref{EqPfNMTame} by going through the proofs of Theorems~\ref{ThmBg} and \ref{ThmIt} and proving tame estimates by exploiting Moser estimates; this is analogous to the manner in which the microlocal estimates for smooth coefficient operators in \cite[\S2]{VasyMicroKerrdS}, \cite[\S2.1]{HintzVasySemilinear} were extended to estimates for rough coefficient operators in \cite[\S\S3--6]{HintzQuasilinearDS}, which were subsequently sharpened to tame estimates in \cite[\S\S3--4]{HintzVasyQuasilinearKdS}. In the present setting, obtaining tame estimates is much simpler than in the references, as the estimates in~\S\S\ref{SBg}--\ref{SIt} are based on standard energy estimates, so one can appeal directly to the Moser estimates; or, in view of the fact that our energy estimates can be proved using positive commutators (and are indeed phrased this way here), which also underlie the tame estimates in these references, the arguments given there (using vector fields instead of microlocal commutants) apply here as well. We omit the details, but we do point out that it is key that the proofs as stated only use pointwise control of up to $1$ derivative of $h$ (via causality considerations and deformation tensors, see e.g.\ the calculation~\eqref{EqBgscriFinalDefFn} and Lemma~\ref{LemmaBgscriImprovedK}) in order to obtain the main positive terms in the commutator arguments; thus, control of $|h|_4$ suffices in this sense, that is, the constant in~\eqref{EqBgEstimate} for $k=1$ only depends on $|h|_4$. The proofs of higher b-regularity use commutation arguments, which \emph{do not} affect the principal part of $L_h$, as well as ellipticity considerations around~\eqref{EqBgipEll} which only require pointwise control of $h$ itself; correspondingly, at no point do we need to use the smallness of any higher regularity norms of $h$. (See the end of~\cite[\S6.4]{HormanderNonlinearLectures} for a related discussion.)

Next, we deal with a small technical complication stemming from the fact that for $m\neq 0$, the closure of $\{t=0\}$, on which in Theorem~\ref{ThmIBaby} we compare the initial data with those of the Schwarzschild metric in its standard form, inside of ${}^m\ol{\R^4}$ is not a smooth hypersurface when $m\neq 0$, the issue being smoothness at $\pa{}^m\ol{\R^4}$; furthermore, our discussion of linear Cauchy problems used ${}^m\Sigma\neq\overline{\{t=0\}}$ as the Cauchy surface. We resolve this issue by solving the initial value problem for a short amount of time in the radial compactification ${}^0\ol{\R^4}$, with initial surface $\{t=0\}$ (whose closure \emph{is} smooth in ${}^0\ol{\R^4}$), pushing the local solution forward to ${}^m\ol{\R^4}$, and then solving globally from there. Recall the function $t_\bop$ from~\eqref{EqCptATimeB}, and the notation~\eqref{EqCptAExpl}. (Thus, ${}^0 t_\bop$ is a rescaling of $t$, and ${}^0\Sigma=\{{}^0 t_\bop=0\}$.)

\begin{lemma}
\label{LemmaPf0}
  Fix $N$ large, and let $b_0>0$, $\eps>0$. Suppose $\gamma,k\in\CI(\R^3;S^2 T^*\R^3)$ are vacuum initial data on $\R^3$, that is, solutions of the constraint equations~\eqref{EqIConstraints}, such that for some $m\in\R$,
  \begin{equation}
  \label{EqPf0GammaTilde}
    \wt\gamma := \gamma - \chi(r)\bigl((1-\tfrac{2 m}{r})^{-1}dr^2+r^2\slg\bigr) \in \rho_0^{1+b_0}\Hb^\infty(\ol{\R^3};S^2\,\Tsc^*\ol{\R^3})
  \end{equation}
  and $k\in\rho_0^{2+b_0}\Hb^\infty(\ol{\R^3};S^2\,\Tsc^*\ol{\R^3})$ satisfy
  \begin{equation}
  \label{EqPf0Small}
    |m|+\|\wt\gamma\|_{\rho_0^{1+b_0}\Hb^{N+1}}+\|k\|_{\rho_0^{2+b_0}\Hb^N}<\delta,
  \end{equation}
  where $\delta>0$ is a sufficiently small constant; here $\chi=\chi(r)$ is a cutoff, $\chi\equiv 0$ for $r<1$, $\chi\equiv 1$ for $r>2$. Then, identifying $\ol{\R^3}\cong{}^0\Sigma\subset{}^0\!M$ via $\R^3\ni x\mapsto(0,x)\in\R^4$, there exists a solution $g$ of the Einstein vacuum equation $\Ric(g)=0$ in the neighborhood
  \begin{equation}
  \label{EqPf0Nbh}
    U:=\{|{}^0 t_\bop|<\tfrac14\},
  \end{equation}
  attaining the data $(\gamma,k)$ at ${}^0\Sigma$ (that is, \eqref{EqIEinIVP} holds) and satisfying the gauge condition $\Ups(g;g_m)=0$; moreover, $g=g_m+\rho h$, where $h\in\rho_0^{b_0}\Hb^\infty(U;S^2\,\Tsc^*\,{}^0\ol{\R^4})$ has norm $\|h\|_{\rho_0^{b_0}\Hb^{N+1}(U)}<\eps$.
\end{lemma}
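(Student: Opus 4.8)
The statement is a short-time/local existence result near the smooth Cauchy surface ${}^0\Sigma=\{{}^0 t_\bop=0\}$ inside the \emph{radial} compactification ${}^0\ol{\R^4}$, where $\ul g$ and $g_m$ are both smooth scattering metrics (Lemma~\ref{LemmaEinNgmMink} and the discussion after it). First I would set up the Cauchy data for the hyperbolic problem: given $(\gamma,k)$ satisfying the constraints~\eqref{EqIConstraints} and the decay~\eqref{EqPf0GammaTilde}, one constructs restrictions $(h_0,h_1)=(h,\pa_\nu h)|_{{}^0\Sigma}$ of the metric perturbation (so that $g=g_m+\rho h$ has first fundamental form $-\gamma$ and second fundamental form $k$ on ${}^0\Sigma$) and, exploiting the freedom in the transversal components, arranges in addition that $\Ups(g;g_m)=0$ on ${}^0\Sigma$ as a $1$-form; since $\gamma-g_m|_{{}^0\Sigma}$ and $k$ are controlled in $\rho_0^{1+b_0}\Hb^\infty$ and $\rho_0^{2+b_0}\Hb^\infty$ respectively, and $\rho=\rho_0$ near $\pa{}^0\Sigma$, the resulting Cauchy data satisfy $h_0,h_1\in\rho_0^{b_0}\Hb^\infty(\ol{\R^3})$ with the quantitative bound $\|h_j\|_{\rho_0^{b_0}\Hb^{N+1-j}}\lesssim|m|+\|\wt\gamma\|_{\rho_0^{1+b_0}\Hb^{N+1}}+\|k\|_{\rho_0^{2+b_0}\Hb^N}<C\delta$. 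This is essentially the standard reduction of the Einstein IVP to the gauged (reduced) equation, adapted to the scattering-geometry bookkeeping.

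Next I would solve the reduced equation $P(h)=0$, $(h,\pa_\nu h)|_{{}^0\Sigma}=(h_0,h_1)$ on the small neighborhood $U=\{|{}^0 t_\bop|<\tfrac14\}$. Here $P(h)=\rho^{-3}P_0(g_m+\rho h)$ with $P_0(g)=\Ric(g)-\tdel^*\Ups(g;g_m)$ is a quasilinear wave operator whose principal part is $\tfrac12\Box_g$ (equation~\eqref{EqEinNSymbol}); since $g_m$ is a smooth scattering metric and $U$ is relatively compact away from $\scri^+$ and $i^+$ (it only touches $\pa{}^0\ol{\R^4}$ along $(i^0)^\circ$), the analysis is exactly of the type carried out in Proposition~\ref{PropBgi0}: a global-in-space (but short-time) weighted energy estimate with the multiplier $\rho_0^{-2 a_0}e^{\digamma{}^0 t_\bop}\nabla({}^0 t_\bop)$, together with the usual Picard/contraction (or local well-posedness) argument for quasilinear wave equations with smooth-coefficient background, yields a unique solution $h\in\rho_0^{b_0}\Hb^\infty(U;S^2\,\Tsc^*\,{}^0\ol{\R^4})$, and the energy inequality propagates the smallness, giving $\|h\|_{\rho_0^{b_0}\Hb^{N+1}(U)}<\eps$ provided $\delta$ is small enough. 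One subtlety worth flagging: near $\pa{}^0\ol{\R^4}$ the domain $U$ is bounded by ${}^0\Sigma$ and the two timelike hypersurfaces $\{{}^0 t_\bop=\pm\tfrac14\}$ (one checks, as in~\eqref{EqBgApproxMetric}, that $d{}^0 t_\bop$ is timelike for the b-metric $\rho^2 g_m$, hence for $\rho^2 g$ once $|h|$ is small), so the region genuinely extends to spatial infinity and the estimate must be a weighted b-estimate rather than a compactly-supported one—but this is precisely Proposition~\ref{PropBgi0}, applied both to the linearized operators arising in the iteration and, for the smallness propagation, to the difference of successive iterates.

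Finally, I would verify the gauge condition: $P(h)=0$ gives, via the second Bianchi identity, the wave equation $\delta_g G_g\tdel^* \Ups(g)=0$ (equation~\eqref{EqIEinUpsProp}) for $\omega:=\Ups(g)$; by construction $\omega|_{{}^0\Sigma}=0$, and the constraint equations~\eqref{EqIConstraints} together with $P_0(g)=0$ along ${}^0\Sigma$ force $\pa_\nu\omega|_{{}^0\Sigma}=0$ as well (the standard argument: the ${}^0\Sigma$-tangential and normal components of $P_0(g)=0$ reduce, modulo the constraints, to $\delta_g^*$ applied to $\omega$ and its transversal derivative vanishing). Uniqueness for the wave equation on $\omega$ then yields $\omega\equiv 0$ on $U$, so $g$ solves $\Ric(g)=0$ in the gauge $\Ups(g;g_m)=0$, attaining $(\gamma,k)$ at ${}^0\Sigma$, which is the assertion. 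The main obstacle is not analytic depth but care in the two bookkeeping points above: (i) constructing the Cauchy data $(h_0,h_1)$ so that they lie in the correct weighted b-space $\rho_0^{b_0}\Hb^\infty$ with quantitative smallness—this requires tracking how the decay $\rho_0^{1+b_0}$ of $\wt\gamma$ and $\rho_0^{2+b_0}$ of $k$ on $\ol{\R^3}$ translates, after the rescaling $g=g_m+\rho h$ and the identification $\rho=\rho_0$, into $\rho_0^{b_0}$ decay for $h$; and (ii) confirming that the short-time existence region $U$ from~\eqref{EqPf0Nbh}, which reaches $(i^0)^\circ$, is indeed controlled by the weighted energy estimate of Proposition~\ref{PropBgi0} rather than merely a compact-subset estimate.
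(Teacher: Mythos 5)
Your proposal is correct and follows essentially the same route as the paper: build gauged Cauchy data $(h_0,h_1)\in\rho_0^{b_0}\Hb^\infty$ from $(\gamma,k)$ with $\Ups(g;g_m)=0$ imposed on ${}^0\Sigma$, solve the reduced quasilinear equation $P(h)=0$ on $U$ by an iteration backed by the weighted b-energy estimate of Proposition~\ref{PropBgi0} (the paper invokes Nash--Moser or a simpler scheme, which matches your Picard-type argument since $U$ stays away from $\scri^+$ and $i^+$), and then propagate the gauge via the constraints and the wave equation~\eqref{EqIEinUpsProp}. The only difference is one of explicitness: the paper writes down the data construction concretely (taking $g_0=(1-\chi\tfrac{2m}{r})dt^2-\gamma$, exploiting its time-reversal symmetry to compute the second fundamental form, and fixing the remaining components of $g_1$ from the gauge condition), whereas you assert this step as standard.
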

\begin{proof}
  Note that the metric $g_m$ is smooth on $U\subset{}^0\ol{\R^4}$, as near $I^0$ it is given by the Schwarzschild metric $g_m^S$, see~\eqref{EqISchw}. Using the product decomposition $\R^4=\R_t\times\R^3_x$, we define a Lorentzian signature metric over the interior $({}^0\Sigma)^\circ=\{t=0\}$ by
  \begin{equation}
  \label{EqPf0G0}
    g_0 := (1-\chi(r)\tfrac{2 m}{r})d t^2 - \gamma \in \CI(({}^0\Sigma)^\circ;S^2 T^*\R^4),
  \end{equation}
  whose pullback to ${}^0\Sigma$ is equal to $-\gamma$. We next find $g_1\in\CI({}^0\Sigma;S^2 T^*\R^4)$ such that $k=\II_{g_0+t g_1}$; denoting by $N=(1-\chi(r)\tfrac{2 m}{r})^{-1/2}\pa_t$ the future unit normal, this is equivalent, by polarization, to
  \[
    g_0((\nabla_X^{g_0+t g_1}-\nabla_X^{g_0})X,N)=k(X,X)\ \ 
    \forall\,X\in T({}^0\Sigma)^\circ;
  \]
  Here, we view $g_0$ as a stationary metric near $t=0$, which due to its symmetry under time reversal $t\mapsto-t$ has vanishing second fundamental form: $g_0(\nabla_X^{g_0}X,N)\equiv 0$. A calculation in normal coordinates for $g_0$ shows that this is uniquely solved by
  \begin{equation}
  \label{EqPf0G1Spatial}
    g_1(X,X)=-2(N t)^{-1} k(X,X) = -2(1-\chi(r)\tfrac{2 m}{r})^{1/2}k(X,X).
  \end{equation}
  It remains to specify $g_1(N,\cdot)$ and $g_1(N,N)$, which involves the gauge condition at $t=0$; that is, for all $V\in T_{\{t=0\}}\R^4$, we require
  \begin{equation}
  \label{EqPf0G1Rest}
  \begin{split}
    -\Ups(g_0;g_m)(V) &= \bigl(\Ups(g_0+t g_1;g_m)-\Ups(g_0;g_m)\bigr)(V) \\
      &= (G_{g_0}g_1)(V,\nabla^{g_0}t) = (1-\chi(r)\tfrac{2 m}{r})^{-1/2} (G_{g_0}g_1)(V,N).
  \end{split}
  \end{equation}
  For $V\in T({}^0\Sigma)^\circ$, this determines $(G_{g_0}g_1)(V,N)=g_1(V,N)$. Lastly, if $E_1,E_2,E_3\in T({}^0\Sigma)^\circ$ completes $N$ to an orthonormal basis, this also determines $(G_{g_0}g_1)(N,N)=\half(g_1(N,N)+\sum_j g_1(E_j,E_j))$ and thus $g_1(N,N)$.

  The assumption on $\gamma$ gives
  \begin{equation}
  \label{EqPf0H0}
    h_0:=\rho_0^{-1}(g_0-g_m)\in\rho_0^{b_0}\Hb^\infty({}^0\Sigma;S^2\,\Tsc^*_{{}^0\Sigma}{}^0\ol{\R^4}).
  \end{equation}
  We claim that likewise
  \begin{equation}
  \label{EqPf0H1}
    h_1:=\rho_0^{-2}g_1\in\rho_0^{b_0}\Hb^\infty({}^0\Sigma;S^2\,\Tsc^*_{{}^0\Sigma}{}^0\ol{\R^4}).
  \end{equation}
  We introduce the extra factor of $\rho_0^{-1}$ since $\rho_0^{-1}\pa_t$ is a smooth b-vector field on ${}^0\ol{\R^4}$ near ${}^0\Sigma$ and transversal to it; that is, in~\eqref{EqBgPartialNu}, we can take
  \[
    \pa_\nu=\rho_0^{-1}\pa_t.
  \]
  Now the restriction of $h_1$ to $S^2\,\Tsc\,{}^0\Sigma$ lies in $\rho_0^{b_0}\Hb^\infty$, as follows from~\eqref{EqPf0G1Spatial}. (Recall that $\Tsc\,{}^0\Sigma$ is spanned by coordinate vector fields on $\R^3$.) To prove~\eqref{EqPf0H1}, it thus suffices to prove that $\Ups(g_0;g_m)(V)\in\rho_0^{2+b_0}\Hb^\infty$ for $V$ equal to $\pa_t$ or a coordinate vector field on $\R^3$; this however follows from~\eqref{EqPf0H0} and the local coordinate expression~\eqref{EqEinUps} of $\Ups$, as such a vector field $V$ is equal to $\rho_0$ times a b-vector field on ${}^0\ol{\R^4}$.

  This construction preserves smallness, i.e.\ we have $\|h_0\|_{\rho_0^{b_0}\Hb^{N+1}}+\|h_1\|_{\rho_0^{b_0}\Hb^N}<C\delta$ for some constant $C$. We can then solve the quasilinear wave equation $P(h)=0$ in the neighborhood $U$ of ${}^0\Sigma$, e.g.\ using Nash--Moser iteration as explained above. (Since we are not solving up to $\scri^+$ where our arguments in~\S\ref{SIt} lose derivatives, one can use a simpler iteration scheme here, see~\cite[\S16.1]{TaylorPDE}.) The constraint equations then imply that $\pa_\nu\Ups(g_m+\rho h;g_m)=0$ at ${}^0\Sigma$, see~\cite[\S2.1]{HintzVasyKdSStability}; since $\Ups$ solves the wave equation~\eqref{EqIEinUpsProp}, we have $\Ups\equiv 0$.
\end{proof}

To extend this to a global solution, we recall from Lemma~\ref{LemmaCptAComp} and the isomorphism~\eqref{EqCptACompHb} that $h$ pushes forward to an element of $\rho_0^{b_0}\Hb^\infty(U')$, $U':=\{|{}^m t_\bop|<\tfrac18\}$, and satisfies a bound $\|h\|_{\rho_0^{b_0}\Hb^{N+1}(U')}<C\eps$, with $C$ a constant depending only on $m$. We can thus use $(h_0,h_1)=(h,\pa_\nu h)|_{{}^m\Sigma}$ as Cauchy data for the equation $P(h)=0$. Note that the gauge condition $\Ups(g)=0$, $g=g_m+\rho h$, holds identically near ${}^m\Sigma$; by uniqueness of solutions of $P(h)=0$ with Cauchy data $(h_0,h_1)$, a global solution $h$ will automatically satisfy $\Ups(g)\equiv 0$, as this holds near ${}^m\Sigma$, and then globally by the argument given around equation~\eqref{EqIEinUpsProp}.

\begin{thm}
\label{ThmPf}
  Fix $N$ large, $b_0>0$, $\eps>0$, and $0<\eta<\min(\half,b_0)$. Then if $m\in\R$ and $h_0,h_1\in\rho_0^{b_0}\Hb^\infty({}^m\Sigma)$ satisfy
  \[
    |m|+\|h_0\|_{\rho_0^{b_0}\Hb^{N+1}}+\|h_1\|_{\rho_0^{b_0}\Hb^N}<\delta,
  \]
  where $\delta>0$ is a small constant, then there exists a global solution $h$ of
  \begin{equation}
  \label{EqPfIVP}
    P(h)=0, \quad (h,\pa_\nu h)|_{{}^m\Sigma}=(h_0,h_1),
  \end{equation}
  that is,
  \[
    \Ric(g)-\tdel^*\Ups(g)=0,\ \ g=g_m+\rho h,
  \]
  which satisfies $h\in\cX^{\infty;b_0,b_I,b'_I,b_+}$ for all weights $b_I<b'_I<\min(1,b_0)$ and $b_+<0$, and so that moreover $\|h\|_{\cX^{6;b_0,\eta,\eta/2,-\eta}}<\eps$. If in addition $\Ups(g_m+\rho h;g_m)=0$, $\pa_\nu\Ups(g_m+\rho h;g_m)=0$ at ${}^m\Sigma$, then $g$ solves
  \[
    \Ric(g)=0
  \]
  in the gauge $\Ups(g)=0$.
\end{thm}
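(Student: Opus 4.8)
The plan is to assemble the proof of Theorem~\ref{ThmPf} from the three ingredients developed above: the short-time solvability of Lemma~\ref{LemmaPf0} (or rather its reformulation with Cauchy data $(h_0,h_1)$ on ${}^m\Sigma$ as in the paragraph preceding the theorem), the global solvability of the gauged equation $P(h)=0$ via the Nash--Moser scheme built around the map~\eqref{EqPfMap}, and the second Bianchi argument of~\eqref{EqIEinUpsProp} which upgrades a solution of $P(h)=0$ to a solution of $\Ric(g)=0$ in the gauge $\Ups(g)=0$. First I would record that the hypotheses on $(m,h_0,h_1)$ are exactly the smallness assumption needed to feed the data into $\phi(h)=\bigl(P(h),(h,\pa_\nu h)|_\Sigma-(h_0,h_1)\bigr)$: one checks $\|\phi(0)\|_{2 d}=\|P(0)\|_{\cY^{2 d}}+\|(h_0,h_1)\|_{D^{2 d}}\lesssim|m|+\|h_0\|_{\rho_0^{b_0}\Hb^{N+1}}+\|h_1\|_{\rho_0^{b_0}\Hb^N}<\delta$, using $P(0)\in\cA_\phg^{\emptyset,\emptyset,0}$ and $\|P(0)\|_{\cY^k}\leq C_k m$ as noted above, provided $N$ is taken at least $D+3=16 d^2+43 d+27$ with $d=13$ (or $d=6+\max(6,d_+)$ with a non-explicit $a_+$). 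The verification of~\eqref{EqPfNM1}--\eqref{EqPfNM3} for $d=3$ is the routine differential-operator bookkeeping already sketched, and the tame right inverse $\psi(h)$ with the loss $d=13$ is Theorem~\ref{ThmIt} together with the derivative-count discussion. Applying Theorem~\ref{ThmPfNM} then produces $h\in B^\infty=\cX^{\infty;b_0,b_I,b'_I,b_+}$ with $|h|_{3 d}<\eps$ solving $\phi(h)=0$, i.e.~\eqref{EqPfIVP}; the membership $h\in\cX^{\infty;b_0,b_I,b'_I,b_+}$ for \emph{all} $b_I<b'_I<\min(1,b_0)$, $b_+<0$ follows by running the scheme with weights as close to these bounds as desired and invoking uniqueness to see the solutions agree, while the stated $\cX^{6;b_0,\eta,\eta/2,-\eta}$-smallness is extracted from $|h|_{3 d}<\eps$ after possibly shrinking $\delta$ (recall $3 d=39\geq 6$).

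The remaining point is the passage to the ungauged Einstein equation. Assuming $\Ups(g_m+\rho h;g_m)=0$ and $\pa_\nu\Ups(g_m+\rho h;g_m)=0$ at ${}^m\Sigma$, I would argue exactly as around~\eqref{EqIEinUpsProp}: since $P_0(g)=\Ric(g)-\tdel^*\Ups(g)=0$ and the Einstein tensor is divergence free, contracting with $\delta_g G_g$ gives the linear wave equation $\delta_g G_g\tdel^*\Ups(g)=0$ for the 1-form $\Ups(g)$; this is a (non-linear-coefficient, but now fixed since $g$ is known) second-order hyperbolic equation, and its Cauchy data $\bigl(\Ups(g)|_{{}^m\Sigma},\pa_\nu\Ups(g)|_{{}^m\Sigma}\bigr)$ vanish by hypothesis, so by uniqueness for the linear Cauchy problem $\Ups(g)\equiv 0$ on the future development. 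Plugging back into $P_0(g)=0$ yields $\Ric(g)=0$, and the gauge condition $\Ups(g)=0$ holds by construction. (In the application to Theorem~\ref{ThmIBaby}, the hypotheses $\Ups=0$, $\pa_\nu\Ups=0$ at ${}^m\Sigma$ are themselves arranged from the constraint equations, as in Lemma~\ref{LemmaPf0} and~\cite[\S2.1]{HintzVasyKdSStability}; here they are simply assumed.)

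I expect the main obstacle not to be conceptual but bookkeeping: one must confirm that the value $d=13$ genuinely suffices in~\eqref{EqPfNMTame}, which requires tracing the derivative losses through Theorem~\ref{ThmBg} (or Theorem~\ref{ThmBgExpl} for the explicit $a_+=\tfrac32$), the transport-ODE arguments of \S\ref{SsIti0Scri} and \S\ref{SsItipScri} (six derivatives each, lost because integrating the regular-singular transport equations~\eqref{EqIti0Scri11c}--\eqref{EqIti0Scri11} does not regain full b-derivatives), and the normal-operator inversion of \S\ref{SsItip} (at most three b-derivatives via Lemma~\ref{LemmaItipFn} and the module-regularity gain of $\wh{\ul L}(\sigma)^{-1}$), and then upgrading all of these energy- and commutator-based estimates to \emph{tame} estimates by appealing to the Moser inequalities. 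As emphasized after the derivative count, the key structural fact making this possible is that the positive-commutator arguments of \S\ref{SBg} use pointwise control of at most one derivative of $h$ for the principal terms, so the constants depend only on $|h|_4$ and no smallness of higher norms is needed; the higher-regularity estimates proceed by commutation and ellipticity, neither of which touches the principal part. I would therefore present the reduction to Theorem~\ref{ThmPfNM} and the Bianchi argument in full, and refer to~\cite[\S\S3--4]{HintzVasyQuasilinearKdS} and~\cite[\S6.4]{HormanderNonlinearLectures} for the mechanical passage from the stated energy estimates to tame estimates, omitting those details as the paper already does.
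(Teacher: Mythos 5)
Your overall architecture is the same as the paper's: apply Theorem~\ref{ThmPfNM} to the map $\phi$ in~\eqref{EqPfMap}, with the tame right inverse supplied by Theorem~\ref{ThmIt} and the derivative count $d=13$, and then use the second Bianchi identity to upgrade $P(h)=0$ to $\Ric(g)=0$ in the gauge $\Ups(g)=0$; that last part, and the verification of~\eqref{EqPfNM1}--\eqref{EqPfNM3}, are fine. The genuine gap is in your claim that $h\in\cX^{\infty;b_0,b_I,b'_I,b_+}$ for \emph{all} $b_I<b'_I<\min(1,b_0)$ ``follows by running the scheme with weights as close to these bounds as desired and invoking uniqueness.'' The scheme cannot be run there: Definition~\ref{DefEinF} imposes $b_I<b'_I<\min(\half,b_0)$, and Theorems~\ref{ThmBg} and~\ref{ThmIt} (hence the tame estimates feeding Nash--Moser) are only established in that range --- the bound $\half$ is genuinely used in the arithmetic at $\scri^+$, e.g.\ in the step $a'_I<a_I+b'_I<a_I+\half$ which matches the $\rho_I^{1/2}$ loss built into $\Hscri^1$. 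So Nash--Moser only delivers $h\in\cX^{\infty;b_0,b_I,b'_I,b_+}$ with $b'_I<\min(\half,b_0)$, and uniqueness identifies solutions for different admissible weights but cannot create decay outside the range in which the spaces and linear theory are defined. The missing step is an a posteriori improvement: with the solution in hand, one repeats the transport-ODE and normal-operator arguments of \S\ref{SsIti0Scri} and \S\ref{SsItipScri} once more (viewing $P(h)=0$ as a linear equation for $h$, as in the proof of Theorem~\ref{ThmPhg}), which upgrades the decay of $\pi_0 h$ and of the remainder terms at $\scri^+$ up to any $b'_I<\min(1,b_0)$. This is exactly how the paper closes that point.

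A second, non-fatal deviation: your requirement $N\geq D+3$ is not needed and is not what makes the displayed inequality true. Since $\|\phi(0)\|_{2d}=\|P(0)\|_{\cY^{2d}}+\|h_0\|_{\rho_0^{b_0}\Hb^{2d+1}}+\|h_1\|_{\rho_0^{b_0}\Hb^{2d}}$, smallness in $N=2d=26$ derivatives already suffices; the constants $C_s$ for $s\leq D$ enter only through the threshold $c$ in Theorem~\ref{ThmPfNM}, and since $h_0,h_1\in\rho_0^{b_0}\Hb^\infty$ these are finite, so the paper takes $N=26$ and lets $\delta$ depend on (without requiring smallness of) the $\Hb^{D+1}\oplus\Hb^D$-norms of the data --- see Remark~\ref{RmkPfCts} for the resulting uniformity and continuity statement. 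Demanding smallness in $\sim D$ derivatives still proves the theorem as literally stated (``Fix $N$ large'') with a uniform $\delta$, but it is a much cruder bookkeeping than the paper's and forfeits the explicit $N=26$ of Theorem~\ref{ThmIBaby}.
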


As explained above, data for which the assumption in the second part of the theorem holds arise from an application of Lemma~\ref{LemmaPf0}. This assumption is equivalent to the statement that the Riemannian metric and second fundamental form of ${}^m\Sigma$ induced by a metric $g_m+\rho h$ with $(h,\pa_\nu h)|_{{}^m\Sigma}=(h_0,h_1)$ satisfy the constraint equations, and that the gauge condition $\Ups(h;g_m)=0$ holds pointwise at ${}^m\Sigma$. These are assumptions only involving the data $(h_0,h_1)$; the vanishing of $\pa_\nu\Ups(h)|_{{}^m\Sigma}$ for the solution $h$ of $P(h)=0$ with these data follows as in the proof of Lemma~\ref{LemmaPf0}.

\begin{proof}[Proof of Theorem~\usref{ThmPf}]
  This follows, with $b_I<b'_I<\min(\half,b_0)$ at first, for $N=2 d=26$, from Theorem~\ref{ThmPfNM} applied to the map in~\eqref{EqPfMap}. The constant $\delta>0$ depends in particular on the constants $C_s$ in~\eqref{EqPfNM1} for $s\leq D=3287$; that is, $\delta=\delta(\|h_0\|_{\rho_0^{b_0}\Hb^{D+1}}+\|h_1\|_{\rho_0^{b_0}\Hb^D})$. Repeating the arguments in \S\S\ref{SsIti0Scri} and \ref{SsItipScri} once more shows that one can take $b_I<b'_I<\min(1,b_0)$; see also the proof of Theorem~\ref{ThmPhg} below.
  
  We remark that $h$ is in fact small in $\cX^{3 d}=\cX^{39}$, but if one is interested in the size of up to two derivatives (e.g.\ curvature) of $h$, control of its $\cX^6$ norm is sufficient by Sobolev embedding.
\end{proof}

\begin{rmk}
\label{RmkPfCts}
  In other words, using the notation of the proof and $d\geq 13$, $N=2 d$, $D=16 d^2+43 d+24=3287$, and fixing $m$ and $b_0$, we can solve the initial value problem~\eqref{EqPfIVP} for data in the space $\sD:=\bigcup_C \sD(C)$, where
  \begin{align*}
    \sD(C):=\Bigl\{(h_0,h_1) \colon h_0,\,h_1\in\rho_0^{b_0}\Hb^\infty({}^m\Sigma),\ \ 
      |m|+{}&\|h_0\|_{\rho_0^{b_0}\Hb^{N+1}}+\|h_1\|_{\rho_0^{b_0}\Hb^N} <\delta(C), \\
      &\|h_0\|_{\rho_0^{b_0}\Hb^{D+1}}+\|h_1\|_{\rho_0^{b_0}\Hb^D}<C \Bigr\}.
  \end{align*}
  An inspection of the proof of Theorem~\ref{ThmPfNM} in \cite{SaintRaymondNashMoser} shows that $\lim_{C\to 0}\delta(C)>0$, so $\sD$ in particular contains all conormal data $(h_0,h_1)$ for which $|m|+\|h_0\|_{\rho_0^{b_0}\Hb^{D+1}}+\|h_1\|_{\rho_0^{b_0}\Hb^D}<\delta_0$, where $\delta_0>0$ is a universal constant (i.e.\ depending only on $m$ and $b_0$). Moreover, one also has a continuity statement: for any choice of weights $b_I,b'_I,b_+$ as in Theorem~\ref{ThmPf}, the solution $h\in\cX^{3 d;b_0,b_I,b'_I,b_+}$ of~\eqref{EqPfIVP} depends continuously on $(h_0,h_1)\in\sD$, the latter being equipped with the $\rho_0^{b_0}\Hb^{D+1}\oplus\rho_0^{b_0}\Hb^D$ topology.\footnote{Hamilton~\cite{HamiltonNashMoser} shows that the data-to-solution map is in fact a tame smooth map $D^{\infty;b_0}\ni(h_0,h_1)\mapsto h\in\cX^{\infty;b_0,b_I,b'_I,b_+}$ (defined in the neighborhood $\sD$ of the origin of $D^{\infty;b_0}$).} Indeed, to obtain continuity at the Minkowski solution, note that the map $\phi$ in~\eqref{EqPfMap} depends parametrically on the data $(h_0,h_1)\in\sD$, but the constants appearing in the estimates in~\cite{SaintRaymondNashMoser} can be taken to be \emph{uniform} when $(h_0,h_1)$ varies in $\sD(C)$ with $C$ fixed. Continuity at other solutions is similarly automatic, but the base point of the Nash--Moser iteration (called $u_0$ in \cite[Lemma~1]{SaintRaymondNashMoser}) should then be given by the solution one is perturbing around.
\end{rmk}

The solution $h$ of~\eqref{EqPfIVP} in fact has a leading term at $I^+$, as will follow from the arguments in \S\ref{SPhg}, see the discussion around~\eqref{EqPhgLocip}; this precise information was not needed to close the iteration scheme, hence we did not encode it in the spaces $\cX^s$.

The conclusion in the form given in Theorem~\ref{ThmIBaby} can be obtained by combining Lemma~\ref{LemmaPf0} and Theorem~\ref{ThmPf}: using the coordinate $t_\bop$ on ${}^m\!M'$, the initial surface ${}^0\Sigma$ in Minkowski space is given by $t_\bop=-2 m \rho_0\chi(r)\log(r-2 m)$. A diffeomorphism of ${}^m\ol{\R^4}$ which near ${}^m\Sigma$ is not smooth but rather polyhomogeneous with index set $\cE_{\rm log}$, and which is the identity away from ${}^m\Sigma$, can be used to map $\{t_\bop\geq -2 m\rho_0\chi(r)\log(r-2 m)\}\subset{}^m\!M'$ to ${}^m\!M=\{t_\bop\geq 0\}$; pushing the solution $g$ obtained from Lemma~\ref{LemmaPf0} and Theorem~\ref{ThmPf}, which is defined on $t\geq 0$, forward using this diffeomorphism produces the solution $g$ as in Theorem~\ref{ThmIBaby}. (The gauge condition satisfied by $g$ is the wave map condition with respect to the background metric which is the pushforward of $g_m$.) We omit the proofs of future causal geodesic completeness of $(M,g)$, as one can essentially copy the arguments of Lindblad--Rodnianski \cite[\S16]{LindbladRodnianskiGlobalExistence}.

\begin{rmk}
\label{RmkPfPointwise}
  By Sobolev embedding, $h$ obeys the pointwise bound
  \begin{equation}
  \label{EqPfPointwise}
    |h|\leq C_\eta(1+t+r)^{-1+\eta}(1+(r_*-t)_+)^{b_0}\ \ \forall\,\eta>0
  \end{equation}
  and is small for fixed $\eta>0$ if $\delta=\delta(\eta)>0$ in the theorem is sufficiently small; here, we measure the size of $h$ using any fixed Riemannian inner product on the fibers of $\beta^*S^2$, equivalently, by measuring $\sum_{i j}|h(Z_i,Z_j)|$, where $\{Z_i\}=\{\pa_t,\pa_{x^1},\pa_{x^2},\pa_{x^3}\}$ are coordinate vector fields. The bound~\eqref{EqPfPointwise} also holds for all covariant derivatives of $h$ along b-vector fields on ${}^m\!M$. In particular, by Lemma~\ref{LemmaEinNgmMink}, $|g-\ul g|\leq C_\eta(1+t+r)^{-1+\eta}$, $\eta>0$. The Riemann curvature tensor also decays to $0$ as $t+r\to\infty$, with the decay rate depending on the component: this follows from an inspection of the expressions in \S\ref{SsCoPert}. Note however that the components in the frame~\eqref{EqCptASplProd} have no geometric meaning away from $\scri^+$. Geometric and more precise decay statements were obtained by Klainerman--Nicol\`o \cite{KlainermanNicoloEvolution}.
\end{rmk}

\begin{rmk}
\label{RmkPfLargeMass}
  If the ADM mass $m$ of the initial data is large, there does not exist a metric with the mass $m$ Schwarzschild behavior near $\scri^+$ but Minkowski-like far from $I^0\cup\scri^+$ which is sufficiently close to being Ricci flat for an application of a small data nonlinear iteration scheme like Nash--Moser: this follows from work of Christodoulou~\cite{ChristodoulouTrapped}, Klainerman--Rodnianski and Luk~\cite{KlainermanRodnianskiTrapped,KlainermanLukRodnianskiTrapped}, An--Luk \cite{AnLukTrapped}, and (for the non\-char\-ac\-ter\-is\-tic problem) Li--Yu \cite{LiYuTrapped}. On the other hand, for arbitrary $m$, but without the smallness condition~\eqref{EqIBabySmall} on the data, one does obtain small data by restricting to the complement of a sufficiently large ball. Working on a suitable submanifold of ${}^m\!M$, defined near $I^0\cap\scri^+$ by $\rho_0<\eps+\rho_I^\beta$ for $\beta\in(0,b_0)$ and $\eps>0$ sufficiently small, cf.\ \eqref{EqBgscriDom}, our method of proof then ensures the existence of a vacuum solution on this submanifold; in particular, the solution includes a piece of null infinity.
\end{rmk}

We can also solve towards the past: Lemma~\ref{LemmaPf0} produces a solution $g$ of Einstein's equation in the gauge $\Ups(g;g_m)=0$ in a full neighborhood of $\{t=0\}$, and we can then use the time-reversed analogue of Theorem~\ref{ThmPf} for solving backwards in time, obtaining a global solution $g$ on $\R^4$. Note here that by construction, the background metric $g_m$ is invariant under the time reversal map $\iota\colon t\mapsto -t$ on $\R^4$, hence the gauge conditions of the future and past solutions match. To describe the behavior of $g$ on a compact space, as illustrated in Figure~\ref{FigIBaby}, let us denote by ${}_m\ol{\R^4}$ the compactification defined like ${}^m\ol{\R^4}$ in \S\ref{SsCptA} but with $t$ replaced by $-t$ everywhere. Thus, $\iota$ induces diffeomorphisms ${}^m\ol{\R^4}\cong{}_m\ol{\R^4}$; denote by $S^-$ the image of $S^+$. The identity map on $\R^4$ induces an identification of the interiors of ${}^m\ol{\R^4}$ and ${}_m\ol{\R^4}$ which extends to be polyhomogeneous of class $\cA_\phg^{\cE_{\rm log}}$ on the maximal domain of existence by a simple variant of Lemma~\ref{LemmaCptAComp}. We then define the compact topological space ${}^m_m\ol{\R^4}$ to be the union of ${}^m\ol{\R^4}$ and ${}_m\ol{\R^4}$ quotiented out by this identification; this is thus a manifold of class $\cA_\phg^{\cE_{\rm log}}$, and in fact of class $\CI$ away from $\pa{}^m\ol{\R^4}\cap\pa{}_m\ol{\R^4}$, hence in particular near $S^\pm$ as well as near ${}^m\beta({}^m I^+)$ and its image under $\iota$. Define the blown-up space
\[
  {}^m_m M := [{}^m_m\ol{\R^4};S^+,S^-],
\]
i.e.\ blow up both $S^+$ and $S^-$; these are closed and disjoint submanifolds, hence the order of blow-up does not matter. Then ${}^m_m M$ is a polyhomogeneous manifold, covered by the two smooth manifolds ${}^m\!M'$ and ${}_m\!M':=[{}_m\ol{\R^4};S^-]$, and with interior naturally diffeomorphic to $\R^4_{t,x}$. We denote its boundary hypersurfaces by $\scri^\pm$ and $i^\pm$ in the obvious manner, see Figure~\ref{FigIBaby}, and $I^0$ is the closure of the remaining part of the boundary. In view of the isomorphism~\eqref{EqCptACompHb}, weighted b-Sobolev spaces on ${}^m_m\!M$ are well-defined. For future use, we also note that polyhomogeneity at $I^0$ with index set $\cE_0$ is well-defined provided
\begin{equation}
\label{EqPfGlobalPhg}
  \cE_0+\cE_{\rm log}=\cE_0,
\end{equation}
as follows from~\eqref{EqCptACompPhg}; note that given any index set $\cE_0^0$, the index set $\cE_0:=\cE_0^0+\cE_{\rm log}$ satisfies~\eqref{EqPfGlobalPhg} (and is the smallest such index set which contains $\cE_0^0$) since $\cE_{\rm log}+\cE_{\rm log}=\cE_{\rm log}$.

It is useful to describe ${}^m_m\!M$ as the union of \emph{three} (overlapping) smooth manifolds, namely ${}^m\!M$, ${}_m M:=\iota{}^m\!M$, and the set $U$ defined in~\eqref{EqPf0Nbh}. We can then define the function space
\[
  \cX^{\infty;b_0,b_I,b'_I,b_+}_{\rm global}
\]
to consist of all distributions on $\R^4$ which lie in $\rho_0^{b_0}\Hb^\infty$ on $U$, and such that their restriction as well as the restriction of their pullback by $\iota$ to ${}^m\!M$ lie in $\cX^{\infty;b_0,b_I,b'_I,b_+}$.

\begin{thm}
\label{ThmPfGlobal}
  Given initial data $\gamma,k$ as in Lemma~\usref{LemmaPf0}, there exists a global solution $g$ of the Einstein vacuum equation $\Ric(g)=0$, attaining the data $\gamma,k$ at $\{t=0\}$ and satisfying the gauge condition $\Ups(g)$, which is of the form $g=g_m+\rho h$ with $h\in\cX^{\infty;b_0,b_I,b'_I,b_+}_{\rm global}$ for all $b_I<b'_I<b_0$ and $b_+<0$.
\end{thm}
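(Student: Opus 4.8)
The plan is to glue together the three pieces of data-to-solution map that have already been established: the short-time solution near $\{t=0\}$ from Lemma~\ref{LemmaPf0}, the global future solution from Theorem~\ref{ThmPf}, and its time-reversed analogue solving into the past. First I would invoke Lemma~\ref{LemmaPf0}: given $\gamma,k$ satisfying the constraint equations and the smallness~\eqref{EqPf0Small}, it produces a solution $g$ of $\Ric(g)=0$ in the gauge $\Ups(g;g_m)=0$ on the neighborhood $U=\{|{}^0 t_\bop|<\tfrac14\}$ of the initial surface inside ${}^0\ol{\R^4}$, with $g=g_m+\rho h$ and $h\in\rho_0^{b_0}\Hb^\infty(U)$ of norm $<\eps$ in $\rho_0^{b_0}\Hb^{N+1}(U)$. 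Next, using Lemma~\ref{LemmaCptAComp} and the isomorphism~\eqref{EqCptACompHb}, I would push this local solution forward to ${}^m\ol{\R^4}$, obtaining $h\in\rho_0^{b_0}\Hb^\infty(U')$ on $U'=\{|{}^m t_\bop|<\tfrac18\}$ with a controlled bound $\|h\|_{\rho_0^{b_0}\Hb^{N+1}(U')}<C\eps$; then the Cauchy data $(h_0,h_1):=(h,\pa_\nu h)|_{{}^m\Sigma}$ are small in $\rho_0^{b_0}\Hb^{N+1}\oplus\rho_0^{b_0}\Hb^N$, and since the gauge condition already holds pointwise near ${}^m\Sigma$ (hence $\Ups(g)$ has trivial Cauchy data there), Theorem~\ref{ThmPf} applies and yields a global future solution $h\in\cX^{\infty;b_0,b_I,b'_I,b_+}$ on ${}^m\!M$, for all $b_I<b'_I<\min(1,b_0)$, $b_+<0$, which by the second part of that theorem and the argument around~\eqref{EqIEinUpsProp} genuinely solves $\Ric(g)=0$ in the gauge $\Ups(g)=0$.

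For the past, I would use the time-reversal symmetry $\iota\colon t\mapsto -t$. The background metric $g_m$ built in \S\ref{SsCptA} is invariant under $\iota$ (the spatial part $dr^2+r^2\slg$ and the interpolation into the Minkowski metric are manifestly so, and the $s$-dependence enters only through $dt^2$), and $\tdel^*$ in~\eqref{EqEinTdel} is likewise $\iota$-compatible once one notes $\rho_t$ is invariant; therefore the time-reversed operator $P$ on ${}_m\ol{\R^4}=\iota\,{}^m\ol{\R^4}$ is just $\iota^*P\iota_*$, and applying the verbatim time-reversed version of Theorem~\ref{ThmPf} to the data $(h_0,-h_1)$ at $\{t=0\}$ (still small, still satisfying the constraints since they are time-symmetric) produces a past solution $h_-$ on ${}_m M$, lying in $\cX^{\infty}$ with respect to the boundary hypersurfaces $\scri^-$, $i^-$, $i^0$. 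The key point making the gluing work is \emph{uniqueness}: both the future solution restricted to $U\cap\{t\ge 0\}$ and the past solution restricted to $U\cap\{t\le 0\}$ agree with the Lemma~\ref{LemmaPf0} solution on the overlap, because all three solve $P(h)=0$ with the same Cauchy data $(h_0,h_1)$ on ${}^m\Sigma$ (resp.\ $\{t=0\}$) and the equation is a quasilinear wave equation with a well-posed Cauchy problem once $\rho h$ is $L^\infty$-small so that $dt$ is globally timelike for $g$. Hence the three pieces patch to a single $h$ on $\R^4$, and the gauge condition $\Ups(g)=0$, which holds near $\{t=0\}$, propagates globally by~\eqref{EqIEinUpsProp}.

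Finally I would verify that the patched solution lies in $\cX^{\infty;b_0,b_I,b'_I,b_+}_{\rm global}$ as defined just before the theorem: on $U$ this is the content of Lemma~\ref{LemmaPf0} ($\rho_0^{b_0}\Hb^\infty$), and the restrictions to ${}^m\!M$ and to $\iota\,{}^m\!M={}_m M$ lie in $\cX^{\infty;b_0,b_I,b'_I,b_+}$ by Theorem~\ref{ThmPf} applied forwards, resp.\ backwards, noting that $\iota$ identifies ${}_m M$ with ${}^m\!M$ smoothly near $S^\pm$ and near the timelike faces, so the function spaces transport correctly. The polyhomogeneous-manifold structure of ${}^m_m M$ only enters in that $i^0$ has index set constrained by~\eqref{EqPfGlobalPhg}, but since we only assert conormal (not polyhomogeneous) regularity here, and conormal spaces are insensitive to the $\cE_{\rm log}$ ambiguity up to an arbitrarily small weight loss at $\scri^\pm$, the weighted b-Sobolev statement is unambiguous. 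I do not expect a serious obstacle in this theorem: the substantive analytic work is entirely contained in Lemma~\ref{LemmaPf0} and Theorem~\ref{ThmPf}, and the only thing to be careful about is the bookkeeping of the time-reversal symmetry of $g_m$ and $\tdel^*$ together with the finite-speed-of-propagation uniqueness used to glue — the mildest of the mild technical points being the non-smoothness of $\overline{\{t=0\}}$ in ${}^m\ol{\R^4}$, already handled by the $\cA_\phg^{\cE_{\rm log}}$ diffeomorphism device in the paragraph following Remark~\ref{RmkPfCts}.
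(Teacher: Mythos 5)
Your proposal is correct and follows essentially the same route as the paper: Lemma~\ref{LemmaPf0} gives the solution in a full neighborhood of $\{t=0\}$ in the gauge $\Ups(g;g_m)=0$, Theorem~\ref{ThmPf} and its time-reversed analogue (using the $\iota$-invariance of $g_m$ so the gauge conditions match) extend it to the future and past, and the pieces patch by uniqueness for the quasilinear Cauchy problem, landing in $\cX^{\infty;b_0,b_I,b'_I,b_+}_{\rm global}$ exactly as you describe. The only cosmetic imprecision is your phrase about the backward data being constraint-compatible "since they are time-symmetric" — the correct reason is that the constraint equations are invariant under $k\mapsto -k$ (and, as in the forward case, the backward Cauchy data should be taken on ${}_m\Sigma$ rather than literally at $\{t=0\}$) — but this does not affect the argument.
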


\section{Polyhomogeneity}
\label{SPhg}

We state and prove a precise version of the polyhomogeneity statement, made in Theorem~\ref{ThmIBaby}, about the solution of the initial value problem which we constructed in \S\ref{SPf}. We use the short hand notations~\eqref{EqCptFPhgBound} and~\eqref{EqCptFPhgShorthand}.

\begin{thm}
\label{ThmPhg}
  Let $b_0>0$, and let $\cE^0_0\subset\C\times\N_0$ be an index set with $\Im\cE^0_0<-b_0$. Suppose $\gamma,k\in\CI(\R^3;S^2 T^*\R^3)$ are initial data such that $m\in\R$, $\wt\gamma$, defined in~\eqref{EqPf0GammaTilde}, and $k$ satisfy the smallness condition~\eqref{EqPf0Small}, for $N$ large and $\delta>0$ small.\footnote{We can take $N=26$ as in (the proof of) Theorem~\ref{ThmPf}.} Assume moreover that the initial data are polyhomogeneous (namely, $\cE_0^0$-smooth):
  \begin{equation}
  \label{EqPhgData}
    \rho_0^{-1}\wt\gamma,\,\rho_0^{-2}k \in \cA_\phg^{\cE_0^0}(\ol{\R^3};S^2\,\Tsc^*\ol{\R^3}).
  \end{equation}
  Let $h$ denote the global solution of $\Ric(g)=0$, $g=g_m+\rho h$, in $M$, satisfying the gauge condition $\Ups(g;g_m)=0$. Then $h$ is polyhomogeneous on $M$. More precisely, $h$ is $\cE$-smooth, $\cE=(\cE_0,\cE_I,\cE_+)$:
  \[
    h\in\cA_\phg^{\cE_0,\cE_I,\cE_+},
  \]
  with the refinements $\pi_{1 1}^c h\in\cA_\phg^{\cE_0,\bar\cE_I,\cE_+}$ and $\pi_0 h \in \cA_\phg^{\cE_0,\cE'_I,\cE_+}$ near $\scri^+$, where the index sets are the smallest ones satisfying\footnote{We shall prove that such index sets indeed exist.}
  \begin{subequations}
  \begin{equation}
  \label{EqPhgIndexi0}
    \cE_0\supset\cE_0^0+\cE'_{\rm log},\ \ \cE_0\supset j(\cE_0-i)+i\ \ \forall\,j\in\N
  \end{equation}
  at $I^0$, with $\cE'_{\rm log}$ defined in~\eqref{EqCptIndexSetLog}, while at $\scri^+$,
  \begin{align}
    \label{EqPhgIndex0}
      \cE'_I &\supset \cE_0\extcup(2\cE_I-i) \\
    \label{EqPhgIndex11c}
      \bar\cE_I &\supset 0\cup\bigl(\cE_0\extcup\bigl((\bar\cE_I+\cE'_I)\cup(2\cE_I-i)\bigr)\bigr), \\
    \label{EqPhgIndex11}
      \cE_I &\supset 0\extcup\cE_0\extcup\bigl((\cE_I+\cE'_I)\cup(2\bar\cE_I)\bigr), \\
    \label{EqPhgIndexNonlin}
      \cE_I &\supset j(\cE_I-i)+i\ \ \forall\,j\in\N,
  \end{align}
  and finally at $I^+$,
  \begin{equation}
    \label{EqPhgIndexip}
      \cE_+ \supset (-i\extcup 0) \cup \bigl((\cE_+-i)\extcup-i\extcup(\cE_I\setminus\{(0,1)\})\bigr).
  \end{equation}
  \end{subequations}
\end{thm}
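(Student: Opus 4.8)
The plan is to upgrade the conormality of $h$ established in Theorem~\usref{ThmPf} to full polyhomogeneity by a bootstrap argument that propagates expansions from the initial surface outward, mirroring the structure of the background/Newton analysis in \S\S\ref{SBg}--\ref{SIt} but now at the level of asymptotic expansions rather than weighted conormal spaces. The key mechanism throughout is the \emph{testing} characterization of polyhomogeneity from~\eqref{EqCptFTesting}: to show $v\in\cA_\phg^\cE$ at a boundary hypersurface $H$ with defining function $\rho_H$, it suffices to show that the decay of $v$ improves by the expected amount after applying $\prod_{(z,j)\in\cE,\ \Im z\ge -N}(\rho_H D_{\rho_H}-z)$. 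Since the operator $L_h$ (equivalently the nonlinear $P$) has coefficients which are themselves polyhomogeneous once $h$ is---this is the content of the algebra of polyhomogeneous functions recalled in \S\ref{SsCptF}, together with Lemmas~\usref{LemmaEinNscri}, \usref{LemmaEinNgmMink}, and \usref{LemmaEinNipulL}---commuting such testing operators through $L_h u = f$ produces a forcing term with improved decay plus commutator errors with coefficients that are polyhomogeneous and decaying, so the same linear solvability tools (energy estimates with better weights, transport ODE integration, normal operator inversion) apply inductively. The index sets $\cE_0,\cE_I,\bar\cE_I,\cE'_I,\cE_+$ are then defined as the smallest index sets closed under the operations that appear: multiplication of expansions (the sums $j(\cE_0-i)+i$ etc., reflecting quadratic and higher nonlinear interactions, using that $P$ is a polynomial in $g^{-1}$ and two derivatives of $g$), the extended unions (from integrating regular-singular transport ODEs with indicial root $0$, which create $\log$ terms, cf.\ Lemmas~\usref{LemmaPhgODE1d}--\usref{LemmaPhgODE2}), and the shifts by $-i$ (from the normal-operator/resonance structure at $i^0$ and $i^+$).

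\textbf{Order of steps.} First I would verify that the index sets defined by the closure conditions~\eqref{EqPhgIndexi0}--\eqref{EqPhgIndexip} actually exist, i.e.\ that iterating these operations starting from $\cE_0^0$ and the fixed index sets $\cE_{\rm log}$, $0$, $-i$, $-2i$ does not produce accumulation violating~\eqref{EqCptFPhg2}; this is a finiteness bookkeeping argument using that each operation only adds points with strictly more negative imaginary part below any fixed level, except for the finitely many `seed' points, combined with the fact that $\Im\cE_0^0<-b_0<0$ pushes everything down. Second, I would establish polyhomogeneity at $i^0$: the initial data are $\cE_0^0$-smooth by~\eqref{EqPhgData}, the background metric $g_m$ contributes $\cE_{\rm log}$ (Lemma~\usref{LemmaEinNgmMink}), and near $(i^0)^\circ$ the operator $L_h$ has a clean normal operator (essentially the cylindrical wave operator of~\eqref{EqISysLinEq0}), so commuting $(\rho_0 D_{\rho_0}-z)$ through and using Proposition~\usref{PropBgi0} iteratively gives the expansion with index set closed under $\cE_0\supset j(\cE_0-i)+i$ coming from the nonlinearity and $+\cE'_{\rm log}$ from $g_m$; this is the analogue of \S\ref{SsIti0Scri} for expansions. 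Third, I would propagate into $\scri^+$ near $i^0\cap\scri^+$: here one integrates the decoupled transport-ODE hierarchy~\eqref{EqEinNscriPi0}--\eqref{EqEinNscriPi11} exactly as in \S\ref{SsIti0Scri}, but now tracking expansions---the vector field $\rho_0\pa_{\rho_0}-\rho_I\pa_{\rho_I}$ transports the $i^0$ expansion to $\scri^+$ (it annihilates $\rho_0^{iz}\rho_I^{iz}$), while integrating $\rho_I\pa_{\rho_I}$ (resp.\ $\rho_I\pa_{\rho_I}-A_{\rm CD}$) against a polyhomogeneous right-hand side produces the $\scri^+$ expansion with the indicated extended unions and the $0\cup(\cdots)$, $0\extcup(\cdots)$ structure reflecting which components carry leading terms; the quadratic forcing term $\pa_1 h^{\bar a\bar b}\pa_1 h_{\bar d\bar e}$ from~\eqref{EqEinP11} feeds the $(2\cE_I-i)$, $(2\bar\cE_I)$ contributions. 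Fourth, I would handle $i^+$ via the normal operator argument of \S\ref{SsItip}: one inverts $\wh{\ul L}(\sigma)^{-1}$ on function spaces of distributions polyhomogeneous at $\pa B = S^+$ (solving away the polyhomogeneous terms formally and applying the conormal inverse from Lemma~\usref{LemmaItipFred} to the remainder, cf.\ Lemma~\usref{LemmaPhgNormalPhg}), and shifts the contour in the inverse Mellin transform past the resonances, whose locations are given in Remark~\usref{RmkItipDivisor} and produce exactly the seed $(-i\extcup 0)$ and the $-i\extcup -2i\extcup -i(1+\gamma_1)\extcup -i(1+\gamma_1+\gamma_2)$ contributions---but since we have already arranged $\gamma_1,\gamma_2$ and the data so that there is no growth, the relevant term is the $(-i,\ldots)$ one, yielding~\eqref{EqPhgIndexip}; the $(\cE_I\setminus\{(0,1)\})$ term records the expansion transported up from $\scri^+$ along $\scri^+\cap i^+$. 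Fifth, I would glue near $\scri^+\cap i^+$ exactly as in \S\ref{SsItipScri}, re-running the transport-ODE integration uniformly up to $i^+$ with the weight $\rho_+$ expansion now in hand, which requires no new ideas. The final step is to observe that the global (two-sided) statement of Theorem~\usref{ThmPfGlobal} and the consistency condition~\eqref{EqPfGlobalPhg} (ensured by $\cE_{\rm log}+\cE_{\rm log}=\cE_{\rm log}$) make the expansions well-defined on the polyhomogeneous manifold ${}^m_m M$.

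\textbf{Main obstacle.} The genuinely delicate point is closing the induction at $\scri^+$ in the presence of the coupling and the logarithmic leading terms: one must show that commuting the testing product $\prod(\rho_I D_{\rho_I}-z)$ through the coupled system~\eqref{EqEinNscriPi0}--\eqref{EqEinNscriPi11} does not generate terms outside the claimed index sets. This requires care because (i) the coefficients of $\wt L_h$ are polyhomogeneous with index sets built from $\cE_I,\bar\cE_I,\cE'_I$ themselves, so the closure conditions must be verified to be self-consistent---this is why~\eqref{EqPhgIndex0}--\eqref{EqPhgIndex11} are stated as a simultaneous fixed-point system; (ii) the term $A_{h,1 1}$ in~\eqref{EqEinNscriPi11} has a genuinely nonzero leading term at $\scri^+$ (forced by Theorem~\usref{ThmIBondi}), so the $h_{1 1}$ expansion must be allowed to contain $(\log\rho_I)^{k+1}$ whenever $h^{\bar a\bar b}$ and $\pa_1 h_{\bar a\bar b}$ contain $(\log\rho_I)^{\le k}$---this is precisely the $2\bar\cE_I$ and extended-union structure in~\eqref{EqPhgIndex11}; and (iii) Lemma~\usref{LemmaIti0ScriAct} must be invoked in its polyhomogeneous-vs-conormal decomposed form to see that the error operator $\wt L_h$ maps $\cX_\phg^\infty\to\cY_\phg^\infty$ with the right index-set shift, so that the phg part and the conormal remainder can be iterated separately with the conormal remainder always strictly better. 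Once the abstract closure/finiteness of the $\cE$'s is secured and the transport lemmas (Lemmas~\usref{LemmaPhgODE1d}--\usref{LemmaPhgODE2}) are applied componentwise respecting the block structure of $A_h,B_h$, the rest is bookkeeping parallel to \S\ref{SIt}.
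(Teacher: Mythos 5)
Your overall route is the paper's: reduce to polyhomogeneous Cauchy data, run a testing/normal-operator argument at $i^0$, integrate the transport hierarchy~\eqref{EqEinNscriPi0}--\eqref{EqEinNscriPi11} at $\scri^+$ using Lemmas~\ref{LemmaPhgODE1d}--\ref{LemmaPhgODE2}, do Mellin/normal-operator analysis at $i^+$ with a polyhomogeneous inverse as in Lemma~\ref{LemmaPhgNormalPhg}, and check that the fixed-point system of index sets is finite. However, there is a genuine gap in how you treat the constraint-damping parameters. At $i^+$ you invoke the divisor of Remark~\ref{RmkItipDivisor}, which for $\gamma_1,\gamma_2>0$ contains $-i(1+\gamma_1)$ and $-i(1+\gamma_1+\gamma_2)$, and you dismiss these with ``since we have already arranged $\gamma_1,\gamma_2$ and the data so that there is no growth, the relevant term is the $(-i,\ldots)$ one.'' That does not follow: polyhomogeneity is a full expansion, so every pole crossed in the contour shift contributes a term unless its coefficient is shown to vanish, and nothing in your argument shows this. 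The same problem occurs at $\scri^+$: with constraint damping the indicial operator for $\pi_0 h$ is $\rho_I\pa_{\rho_I}-A_\CD$ with roots $\gamma_1,2\gamma_1,\gamma_2$, so integrating against a polyhomogeneous right-hand side produces terms $\rho_I^{\gamma_1},\rho_I^{2\gamma_1},\rho_I^{\gamma_2}$ in the expansion of $\pi_0 h$. As written, your induction therefore yields $\gamma$-dependent exponents in $\cE'_I$ and $\cE_+$ which are not in the index sets~\eqref{EqPhgIndex0}--\eqref{EqPhgIndexip} you are claiming; excluding them requires a separate (nontrivial) vanishing argument for the gauge/constraint modes.

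The paper avoids this entirely by one observation you are missing: since the solution satisfies $\Ups(g;g_m)=0$ identically, $h$ solves $\Ric(g)-\tdel^*\Ups(g)=0$ for \emph{any} choice of $\tdel^*$, so for the expansion analysis at $\scri^+$ and $i^+$ one may simply take $\gamma_1=\gamma_2=0$. Then the indicial roots at $\scri^+$ are integers, and the normal operator at $i^+$ (taken at $i^+\subset M$, not on the blown-down space $N$ — using the radial compactification here would force you to relate partially polyhomogeneous spaces on $\ol{\R^4}$ and $M$, which the blown-up normal operator, identified with $\Box_{g_\dS}-2$ acting componentwise, avoids) has divisor exactly $\cR=-i$ with no logarithms, which is what produces the stated $\cE_+$. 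Relatedly, the constraint damping \emph{is} still needed (with $\gamma_1,\gamma_2>b'_I$) in the $i^0$ step, where the sharp $\rho_0$-weight of the background estimate is used, so the proof switches between the two choices of $\tdel^*$; your write-up does not register this. A minor further point: the $\cE'_{\rm log}$ in~\eqref{EqPhgIndexi0} comes from pushing the local solution forward from ${}^0\ol{\R^4}$ to ${}^m\ol{\R^4}$ (the initial surface $\{t=0\}$ is not smooth in ${}^m\ol{\R^4}$), not from $g_m$ itself, which near $i^0$ is the exact Schwarzschild metric and hence smooth there.
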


At $I^0$, we only need to capture the index set arising from nonlinear terms in Einstein's equation since the background metric $g_m$ solves $\rho^{-3}\Ric(g_m)=0$ identically near $I^0$; the addition of the index set $\cE_{\rm log}$ arises when pushing the solution near $\{t=0\}\subset{}^0\ol{\R^4}$ forward to ${}^m\!M$; see~\eqref{EqPfGlobalPhg}. We point out that the index sets we obtain are very likely to be nonoptimal due to our rather coarse analysis of nonlinear interactions.

\begin{example}
\label{ExPhgSchwartz}
  For data which are Schwarzschildean modulo Schwartz functions, i.e.\ $\cE_0^0=\emptyset$, the above gives $\cE_0=\emptyset$ and
  \[
    \cE_I = \bigcup_{j\in\N_0} (-i j,3 j+1),\ \ 
    \bar\cE_I = 0 \cup \cE'_I,\ \ 
    \cE'_I = \bigcup_{j\in\N} (-i j,3 j-1),\ \ 
    \cE_+ = \bigcup_{j\in\N_0} \bigl(-i j,\tfrac32 j(j+3)\bigr).
  \]
  Recalling the notation $\log^{\leq k}$ introduced around~\eqref{EqIDetailLogPowers}, this gives, schematically, leading terms $\pi_{1 1}h\sim\log^{\leq 1}\rho_I + \rho_I\log^{\leq 4}\rho_I$, $\pi_{1 1}^c h\sim 1 + \rho_I\log^{\leq 2}\rho_I$, $\pi_0 h\sim \rho_I\log^{\leq 2}\rho_I$ at $\scri^+$ (near the interior of which one can take $\rho_I=r^{-1}$), and $h\sim 1+\rho_+\log^{\leq 6}\rho_+$ at $I^+$ (near the interior of which one can take $\rho_+=t^{-1}$).
\end{example}
\begin{example}
\label{ExPhgSmooth}
  Consider $\cE_0^0=-i$: this corresponds to initial data which have a full Taylor expansion in $1/r$ at infinity, beginning with $\cO(r^{-2})$ perturbations of the Schwarzschild metric. In this case, we get many additional logarithmic terms from $\cE_0=\cE_0^0+\cE_{\rm log}=\bigcup_{j\in\N}(-i j,j-1)$, namely
  \begin{gather*}
    \cE_I = \bigcup_{j\in\N_0} \bigl(-i j,\half j(3 j+7)+1\bigr),\ \ 
    \bar\cE_I = 0 \cup \bigcup_{j\in\N} \bigl(-i j,\half j(3 j+5)\bigr), \\
    \cE'_I = \bigcup_{j\in\N} \bigl(-i j,\half j(3 j+3)\bigr), \ \
    \cE_+ = \bigcup_{j\in\N_0} \bigl(-i j,\half j(j^2+5 j+10)\bigr),
  \end{gather*}
  so $\pi_{1 1}h\sim\log^{\leq 1}\rho_I+\rho_I\log^{\leq 6}\rho_I$, $\pi_{1 1}^c h\sim 1+\rho_I\log^{\leq 4}\rho_I$, $\pi_0 h\sim\rho_I\log^{\leq 3}\rho_I$ at $\scri^+$, and $h\sim 1 +\rho_+\log^{\leq 8}\rho_+$ at $I^+$.
\end{example}

\begin{rmk}
\label{RmkPhgSmoothNoLog}
  Let us consider the index set $\cE_0^0=-i$ again. As indicated above, the addition of $\cE'_{\rm log}$ in~\eqref{EqPhgIndexi0} is only due to an inconvenient choice of initial surface which produces logarithmic terms when passing from ${}^0\ol{\R^4}$ (which the initial surface in Theorem~\ref{ThmPhg} is a smooth submanifold of) to ${}^m\ol{\R^4}$. If instead one is given the ADM mass $m$ and initial data $(\gamma,k)$ on ${}^m\Sigma$, with $(\gamma,k)$ close to the data induced by $g_m$ on ${}^m\Sigma$ (measured in $\rho_0^{b_0}\Hb^N({}^m\Sigma;S^2\,\Tsc^*\,{}^m\Sigma)$ for suitable $N$), then the index set at $I^0$ can be defined as in~\eqref{EqPhgIndexi0} \emph{but without $\cE'_{\rm log}$}. Correspondingly, the index sets at the other boundary faces have fewer logarithms:
  \begin{gather*}
    \cE_I = \bigcup_{j\in\N_0} \bigl(-i j,5 j+1\bigr),\ \ 
    \bar\cE_I = 0 \cup \bigcup_{j\in\N} \bigl(-i j,5 j-1\bigr), \\
    \cE'_I = \bigcup_{j\in\N} \bigl(-i j,5 j-2\bigr), \ \
    \cE_+ = \bigcup_{j\in\N_0} \bigl(-i j,\half j(5 j+11)\bigr),
  \end{gather*}
  so $\pi_{1 1}h\sim\log^{\leq 1}\rho_I+\rho_I\log^{\leq 6}\rho_I$, $\pi_{1 1}^c h\sim 1+\rho_I\log^{\leq 4}\rho_I$, $\pi_0 h\sim\rho_I\log^{\leq 3}\rho_I$ at $\scri^+$, and $h\sim 1 +\rho_+\log^{\leq 8}\rho_+$ at $I^+$. (The exponents in subsequent terms of the expansion are smaller than in Example~\ref{ExPhgSmooth}.)
\end{rmk}

The proof of Theorem~\ref{ThmPhg} is straightforward but requires some bookkeeping: we will peel off the polyhomogeneous expansion at the various boundary faces iteratively, writing the nonlinear equation $P(h)=0$ as a linear equation plus error terms with better decay, much like in \S\ref{SIt}. As a preparation, we prove a few lemmas for ODEs which were already used in \S\ref{SIt}:
\begin{lemma}
\label{LemmaPhgODE1d}
  Let $X:=[0,\infty)_\rho$, $u\in\rho^{-\infty}\Hb^\infty(X)$, $\supp u\subset[0,1]$, and $f:=\rho D_\rho u$. Then:
  \begin{enumerate}
  \item\label{ItPhgODE1dbLess} $f\in\rho^a\Hb^\infty(X)$, $a<0$ $\Rightarrow$ $u\in\rho^a\Hb^\infty(X)$;
  \item\label{ItPhgODE1dbGtr} $f\in\rho^a\Hb^\infty(X)$, $a>0$ $\Rightarrow$ $u\in\cA_\phg^0(X)+\rho^a\Hb^\infty(X)$;
  \item\label{ItPhgODE1dp} $f\in\cA_\phg^\cE(X)$, $\cE$ any index set $\Rightarrow$ $u\in\cA_\phg^{\cE\extcup 0}(X)$; if $(0,0)\notin\cE$, then $u\in\cA_\phg^{\cE\cup 0}(X)$.
  \end{enumerate}
\end{lemma}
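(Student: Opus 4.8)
The whole lemma is a statement about the regular singular ODE $\rho D_\rho u = f$ on the half-line, with $u$ compactly supported near $\rho=0$; the point is that $\rho D_\rho$ has indicial root $0$, which is exactly why the leading term $\cA_\phg^0$ and the extended unions $\cE\extcup 0$ appear. First I would record the explicit solution operator: since $u$ vanishes for $\rho\geq 1$, the unique such $u$ is $u(\rho) = i\int_\rho^1 f(s)\,\frac{ds}{s}$, i.e.\ $u(\rho) = -i\int_0^1 f(\rho^\lambda)\,\frac{d\lambda}{\lambda}\cdot(\text{something})$—more precisely, writing $t=-\log\rho$, the equation becomes $-D_t u = f$, so $u(t) = i\int_{-\infty}^t f(t')\,dt' = -i\int_0^\infty f(t+\tau)\,d\tau$ with $f$ supported in $t'\geq 0$. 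This representation makes all three parts essentially a matter of bounding $\int_0^\infty f(t+\tau)\,d\tau$ in the relevant spaces, and commuting $D_t$ (equivalently $\rho D_\rho$) through the integral is harmless since $D_t$ acts as $-\tfrac{d}{d\tau}$ on the integrand plus boundary terms, i.e.\ higher b-regularity is automatic.

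For part \usref{ItPhgODE1dbLess}: if $f\in\rho^a\Hb^\infty$ with $a<0$, then in the $t$ variable $e^{a t}f$ lies in $H^\infty(\R_t)$ (with support in $t\geq 0$, hence $\rho^a\Hb^\infty(X)\cong e^{-at}H^\infty([0,\infty)_t)$ after the logarithmic change of coordinates, recalling the isometry $\Hb^s(\R^{n,d}_+)\cong H^s(\R^n)$ noted in \S\ref{SsCptF}). The estimate $|u(t)|\leq\int_0^\infty|f(t+\tau)|\,d\tau$ together with $a<0$ (so the weight $e^{at}$ is \emph{decaying} as $t\to\infty$, i.e.\ $\rho^a$ blows up as $\rho\to0$, and the integral converges and inherits the decay) gives $e^{at}u\in H^\infty$; the same for $\rho D_\rho$-derivatives of $u$ by differentiating under the integral. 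For part \usref{ItPhgODE1dbGtr} with $a>0$: now $\int_0^\infty f(t+\tau)\,d\tau$ need not decay; instead split $u(t) = c + (u(t)-c)$ where $c := i\int_0^\infty f(\tau)\,d\tau \cdot$ is the limiting constant (well-defined since $a>0$ forces $f$ integrable), i.e.\ $c = u|_{\rho=0}$; then $u(t)-c = -i\int_0^t f(\tau)\,d\tau + \text{(const already subtracted)}$, wait—more cleanly, $u(t)-c = i\int_0^\infty f(t+\tau)\,d\tau - i\int_0^\infty f(\tau)\,d\tau$, but the cleanest is: $\rho D_\rho(u-c) = f \in \rho^a\Hb^\infty$ with $a>0$ and $u-c$ vanishes at $\rho=0$, and one checks directly from the integral representation that $u-c\in\rho^a\Hb^\infty$ since the weight $e^{a t}$ now \emph{grows}, but the integral $\int_0^\infty f(t+\tau)d\tau$ decays like $e^{-at}$ so the product is bounded—so $c\in\CI(X)$ is constant, hence $c\in\cA_\phg^0(X)$, and $u\in\cA_\phg^0(X)+\rho^a\Hb^\infty(X)$.

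For part \usref{ItPhgODE1dp}: given $f\in\cA_\phg^\cE(X)$, write $f\sim\sum_{(z,j)\in\cE}\rho^{iz}(\log\rho)^j a_{(z,j)}$ with $a_{(z,j)}\in\CI(X)$ (here $\CI$ of a half-line, so just constants times a fixed cutoff); the ODE $\rho D_\rho u=f$ is solved term by term using $\rho D_\rho(\rho^{iz}(\log\rho)^k) = z\rho^{iz}(\log\rho)^k + ik\rho^{iz}(\log\rho)^{k-1}$, so for $z\neq 0$ each term $\rho^{iz}(\log\rho)^j$ is solved away by a finite linear combination of $\rho^{iz}(\log\rho)^{j'}$, $j'\leq j$, producing no new index set elements; for $z=0$ the term $(\log\rho)^j$ requires a solution $\sim(\log\rho)^{j+1}$, which is exactly the extra element $(0,j+1)$ of $\cE\extcup 0$—and if $(0,0)\notin\cE$ there is no $(\log\rho)^0$ term on the right, so only $(0,k)$ for $k\geq1$ can be produced, and together with the pure indicial-root solution $\rho^0=\mathrm{const}$ (i.e.\ the $(0,0)$ term, the homogeneous solution one adds to match the support condition—but here the support condition fixes it, so actually one gets $\cE\cup 0$ with $(0,0)$ coming from the constant of integration being nonzero in general), one obtains $u\in\cA_\phg^{\cE\cup 0}(X)$. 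Summing the formal series against a Borel-type argument (or just iterating parts \usref{ItPhgODE1dbLess}–\usref{ItPhgODE1dbGtr} on the remainders $u - \sum_{\Im z\geq -N}(\dots)$, which satisfy $\rho D_\rho(\text{remainder})\in\rho^N\Hb^\infty$) upgrades the formal expansion to genuine polyhomogeneity. The main (mild) obstacle is purely bookkeeping: being careful that the $z=0$, log-power terms generate exactly one extra log via the extended union and that the support/integration constant contributes the $(0,0)$ term, so that the stated index sets $\cE\extcup 0$ and (in the $(0,0)\notin\cE$ case) $\cE\cup 0$ are correct and minimal; everything else is the standard logarithmic change of variables plus the elementary bound on $\int_0^\infty f(t+\tau)\,d\tau$.
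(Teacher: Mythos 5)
Your overall route is the paper's own (second) argument: the explicit solution formula $u(\rho)=-i\int_\rho^1 f(s)\,\tfrac{ds}{s}$ for the unique solution supported in $\rho\le 1$, the constant-plus-remainder split when $a>0$, and term-by-term integration of the expansion, with one extra logarithm produced exactly at the indicial root $z=0$ (hence $\cE\extcup 0$, and $\cE\cup 0$ when $(0,0)\notin\cE$ since then only the constant of integration contributes at $z=0$), followed by applying the weighted statements to the remainders. Parts \usref{ItPhgODE1dbGtr} and \usref{ItPhgODE1dp} of your write-up, once untangled, coincide with this. The problem is part \usref{ItPhgODE1dbLess}: having written the correct formula, you replace it by $u(t)=-i\int_0^\infty f(t+\tau)\,d\tau=-i\int_t^\infty f(t')\,dt'$ in $t=-\log\rho$. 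This is a different function: $\int_{-\infty}^t f$ and $-\int_t^\infty f$ differ by the constant $\int_\R f$, which is (up to a factor of $i$) the boundary value of $u$ at $\rho=0$ and has no reason to vanish. Worse, in the case $a<0$ that part \usref{ItPhgODE1dbLess} addresses, $f$ is allowed to grow like $\rho^{a}=e^{|a|t}$ as $t\to\infty$, so $\int_t^\infty|f|$ diverges; the assertion that ``the integral converges and inherits the decay'' is precisely what fails, and the bound $|u(t)|\le\int_0^\infty|f(t+\tau)|\,d\tau$ is vacuous. The integral-from-$\rho=0$ representation is usable only in part \usref{ItPhgODE1dbGtr}, where $a>0$ makes $f$ integrable at $t=\infty$; that is where your constant $c$ comes from, and there your decomposition is correct.

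The repair is to keep the representation you started from, $u(t)=-i\int_0^t f(t')\,dt'$, which is what the paper uses. For \usref{ItPhgODE1dbLess}, write $e^{a t}u=-i\,(K*g)$ with $g:=e^{a t}f\in H^\infty(\R_t)$ supported in $t\ge 0$ and $K(\tau):=e^{a\tau}\one_{\tau\ge0}$, which lies in $L^1$ precisely because $a<0$; Young's inequality gives $e^{a t}u\in L^2$, and $(\rho D_\rho)^k u=(\rho D_\rho)^{k-1}f$ supplies all higher b-derivatives. (A pure sup bound $|u|\lesssim\rho^a$ is not quite enough, since bounded functions fail to lie in $L^2_\bop$ with respect to $\tfrac{d\rho}{\rho}$ on $[0,1]$; the same remark applies to your bound on $e^{a t}(u-c)$ in part \usref{ItPhgODE1dbGtr}, where the analogous kernel $e^{a\tau}\one_{\tau\le0}$ is integrable because $a>0$.) With these corrections your argument for \usref{ItPhgODE1dp} — $z\neq0$ terms solved within the same index family, a $(0,k)$ term forcing a $(0,k+1)$ term, the constant of integration supplying $(0,0)$, and the conormal remainders handled by \usref{ItPhgODE1dbLess}–\usref{ItPhgODE1dbGtr} — is exactly the paper's proof.
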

\begin{proof}
  This follows immediately from the characterization of b-Sobolev and polyhomogeneous spaces using the Mellin transform \cite[\S4]{MelroseDiffOnMwc}. Alternatively, one can explicitly construct the unique solution of $\rho D_\rho u=f$ with support in $\rho\leq 1$: part~\eqref{ItPhgODE1dbLess} follows easily from $u=-i\int_\rho^1 f\,\frac{d\rho}{\rho}$, while for part~\eqref{ItPhgODE1dbGtr}, $u=-i\int_0^1 f\,\frac{d\rho}{\rho}+i\int_0^\rho f\,\frac{d\rho}{\rho}$ gives the decomposition into constant and remainder term. The appearance of the extended union in~\eqref{ItPhgODE1dp} is due to the fact that while $\rho D_\rho u=\rho^{i z}(\log\rho)^k$, $k\in\N_0$, is solved to leading order by $u=\tfrac{1}{z}\rho^{i z}(\log\rho)^k$ for $z\neq 0$, we need an extra logarithmic term for $z=0$, as $\rho D_\rho(\tfrac{1}{k+1}(\log\rho)^{k+1} a)=-i(\log\rho)^k a$ plus lower order terms.
\end{proof}

Adding more dimensions is straightforward:

\begin{lemma}
\label{LemmaPhgODE}
  Let $X=[0,\infty)_{\rho_1}\times[0,\infty)_{\rho_2}\times\R^n_\omega$, $U=\{\rho_1<1,\,\rho_2<1\}\subset X$, $\rho=\rho_1\rho_2$, and let $\cE_1,\cE_2$ denote two index sets. Suppose $u\in\rho^{-\infty}\Hb^\infty(X)$ has support in $U$, and let $f:=\rho_1 D_{\rho_1}u$. Then:
  \begin{enumerate}
  \item\label{ItPhgODEbb} $f\in\rho_1^{a_1}\rho_2^{a_2}\Hb^\infty(X)$, $a_1\neq 0$ $\Rightarrow$ $u\in\cA_{\phg,\bop}^{0,a_2}(X)+\rho_1^{a_1}\rho_2^{a_2}\Hb^\infty(X)$;
  \item\label{ItPhgODEbp} $f\in\cA_{\bop,\phg}^{a_1,\cE_2}(X)$, $a_1\neq 0$ $\Rightarrow$ $u\in\cA_\phg^{0,\cE_2}(X)+\cA_{\bop,\phg}^{a_1,\cE_2}(X)$;
  \item\label{ItPhgODEpp} $f\in\cA_\phg^{\cE_1,\cE_2}(X)$ $\Rightarrow$ $u\in\cA_\phg^{\cE_1\extcup 0,\cE_2}(X)$; if $(0,0)\notin\cE_2$, then $u\in\cA_\phg^{\cE_1\cup 0,\cE_2}(X)$.
  \end{enumerate}
\end{lemma}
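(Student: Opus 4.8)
\textbf{Proof plan for Lemma~\ref{LemmaPhgODE}.} The strategy is to reduce everything to the one-dimensional Lemma~\ref{LemmaPhgODE1d} by treating $\rho_2$ and $\omega$ as parameters, and then to bootstrap in $\rho_2$ to produce the polyhomogeneous expansion there. Throughout, the operator $\rho_1 D_{\rho_1}$ acts only in the $\rho_1$ variable, commutes with $\rho_2 D_{\rho_2}$ and with derivatives in $\omega$, and commutes with multiplication by powers of $\rho_2$; this is what lets the parameter-dependent argument go through uniformly. Note also that the unique solution $u$ of $\rho_1 D_{\rho_1} u = f$ supported in $U$ is given explicitly, as in the proof of Lemma~\ref{LemmaPhgODE1d}, by integration in $\rho_1$ from $\rho_1 = 1$ (when $a_1 < 0$, i.e.\ $f$ decaying) or by splitting the integral at $\rho_1 = 0$ (when $a_1 > 0$); these explicit formulas commute with $\rho_2 D_{\rho_2}$ and $D_\omega$ up to the obvious terms, which is the mechanism behind the claimed regularity in $\rho_2$.

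For part~\eqref{ItPhgODEbb}: by Lemma~\ref{LemmaPhgODE1d}\eqref{ItPhgODE1dbLess}--\eqref{ItPhgODE1dbGtr} applied with $\rho_2, \omega$ as parameters (using the characterization of $\Hb^\infty$ via uniform $L^2_\bop$ bounds on all $(\rho_1 D_{\rho_1})^j (\rho_2 D_{\rho_2})^k D_\omega^\alpha$ derivatives), we get $u = \chi \cdot c(\rho_2,\omega) + \wt u$ with $c$ the ``constant term'' in $\rho_1$, $\chi$ a cutoff to $\rho_1 < 1$, and $\wt u \in \rho_1^{a_1}\rho_2^{a_2}\Hb^\infty$; here $c(\rho_2,\omega) = -i\int_0^1 f(\rho_1,\rho_2,\omega)\,\tfrac{d\rho_1}{\rho_1}$, and since differentiation in $\rho_2$ and $\omega$ passes under the integral, $c \in \rho_2^{a_2}\Hb^\infty([0,\infty)_{\rho_2}\times\R^n_\omega)$, i.e.\ $\chi c \in \cA_{\phg,\bop}^{0,a_2}(X)$. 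For part~\eqref{ItPhgODEbp}, one first applies part~\eqref{ItPhgODEbb} to get the $\cA_{\phg,\bop}^{0,a_2}$ piece for some weight $a_2$ below $\Im\cE_2$; then, to upgrade the $\rho_2$-behaviour of both the constant term $c(\rho_2,\omega)$ and the remainder $\wt u$ to full polyhomogeneity with index set $\cE_2$, one peels off the terms $\rho_2^{i z}(\log\rho_2)^\ell a_{(z,\ell)}$ of the expansion of $f$ one at a time: for each such term, $\rho_1 D_{\rho_1} u_{(z,\ell)} = \rho_2^{i z}(\log\rho_2)^\ell a_{(z,\ell)}(\rho_1,\omega)$, whose $\rho_1$-solution is again obtained by the explicit formula and has the same $\rho_2^{i z}(\log\rho_2)^\ell$ prefactor times a function of class $\cA_\phg^0 + \rho_1^{a_1}\Hb^\infty$ in $(\rho_1,\omega)$; summing the Borel-type series (the index set condition~\eqref{EqCptFPhg2} guarantees only finitely many terms above any weight) and applying part~\eqref{ItPhgODEbb} to the remainder gives $u \in \cA_\phg^{0,\cE_2}(X) + \cA_{\bop,\phg}^{a_1,\cE_2}(X)$.

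For part~\eqref{ItPhgODEpp}, one combines the expansion in $\rho_2$ just described with an expansion in $\rho_1$. Concretely, expand $f$ at $\rho_1 = 0$: $f \sim \sum_{(z,\ell)\in\cE_1} \rho_1^{i z}(\log\rho_1)^\ell b_{(z,\ell)}$, where each $b_{(z,\ell)} \in \cA_\phg^{\cE_2}([0,\infty)_{\rho_2}\times\R^n_\omega)$ by the definition of the mixed polyhomogeneous space. For $z \neq 0$, the ODE $\rho_1 D_{\rho_1} u_{(z,\ell)} = \rho_1^{i z}(\log\rho_1)^\ell b_{(z,\ell)}$ is solved to leading order by $\tfrac{1}{z}\rho_1^{i z}(\log\rho_1)^\ell b_{(z,\ell)}$ modulo lower-order logarithmic corrections with the same $\rho_1^{i z}$-weight, so these contribute to $\cE_1$ directly; for $z = 0$ (which by~\eqref{EqCptFPhg3} means $(- i n, \ell) \in \cE_1$ for all $n \in \N_0$, in particular $(0,\ell)\in\cE_1$), solving requires one extra factor of $\log\rho_1$, since $\rho_1 D_{\rho_1}\bigl(\tfrac{1}{\ell+1}(\log\rho_1)^{\ell+1}\bigr) = (\log\rho_1)^\ell$ modulo lower order, which is exactly the passage from $\cE_1$ to $\cE_1 \extcup 0$; if $(0,0)\notin\cE_2$ the only source of the $z=0$ obstruction is $\cE_1$ itself, so $\cE_1 \cup 0$ suffices. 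All the coefficient functions produced in this way inherit class $\cA_\phg^{\cE_2}$ in $(\rho_2,\omega)$ because, as noted, the explicit $\rho_1$-integrals commute with $\rho_2 D_{\rho_2}$ and $D_\omega$; hence the leading-order sum lies in $\cA_\phg^{\cE_1\extcup 0,\,\cE_2}(X)$, and the error, after peeling off all terms with $\Im z \geq -N$, is handled by part~\eqref{ItPhgODEbp} together with induction on $N$ (the index set condition~\eqref{EqCptFPhg2} again ensuring finiteness). The main obstacle is purely bookkeeping: one must check that the double expansion can be organized consistently—that peeling off a $\rho_1$-term does not spoil the $\rho_2$-polyhomogeneity and vice versa—which is exactly why the commutation of $\rho_1 D_{\rho_1}$ with $\rho_2 D_{\rho_2}$ and $D_\omega$, and the fact that the solution operator is given by $\rho_1$-integration alone, are emphasized above; no genuinely new analytic input beyond Lemma~\ref{LemmaPhgODE1d} is needed.
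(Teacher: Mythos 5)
Your overall strategy is the intended one: the paper gives no separate proof of this lemma (it is declared a straightforward extension of Lemma~\ref{LemmaPhgODE1d}), and your parametric reduction---solving in $\rho_1$ by the explicit integrals from the one-dimensional proof with $(\rho_2,\omega)$ as parameters, then peeling off the terms of the $\rho_2$- and $\rho_1$-expansions and concluding by asymptotic summation and part~\eqref{ItPhgODEbb}/\eqref{ItPhgODEbp} on the remainders---is exactly the right way to carry this out, and your write-up of parts~\eqref{ItPhgODEbb}, \eqref{ItPhgODEbp} and of the main claim of~\eqref{ItPhgODEpp} is correct.

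The genuine problem is your treatment of the refinement clause at the end of~\eqref{ItPhgODEpp}. The sentence ``if $(0,0)\notin\cE_2$ the only source of the $z=0$ obstruction is $\cE_1$ itself, so $\cE_1\cup 0$ suffices'' does not prove anything: if the obstruction comes from $\cE_1$, i.e.\ if $(0,k)\in\cE_1$ for some $k$, then solving the corresponding term $\rho_1^{0}(\log\rho_1)^k b_{(0,k)}(\rho_2,\omega)$ really does produce $(\log\rho_1)^{k+1}$, and no hypothesis on $\cE_2$ removes it, since $b_{(0,k)}$ need not vanish merely because it decays as $\rho_2\to 0$. Indeed, the clause as printed is not correct as it stands: taking $u=\chi(\rho_1)\,\rho_2\psi(\rho_2)\log\rho_1$ with $\chi,\psi$ cutoffs supported in $[0,1)$, one has $f=\rho_1 D_{\rho_1}u\in\cA_\phg^{0,-i}(X)$, so $\cE_1=0$, $\cE_2=-i$ and $(0,0)\notin\cE_2$, yet $u\notin\cA_\phg^{0,-i}(X)$, which is what $\cA_\phg^{\cE_1\cup 0,\cE_2}$ is here. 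The hypothesis must be read as $(0,0)\notin\cE_1$, in parallel with Lemma~\ref{LemmaPhgODE1d}\eqref{ItPhgODE1dp} (the ``$\cE_2$'' is a slip), and under that hypothesis the claim follows at once within your own scheme: by~\eqref{EqCptFPhg1} the expansion of $f$ at $\rho_1=0$ then contains no terms with $z=0$, so no new logarithms are generated, and the only point added to $\cE_1$ is $(0,0)$, coming from the constant of integration $c(\rho_2,\omega)$, which lies in $\cA_\phg^{\cE_2}$ of the corner by the same commutation-under-the-integral argument you use in part~\eqref{ItPhgODEbp}; hence $u\in\cA_\phg^{\cE_1\cup 0,\cE_2}(X)$. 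Replace the quoted sentence by this argument (with the corrected hypothesis) rather than attempting to derive the refinement from a condition on $\cE_2$.
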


\begin{lemma}
\label{LemmaPhgODE2}
  In the notation of Lemma~\usref{LemmaPhgODE}, with $u\in\rho^{-\infty}\Hb^\infty(X)$ supported in $\rho_1\leq 1$, let $f:=(\rho_1 D_{\rho_1}-\rho_2 D_{\rho_2})u$. Let $\chi=\chi(\rho_1,\rho_2)\in\CIc([0,1)^2)$ denote a localizer, identically $1$ in a neighborhood of the corner $\rho_1=\rho_2=0$. See Figure~\usref{FigPhgODE2}. Then:
  \begin{enumerate}
  \item\label{ItPhgODE2bb} $f\in\rho_1^{b_1}\rho_2^{b_2}\Hb^\infty(X)$, $b_2>b_1$ $\Rightarrow$ $\chi u\in\rho_1^{b_1}\rho_2^{b_2}\Hb^\infty(X)$;
  \item\label{ItPhgODE2bp} $f\in\cA_{\bop,\phg}^{b_1,\cE_2}(X)$, $\Im z\neq-b_1$ whenever $(z,0)\in\cE_2$ $\Rightarrow$ $\chi u\in\cA_\phg^{\cE_2,\cE_2}(X)+\cA_{\bop,\phg}^{b_1,\cE_2}(X)$;
  \item\label{ItPhgODE2pp} $f\in\cA_\phg^{\cE_1,\cE_2}(X)$ $\Rightarrow$ $\chi u\in\cA_\phg^{\cE_1\extcup\cE_2,\cE_2}(X)$.
  \end{enumerate}
\end{lemma}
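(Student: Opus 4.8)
\textbf{Plan of proof for Lemma~\ref{LemmaPhgODE2}.}
The strategy is to reduce everything to explicit integration of the single transport operator $\rho_1 D_{\rho_1}-\rho_2 D_{\rho_2}$ along its integral curves, exploiting that this vector field annihilates $(\rho_1\rho_2)^{i z}(\log(\rho_1/\rho_2))^k$ for all $z$ and $k$. First I would set up coordinates adapted to the flow: with $s:=\rho_1/\rho_2$ and $\rho_2$ itself (valid away from $\rho_2=0$, but since $u$ is supported in $\rho_1\le 1$ and we only assert conclusions after multiplying by the corner cutoff $\chi$, the relevant region is a neighborhood of $\rho_1=\rho_2=0$ where these coordinates degenerate only at $\scri$-type faces), the operator becomes, schematically, $2\rho_1 D_{\rho_1}$ at fixed $s$ after a suitable change — more precisely, writing $\sigma=\rho_1\rho_2$ and $\tau=\rho_1/\rho_2$ one has $\rho_1 D_{\rho_1}-\rho_2 D_{\rho_2}=2\tau D_\tau$, so the equation $f=2\tau D_\tau u$ is, for each fixed $\sigma$, the one-dimensional transport equation already analyzed in Lemma~\ref{LemmaPhgODE1d}. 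The unique solution supported in $\tau\le 1$ (which corresponds, near the corner, to $\rho_1\le\rho_2$; the complementary region $\rho_1\ge\rho_2$ is handled by the symmetric argument integrating from $\rho_1=\rho_2$, or by noting $u$ already has support control from $\rho_1\le 1$) is obtained by integrating in $\tau$, and one reads off the leading behaviour exactly as in Lemma~\ref{LemmaPhgODE1d}.

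For part~\eqref{ItPhgODE2bb}, I would argue as follows: in the coordinates $(\sigma,\tau,\omega)$ the space $\rho_1^{b_1}\rho_2^{b_2}\Hb^\infty$ near the corner is comparable to $\sigma^{b_1}\tau^{(b_2-b_1)/2}\Hb^\infty$ (up to harmless weight bookkeeping), and since $b_2>b_1$ the exponent of $\tau$ is strictly positive; integrating $2\tau D_\tau$ from $\tau=0$ (i.e.\ from $\rho_1=0$) gains no derivatives but preserves the weight in $\tau$, while the overall $\sigma^{b_1}$ weight is untouched because the operator commutes with $\sigma$. Translating back gives $\chi u\in\rho_1^{b_1}\rho_2^{b_2}\Hb^\infty$ as claimed; the cutoff $\chi$ is inserted to discard contributions from $\rho_1$ or $\rho_2$ near $1$ where the coordinate change and the "integrate from zero'' prescription are not available, and cutting off does not worsen the b-Sobolev estimates (cf.\ the discussion after \eqref{EqPfNMSmoothing} and \cite[Lemma~5.9]{HintzVasyQuasilinearKdS}). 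For part~\eqref{ItPhgODE2bp}, the forcing term is conormal in $\rho_1$ with a phg expansion in $\rho_2$; feeding each term $\rho_2^{i z}(\log\rho_2)^j a$ of the $\cE_2$-expansion into the transport equation and using that $2\tau D_\tau$ annihilates $(\rho_1\rho_2)^{i z}(\log(\rho_1/\rho_2))^j$, one finds that the solution acquires an expansion at $\rho_1=0$ governed by the \emph{same} index set $\cE_2$ — this is the mechanism "$\rho_0\pa_{\rho_0}-\rho_I\pa_{\rho_I}$ transports expansions from $i^0$ to $\scri^+$'' invoked in \S\ref{SssISysPhg}. The hypothesis $\Im z\neq -b_1$ for $(z,0)\in\cE_2$ is exactly what is needed so that no resonance occurs between the conormal order $b_1$ in $\rho_1$ and the exponents being transported, so that the phg part and the $\rho_1^{b_1}$-conormal remainder stay separated; the output is $\chi u\in\cA_{\phg}^{\cE_2,\cE_2}(X)+\cA_{\bop,\phg}^{b_1,\cE_2}(X)$.

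For part~\eqref{ItPhgODE2pp}, I would iterate: $f\in\cA_\phg^{\cE_1,\cE_2}$ means $f$ has a joint expansion, and one peels off terms of the expansion at $\rho_1=0$ one at a time. A term $\rho_1^{i z_1}(\log\rho_1)^{k_1}\rho_2^{i z_2}(\log\rho_2)^{k_2}a(\omega)$ with $(z_1,k_1)\in\cE_1$, $(z_2,k_2)\in\cE_2$ is solved to leading order by dividing by the indicial value; when $z_1=z_2$ (so the term is annihilated by the transport vector field to leading order) one must, as in Lemma~\ref{LemmaPhgODE1d}\eqref{ItPhgODE1dp}, introduce an extra power of $\log(\rho_1/\rho_2)$, and this is precisely the source of the extended union $\cE_1\extcup\cE_2$ in the conclusion; if $(0,0)\notin\cE_2$ (hence no pair with $z_1=z_2$ both having $z=0$, but more relevantly the bookkeeping of the extended union at the $\rho_2$-face simplifies) one gets the ordinary union. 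Matching the errors at each stage into a convergent phg series uses the discreteness property~\eqref{EqCptFPhg2} of index sets together with Lemmas~\ref{LemmaPhgODE1d} and~\ref{LemmaPhgODE} to control the $\rho_1$-conormal remainders, exactly as in the proof of Lemma~\ref{LemmaPhgODE}.

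\textbf{Main obstacle.} The genuinely delicate point is the resonance/log-creation bookkeeping in parts~\eqref{ItPhgODE2bp} and~\eqref{ItPhgODE2pp}: one must track carefully how the index set $\cE_2$ carried by the forcing at the $\rho_2$-face interacts with the transport along $\rho_1 D_{\rho_1}-\rho_2 D_{\rho_2}$, in particular verifying that the solution's $\rho_1$-face expansion is governed by $\cE_1\extcup\cE_2$ (not something larger), that the $\rho_2$-face index set is unchanged, and that the non-resonance hypothesis in~\eqref{ItPhgODE2bp} exactly suffices to keep the phg piece and the conormal remainder from mixing. This is conceptually routine — it is the two-dimensional analogue of Lemma~\ref{LemmaPhgODE1d}\eqref{ItPhgODE1dp} with a parameter — but getting the index-set arithmetic right (and confirming that the extended unions here are the minimal ones, matching what is needed in \eqref{EqPhgIndex0}--\eqref{EqPhgIndexNonlin}) requires care. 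Everything else is a direct translation of the one-dimensional statements via the change of variables $(\rho_1,\rho_2)\mapsto(\rho_1\rho_2,\rho_1/\rho_2)$ and the Mellin-transform characterization of phg and b-Sobolev spaces from \cite[\S4]{MelroseDiffOnMwc}.
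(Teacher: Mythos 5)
Your overall route is in fact the paper's: the solution is recovered by integrating along the flow of $\rho_1\pa_{\rho_1}-\rho_2\pa_{\rho_2}$ (which preserves $\rho_1\rho_2$ -- your $\sigma=\mathrm{const}$ curves are exactly the paper's flow lines), and parts \eqref{ItPhgODE2bp}, \eqref{ItPhgODE2pp} are reduced, term by term in the expansion at the $\rho_2$-face, to the one-dimensional Lemma~\ref{LemmaPhgODE1d}; your reading of the non-resonance hypothesis $\Im z\neq-b_1$ is also the correct one. The problem is that several of the concrete steps your sketch rests on are wrong as stated, and they are precisely the steps the proof consists of.

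First, the support and integration-endpoint bookkeeping: in your coordinates the support condition $\rho_1\le 1$ reads $\sigma\tau\le 1$, not $\tau\le 1$, and the solution is pinned down by its vanishing on $\{\rho_1=1\}$; one must therefore integrate along the flow from $\rho_1=1$, i.e.\ $u(\rho_1,\rho_2)=-i\int_{\rho_1}^1 f(t^{-1}\rho_1,t\rho_2)\,\frac{dt}{t}$. ``Integrating $2\tau D_\tau$ from $\tau=0$'' either defines a different solution or presupposes exactly the decay at the $\rho_1$-face you are trying to prove, and there is no complementary region to be handled ``from $\rho_1=\rho_2$.'' Second, the weight computation in \eqref{ItPhgODE2bb} is off: $\rho_1^{b_1}\rho_2^{b_2}=\sigma^{(b_1+b_2)/2}\tau^{(b_1-b_2)/2}$, so the $\tau$-exponent is \emph{negative} when $b_2>b_1$; the hypothesis $b_2>b_1$ enters not through positivity of that exponent but because along the flow $f(t^{-1}\rho_1,t\rho_2)$ carries the factor $t^{b_2-b_1}$, whose integrability against $dt/t$ near $t=0$ is what makes the explicit integral (estimated by Cauchy--Schwarz) close; since this single estimate essentially \emph{is} part \eqref{ItPhgODE2bb}, it cannot be relegated to ``harmless weight bookkeeping.'' Third, the vector field does not annihilate $(\rho_1\rho_2)^{iz}(\log(\rho_1/\rho_2))^k$ for $k\ge1$ -- it annihilates $(\rho_1\rho_2)^{iz}(\log(\rho_1\rho_2))^k$, and sends the former to a nonzero multiple of the power $k-1$. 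This is not a typo-level slip: the fact that $\rho_2 D_{\rho_2}$ hitting $(\log\rho_2)^k$ leaves an error with one fewer logarithm is what forces the downward induction on the log exponent (ansatz $u_k=\rho_2^{iz}(\log\rho_2)^k b_k(\rho_1)$ with $(\rho_1 D_{\rho_1}-z)b_k=a_k$, then an error of log-order $k-1$, and so on), each step invoking Lemma~\ref{LemmaPhgODE1d}\eqref{ItPhgODE1dp} and generating the extra $\log\rho_1$'s; your ``annihilation'' shortcut skips this induction, which is where the index sets $\cA_\phg^{\cE_2,\cE_2}$ in \eqref{ItPhgODE2bp} and $\cE_1\extcup\cE_2$ in \eqref{ItPhgODE2pp} are actually produced. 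Finally, the clause you append to \eqref{ItPhgODE2pp} about $(0,0)\notin\cE_2$ yielding an ordinary union is not part of the statement and the reasoning offered for it does not hold; that refinement belongs to Lemma~\ref{LemmaPhgODE}, where the operator is $\rho_1 D_{\rho_1}$ alone.
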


\begin{figure}[!ht]
\includegraphics{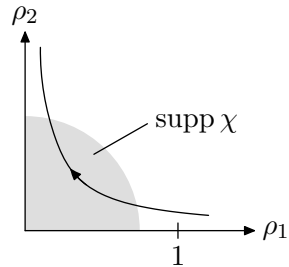}
\caption{Illustration of Lemma~\ref{LemmaPhgODE2} which describes solutions of the transport equation along the vector field $-\rho_1\pa_{\rho_1}+\rho_2\pa_{\rho_2}$; one integral curve of this vector field is shown here.}
\label{FigPhgODE2}
\end{figure}

\begin{proof}
  We drop the $\R^n_\omega$ factor from the notation for brevity. For~\eqref{ItPhgODE2bb}, write $u(\rho_1,\rho_2)=-i\int_{\rho_1}^1 f(t^{-1}\rho_1,t\rho_2)\,\frac{d t}{t}$ and $f=\rho_1^{b_1}\rho_2^{b_2}\wt f$, $\wt f\in\Hb^\infty$, then for $0<\eps<b_2-b_1$
  \begin{align*}
    \|\chi u\|_{\rho_1^{b_1}\rho_2^{b_2}L^2_\bop}^2 &\leq \int_0^1\!\!\int_0^1 \biggl|\int_{\rho_1}^1 t^{b_2-b_1}\wt f(t^{-1}\rho_1, t\rho_2)\,\frac{d t}{t}\biggr|^2 \frac{d\rho_1}{\rho_1}\frac{d\rho_2}{\rho_2} \\
    &\leq \int_0^1 \biggl(\int_{\rho_1}^1 t^{b_2-b_1} \Bigl(\int_0^1 |\wt f(t^{-1}\rho_1,x_2)|\,\frac{d x_2}{x_2}\Bigr)^{1/2}\frac{d t}{t}\biggr)^2 \frac{d\rho_1}{\rho_1} \\
    &\leq \biggl(\int_0^1 t^{2(b_2-b_1-\eps)}\,\frac{d t}{t}\biggr)\cdot \int_0^1\!\!\int_0^1\!\!\int_0^1 t^{2\eps}|\wt f(x_1,x_2)|^2\,\frac{d x_2}{x_2}\frac{d t}{t}\frac{d x_1}{x_1} \\
    &\leq C\|f\|_{\rho_1^{b_1}\rho_2^{b_2}L^2_\bop},
  \end{align*}
  as desired; higher b-regularity follows by commuting $\rho_j D_{\rho_j}$ through the equation for $u$.\footnote{A more conceptual proof, which does not rely on explicit integration of the vector field, uses a positive commutator argument with the commutant $a=\chi_1(\rho_1)\chi_2(\rho_2)\rho_1^{-b_1}\rho_2^{-b_2}$, $\chi_j\in\CIc([0,\infty))$, $\chi_j(\rho)\equiv 1$ near $0$, and $\chi_j'\leq 0$, i.e.\ the evaluation of $2\Im\la(\rho_1 D_{\rho_1}-\rho_2 D_{\rho_2})u,a^2 u\ra_{L^2_\bop}$, in two different ways: once by using the equation satisfied by $u$, and once by integrating by parts and using that $(\rho_1\pa_{\rho_1}-\rho_2\pa_{\rho_2})a$ has a constant sign on $\supp a\cap\supp u$. See~\eqref{EqBgi0UItself} for a similar argument.}

  For the proof of~\eqref{ItPhgODE2bp}, it suffices to consider a single term
  \begin{equation}
  \label{EqPhgODE2Single}
    f_k=\rho_2^{i z}(\log\rho_2)^k a_k(\rho_1),
  \end{equation}
  with $a_k\in\rho_1^{b_1}\Hb^\infty(H_1)$ supported in $\rho_1\leq 1$. Let $u_k=\rho_2^{i z}(\log\rho_2)^k b_k(\rho_1)$, where $b_k=b_k(\rho_1)$ solves
  \begin{equation}
  \label{EqPhgODE2Int}
    (\rho_1 D_{\rho_1}-z)b_k=a_k
  \end{equation}
  and is supported in $\rho_1\leq 1$, then the error term
  \begin{align*}
    f_{k-1} &:= (\rho_1 D_{\rho_1}-\rho_2 D_{\rho_2})u_k-f_k = \bigl((\rho_1 D_{\rho_1}-z)-(\rho_2 D_{\rho_2}-z)\bigr)u_k-f_k \\
      &= \rho_2^{i z}(\log\rho_2)^{k-1}a_{k-1}(\rho_1),\quad a_{k-1}:=i k b_k,
  \end{align*}
  is one power of $\log\rho_2$ better than $f_k$. Rewriting equation~\eqref{EqPhgODE2Int} as $\rho_1 D_{\rho_1}(\rho_1^{-i z}b_k)=\rho_1^{-i z}a_k\in\rho_1^{b_1+\Im z}\Hb^\infty(H_1)$, we can use Lemma~\ref{LemmaPhgODE1d} to obtain $b_k\in \cA_\phg^z(H_1)+\rho_1^{b_1}\Hb^\infty(H_1)$; therefore $u_k\in\cA_\phg^{z,(z,k)}(X)+\cA_{\bop,\phg}^{b_1,(z,k)}(X)$. Proceeding iteratively, we next solve $(\rho_1 D_{\rho_1}-\rho_2 D_{\rho_2})u_{k-1}=f_{k-1}$ to leading order, etc., reducing $k$ by $1$ at each step, and picking up one extra power of $\log\rho_1$ at each stage by Lemma~\ref{LemmaPhgODE1d}\eqref{ItPhgODE1dp} (conjugated by $\rho^{i z}$). We obtain $u=\sum_{j=0}^k u_j\in\cA_\phg^{(z,k),(z,k)}(X)+\cA_{\bop,\phg}^{b_1,(z,k)}(X)$.

  The proof of~\eqref{ItPhgODE2pp} proceeds in the same manner: if $f_k$ is of the form~\eqref{EqPhgODE2Single}, now with $a_k\in\cA_\phg^{\cE_1}(H_1)$, then $b_k\in\cA_\phg^{\cE_1\extcup z}(H_1)$, so $u_k\in\cA_\phg^{\cE_1\extcup z,(z,k)}(X)$ and $f_{k-1}\in\cA_\phg^{\cE_1\extcup z,(z,k-1)}(X)$. Iterating as before gives $u\in\cA_\phg^{\cE_1\extcup(z,k),(z,k)}(X)$.
\end{proof}

\begin{proof}[Proof of Theorem~\usref{ThmPhg}]
  We shall first prove that if the Cauchy data $(h_0,h_1)$ in the notation of Theorem~\ref{ThmPf} are polyhomogeneous at $\pa{}^m\Sigma$,
  \begin{equation}
  \label{EqPhgH01}
    h_0,\,h_1 \in \cA_\phg^{\cE_0}({}^m\Sigma),
  \end{equation}
  then the conclusion of Theorem~\ref{ThmPhg} holds. Now, by Theorem~\ref{ThmPf}, we have $h\in\cX^{\infty;b_0,b_I,b'_I,b_+}$ for all $b_I<b'_I<b_0$ and $b_+<0$. Note that since the gauge condition $\Ups(g)=0$ is satisfied identically, $h$ solves $\Ric(g)-\tdel^*\Ups(g)=0$ for \emph{any} choice of $\tdel^*$; this will be useful as it will allow us to work with simpler normal operator models.
  
  For now, consider $h$ as a solution of $P(h)=0$ for $\gamma>b'_I$ as in Theorem~\ref{ThmBg}. We write
  \begin{equation}
  \label{EqPhgNonlinLin}
    0 = P(h) = p_0 + \int_0^1 L_{t h}(h)\,d t,\ \ p_0:=P(0)\in\cA_\phg^{\emptyset,\emptyset,0}.
  \end{equation}
  (In fact, $\supp p_0\cap(I^0\cup\scri^+)=\emptyset$ since $g_m$ is the Schwarzschild metric near $I^0\cap\scri^+$.)
  
  Let us first work near $I^0$, away from $I^+$. Suppose that for some $c\geq b_0$, we already have $h\in\cA_{\phg,\bop}^{\cE_0,-0}+\rho_0^c\rho_I^{-0}\Hb^\infty$, $\pi_0 h\in\cA_{\phg,\bop}^{\cE_0,b'_I-0}+\rho_0^c\rho_I^{b'_I-0}\Hb^\infty$, with the exponents referring to the behavior at $I^0$ and $\scri^+$, respectively. Then
  \begin{equation}
  \label{EqPhgNonlinLin2}
    L_0 h = -p_0 + \int_0^1(L_0-L_{t h})(h)\,d t;
  \end{equation}
  we have $L_0-L_{t h}\in(\cA_{\phg,\bop}^{\cE_0-i,-1-0}+\rho_0^{c+1}\rho_I^{-1-0}\Hb^\infty)\Diffb^2$ by an inspection of the proof of Lemma~\ref{LemmaEinNscri}, and it respects the improved behavior of $\pi_0 h$, so we find
  \[
    L_0 h \in \cA_{\phg,\bop}^{2\cE_0-i,-1-0}+\rho_0^{c+1}\rho_I^{-1-0}\Hb^\infty,\ \ 
    \pi_0 L_0 h \in \cA_{\phg,\bop}^{2\cE_0-i,-1+b'_I-0}+\rho_0^{c+1}\rho_I^{-1+b'_I-0}\Hb^\infty.
  \]
  Denote by $\cE_1:=\{(z,j)\in\cE_0\colon \Im z\geq-c\}$ the (finite) set of exponents already captured, and let $\cE_2:=\{(z,j)\in\cE_0\colon -c-1\leq\Im z<-c\}$. Let
  \[
    R_j := \prod_{(z,k)\in\cE_j} (\rho_0 D_{\rho_0}-z),\ \ 
    R=R_2\circ R_1.
  \]
  Let $N(L_0)\in\rho_I^{-1}\Diffb^2(M)$ denote the normal operator of $L_0$ at $I^0$, i.e.\ freezing the coefficients of $L_0$ at $\rho_0=0$ for a fixed choice of a collar neighborhood $[0,\eps)_{\rho_0}\times I^0$ of $I^0$; thus $N(L_0)$ commutes with $\rho_0\pa_{\rho_0}$, and $L_0-N(L_0)\in\rho_0\rho_I^{-1}\Diffb^2$. Then $R h\in\rho_0^c\rho_I^{-0}\Hb^\infty$ solves the equation $N(L_0)(R h)=f$, where
  \[
    f := -R(L_0-N(L_0))h + R L_0 h \in \rho_0^{c+1}\rho_I^{-1-0}\Hb^\infty,\ \ 
    \pi_0 f\in\rho_0^{c+1}\rho_I^{-1+b'_I-0}\Hb^\infty,
  \]
  due to $2\cE_0-i\subset\cE_0$; the Cauchy data of $R h$ lie in $\rho_0^{c+1}\Hb^\infty$ due to the polyhomogeneity of $h_0$ and $h_1$. The background estimate near $I^0$ being sharp with regards to the weight at $I^0$, see Propositions~\ref{PropBgi0} and \ref{PropBgscri}, this gives $R h\in\rho_0^{c+1}\rho_I^{-0}\Hb^\infty$, $\pi_0 R h\in\rho_0^{c+1}\rho_I^{b'_I-0}\Hb^\infty$. Thus, $h\in\cA_{\phg,\bop}^{\cE_0,-0}+\rho_0^{c+1}\rho_I^{-0}\Hb^\infty$, $\pi_0 h\in\cA_{\phg,\bop}^{\cE_0,b'_I-0}+\rho_0^{c+1}\rho_I^{b'_I-0}\Hb^\infty$. Iterating this gives
  \begin{equation}
  \label{EqPhgi0}
    h\in\cA_{\phg,\bop}^{\cE_0,-0},\ \ 
    \pi_0 h\in\cA_{\phg,\bop}^{\cE_0,b'_I-0}\ \ \tn{near}\ I^0.
  \end{equation}

  Following the structure of the argument in \S\ref{SIt}, we next prove the polyhomogeneity at $\scri^+\setminus(\scri^+\cap I^+)$ using Lemmas~\ref{LemmaPhgODE} and \ref{LemmaPhgODE2}. We now take $\gamma=0$ in the definition of $P$ and its linearization. Thus, let us work near $I^0\cap\scri^+$, and assume that we already have
  \begin{equation}
  \label{EqPhgLeading}
    \pi_0 h\in\cA_\phg^{\cE_0,\cE'_I} + \cA_{\phg,\bop}^{\cE_0,c'_I-0},\ \ 
    \pi_{1 1}^c h\in\cA_\phg^{\cE_0,\bar\cE_I} + \cA_{\phg,\bop}^{\cE_0,c_I-0},\ \ 
    \pi_{1 1} h\in\cA_\phg^{\cE_0,\cE_I} + \cA_{\phg,\bop}^{\cE_0,c_I-0},\ \ 
  \end{equation}
  for some $0\leq c_I<c'_I\leq c_I+1$. Using~\eqref{EqPhgNonlinLin} and the structure of $L_{t h}=L_{t h}^0+\wt L_{t h}$, we find
  \begin{equation}
  \label{EqPhgLeading11c}
    \pi_{1 1}^c L_0^0\pi_{1 1}^c h = -\pi_{1 1}^c p_0 - \int_0^1 \bigl(\pi_{1 1}^c\wt L_{t h}\pi_{1 1}^c h + \pi_{1 1}^c L_{t h}\pi_0 h + \pi_{1 1}^c L_{t h}\pi_{1 1}h\bigr)\,d t.
  \end{equation}
  The proof of Lemma~\ref{LemmaEinNscri}, condition~\eqref{EqPhgIndexNonlin}, and the fact that $\cE_I\supset\bar\cE_I\supset\cE'_I\supset\cE_I-i$ give
  \begin{gather}
  \label{EqPhgTildeL}
    \wt L_{t h}\in(\CI+\cA_\phg^{\cE_0-i,\cE_I}+\cA_{\phg,\bop}^{\cE_0-i,c_I-0})\Diffb^2, \\
    \pi_{1 1}^c L_{t h}^0\pi_0 \in \rho_I^{-1}(\CI+\cA_\phg^{\cE_0-i,\bar\cE_I}+\cA_{\phg,\bop}^{\cE_0-i,c_I-0})\Diffb^1, \nonumber
  \end{gather}
  and $\pi_{1 1}^c L_{t h}^0\pi_{1 1}=0$. Multiplying~\eqref{EqPhgLeading11c} by $\rho_I$, grouping function spaces in the order of the summands in the integrand above, and simplifying using $2\cE_0-i\subset\cE_0$ and $0\subset\cE_I$, this gives
  \[
    \rho_I\pa_{\rho_I}(\rho_0\pa_{\rho_0}-\rho_I\pa_{\rho_I})\pi_{1 1}^c h \in \cA_\phg^{\cE_0,\cE_I+\bar\cE_I-i} + \cA_\phg^{\cE_0,\bar\cE_I+\cE'_I} + \cA_\phg^{\cE_0,2\cE_I-i} + \cA_{\phg,\bop}^{\cE_0,c'_I-0};
  \]
  the first space is contained in the second. In view of condition~\eqref{EqPhgIndex11c} (note that the index sets in parentheses there lie in $\Im z<0$), we obtain
  \begin{equation}
  \label{EqPhgImproved11c}
    \pi_{1 1}^c h \in \cA_\phg^{\cE_0,\bar\cE_I}+\cA_{\phg,\bop}^{\cE_0,c'_I-0},
  \end{equation}
  which improves on the a priori weight of the remainder term at $\scri^+$. Next,
  \[
    \pi_{1 1}L_0^0\pi_{1 1}h = -\pi_{1 1}p_0 - \int_0^1 \bigl(\pi_{1 1}\wt L_{t h}\pi_{1 1}h + \pi_{1 1}L_{t h}\pi_0 h+\pi_{1 1}L_{t h}\pi_{1 1}^c h\bigr)\,d t.
  \]
  Lemma~\ref{LemmaEinNscri} and the membership~\eqref{EqPhgImproved11c} imply
  \begin{align*}
    \pi_{1 1}L_{t h}^0\pi_0 &\in \rho_I^{-1}(\CI+\cA_\phg^{\cE_0-i,\cE_I}+\cA_{\phg,\bop}^{\cE_0-i,c_I-0})\Diffb^1, \\
    \pi_{1 1}L_{t h}^0\pi_{1 1}^c &\in \rho_I^{-1}(\cA_\phg^{\cE_0-i,\bar\cE_I}+\cA_{\phg,\bop}^{\cE_0-i,c'_I-0})\Diffb^1,
  \end{align*}
  with $\rho_I$ times the latter having a leading order term at $\scri^+$, cf.\ the discussion of~\eqref{EqIti0Scri11}; together with~\eqref{EqPhgTildeL} and \eqref{EqPhgImproved11c}, and using $\bar\cE_I\subset\cE_I$, one finds
  \[
    \rho_I\pa_{\rho_I}(\rho_0\pa_{\rho_0}-\rho_I\pa_{\rho_I})\pi_{1 1}h \in \cA_\phg^{\cE_0,2\cE_I-i} + \cA_\phg^{\cE_0,\cE_I+\cE'_I} + \cA_\phg^{\cE_0,2\bar\cE_I} + \cA_{\phg,\bop}^{\cE_0,c'_I-0},
  \]
  with the first space again contained in the second. Condition~\eqref{EqPhgIndex11} then gives
  \begin{equation}
  \label{EqPhgImproved11}
    \pi_{1 1} h \in \cA_\phg^{\cE_0,\cE_I} + \cA_{\phg,\bop}^{\cE_0,c'_I-0}.
  \end{equation}
  Lastly then, we can improve on the asymptotics of $\pi_0 h$ at $\scri^+$ by writing
  \[
    \pi_0 L_0^0\pi_0 h = -\pi_0 p_0 - \int_0^1\bigl(\pi_0\wt L_{t h}\pi_0 h+\pi_0 L_{t h}\pi_{1 1}^c h + \pi_0 L_{t h}\pi_{1 1}h\bigr)\,d t;
  \]
  now $\pi_0 L_{t h}^0\pi_{1 1}^c=0=\pi_0 L_{t h}^0\pi_{1 1}$ and $\cE'_I\subset\bar\cE_I\subset\cE_I$, so, since $\gamma=0$,
  \[
    \rho_I\pa_{\rho_I}(\rho_0\pa_{\rho_0}-\rho_I\pa_{\rho_I})\pi_0 h \in \cA_\phg^{\cE_0,2\cE_I-i} + \cA_{\phg,\bop}^{\cE_0,c'_I+1-0};
  \]
  but condition~\eqref{EqPhgIndex0} and Lemma~\ref{LemmaPhgODE2} imply
  \[
    \rho_I\pa_{\rho_I}\pi_0 h \in \cA_\phg^{\cE_0,\cE'_I} + \cA_{\phg,\bop}^{\cE_0,c'_I+1-0};
  \]
  an application of Lemma~\ref{LemmaPhgODE} gives the same membership for $\pi_0 h$, since we already know that $\pi_0 h$ has no leading term at $\scri^+$. This establishes~\eqref{EqPhgLeading} for $(c_I,c'_I)$ replaced by $(c'_I,c'_I+1)$, and we can iterate the procedure to establish the full polyhomogeneity away from $I^+$. Near $\scri^+\cap I^+$, the arguments are completely analogous, except we only have conormal regularity $\rho_+^{b_+}\Hb^\infty$ at $I^+$. Thus,
  \[
    \pi_0 h\in\cA_{\phg,\phg,\bop}^{\cE_0,\cE'_I,b_+},\ \ 
    \pi_{1 1}^c h\in\cA_{\phg,\phg,\bop}^{\cE_0,\bar\cE_I,b_+},\ \ 
    \pi_{1 1} h\in\cA_{\phg,\phg,\bop}^{\cE_0,\cE_I,b_+}.
  \]

  Next, we use this information to obtain an expansion at $I^+$, similarly to the arguments around~\eqref{EqItipEqn}. We shall use the linearization $L_0$, still defined using $\gamma=0$, and its normal operator \emph{at $I^+\subset M$}---instead of its normal operator at the boundary of $\ol{\R^4}$, which obviates the need to relate (partially) polyhomogeneous function spaces on $\ol{\R^4}$ and $M$. Namely, fix a collar neighborhood
  \[
    U:=[0,1)_{\rho_+}\times I^+,\ \ I^+=\{Z\in\R^3\colon|Z|\leq 1\},
  \]
  of $I^+$ in $M$, and denote by $\cV_{\bop,-}(U)\subset\cV(U)$ the Lie subalgebra of vector fields tangent to $I^+$ but \emph{with no condition at $\scri^+$}. Then for $\gamma=0$, we have $L_0\in\Diff_{\bop,-}^2(U)$ (the algebra generated by $\cV_{\bop,-}$), acting on sections of $\beta^*S^2|_U$: by Lemma~\ref{LemmaEinNscri}, $\wt L_0\in\Diffb^2(M)\hra\Diff_{\bop,-}^2(M)$ certainly has smooth coefficients, and the same is true for $L_0^0=-2\rho^{-2}\pa_0\pa_1=\pa_{\rho_I}(\rho_I\pa_{\rho_I}-\rho_+\pa_{\rho_+})+\Diffb^2(M)$, $\rho_I=1-|Z|^2$. Furthermore, by Lemmas~\ref{LemmaCptAComp} and \ref{LemmaEinNi0p} as well as equation~\eqref{EqEinNipL0}, the normal operator $N(L_0)$ of $L_0$ at $I^+$ can be identified with $N(\ul L)$, so that in fact $N(L_0)=\Box_{g_\dS}-2$, defined using the expressions~\eqref{EqBgExplDS} and \eqref{EqBgExplStaticdS}, acting component-wise on the fibers of the trivial bundle $\ul\R^{10}$, where we use Lemma~\ref{LemmaCptACompBundle} to identify $\beta^*S^2|_{I^+}\cong{}^0\beta^*(S^2\,\Tsc^*{}^0\,\ol{\R^4})|_{{}^0 I^+}\cong\ul\R^{10}$ by means of coordinate differentials. By \cite[\S4]{VasyMicroKerrdS} and the module regularity proved in~\cite{HaberVasyPropagation},
  \begin{equation}
  \label{EqPhgNormalOp}
    \wh{L_0}(\sigma)^{-1}\colon \Hext^{s-1,k}(I^+)\to\Hext^{s,k}(I^+)
  \end{equation}
  is meromorphic for $\sigma\in\C$ with $s>\half-\Im\sigma$, where the bar refers to extendibility at $\pa I^+=\{|Z|=1\}$, while the parameter $k\in\N_0$ measures the amount of regularity under the $\CI(I^+)$-module $\Diffb^1(I^+)$; that is, $\Hext^{s,k}(I^+)$ consists of $H^s$ functions on $I^+$ which remain in $H^s$ under application of any operator in $\Diffb^k(I^+)$. (This is analogous to Lemma~\ref{LemmaItipFred}, except in the present de~Sitter setting we work on \emph{high} regularity spaces rather than the low regularity spaces in the Minkowski setting, see \cite[\S5]{VasyMicroKerrdS}.) Strictly speaking, the references only apply to the operator obtained from $L_0$ by smooth extension across $\pa I^+$ to an operator on a slightly larger space than $I^+$; but \eqref{EqPhgNormalOp} follows simply by using extension and restriction operators, and the choice of extensions is irrelevant since $\wh{L_0}(\sigma)$ is principally a wave operator beyond $\pa I^+$.
  
  The divisor $\cR$ of $L_0$, see Remark~\ref{RmkItipDivisor}, is then
  \begin{equation}
    \cR=-i;
  \end{equation}
  indeed, using the relation between asymptotics on global de~Sitter space and resonances on static de~Sitter space as in \cite[Appendix~C]{HintzVasyKdSStability}, this follows from \cite[Theorem~1.1]{VasyWaveOndS} for $n=4$, $\lambda=2$, with the logarithmic terms absent: the indicial roots are $1$ and $2$, see \cite[Lemma~4.13]{VasyWaveOndS}, and in the notation of~\eqref{EqBgExplStaticdS}, the difference of $\Box_{g_\dS}$ and its indicial operator $-(\hat\tau\pa_{\hat\tau})^2+3\hat\tau\pa_{\hat\tau}$ is $\hat\tau^2\Delta_{\hat x}$, thus vanishes \emph{quadratically} in $\hat\tau$ as a b-operator on $[0,\infty)_{\hat\tau}\times\R^3_{\hat x}$. Hence, for the formal solution $u=\hat\tau v_-+\hat\tau^2 v_+$ constructed in \cite[Lemma~4.13]{VasyWaveOndS}, the Taylor series of $v_\pm$ only contain \emph{even} powers of $\hat\tau$; $1-2\N_0$ and $2-2\N_0$ being disjoint, there are no integer coincidences which would cause logarithmic terms.
  
  Now, consider again~\eqref{EqPhgNonlinLin2}: if $\chi=\chi(\rho_+)$ denotes a localizer near $I^+$, identically $1$ near $I^+$ and vanishing near $I^0$, we have
  \begin{equation}
  \label{EqPhgLocip}
    L_0(\chi h) = -\chi p_0 + [L_0,\chi]h + \int_0^1 \chi(L_0-L_{t h})(h)\,d t.
  \end{equation}
  We have $\chi p_0\in\cA_\phg^{\emptyset,0}$, with the exponents now referring to the behavior at $\scri^+$ and $I^+$, respectively. Suppose we already have
  \begin{equation}
  \label{EqPhgInductiveip}
    \pi_0 h\in\cA_\phg^{\cE'_I,\cE_+}+\cA_{\phg,\bop}^{\cE'_I,c_+},\ \ 
    \pi_{1 1}^c h\in\cA_\phg^{\bar\cE_I,\cE_+}+\cA_{\phg,\bop}^{\bar\cE_I,c_+},\ \ 
    \pi_{1 1} h\in\cA_\phg^{\cE_I,\cE_+}+\cA_{\phg,\bop}^{\cE_I,c_+}.
  \end{equation}
  Using that $\cE_+-i$ is closed under nonlinear operations, i.e.\ $j(\cE_+-i)+i\subset\cE_+$, $j\in\N$, we find $L_0-L_{t h}\in\cA_\phg^{\cE_+-i}+\rho_+^{c_+ +1}\Hb^\infty$ near $(I^+)^\circ$; see also Lemma~\ref{LemmaEinNi0p}. Using the structure of $L_{t h}$ near $\scri^+\cap I^+$ from Lemma~\ref{LemmaEinNscri} as above, and noting that $\supp[L_0,\chi]h\subset\supp d\chi$ is disjoint from $I^+$, we deduce that
  \[
    L_0(\chi h) \in \cA_\phg^{\emptyset,0} + \cA_\phg^{\wt\cE_I+i,\emptyset} + \cA_\phg^{\wt\cE_I+i,2\cE_+-i} + \cA_{\phg,\bop}^{\wt\cE_I+i,c_+ +1},\ \ \wt\cE_I:=\cE_I\setminus\{(0,1)\},
  \]
  where the weight of the remainder term is as stated since all $(z,k)\in\cE_+$ except for $(0,0)$ have $\Im z<0$. (Here $\wt\cE_I\supset\bar\cE_I+i\supset\cE'_I+i$ allows for a nonlogarithmic leading term at $\scri^+$, capturing the worst component of elements of the space $\cY^\infty$ in Definition~\ref{DefEinFY}, and moreover captures all nonlinear terms of~\eqref{EqPhgLocip}.) Replacing $L_0$ by $N(L_0)$ causes another error term, $(L_0-N(L_0))(\chi h)\in\cA_\phg^{\wt\cE_I+i,\cE_+-i}+\cA_{\phg,\bop}^{\wt\cE_I+i,c_++1}$, so
  \[
    N(L_0)(\chi h) \in \cA_\phg^{\emptyset,0} + \cA_\phg^{\wt\cE_I+i,\cE_+-i} + \cA_{\phg,\bop}^{\wt\cE_I+i,c_++1}.
  \]
  Mellin transforming in $\rho_+$ at $\Im\sigma=-b_+$, inverting $\wh{L_0}(\sigma)$ on $\cA_\phg^{\wt\cE_I+i}(I^+)$ using Lemma~\ref{LemmaPhgNormalPhg} below, taking the inverse Mellin transform, and shifting the contour to $\Im\sigma=-c_+-1$, we obtain
  \[
    \chi h \in \cA_\phg^{0,\cR\extcup 0} + \cA_\phg^{0\extcup\wt\cE_I,(\cR\extcup\wt\cE_I)\extcup(\cE_+-i)} + \cA_{\phg,\bop}^{0\extcup\wt\cE_I,c_+ +1}.
  \]
  The index set at $I^+$ is contained in $\cE_+$ by condition~\eqref{EqPhgIndexip}, so this improves over~\eqref{EqPhgInductiveip} by the weight $1$ in the remainder term; the index sets at $\scri^+$ on the other hand are automatically the ones stated (but now with the improvement at $I^+$), as the presence of a nonzero term in the expansion of $\pi_{1 1} h$, say, at $\scri^+$ corresponding to some element in $(0\extcup\wt\cE_I)\setminus\cE_I$, would contradict our a priori knowledge~\eqref{EqPhgInductiveip}. Iterating this gives the polyhomogeneity at $I^+$, as claimed.

  Next, let us show that the smallest sets satisfying conditions~\eqref{EqPhgIndexi0}--\eqref{EqPhgIndexip} are indeed \emph{index} sets: we need to verify condition~\eqref{EqCptFPhg2}. For $\cE_0$, this is clear since, letting $\wt\cE_0^0:=\cE_0^0+\cE'_{\rm log}$,
  \[
    \cE_0=\wt\cE_0^0\cup\bigcup_{j\in\N}j(\wt\cE_0^0-i)+i
  \]
  and $\Im\wt\cE_0^0<0$; note that this gives $\Im\cE_0<0$. At $\scri^+$, we take $\cE'_I=\bigcup_{k\in\N}\cE'_{I,k}$, likewise for $\bar\cE_I$ and $\cE_I$, where we recursively define $\cE'_{I,0}=\bar\cE_{I,0}=\cE_{I,0}=\emptyset$ and
  \begin{subequations}
  \begin{align}
  \label{EqPhgPfId0}
    \cE'_{I,k+1} &= \cE_0 \extcup (2\cE_{I,k}-i), \\
  \label{EqPhgPfId11c}
    \bar\cE_{I,k+1} &= 0 \cup \bigl(\cE_0\extcup\bigl((\bar\cE_{I,k}+\cE'_{I,k})\cup(2\cE_{I,k}-i)\bigr)\bigr), \\
  \label{EqPhgPfId11}
    \cE_{I,k+1} &= \Bigl(0 \extcup\cE_0\extcup\bigl((\cE_{I,k}+\cE'_{I,k})\cup(2\bar\cE_{I,k})\bigr)\Bigr) \cup \bigcup_{j\in\N}\bigl(j(\cE_{I,k}-i)+i\bigr).
  \end{align}
  \end{subequations}
  It easy to see by induction that
  \[
    \Im\cE'_{I,k},\ \Im\bigl(\bar\cE_{I,k}\setminus(0,0)\bigr),\ \Im\bigl(\cE_{I,k}\setminus(0,1)\bigr)\leq -c,\ \ c := \min(1,-\sup\Im\cE_0)>0,
  \]
  for all $k$. Therefore, to compute the index sets in any fixed half space $\Im z>-N$, it suffices to restrict to $j\leq N+1$ in~\eqref{EqPhgPfId11}, which implies that the truncated sets $\cE'_{I,k;N}:=\cE'_{I,k}\cap\{\Im z>-N\}$ etc.\ are finite for all $k$; we must show that $\cE'_{I,k;N}$ etc.\ are independent of $k$ for sufficiently large $k$ (depending on $N$). Note then:
  \begin{itemize}
  \item $\cE'_{I,k+1;N}$ only depends on $\cE_{I,k;(N-1)/2}$;
  \item $\bar\cE_{I,k+1;N}$ only depends on $\cE_{I,k;(N-1)/2}$, $\bar\cE_{I,k;N-c}$, and $\cE'_{I,k;N}$;
  \item $\cE_{I,k+1;N}$ only depends on $\cE_{I,k;N-c}$, $\cE_{I,k;(N-1)/2}$, $\bar\cE_{I,k;N}$, and $\cE'_{I,k;N}$.
  \end{itemize}
  Combining these, one finds that, a fortiori, $\cE'_{I,k+1;N}$, $\bar\cE_{I,k+1;N}$, and $\cE_{I,k+1;N}$ only depend on the sets $\cE'_{I,k-\ell;N-c}$, $\bar\cE_{I,k-\ell;N-c}$, $\cE_{I,k-\ell;\max(N-c,(N-1)/2)}$, $\ell=0,1,2$. Therefore, for $N>0$, $\cE'_{I,k;N}$ etc.\ are independent of $k$ for $k>3 N/c$, as desired. An analogous argument implies that $\cE_+$ is an index set as well.

  Finally, we show that the polyhomogeneity of the initial data $\gamma$ and $k$ in the sense of~\eqref{EqPhgData} implies that the solution in the neighborhood $U$, see~\eqref{EqPf0Nbh}, of $\ol{\{t=0\}}$ constructed in Lemma~\ref{LemmaPf0} is indeed polyhomogeneous at $I^0\cap U$ with index set $\cE_0$; this however follows from the same arguments used to prove~\eqref{EqPhgi0} (and we can in fact ignore the weight at $\scri^+$). In fact, working on ${}^0\ol{\R^4}$, we have $h\in\cA_\phg^{\cE'_0}(U)$ where $\cE'_0=\bigcup_{j\in\N_0}\bigr(j(\cE_0^0-i)+i\bigr)$ does not include the extra logarithmic terms from $\cE_{\rm log}$; this relies on the observation that the gauged Cauchy data constructed in the proof of Lemma~\ref{LemmaPf0}, see~\eqref{EqPf0H0}--\eqref{EqPf0H1}, lie in $\cA_\phg^{\cE'_0}({}^0\Sigma)$, which follows from an inspection of the proof. Upon pushing the local solution $h$ in $U$ forward to ${}^m\ol{\R^4}$, we incur the logarithmic terms encoded in the index set $\cE_{\rm log}$, see~\eqref{EqPfGlobalPhg}; this proves~\eqref{EqPhgH01}.
\end{proof}

To complete the proof, we need to study the action of $\wh{L_0}(\sigma)^{-1}$ on polyhomogeneous spaces. Let $\cE$ be an index set, and let $c\in\R$ be such that $\Im z<-c$ for all $(z,0)\in\cE$; then $\cA_\phg^{\cE+i}(I^+)\subset\rho_I^{c-1}\Hb^\infty(I^+)\subset \Hext^{-1/2+c-0,\infty}(I^+)$.
\begin{lemma}
\label{LemmaPhgNormalPhg}
  The operator $\wh{L_0}(\sigma)^{-1}$ in~\eqref{EqPhgNormalOp} extends from $\Im\sigma>-c$ as a meromorphic operator family $\wh{L_0}(\sigma)^{-1}\colon\cA_\phg^{\cE+i}(I^+)\to\cA_\phg^{0\extcup\cE}(I^+)$ with divisor contained in $\cR\extcup\cE$.
\end{lemma}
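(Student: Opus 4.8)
\emph{Proof plan.} The plan is to use the standard device for extending a b‑normal‑operator inverse to polyhomogeneous data: first solve away the expansion at the corner $\pa i^+=\scri^+\cap i^+$ formally, then invert the resulting infinite‑order‑vanishing remainder with the operator~\eqref{EqPhgNormalOp}. Recall that in this part of the proof $\gamma_1=\gamma_2=0$, so $L_0^0=-2\rho^{-2}\pa_0\pa_1$ is diagonal on the fibers of $\beta^*S^2|_{i^+}$; hence the indicial operator of $\rho_I\wh{L_0}(\sigma)$ at $\pa i^+$ is scalar, and a short computation — either directly from~\eqref{EqCptASplNull} and~\eqref{EqBgipDualMetric}, or from the static de~Sitter form~\eqref{EqBgExplStaticdS} of $N(L_0)=\Box_{g_\dS}-2$ — shows that, to leading order, it is the composition of a transport operator along $\rho_I\pa_{\rho_I}$ with a first order operator, with indicial polynomial $p_\sigma(z)$ equal, up to a $\sigma$‑independent affine relabeling of $z$, to $z(z-\sigma)$. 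Thus the indicial roots at $\pa i^+$ are $0$ (independent of $\sigma$, reflecting that $\rho_I^0$ is always an approximate null solution, as in the scalar wave case) and a simple $\sigma$‑dependent root.

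First I would carry out the formal solution. For $f\in\cA_\phg^{\cE+i}(i^+)$ I peel off its expansion at $\pa i^+$ term by term: a term $\rho_I^{i(z_\ell+i)}(\log\rho_I)^{j_\ell}f_{(z_\ell,j_\ell)}$ with $(z_\ell,j_\ell)\in\cE$ is cancelled, modulo faster‑vanishing terms, by a $u$‑term $\rho_I^{iz_\ell}(\log\rho_I)^{j_\ell}$ with coefficient proportional to $1/p_\sigma(z_\ell)$ when $z_\ell$ is not an indicial root; this coefficient is holomorphic in $\sigma$ except for a pole at the locus where $\sigma$ equals the $\sigma$‑dependent root at $z=z_\ell$, which as $(z_\ell,j_\ell)$ runs over $\cE$ is precisely the divisor $\cE$. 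When $z_\ell=0$ an extra $\log\rho_I$ appears (with no pole), which is exactly the content of the $\extcup\,0$ in the target index set. Iterating — the lower order parts of $\wh{L_0}(\sigma)$ only shift index sets by $-i$, under which $0\extcup\cE$ is closed — and controlling error terms along the generators of $\scri^+$ exactly as in Lemmas~\ref{LemmaPhgODE1d}--\ref{LemmaPhgODE2}, I obtain a family $u_\phg(\sigma)\in\cA_\phg^{0\extcup\cE}(i^+)$, depending linearly on $f$ and meromorphically on $\sigma\in\C$ with divisor contained in $\cE$, such that $r(\sigma):=f-\wh{L_0}(\sigma)u_\phg(\sigma)$ lies in $\bigcap_N\rho_I^N\Hb^\infty(i^+)$ and is holomorphic in $\sigma$ away from that divisor.

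Second, since $\bigcap_N\rho_I^N\Hb^\infty(i^+)\subset\Hext^{s-1,\infty}(i^+)$ for every $s$, the operator~\eqref{EqPhgNormalOp} produces $w(\sigma):=\wh{L_0}(\sigma)^{-1}r(\sigma)\in\Hext^{s,\infty}(i^+)$ for all $s$, meromorphic on $\C$ with divisor $\cR=-i$. Near $\pa i^+$ the equation $\wh{L_0}(\sigma)w(\sigma)=r(\sigma)$ has a remainder vanishing to infinite order, so by the regular‑singular structure $w(\sigma)$ expands into the indicial branches; the $\rho_I^{i\sigma}$‑branch is incompatible with membership in $\Hext^{s,\infty}$ for arbitrarily large $s$ (its $\Hext$‑regularity is bounded by $c+\tfrac12<\infty$), whence $w(\sigma)$ is Taylor‑expandable at $\pa i^+$, i.e. $w(\sigma)\in\cA_\phg^0(i^+)$ — the second branch reappears only at a resonance, as the power part of the resonant state, whose index set is again contained in $0$. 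Consequently $f\mapsto u_\phg(\sigma)+w(\sigma)$ maps $\cA_\phg^{\cE+i}(i^+)$ to $\cA_\phg^{0\extcup\cE}(i^+)$, agrees with~\eqref{EqPhgNormalOp} on $\Im\sigma>-c$, and is meromorphic on $\C$ with divisor contained in $\cR\extcup\cE$ — the $\extcup$ absorbing the addition of pole orders where resonances and formal‑solution coincidences collide. I expect the main obstacle to be the bookkeeping of the formal step: pinning down the indicial operator of $\wh{L_0}(\sigma)$ at $\pa i^+$ precisely enough to read off both the target index set $0\extcup\cE$ and the $\cE$‑part of the divisor, and tracking meromorphic dependence on $\sigma$ through the iteration — together with the routine but slightly delicate exclusion of the second indicial branch from the extendible spaces used for $w(\sigma)$.
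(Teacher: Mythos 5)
Your proposal follows essentially the same route as the paper: using $\wh{L_0}(\sigma)=-D_{\rho_I}(\rho_I D_{\rho_I}-\sigma)+\Diffb^2(i^+)$ you solve away the expansion of $f$ at $\pa i^+$ term by term (picking up the divisor $\cE$ and the extra logarithm responsible for $0\extcup\cE$), then remove the remaining error in $\CIdot(i^+)$ with the inverse from~\eqref{EqPhgNormalOp}, which lands in $\bigcap_s\Hext^{s,\infty}(i^+)=\CI(i^+)=\cA_\phg^0(i^+)$ with divisor $\cR$ --- exactly the paper's argument, so your detour excluding the $\rho_I^{i\sigma}$ indicial branch in the second step is superfluous. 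The only slip is the parenthetical claim that the $z_\ell=0$ terms come ``with no pole'': the explicit formal solution there carries factors $\sigma^{-j-1}$, i.e.\ poles at $\sigma=0$ of order up to $j_\ell+1$, but since $(0,j)\in\cE$ for $j\leq j_\ell$ this is still consistent with the divisor $\cE$ you assert for the formal part, so the conclusion is unaffected.
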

\begin{proof}
  Given $f\in\rho_I^{-1}\cA_\phg^\cE(I^+)$, we shall explicitly construct a formal solution $u_\phg$ of $\wh{L_0}(\sigma)u_\phg=f$ at $\pa I^+$, which we then correct using the inverse~\eqref{EqPhgNormalOp} acting on $\CIdot(I^+)$. The construction uses that
  \begin{equation}
  \label{EqPhgNormalPhgOp}
    \wh L_0(\sigma)=-D_{\rho_I}(\rho_I D_{\rho_I}-\sigma)+\Diffb^2(I^+),
  \end{equation}
  which follows from the form~\eqref{EqBgipDualMetric} of the dual metric of $\rho^{-2}g_m$. Thus, consider $(z,k)\in\cE$, $f_0\in\CI(\pa I^+)=\CI(\Sph^2)$, and suppose $f=\rho_I^{i z-1}(\log\rho_I)^k f_k\in\rho_I^{-1}\cA_\phg^{(z,k)}(I^+)$ near $\rho_I=0$. If $z\neq 0$, we then have
  \begin{align*}
    &\wh{L_0}(\sigma)\bigl(-z^{-1}(z-\sigma)^{-1}\rho_I^{i z}(\log\rho_I)^k f_k\bigr) - f_\phg \\
    &\qquad\qquad = (z-\sigma)^{-1}\rho_I^{i z-1}(\log\rho_I)^{k-1}f_{k-1} + (z-\sigma)^{-1}f'
  \end{align*}
  for some $f_{k-1}\in\CI(\pa I^+)$, and with $f'\in\rho_I^{-1}\cA_\phg^{(z,k)-i}(I^+)$ holomorphic in $\sigma$. We can iteratively solve away the first term, obtaining $u_j\in\CI(\pa I^+)$ such that
  \[
    \wh{L_0}(\sigma)\Biggl(\sum_{j=0}^k (z-\sigma)^{-j-1}\rho_I^{i z}(\log\rho_I)^{k-j}u_j\Biggr) - f = \sum_{j=0}^k(z-\sigma)^{-j-1}f'_j,
  \]
  where $f'_j\in\rho_I^{-1}\cA_\phg^{(z,k-j)-i}(I^+)$ is holomorphic in $\sigma$ and has improved asymptotics at $\pa I^+$. If on the other hand $z=0$, $f=\rho_I^{-1}(\log\rho_I)^k f_k\in\rho_I^{-1}\cA_\phg^{(0,k)}(I^+)$, we need an extra $\log\rho_I$ term: there exist $u_j\in\CI(\pa I^+)$ such that
  \[
    \wh{L_0}(\sigma)\Biggl(\sum_{j=0}^k\sigma^{-j-1}(\log\rho_I)^{k+1-j}u_j\Biggr) - f = \sum_{j=0}^k \sigma^{-j-1}f'_j,\ \ f'_j\in\rho_I^{-1}\cA_\phg^{(0,k+1-j)-i}(I^+).
  \]
  (Note that there is no term on the left with $(\log\rho_I)^0$.) In general, given $f\in\rho_I^{-1}\cA_\phg^{\cE}(I^+)$, we can use these arguments and asymptotic summation to construct, locally in $\sigma$, a family $u_\phg\in\cA_\phg^{0\extcup\cE}(I^+)$, depending meromorphically on $\sigma$ with divisor contained in $\cE$, such that
  \[
    \wh{L_0}(\sigma)u_\phg - f =: f' \in \cA_\phg^\emptyset(I^+) = \CIdot(I^+)
  \]
  is meromorphic with divisor contained in $\cE$; applying $\wh{L_0}(\sigma)^{-1}$ to this gives an element of $\CI(I^+)=\cA_\phg^0(I^+)$, and
  \[
    u:=\wh{L_0}(\sigma)^{-1}f = u_\phg - \wh{L_0}(\sigma)^{-1}f'
  \]
  solves $\wh{L_0}(\sigma)u=f$, with divisor contained in $\cR\extcup\cE$ due to the second term.
\end{proof}

The global solution $g=g_m+\rho h$ constructed on the space ${}^m_m M$ in Theorem~\ref{ThmPfGlobal} is polyhomogeneous as well; the only place where this is not immediate is $I^0$, where however polyhomogeneity is well-defined under the assumption~\eqref{EqPfGlobalPhg} on the index set $\cE_0$, which is already satisfied for the set $\cE_0$ constructed in Theorem~\ref{ThmPhg}. Thus, the index sets of $h$ at $I^-$, $\scri^-$, $I^0$, $\scri^+$, and $I^+$ are $\cE_+$, $\cE_I$, $\cE_0$, $\cE_I$, and $\cE_+$, respectively, likewise for the refined asymptotics of $\pi_{1 1}^c h$ and $\pi_0 h$ near $\scri^\pm$.

\section{Bondi mass and the mass loss formula}
\label{SL}

We shall first use a different characterization of the Bondi mass than the one outlined in~\S\ref{SsIBondi}: the Bondi mass can be calculated from the leading lower order terms of the metric $g$ in a so-called \emph{Bondi--Sachs coordinate system} in \S\ref{SsLBS}; in order to define these coordinates, we first need to study a special class of null-geodesics in \S\ref{SsLR}, namely those which asymptotically look like outgoing radial null-geodesics in the Schwarzschild spacetime. For simplicity, we work with the infinite regularity solutions of Theorem~\ref{ThmIDetail}, and we only control the Bondi--Sachs coordinates in a small neighborhood of $(\scri^+)^\circ$, as this is all that is needed for deriving the mass loss formula. More precise estimates, including up to $\scri^+\cap I^+$, of this coordinate system, and a precise description of future-directed null-geodesics and other aspects of the geometry near (null) infinity will be discussed elsewhere.

\subsection{Asymptotically radial null-geodesics}
\label{SsLR}

Suppose $g=g_m+\rho h$, $h\in\cX^{\infty;b_0,b_I,b'_I,b_+}$, solves $\Ric(g)=0$ in the gauge $\Ups(g;g_m)=0$, where the weights are as in Definition~\ref{DefEinF}; by an inspection of the expressions in~\S\ref{SsCoPert}, the gauge condition implies improved decay of certain (sums and derivatives of) components of the metric perturbation $h$, for instance, $\Ups(g)_0=0$ implies
\begin{equation}
\label{EqLRGamma000}
  \Gamma_0^{0 0}\in m r^{-2}+\Hb^{\infty;2+b_0,2+b_I,2+b_+}.
\end{equation}
We wish to study null-geodesics near $(\scri^+)^\circ$. Introducing coordinates $v^\mu$ on $T\R^4$ by writing tangent vectors as $v^\mu\pa_{x^\mu}$, the geodesic vector field $H\in\cV(T\R^4)$ takes the form
\[
  H = v^\mu\pa_{x^\mu} + \Gamma^\mu_{\kappa\lambda}v^\kappa v^\lambda \pa_{v^\mu}.
\]
As usual, we will use $x^0=t+r_*$, $x^1=t-r_*$, and local coordinates $x^2,x^3$ on $\Sph^2$. Consider first the case that $h=0$, so $g$ is the Schwarzschild spacetime near $\scri^+$. Radial null-geodesics then have constant $x^1$ and $x^b$, $b=2,3$, while $v^0(s)=\dot x^0(s)$ satisfies the ODE $\dot v^0=-m r(s)^{-2}(v^0)^2$, so $\ddot x^0=-m r^{-2}(\dot x^0)^2$. We then use:

\begin{lemma}
\label{LemmaLRrstar}
  We have $r=r_*-2 m\log r_* + \cO(r_*^{-1}\log r_*)$, and $r_*=\half(x^0-x^1)$.
\end{lemma}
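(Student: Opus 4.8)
The plan is to prove the two identities in Lemma~\ref{LemmaLRrstar} essentially by unwinding the definitions and doing elementary asymptotic analysis, with no hard analytic input beyond the implicit function theorem already invoked when setting up the coordinates in~\S\ref{SsCptA}.

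First, the identity $r_*=\half(x^0-x^1)$ is immediate: by definition $x^0=t+r_*$ and $x^1=t-r_*$ (see the double null coordinates introduced after~\eqref{EqCptASpl}), so subtracting gives $x^0-x^1=2 r_*$. I would simply state this.

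For the first identity, the point of departure is the definition~\eqref{EqCptARstar}, namely $r_*=r+2 m\log(r-2 m)$. The goal is to invert this relation asymptotically as $r\to\infty$, i.e.\ to express $r$ in terms of $r_*$. I would proceed as follows. From $r_*=r+2 m\log(r-2 m)$ we get $r=r_*-2 m\log(r-2 m)$. Since $r\to\infty$ as $r_*\to\infty$, we have $\log(r-2 m)=\log r+\log(1-2 m/r)=\log r+\cO(r^{-1})$, so $r=r_*-2 m\log r+\cO(r^{-1})$; at leading order this already forces $r=r_*+\cO(\log r_*)$, hence $\log r=\log r_*+\cO(\log r_*/r_*)$. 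Substituting back, $r=r_*-2 m\log r_*+\cO(r_*^{-1}\log r_*)$, which is the claimed expansion. (One can phrase the error bookkeeping cleanly by noting that the map $r\mapsto r_*$ is a smooth diffeomorphism of $(2m+1,\infty)$ onto its image with $dr_*/dr=r/(r-2m)=1+\cO(r^{-1})$, so its inverse is smooth with bounded derivative, and then Taylor expanding.)

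I do not expect any serious obstacle here; this is a routine one-variable asymptotic inversion. The only mild subtlety is keeping the error terms honest — in particular confirming that the correction to $r=r_*-2m\log r_*$ is genuinely $\cO(r_*^{-1}\log r_*)$ and not merely $\cO(r_*^{-1}(\log r_*)^2)$ — which amounts to observing that after one iteration $\log r-\log r_*=\log(1-2m\log r_*/r_*+\cdots)=\cO(r_*^{-1}\log r_*)$, so the feedback into $2m\log(r-2m)$ contributes exactly at that order. An alternative, perhaps cleaner, route is to write $r=r_*-2m\log r_*+\psi$ and show $\psi=\cO(r_*^{-1}\log r_*)$ by plugging into~\eqref{EqCptARstar} and using that $\log$ is Lipschitz on the relevant range; I would pick whichever is shorter to typeset. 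Either way the lemma follows in a few lines.
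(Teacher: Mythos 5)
Your proposal is correct and follows essentially the same route as the paper: both invert the defining relation $r_*=r+2m\log(r-2m)$ by iterating it, with your two-step bootstrap ($r=r_*+\cO(\log r_*)$, then resubstitute) being exactly the first two iterates of the paper's Picard-type scheme $r_{k+1}=r_*-2m\log(r_k-2m)$, which the paper closes with the contraction estimate $|r_{k+1}-r_k|\leq C r_*^{-1}|r_k-r_{k-1}|$. Your error bookkeeping (the feedback entering at order $r_*^{-1}\log r_*$, not $r_*^{-1}(\log r_*)^2$) matches the paper's conclusion $r-r_1=\cO(r_*^{-1}\log r_*)$, and the identity $r_*=\half(x^0-x^1)$ is indeed immediate from the definitions $x^0=t+r_*$, $x^1=t-r_*$.
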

\begin{proof}
  Let $r_0(r_*)\equiv r_*$ and
  \[
    r_{k+1}(r_*)=r_*-2 m\log(r_k(r_*)-2 m)=r_*-2 m\log(r_k)-2 m\log(1-2 m r_k^{-1}),
  \]
  then $|r_{k+1}-r_k|\leq C r_*^{-1}|r_k-r_{k-1}|$, $k\geq 1$, and the fact that $|r_1-r_0|=\cO(\log r_*)$ show that $r-r_1=\cO(r_*^{-1}\log r_*)$, hence evaluation of $r_1$ gives the result.
\end{proof}

Often, we will only need the consequence that
\begin{equation}
\label{EqLRrstar}
  r=\half x^0+\cO(\log x^0)
\end{equation}
for bounded $x^1$, suggesting the approximation $\ddot x^0=-4 m(x^0)^{-2}(\dot x^0)^2$ for the geodesic equation. Solving this by Picard iteration with initial guess $x^0_0(s)\equiv s$ gives
\[
  x^0_1(s)=s+4 m\log s,\ \ 
  \dot x^0_1(s)=1+4 m s^{-1},
\]
and subsequent iterations give $\cO(s^{-1}\log s)$, resp.\ $\cO(s^{-2}\log s)$, corrections to $x^0_1(s)$, resp.\ $\dot x^0_1(s)$. Let us generalize such radial null-geodesics:
\begin{prop}
\label{PropLR}
  Fix a point $p\in(\scri^+)^\circ$ with coordinates $x^i(p)=:\bar x^i$. Then there exists a future-directed null-geodesic $\gamma\colon[0,\infty)\to M$, $\gamma(s)=(x^\mu(s))$ such that $\gamma(s)\to p$ in $M$ and $x^a(s)-\bar x^a=o(s^{-1})$ as $s\to\infty$.
\end{prop}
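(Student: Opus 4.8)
The plan is to produce $\gamma$ directly as the solution of the geodesic equation on a half-line $[s_0,\infty)$, with $s_0$ large, carrying prescribed asymptotics as $s\to\infty$, and then reparametrize by $s\mapsto s-s_0$ to obtain a curve on $[0,\infty)$. Writing $\gamma(s)=(x^\mu(s))$ and $v^\mu=\dot x^\mu$, the geodesic equations read $\dot v^\mu=-\Gamma(g)^\mu_{\kappa\lambda}v^\kappa v^\lambda$. The computation preceding the statement shows that in exact Schwarzschild the outgoing radial null-geodesic through $p$ has $x^1\equiv\bar x^1$, $x^a\equiv\bar x^a$, $v^1\equiv v^a\equiv 0$, $v^0=1+4ms^{-1}+\cO(s^{-2}\log s)$ and $x^0=s+4m\log s+\cO(s^{-1}\log s)$; I would therefore look for a null-geodesic of $g=g_m+\rho h$ with $v^0(s)\to 1$, $x^0(s)\sim s$, $x^1(s)\to\bar x^1$, $x^a(s)\to\bar x^a$ and $v^1,v^a\to 0$ at suitable rates, and construct it by a contraction mapping argument.

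First I recast the geodesic system as integral equations adapted to these asymptotics: for the \emph{radial} pair I fix $x^0(s_0)=s_0$ and write $v^0(s)=1+\int_s^\infty\Gamma^0_{\kappa\lambda}v^\kappa v^\lambda\,dt$, $x^0(s)=s_0+\int_{s_0}^s v^0\,dt$; for the \emph{transverse} quantities I impose the limits from $+\infty$, setting $v^1(s)=\int_s^\infty\Gamma^1_{\kappa\lambda}v^\kappa v^\lambda\,dt$, $v^a(s)=\int_s^\infty\Gamma^a_{\kappa\lambda}v^\kappa v^\lambda\,dt$, and $x^1(s)=\bar x^1-\int_s^\infty v^1\,dt$, $x^a(s)=\bar x^a-\int_s^\infty v^a\,dt$. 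The second, and technically central, step is to read off from the gauge condition $\Ups(g;g_m)=0$, the Ricci-flatness, and $h\in\cX^\infty$ the decay at $\scri^+$ of the relevant Christoffel symbols, using the expressions in Appendix~\ref{SCo} and \S\ref{SsCoPert}, the metric and inverse-metric components \eqref{EqEinFMetric}--\eqref{EqEinFinverse}, the null-frame relations \eqref{EqCptASplNull} (so that $\pa_0\in\rho_I\Vb(M)$, $\pa_1\in\Vb(M)$ near $(\scri^+)^\circ$), and $r=\tfrac12 x^0+\cO(\log x^0)$ from Lemma~\ref{LemmaLRrstar}. Along a curve with $x^1,x^a$ bounded, the facts I need are: $\Gamma^0_{00}\in mr^{-2}+\Hb^{\infty;2+b_0,2+b_I,2+b_+}$ by \eqref{EqLRGamma000}, which produces the logarithmic growth of $x^0$; $\Gamma^a_{0b}=\tfrac12 r^{-1}\delta^a_b+\cO(r^{-2})$, coming from the round spherical part $-r^2\slg$ of $g$, which is the damping mechanism for $v^a$; and $\Gamma^1_{00}$, $\Gamma^a_{00}$, $\Gamma^0_{0a}$, $\Gamma^0_{01}$, $\Gamma^1_{01}$ and the remainder of $\Gamma^a_{0b}$ all of size $\cO(r^{-2}\rho_I^{b_I})$ or smaller, the gauge condition being exactly what removes the would-be non-decaying contributions of the components $h_{1\bar 1}$, $h_{1\bar b}$, $h_{\bar a\bar b}$ of $h$.

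With these bounds the contraction argument is routine. On the Banach space of curve-data over $[s_0,\infty)$ I take the closed ball defined by $|v^0-1|\le M(x^0)^{-1}$, $|v^1|\le M(x^0)^{-1-b_I}$, $|v^a|\le M(x^0)^{-2-b_I}$, with matching bounds on the $x^\mu$ obtained by integration, $M$ fixed and $s_0$ large. The only non-elementary estimate is for $v^a$: the part of its integral equation linear in $v^a$ has kernel $r^{-1}v^0\approx 2t^{-1}$, which acts on the weight $t^{-2-b_I}$ with operator norm $\tfrac{2}{2+b_I}<1$, so after absorbing the source $\int_s^\infty\Gamma^a_{00}(v^0)^2\,dt=\cO(s^{-2-b_I})$ and the quadratically small remaining terms, the map is a contraction; the estimates for $v^1$ and $v^0$ are strictly easier, using the extra Christoffel decay and the smallness of $s_0^{-b_I}$. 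The fixed point gives $x^a(s)-\bar x^a=\cO(s^{-1-b_I})=o(s^{-1})$, $x^1(s)\to\bar x^1$, and $x^0(s)=s+4m\log s+\cO(1)\to\infty$. Since the geodesic ODE is solved exactly, $g(\dot\gamma,\dot\gamma)$ is constant along $\gamma$; estimating its size as $s\to\infty$ from the component bounds gives $g(\dot\gamma,\dot\gamma)=o(1)$, hence it vanishes and $\gamma$ is null. It is future-directed because $\dot\gamma$ is close to $\pa_{x^0}=\pa_q$, which is future null for $g_m$ and remains future causal under the small perturbation $\rho h$; and $\gamma(s)\to p$ in $M$ since $((x^0)^{-1},x^1,x^a)$ furnish smooth coordinates near $(\scri^+)^\circ$ in which $\gamma(s)$ tends to the coordinates of $p$.

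The main obstacle I anticipate is the Christoffel bookkeeping of the second step: one must verify that the components of $h$ which do \emph{not} decay at $\scri^+$---the size-$1$ leading terms of $h_{1\bar b}$ and $h_{\bar a\bar b}$, the logarithmic leading term of $h_{1\bar 1}$---enter the geodesic equations only in harmless positions, either with a compensating factor $\rho_I$ from a $\pa_0$-derivative, in quadratically small velocity products, or, for $\Gamma^a_{0b}$, in the benign damping term, so that no obstruction to the decay of $v^1$ and $v^a$ survives; this is precisely where $\Ups(g)=0$ is used. A secondary subtlety, already visible in the Schwarzschild model, is that a generic null-geodesic limiting to $p$ only satisfies $x^a-\bar x^a=\cO(s^{-1})$ (its $v^a$ behaves like the homogeneous solution $c_a s^{-2}$); running the contraction on the ball with weight $(x^0)^{-2-b_I}$ for $v^a$ is what selects the distinguished, faster-decaying geodesic demanded by the statement.
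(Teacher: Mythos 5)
Your proposal is correct and follows essentially the same route as the paper: the paper also constructs $\gamma$ via a Picard/contraction argument for the velocity components on weighted $\cC^0$ spaces over $[s_0,\infty)$, with the transverse components prescribed to vanish at infinity, using the gauge-improved decay of the Christoffel symbols (e.g.\ $\Gamma^0_{0 0}\in m r^{-2}+\Hb^{\infty;2+b_0,2+b_I,2+b_+}$) and handling the borderline spherical term $2\Gamma^a_{0 b}v^0 v^b\approx 2 s^{-1}v^a$ exactly as you do, by noting that its weighted integral has norm strictly less than $1$ on the chosen decay class. Your weights $(s^{-1},s^{-1-b_I},s^{-2-b_I})$ sit inside the paper's admissible range $(\alpha_0,\alpha_1,\slalpha)$ with $\slalpha>1$, and your concluding steps (nullity from the constancy of $|\dot\gamma|_g^2$ and its vanishing limit, convergence to $p$ in $M$) coincide with the paper's.
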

\begin{proof}
  We will normalize $\gamma$ by requiring that $x^0(s)\sim s+4 m\log s$, and we shall seek $\gamma\colon[s_0,\infty)\to M$ for $s_0>0$ large. For weights $\alpha_0,\alpha_1,\slalpha>0$, to be specified in~\eqref{EqLRPfWeights} below, we will solve the geodesic equation on the level of the velocity $v^\mu=\dot x^\mu$ using a suitable Picard iteration scheme on the Banach space
  \begin{equation}
  \label{EqLRPfBanach}
    X := \bigl\{ v=(v^\mu)\colon[s_0,\infty)\to\R^4 \colon \wt v^0 \in s^{-1-\alpha_0}\cC^0,\ v^1 \in s^{-1-\alpha_1}\cC^0,\ v^a \in s^{-1-\slalpha}\cC^0 \bigr\},
  \end{equation}
  where we use the notation
  \[
    \wt v^0(s):=v^0(s)-(1+4 m s^{-1}),
  \]
  and where $\cC^0\equiv\cC^0([s_0,\infty))$ is equipped with the $\sup$ norm; as the norm on $X$, we then take the maximum of the weighted $\cC^0$ norms of $\wt v^0$ and $v^i$, $i=1,2,3$. For $v\in X$, we define its integral $x=I(v)$, $\dot x^\mu(s)=v^\mu(s)$, by
  \begin{equation}
  \label{EqLRPfx}
  \begin{split}
    x^0(s) &:= s+4 m\log s - \int_s^\infty \wt v^0(u)\,d u, \\
    x^i(s) &:= \bar x^i - \int_s^\infty v^i(u)\,d u,\ \ i=1,2,3.
  \end{split}
  \end{equation}
  As the first iterate, we take
  \[
    \wt v_0^0(s),\, v_0^i(s)\equiv 0,\ \ 
    x_0:=I(v_0);
  \]
  note that $\|v_0\|_X=0$. For $k\geq 0$, $v_k\in X$, $\|v_k\|_X\leq 1$, and $x_k=I(v_k)$, let then
  \begin{equation}
  \label{EqLRPfIt}
    v^\mu_{k+1}(s) := v_k^\mu(\infty) + \int_s^\infty \Gamma^\mu_{\kappa\lambda}|_{x_k(u)} v_k^\kappa(u)v_k^\lambda(u)\,d u,\ \ 
    x_{k+1} := I(v_{k+1}).
  \end{equation}
  Note that for some fixed constant $C>0$,
  \begin{equation}
  \label{EqLRPfxEst}
    |x_k^0(s)-s-4 m\log s|\leq C s^{-\alpha_0},\ \ 
    |x_k^1(s)-\bar x^1|\leq C s^{-\alpha_1},\ \ 
    |x_k^i(s)-\bar x^i|\leq C s^{-\slalpha},
  \end{equation}
  which in particular allows us to estimate the Christoffel symbols appearing in~\eqref{EqLRPfIt}. For $\mu=0$, writing $r_k(s)=r(x_k(s))$, and using the improved decay of various Christoffel symbols due to the gauge condition $\Ups(g)=0$, we have
  \begin{equation}
  \label{EqLRPfV0}
  \begin{split}
    \wt v_{k+1}^0(s) &= -4 m s^{-1} + \int_s^\infty m r_k(u)^{-2}\,d u + \int_s^\infty \cO_{s_0}(u^{-2-b_I})\,d u \\
      &\qquad + \int_s^\infty \cO_{s_0}(u^{-2}\log u\cdot 1\cdot u^{-1-\alpha_1}) + \cO_{s_0}(u^{-1}\cdot 1\cdot u^{-1-\slalpha}) \\
      &\qquad\qquad + \cO_{s_0}(u^{-1}\log u\cdot u^{-2-\alpha_1}) + \cO_{s_0}(u^{-1}\log u\cdot u^{-1-\alpha_1}\cdot u^{-1-\slalpha}) \\
      &\qquad\qquad + \cO_{s_0}(u\cdot u^{-2-2\slalpha})\,d u,
  \end{split}
  \end{equation}
  with the integrals on the first line coming from terms with $(\kappa,\lambda)=(0,0)$ and using~\eqref{EqLRGamma000}, while the remaining terms come from $(\kappa,\lambda)=(0,1)$, $(0,b)$, $(1,1)$, $(1,b)$, $(a,b)$, in this order, using that $v_k^0=\cO(1)$, $v_k^1=\cO(s^{-1-\alpha_1})$, and $v_k^a=\cO(s^{-1-\slalpha})$. As for the notation, the constants implicit in the $\cO_{s_0}$ notation depend only on $s_0$ and are nonincreasing with $s_0$, as they come from the size of the Christoffel symbols along $x_k(s)$, which satisfies~\eqref{EqLRPfxEst}. By~\eqref{EqLRrstar} and~\eqref{EqLRPfxEst}, we have
  \[
    \int_s^\infty m r_k(u)^{-2}\,d u=\int_s^\infty 4 m(u^{-2}+\cO(u^{-3}\log u))\,d u = 4 m s^{-1} + \cO(s^{-2}\log s).
  \]
  Therefore, we have
  \[
    |\wt v^0_{k+1}(s)| \lesssim_{s_0} s^{-1-b_I} + s^{-2-\alpha_1}\log s + s^{-1-\slalpha} + s^{-2\slalpha},
  \]
  which, for fixed $\alpha_0<b_I$, is bounded by $\frac{1}{10}s^{-1-\alpha_0}$ for large $s_0$, provided $\alpha_0<\min(\slalpha,1+\alpha_1,2\slalpha-1)$; in particular, this requires $\slalpha>\half$.
  
  We obtain estimates on $v_{k+1}^i(s)$, $i=1,2,3$, in a similar manner. Namely,
  \begin{equation}
  \label{EqLRPfV1}
  \begin{split}
    v_{k+1}^1(s) &= \int_s^\infty \cO_{s_0}(u^{-2-b'_I}\cdot 1^2) + \cO_{s_0}(u^{-2}\cdot 1\cdot u^{-1-\alpha_1}) + \cO_{s_0}(u^{-1-b'_I}\cdot 1\cdot u^{-1-\slalpha}) \\
      &\qquad\qquad + \cO_{s_0}(u^{-1}\cdot u^{-2-2\alpha_1}) + \cO_{s_0}(u^{-1}\cdot u^{-1-\alpha_1}\cdot u^{-1-\slalpha}) \\
      &\qquad\qquad + \cO_{s_0}(u\cdot u^{-2-2\slalpha})\,d u
  \end{split}
  \end{equation}
  satisfies $|v_{k+1}^1(s)|\lesssim_{s_0}s^{-1-b'_I}+s^{-2\slalpha}$, hence $|v_{k+1}^1(s)|<\frac{1}{10}s^{-1-\alpha_1}$ provided the weights satisfy $\alpha_1<\min(b'_I,2\slalpha-1)$, and provided we increase $s_0$, if necessary.
  
  Lastly, using the precise form of the leading term of $\Gamma_{0 b}^c$,
  \begin{equation}
  \label{EqLRPfVA}
  \begin{split}
    v^a_{k+1}(s) &= \int_s^\infty \cO_{s_0}(u^{-3-b'_I}) + \cO_{s_0}(u^{-3}\cdot 1\cdot u^{-1-\alpha_1}) \\
      &\qquad\qquad + \bigl( u^{-1}\cdot 1\cdot u^{-1-\slalpha} + \cO_{s_0}(u^{-2}\cdot 1\cdot u^{-1-\slalpha}) \bigr) \\
      &\qquad\qquad + \cO_{s_0}(u^{-2}\cdot u^{-2-2\alpha_1}) + \cO_{s_0}(u^{-1}\cdot u^{-1-\alpha_1}\cdot u^{-1-\slalpha}) \\
      &\qquad\qquad + \cO_{s_0}(1\cdot u^{-2-2\slalpha})\,d u.
  \end{split}
  \end{equation}
  Integrating the first term in the second line gives a term bounded from above by
  \[
    \bigl|{-\tfrac{1}{1+\slalpha}}s^{-1-\slalpha}\bigr|<\tfrac23 s^{-1-\slalpha}\ \ (\slalpha>\half),
  \]
  so we get $|v^a_{k+1}(s)|<(\frac{2}{3}+\frac{1}{10})s^{-1-\slalpha}$ provided $\slalpha<1+b'_I$ (which is consistent with $\slalpha>\half$). Thus, the iteration~\eqref{EqLRPfIt} maps the unit ball in $X$ into itself, provided we fix weights
  \begin{equation}
  \label{EqLRPfWeights}
    \alpha_0\in(0,b_I),\ \ 
    \alpha_1\in(0,b'_I),\ \ 
    \slalpha\in(\half,1+b'_I),
  \end{equation}
  and choose $s_0$ large; recall here that $0<b_I<b'_I<1$. Moreover, taking $s_0$ larger if necessary, $v_k\mapsto v_{k+1}$ is a contraction; such an estimate is only nonobvious for the difference of quadratic terms in~\eqref{EqLRPfIt} involving the component $v^0$; however, the corresponding terms come with a \emph{small} prefactor due to the smallness of the relevant Christoffel symbols.

  Let now $v:=\lim_{k\to\infty} v_k\in X$ denote the limiting curve in $T\R^4$, and integrate it by setting $\gamma:=I(v)$. Then $v$ satisfies the integral equation~\eqref{EqLRPfIt} with $v_k$ and $v_{k+1}$ replaced by $v$, so $v$ is $\cC^1$, hence $\gamma$ is a $\cC^2$ geodesic. In particular, $|v(s)|_{g(s)}^2$ is constant, hence equal to its limit as $s\to\infty$, which is
  \begin{align*}
    &\cO(s^{-1-b'_I}\cdot 1^2) + \cO(1\cdot 1\cdot s^{-1-\alpha_1}) + \cO(s^{-b'_I}\cdot 1\cdot s^{-1-\slalpha}) \\
    &\quad + \cO(s^{-1}\log s\cdot s^{-2-2\alpha_1}) + \cO(1\cdot s^{-1-\alpha_1}\cdot s^{-1-\slalpha}) + \cO(s^2\cdot s^{-2-2\slalpha}) = o(1),\ \ s\to\infty.
  \end{align*}
  This proves that $\gamma$ is a null-geodesic with the desired properties.
\end{proof}

Note that $\gamma$ is the \emph{unique} null-geodesic, up to translation of the affine parameter, tending to $p$ and such that $\dot\gamma\in X$. (Indeed, for any such $\gamma$, the velocity $\dot\gamma$ has \emph{small} norm in a space defined like $X$ but with weights decreased by a small amount and for $s_0$ large enough. The uniqueness then follows from the fixed point theorem.)

\begin{definition}
\label{DefLR}
  For $p\in(\scri^+)^\circ$, denote by $\gamma_p(s)$ the maximal null-geodesic such that $v=\dot\gamma_p$ and $x=\gamma_p$ satisfy equation~\eqref{EqLRPfx} and $v\in X$, with $X$ given in~\eqref{EqLRPfBanach}. We call $\gamma_p$ a \emph{radial null-geodesic}.
\end{definition}

We record the following stronger regularity property of the geodesics $\gamma_p$:
\begin{lemma}
\label{LemmaLRSymb}
  In the notation of Proposition~\usref{PropLR}, let $\gamma_p(s)=(x^\mu(s))$ denote a radial null-geodesic; then we have
  \[
    \wt x^0(s) \in S^{-\alpha_0}([s_0,\infty)),\ \ 
    \wt x^1(s) \in S^{-\alpha_1}([s_0,\infty)),\ \ 
    \wt x^a(s) \in S^{-\slalpha}([s_0,\infty)),
  \]
  for all weights $\alpha_0<b_I$, $\alpha_1<b'_I$, $\slalpha<1+b'_I$, where $\wt x^0(s):=x^0(s)-(s+4 m\log s)$, $\wt x^i(s):=x^i(s)-\bar x^i$, and where $S^m([s_0,\infty))$ denotes symbols of order $m$, i.e.\ functions $u\in\CI([s_0,\infty))$ such that for any $k\in\N_0$, $|u^{(k)}(s)|\leq C_k\la s\ra^{m-k}$.
\end{lemma}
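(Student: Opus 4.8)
The plan is to upgrade the $\cC^0$ fixed point argument of Proposition~\ref{PropLR} to a symbolic estimate by differentiating the integral equation~\eqref{EqLRPfIt} for the limiting curve $v=\dot\gamma_p$ repeatedly in the affine parameter $s$, and feeding the resulting bounds back into the same contraction structure. First I would record that $v$ satisfies the closed integral equation
\[
  v^\mu(s) = v^\mu(\infty) + \int_s^\infty \Gamma^\mu_{\kappa\lambda}|_{x(u)}\,v^\kappa(u)v^\lambda(u)\,d u, \qquad x=I(v),
\]
so that $\dot v^\mu(s) = -\Gamma^\mu_{\kappa\lambda}|_{x(s)}v^\kappa(s)v^\lambda(s)$. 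Since the $\Gamma^\mu_{\kappa\lambda}$ are (sums of products of inverse metric and first derivatives of) components of $g_m+\rho h$, hence lie in $\rho^+\CI + \Hb^{\infty;\cdot,\cdot,\cdot}$ with the improved weights coming from $\Ups(g)=0$ as in~\eqref{EqLRGamma000} and~\S\ref{SsCoPert}, and since $\pa_s$ along $\gamma_p$ is (to leading order) $2\pa_0 = \pa_t+\pa_{r_*}$, which is a scattering vector field gaining an order of $\rho$, each $s$-derivative falling on a Christoffel symbol evaluated along $x(s)$ produces an extra factor of $s^{-1}$ (times at worst a $\log s$), using $r=\tfrac12 x^0+\cO(\log x^0)$ from~\eqref{EqLRrstar} and the already established $\cC^0$ bounds~\eqref{EqLRPfxEst}. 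Derivatives falling on the $v^\kappa v^\lambda$ factors are handled by induction on the number of derivatives.

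Concretely, I would set up an induction on $k$: assuming $\wt v^0 \in S^{-1-\alpha_0}$, $v^1\in S^{-1-\alpha_1}$, $v^a \in S^{-1-\slalpha}$ through order $k$ (the base case $k=0$ being Proposition~\ref{PropLR}), differentiate $\dot v^\mu = -\Gamma^\mu_{\kappa\lambda}|_{x}v^\kappa v^\lambda$ exactly $k$ times. By the chain rule and Faà di Bruno, $\pa_s^k(\Gamma^\mu_{\kappa\lambda}|_{x(s)})$ is a sum of terms involving $\leq k$ derivatives of $\Gamma$ composed with $x$, multiplied by products of derivatives $\pa_s^j x^\nu$ with $j\leq k$; the latter are controlled by the inductive hypothesis (integrating the symbolic bounds on $v$), and the former gain powers of $s^{-1}$ as explained above. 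One then checks, exactly as in the weight bookkeeping~\eqref{EqLRPfV0}, \eqref{EqLRPfV1}, \eqref{EqLRPfVA}, that every resulting term decays at least one order faster in $s$ than the claimed symbolic order minus $k$ — the margins in the constraints~\eqref{EqLRPfWeights} ($\alpha_0<b_I$ etc., all strict) are precisely what give this room. This yields $\pa_s^{k+1}v^\mu = \cO(s^{-1-\alpha_\bullet-k-1})$... wait, more carefully: it yields $|\pa_s^{k+1}\wt v^0|\lesssim s^{-1-\alpha_0-(k+1)}$ and analogously for the other components, closing the induction. Integrating once more (using~\eqref{EqLRPfx}) transfers the symbolic bounds from $v$ to $\wt x^0 = x^0-(s+4m\log s)$, $\wt x^i = x^i-\bar x^i$, with the order improving by $1$ exactly as in the passage from $v^i\in s^{-1-\alpha}\cC^0$ to $x^i-\bar x^i = \cO(s^{-\alpha})$, giving $\wt x^0\in S^{-\alpha_0}$, $\wt x^1\in S^{-\alpha_1}$, $\wt x^a\in S^{-\slalpha}$.

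The main obstacle I anticipate is purely organizational rather than conceptual: the Faà di Bruno expansion of $\pa_s^k(\Gamma^\mu_{\kappa\lambda}\circ x)$ produces a combinatorially large collection of terms, and one must verify uniformly that the gauge-improved decay rates of the Christoffel components (the analogues of~\eqref{EqLRGamma000} for all the relevant index combinations, extracted from~\S\ref{SsCoPert}) survive differentiation — i.e., that $\rho$-scattering smoothness plus $\Hb^\infty$ conormality is stable under $\pa_s$ along $\gamma_p$, with each derivative costing exactly one order of $\rho\sim s^{-1}$ modulo logarithms. This is true because $S^+$-tangent b-vector fields and the scattering vector field $2\pa_0$ both lift to $\rho_I^{-1}\Vb(M)$-type operators which map the relevant function spaces as needed (cf.\ the discussion after~\eqref{EqEinNipulL} and in~\S\ref{SsCptScri}), but stating it cleanly requires care in choosing the weights $\alpha_0,\alpha_1,\slalpha$ slightly below their sup values at the outset so that all the logarithmic losses incurred over $k$ differentiations are absorbed. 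Since $k$ is arbitrary and the symbol estimates only need to hold for each fixed $k$ with a $k$-dependent constant, this causes no difficulty. I would then simply remark that the details are a routine (if tedious) elaboration of the estimates~\eqref{EqLRPfV0}--\eqref{EqLRPfVA} and omit the full computation.
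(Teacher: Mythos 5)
Your proposal is correct and follows essentially the same route as the paper's proof: an induction on the number of $s$-derivatives applied to the geodesic equation, with each derivative hitting a Christoffel symbol gaining one order because the $\pa_0$-component of $\pa_s$ is a good (vanishing at $\scri^+$) derivative while the remaining components come with coefficients $\dot x^i=\cO(s^{-1})$, the cancellation of the $4m s^{-2}$ model term being handled exactly as in the $k=0$ bookkeeping of~\eqref{EqLRPfV0}. Whether one phrases the induction on derivatives of the velocity (your first-order formulation, closed by one integration) or directly on derivatives of $\wt x^\mu$ via the second-order ODE (as the paper does) is immaterial.
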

\begin{proof}
  Certainly $x^\mu(s)$ is smooth as a geodesic in a spacetime with smooth metric tensor. The symbolic estimates for $\pa_s^k\wt x^\mu(s)$ for $k=0,1$ follow immediately from the construction of $\gamma_p$ in the proof of Proposition~\ref{PropLR}; for $k=2$, they follow from the proof as well, specifically, from the decay of the integrands in~\eqref{EqLRPfV0}--\eqref{EqLRPfVA}. Assuming that for some $k\geq 1$ we have $|\pa_s^j\wt x^0(s)|\lesssim\la s\ra^{\alpha_0-j}$, $0\leq j\leq k+1$, with $\alpha_0$ as in~\eqref{EqLRPfWeights}, likewise for $\wt x^i$, $i=1,2,3$, we have
  \[
    \pa_s^k(\pa_s^2\wt x^0) = \pa_s^k\ddot x^0 - \pa_s^{k+2}(s+4 m\log s) = \pa_s^k\ddot x^0 + \pa_s^k(4 m s^{-2}),
  \]
  and $\pa_s^k\ddot x^0=-\pa_s^k(\Gamma^0_{\mu\nu}\dot x^\mu\dot x^\nu)$. Note that $x^0(s)=\cO(s)$, $\pa_s x^0(s)=\cO(1)$, and $\pa_s^j x^0(s)=\cO(s^{-1-j})$ for $2\leq j\leq k+1$. Expanding the derivatives using the Leibniz and chain rules thus gives the following types of terms: for $(\mu,\nu)=(0,0)$ and all derivatives falling on the Christoffel symbol,
  \begin{align*}
    (\pa_s^k\Gamma^0_{0 0})(\dot x^0)^2&=\pa_s^k(4 m s^{-2}+\cO(s^{-2-b_I}))(1+\cO(s^{-1}\log s)) \\
      &=\pa_s^k(4 m s^{-2}) + \cO(s^{-k-2-b_I})
  \end{align*}
  by the inductive hypothesis and the b-regularity of the remainder term in $\Gamma^0_{0 0}$; the remaining $(\mu,\nu)=(0,0)$ terms are, with $\ell_1+\ell_2+\ell_3=k$ and $\ell_2>0$,
  \[
    (\pa_s^{\ell_1}\Gamma_{0 0}^0)(\pa_s^{\ell_2}\dot x^0)(\pa_s^{\ell_3}\dot x^0) = \cO(s^{-2-\ell_1}\cdot s^{-1-\ell_2}\cdot s^{-\ell_3}) = \cO(s^{-k-3}).
  \]
  Estimating the terms with $(\mu,\nu)\neq(0,0)$ does not require special care: derivatives falling on $\dot x^\mu$ are estimated using the inductive hypothesis (thus every derivative gives an extra power of decay in $s$); a derivative falling on $\Gamma_{\mu\nu}^0$ on the other hand either produces $(\pa_0\Gamma_{\mu\nu}^0)\dot x^0$, which gains an order of decay due to the Christoffel symbol (recall that $\pa_0$ is a b-derivative which \emph{vanishes} at $\scri^+$), or $(\pa_i\Gamma_{\mu\nu}^0)\dot x^i$, which gains an order of decay due to $\dot x^i=\cO(s^{-1})$. Thus, the bound $\pa_s^k(\pa_s^2\wt x^0)=\cO(s^{-k-2-\alpha_0})$ follows from the same arithmetic of weights as used after~\eqref{EqLRPfV0}.

  The arguments for the other components $\wt x^i$ are completely analogous, and in fact simpler as no terms need to be handled separately. This finishes the inductive step, and thus the proof of the lemma.
\end{proof}

We further note that for any compact subset $K\Subset(\scri^+)^\circ$, there exists a uniform value $s_0\in\R$ such that the null-geodesics $\gamma_p$, $p\in K$, are defined on $[s_0,\infty)$; since moreover $\gamma_p$ arises, via $\gamma_p=I(\dot\gamma_p)$ as in~\eqref{EqLRPfx}, from the Banach fixed point theorem for a smooth (in $p$) contraction, Lemma~\ref{LemmaLRSymb} holds \emph{smoothly} in the parameter $p$, that is, making the dependence on $p$ explicit as a subscript, we have $\wt x^0_p(s)\in \CI(K;S^{-\alpha_0}([s_0,\infty)))$ etc.

Consider now the union of radial null-geodesics tending to the points of particular $\Sph^2$ sections of $\scri^+$. Concretely, for fixed $\bar x^1\in\R$, denote
\begin{equation}
\label{EqLRSpheres}
  S(\bar x^1):=\{p\in\scri^+\colon x^1(p)=\bar x^1\},\ \ 
  C_{\bar x^1}:=\bigcup_{p\in S(\bar x^1)} \gamma_p((s_0,\infty)),
\end{equation}
where $s_0$ is chosen sufficiently large, \emph{which will always be assumed from now on}. See Figure~\ref{FigLRCu}. Thus, on the Schwarzschild spacetime, $C_{\bar x^1}$ is the part of the null hypersurface $x^1=\bar x^1$ on which $x^0\gtrsim s_0$.

\begin{figure}[!ht]
\includegraphics{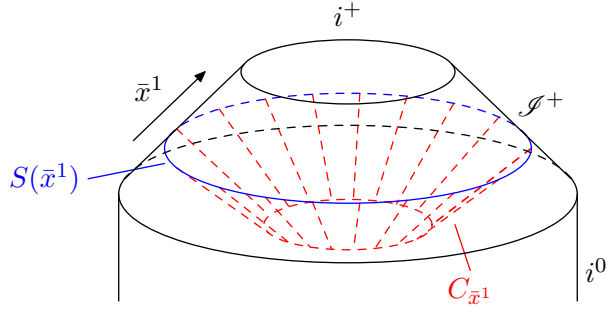}
\caption{The outgoing light cone $C_{\bar x^1}$ limiting to the sphere $S(\bar x^1)\subset(\scri^+)^\circ$. Also shown are a number of radial null-geodesics.}
\label{FigLRCu}
\end{figure}

\begin{lemma}
\label{LemmaLRCongr}
  For $\bar x^1\in\R$, the set $C_{\bar x^1}$ is a smooth null hypersurface near $\scri^+$. Moreover, if $I^1\Subset\R$ is a precompact open interval, then there exists a function $u$ such that
  \begin{equation}
  \label{EqLRCongr}
    u-x^1=:\wt u\in\rho_I^{b'_I-0}\Hb^\infty(M); \quad
    C_{\bar x^1} = \{ u=\bar x^1 \},\ \ \bar x^1\in I^1.
  \end{equation}
\end{lemma}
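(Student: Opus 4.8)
The plan is to realize $u$ as the function that is constant along each radial null-geodesic $\gamma_p$ of Definition~\ref{DefLR} and equals $x^1(p)$ there, and to prove the lemma by showing that the geodesics $\gamma_p$, $p\in(\scri^+)^\circ$, sweep out a neighborhood of $(\scri^+)^\circ$ in $M$ as a regular fibration with controlled behavior down to $\scri^+$. Fix a precompact interval $I^1\Subset\R$ and set $S(I^1):=\{p\in\scri^+:x^1(p)\in I^1\}\cong\overline{I^1}\times\Sph^2$, a compact subset of $(\scri^+)^\circ$; by the remark after \eqref{EqLRSpheres} there is a uniform $s_0$ with $\gamma_p$ defined on $[s_0,\infty)$ for all $p\in S(I^1)$. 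First I would record, using \eqref{EqLRrstar} and Lemma~\ref{LemmaLRrstar}, that $r(\gamma_p(s))=\tfrac12 s+O(\log s)$, so that a boundary defining function $\rho_I$ of $\scri^+$ restricts to $\rho_I(\gamma_p(s))=c(p)\,s^{-1}(1+o(1))$ with $c(p)>0$ smooth (computed locally on $(\scri^+)^\circ$: from $r^{-1}$ near $\{x^1=0\}$ and from $|v|$ elsewhere, cf.\ \eqref{EqCptASplCoords} and \eqref{EqBgipNullDefFn}). In particular $\rho_I$ is a strictly monotone reparametrization of $\gamma_p$ near $s=\infty$.

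Next I would study the congruence map $\Phi(p,\rho_I):=\gamma_p(s(\rho_I))$ from a neighborhood of $S(I^1)\times\{0\}$ in $S(I^1)\times[0,\eps)_{\rho_I}$ into $M$. By Lemma~\ref{LemmaLRSymb}, together with the smooth dependence of $\gamma_p$ on $p$ noted after Definition~\ref{DefLR}, $\Phi$ is smooth in the interior and conormal up to $\{\rho_I=0\}$, with $\Phi|_{\rho_I=0}=\mathrm{id}_{\scri^+}$ since $\gamma_p(s)\to p$ in $M$. At $\scri^+$ the differential $d\Phi$ is block-triangular: it is the identity on $T\scri^+$, while the transversal generator $\Phi_*\partial_{\rho_I}$ is a nonzero multiple of the lift of $\dot\gamma_p$, whose limiting direction is that of $\partial_0=\partial_t+\partial_{r_*}$; by \eqref{EqCptASplNullExpl}--\eqref{EqCptASplNull}, $\partial_0$ is a nonvanishing multiple of $\rho_I\partial_{\rho_I}$ modulo terms vanishing at $\scri^+$, so this vector is transversal to $\scri^+$. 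Hence $d\Phi$ is invertible along $\scri^+$, so $\Phi$ is a local diffeomorphism near $\scri^+$; since $\Phi$ is the identity on the compact set $S(I^1)$, it is injective on a neighborhood, hence a diffeomorphism onto an open neighborhood $\Omega$ of $S(I^1)$ in $M$, which is exactly the union of the $\gamma_p((s_0,\infty))$, $p\in S(I^1)$, together with $S(I^1)$.

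Then I would set $u:=x^1\circ\pi\circ\Phi^{-1}$ on $\Omega$, with $\pi(p,\rho_I):=p$; this $u$ is conormal on $\Omega$ and $\{u=\bar x^1\}=C_{\bar x^1}$ for $\bar x^1\in I^1$ by the definition \eqref{EqLRSpheres} of $C_{\bar x^1}$, which is the foliation statement. For the decay, at $q=\Phi(p,\rho_I)=\gamma_p(s)$ one has $u(q)=x^1(p)=:\bar x^1$ and $x^1(q)=x^1(s)=\bar x^1+\wt x^1(s)$, so $\wt u:=u-x^1$ satisfies $\wt u\circ\Phi=-\wt x^1$. By Lemma~\ref{LemmaLRSymb}, $\wt x^1_p\in S^{-\alpha_1}([s_0,\infty))$ for every $\alpha_1<b'_I$, smoothly in $p$; under the substitution $s\sim c(p)\,\rho_I^{-1}$, b-vector fields on $M$ near $(\scri^+)^\circ$ pull back to combinations of $\partial_p$ and $s\partial_s$, and $(s\partial_s)^jS^{-\alpha_1}\subset S^{-\alpha_1}$, so $\wt u\in\rho_I^{\alpha_1}\Hb^\infty$ for all $\alpha_1<b'_I$, i.e.\ $\wt u\in\rho_I^{b'_I-0}\Hb^\infty(M)$ near $(\scri^+)^\circ$. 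Finally, for nullity: $T_qC_{\bar x^1}$ is spanned by $\dot\gamma_p$ and the sphere-variation fields $J=\partial_p\gamma_p$, which are Jacobi fields along $\gamma_p$; since $\langle\dot\gamma_p,\dot\gamma_p\rangle_g\equiv0$ and $\tfrac{d}{ds}\langle\dot\gamma_p,J\rangle_g=\langle\dot\gamma_p,\nabla_{\dot\gamma_p}J\rangle_g=\langle\dot\gamma_p,\nabla_J\dot\gamma_p\rangle_g=\tfrac12 J\langle\dot\gamma_p,\dot\gamma_p\rangle_g=0$, the quantity $\langle\dot\gamma_p,J\rangle_g$ is constant along $\gamma_p$; as $s\to\infty$, $\dot\gamma_p\to\partial_0$ in the coordinate frame, $J$ tends to a sphere-tangent direction (as $\bar x^1$ is held fixed on $S(\bar x^1)$), and the coefficient $g_{0b}=h_{0\bar b}$ is a good component decaying at $\scri^+$ (see \eqref{EqEinFMetric}), so $\langle\dot\gamma_p,J\rangle_g\to0$, hence $\equiv0$; thus the null vector $\dot\gamma_p$ is $g$-orthogonal to all of $T_qC_{\bar x^1}$, making $g|_{T_qC_{\bar x^1}}$ degenerate, so $C_{\bar x^1}$ is null (the first assertion being this with a singleton $I^1$).

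The main obstacle is the second step: establishing that the radial null-geodesics fill out a neighborhood of $(\scri^+)^\circ$ as a conormal fibration down to $\scri^+$, i.e.\ that $\Phi$ is a conormal diffeomorphism fixing $\scri^+$; once this is in hand the construction of $u$, its decay, and nullity are bookkeeping. All the analytic input for it is the symbolic control of Lemma~\ref{LemmaLRSymb}, and the one point that needs care is the uniformity of the reparametrization $s\leftrightarrow\rho_I$ as $x^1(p)$ ranges over the precompact set $\overline{I^1}$ (in particular the passage from the defining functions $|v|$ to $r^{-1}$ near $\{x^1=0\}$ on $\scri^+$), which is precisely why the statement is phrased for a precompact interval $I^1$.
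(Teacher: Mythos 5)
Your overall strategy is the same as the paper's — invert the congruence of radial null-geodesics near $(\scri^+)^\circ$ and read off $u$ as the label $x^1(p)$, with Lemma~\ref{LemmaLRSymb} supplying the decay — but the step you yourself flag as the main obstacle is not established by the argument you give, and the justification offered is incorrect. By \eqref{EqCptASplNullExpl}, $\pa_0=-\half\rho_0\rho_I(1-2m\rho)\,\rho_I\pa_{\rho_I}$: the vector field $\rho_I\pa_{\rho_I}$ is \emph{tangent} to $\scri^+$ (it vanishes there as an ordinary vector field), and $\pa_0$ vanishes at $\scri^+$ to an additional order, so the claim that the limiting direction of $\dot\gamma_p$ is transversal to $\scri^+$ is false as stated. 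The transversality of $\Phi_*\pa_{\rho_I}$ must instead come from the reparametrization Jacobian $ds/d\rho_I\sim -c(p)\rho_I^{-2}$, which compensates the quadratic vanishing of $\dot\gamma_p$ in $\pa_{\rho_I}$. More seriously, even with that repaired, $\Phi$ differs from the model product map only by terms that are conormal with weight $\rho_I^{\alpha}$, $\alpha<b'_I<\half$; such maps are H\"older but not $\cC^1$ up to $\scri^+$, so ``$d\Phi$ is invertible along $\scri^+$, hence $\Phi$ is a local diffeomorphism'' is not an application of the classical inverse function theorem at boundary points. The conormal regularity of $\Phi^{-1}$ is precisely what you need in the last step to transfer $\wt x^1\in S^{-\alpha_1}$ into $\wt u\in\rho_I^{b'_I-0}\Hb^\infty(M)$, and as written it is asserted rather than proved.

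The paper closes exactly this gap by carrying out the inversion inside symbol classes via contraction mappings: it first reparametrizes the geodesics so that $(x^0,x^2,x^3)$ become coordinates along the congruence (the map \eqref{EqLRCongrPhi}, obtained by solving \eqref{EqLRCongrPhiIt} by a fixed point in $S^{-\alpha}$, first in weighted $\cC^0$ and then symbolically), checks injectivity in $\bar x^1$ by a one-line estimate using $f\in S^{-\alpha_1}$, and then solves $\wt u=-f(\bar x^1+\wt u)$ by another fixed point, which delivers $\wt u\in S^{-\alpha_1}$ and hence the stated membership after observing $S^{-\alpha_1}\subset\rho_I^{\alpha_1-0}\Hb^\infty$ near $(\scri^+)^\circ$. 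Your outline can be repaired in the same spirit (for instance by passing to logarithmic coordinates, in which conormality becomes uniform decay of all derivatives and a quantitative Banach-space inverse function theorem applies), but the key analytic content is the fixed-point inversion, not a pointwise differential at the boundary. As a side remark, your Jacobi-field verification that $C_{\bar x^1}$ is null is a reasonable supplement (the paper treats nullity as immediate from the null-geodesic generators); note only that the term $g_{ab}\dot\gamma^a J^b\sim r^2\cdot\cO(s^{-1-\slalpha})$ forces you to choose $\slalpha>1$, which Lemma~\ref{LemmaLRSymb} indeed permits since $\slalpha<1+b'_I$.
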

\begin{proof}
  With coordinates $x^a$, $a=2,3$, on $\Sph^2$, write $\gamma(\bar x^1;s,\bar x^2,\bar x^3) := \gamma_{(\bar x^1,\bar x^2,\bar x^3)}(s)$. First, we shall prove that there exists a coordinate change of $\R_{x^0}\times\R^2_{x^2,x^3}$,
  \begin{equation}
  \label{EqLRCongrPhi}
    \Phi(\bar x^1;x^0,x^2,x^3)=(x^0-4 m\log x^0+\wt\Phi^0,x^2+\wt\Phi^2,x^3+\wt\Phi^3)=:(\Phi^0,\Phi^2,\Phi^3),
  \end{equation}
  depending parametrically on $\bar x^1\in I^1$, and with $\wt\Phi^0\in S^{-\alpha_0}$, $\wt\Phi^a\in S^{-\slalpha}$ for weights as in~\eqref{EqLRPfWeights} (with the symbolic behavior in $x^0$), such that the map
  \[
    \delta(x^0,\bar x^1,x^2,x^3) := \gamma(\bar x^1;\Phi(\bar x^1;x^0,x^2,x^3))
  \]
  satisfies $x^i\circ\delta=x^i$, $i=0,2,3$. To do this, recall that, putting $\gamma^\mu:=x^\mu\circ\gamma$, we have $\gamma^0-(s+4 m\log s)=:\wt\gamma^0\in S^{-\alpha_0}$, $\gamma^1-\bar x^1=:\wt\gamma^1\in S^{-\alpha_1}$, and $\gamma^a-\bar x^a=:\wt\gamma^a\in S^{-\slalpha}$, so after some simplifications, our task becomes choosing $\wt\Phi^i$ such that
  \begin{equation}
  \label{EqLRCongrPhiIt}
    \wt\Phi^0 = 4 m\log\bigl(1-4 m(x^0)^{-1}(\log x^0+\wt\Phi^0)\bigr)-\wt\gamma^0(\bar x^1;\Phi), \quad
    \wt\Phi^a = -\wt\gamma^a(\bar x^1;\Phi);
  \end{equation}
  this can be solved, first with $\wt\Phi^0\in(x^0)^{-\alpha_0}\cC^0$ etc.\ using the fixed point theorem, and then in symbol spaces using the smoothness of $\wt\Phi^0$ (which follows from the implicit function theorem) and an iterative argument.
  
  Let us drop $x^0,x^2,x^3$ from the notation. The desired function $u$ is then defined implicitly by $u\circ\delta=\bar x^1$. Writing $x^1(\delta(\bar x^1))=:\bar x^1+f$, where $f\in S^{-\alpha_1}$ by Lemma~\ref{LemmaLRSymb}, we see that $\delta$ is one to one for large $x^0$, as $\bar x^1+f(\bar x^1)=\bar y^1+f(\bar y^1)$ implies $0\geq|\bar x^1-\bar y^1|-C(x^0)^{-\alpha_1}|\bar x^1-\bar y^1|$, so $\bar x^1=\bar y^1$ if $x^0$ is large. Writing $u=\bar x^1+\wt u$, we thus need to solve
  \[
    (\bar x^1+\wt u)+f(\bar x^1+\wt u)=\bar x^1\ \ \Llra\ \ 
    \wt u = -f(\bar x^1+\wt u),
  \]
  which by another application of the fixed point theorem has a solution $\wt u\in S^{-\alpha_1}$. Lastly, note that the vector fields $\pa_{x^i}$, $i=2,3,4$, and $x^0\pa_{x^0}$ span $\Vb(M)$ near $(\scri^+)^\circ$ in view of $\rho_I=1/x^0$, hence $S^{-\alpha_1}\subset\rho_I^{\alpha_1-0}\Hb^\infty$ near $(\scri^+)^\circ$. Since we can take $\alpha_1$ arbitrarily close to $b'_I$ by~\eqref{EqLRPfWeights}, the existence of $u$ and smoothness of $C_{\bar x^1}$ follows.

  It remains to prove that $C_{\bar x^1}$ is a \emph{null} hypersurface. To this end, we sketch a different way of constructing $C_{\bar x^1}$: let $\bar x^0>0$, and consider the 2-sphere $S_{\bar x^1\bar x^0}=\{x^0=\bar x^0,\ x^1=\bar x^1\}$. For sufficiently large $\bar x^0$, $S_{\bar x^1\bar x^0}$ is spacelike; hence, for any $p\in S_{\bar x^1\bar x^0}$, there are precisely $4$ rays of lightlike directions in $(T_p S_{\bar x^1\bar x^0})^\perp$, and there exists a unique $v(p)\in(T_p S_{\bar x^1\bar x^0})^\perp$ which is future lightlike and outgoing (i.e.\ $d r(v(p))>0$), and for which $v(p)^0=1+\frac{2 m}{r(p)}$. By writing out the condition $g(v(p),\pa_a)=0$ using the form~\eqref{EqEinFMetric} of $g$, one obtains an expression for $v(p)^a$ in terms of a small multiple of $v(p)^1$ and certain metric coefficients, while using $|v(p)|_g^2=0$ (and using the nonvanishing of $g_{0 1}$) gives an expression for $v(p)^1$ in terms of a small multiple of $v(p)^a$, plus certain metric coefficients. Solving this simple system, one finds that the components of $v(p)$ satisfy $v(p)^1=\cO(r^{-1-b_I'})$ and $v(p)^a=\cO(r^{-2-b_I'})$; they are thus \emph{small} when measured in the norm of $X$ (restricted to a single point) in~\eqref{EqLRPfBanach}, cf.\ the upper bounds on the weights in~\eqref{EqLRPfWeights}.
  
  A small modification of the fixed point argument in the proof of Proposition~\ref{PropLR} shows that we can solve the geodesic equation with initial data $v(p)$ in the backwards direction up to a \emph{fixed} value of $x^0$, say $x^0=C\gg 1$; denote the union of these null-geodesic segments emanating from points on $S_{\bar x^1\bar x^0}$ by $C_{\bar x^1\bar x^0}$. Letting $\bar x^0\to\infty$, it then follows that $C_{\bar x^1\bar x^0}$ converges over every compact subset of $\R^4\cap\{x^0>C\}$ to $C_{\bar x^1}$ in the $\cC^1$ topology. By construction, every $C_{\bar x^1\bar x^0}$ is a null hypersurface; thus, its $\cC^1$ limit $C_{\bar x^1}$ is a null hypersurface as well.
\end{proof}

The function $u$ is uniquely defined by~\eqref{EqLRCongr}; thus, Lemma~\ref{LemmaLRCongr} shows the existence of a neighborhood
\begin{equation}
\label{EqLRNbh}
  (\scri^+)^\circ\subset U^+\subset M
\end{equation}
and a function $u\in x^1+\rho_I^{b'_I-0}\Hbloc^\infty(U^+)$ such that $C_{\bar x^1}\cap U^+=\{u=\bar x^1\}$ for all $\bar x^1\in\R$.

\begin{rmk}
  The weight in~\eqref{EqLRCongr} is consistent with the choice of the domain~\eqref{EqBgscriDom} whose boundary component $U_\eps^\pa$ is \emph{spacelike}, see~\eqref{EqBgscriFinalDefFn}.
\end{rmk}

Since $|\nabla u|^2\equiv 0$ by construction, the vector field $\nabla u$ consists of null-generators of its level sets $C_u$; more precisely, we have $\nabla_{\nabla u}\nabla u=0$, so restricted to the image of a radial null-geodesic $\gamma_p\subset C_u$, we have $(\nabla u)|_{\gamma_p(s)}=c_p\dot\gamma_p(s)$ for some \emph{constant} $c_p$. Taking the inner product with $\pa_1$ and using the form~\eqref{EqEinFMetric} of $g$ yields $1+\cO(s^{-b'_I+0})=c_p(\half+\cO(s^{-1}))$, so letting $s\to\infty$ gives $c_p=2$ and thus
\[
  (\nabla u)|_{\gamma_p(s)} = 2\dot\gamma_p(s).
\]
We can then extract more information using $r=\half s+\cO(\log s)$ and $g_{0 1}=\half+2 s^{-1}(h_{0 1}-m)+\cO(s^{-2}\log s)$: Lemma~\ref{LemmaLRSymb} then gives $2\la\dot\gamma_p(s),\pa_1\ra=1+4 s^{-1}h_{0 1}+\cO(s^{-1-\alpha_0})$, so
\begin{equation}
\label{EqLRuDer}
  \pa_1\wt u-2 r^{-1}h_{0 1}\in\rho_I^{1+b_I-0}\Hb^\infty.
\end{equation}

\subsection{Bondi--Sachs coordinates; proof of the mass loss formula}
\label{SsLBS}

The function $u$ has nonvanishing differential everywhere on $C_{\bar x^1}$ when $x^0$ is large; we will use it one coordinate of a \emph{Bondi--Sachs} coordinate system $(u,\mathring r,\mathring x^2,\mathring x^3)$, where the coordinates $\mathring r$ and $\mathring x^a$, $a=2,3$, are geometrically defined and constructed below; with respect to such a coordinate system, the metric takes the form
\[
  g = g_{u u}\,d u^2 + 2 g_{u\mathring r}\,d u\,d\mathring r - \mathring r^2 q_{a b}(d\mathring x^a-\wt U^a\,d u)(d\mathring x^b-\wt U^b\,d u)
\]
for some $g_{u u}$, $g_{u \mathring r}$, $q_{a b}$, and $\wt U^a$, and quantities of geometric or physical interest such as the Bondi mass and the gravitational energy flux can be calculated in terms of certain lower order terms of these metric coefficients \cite{BondivdBMetznerGravity,MaedlerWinicourBondi}. We begin by defining $\mathring r$. Introduce a projection $\pi\colon U^+\to\Sph^2$ by
\[
  \pi(\gamma_{(\bar x^1,\theta)}(s)) := \theta,\ \ \theta\in\Sph^2,
\]
which is well-defined due to Lemma~\ref{LemmaLRCongr}; in fact, in the notation of its proof, using local coordinates $x^a$, $a=2,3$, on $\Sph^2$, we have
\begin{equation}
\label{EqLBSProjExpl}
  \pi(x^0,x^1,x^2,x^3)=(\Phi^a(x^1+\wt u;x^0,x^2,x^3))_{a=2,3},
\end{equation}
which in particular gives
\begin{equation}
\label{EqLBSProj}
  \pi(x^0,x^1,x^2,x^3)-(x^2,x^3)\in S^{-\slalpha}.
\end{equation}
The map $\pi$ defines a fibration of every $C_u$; these fibrations have natural sections, as we proceed to explain invariantly. Let $N:=\ker\pi_*$ denote the subbundle (smooth in $M^\circ$) consisting of vectors tangent to the fibers of $\pi$: this is the bundle of null generators of the null hypersurfaces $C_u$, and therefore $N \perp T C_u$. This implies that the spacetime metric $g$ restricts to an element
\[
  [g]\in S^2(T C_u/N)^*.
\]
On the other hand, the pull-back $\pi^*\slg$ induces a Riemannian metric $[\pi^*\slg]$ on $T C_u/N$, i.e.\ an isomorphism $T C_u/N\to(T C_u/N)^*$, hence $[\pi^*\slg]^{-1}[g]\in\End(T C_u/N)$ is well-defined. We then define the \emph{area radius} $\mathring r$ by the formula
\[
  \mathring r^4 := \det\bigl([\pi^*\slg]^{-1}[g]\bigr),\ \ \mathring r>0.
\]
\begin{lemma}
\label{LemmaLBSAreaR}
  We have $\mathring r-r\in\rho_I^{b'_I-0}\Hb^\infty$ and $\pa_0\mathring r=\half-m r^{-1}+\rho_I^{1+b'_I-0}\Hb^\infty$ near $(\scri^+)^\circ$.
\end{lemma}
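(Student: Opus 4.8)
The plan is to compute $\mathring r$ in the Bondi--Sachs frame by unwinding the definition $\mathring r^4=\det([\pi^*\slg]^{-1}[g])$ and tracking the leading and subleading terms using the explicit form \eqref{EqEinFMetric} of $g$, the symbolic estimates in Lemma~\ref{LemmaLRSymb} for the radial null-geodesics, and the asymptotics \eqref{EqLBSProj} of the projection $\pi$. First I would work in local coordinates $(x^0,x^1,x^2,x^3)$ on a piece of $U^+$ and identify, at each point near $(\scri^+)^\circ$, a basis of $T C_u$ transversal to $N$: since $C_u=\{u=\bar x^1\}$ with $u-x^1\in\rho_I^{b'_I-0}\Hb^\infty$ by Lemma~\ref{LemmaLRCongr}, and $N$ is spanned by $\dot\gamma_p$ which by \eqref{EqLRuDer} and Lemma~\ref{LemmaLRSymb} is $\pa_0$ modulo $\rho_I\Vb$-corrections, natural representatives of $T C_u/N$ are the lifts of $\pa_{x^2},\pa_{x^3}$ adjusted by the vector fields $N$-correction; equivalently, one may push forward $\pa_{x^a}$ along $\pi$. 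The key point is that $[\pi^*\slg]^{-1}[g]$ is, to leading order, $(r^2/1)\slg^{-1}\slg=r^2\,\mathrm{Id}$ on the rank-$2$ bundle $T C_u/N$, using $g_{ab}=-r^2\slg_{ab}+r h_{\bar a\bar b}$ and the fact that $\pi^*\slg$ differs from $\slg$ by an $S^{-\slalpha}$-term via \eqref{EqLBSProj}; hence $\mathring r^4=r^4(1+O(\rho_I^{b'_I-0}))$, giving $\mathring r-r\in\rho_I^{b'_I-0}\Hb^\infty$ after taking fourth roots (the $\Hb^\infty$ membership following since all the corrections are conormal on $M$ near $\scri^+$, as $S^{-\alpha}\subset\rho_I^{\alpha-0}\Hb^\infty$ there).

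For the second claim I would differentiate $\mathring r$ along $\pa_0=\pa_t+\pa_{r_*}$. Since $\pa_0$ is a b-vector field vanishing at $\scri^+$ (i.e.\ $\pa_0\in\rho_I\Vb$ near $i^0\cap\scri^+$, cf.\ \eqref{EqCptASplNullExpl}), and $r=\tfrac12 x^0-\tfrac12 x^1$ so that $\pa_0 r=\tfrac12$, the task reduces to computing $\pa_0(\mathring r-r)$ to one more order. Here I would keep the $O(\rho_I)$ correction in $[\pi^*\slg]^{-1}[g]$ explicitly: from $g_{ab}=-r^2\slg_{ab}+r h_{\bar a\bar b}$ one has, writing $h_{\bar a\bar b}=\slg_{ab}\,\tfrac12\sltr h+(\text{trace-free})$, that $\det([\pi^*\slg]^{-1}[g])=r^4\bigl(1+\tfrac{1}{r}\sltr h+O(\rho_I^{1+b'_I-0})\bigr)$, where the trace-free part of $h_{\bar a\bar b}$ contributes only at quadratic order $O(\rho_I^{2-0})$ to the determinant, and the $\pi^*\slg$-vs-$\slg$ discrepancy and the $N$-adjustment contribute $O(\rho_I^{1+b'_I-0})$ as well (here I use $\sltr h\in\Hb^{\infty;b_0,b'_I,b_+}$ from \eqref{EqEinFGood}, so $\tfrac1r\sltr h\in\rho_I^{1+b'_I-0}\Hb^\infty$). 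Thus $\mathring r=r\bigl(1+\tfrac{1}{4r}\sltr h+O(\rho_I^{1+b'_I-0})\bigr)$, and since $\sltr h$ itself is $\rho_I^{b'_I}$-decaying, $\pa_0$ applied to $\tfrac14\sltr h$ lands in $\rho_I^{1+b'_I-0}\Hb^\infty$; the only term surviving at order $\rho_I^1$ is the one coming from the Schwarzschild background, which is exactly the $-m/r$ term: indeed for $h=0$ one computes directly on Schwarzschild that $\mathring r=r$ and $\pa_0 r=\tfrac12$, but the correction $-m r^{-1}$ arises from the difference between the affine parametrization of $\gamma_p$ (normalized by $\dot x^0\sim 1+4ms^{-1}$) and $r$, i.e.\ from $\pa_0$ acting through the $4m\log x^0$ term in $x^0(s)$. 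I would pin this down by the computation $\pa_0\mathring r=\langle\nabla\mathring r,\cdot\rangle$ contracted appropriately, or more directly by noting that along $\gamma_p$, $\tfrac{d}{ds}\mathring r=\tfrac{d}{ds}r+O(s^{-2}\log s)$ while $\tfrac{d}{ds}r=\tfrac12\dot x^0-\tfrac12\dot x^1=\tfrac12+2ms^{-1}+O(s^{-1-\alpha})$, and converting the $s$-derivative to a $\pa_0$-derivative using $\pa_0=\tfrac{d}{ds}+O(\rho_I)$ plus $r=\tfrac12 s+O(\log s)$, which turns $2ms^{-1}$ into $m r^{-1}+O(\rho_I^{1+0})$; combining with the fact that $\pa_0$ of the $\sltr h$-correction is lower order yields $\pa_0\mathring r=\tfrac12-mr^{-1}+\rho_I^{1+b'_I-0}\Hb^\infty$.

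The main obstacle I anticipate is bookkeeping the precise order of the two distinct corrections--the geometric one from $\sltr h$ (size $\rho_I^{b'_I}$ relative to $r$, hence $\rho_I^{1+b'_I}$ in absolute terms) versus the background Schwarzschild one (size $\rho_I^{1}$ in absolute terms, i.e.\ $m r^{-1}$)--and verifying that all the \emph{other} sources of error (the discrepancy between $\pi^*\slg$ and the round metric on the fibers, the non-orthogonality corrections when picking representatives of $TC_u/N$, the trace-free part of $h_{\bar a\bar b}$ entering the determinant, and the conversion between affine-parameter derivatives and $\pa_0$) genuinely land in $\rho_I^{1+b'_I-0}\Hb^\infty$ and not merely in $\rho_I^{1-0}\Hb^\infty$. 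This requires using the refined estimates of Definition~\ref{DefEinF} (in particular the improved weight $b'_I>b_I$ on the good components, and that $\pi-\mathrm{id}\in S^{-\slalpha}$ with $\slalpha>1/2$ can be taken close to $1+b'_I$ by \eqref{EqLRPfWeights}) rather than just the crude bound $h\in\Hb^{\infty;b_0,-\eps,b_+}$. Once the arithmetic of weights is organized as above, both assertions follow; the conormality (membership in $\Hb^\infty$ rather than merely a pointwise bound) is automatic since every object involved is conormal on $M$ near $\scri^+$, the symbol spaces $S^{-\alpha}$ in the $x^0$ variable embedding into $\rho_I^{\alpha-0}\Hb^\infty$ near $(\scri^+)^\circ$ because $x^0\pa_{x^0}$ and $\pa_{x^a}$ span $\Vb(M)$ there.
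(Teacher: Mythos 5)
Your treatment of the first claim is essentially the paper's argument: choose representatives of $T C_u/N$, compute $[\pi^*\slg]^{-1}[g]$ from $g_{a b}=-r^2\slg_{a b}+r h_{\bar a\bar b}$ together with the $S^{-\slalpha}$ control of $\pi$ from \eqref{EqLBSProj}, absorb the borderline term $r^{-1}\sltr h$ using the gauge-improved decay $\sltr h\in\Hb^{\infty;b_0,b'_I,b_+}$ from \eqref{EqEinFGood}, and take fourth roots. Two remarks. First, the conclusion of your opening paragraph, that $\mathring r^4=r^4(1+\cO(\rho_I^{b'_I-0}))$ already gives $\mathring r-r\in\rho_I^{b'_I-0}\Hb^\infty$, does not follow as stated (it only yields $\mathring r-r\in\rho_I^{-1+b'_I-0}\Hb^\infty$); what is needed is the relative error $\cO(\rho_I^{1+b'_I-0})$, which your second paragraph does supply, so this is a repaired slip rather than a gap. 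Second, ``lifts of $\pa_{x^a}$ adjusted by an $N$-correction'' cannot produce vectors tangent to $C_u$: since $N$ is null and $g(\dot\gamma_p,\dot\gamma_p)=0$, adding multiples of $\dot\gamma_p$ to $\pa_a$ does not change $g(\dot\gamma_p,\cdot)$, so it cannot enforce the tangency condition $g(\dot\gamma_p,V_a)=0$; the correction must be taken along a direction on which $g(\dot\gamma_p,\cdot)$ is of size $1$, e.g.\ $V_a=f_a\pa_1+\pa_a$ with $f_a=\cO(s^{1-\slalpha})$, which is what the paper does, and with which your error bookkeeping is then exactly right.

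For the second claim your derivation, as written, goes wrong at precisely the order the lemma is about. You assert $r=\half x^0-\half x^1$ and $\pa_0 r=\half$ for $h=0$; but $\half(x^0-x^1)=r_*$, not $r$ (Lemma~\ref{LemmaLRrstar}), and since $d r_*=\tfrac{r}{r-2m}\,d r$ one has the exact identity $\pa_0 r=\half\bigl(1-\tfrac{2m}{r}\bigr)=\half-m r^{-1}$, with no contribution needed from the affine parametrization of the geodesics. Your attempt to recover $-m r^{-1}$ from $\dot x^0\sim 1+4 m s^{-1}$ conflates $\tfrac{d}{d s}r$ with $\tfrac{d}{d s}r_*$ (the $2 m s^{-1}$ term in $\tfrac{d}{d s}r_*$ is cancelled by the factor $d r/d r_*=1-\tfrac{2m}{r}$), and the replacement ``$\pa_0=\tfrac{d}{d s}+\cO(\rho_I)$'' discards the contribution $(\dot x^0-1)\pa_0 r\approx 2 m s^{-1}$, which is exactly of the size being computed; followed literally, your chain would produce $+m r^{-1}$ rather than $-m r^{-1}$. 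The correct route—also the paper's, which simply notes that it suffices to prove the first claim—is immediate: $\pa_0 r=\half-m r^{-1}$ exactly, and since $\pa_0\in\rho_0\rho_I\,\Vb(M)$ near $\scri^+$ by \eqref{EqCptASplNullExpl}, applying $\pa_0$ to $\mathring r-r\in\rho_I^{b'_I-0}\Hb^\infty$ lands in $\rho_I^{1+b'_I-0}\Hb^\infty$ near $(\scri^+)^\circ$. (Your observation that $\pa_0$ of the $\sltr h$-correction gains a power of $\rho_I$ is this same mechanism; the defect lies only in your handling of the background Schwarzschild term.)
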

\begin{proof}
  It suffices to prove the first claim. We start by finding representatives in $T C_u$ of a basis of $T C_u/N$ by considering the vector fields
  \begin{equation}
  \label{EqLBSAreaRVec}
    V_a=f_a\pa_1+\pa_a,\ \ a=2,3,
  \end{equation}
  with $f_a$ to be determined. Working over the image of a fixed geodesic $\gamma_p\colon[s_0,\infty)\to M$, we use $\dot\gamma_p=(1+\cO(s^{-1}))\pa_0+\cO(s^{-1-\alpha_1})\pa_1+\sum_c\cO(s^{-1-\slalpha})\pa_c$ and the form of $g$ to calculate
  \[
    g(\dot\gamma_p,V_a) = (\half+\cO(s^{-1}))(1+\cO(s^{-1}))f_a + \cO(s^{1-\slalpha});
  \]
  demanding this to vanish determines $f_a=\cO(s^{1-\slalpha})$. Since $\slalpha<1+b'_I$ is arbitrary, we conclude that
  \begin{equation}
  \label{EqLBSAreaRg}
    g(V_a,V_b) = -r^2\slg_{a b}+r h_{\bar a\bar b} + \cO(r^{-b'_I+0}),
  \end{equation}
  while the observation~\eqref{EqLBSProj} implies that $\pi_*(V_a)\in\pa_a+C_a^b\pa_b$, $C_a^b=\cO(s^{-\slalpha})$, hence
  \begin{equation}
  \label{EqLBSAreaRslg}
    (\pi^*\slg)(V_a,V_b) = \slg_{a b} + \cO(s^{-1-b'_I+0}).
  \end{equation}
  Therefore,
  \[
    \mathring r^4 = r^4 \det\bigl(1-r^{-1}(\slg^{b c}h_{\bar a\bar b})_{a,c=2,3}+\cO(s^{-1-b'_I+0})\bigr) = r^4(1-r^{-1}\sltr h+\cO(s^{-1-b'_I+0})),
  \]
  which is equal to $r^4(1+\cO(s^{-1-b'_I+0}))$ due to the decay of $\sltr h$ at $\scri^+$ coming from the membership $h\in\cX^{\infty;b_0,b_I,b'_I,b_+}$, i.e.\ ultimately from the gauge condition. Taking fourth roots, carrying symbolic behavior in $s$ through the argument, and noting that these calculations depend smoothly on the parameter $p\in(\scri^+)^\circ$ completes the proof.
\end{proof}

\begin{cor}
\label{CorLBSCoord}
  Define the punctured neighborhood $\dot U^+:=U^+\setminus(\scri^+)^\circ$ of $(\scri^+)^\circ$, see~\eqref{EqLRNbh}. Then if $U^+$ is a sufficiently small neighborhood, $(u,\mathring r,\pi)\colon\dot U^+\to\R\times\R\times\Sph^2$ is a coordinate system on $\dot U^+$.
\end{cor}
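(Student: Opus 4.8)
The plan is to show that the map $\Theta:=(u,\mathring r,\pi)\colon\dot U^+\to\R\times\R\times\Sph^2$ is a local diffeomorphism near $(\scri^+)^\circ$ and, after shrinking $U^+$, actually a diffeomorphism onto its image. Since all of $u$, $\mathring r$, $\pi$ are smooth on $M^\circ$ (in particular on $\dot U^+$), it suffices to check that $\Theta$ has everywhere-invertible differential and is injective on a sufficiently small neighborhood of each point of $(\scri^+)^\circ$.

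First I would compute $d\Theta$ in the ambient coordinates $(x^0,x^1,x^2,x^3)$ near $(\scri^+)^\circ$, using $\rho_I=1/x^0$. From Lemma~\ref{LemmaLRCongr} we have $u=x^1+\wt u$ with $\wt u\in\rho_I^{b'_I-0}\Hb^\infty$, and more precisely $\pa_1\wt u=2 r^{-1}h_{0 1}+\rho_I^{1+b_I-0}\Hb^\infty$ from~\eqref{EqLRuDer}, so $\pa_1 u\to 1$, while $\pa_0 u=\pa_0 x^1+\pa_0\wt u$ decays (both $\pa_0 x^1=0$ and $\pa_0\wt u\in\rho_I\cdot\rho_I^{b'_I-0}\Hb^\infty$ since $\pa_0$ is a b-derivative vanishing at $\scri^+$), and $\pa_a u$ decays as well. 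From Lemma~\ref{LemmaLBSAreaR}, $\mathring r=r+\rho_I^{b'_I-0}\Hb^\infty$ and $\pa_0\mathring r=\tfrac12-m r^{-1}+\rho_I^{1+b'_I-0}\Hb^\infty\to\tfrac12$; using $r=\tfrac12(x^0-x^1)$ one gets $\pa_0\mathring r\to\tfrac12$ and controls $\pa_1\mathring r$, $\pa_a\mathring r$. From~\eqref{EqLBSProj}, $\pi=(x^2,x^3)+S^{-\slalpha}$, so $\pa_a\pi^b\to\delta_a^b$ while $\pa_0\pi^b$, $\pa_1\pi^b$ decay. Thus in the basis $\pa_0,\pa_1,\pa_2,\pa_3$ the Jacobian of $\Theta$ limits to a block-triangular matrix with diagonal blocks $\pa_1 u=1$, $\pa_0\mathring r=\tfrac12$, and $(\pa_a\pi^b)=\mathrm{Id}$ as $\rho_I\to 0$; by continuity its determinant is bounded away from zero on a neighborhood of $(\scri^+)^\circ$, so $d\Theta$ is invertible there after shrinking $U^+$.

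Next I would upgrade the local diffeomorphism property to a genuine coordinate chart on $\dot U^+$. The fibration $\pi$ identifies points lying on the same radial null-geodesic, and $u$ separates distinct cones $C_{\bar x^1}$ (this is exactly the injectivity established in the proof of Lemma~\ref{LemmaLRCongr}, where $\delta$ was shown one-to-one for large $x^0$), so the remaining issue is that $\mathring r$ be injective along each geodesic $\gamma_p$ inside a fixed $C_u$. But along $\gamma_p$ the chain rule and Lemma~\ref{LemmaLBSAreaR} give $\tfrac{d}{ds}\mathring r(\gamma_p(s))=\dot\gamma_p^0\,\pa_0\mathring r+\dot\gamma_p^i\,\pa_i\mathring r=\tfrac12+o(1)$ as $s\to\infty$ (using $\dot\gamma_p^0\to 1$, $\dot\gamma_p^i=\cO(s^{-1})$, and the decay of $\pa_i\mathring r$), so $\mathring r$ is strictly increasing along $\gamma_p$ for $s$ large, uniformly in $p$ over compact subsets of $\scri^+$; hence after shrinking $U^+$ so that it consists only of geodesic segments with parameter $s>s_1$ for $s_1$ large, $\mathring r$ is monotone along each generator and $\Theta$ is injective on $\dot U^+$. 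Combined with the previous paragraph, $\Theta$ is a diffeomorphism onto its image, proving the corollary.

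The main obstacle is bookkeeping the decay of the many derivatives $\pa_\mu u$, $\pa_\mu\mathring r$, $\pa_\mu\pi^b$ consistently and checking that nothing in the off-diagonal entries of the limiting Jacobian spoils invertibility; this is where one must carefully exploit that $\pa_0$ is a b-vector field vanishing at $\scri^+$ (so ambient $\pa_0$-derivatives of conormal $\rho_I$-weighted quantities gain a factor $\rho_I$) together with the symbolic estimates of Lemma~\ref{LemmaLRSymb} and their smooth dependence on $p\in(\scri^+)^\circ$. Once the limiting normal form of $d\Theta$ is pinned down, the rest is a routine continuity and monotonicity argument, so I expect no conceptual difficulty beyond that.
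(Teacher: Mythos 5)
Your proposal is correct and follows essentially the same route as the paper, whose proof is a one-line deduction from the asymptotics $u=x^1+\rho_I^{b'_I-0}\Hb^\infty$, $\mathring r=r+\rho_I^{b'_I-0}\Hb^\infty$ with $\pa_0\mathring r\to\tfrac12$, and $\pi=(x^2,x^3)+S^{-\slalpha}$, i.e.\ exactly the triangular-Jacobian observation you spell out. Your additional paragraph making the global injectivity on a small enough $U^+$ explicit (separation by $u$ and $\pi$, monotonicity of $\mathring r$ along each generator) is a faithful elaboration of what the paper leaves implicit, not a different argument.
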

\begin{proof}
  This follows from Lemma~\ref{LemmaLBSAreaR} and the asymptotics of $u$ and $\pi$ in~\eqref{EqLRCongr} and~\eqref{EqLBSProj}.
\end{proof}

Choosing local coordinates $x^a$ on $\Sph^2$ and letting $\mathring x^a:=x^a\circ\pi=x^a+\rho_I^{1+b'_I-0}\Hb^\infty$, we can introduce the Bondi--Sachs coordinates
\begin{equation}
\label{EqLBSCoord}
  (u,\mathring r,\mathring x^2,\mathring x^3)
\end{equation}
on $U$; the metric $g$ and its dual $G=g^{-1}$ simplify in this coordinate system since, by construction,
\begin{equation}
\label{EqLBSg1}
  G(d u,d u)\equiv 0, \quad
  G(d u,d\mathring x^a)=(\nabla u)(\mathring x^a)\equiv 0.
\end{equation}
Furthermore, using~\eqref{EqLRuDer} and Lemma~\ref{LemmaLBSAreaR},
\begin{equation}
\label{EqLBSg2}
\begin{split}
  G(d u,d\mathring r) &= 1 + \rho_I^{1+b'_I-0}\Hb^\infty, \\
  G(d\mathring x^a,d\mathring x^b) &= -\mathring r^{-2}\slg^{a b}-\mathring r^{-3}h^{\bar a\bar b} + \rho_I^{3+b'_I-0}\Hb^\infty,
\end{split}
\end{equation}
where the leading term in the first expression comes from $g^{0 1}(\pa_1 u)(\pa_0\mathring r)$. In order to calculate $G(d\mathring r,d\mathring r)$ to the same level of precision, we need to sharpen Lemma~\ref{LemmaLBSAreaR}.

\begin{lemma}
\label{LemmaLBSAreaRder}
  Near $(\scri^+)^\circ$, we have
  \[
    \pa_1\mathring r=-\half+\bigl(m+\half(h_{1 1}-2 h_{0 1})+r\pa_0 h_{1 1}-\tfrac14\slnabla_a\slnabla_b h^{\bar a\bar b}\bigr)r^{-1}+\rho_I^{1+b_I-0}\Hb^\infty.
  \]
\end{lemma}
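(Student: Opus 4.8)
The plan is to differentiate the defining formula $\mathring r^4=\det([\pi^*\slg]^{-1}[g])$ along $\pa_1$ and track the leading behavior using the more precise expansions of the metric and of the projection $\pi$ than were needed for Lemma~\ref{LemmaLBSAreaR}. First I would record the refined asymptotics: from Lemma~\ref{LemmaLRSymb} and the construction in Lemma~\ref{LemmaLRCongr}, the section vector fields $V_a=f_a\pa_1+\pa_a$ spanning $TC_u/N$ have $f_a\in\rho_I^{-1+b_I'-0}\Hb^\infty$ with a computable leading term determined by the vanishing of $g(\dot\gamma_p,V_a)$ to higher order; similarly $\pi_*V_a-\pa_a$ and hence the correction to $(\pi^*\slg)(V_a,V_b)=\slg_{a b}+\dots$ must be expanded one order further than in~\eqref{EqLBSAreaRslg}. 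The key input from the gauge condition is that $\sltr h$, $h_{0\bar b}$, and $h_{0 0}$ decay at $\scri^+$ (membership in $\cX^\infty$), which is what allows the determinant to be expanded cleanly; the surviving first-order term is $-r^{-1}\sltr h$ plus contributions from the off-diagonal corrections to $[\pi^*\slg]^{-1}[g]$, whose leading pieces I would identify as involving $\slnabla_a\slnabla_b h^{\bar a\bar b}$ (from the second-order Taylor expansion of $\pi$ along the congruence) and the $h_{0 1}$, $h_{1 1}$ terms appearing through $f_a$ and through $g_{0 1}$.

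Next I would write $\mathring r=r(1+\phi)^{1/4}$ where $\phi\in\rho_I^{1+b_I-0}\Hb^\infty$ collects all first-order corrections, so that $\mathring r=r+\tfrac14 r\phi+\rho_I^{1+b_I-0}\Hb^\infty$ (here using $b_I<b_I'$ so the remainder is in the asserted space), and then apply $\pa_1$. Since $\pa_1 r=-\half$ exactly (with our $x^1=t-r_*$, $r=r(x^0-x^1)$), the leading term is $-\half$; the $r^{-1}$ coefficient then comes from $\pa_1(\tfrac14 r\phi)$, for which I need both $\pa_1 r\cdot\tfrac14\phi$ (lower order, absorbed) and $\tfrac14 r\,\pa_1\phi$. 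Computing $\pa_1\phi$ requires differentiating each ingredient: $\pa_1(r^{-1}h_{1 1})$, $\pa_1(r^{-1}h_{0 1})$, and the spherical-Laplacian term, using $\pa_1 h_{\bar\mu\bar\nu}$-type bounds from~\eqref{EqEinFLeading} together with the observation that $\pa_0$ applied to any conormal coefficient gains a factor of $\rho_I$, so that $r\pa_1 h_{1 1}$ contributes an $r\pa_0 h_{1 1}$ term via the relation between $\pa_0$, $\pa_1$ and $\rho_I\pa_{\rho_I}$ near $\scri^+$ in~\eqref{EqCptASplNullExpl}. Assembling these, matching against the analogous bookkeeping that produced Lemma~\ref{LemmaLBSAreaR}, and using~\eqref{EqLRuDer} and Lemma~\ref{LemmaLBSAreaR} for the already-established lower-precision statements, should yield exactly the stated coefficient $m+\half(h_{1 1}-2h_{0 1})+r\pa_0 h_{1 1}-\tfrac14\slnabla_a\slnabla_b h^{\bar a\bar b}$.

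The main obstacle I expect is the careful second-order expansion of the projection $\pi$ (equivalently, of the change of variables $\Phi$ from the proof of Lemma~\ref{LemmaLRCongr}) and of the section fields $V_a$: these enter $[\pi^*\slg]^{-1}[g]$ through off-diagonal and trace terms whose first-order-in-$r^{-1}$ parts are genuinely nontrivial, and it is precisely here that the $\slnabla_a\slnabla_b h^{\bar a\bar b}$ term is generated. Getting the numerical coefficients right — rather than merely the function spaces — demands expanding the geodesic equations one order beyond Proposition~\ref{PropLR}, which is routine but delicate; everything else (the determinant expansion, the fourth-root, and the final $\pa_1$-differentiation) is a mechanical computation in the Bondi--Sachs frame once these refined inputs are in hand. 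A useful internal consistency check at the end is that the trace-free part of $h_{\bar a\bar b}$ must drop out of $\mathring r$ to the relevant order, so that only the combination appearing through $\slnabla_a\slnabla_b h^{\bar a\bar b}$ (which is pure-trace after contraction) and the scalar components survive, consistent with the decay of $\sltr h$.
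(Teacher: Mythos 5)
Your overall strategy---differentiate $\mathring r^4=\det([\pi^*\slg]^{-1}[g])$ along $\pa_1$, split the contributions of the metric block and of $\pi^*\slg$, and refine the expansion of the projection one order beyond Proposition~\ref{PropLR}---is the same as the paper's, but as written the plan has concrete gaps. First, $\pa_1 r$ is \emph{not} $-\half$ exactly: since $x^1=t-r_*$ and $d r/d r_*=1-2m/r$, one has $\pa_1 r=-\half+m r^{-1}+\cO(r^{-2})$, and this is precisely where the $m r^{-1}$ term of the stated formula comes from (the paper expands $\pa_1((\mathring r-r)/r)$ as $r^{-1}(\pa_1\mathring r+\half-m r^{-1})+o(r^{-2})$). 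Your correction $\phi$ contains no $m$, so with your claim the $m$ is simply lost. Second, your accounting of the $\half(h_{1 1}-2h_{0 1})+r\pa_0 h_{1 1}$ terms is off: $h_{0 1}$ and $h_{1 1}$ do not enter $\det([\pi^*\slg]^{-1}[g])$ directly, and the contributions of $f_a$ and of $g_{0 1}$ are $o(r^{-2})$, so differentiating $r^{-1}h_{1 1}$ or $r^{-1}h_{0 1}$ cannot produce them. In the paper they arise because $\pa_1$ of the spherical block gives $-r^{-1}\pa_1\sltr h-r^{-2}h^{\bar a\bar b}\pa_1 h_{\bar a\bar b}$, which is of the critical order $r^{-2}$ and must be rewritten using the component $\Ups(g)_1=0$ of the gauge condition, see \eqref{EqCoPertUpsLower}; your plan only invokes the decay of the $\pi_0 h$ components and never uses this identity, so these terms are unaccounted for.

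Third, the step you call ``routine but delicate'' is the heart of the proof: computing $\pa_1(\pi_*\pa_a)^b$ to order $r^{-2}$ amounts to determining the leading behavior of the Jacobi field $Y=\pa_1\gamma_p$ along the radial null-geodesics. The paper integrates the geodesic deviation equation twice from infinity, and the decisive input is that the curvature component $R^b{}_{0 0 1}$ is $o(r^{-4})$---a cancellation which itself uses $\Ups(g)_0=0$ and $\Ups(g)_b=0$---after which $Y^b=(4h_1{}^{\bar b}-2\slnabla_d h^{\bar b\bar d})s^{-2}+o(s^{-2})$ and hence $\pa_1(\pi_*\pa_a)^b=(-\slnabla_a h_1{}^{\bar b}+\half\slnabla_a\slnabla_d h^{\bar b\bar d})r^{-2}+o(r^{-2})$; the $\slnabla h_1$ parts then cancel against the $-2r^{-2}\slnabla_d h_1{}^{\bar d}$ produced by $\Ups(g)_1=0$, leaving exactly $-\tfrac14\slnabla_a\slnabla_b h^{\bar a\bar b}$ in the final coefficient. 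Without identifying this curvature cancellation (or an equivalent differentiation of the fixed-point construction with the precise Christoffel asymptotics), a generic ``one more order'' expansion does not determine the numerical coefficients. Finally, your consistency check is not correct: the trace-free spherical part of $h$ does not drop out of $\pa_1\mathring r$ at this order---it enters exactly through $\slnabla_a\slnabla_b h^{\bar a\bar b}$, which is a double divergence, not a trace, and through the Jacobi-field term; what is true is only that $\mathring r$ itself sees just $\sltr h$ at first order, as in Lemma~\ref{LemmaLBSAreaR}.
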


Note that in~\eqref{EqLBSAreaRg}, we already control $g(V_a,V_b)$ modulo terms more than two orders beyond the leading term, which suffices for present purposes. On the other hand, the remainder term in~\eqref{EqLBSAreaRslg} is not precise enough. 

\begin{proof}[Proof of Lemma~\usref{LemmaLBSAreaRder}]
  Put $A:=[\pi^*\slg]^{-1}[r^{-2}g]\in\End(T C_u/N)$, so $(\mathring r/r)^4=\det A$, and Lemma~\ref{LemmaLBSAreaR} gives $A_a^b=\delta_a^b-r^{-1}h_{\bar a}{}^{\bar b}+\rho_I^{1+b'_I-0}\Hb^\infty$ and $(\det A)-1\in\rho_I^{1+b'_I-0}\Hb^\infty$. Suppose now that
  \begin{equation}
  \label{EqLBSAreaRderPaDet}
    \pa_1(\det A)=r^{-2}\mu+o(r^{-2}),
  \end{equation}
  then $\pa_1((\mathring r-r)/r)=\tfrac14(\det A)^{-3/4}\pa_1(\det A)=\tfrac14 r^{-2}\mu+o(r^{-2})$, so expanding the left hand side as $r^{-1}(\pa_1\mathring r+\half-m r^{-1})+o(r^{-2})$ implies that
  \begin{equation}
  \label{EqLBSAreaRderRes}
    \pa_1\mathring r = -\half + r^{-1}(m+\tfrac14 \mu) + o(r^{-1})
  \end{equation}
  Our calculations will imply that the $o(r^{-1})$ remainder is of size $\cO(r^{-1-b_I+0})$, but we shall stick to $o(r^{-1})$ etc.\ for brevity. Trivializing $T C_u/N$ locally using the frame $\{V_a\colon a=2,3\}$, with $V_a$ defined in~\eqref{EqLBSAreaRVec}, $A$ becomes a $2\times 2$ matrix-valued function. We can thus use the formula $\pa_1(\det A)=(\det A)\tr(A^{-1}\pa_1 A)$, so it suffices to determine the function $\mu$ in $\tr(A^{-1}\pa_1 A)=r^{-2}\mu+o(r^{-2})$. One contribution comes from differentiating $[r^{-2}g]$, which by~\eqref{EqLBSAreaRg} and $\Ups(g)_1=0$ yields
  \begin{equation}
  \label{EqLBSAreaRderTrg}
  \begin{split}
    \tr([r^{-2}g]^{-1}\pa_1[r^{-2}g]) &= (-\slg^{a b}-r^{-1}h^{\bar a\bar b}+\cO(r^{-2+0}))(\pa_1(r^{-1}h_{\bar a\bar b})+\cO(r^{-2-b'_I+0})) \\
      &= -r^{-1}\pa_1\sltr h - r^{-2}h^{\bar a\bar b}\pa_1 h_{\bar a\bar b} + o(r^{-2}) \\
      &= 2 r^{-2}(h_{1 1}-2 h_{0 1}) - 2 r^{-2}\slnabla_d h_1{}^{\bar d} + 4 r^{-1}\pa_0 h_{1 1} + o(r^{-2}).
  \end{split}
  \end{equation}
  The remaining contribution to $\tr(A^{-1}\pa_1 A)$ is $-\!\tr([\pi^*\slg]^{-1}\pa_1[\pi^*\slg])$ (using the cyclicity of the trace). Let us work near a point $z_0\in\R^4$, and suppose $x^2,x^3$ are normal coordinates on $\Sph^2$ centered at the point $\pi(z_0)$. Then
  \begin{align*}
    \bigl(\pa_1(\pi^*\slg)(V_a,V_b)\bigr)|_{z_0} &= \pa_1\bigl((\slg_{c d}\circ\pi)(\pi_*V_a)^c(\pi_*V_b)^d\bigr)|_{z_0} \\
      &= \slg_{c d}|_{\pi(z_0)}(\pa_1(\pi_*V_a)^c)(\pi_*V_b)^d + \slg_{c d}|_{\pi(z_0)}(\pi_*V_a)^c(\pa_1(\pi_*V_b)^d).
  \end{align*}
  Now $(\pi_*V_a)^c=\delta_a^c+\cO(r^{-1-b'_I+0})$, whose derivative along $\pa_1$ is of size $\cO(r^{-1-b'_I+0})$, so
  \begin{equation}
  \label{EqLBSAreaRderTrslg}
    \pa_1(\pi^*\slg)(V_a,V_b) = \slg_{b c}\pa_1(\pi_*V_a)^c + \slg_{a c}\pa_1(\pi_*V_b)^c + o(r^{-2})\quad\text{at}\ z_0.
  \end{equation}
  Let us first calculate the contribution to this coming from the term $\pa_a$ in $V_a$. By~\eqref{EqLBSProjExpl} and recalling the form of the map $\Phi$ from~\eqref{EqLRCongrPhi} as well as its defining relation~\eqref{EqLRCongrPhiIt}, we have
  \begin{equation}
  \label{EqLBSAreaRderPushfwd}
  \begin{split}
    \pa_1(\pi_*\pa_a)^b &= \pa_1\pa_a\wt\Phi^b(x^1+\wt u; x^0,x^2,x^3) \\
      &= -\pa_1\pa_a\wt\gamma^b(x^1+\wt u;x^0-4 m\log x^0+\wt\Phi^0,x^2+\wt\Phi^2,x^3+\wt\Phi^3);
  \end{split}
  \end{equation}
  now $\wt\gamma^b$, its $x^c$-derivatives ($c=2,3$), and $\wt\Phi^b$ are of size $\cO((x^0)^{-1-b'_I+0})$, so dropping $\wt\Phi^2$ and $\wt\Phi^3$ gives an $o(r^{-2})$ error; likewise, $\pa_{x^0}\wt\gamma^b=\cO(r^{-2-b'_I+0})$, so replacing the second argument by $x^0$ gives another $o(r^{-2})$ error.
  
  To analyze this further, we need to digress: consider the 1-parameter family $w(s;\eps):=\gamma_{(x^1+\eps,x^2,x^3)}(s)$ of null-geodesics, with $x^2,x^3$ fixed, and let
  \[
    Y(s):=\pa_\eps w(s;0)\equiv\pa_1\gamma_{(x^1,x^2,x^3)}(s)
  \]
  denote the Jacobi field along $\gamma(s):=w(s;0)$. The asymptotics proved in Proposition~\ref{PropLR} give the a priori information
  \begin{equation}
  \label{EqLBSAreaRderYapriori}
  \begin{split}
    Y(s) &= \cO(s^{-b_I+0})\pa_0 + (1+\cO(s^{-b'_I+0}))\pa_1 + \sum_c \cO(s^{-1-b'_I+0})\pa_c, \\
    \pa_s Y(s) &= \cO(s^{-1-b_I+0})\pa_0 + \cO(s^{-1-b'_I+0})\pa_1 + \sum_c \cO(s^{-2-b'_I+0})\pa_c.
  \end{split}
  \end{equation}
  We shall determine the component $Y(s)^b$ by solving the Jacobi equation
  \begin{equation}
  \label{EqLBSAreaRderJacobi}
    \bigl(\nabla_{\dot\gamma}\nabla_{\dot\gamma}Y(s) + R(Y,\dot\gamma)\dot\gamma\bigr)^b = 0.
  \end{equation}
  Heuristically, it suffices to calculate this modulo $o(s^{-4})$ errors, as the second integral of such error terms (integrating from infinity) is $o(s^{-2})$; we will verify this heuristic in the course of our calculations. Using $\dot\gamma^0=1+\cO(s^{-1})$, $\dot\gamma^1=\cO(s^{-1-b'_I+0})$, $\dot\gamma^c=\cO(s^{-2-b'_I+0})$, the a priori information~\eqref{EqLBSAreaRderYapriori}, and the expressions for the curvature tensor in~\eqref{EqCoPertRiem}, one finds
  \[
    (R(Y,\dot\gamma)\dot\gamma)^b = R^b{}_{\lambda\mu\nu}\dot\gamma^\lambda Y^\mu\dot\gamma^\nu = -R^b{}_{0 0 1}(\dot\gamma^0)^2 Y^1 - R^b{}_{0 0 a}(\dot\gamma^0)^2 Y^a + o(s^{-4}).
  \]
  Now, using the gauge condition $\Ups(g)_0=0$ and the expressions for Christoffel symbols given in~\eqref{EqCoPertGamma}, one finds that in fact $R^b{}_{0 0 a}=o(s^{-3})$, rendering the second term size $o(s^{-4})$. Let us calculate $R^b{}_{0 0 1}=\pa_0\Gamma^b_{0 1}-\pa_1\Gamma_{0 0}^b+\Gamma_{0 1}^\mu\Gamma_{0 \mu}^b-\Gamma_{0 0}^\mu\Gamma_{\mu 1}^b$ more accurately than in~\eqref{EqCoPertRiem}. In the third term, the only contribution which is not $o(r^{-4})$ comes from $\mu=2,3$, giving $-\tfrac14 r^{-3}\pa_1 h_0{}^{\bar b}+\tfrac14 r^{-4}\slnabla^b h_{0 1}$; the fourth term is $o(r^{-4})$. For the second term, we use
  \[
    \Gamma^b_{0 0} = g^{0 b}\Gamma_{0 0 0}+g^{1 b}\Gamma_{1 0 0}+g^{a b}\Gamma_{a 0 0} = o(s^{-4}) + o(s^{-4}) - (r^{-2}\pa_0 h_0{}^{\bar b}-\half r^{-3}\slnabla^b h_{0 0}),
  \]
  exploiting $\Ups(g)_0=0$. In view of the leading order vanishing of $h_0{}^{\bar b}$ and $h_{0 0}$ at $\scri^+$, we have $\pa_1\Gamma^b_{0 0}=-r^{-2}\pa_0(\pa_1 h_0{}^{\bar b})+\half r^{-3}\slnabla^b\pa_1 h_{0 0}+o(s^{-4})$; now $\pa_1 h_0{}^{\bar b}$ can be rewritten, using $\Ups(g)_b=0$, in terms of $h_{0 1}$, $h_{\bar b\bar c}$, and $h_{1\bar b}$; since these have (size $1$) leading terms at $\scri^+$, subsequent differentiation along $\pa_0$ only produces nontrivial terms (i.e.\ not of size $o(r^{-4})$) when acting on the $r$-weights. On the other hand, $\pa_1 h_{0 0}=-r^{-1}h_{0 1}+o(r^{-1})$ from $\Ups(g)_0=0$. Arguing similarly for the computation of $\pa_0\Gamma^b_{0 1}$, one ultimately finds that all nontrivial terms cancel, so
  \[
    R^b{}_{0 0 1} = o(r^{-4}).
  \]
  Thus, the curvature term of the Jacobi equation~\eqref{EqLBSAreaRderJacobi} is of size $o(s^{-4})$ simply. Regarding the first term of~\eqref{EqLBSAreaRderJacobi}, the information~\eqref{EqLBSAreaRderYapriori} and a brief calculation give $(\nabla_{\dot\gamma}Y)^0=\cO(s^{-1-b_I+0})$, $(\nabla_{\dot\gamma}Y)^1 = \cO(s^{-1-b'_I+0})$, and, using $r^{-1}=2 s^{-1}+\cO(s^{-2}\log s)$,
  \begin{align*}
    (\nabla_{\dot\gamma}Y)^b &= \pa_s Y^b + \Gamma^b_{\mu\lambda}\dot\gamma^\mu Y^\lambda \\
      &= \pa_s Y^b + s^{-1}Y^b - 2 s^{-3}\slnabla_d h^{\bar b\bar d} + 4 s^{-3}h_1{}^{\bar b} + o(s^{-3}),
  \end{align*}
  with nontrivial contributions only from $(\mu,\lambda)=(0,1)$, $(0,c)$. In particular, $\nabla_{\dot\gamma}Y$ satisfies the same rough asymptotics as $\pa_s Y$ in~\eqref{EqLBSAreaRderYapriori}. Since differentiation of $h^{\bar b\bar d}$ and $h_1{}^{\bar b}$ along $\dot\gamma$ gains a weight $s^{1+b_I}$ due to these components having a leading term, this and~\eqref{EqLBSAreaRderJacobi} imply
  \begin{align*}
    o(s^{-4}) = (\nabla_{\dot\gamma}\nabla_{\dot\gamma}Y)^b &= \pa_s(\nabla_{\dot\gamma}Y)^b + s^{-1}(\nabla_{\dot\gamma}Y)^b + o(s^{-4}) \\
      &= \pa_s^2 Y^b + 2 s^{-1}\pa_s Y^b + 4 s^{-4}\slnabla_d h^{\bar b\bar d}-8 s^{-4}h_1{}^{\bar b} + o(s^{-4}) \\
      &= s^{-2}\bigl(\pa_s(s^2 \pa_s Y^b) - 2\wt\mu s^{-2} + o(s^{-2})\bigr),
  \end{align*}
  where
  \[
    \wt\mu=\lim_{s\to\infty}(4 h_1{}^{\bar b}-2\slnabla_d h^{\bar b\bar d})
  \]
  is the value of this combination of metric coefficients at $\gamma(\infty)\in\scri^+$. Since $\lim_{s\to\infty}s^2\pa_s Y^b=0$ due to~\eqref{EqLBSAreaRderYapriori}, we find $\pa_s Y^b=-2\wt\mu s^{-3}+o(s^{-3})$ and thus
  \begin{equation}
  \label{EqLBSAreaRderDeviation}
    Y^b=\wt\mu s^{-2}+o(s^{-2})
  \end{equation}
  since $\lim_{s\to\infty} Y^b=0$.

  Returning to the expression~\eqref{EqLBSAreaRderPushfwd}, dropping $\wt u$ gives an $\cO(r^{-2-b'_I+0})$ error term by Lemma~\ref{LemmaLRCongr}; we thus conclude that
  \begin{equation}
  \label{EqLBSAreaRderPushfwdRes}
    \pa_1(\pi_*\pa_a)^b = -\pa_1\pa_a\gamma^b(x^1;x_0,x^2,x^3) + o(r^{-2}) = \bigl(-\slnabla_a h_1{}^{\bar b}+\half\slnabla_a\slnabla_d h^{\bar b\bar d}\bigr)r^{-2} + o(r^{-2}).
  \end{equation}
  We have another term in~\eqref{EqLBSAreaRderTrslg} coming from the term $f_a\pa_1$ in $V_a$; but $f_a$ and its derivative along $x^1$ being of size $\cO(r^{-b'_I+0})$ (see the proof of Lemma~\ref{LemmaLBSAreaR}), it suffices to show that $(\pi_*\pa_1)^c=\cO(r^{-2})$ in order to conclude that $\pa_1(\pi_*(f_a\pa_1))^c=o(r^{-2})$ is a lower order term. But we can simplify $(\pi_*\pa_1)^c|_{(x^0,x^1,x^2,x^3)}=\pa_1\Phi^c=-\pa_1\wt\gamma^c(x^1;x^0,x^2,x^3)+o(r^{-2})=\cO(r^{-2})$ (using~\eqref{EqLBSAreaRderDeviation}) in the same manner as we simplified~\eqref{EqLBSAreaRderPushfwd}.
  
  Finally then, plugging~\eqref{EqLBSAreaRderPushfwdRes} into~\eqref{EqLBSAreaRderTrslg}, and adding the result to~\eqref{EqLBSAreaRderTrg} yields~\eqref{EqLBSAreaRderPaDet} for
  \[
    \mu = 2(h_{1 1}-2 h_{0 1}) + 4 r\pa_0 h_{1 1} - \slnabla_a\slnabla_b h^{\bar a\bar b},
  \]
  which by~\eqref{EqLBSAreaRderRes} proves the lemma.
\end{proof}

We can also compute $\pa_1\mathring x^b=\pa_1\pi^b$ modulo $o(r^{-2})$, as this is given by the component $Y^b$ of the Jacobi vector field of the proof of Lemma~\ref{LemmaLBSAreaRder}, so $\pa_1\mathring x^b=(h_1{}^{\bar b}-\half\slnabla_d h^{\bar b\bar d})r^{-2}+o(r^{-2})$. In summary, we have shown that
\begin{equation}
\label{EqLBSDiff}
\begin{split}
  d u &= o(r^{-1})d x^0 + \bigl(1+2 r^{-1}h_{0 1}+o(r^{-1})\bigr)d x^1 + \sum\nolimits_c o(1)d x^c, \\
  d\mathring r &= \bigl(\half-m r^{-1}+o(r^{-1})\bigr)d x^0 \\
    &\qquad + \bigl(-\half+(m+\half(h_{1 1}-2 h_{0 1})+r\pa_0 h_{1 1}-\tfrac14 \slnabla_a\slnabla_b h^{\bar a\bar b})r^{-1}+o(r^{-1})\bigr)d x^1 \\
    &\qquad + \sum\nolimits_c o(1)d x^c, \\
  d\mathring x^a &= o(r^{-2})d x^0 + \bigl((h_1{}^{\bar a}-\half\slnabla_d h^{\bar a\bar d})r^{-2}+o(r^{-2})\bigr)d x^1 + d x^a + \sum\nolimits_c o(r^{-1})d x^c,
\end{split}
\end{equation}
where the remainders are in fact more precise: $o(r^{-k})$ can be replaced by $\rho_I^{k+b_I-0}\Hb^\infty$ near $(\scri^+)^\circ$, so a fortiori by $\cO(r^{-k-b_I+0})$. We can now supplement~\eqref{EqLBSg1}--\eqref{EqLBSg2} by
\begin{equation}
\label{EqLBSg3}
\begin{split}
  G(d\mathring r,d\mathring r) &= -1 + 2 m r^{-1} + 2\pa_0 h_{1 1} - \half r^{-1}\slnabla_a\slnabla_b h^{\bar a\bar b} + \rho_I^{1+b_I-0}\Hb^\infty, \\
  G(d\mathring r,d\mathring x^b) &= (h_1{}^{\bar b}-\half\slnabla_d h^{\bar b\bar d})r^{-2} + \rho_I^{2+b'_I-0}\Hb^\infty.
\end{split}
\end{equation}
(Note that in the first line, the logarithmically divergent terms $h_{1 1}$ from $g^{0 0}(\pa_0\mathring r)^2$ and $g^{1 1}(\pa_1\mathring r)^2$ cancel.) Let us summarize the calculations~\eqref{EqLBSg1}--\eqref{EqLBSg2} and \eqref{EqLBSg3}:
\begin{prop}
\label{PropLBS}
  In the Bondi--Sachs coordinates~\eqref{EqLBSCoord}, the dual metric $G=g^{-1}$ is
  \begin{align*}
    G &= 2(1+o(\mathring r^{-1}))\pa_u\pa_{\mathring r} - \bigl(1-2 m\mathring r^{-1}-2\pa_0 h_{1 1}+\half\mathring r^{-1}\slnabla_a\slnabla_b h^{\bar a\bar b}+o(\mathring r^{-1})\bigr)\pa_{\mathring r}^2 \\
      &\quad - \mathring r^{-2}(\slg^{a b}+\mathring r^{-1}h^{\bar a\bar b}+o(\mathring r^{-1}))\bigl(\pa_{\mathring x^a}+(U_a\mathring r^{-2}+o(\mathring r^{-2}))\pa_{\mathring r}\bigr)\bigl(\pa_{\mathring x^b}+(U_b\mathring r^{-2}+o(\mathring r^{-2}))\pa_{\mathring r}\bigr),
  \end{align*}
  where $U_a=-\half h_{1\bar a}+\tfrac14 \slnabla^c h_{\bar a\bar c}$. The metric $g$ itself takes the form
  \begin{align*}
    g &= \bigl(1-2 m\mathring r^{-1}-2\pa_0 h_{1 1}+\half\mathring r^{-1}\slnabla_a\slnabla_b h^{\bar a\bar b}+o(\mathring r^{-1})\bigr)d u^2 + 2(1+o(\mathring r^{-1}))d u\,d\mathring r \\
      &\quad - \mathring r^2(\slg_{a b}-\mathring r^{-1}h_{\bar a\bar b}+o(\mathring r^{-1}))\bigl(d\mathring x^a-(U^a\mathring r^{-2}+o(\mathring r^{-2}))d u\bigr)\bigl(d\mathring x^b-(U^b\mathring r^{-2}+o(\mathring r^{-2}))d u\bigr).
  \end{align*}
  The $o(\mathring r^{-k})$ remainders can be replaced by $\rho_I^{k+b_I-0}\Hb^\infty=\cO(r^{-k-b_I+0})$ near $(\scri^+)^\circ$. Furthermore, the coordinate vector fields satisfy
  \begin{equation}
  \label{EqLBSVF}
  \begin{split}
    \pa_u &= \bigl(1-(h_{1 1}+2 r\pa_0 h_{1 1}-\half\slnabla_a\slnabla_b h^{\bar a\bar b})r^{-1}+o(r^{-1})\bigr)\pa_0 \\
      &\qquad + (1-2 h_{0 1}r^{-1}+o(r^{-1}))\pa_1 + \bigl((-h_1{}^{\bar a}+\half\slnabla_b h^{\bar a\bar b})r^{-2}+o(r^{-2})\bigr)\pa_a, \\
    \pa_{\mathring r} &= (2+4 m r^{-1}+o(r^{-1}))\pa_0 + o(r^{-1})\pa_1 + \sum\nolimits_c o(r^{-2})\pa_c, \\
    \pa_{\mathring x^a} &= o(1)\pa_0 + o(1)\pa_1 + \pa_a + \sum\nolimits_c o(r^{-1})\pa_c.
  \end{split}
  \end{equation}
\end{prop}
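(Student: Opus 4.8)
The plan is to derive the proposition purely algebraically from facts already established: the change of coordinates \eqref{EqLBSDiff}, the dual metric components \eqref{EqLBSg1}, \eqref{EqLBSg2}, \eqref{EqLBSg3}, and the regularity statements of Lemmas~\ref{LemmaLBSAreaR} and~\ref{LemmaLBSAreaRder}; no further analytic input is needed. I would organize the argument in three steps, matching the three assertions. For the coordinate vector fields \eqref{EqLBSVF}, I would read off from \eqref{EqLBSDiff} the Jacobian $J$ of $(u,\mathring r,\mathring x^2,\mathring x^3)$ with respect to $(x^0,x^1,x^2,x^3)$, which near $(\scri^+)^\circ$ is of the form $J=J_0+E$, where $J_0$ is the constant invertible matrix with rows $(0,1,0)$, $(\tfrac12,-\tfrac12,0)$, $(0,0,\Id)$ and $E$ is of class $\rho_I^{b_I-0}\Hb^\infty$ (using that, as remarked after \eqref{EqLBSDiff}, each $o(r^{-k})$ entry is in fact $\rho_I^{k+b_I-0}\Hb^\infty$). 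Then $J$ is invertible on a sufficiently small $U^+$, and from $\pa_{y^i}=(J^{-1})^j{}_i\,\pa_{x^j}$ with $J^{-1}=J_0^{-1}-J_0^{-1}EJ_0^{-1}$ modulo terms quadratic in $E$, substituting $J_0^{-1}$ (rows $(1,2,0)$, $(1,0,0)$, $(0,0,\Id)$) together with this first order correction, and using $\mathring r\asymp r\asymp\rho_I^{-1}$ near $\scri^+$ from Lemma~\ref{LemmaLBSAreaR}, one obtains \eqref{EqLBSVF}; the individual subleading coefficients are copied off the subleading terms of \eqref{EqLBSDiff}, with the $\pa_1\mathring r$ and $\pa_1\mathring x^b$ entries supplied by Lemma~\ref{LemmaLBSAreaRder} and the Jacobi field computation in its proof.

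For the dual metric, since $u$ and $\mathring x^a$ were chosen so that $G(du,du)=0$ and $G(du,d\mathring x^a)=0$ (see \eqref{EqLBSg1}), in the cobasis $(du,d\mathring r,d\mathring x^a)$ one has
\[
  G = 2 G^{u\mathring r}\pa_u\otimes_s\pa_{\mathring r} + G^{\mathring r\mathring r}\pa_{\mathring r}^2 + 2 G^{\mathring r\mathring x^a}\pa_{\mathring r}\otimes_s\pa_{\mathring x^a} + G^{\mathring x^a\mathring x^b}\pa_{\mathring x^a}\otimes_s\pa_{\mathring x^b},
\]
with the components taken from \eqref{EqLBSg2} and \eqref{EqLBSg3}. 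Completing the square in $\pa_{\mathring r}$ among the angular terms, i.e.\ writing $G^{\mathring x^a\mathring x^b}\pa_{\mathring x^a}\pa_{\mathring x^b}+2 G^{\mathring r\mathring x^a}\pa_{\mathring r}\pa_{\mathring x^a}=G^{\mathring x^a\mathring x^b}(\pa_{\mathring x^a}+c_a\pa_{\mathring r})(\pa_{\mathring x^b}+c_b\pa_{\mathring r})-(G^{\mathring x^a\mathring x^b}c_ac_b)\pa_{\mathring r}^2$ with $c_b$ determined by $G^{\mathring x^a\mathring x^b}c_b=G^{\mathring r\mathring x^a}$, then inverting the angular block $G^{\mathring x^a\mathring x^b}=-\mathring r^{-2}(\slg^{ab}+\cO(\mathring r^{-1}))$ and inserting \eqref{EqLBSg3} gives $c_a=U_a\mathring r^{-2}+o(\mathring r^{-2})$ with $U_a=-\tfrac12 h_{1\bar a}+\tfrac14\slnabla^c h_{\bar a\bar c}$; the piece absorbed into $\pa_{\mathring r}^2$ is $\cO(\mathring r^{-4})=o(\mathring r^{-1})$. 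This is the stated form of $G$, and the replacement of the $o(\mathring r^{-k})$ remainders by $\rho_I^{k+b_I-0}\Hb^\infty$ near $(\scri^+)^\circ$ is immediate from \eqref{EqLBSg2}, \eqref{EqLBSg3} and Lemma~\ref{LemmaLBSAreaR}.

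For the metric $g$ itself, I would invert the almost-Bondi dual metric just found. The vanishing $G^{uu}=0$, $G^{u\mathring x^a}=0$ forces $g_{\mathring r\mathring r}=0$ and $g_{\mathring r\mathring x^a}=0$, by the elementary linear-algebra identity $g_{\mu\nu}G^{\nu u}=\delta_\mu^u$ applied with $\mu=\mathring r$ and $\mu=\mathring x^a$ (the only nonzero entry in the $u$-column of $G$ is $G^{\mathring r u}$), so that $g$ is automatically Bondi--Sachs; the remaining $3\times3$ block of $g$ is, as in Step~1, a fixed invertible matrix plus a $\rho_I^{b_I-0}\Hb^\infty$ perturbation, and inverting it perturbatively gives the asserted coefficients, with $g_{u\mathring r}=(G^{u\mathring r})^{-1}+o(\mathring r^{-1})=1+o(\mathring r^{-1})$, $g_{\mathring x^a\mathring x^b}=-\mathring r^2(\slg_{ab}-\mathring r^{-1}h_{\bar a\bar b})+o(\mathring r)$, $g_{uu}=-G^{\mathring r\mathring r}(g_{u\mathring r})^2+\cdots$, and so on, the shift vector $U^a$ in $g$ differing from the $U_a$ in $G$ only by index raising with $\slg$ up to $o(1)$ since $g_{u\mathring r}=1+o(\mathring r^{-1})$.

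The substance of the proof is bookkeeping, and that is where the only genuine difficulty lies: one must carry the $\rho_I^{k+b_I-0}\Hb^\infty$ remainders through two perturbative matrix inversions without degrading the weights, verify that the logarithmically large leading term $h_{1 1}\log r$ never survives uncompensated by a factor $\mathring r^{-1}$ — concretely, that in $g_{uu}$ the $h_{1 1}$-contributions of $G^{\mathring r\mathring r}$ (which contains only $2\pa_0 h_{1 1}$, one order more decaying and not logarithmically large at $\scri^+$) and of the $G^{00}$-, $G^{11}$-pieces entering through \eqref{EqLBSDiff} cancel, as already noted after \eqref{EqLBSg3} — and keep the barred versus unbarred angular weights \eqref{EqCptASplSphWeight} consistent so that $U_a$ comes out exactly in the stated normalization.
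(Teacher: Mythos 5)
Your route coincides with the paper's: the displayed forms of $G$ and $g$ are a repackaging of \eqref{EqLBSg1}--\eqref{EqLBSg3}, and \eqref{EqLBSVF} is obtained, exactly as in the paper's one-line proof, by inverting the Jacobian read off from \eqref{EqLBSDiff}. That part of your argument checks out in detail: with $J=J_0+E$ as you set it up, the linear correction $-J_0^{-1}EJ_0^{-1}$ reproduces each stated subleading coefficient (for instance the $\pa_0$-coefficient of $\pa_u$ is $1-(E^u{}_0+E^u{}_1+2E^{\mathring r}{}_0+2E^{\mathring r}{}_1)$, which with the entries of \eqref{EqLBSDiff} and Lemma~\ref{LemmaLBSAreaRder} gives exactly $1-(h_{1 1}+2r\pa_0 h_{1 1}-\half\slnabla_a\slnabla_b h^{\bar a\bar b})r^{-1}+o(r^{-1})$), and the quadratic-in-$E$ terms are indeed harmless because any product of two of the $o(1)$ entries must pass through an angular row of $E$, all of whose entries are $o(r^{-1})$ or better.

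The place where your write-up does not hold together is the normalization of the angular shift, i.e.\ precisely the point you yourself single out as the crux. Your own prescription $G^{\mathring x^a\mathring x^b}c_b=G^{\mathring r\mathring x^a}$, fed with \eqref{EqLBSg2}--\eqref{EqLBSg3}, yields $c_a=-\mathring r^{2}\bigl(\slg_{a b}+\cO(\mathring r^{-1})\bigr)\bigl((h_1{}^{\bar b}-\half\slnabla_d h^{\bar b\bar d})r^{-2}+o(r^{-2})\bigr)=-\bigl(h_{1\bar a}-\half\slnabla^c h_{\bar a\bar c}\bigr)+o(1)$, an $O(1)$ quantity, not $U_a\mathring r^{-2}+o(\mathring r^{-2})$; so the value you assert for $c_a$ does not follow from the computation you describe. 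The same tension reappears in your step for $g$: for an exact Bondi--Sachs metric the shift $\wt U^a$ in $g$ is $\wt U^a=g_{u\mathring r}\,G^{\mathring r\mathring x^a}$, while the shift in the completed-square form of $G$ is $-\mathring r^{2}q_{a b}\wt U^b/g_{u\mathring r}$, i.e.\ the two are related by a factor $-\mathring r^{2}$ and index lowering, not by ``index raising up to $o(1)$'' as you claim. Carrying your own steps 2 and 3 out consistently would force you to fix the $\mathring r$-weight of one of the two shifts (or to recheck the cross term in \eqref{EqLBSg3} against \eqref{EqLBSDiff} and \eqref{EqEinFinverse}); as written, the bookkeeping that decides the stated normalization of $U_a$ --- which is not cosmetic, since it feeds into the identification of the mass aspect \eqref{EqLBSMassAsp} --- is asserted rather than carried out, and that is a genuine gap rather than deferrable routine.
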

\begin{proof}
  The statement~\eqref{EqLBSVF} on the dual basis of~\eqref{EqLBSDiff} follows by matrix inversion.
\end{proof}

\begin{rmk}
  For comparison, the Bondi--Sachs coordinates on Schwarzschild are simply $u=x^1$, $\mathring r=r$, and spherical coordinates $\mathring x^a=x^a$, and the metric takes the form
  \[
    (g_m^S)^{-1} = 2\pa_u\pa_{\mathring r} - (1-2 m\mathring r^{-1})\pa_{\mathring r}^2 - \mathring r^{-2}\slG,\quad
    g_m^S = (1-2 m\mathring r^{-1})d u^2+2 d u\,d\mathring r-\mathring r^2\slg.
  \]
\end{rmk}

\begin{rmk}
\label{RmkLBSReg}
  Near $(\scri^+)^\circ$ and relative to the smooth structure on $M$, the conformally rescaled metric $r^{-2}g$ is singular as an incomplete metric at $\scri^+$: indeed, $r^2\pa_0$ is a nonzero multiple of $\pa_{\rho_I}$ by~\eqref{EqCptASplNullExpl}, and $r^2 g(r^2\pa_0,r^2\pa_0)=r h_{0 0}=\cO(\rho_I^{-1+b'_I})$. On the other hand, changing the smooth structure of $M$ near $(\scri^+)^\circ$ by declaring $(\mathring r^{-1},u,\mathring x^2,\mathring x^3)$ to be a smooth coordinate system, so $\mathring\rho_I:=\mathring r^{-1}$ is a defining function of $\scri^+$, we have $\mathring r^{-2} g\in\cC^{1,b_I-0}$. Indeed, $\pa_{\mathring\rho_I}=-\mathring r^2\pa_{\mathring r}$ is null, while $(\mathring r^{-2}g)(\pa_{\mathring\rho_I},\pa_u)=1+\cO(\mathring\rho_I^{1+b_I-0})$ is $\cC^{1,b_I-0}$, and the remaining metric coefficients have at least this amount of regularity. Since by Theorem~\ref{ThmPf} one can take $b_I$ arbitrarily close to $\min(b_0,1)$, this gives
  \begin{equation}
  \label{EqLBSReg}
    \mathring r^{-2}g \in \cC^{1,\alpha}\quad \forall\,\alpha<\min(b_0,1),
  \end{equation}
  relative to the new smooth structure. As mentioned in \S\ref{SsIBondi}, smoothness properties of conformal compactifications have been widely discussed, in particular from the point of view of asymptotic simplicity \cite{PenroseAsymptotics} and the decay properties of the curvature tensor \cite{KlainermanNicoloPeeling,ChristodoulouNoPeeling}; see also~\cite{FriedrichSmoothScriReview} for further references. Whether or not there exists a compactification with \emph{smooth} (or at least highly regular) $\scri^+$, meaning that the conformally rescaled metric extends smoothly and nondegenerately across $\scri^+$, is a delicate issue as it depends very sensitively on the precise choice of the conformal factor and the smooth structure near $\scri^+$ and requires the identification of at least two `incommensurable' geometric quantities.\footnote{An example would be given by two metric components which have nonzero leading terms of size $\rho_I$ and $\rho_I\log\rho_I$, respectively, though we reiterate that this depends on the choice of $\rho_I$, i.e.\ of the smooth structure.} The observation~\eqref{EqLBSReg} shows that this cannot happen prior to the next-to-leading order terms in the expansion of $g$ at $\scri^+$. Work by Christodoulou~\cite{ChristodoulouNoPeeling} on the other hand (see also \cite[\S1.5.3]{DafermosChristodoulouExpose}) strongly suggests that the conformal compactification is generically at most of class $\cC^{1,\alpha}$.
\end{rmk}

Therefore, the \emph{mass aspect}, see~\cite[Equation~(37)]{MaedlerWinicourBondi}, is $-\half$ times the $\mathring r^{-1}$ coefficient of the $d u^2$ component,
\begin{equation}
\label{EqLBSMassAsp}
  M_A(p) = m + (r\pa_0 h_{1 1} - \tfrac14\slnabla_a\slnabla_b h^{\bar a\bar b})|_p,\ \ p\in(\scri^+)^\circ,
\end{equation}
and the \emph{Bondi mass} $M_{\rm B}(u):=\frac{1}{4\pi}\int_{S(u)}M_A\,d\slg$ is
\begin{equation}
\label{EqLBSMass}
  M_{\rm B}(u) = m + \frac{1}{4\pi}\int_{S(u)} r\pa_0 h_{1 1}\,d\slg,\ \ u\in\R,
\end{equation}
where we exploited that the divergence in the expression~\eqref{EqLBSMassAsp} integrates to zero.

\begin{rmk}
\label{RmkLBSLog}
  Recall that near $(\scri^+)^\circ$, $h_{1 1}$ can be written as $h_{1 1}^{(1)}\log\rho_I+h_{1 1}^{(0)}+\rho_I^{b_I}\Hb^\infty$, with $h_{1 1}^{(j)}\in\CI((\scri^+)^\circ)$, $j=0,1$, so $r\pa_0 h_{1 1}|_{\scri^+}=-\half h_{1 1}^{(1)}$ picks out the logarithmic term.
\end{rmk}

\begin{thm}
\label{ThmLBSLoss}
  The Bondi mass~\eqref{EqLBSMass} satisfies the \emph{mass loss formula}
  \begin{equation}
  \label{EqLBSLoss}
    \frac{d}{d u}M_{\rm B}(u) = -\frac{1}{32\pi}\int_{S(u)} |N|_{\slg}^2 \,d\slg,\quad N_{a b}:=\pa_u h_{\bar a\bar b}|_{\scri^+}.
  \end{equation}
  Moreover, $M_{\rm B}(-\infty)=m$ is the ADM mass of the initial data, while $M_{\rm B}(+\infty)=0$.
\end{thm}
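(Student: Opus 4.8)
The plan is to compute the $u$-derivative of $M_{\rm B}(u)$ directly from the expression \eqref{EqLBSMass}, using the fact that $g=g_m+\rho h$ solves the Einstein vacuum equation $\Ric(g)=0$ in the gauge $\Ups(g;g_m)=0$. The key point, as indicated in the discussion surrounding \eqref{EqIBondi}, is that the mass loss formula is \emph{equivalent} to the vanishing of the leading term of the $(1,1)$ component of the gauged Einstein equation at $\scri^+$. Concretely, from Lemma~\ref{LemmaEinP} we have
\[
  P(h)_{1 1} \in -2\rho^{-2}\pa_1\pa_0 h_{1 1} - \tfrac14 \rho^{-1}\pa_1 h^{\bar d\bar e}\pa_1 h_{\bar d\bar e} + \Hb^{\infty;b_0,-1+b_I,b_+},
\]
and since $P(h)=0$, the $\rho_I^{-1}$-leading term at $\scri^+$ must vanish. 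Using $\rho=r^{-1}$, $\rho^{-2}\pa_1\pa_0 = -\tfrac12 r^2\pa_1\pa_0$, $\pa_1 = \pa_u + o(1)$ and $\pa_0 h_{1 1}|_{\scri^+} = r^{-1}\cdot r\pa_0 h_{1 1}|_{\scri^+}$, this identity becomes, at $\scri^+$,
\[
  \pa_u\bigl(r\pa_0 h_{1 1}\bigr)\big|_{\scri^+} = -\tfrac18 \pa_u h^{\bar d\bar e}\,\pa_u h_{\bar d\bar e}\big|_{\scri^+} = -\tfrac18 |N|_{\slg}^2,
\]
after raising and lowering spherical indices with $\slg$ (which is permissible on $\scri^+$ since $h_{\bar a\bar b}\in r^0 S^2 T^*\Sph^2$ and the metric coefficients in the spherical block are $-r^2\slg_{ab}$ to leading order). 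I would first carry out this reduction carefully, keeping track of the precise constants and of the fact that all lower-order terms in $P(h)_{1 1}$ decay strictly faster than $\rho_I^{-1}$, so they do not contribute to the $\scri^+$-leading coefficient. Then, differentiating \eqref{EqLBSMass} under the integral sign (justified by the conormality/polyhomogeneity of $h$ at $\scri^+$ from Theorem~\ref{ThmIDetail}, which gives uniform control in $u$ on compact $u$-intervals, and by the fact that $u = x^1 + \wt u$ with $\wt u\in\rho_I^{b'_I-0}\Hb^\infty$ so that $\pa_u$ at $\scri^+$ agrees with $\pa_1$ there), and using that the round area element $d\slg$ on $S(u)$ is $u$-independent, yields
\[
  \frac{d}{d u}M_{\rm B}(u) = \frac{1}{4\pi}\int_{S(u)} \pa_u\bigl(r\pa_0 h_{1 1}\bigr)\,d\slg = -\frac{1}{32\pi}\int_{S(u)}|N|_{\slg}^2\,d\slg,
\]
which is \eqref{EqLBSLoss}. (Here I should double-check the identification $r\pa_0 h_{1 1}|_{\scri^+} = -\tfrac12 h_{1 1}^{(1)}$ from Remark~\ref{RmkLBSLog}, which shows consistency with the form \eqref{EqIBondi} in Theorem~\ref{ThmIBondi}.)

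For the boundary values, $M_{\rm B}(-\infty)=m$ follows immediately from the decay of $h$ at $i^0$: by Theorem~\ref{ThmIDetail}, $h_{1 1}$ and hence $\pa_0 h_{1 1}$ decay at $i^0$ with the weight $\rho_0^{b_0}$, so the integral term in \eqref{EqLBSMass} tends to $0$ as $u = x^1\to-\infty$ along $\scri^+$ (which approaches $i^0\cap\scri^+$). That $m$ coincides with the ADM mass of the initial data is part of the construction: $g_m$ equals the mass $m$ Schwarzschild metric near $i^0\cup\scri^+$, and $h$ is a short-range perturbation there (decaying like $\rho_0^{b_0}$ with $b_0>0$), so the ADM mass of $g|_{\{t=0\}}$ equals that of $g_m^S|_{\{t=0\}}$, namely $m$. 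The claim $M_{\rm B}(+\infty)=0$ is the subtler one: it is the statement that the total radiated energy $\int_{-\infty}^{\infty} E(u)\,d u$ equals the initial mass $m$, equivalently (by the mass loss formula just proved) that $\lim_{u\to+\infty}M_{\rm B}(u)=0$. As outlined in \S\ref{SsIBondi}, this is proved by studying the leading term $h|_{i^+}$ of the solution at future timelike infinity as the solution of a linear equation on $i^+$ obtained by restricting the gauged Einstein equation to $i^+$ (cf.\ the normal operator analysis in \S\ref{SsItip} and the polyhomogeneity arguments of \S\ref{SPhg}, which produce a leading term $h^+\in\rho_I^{-\eps}\Hb^\infty(i^+)$ with $h-h^+\in\rho_I^{-\eps}\rho_+^{b_+}\Hb^\infty$). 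The forcing term of this linear equation comes from the failure of the glued background metric $g_m$ to be Ricci-flat in the transition region and is therefore proportional to $m$; the relevant operator is essentially the spectral family of hyperbolic space at the bottom of its essential spectrum, and one computes the associated scattering matrix (with incoming data $h_{1 1}^{(1)}$ and outgoing data read off near $\scri^+\cap i^+$), comparing the $(0,0)$ component of the outgoing data with $h_{0 0}$—which vanishes identically by the constraint damping construction—to extract the identity $\lim_{u\to+\infty} M_{\rm B}(u)=0$.

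The main obstacle is the last part, $M_{\rm B}(+\infty)=0$: unlike the mass loss formula, which is a local computation at $\scri^+$, this requires genuinely global information about how the solution behaves as one approaches $i^+$, and in particular the precise structure of the model operator at $i^+$ (its identification with the conformal Klein--Gordon operator on static de~Sitter space / the hyperbolic space spectral family, and the explicit computation of its scattering matrix on the relevant spaces of logarithmically-growing-versus-decaying data). Setting up this scattering-theoretic framework carefully—specifying the function spaces on $i^+$ on which the scattering matrix acts, verifying that $h^+$ solves the asserted linear equation with forcing proportional to $m$, and tracking which component of the outgoing data is tied to $h_{0 0}$—is where the real work lies; the local $\scri^+$ computation and the $u\to-\infty$ limit are comparatively routine. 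I would organize the proof so that \eqref{EqLBSLoss} and $M_{\rm B}(-\infty)=m$ are dispatched first by the direct calculations above, and then treat $M_{\rm B}(+\infty)=0$ as a separate lemma relying on the $i^+$ analysis, with the details of the scattering matrix computation deferred to (or cross-referenced with) the polyhomogeneity section \S\ref{SPhg}.
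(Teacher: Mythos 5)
Your handling of the first two claims is correct and is essentially the paper's own argument: the mass loss formula is read off from the vanishing of the $\rho_I^{-1}$-leading term of $P(h)_{1 1}$ in Lemma~\ref{LemmaEinP}, and $M_{\rm B}(-\infty)=m$ follows from the decay $r\pa_0 h_{1 1}\in\rho_0^{b_0}\rho_+^{b_+}\Hb^\infty(\scri^+)$ at $i^0\cap\scri^+$. One small slip: the line ``$\rho^{-2}\pa_1\pa_0=-\tfrac12 r^2\pa_1\pa_0$'' is false as written (with $\rho=r^{-1}$ one has $\rho^{-2}=r^2$; the factor $-\half$ belongs to $r\pa_0 h_{1 1}|_{\scri^+}=-\half h_{1 1}^{(1)}$, which never enters since \eqref{EqLBSMass} is already phrased in terms of $r\pa_0 h_{1 1}$); your final identity $\pa_u(r\pa_0 h_{1 1})|_{\scri^+}=-\tfrac18|N|_{\slg}^2$ and the constant $\tfrac{1}{32\pi}$ are nevertheless correct.

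The claim $M_{\rm B}(+\infty)=0$ is where your proposal has a genuine gap. You correctly identify the strategy the paper uses (restrict the equation to $i^+$, observe the forcing $-\rho^{-3}\Ric(g_m)|_{i^+}$ is proportional to $m$, and play this off against the vanishing of $h_{0 0}$ at $\scri^+$), but you then defer ``the real work'' to the $i^+$/polyhomogeneity analysis of \S\ref{SsItip} and \S\ref{SPhg}. That material only provides the existence of the leading term $h^+=h|_{i^+}$ and the meromorphic invertibility of $\wh{\ul L}(0)$; the scattering computation you need is not there, it \emph{is} the proof of this part of Theorem~\ref{ThmLBSLoss}. Concretely, three quantitative inputs are missing from your outline. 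First, the boundary condition at $\pa i^+$: $h^+_{1 1}$ carries the logarithmic leading term $h^+_\ell\log\rho_I$ with $h^+_\ell(\theta)=\tfrac14\int_{\beta^{-1}(\theta)}|N|^2\,dx^1$ (again a consequence of Lemma~\ref{LemmaEinP}); this is the only place where the total radiated energy enters the $i^+$ problem, and your sketch never makes this link, nor invokes the injectivity of $\wh{\ul L}(0)$ on $\Hext^{1/2+0}(i^+)$ which makes $h^+$ uniquely determined by \eqref{EqLBSLossEq} together with this log datum. Second, the explicit solution of $\wh{\ul L}(0)h''=0$ with that log datum, via the spherical-harmonic ODE solutions \eqref{EqLBSLossODE}, giving $h''_{0 0}|_{\pa i^+}=-\tfrac14 c$ with $c=\frac{1}{4\pi}\int_{\pa i^+}h^+_\ell\,d\slg$. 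Third, the treatment of the forcing via the ansatz $h'=-\ul h+\rho^{-1}\delta^*_{\ul g}\omega+\wt h'$ (a 1-form problem for $\omega$ at zero Mellin frequency, plus another explicit log-mode correction), giving $h'_{0 0}|_{\pa i^+}=\half m$. Only the combination of $h^+_{0 0}|_{\pa i^+}=0$ with the computed relation $\frac{1}{4\pi}\int_{\pa i^+}h^+_{0 0}\,d\slg=\half m-\tfrac14 c$ yields $c=2m$, hence $M_{\rm B}(+\infty)=m-\half c=0$; the conclusion hinges on these exact constants, so without carrying out this model computation the third assertion remains unproved.
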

\begin{proof}
  The formula~\eqref{EqLBSLoss} is an immediate consequence of Lemma~\ref{LemmaEinP}, and $M_{\rm B}(-\infty)=m$ follows from the fact that $r\pa_0 h_{1 1}\in\rho_0^{b_0}\rho_+^{b_+}\Hb^\infty(\scri^+)$ decays to $0$ as $\rho_0\to 0$.
  
  Let us fix the boundary defining function $\rho$ to be equal to $r^{-1}$ near $\scri^+$, and fix $\rho_I$ and $\rho_+$ near $I^+$ so that $\rho_I\rho_+=\rho$. In order to prove $M_{\rm B}(+\infty)=0$, we analyze the equation satisfied by $h^+:=h|_{I^+}$. The existence of this leading term was proved in~\S\ref{SPhg} starting with equation~\eqref{EqPhgLocip} (in which we do not use constraint damping); that is, restricting that equation to $I^+$ and using the Mellin-transformed normal operators $\wh{L_0}(0)=\wh{\ul L}(0)\in\rho_I^{-1}\Diffb^2(I^+)$ at frequency $0$ (so this is the action of $L_0$ on 2-tensors smooth down to $I^+$ followed by restriction to $I^+$), we have
  \begin{equation}
  \label{EqLBSLossEq}
    \wh{\ul L}(0) h^+ = -P(0)|_{I^+} = -\rho^{-3}\Ric(g_m)|_{I^+}.
  \end{equation}
  Moreover, $h^+_{1 1}$ has a logarithmic leading order term $h^+_\ell\log\rho_I$,
  \begin{equation}
  \label{EqLBSLossLog}
    h^+-h^+_\ell\log\rho_I\,(d x^1)^2 \in\CI(I^+)+\rho_I^{b_I}\Hb^\infty(I^+)\subset\Hext^{1/2+b_I-0}(I^+),
  \end{equation}
  where $h^+_\ell=(\rho_I\pa_{\rho_I}h_{1 1})|_{\pa I^+}=(-2 r\pa_0 h_{1 1})_{\pa I^+}$, so by Lemma~\ref{LemmaEinP}
  \[
    h_\ell^+(\theta) = \frac14 \int_{\beta^{-1}(\theta)} |N|^2\,d x^1,\ \ \theta\in\pa I^+.
  \]
  Since $\wh{\ul L}(0)$ is injective on $\Hext^{1/2+0}(I^+)$, the tensor $h^+$ on $I^+$ is uniquely determined by equation~\eqref{EqLBSLossEq} and the `boundary condition'~\eqref{EqLBSLossLog}. The strategy is to evaluate $h^+_{0 0}|_{\pa I^+}$ in two ways: one the one hand, this quantity vanishes identically by construction of the metric $h$ in our DeTurck gauge; on the other hand, we will show that solving~\eqref{EqLBSLossEq} directly yields the relation
  \begin{equation}
  \label{EqLBSLossAim}
    \frac{1}{4\pi}\int_{\pa I^+} h^+_{0 0}|_{\pa I^+}\,d\slg = \half m - \tfrac14 c, \quad c:=\frac{1}{4\pi}\int_{\pa I^+} h^+_\ell\,d\slg,
  \end{equation}
  which thus gives the desired conclusion. For the proof of~\eqref{EqLBSLossAim}, let us split $h^+=h'+h''$, where
  \begin{equation}
  \label{EqLBSLossMinkBC}
    h'\in\CI(I^+;S^2\,\Tsc^*_{I^+}\ol{\R^4}),\ \ 
    h''\in h^+_\ell\log\rho_I\,(d x^1)^2 + \Hext^{1/2+0}(I^+;S^2\,\Tsc^*_{I^+}\ol{\R^4})
  \end{equation}
  are the unique solutions with these properties solving the equations
  \begin{align}
  \label{EqLBSLossSchw}
    \wh{\ul L}(0)h' &= -P(0)|_{I^+}, \\
  \label{EqLBSLossMink}
    \wh{\ul L}(0)h'' &= 0;
  \end{align}
  the first equation is uniquely solvable in this regularity class due to $P(0)\in\CIc((I^+)^\circ)$. We first solve~\eqref{EqLBSLossMink} with the boundary condition~\eqref{EqLBSLossMinkBC}, to the extent that we can determine $h''_{0 0}$. This can be viewed as a calculation of (a part of) the `scattering matrix' of the operator $\wh{\ul L}(0)$ on $I^+$,\footnote{Trivializing the 2-tensor bundle using coordinate differentials on $\R^4$, a conjugated version of $\wh{\ul L}(0)$ acts component-wise as the Laplacian of exact hyperbolic space with spectral parameter at the bottom of the spectrum; see Equations~(4.1), (6.11), and~(6.13) in~\cite{HintzZworskiHypObs}.} which can be done explicitly: writing points in $I^+$ using spherical coordinates as $Z=R\omega\in\R^3$, $R=r/t\in[0,1]$, $\omega\in\Sph^2$, we have
  \[
    -2\wh{\ul L}(0) = R^{-2}D_R R^2(1-R^2)D_R + R^{-2}\slDelta_\omega + 2,
  \]
  acting component-wise on the coordinate trivialization of $\Tsc^*\ol{\R^4}$; see~\eqref{EqBgExplDS} and \eqref{EqBgExplStaticdS}. Since $\wh{\ul L}(0)$ is $SO(3)$-invariant, it suffices to calculate $u_{0 0}|_{\pa I^+}$ for the solution of $\wh{\ul L}(0)u=0$ for which $u-c\log\rho_I(d x^1)^2\in\Hext^{1/2+0}(I^+)$; recall that $c$ was defined in~\eqref{EqLBSLossAim}. Now at $I^+$,
  \begin{equation}
  \label{EqLBSLossdx1sq}
    (d x^1)^2 = d t^2 - 2\tfrac{Z^i}{|Z|}\,d t\,d x_i + \tfrac{Z^i Z^j}{|Z|^2}\,d x_i\,d x_j,
  \end{equation}
  where we write $x_i$ for the Euclidean coordinates on $\R^3$; observe then that if $Y_\ell\in\CI(\Sph^2)$, $\slDelta Y_\ell=\ell(\ell+1)Y_\ell$, denotes a spherical harmonic, then $\wh{\ul L}(0)\bigl(u_\ell(R) Y_\ell(\omega)\bigr) = 0$ holds for
  \begin{equation}
  \label{EqLBSLossODE}
    u_0=R^{-1}\log\bigl(\tfrac{1-R}{1+R}\bigr),\ \ 
    u_1=R^{-2}\log\bigl(\tfrac{1-R}{1+R}\bigr)+2 R^{-1},\ \ 
    u_2=\tfrac{3-R^2}{2 R^3}\log\bigl(\tfrac{1-R}{1+R}\bigr)+3 R^{-2};
  \end{equation}
  Taylor expanding at $R=0$, one sees that $R^{-\ell}u_\ell$ is a smooth function of $R^2$, hence $u_\ell Y_\ell$ is smooth there; moreover, $u_\ell$ satisfies the boundary condition $u_\ell-\log\rho_I=\cO(1)$, $\rho_I=1-R$, at $R=1$. (In fact, $u_\ell$ is the unique solution with these two properties.) Using~\eqref{EqLBSLossdx1sq}, we find $h''=c\cdot\bigl(u_0\,d t^2-2 u_1\,d t\,d r+u_2\,d r^2\bigr)$, so writing $d t=(d x^0+d x^1)/2$, $d r=\frac{Z^i}{|Z|}d x_i$, and $r=(d x^0-d x^1)/2$ near $\pa I^+$ within $I^+$, this gives
  \begin{equation}
  \label{EqLBSLossHdbl}
    h''_{0 0}|_{\pa I^+} = c\cdot\bigl(\tfrac14 u_0 - \tfrac12 u_1 + \tfrac14 u_2\bigr)\big|_{R=1} = -\tfrac14 c.
  \end{equation}

  In order to solve~\eqref{EqLBSLossSchw}, note first that the map $\ul h\in\CI(I^+)\mapsto\rho^{-3}\Ric(\ul g+\rho\ul h)|_{I^+}$ is \emph{linear} in $\ul h$,\footnote{This reflects the fact that the normal operator of the linearization of the Einstein equation around a metric of the form $\ul g+\rho\ul h$ only depends on the leading order part of the metric at $I^+$, i.e.\ on $\ul g$; see also Lemma~\ref{LemmaEinNi0p}.} hence writing $g_m=:\ul g+\rho\ul h$, we have
  \[
    P(0)|_{I^+}=\rho^{-3}(\Ric(\ul g+\rho\ul h)-\Ric(\ul g))|_{I^+} = \wh{\ul L}(0)\ul h - \rho^{-3}\delta^*_{\ul g}\delta_{\ul g}G_{\ul g}\rho\ul h;
  \]
  for later use, we note that in a neighborhood of $\pa I^+$ in $I^+$,
  \begin{equation}
  \label{EqLBSLossulh}
    \ul h=-2 m \rho^{-1}r^{-1}(d t^2+d r^2)=-m((d x^0)^2+(d x^1)^2).
  \end{equation}
  This suggests writing $\rho h'$ as the sum of $-\rho\ul h$ (to solve away the first term) and a pure gauge term, so we make the ansatz
  \begin{equation}
  \label{EqLBSLossHprimeAnsatz}
    h'=-\ul h+\rho^{-1}\delta^*_{\ul g}\omega+\wt h',
  \end{equation}
  where $\omega\in\CI((I^+)^\circ;\Tsc^*_{I^+}\ol{\R^4})$ solves\footnote{We abuse notation by using the same expression for a b-operator on $\ol{\R^4}$ and its Mellin-transformed normal operator at $0$ frequency. Note that for a b-differential operator $A$, the operator $\wh{A}(0)$ is independent of the choice of boundary defining function (unlike $\wh{A}(\sigma)$ for $\sigma\neq 0$); see also \cite[p.~762]{VasyPropagationCorners}.}
  \begin{equation}
  \label{EqLBSLossOmega}
    \rho^{-2}\delta_{\ul g}G_{\ul g}\delta_{\ul g}^*\omega = \vartheta := \rho^{-2}\delta_{\ul g}G_{\ul g}(\rho\ul h) \in \CI(I^+;\Tsc_{I^+}^*\ol{\R^4}),
  \end{equation}
  and $\wt h'$ is a solution of $\wh{\ul L}(0)\wt h'=0$ which we will use to solve away any singular terms. We compute $\vartheta$ to leading order at $\pa I^+$ by using $r^2\delta_{\ul g}r^{-1} d t^2=0$ and $r^2\delta_{\ul g}r^{-1} d r^2=d r$, so
  \[
    \vartheta = -2 m\,d r = -m\,d x^0+m\,d x^1 + \rho_I\,\CI(I^+).
  \]
  Write $\rho^{-2}\delta_{\ul g}G_{\ul g}\delta_{\ul g}^*=\rho\ul D\rho^{-1}$, where $\ul D=\rho^{-3}\delta_{\ul g}G_{\ul g}\delta_{\ul g}^*\rho$ is $\half$ times the wave operator on 1-forms on Minkowski space, re-weighted to a b-operator as usual; then equation~\eqref{EqLBSLossOmega} becomes
  \begin{equation}
  \label{EqLBSLossOmegaEq}
    \wh{\ul D}(i)(\rho_I^{-1}\omega) = \rho_I^{-1}\vartheta.
  \end{equation}
  Now $\rho_I^{-1}\vartheta\in\Hext^{-1/2-0,\infty}(I^+)$, while $\wh{\ul D}(i)^{-1}\colon\Hext^{s-1,\infty}(I^+)\to\Hext^{s,\infty}(I^+)$ for $s>-\half$, cf.\ \eqref{EqPhgNormalOp}. Therefore, the solution satisfies $\omega\in\rho_I\Hext^{1/2-0,\infty}(I^+)\subset\rho_I^{1-0}\Hb^\infty(I^+)$ (by Sobolev embedding for functions of the single variable $\rho_I$), which using the expression~\eqref{EqCoSchwDelta} implies that $\omega$ does not contribute to $h'_{0 0}|_{\pa I^+}$, namely $(\rho^{-1}\delta_{\ul g}^*\omega)_{0 0}|_{\pa I^+} = (\rho^{-1}\pa_0\omega_0)|_{\pa I^+}=0$, where we used that $\rho^{-1}\pa_0$ is a multiple of the b-vector field $\rho_I\pa_{\rho_I}$ at $\pa I^+$.

  A careful inspection of the solution of~\eqref{EqLBSLossOmegaEq} shows that $\rho^{-1}\delta_{\ul g}^*\omega$ is not smooth. Indeed, in the bundle splitting~\eqref{EqCptASpl}, we have $\ul D\in-2\rho^{-2}\pa_0\pa_1+\Diffb^2({}^0\!M)$, as follows from the same calculations as~\eqref{EqEinNscriBoxForm}, so using the expression~\eqref{EqPhgNormalPhgOp} for $\sigma=i$, we have $\wh{\ul D}(i)\in\pa_{\rho_I}(\rho_I\pa_{\rho_I}+1)+\Diffb^2(I^+)$, which implies that\footnote{Using the arguments employed in the proof of Lemma~\ref{LemmaPhgNormalPhg}, we in fact have $\rho_I^{-1}\omega\in\log\rho_I\,\CI(I^+)+\CI(I^+)$, as follows by constructing a formal solution at $\rho_I=0$, starting with the stated leading order term, and solving away the remaining smooth error using $\wh{\ul D}(i)^{-1}$.} $\omega=\rho_I\log\rho_I\,\vartheta|_{\pa I^+}+\Hext^{5/2-0,\infty}(I^+)$; therefore
  \[
    (\rho^{-1}\delta_{\ul g}^*\omega)|_{I^+} = \bigl(-d x^0\,d x^1+(d x^1)^2\bigr)m \log\rho_I + \Hext^{3/2-0,\infty}(I^+).
  \]
  Therefore, while we do have $\wh{\ul L}(0)(-\ul h+\rho^{-1}\delta_{\ul g}^*\omega)=-P(0)$, we need to correct the 2-tensor on the left by adding the unique solution $\wt h'$ of
  \[
    \wh{\ul L}(0)\wt h'=0,\ \ \wt h' \in \bigl(d x^0\,d x^1-(d x^1)^2\bigr)m\log\rho_I + \Hext^{1/2+0}(I^+)
  \]
  in order for $h'$ in~\eqref{EqLBSLossHprimeAnsatz} to have regularity \emph{above} $\Hext^{1/2}(I^+)$, which, as remarked before, implies that it is the unique \emph{smooth} solution of \eqref{EqLBSLossSchw}, as desired. Arguing similarly as around~\eqref{EqLBSLossdx1sq}--\eqref{EqLBSLossODE} and noting that $d x^0\,d x^1=d t^2-d r^2=d t^2-\frac{Z^i Z^j}{|Z|^2}d x_i\,d x_j$, the solution is given by $\wt h'=m(u_0\,d t^2-u_2\,d r^2)-m(u_0\,d t^2-2 u_1\,d t\,d r+u_2\,d r^2)$. This gives
  \[
    \wt h'_{0 0}|_{\pa I^+} = \tfrac14 m (u_0-u_2)|_{\pa I^+} - m\cdot(-\tfrac14) = -\half m.
  \]
  In view of~\eqref{EqLBSLossulh}, we conclude that
  \[
    h'_{0 0}|_{\pa I^+} = -\ul h_{0 0}|_{\pa I^+} + \wt h'_{0 0}|_{\pa I^+} = \half m.
  \]
  Adding this to~\eqref{EqLBSLossHdbl} establishes the relation~\eqref{EqLBSLossAim}, and proves $M_{\rm B}(+\infty)=0$.
\end{proof}

\begin{rmk}
  The construction of Bondi--Sachs coordinates is local near $(\scri^+)^\circ$ and as such did not rely on $h$ being small. (The proof of Proposition~\ref{PropLR} used the smallness of certain Christoffel symbols in a weighted $\cC^0$ space, but this is automatic for any fixed $h\in\cX^\infty$ if one relaxes the weights at $\scri^+$ by a little and works in a sufficiently small neighborhood of $\scri^+$.) Likewise, the proof of Theorem~\ref{ThmLBSLoss} did not require $h$ to be small. Therefore, we in fact conclude that \emph{any (large) solution} of the Einstein vacuum equation of the form $g=g_m+\rho h$ (with $m$ possibly large), $h\in\cX^\infty$---which requires it to decay to the Minkowski solution at $I^+$---satisfies the conclusions of Theorem~\ref{ThmLBSLoss}.
\end{rmk}

Let us connect this to the alternative definition of the Bondi mass and the mass loss formula used in \S\ref{SsIBondi}, which has a more geometric flavor \cite{ChristodoulouNonlinear}. To describe this, consider an outgoing null cone $C_u$ and let
\[
  S_{u,\mathring r}:=C_u\cap\{r=\mathring r\}
\]
denote the $2$-sphere of constant area radius (which is a particular choice of transversal of $C_u$). Let $L\in(T C_u)^\perp$ be a future-directed null normal vector field, i.e.\ a smooth positive multiple of $\nabla u$; then the \emph{null second fundamental form} is
\[
  \chi_L(X,Y) := g(\nabla_X L,Y),\ \ X,Y\in T S_{u,\mathring r}.
\]
Note that $\chi_{a L}=a\chi_L$ for any function $a$. There exists a unique future-directed null vector field
\begin{equation}
\label{EqLBSLbar}
  \ul L\in( T S_{u,\mathring r})^\perp\ \ \text{such that}\ \ g(L,\ul L)=2.
\end{equation}
Define $T\ul C_u:=T S_{u,\mathring r}\oplus\la\ul L\ra$, which is the tangent space (at $S_{u,\mathring r}$) of a null hypersurface $\ul C_u$ which is the congruence of null-geodesics with initial condition on $S_{u,\mathring r}$ and initial velocity $\ul L$. ($L$ and $C_u$, resp.\ $\ul L$ and $\ul C_u$, are often called `outgoing' and `ingoing,' respectively.) The \emph{conjugate null second fundamental form} is then
\[
  \ul\chi_{\ul L}(X,Y) := g(\nabla_X\ul L,Y)=-g(\nabla_X Y,\ul L),\ \ X,Y\in T S_{u,\mathring r},
\]
with the second expression showing that this depends only on $\ul L$ at $S_{u,\mathring r}$. Letting $\mathring g:=g|_{S_{u,\mathring r}}$ denote the induced metric, the trace-free parts of $\chi$ and $\ul\chi$ are
\[
  \hat\chi_L := \chi_L - \half\mathring g\tr_{\mathring g}(\chi_L), \quad
  \hat{\ul\chi}_{\ul L} := \ul\chi_{\ul L}-\half\mathring g\tr_{\mathring g}(\ul\chi_{\ul L}).
\]
Rescaling $L$ to $a L$, we must rescale $\ul L$ to $a^{-1}\ul L$, so the product $\tr\chi_L\tr\chi_{\ul L}$ is well-defined, and we may drop the subscripts on $\chi$ and $\ul\chi$. The \emph{Hawking mass} of $S_{u,\mathring r}$ is defined as
\begin{equation}
\label{EqLBSHawking}
  M_{\rm H}(u,\mathring r) := \frac{\mathring r}{2}\biggl(1+\frac{1}{16\pi}\int_{S(u,\mathring r)} \tr\chi \tr\ul\chi\,d S\biggr),
\end{equation}
where $d S$ is the induced surface measure. For a 1-form, let us write its components $\omega$ in Bondi--Sachs coordinates as $\omega_u$, $\omega_{\mathring r}$, $\omega_{\mathring a}$, $a=2,3$, similarly for higher rank tensors.

\begin{lemma}
\label{LemmaLBSHawking}
  We have $|M_{\rm H}(u,\mathring r)-M_{\rm B}(u)|\lesssim\mathring r^{-b_I+0}$, hence
  \[
    \lim_{\mathring r\to\infty}M_{\rm H}(u,\mathring r)=M_{\rm B}(u).
  \]
\end{lemma}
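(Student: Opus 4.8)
The plan is to compute the Hawking mass of $S_{u,\mathring r}$ directly from the Bondi--Sachs form of the metric established in Proposition~\ref{PropLBS}, and to show that up to $\cO(\mathring r^{-b_I+0})$ errors it equals the mass aspect integrated over the sphere, which after discarding a total divergence is $M_{\rm B}(u)$. First I would choose the outgoing null normal to $C_u$ to be $L:=\nabla u$; by construction $L$ is a null-generator of $C_u$, and from~\eqref{EqLBSg1}--\eqref{EqLBSg2} one has, in Bondi--Sachs coordinates, $L=G(du,\cdot)=\pa_{\mathring r}+\cO(\mathring r^{-1})$-type corrections, so $L$ is a perturbation of the Schwarzschild outgoing null vector. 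Using~\eqref{EqLBSLbar}, the conjugate null vector $\ul L\in(TS_{u,\mathring r})^\perp$ with $g(L,\ul L)=2$ is then, to leading order, a multiple of $\pa_u$ plus a $\pa_{\mathring r}$ term; again this is a small perturbation of the Schwarzschild ingoing vector $2\pa_u-(1-2 m\mathring r^{-1})\pa_{\mathring r}$. The induced metric on $S_{u,\mathring r}$ is $\mathring g=-\mathring r^2\slg+\mathring r h_{\bar a\bar b}+\cO(\mathring r^{-b_I+0})$ with surface measure $dS=\mathring r^2\,d\slg(1+\cO(\mathring r^{-1}))$ (the $\mathring r^{-1}$ term being proportional to $\sltr h$, which vanishes to leading order at $\scri^+$ by the gauge condition).

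Next I would compute $\tr_{\mathring g}\chi$ and $\tr_{\mathring g}\ul\chi$. For Schwarzschild these are $\tr\chi=2/\mathring r$ and $\tr\ul\chi=-(1-2 m\mathring r^{-1})/\mathring r$, giving $\tr\chi\tr\ul\chi=-2\mathring r^{-2}(1-2 m\mathring r^{-1})$ and hence $M_{\rm H}=m$ exactly for Schwarzschild. For our metric, the key point is that the product $\tr\chi\tr\ul\chi$ is invariant under rescaling $L\mapsto a L$, $\ul L\mapsto a^{-1}\ul L$, so one may compute in any convenient normalization; I would expand $\chi(X,Y)=g(\nabla_X L,Y)$ for $X,Y\in\{\pa_{\mathring x^2},\pa_{\mathring x^3}\}$ using the Christoffel symbols of $g$ in Bondi--Sachs coordinates read off from Proposition~\ref{PropLBS}, and similarly for $\ul\chi$. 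Because the $d u^2$ coefficient of $g$ has the expansion $1-2 m\mathring r^{-1}-2\pa_0 h_{1 1}+\half\mathring r^{-1}\slnabla_a\slnabla_b h^{\bar a\bar b}+\cO(\mathring r^{-1-b_I+0})$, and because the area radius $\mathring r$ was defined precisely so that $\det([\pi^*\slg]^{-1}[g])=\mathring r^4$, the only correction to $\tr\chi\tr\ul\chi$ at order $\mathring r^{-3}$ beyond the Schwarzschild value is $-2\mathring r^{-3}\cdot 2(r\pa_0 h_{1 1}-\tfrac14\slnabla_a\slnabla_b h^{\bar a\bar b})=-4\mathring r^{-3}(M_A-m)$ in the notation~\eqref{EqLBSMassAsp}, with an $\cO(\mathring r^{-3-b_I+0})$ remainder; all the $\cO(\mathring r^{-1})$-type remainders in the metric and in $L,\ul L$ contribute only at order $\mathring r^{-3-b_I+0}$ or better to the integrand, since they multiply the already $\mathring r^{-2}$-small principal part. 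Plugging into~\eqref{EqLBSHawking} gives
\[
  M_{\rm H}(u,\mathring r)=\frac{\mathring r}{2}\Bigl(1+\frac{1}{16\pi}\int_{S_{u,\mathring r}}\bigl(-2\mathring r^{-2}(1-2m\mathring r^{-1})-4\mathring r^{-3}(M_A-m)\bigr)(1+\cO(\mathring r^{-1}))\,\mathring r^2 d\slg+\cdots\Bigr),
\]
and collecting orders the $\mathring r^0$ and $\mathring r^{-1}$ pieces combine to $m+\frac{1}{4\pi}\int_{S(u)}(M_A-m)\,d\slg+\cO(\mathring r^{-b_I+0})=M_{\rm B}(u)+\cO(\mathring r^{-b_I+0})$, using that the divergence $\slnabla_a\slnabla_b h^{\bar a\bar b}$ integrates to zero over $S(u)$.

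The main obstacle I anticipate is organizing the error-term bookkeeping cleanly: one must verify that every $o(\mathring r^{-1})$ term appearing in Proposition~\ref{PropLBS} (in the metric coefficients, in the dual metric, and in the coordinate vector fields $\pa_u,\pa_{\mathring r},\pa_{\mathring x^a}$) feeds into $\tr\chi$, $\tr\ul\chi$, the normalization factor $a$ fixing $g(L,\ul L)=2$, and the surface measure $dS$ in such a way that its contribution to the integrand of~\eqref{EqLBSHawking} is $\cO(\mathring r^{-3-b_I+0})$, so that after multiplying by $\mathring r/2\cdot\mathring r^2$ it yields an $\cO(\mathring r^{-b_I+0})$ error in $M_{\rm H}$ that vanishes as $\mathring r\to\infty$. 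This is a matter of careful weight-counting rather than a conceptual difficulty: since the Schwarzschild background already produces the exact identity $M_{\rm H}^{\mathrm{Schw}}=m$, and since the definition of $\mathring r$ absorbs the leading perturbative correction to the area element, one only needs the first subleading correction, which is governed by $\pa_0 h_{1 1}$ and a total divergence, exactly as in~\eqref{EqLBSMassAsp}. A secondary point to handle with care is the choice and smoothness of $\ul L$: it is determined pointwise on $S_{u,\mathring r}$ by orthogonality to $TS_{u,\mathring r}$ and the normalization $g(L,\ul L)=2$, so I would write $\ul L=\alpha\pa_u+\beta\pa_{\mathring r}+\gamma^a\pa_{\mathring x^a}$ and solve the resulting linear system using~\eqref{EqLBSg1}--\eqref{EqLBSg3}, checking that $\alpha=\half+\cO(\mathring r^{-1})$ (so that $\ul L$ is a genuine small perturbation of the Schwarzschild ingoing null vector and $\tr\ul\chi$ has the expected sign and leading behavior). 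Once these checks are in place the limit statement $\lim_{\mathring r\to\infty}M_{\rm H}(u,\mathring r)=M_{\rm B}(u)$ follows immediately.
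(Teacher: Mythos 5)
Your strategy is the same as the paper's: compute $\chi$ and $\ul\chi$ directly in the Bondi--Sachs coordinates of Proposition~\ref{PropLBS} (your $L=\nabla u=(1+o(\mathring r^{-1}))\pa_{\mathring r}$ is the paper's $L=\pa_{\mathring r}$ up to an irrelevant rescaling), determine $\ul L$ from \eqref{EqLBSLbar}, use $dS=(\mathring r^2+o(\mathring r))\,d\slg$, and identify the $\mathring r^{-3}$ part of $\tr\chi\,\tr\ul\chi$ with the mass aspect modulo a spherical divergence. However, the explicit expansions you assert are wrong in two places that break the derivation as written. First, with the normalization $g(L,\ul L)=2$ of \eqref{EqLBSLbar} and $L=\pa_{\mathring r}$, the Schwarzschild values are $\tr\chi=2/\mathring r$, $\tr\ul\chi=-\tfrac{2}{\mathring r}(1-\tfrac{2m}{\mathring r})$, so $\tr\chi\,\tr\ul\chi=-\tfrac{4}{\mathring r^2}(1-\tfrac{2m}{\mathring r})$; your value $\tr\ul\chi=-\tfrac{1}{\mathring r}(1-\tfrac{2m}{\mathring r})$ is half of this, and with it the $1$ in \eqref{EqLBSHawking} is not cancelled: your own displayed formula then gives $M_{\rm H}\sim\mathring r/4\to\infty$, not $M_{\rm H}=m$, contrary to what you assert for Schwarzschild. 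Second, the claimed perturbative correction $-4\mathring r^{-3}(M_A-m)$ has the wrong sign and half the required magnitude. The correct correction, modulo exact divergences (which drop out upon integration), is $+8\,\mathring r\pa_0 h_{1 1}\,\mathring r^{-3}$: it arises as $\tr\chi\approx 2\mathring r^{-1}$ times the $\mathring r^{-2}\bigl(4 m+4\mathring r\pa_0 h_{1 1}-\half\slnabla_a\slnabla_b h^{\bar a\bar b}-\slnabla_a h_1{}^{\bar a}\bigr)$ term of $\tr\ul\chi$ (cf.\ \eqref{EqLBSHawkingChibarHat}), the $4\mathring r\pa_0 h_{1 1}$ coming from the $-2\pa_0 h_{1 1}$ correction in the $\pa_{\mathring r}$-component of $\ul L$ contracted against $\Gamma_{\mathring r\mathring a\mathring b}=\mathring r\slg_{a b}+\ldots$. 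With your coefficient, even after repairing the Schwarzschild baseline, the limit would come out as $\tfrac32 m-\tfrac12 M_{\rm B}(u)$ rather than $M_{\rm B}(u)$, so the final ``collecting orders'' step does not follow from the numbers you wrote down.

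Two smaller points to tighten: the absence of an $\mathring r^{-2}$ term in $\tr\chi$ is not purely a consequence of the definition of the area radius; in the computation \eqref{EqLBSHawkingChi}--\eqref{EqLBSHawkingChiHat} it uses $\sltr h=o(1)$, i.e.\ the gauge condition, so this should be said explicitly. And the term $\mathring r\pa_u h_{\bar a\bar b}$ produced by $\Gamma_{u\mathring a\mathring b}$ contracted with the $\pa_u$-component of $\ul L$ is trace-free to leading order, hence enters $\hat{\ul\chi}$ (and the energy flux) but not $\tr\ul\chi$; your error-bookkeeping should record why this $O(1)$ piece of $\ul\chi_{\mathring a\mathring b}$ does not contaminate the trace at order $\mathring r^{-2}$.
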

\begin{proof}
  We work in Bondi--Sachs coordinates, so $T S_{u,\mathring r}=\la\pa_{\mathring x^2},\pa_{\mathring x^3}\ra$, and
  \[
    \mathring g_{\mathring a\mathring b}=-\mathring r^2\slg_{a b}+\mathring r h_{\bar a\bar b}+o(\mathring r), \quad
    (\mathring g^{-1})^{\mathring a\mathring b}=-\mathring r^{-2}\slg^{a b}-\mathring r^{-3}h^{\bar a\bar b}+o(\mathring r^{-3}).
  \]
  Let us take $L=\pa_{\mathring r}$ and write $\chi\equiv\chi_L$, then $\chi_{\mathring a\mathring b}$ is the Christoffel symbol of the first kind, $\Gamma_{\mathring b\mathring a\mathring r}=g(\nabla_{\pa_{\mathring x^a}}\pa_{\mathring r},\pa_{\mathring x^b})$. By Proposition~\ref{PropLBS}, $g(\pa_{\mathring x^a},\pa_{\mathring r^a})\equiv 0$, therefore
  \begin{equation}
  \label{EqLBSHawkingChi}
    \chi_{\mathring a\mathring b} = \half\pa_{\mathring r}g_{\mathring a\mathring b} = -\mathring r \slg_{a b}+\half h_{\bar a\bar b} + o(1),
  \end{equation}
  which due to $\sltr h=o(1)$ gives
  \begin{equation}
  \label{EqLBSHawkingChiHat}
    \tr\chi = 2\mathring r^{-1} + o(\mathring r^{-2}), \quad
    \hat\chi_{\mathring a\mathring b} = -\half h_{\bar a\bar b} + o(1).
  \end{equation}
  Next, a simple calculation shows that the unique future-directed null vector field $\ul L$ defined in~\eqref{EqLBSLbar} is given by
  \begin{align*}
    \ul L &= (2+o(\mathring r^{-1}))\pa_u - \bigl(1-2 m\mathring r^{-1}-2\pa_0 h_{1 1}+\half\mathring r^{-1}\slnabla_a\slnabla_b h^{\bar a\bar b}+o(\mathring r^{-1})\bigr)\pa_{\mathring r} \\
      &\qquad + \bigl((-h_1{}^{\bar a}+\half\slnabla_b h^{\bar a\bar b})\mathring r^{-2}+o(\mathring r^{-2})\bigr)\pa_{\mathring a}.
  \end{align*}
  (The spherical component is determined by $g(\ul L,\pa_{\mathring c})=0$, $c=2,3$, the $\pa_u$ component by $g(\ul L,L)=2$, and the $\pa_{\mathring r}$ component by $g(\ul L,\ul L)=0$.) Working in normal coordinates on $\Sph^2$, using $\Gamma_{u\mathring a\mathring b} = -\half\mathring r\pa_u h_{\bar a\bar b} - \tfrac14(\slnabla_a h_{1\bar b}+\slnabla_b h_{1\bar a})+\tfrac18(\slnabla_a\slnabla_c h_{\bar b}{}^{\bar c}+\slnabla_b\slnabla_c h_{\bar a}{}^{\bar c}) + o(1)$, $\Gamma_{\mathring r\mathring a\mathring b} = \mathring r\slg_{a b}-\half h_{\bar a\bar b}+o(1)$, and $\Gamma_{\mathring c\mathring a\mathring b} = o(\mathring r^2)$, the components of $\ul\chi:=\ul\chi_{\ul L}$ are
  \begin{equation}
  \label{EqLBSHawkingChibar}
  \begin{split}
    \ul\chi_{\mathring a\mathring b} = -\Gamma_{\mu\mathring a\mathring b}\ul L^\mu &= (\mathring r-2 m-2\mathring r\pa_0 h_{1 1}+\half\slnabla_c\slnabla_d h^{\bar c\bar d})\slg_{a b} + \mathring r\pa_u h_{\bar a\bar b} \\
      &\qquad -\half h_{\bar a\bar b}+\half(\slnabla_a h_{1\bar b}+\slnabla_b h_{1\bar a})-\tfrac14(\slnabla_a\slnabla_c h_{\bar b}{}^{\bar c}+\slnabla_b\slnabla_c h_{\bar a}{}^{\bar c}) + o(1),
  \end{split}
  \end{equation}
  which gives
  \begin{equation}
  \label{EqLBSHawkingChibarHat}
  \begin{split}
    \tr\ul\chi &= -2\mathring r^{-1}+(4 m+4\mathring r\pa_0 h_{1 1}-\half\slnabla_a\slnabla_b h^{\bar a\bar b}-\slnabla_a h_1{}^{\bar a})\mathring r^{-2} + o(\mathring r^{-2}), \\
    \hat{\ul\chi}_{\mathring a\mathring b} &= \mathring r\pa_u h_{\bar a\bar b} + (\tfrac14 \slnabla_c\slnabla_d h^{\bar c\bar d}-\half\slnabla_c h_1{}^{\bar c})\slg_{a b}+\half h_{\bar a\bar b} \\
      &\qquad +\half(\slnabla_a h_{1\bar b}+\slnabla_b h_{1\bar a})-\tfrac14(\slnabla_a\slnabla_c h_{\bar b}{}^{\bar c}+\slnabla_b\slnabla_c h_{\bar a}{}^{\bar c}) + o(1).
  \end{split}
  \end{equation}
  Finally, the surface measure on $S_{u,\mathring r}$ is $|\det\mathring g|^{1/2}|d\mathring x^a\,d\mathring x^b|=(\mathring r^2\slg_{a b}+o(\mathring r))|d\mathring x^a\,d\mathring x^b|$, hence the Hawking mass is $M_{\rm H}(u,\mathring r)=m+\tfrac{1}{4\pi}\int_{S(u)}\mathring r\pa_0 h_{1 1}\,d\slg+o(1)=M_{\rm B}(u)+o(1)$. (As usual, the $o(1)$ remainder is really symbolic as $\mathring r\to 0$, namely of class $S^{-b_I+0}$.)
\end{proof}

With $L$ and $\ul L$ defined as in the proof of the lemma, consider the conjugate null vectors $a L$ and $a^{-1}\ul L$. By~\eqref{EqLBSHawkingChiHat} and \eqref{EqLBSHawkingChibarHat}, there exists a unique $a=1+\cO(\mathring r^{-1})$ such that
\begin{equation}
\label{EqLBSNormalize}
  \tr\chi_{a L}+\tr\ul\chi_{a^{-1}\ul L}=a^{-1}(a^2\tr\chi+\tr\ul\chi)=0;
\end{equation}
thus $\hat{\ul\chi}_{a^{-1}\ul L}=\mathring r\pa_u h_{\bar a\bar b}+\cO(1)=\hat{\ul\chi}+\cO(1)$, hence to leading order, the normalization~\eqref{EqLBSNormalize} does not change $\hat{\ul\chi}$. We can now calculate the \emph{outgoing energy flux} through $S_{u,\mathring r}$,
\[
  \ul E(u,\mathring r) = \frac{1}{32\pi}\int_{S(u,\mathring r)} |\hat{\ul\chi}|^2\,d S = \frac{1}{32\pi}\int_{S(u)} |N|_{\slg}^2\,d S + o(1),
\]
with $N_{a b}=\pa_u h_{\bar a\bar b}$ is as in Theorem~\ref{ThmLBSLoss}.\footnote{Using~\eqref{EqLBSHawkingChibarHat}, we could compute $a$ as well as $\ul E(u,\mathring r)$ to one more order, exhibiting a $\mathring r^{-1}$ term plus a $o(\mathring r^{-1})$ remainder for both.} Clearly, $\ul E$ has a limit $E(u) = \lim_{\mathring r\to\infty} \ul E(u,\mathring r)$ at null infinity, and the Bondi mass loss formula~\eqref{EqLBSLoss} then takes the equivalent form
\[
  \frac{d}{d u}M_{\rm B}(u) = -E(u).
\]

\appendix

\section{Connection coefficients, curvature components, and natural operators}
\label{SCo}

We list the results of calculations used in the main body of the paper: geometric quantities and relevant differential operators for the exact Schwarzschild metric in \S\ref{SsCoSchw}, its perturbations (as considered in \S\ref{SsEinF}) near null infinity in \S\ref{SsCoPert}, and near the temporal face of the Minkowski metric in \S\ref{SsCoMink}.

\subsection{Schwarzschild}
\label{SsCoSchw}

In the notation of \S\ref{SsCptA}, in particular around~\eqref{EqCptASplProd}, the Schwarzschild metric
\[
  g\equiv g_m^S=(1-\tfrac{2 m}{r})dq\,ds-r^2\slg
\]
and the dual metric $g^{-1}$ have components
\[
  \setarraystretch
  \begin{array}{llllll}
    g_{0 0}=0, & g_{0 1}=\half(1-\tfrac{2 m}{r}), & g_{0 b}=0, & g_{1 1}=0, & g_{1 b}=0, & g_{a b}=-r^2\slg_{a b}, \\
    g^{0 0}=0, & g^{0 1}=\tfrac{2 r}{r-2 m}, & g^{0 b}=0, & g^{1 1}=0, & g^{1 b}=0, & g^{a b}=-r^{-2}\slg^{a b}.
  \end{array}
\]
The only nonzero Christoffel symbols in this frame are $\Gamma_{c a b}=-r^2\slGamma_{c a b}$, $\Gamma^c_{a b}=\slGamma^c_{a b}$, and
\[
  \setarraystretch
  \begin{array}{lll}
    \Gamma_{1 0 0}=\half m r^{-3}(r-2 m), & \Gamma_{c 0 b}=-\half(r-2 m)\slg_{b c}, & \Gamma_{0 1 1}=-\half m r^{-3}(r-2 m), \\
    \Gamma_{c 1 b}=\half(r-2 m)\slg_{b c}, & \Gamma_{0 a b}=\half(r-2 m)\slg_{a b}, & \Gamma_{1 a b}=-\half(r-2 m)\slg_{a b}, \\
    \Gamma^0_{0 0}=m r^{-2}, & \Gamma^c_{0 b}=\half r^{-1}(1-\tfrac{2 m}{r})\delta_b^c, & \Gamma^1_{1 1}=-m r^{-2}, \\
    \Gamma^c_{1 b}=-\half r^{-1}(1-\tfrac{2 m}{r})\delta_b^c, & \Gamma^0_{a b}=-r\slg_{a b}, & \Gamma^1_{a b}=r\slg_{a b}.
  \end{array}
\]
The only nonzero components of the Riemann curvature tensor (up to reordering the last two indices) are $R^a{}_{b c d}=2 m r^{-1}(\delta^a_c\slg_{b d}-\delta^a_d\slg_{b c})$ and
\[
  \setarraystretch
  \begin{array}{lll}
    R^0{}_{0 0 1}=-m r^{-3}(1-\tfrac{2 m}{r}), & R^0{}_{b 0 d}=-m r^{-1}\slg_{b d}, & R^1{}_{1 0 1}=m r^{-3}(1-\tfrac{2 m}{r}), \\
    R^1{}_{b 1 d}=-m r^{-1}\slg_{b d}, & R^a{}_{0 1 d}=-\half m r^{-3}(1-\tfrac{2 m}{r})\delta_d^a, & R^a{}_{1 0 d}=-\half m r^{-3}(1-\tfrac{2 m}{r})\delta_d^a.
  \end{array}
\]
With respect to the rescaled bundle splittings \eqref{EqCptASpl} and \eqref{EqCptASpl2}, we have
\[
  g_m^S=(0,\half(1-\tfrac{2 m}{r}),0,0,0,-\slg)^T,\ \ \tr_{g_m^S}=(0,\tfrac{4 r}{r-2 m},0,0,0,-\sltr),
\]
further
\begin{equation}
\label{EqCoSchwDelta}
\def\arraystretch{1.1}
  G_{g_m^S}=
  \openbigpmatrix{2pt}
    1 & 0 & 0 & 0 & 0 & 0 \\
    0 & 0 & 0 & 0 & 0 & (\tfrac14-\tfrac{m}{2 r})\sltr \\
    0 & 0 & 1 & 0 & 0 & 0 \\
    0 & 0 & 0 & 1 & 0 & 0 \\
    0 & 0 & 0 & 0 & 1 & 0 \\
    0 & \tfrac{2 r\slg}{r-2 m} & 0 & 0 & 0 & G_\slg
  \closebigpmatrix\!,
  \ 
  \delta_{g_m^S}^*=
  \openbigpmatrix{2pt}
    \pa_0-\tfrac{m}{r^2} & 0 & 0 \\
    \half\pa_1 & \half\pa_0 & 0 \\
    \half r^{-1}\sld & 0 & \half\pa_0 - r^{-1}(\tfrac14-\tfrac{m}{2 r}) \\
    0 & \pa_1+\tfrac{m}{r^2} & 0 \\
    0 & \half r^{-1}\sld & \half\pa_1 + r^{-1}(\tfrac14-\tfrac{m}{2 r}) \\
    r^{-1}\slg & -r^{-1}\slg & r^{-1}\sldelta^*
  \closebigpmatrix.
\end{equation}

We also record $dt=(\half,\half,0)^T$, $\nabla^{g_m^S} t=\tfrac{r}{r-2 m}(1,1,0)$, and, paralleling the definition of $\tdel^*$ from \eqref{EqEinTdel}, we have, near $S_+$,
\begin{equation}
\label{EqCoSchwTdeldiff}
\begin{split}
  &-2\gamma\frac{d(t^{-1})}{t^{-1}}\otimes_s(\cdot) + \gamma(\iota_{t\nabla^{g_m}(t^{-1})}(\cdot))g_m \\
  &\qquad = 
    \gamma t^{-1}
    \begin{pmatrix}
      1 & 0 & 0 \\
      \half & \half & 0 \\
      0 & 0 & \half \\
      0 & 1 & 0 \\
      0 & 0 & \half \\
      0 & 0 & 0
    \end{pmatrix}
   -\gamma t^{-1}
    \begin{pmatrix}
      0 & 0 & 0 \\
      \half & \half & 0 \\
      0 & 0 & 0 \\
      0 & 0 & 0 \\
      0 & 0 & 0 \\
      -\tfrac{r}{r-2 m}\slg & -\tfrac{r}{r-2 m}\slg & 0
    \end{pmatrix}.
\end{split}
\end{equation}

\subsection{Perturbations of Schwarzschild near the light cone}
\label{SsCoPert}

We consider a metric $g=g_m+\rho h=g_m^S+r^{-1} h$, with the perturbation $h\in\cX^{\infty;b_0,b_I,b'_I,b_+}$ lying in the function space of Definition~\ref{DefEinF}, and continue using the splittings \eqref{EqCptASplProd} of $(S^2)T^*\R^4$; however, we express the components of $h$ using the rescaled splitting \eqref{EqCptASpl2} as in~\eqref{EqCptASplSphWeight}, since for $h\in\cX^\infty$ all components $h_{\bar\mu\bar\nu}$ lie in the same space $\Hb^{\infty;b_0,-\eps,b_+}$; more precisely, they satisfy \eqref{EqEinFGood}--\eqref{EqEinFRest}. The components $g_{\mu\nu}$ and $g^{\mu\nu}$ were already computed, see~\eqref{EqEinFMetric} and \eqref{EqEinFinverse}. Recall also the observation~\eqref{EqEinFLeading} and the memberships \eqref{EqCptASplNull}. We shall write $b-0$ for weights which can be taken to be $b-\eps$ for any $\eps>0$; any two choices of $\eps$ are equivalent due to the assumption that all components of $h$ have leading terms (possibly with a factor $\log\rho_I$ for $h_{1 1}$) at $\scri^+$. The only part of the analysis that relies on the precise structure of the gauge-fixed Einstein equation is the analysis at $\scri^+$, so in the calculations below, the weight at $\scri^+$ is the most important one. We compute:
\begin{align*}
  \Gamma_{0 0 0} &\in \Hb^{\infty;2+b_0,2+b'_I,2+b_+}, \\
  \Gamma_{1 0 0} &\in \half r^{-2}(m-h_{0 1}) - \half r^{-1}\pa_1 h_{0 0} + \Hb^{\infty;2+b_0,2+b_I,2+b_+}, \\
  \Gamma_{c 0 0} &\in \Hb^{\infty;1+b_0,1+b'_I,1+b_+}, \\
  \Gamma_{0 0 1} &\in \half r^{-1}\pa_1 h_{0 0} + \Hb^{\infty;2+b_0,2+b'_I,2+b_+}, \\
  \Gamma_{1 0 1} &\in \half r^{-1}\pa_0 h_{1 1} - \tfrac14 r^{-2}h_{1 1} + \Hb^{\infty;3+b_0,3-0,3+b_+}, \\
  \Gamma_{c 0 1} &\in \half\pa_1 h_{0\bar c} - \half r^{-1}\pa_c h_{0 1} + \Hb^{\infty;1+b_0,1+b_I,1+b_+}, \\
  \Gamma_{0 0 b} &\in \Hb^{\infty;1+b_0,1+b'_I,1+b_+}, \\
  \Gamma_{1 0 b} &\in \half r^{-1}\pa_b h_{0 1}-\half\pa_1 h_{0\bar b} + \Hb^{\infty;1+b_0,1+b_I,1+b_+}, \\
  \Gamma_{c 0 b} &\in -\half(r-2 m)\slg_{b c} + \tfrac14 h_{\bar b\bar c} + \Hb^{\infty;b_0,b_I,b_+}, \\
  \Gamma_{0 1 1} &\in \half r^{-2}(h_{0 1}-m) + r^{-1}\pa_1 h_{0 1} - \half r^{-1}\pa_0 h_{1 1} + \tfrac14 r^{-2}h_{1 1} + \Hb^{\infty;3-0,3-0,3+b_+}, \\
  \Gamma_{1 1 1} &\in \half r^{-1}\pa_1 h_{1 1} + \tfrac14 r^{-2}h_{1 1} + \Hb^{\infty;3+b_0,3-0,3+b_+}, \\
  \Gamma_{c 1 1} &= \pa_1 h_{1\bar c} - \half r^{-1}\pa_c h_{1 1}, \\
  \Gamma_{0 1 b} &\in \half\pa_1 h_{0\bar b}+\half r^{-1}\pa_b h_{0 1} + \Hb^{\infty;1+b_0,1+b_I,1+b_+}, \\
  \Gamma_{1 1 b} &= \half r^{-1}\pa_b h_{1 1}, \\
  \Gamma_{c 1 b} &\in \half(r-2 m)\slg_{b c}+\half r\pa_1 h_{\bar b\bar c} - \tfrac14 h_{\bar b\bar c} + \half(\pa_b h_{1\bar c}-\pa_c h_{1\bar b}) + \Hb^{\infty;1+b_0,1-0,1+b_+}, \\
  \Gamma_{0 a b} &\in \half(r-2 m)\slg_{a b}-\tfrac14 h_{\bar a\bar b} + \Hb^{\infty;b_0,b_I,b_+}, \\
  \Gamma_{1 a b} &\in -\half(r-2 m)\slg_{a b}-\half r\pa_1 h_{\bar a\bar b} + \half(\pa_a h_{1\bar b}+\pa_b h_{1\bar a}) + \tfrac14 h_{\bar a\bar b} + \Hb^{\infty;1+b_0,1-0,1+b_+}, \\
  \Gamma_{c a b} &= -r^2\slGamma_{c a b} + \half r(\pa_a h_{\bar b\bar c}+\pa_b h_{\bar a\bar c}-\pa_c h_{\bar a\bar b}).
\end{align*}
The Christoffel symbols of the second kind are therefore
\begin{align}
\label{EqCoPertGamma}
  \Gamma^0_{0 0} &\in r^{-2}(m-h_{0 1}) - r^{-1}\pa_1 h_{0 0} + \Hb^{\infty;2+b_0,2+b_I,2+b_+}, \\
  \Gamma^1_{0 0} &\in \Hb^{\infty;2+b_0,2+b'_I,2+b_+}, \nonumber\\
  \Gamma^c_{0 0} &\in \Hb^{\infty;3+b_0,3+b'_I,3+b_+}, \nonumber\\
  \Gamma^0_{0 1} &\in r^{-1}\pa_0 h_{1 1}-\half r^{-2}h_{1 1}+\Hb^{\infty;3+b_0,2+b'_I-0,3+2 b_+}, \nonumber\\
  \Gamma^1_{0 1} &\in r^{-1}\pa_1 h_{0 0} + \Hb^{\infty;2+b_0,2+b'_I,2+b_+}, \nonumber\\
  \Gamma^c_{0 1} &\in -\half r^{-2}\pa_1 h_0{}^{\bar c}+\half r^{-3}\slnabla^c h_{0 1} + \Hb^{\infty;3+b_0,3+b_I,3+b_+}, \nonumber\\
  \Gamma^0_{0 b} &\in -\pa_1 h_{0\bar b}+r^{-1}\pa_b h_{0 1}-r^{-1}h_{1\bar b} + \Hb^{\infty;1+b_0,1+b_I,1+b_+}, \nonumber\\
  \Gamma^1_{0 b} &\in \Hb^{\infty;1+b_0,1+b'_I,1+b_+}, \nonumber\\
  \Gamma^c_{0 b} &\in \half r^{-1}(1-\tfrac{2 m}{r})\delta_b^c + \tfrac14 r^{-2}h_{\bar b}{}^{\bar c} + \Hb^{\infty;2+b_0,2+b_I,2+b_+}, \nonumber\\
  \Gamma^0_{1 1} &\in r^{-1}\pa_1 h_{1 1}+\half r^{-2}h_{1 1} + 2 r^{-2}(m-h_{0 1})\pa_1 h_{1 1} \nonumber\\
    &\qquad - 4 r^{-2}h_{1 1}\pa_1 h_{0 1} + 2 r^{-2}h_1{}^{\bar d}\pa_1 h_{1\bar d} + \Hb^{\infty;3+b_0,3-0,3+2 b_+}, \nonumber\\
  \Gamma^1_{1 1} &\in r^{-2}(h_{0 1}-m)+2 r^{-1}\pa_1 h_{0 1}-r^{-1}\pa_0 h_{1 1} \nonumber\\
    &\qquad +\half r^{-2}h_{1 1}+4 r^{-2}(m-h_{0 1})\pa_1 h_{0 1} + \Hb^{\infty;3-0,2+b'_I-0,3+2 b_+}, \nonumber\\
  \Gamma^c_{1 1} &\in -r^{-2}\pa_1 h_1{}^{\bar c}+\half r^{-3}\slnabla^c h_{1 1}+2 r^{-3}h_1{}^{\bar c}\pa_1 h_{0 1} \nonumber\\
    &\qquad -r^{-3}h^{\bar c\bar d}\pa_1 h_{1\bar d} + \Hb^{\infty;4+b_0,3+b'_I-0,4+2 b_+}, \nonumber\\
  \Gamma^0_{1 b} &\in r^{-1}\pa_b h_{1 1}+r^{-1}h_{1\bar b}+r^{-1}h_1{}^{\bar d}\pa_1 h_{\bar b\bar d} + \Hb^{\infty;2+b_0,1+b'_I-0,2+2 b_+}, \nonumber\\
  \Gamma^1_{1 b} &\in \pa_1 h_{0\bar b}+r^{-1}\pa_b h_{0 1} + \Hb^{\infty;1+b_0,1+b_I,1+b_+}, \nonumber\\
  \Gamma^c_{1 b} &\in -\half r^{-1}(1-\tfrac{2 m}{r})\delta_b^c-\half r^{-1}\pa_1 h_{\bar b}{}^{\bar c}-\tfrac14 r^{-2}h_{\bar b}{}^{\bar c} \nonumber\\
    &\qquad + \half r^{-2}\slg^{c d}(\pa_ d h_{1\bar b}-\pa_b h_{1\bar d}) - \half r^{-2}h^{\bar c\bar d}\pa_1 h_{\bar b\bar d} + \Hb^{\infty;3+b_0,2+b'_I,3+2 b_+}, \nonumber\\
  \Gamma^0_{a b} &\in (-r+2 h_{0 1}-2 h_{1 1})\slg_{a b}-(r+2 m-2 h_{0 1})\pa_1 h_{\bar a\bar b} \nonumber\\
    &\qquad + (\slnabla_a h_{1\bar b}+\slnabla_b h_{1\bar a})+\half h_{\bar a\bar b} + \Hb^{\infty;1-0,1-0,1+2 b_+}, \nonumber\\
  \Gamma^1_{a b} &\in (r-2 h_{0 1})\slg_{a b} - \half h_{\bar a\bar b} + \Hb^{\infty;b_0,b_I,b_+}, \nonumber\\
  \Gamma^c_{a b} &\in \slGamma^c_{a b} + r^{-1}h_1{}^{\bar c}\slg_{a b} - \half r^{-1}(\slnabla_a h_{\bar b}{}^{\bar c}+\slnabla_b h_{\bar a}{}^{\bar c}-\slnabla^c h_{\bar a\bar b}) + \Hb^{\infty;1+b_0,1+b'_I,1+b_+}. \nonumber
\end{align}
We can then calculate $\Ups(g)^\nu=g^{\kappa\lambda}(\Gamma(g)_{\kappa\lambda}^\nu-\Gamma(g_m)_{\kappa\lambda}^\nu)$, see \eqref{EqEinUps}, to wit
\begin{align}
\label{EqCoPertUpsUpper}
  \Ups(g)^0 &\in r^{-1}\pa_1\sltr h + 2 r^{-2}(h_{1 1}-2 h_{0 1})-2 r^{-2}\slnabla_d h_1{}^{\bar d} \\
    &\qquad + 4 r^{-1}\pa_0 h_{1 1} + r^{-2}h^{\bar d\bar e}\pa_1 h_{\bar d\bar e} + \Hb^{\infty;2+b_0,2+b'_I-0,2+b_+}, \nonumber\\
  \Ups(g)^1 &\in 4 r^{-1}\pa_1 h_{0 0} + 4 r^{-2}h_{0 1} + \Hb^{\infty;2+b_0,2+b_I,2+b_+}, \nonumber\\
  \Ups(g)^c &\in -2 r^{-2}\pa_1 h_0{}^{\bar c} + 2 r^{-3}\slnabla^c h_{0 1}+r^{-3}\slnabla_d h^{\bar c\bar d}-2 r^{-3}h_1{}^{\bar c} + \Hb^{\infty;3+b_0,3+b_I,3+b_+}, \nonumber
\end{align}
and therefore
\begin{align}
\label{EqCoPertUpsLower}
  \Ups(g)_0 &\in 2 r^{-1}\pa_1 h_{0 0}+2 r^{-2}h_{0 1}+\Hb^{\infty;2+b_0,2+b_I,2+b_+}, \\
  \Ups(g)_1 &\in \half r^{-1}\pa_1\sltr h+r^{-2}(h_{1 1}-2 h_{0 1})-r^{-2}\slnabla_d h_1{}^{\bar d} \nonumber\\
    &\qquad +2 r^{-1}\pa_0 h_{1 1}+\half r^{-2}h^{\bar d\bar e}\pa_1 h_{\bar d\bar e} + \Hb^{\infty;2+b_0,2+b'_I-0,2+b_+}, \nonumber\\
  \Ups(g)_c &\in 2\pa_1 h_{0\bar c}-2 r^{-1}\pa_c h_{0 1}-r^{-1}\slnabla^d h_{\bar c\bar d}+2 r^{-1}h_{1\bar c} + \Hb^{\infty;1+b_0,1+b_I,1+b_+}. \nonumber
\end{align}
Using \eqref{EqCoSchwDelta}, this gives
\begin{align}
\label{EqCoPertDelUps}
  (\delta_{g_m}^*\Ups(g))_{0 0} &\in \Hb^{\infty;3+b_0,2+b'_I,3+b_+}, \\
  (\delta_{g_m}^*\Ups(g))_{0 1} &\in r^{-1}\pa_1\pa_1 h_{0 0}+r^{-2}\pa_1 h_{0 1}+\Hb^{\infty;3+b_0,2+b'_I,3+b_+}, \nonumber\\
  (\delta_{g_m}^*\Ups(g))_{0 \bar b} &\in \Hb^{\infty;3+b_0,2+b'_I,3+b_+}, \nonumber\\
  (\delta_{g_m}^*\Ups(g))_{1 1} &\in \half r^{-1}\pa_1\pa_1\sltr h + 2 r^{-1}\pa_1\pa_0 h_{1 1} - r^{-2}\pa_1\slnabla_d h_1{}^{\bar d} + \half r^{-2}h^{\bar d\bar e}\pa_1\pa_1 h_{\bar d\bar e} \nonumber\\
    &\qquad + r^{-2}(\pa_1 h_{1 1}-2\pa_1 h_{0 1}) + \half r^{-2}\pa_1 h^{\bar d\bar e}\pa_1 h_{\bar d\bar e} + \Hb^{\infty;3+b_0,2+b'_I-0,3+b_+}, \nonumber\\
  (\delta_{g_m}^*\Ups(g))_{1 \bar b} &\in r^{-1}\pa_1\pa_1 h_{0\bar b}-r^{-2}\pa_b\pa_1 h_{0 1}-\half r^{-2}\slnabla^d\pa_1 h_{\bar b\bar d} + r^{-2}\pa_1 h_{1\bar b} + \Hb^{\infty;3+b_0,2+b'_I,3+b_+}, \nonumber\\
  (\delta_{g_m}^*\Ups(g))_{\bar a \bar b} &\in \Hb^{\infty;3+b_0,2+b'_I,3+b_+}. \nonumber
\end{align}

Next, we calculate the curvature components; as explained in \S\ref{SIt}, we shall need to know the components $\Ric_{\bar\mu\bar\nu}$ modulo terms decaying faster than $\rho_0^{3+b_0}$, $\rho_I^{2+b_I}$, and $\rho_+^{3+b_+}$ at $I^0$, $\scri^+$, and $I^+$, respectively, in order to control each step in our iteration scheme. At $I^0$, the leading contribution to the curvature components will come from the Schwarzschild part of $g$; cf.\ the calculations in \S\ref{SsCoSchw}. Thus, we compute
\begin{align}
\label{EqCoPertRiem}
  R^0{}_{0 0 1} &\in -m r^{-3} + r^{-1}\pa_1\pa_1 h_{0 0}+r^{-2}\pa_1 h_{0 1} + \Hb^{\infty;3+b_0,2+b_I,3+b_+}, \\
  R^0{}_{0 0 d} &\in \Hb^{\infty;2+b_0,1+b'_I,2+b_+}, \nonumber\\
  R^0{}_{0 1 d} &\in -\pa_1\pa_1 h_{0\bar d}+r^{-1}\pa_d\pa_1 h_{0 1}-r^{-1}\pa_1 h_{1\bar d} + \Hb^{\infty;2+b_0,1+b_I,2+b_+}, \nonumber\\
  R^0{}_{0 c d} &\in \Hb^{\infty;1+b_0,b'_I,1+b_+}, \nonumber\\
  R^0{}_{1 0 1} &\in \Hb^{\infty;3+b_0,2+b_I,3+b_+}, \nonumber\\
  R^0{}_{1 0 d} &\in \Hb^{\infty;2+b_0,1+b'_I,2+b_+}, \nonumber\\
  R^0{}_{1 1 d} &\in r^{-1}h_1{}^{\bar e}\pa_1\pa_1 h_{\bar d\bar e} + \Hb^{\infty;2+b_0,1+b'_I,2+b_+}, \nonumber\\
  R^0{}_{1 c d} &\in \Hb^{\infty;1+b_0,1-0,1+b_+}, \nonumber\\
  R^0{}_{b 0 1} &\in \pa_1\pa_1 h_{0\bar b}-r^{-1}\pa_b\pa_1 h_{0 1}+r^{-1}\pa_1 h_{1\bar b} + \Hb^{\infty;2+b_0,1+b_I,2+b_+}, \nonumber\\
  R^0{}_{b 0 d} &\in -m r^{-1}\slg_{b d} + \Hb^{\infty;1+b_0,b'_I,1+b_+}, \nonumber\\
  R^0{}_{b 1 d} &\in -(r+2 m-2 h_{0 1})\pa_1\pa_1 h_{\bar b\bar d} + (2\pa_1 h_{0 1}-\pa_1 h_{1 1})\slg_{b d} \nonumber\\
    &\qquad + \pa_1(\pa_b h_{1\bar d}+\pa_d h_{1\bar b}) + 2\pa_1 h_{0 1}\pa_1 h_{\bar b\bar d} - \half\pa_1 h_{\bar b\bar e}\pa_1 h_{\bar d}{}^{\bar e} + \Hb^{\infty;1+b_0,1-0,1+b_+}, \nonumber\\
  R^0{}_{b c d} &\in r\pa_1(\slnabla_d h_{\bar b\bar c}-\slnabla_c h_{\bar b\bar d}) + \Hb^{\infty;b_0,-1+b'_I,b_+}, \nonumber\\
  R^1{}_{0 0 1} &\in \Hb^{\infty;3+b_0,2+b'_I,3+b_+}, \nonumber\\
  R^1{}_{0 0 d} &\in \Hb^{\infty;2+b_0,2+b'_I,2+b_+}, \nonumber\\
  R^1{}_{0 1 d} &\in \Hb^{\infty;2+b_0,1+b'_I,2+b_+}, \nonumber\\
  R^1{}_{0 c d} &\in \Hb^{\infty;1+b_0,1+b'_I,1+b_+}, \nonumber\\
  R^1{}_{1 0 1} &\in m r^{-3}-r^{-1}\pa_1\pa_1 h_{0 0}-r^{-2}\pa_1 h_{0 1} + \Hb^{\infty;3+b_0,2+b'_I,3+b_+}, \nonumber\\
  R^1{}_{1 0 d} &\in \Hb^{\infty;2+b_0,1+b'_I,2+b_+}, \nonumber\\
  R^1{}_{1 1 d} &\in \pa_1\pa_1 h_{0\bar d}-r^{-1}\pa_1\pa_d h_{0 1}+r^{-1}\pa_1 h_{1\bar d} + \Hb^{\infty;2+b_0,1+b_I,2+b_+}, \nonumber\\
  R^1{}_{1 c d} &\in \Hb^{\infty;1+b_0,b'_I,1+b_+}, \nonumber\\
  R^1{}_{b 0 1} &\in \Hb^{\infty;2+b_0,1+b'_I,2+b_+}, \nonumber\\
  R^1{}_{b 0 d} &\in \Hb^{\infty;1+b_0,b'_I,1+b_+}, \nonumber\\
  R^1{}_{b 1 d} &\in -m r^{-1}\slg_{b d} + \Hb^{\infty;1+b_0,b'_I,1+b_+}, \nonumber\\
  R^1{}_{b c d} &\in \Hb^{\infty;b_0,-1+b'_I,b_+}, \nonumber\\
  R^a{}_{0 0 1} &\in \Hb^{\infty;4+b_0,3+b'_I,4+b_+}, \nonumber\\
  R^a{}_{0 0 d} &\in \Hb^{\infty;3+b_0,2+b'_I,3+b_+}, \nonumber\\
  R^a{}_{0 1 d} &\in -\half m r^{-3}\delta_d^a + \Hb^{\infty;3+b_0,2+b'_I,3+b_+}, \nonumber\\
  R^a{}_{0 c d} &\in \Hb^{\infty;2+b_0,1+b'_I,2+b_+}, \nonumber\\
  R^a{}_{1 0 1} &\in \half r^{-2}\pa_1\pa_1 h_0{}^{\bar a}-\half r^{-3}\slnabla^a\pa_1 h_{0 1} + \half r^{-3}\pa_1 h_1{}^{\bar a} + \Hb^{\infty;4+b_0,3+b_I,4+b_+}, \nonumber\\
  R^a{}_{1 0 d} &\in -\half m r^{-3}\delta_d^a + \Hb^{\infty;3+b_0,2+b_I,3+b_+}, \nonumber\\
  R^a{}_{1 1 d} &\in -\half r^{-1}\pa_1\pa_1 h_{\bar d}{}^{\bar a} + \half r^{-2}\pa_1(\slnabla^a h_{1\bar d}+\slnabla_d h_1{}^{\bar a}) - \half r^{-2}h^{\bar a\bar e}\pa_1\pa_1 h_{\bar d\bar e} \nonumber\\
    &\qquad +r^{-2}(\pa_1 h_{0 1}-\half\pa_1 h_{1 1})\delta_d^a + r^{-2}\pa_1 h_{0 1}\pa_1 h_{\bar d}{}^{\bar a} \nonumber\\
    &\qquad -\tfrac14 r^{-2}\pa_1 h^{\bar a\bar e}\pa_1 h_{\bar d\bar e} + \Hb^{\infty;3+b_0,2+b'_I,3+b_+}, \nonumber\\
  R^a{}_{1 c d} &\in \half r^{-1}\pa_1(\slnabla_d h_{\bar c}{}^{\bar a}-\slnabla_c h_{\bar d}{}^{\bar a}) + \Hb^{\infty;2+b_0,1+b'_I,2+b_+}, \nonumber\\
  R^a{}_{b 0 1} &\in \Hb^{\infty;3+b_0,2+b_I,3+b_+}, \nonumber\\
  R^a{}_{b 0 d} &\in \Hb^{\infty;2+b_0,1+b'_I,2+b_+}, \nonumber\\
  R^a{}_{b 1 d} &\in \half r^{-1}\pa_1(\slnabla^a h_{\bar b\bar d}-\slnabla_b h_{\bar d}{}^{\bar a}) + \Hb^{\infty;2+b_0,1+b'_I,2+b_+}, \nonumber\\
  R^a{}_{b c d} &\in 2 m r^{-1}(\delta_k^a\slg_{b d}-\delta_d^a\slg_{b c}) \nonumber\\
    &\qquad + \half(\pa_1 h_{\bar b\bar c}\delta_d^a-\pa_1 h_{\bar b\bar d}\delta_k^a + \pa_1 h_{\bar d}{}^{\bar a}\slg_{b c}-\pa_1 h_{\bar c}{}^{\bar a}\slg_{b d}) + \Hb^{\infty;1+b_0,1-0,1+b_+}, \nonumber
\end{align}
and the Ricci tensor
\begin{align}
  \Ric(g)_{0 0} &\in \Hb^{\infty;3+b_0,2+b'_I,3+b_+}, \nonumber\\
  \Ric(g)_{0 1} &\in r^{-1}\pa_1\pa_1 h_{0 0}+r^{-2}\pa_1 h_{0 1} + \Hb^{\infty;3+b_0,2+b_I,3+b_+}, \nonumber\\
  \Ric(g)_{0\bar b} &\in \Hb^{\infty;3+b_0,2+b'_I,3+b_+}, \nonumber\\
\label{EqCoPertRic}
  \Ric(g)_{1 1} &\in \half r^{-1}\pa_1\pa_1\sltr h -r^{-2}\pa_1\slnabla^d h_{1\bar d}+\half r^{-2}h^{\bar d\bar e}\pa_1\pa_1 h_{\bar d\bar e} \\
    &\qquad + r^{-2}(\pa_1 h_{1 1}-2\pa_1 h_{0 1}) + \tfrac14 r^{-2}\pa_1 h^{\bar d\bar e}\pa_1 h_{\bar d\bar e} + \Hb^{\infty;3+b_0,2+b_I,3+b_+}, \nonumber\\
  \Ric(g)_{1\bar b} &\in r^{-1}\pa_1\pa_1 h_{0\bar b}-r^{-2}\pa_1\pa_b h_{0 1} - \half r^{-2}\pa_1\slnabla_d h_{\bar b}{}^{\bar d} + r^{-2}\pa_1 h_{1\bar b} + \Hb^{\infty;3+b_0,2+b_I,3+b_+}, \nonumber\\
  \Ric(g)_{\bar a\bar b} &\in \Hb^{\infty;3+b_0,2+b'_I,3+b_+}. \nonumber
\end{align}

\subsection{Perturbations of Minkowski space near the temporal face}
\label{SsCoMink}

We work on $\R^4=\R_t\times\R^3_x$, equipped with the Minkowski metric $\ul g=dt^2-dx^2$, and consider the linearization of $\ul P_0(g):=\Ric(g)-\ul\tdel^*\ul\Ups(g)$,
\[
  (\ul\tdel^*-\delta_{\ul g}^*)u := 2\gamma t^{-1}\,dt\otimes_s u - \gamma t^{-1}(\iota_{\nabla^{\ul g}t}u)\ul g,\ \ 
  \ul\Ups(g) = g \ul g^{-1}\delta_g G_g \ul g,
\]
around $g=\ul g$; concretely, let $\ul L:=\ul\rho^{-3}D_{\ul g}\ul P_0\ul\rho$, where $\ul\rho:=t^{-1}$ is a boundary defining function of ${}^0\ol{\R^4}$ in $t>\eps r$, $\eps>0$. We have
\[
  \ul L = t^3\bigl(\half\Box_{\ul g} + (\ul\tdel^*-\delta_{\ul g}^*)\delta_{\ul g} G_{\ul g}\bigr)t^{-1}.
\]
Splitting
\begin{equation}
\label{EqCoMinkBundle}
  T^*\R^4 = \la dt\ra \oplus T^*\R^3,\ \ 
  S^2 T^*\R^4 = \la dt^2\ra \oplus (2 dt\otimes_s T^*\R^3) \oplus S^2 T^*\R^3,
\end{equation}
and writing $e=dx^2$ for the Euclidean metric on $\R^3$, we have $\ul g=(1,0,-e)^T$, $\tr_{\ul g}=(1,0,-\tr_e)$,
\[
  \ul\tdel^*-\delta_{\ul g}^*=t^{-1}
  \begin{pmatrix}
    \gamma & 0 \\
    0 & \gamma \\
    \gamma e & 0
  \end{pmatrix},\ \ 
  G_{\ul g}=
  \begin{pmatrix}
    \half & 0 & \half\tr_e \\
    0 & 1 & 0 \\
    \half e & 0 & 1-\half e\tr_e
  \end{pmatrix},\ \ 
  \delta_{\ul g}=
  \begin{pmatrix}
    -\pa_t & -\delta_e & 0 \\
    0 & -\pa_t & -\delta_e
  \end{pmatrix}.
\]
Moreover, $\Box_{\ul g}$ is diagonal using the standard trivialization of $T^*\R^3$, and the scalar wave operator is $t^3\Box_{\ul g} t^{-1}=-(t\pa_t-\tfrac32)^2-t^2\Delta_x+\tfrac14$, hence
\begin{align*}
  \ul L &= \half\bigl(-(t\pa_t-\tfrac32)^2-t^2\Delta_x+\tfrac14\bigr) \\
   &\qquad -\begin{pmatrix}
     \half\gamma(t\pa_t-1) & \gamma t\delta_e & \half\gamma(t\pa_t-1)\tr_e \\
     -\half\gamma t d_x & \gamma(t\pa_t-1) & \gamma t(\delta_e+\half d_x\tr_e) \\
     \half\gamma(t\pa_t-1)e & \gamma t e\delta_e & \half\gamma(t\pa_t-1)e\tr_e
   \end{pmatrix}.
\end{align*}

\section{Proofs of Lemmas~\ref{LemmaEinNScalarBox} and \ref{LemmaEinNscri}}
\label{SAppEinN}

We perform the necessary calculations using the results in \S\ref{SsCoPert}. 

\begin{proof}[Proof of Lemma~\usref{LemmaEinNScalarBox}]
  We use the invariance properties of the conformal wave operator (i.e.\ the conformal Laplacian in Lorentzian signature),
  \[
    A:=\rho^{-3}(\Box_g-\tfrac16 R_g)\rho = \Box_{g_\bop}-\tfrac16 R_{g_\bop},\quad g_\bop=\rho^2 g.
  \]
  Here, the scalar curvature satisfies $\rho^{-2}R_g\in\Hb^{\infty;1+b_0,-1+b'_I,1+b_+}$; indeed, in view of~\eqref{EqCoPertRic}, and using in addition the memberships~\eqref{EqCptASplNull} of the operators $\pa_0,\pa_1$ (and spherical vector fields, which lie in $\Vb(M)$) as well as the memberships of the metric coefficients of $h$ as encoded in Definition~\ref{DefEinF}, one concludes that $\rho^{-2}\Ric(g)\in\Hb^{\infty;1+b_0,-1+b'_I,1+b_+}$; since the metric coefficients of $g^{-1}$ are bounded and conormal, the rescaled scalar curvature $\rho^{-2}R_g=\tr_g(\rho^{-2}\Ric(g))$ lies in the same space.
  
  We next write the wave operator as
  \[
    \Box_g u = -r^{-s(\mu,\nu)}g^{\bar\mu\bar\nu}\pa_\mu\pa_\nu u + r^{-s(\kappa)}g^{\bar\mu\bar\nu}\Gamma_{\bar\mu\bar\nu}^{\bar\kappa}\pa_\kappa.
  \]
  In the first term, when $\mu=0$, the terms with $\nu\neq 1$ contribute $\Hb^{\infty;3+b_0,3-0,3+b_+}\Diffb^2$, while $\nu=1$ gives $-4\pa_0\pa_1+(\rho^2+\Hb^{\infty;3+b_0,2-0,3+b_+})\Diffb^2$. For $\mu=1$, $\nu=1$ produces a term in $\Hb^{\infty;3+b_0,1+b'_I,3+b_+}\cM^2$ due to the decay of $h_{0 0}$ at $\scri^+$, while $\nu$ spherical gives an element of $\Hb^{\infty;3+b_0,2-0,3+b_+}\Diffb^2$. Lastly, $\mu$ and $\nu$ both spherical give $(\rho^2\CI+\Hb^{\infty;3+b_0,3-0,3+b_+})\Diffb^2$. For the second summand, recall \eqref{EqEinNscriContractedGamma}, while for $\kappa\neq 1$, $g^{\bar\mu\bar\nu}\Gamma_{\bar\mu\bar\nu}^{\bar\kappa}\in\rho\CI+\Hb^{\infty;2+b_0,1-0,2+b_+}$ by~\eqref{EqEinNscriGamma}. Thus, $\Box_g\equiv -4\pa_0\pa_1-2 r^{-1}\pa_1$ modulo a term lying in $\rho^2$ times the error space in \eqref{EqEinNScalarBox}. Since $\Box_{g_\bop}=A-A(1)$, the claim follows.
\end{proof}

\begin{proof}[Proof of Lemma~\usref{LemmaEinNscri}]
  We consider each of the terms in \eqref{EqEinNOp} separately. The contribution from
  \[
    (\rho^{-3}\sR_g\rho u)_{\bar\mu\bar\nu} = \rho^{-2}(R_g)^{\bar\kappa}{}_{\bar\mu\bar\nu\bar\lambda}g^{\bar\lambda\bar\sigma}u_{\bar\kappa\bar\sigma} + \half \rho^{-2}g^{\bar\lambda\bar\sigma}(\Ric(g)_{\bar\mu\bar\lambda}u_{\bar\sigma\bar\nu} + \Ric(g)_{\bar\nu\bar\lambda}u_{\bar\sigma\bar\mu})
  \]
  to terms of size at least $\rho_I^{-1}$ at $\scri^+$ comes from those components of $R_g$ and $\Ric(g)$ of size at least $\rho_I$. The only such components of $R_g$ are
  \[
    R^0{}_{\bar b 1\bar d} \in -r^{-1}\pa_1\pa_1 h_{\bar b\bar d} + \Hb^{\infty;3+b_0,2-0,3+b_+},\ \ 
    R^{\bar a}{}_{1 1\bar d} \in -\half r^{-1}\pa_1\pa_1 h_{\bar d}{}^{\bar a} + \Hb^{\infty;3+b_0,2-0,3+b_+},
  \]
  while all other components lie in $\Hb^{\infty;3-0,1+b'_I,3+b_+}$; the decay order at $I^0$ is due to the contributions from the asymptotic Schwarzschild metric, as e.g.\ in $R^0{}_{0 0 1}$. On the other hand, \eqref{EqCoPertRic} shows that $\Ric(g)\in\Hb^{\infty;3+b_0,1+b'_I,3+b_+}$. Using the form \eqref{EqEinFinverse} of $g^{-1}$, this gives
  \begin{equation}
  \label{EqEinNscriRg}
  \begin{split}
    \rho^{-2}\sR_g &\in
    \openbigpmatrix{1pt}
      0 & 0 & 0 & 0 & 0 & 0 & 0 \\
      0 & 0 & 0 & 0 & 0 & 0 & 0 \\
      0 & 0 & 0 & 0 & 0 & 0 & 0 \\
      0 & 0 & 0 & 0 & 0 & 0 & \half\rho^{-1}\pa_1\pa_1 h^{\bar a\bar b} \\
      0 & 0 & \rho^{-1}\pa_1\pa_1 h^{\bar a}{}_{\bar b} & 0 & 0 & 0 & 0 \\
      0 & 0 & 0 & 0 & 0 & 0 & 0 \\
      2\rho^{-1}\pa_1\pa_1 h_{\bar a\bar b} & 0 & 0 & 0 & 0 & 0 & 0
    \closebigpmatrix \\
    &\qquad+ \rho\,\CI
    + \Hb^{\infty;1+b_0,-1+b'_I,1+b_+}.
  \end{split}
  \end{equation}

  Next, we have $(\sY_g u)_{\bar\kappa}=\Ups(g)^{\bar\lambda}u_{\bar\kappa\bar\lambda}$, with $\Ups(g)^{\bar\lambda}\in\Hb^{\infty;2+b_0,1+b'_I,2+b_+}$ by \eqref{EqCoPertUpsUpper}. Now, equation~\eqref{EqEinTdel} implies
  \begin{equation}
  \label{EqEinNscriTdeldiff}
    \tdel^*-\delta_{g_m}^* \in \rho\,\CI(M;\Hom(\beta^*\,\Tsc^*\ol{\R^4},\beta^*S^2)),
  \end{equation}
  so the expression for $\tdel^*$ obtained from \eqref{EqCoSchwDelta} and the inclusions \eqref{EqCptScriNull} show that
  \begin{equation}
  \label{EqEinNscriTdel}
    \tdel^*\in \rho_0\rho_+\cM_{\beta^*\,\Tsc^*\ol{\R^4},\beta^*S^2} + \rho\,\Diffb^1(M;\beta^*\,\Tsc^*\ol{\R^4},\beta^*S^2),
  \end{equation}
  and therefore
  \begin{equation}
  \label{EqEinNscrisY}
    \rho^{-3}\tdel^*\sY_g\rho \in \Hb^{\infty;1+b_0,-1+b'_I,1+b_+}\cM + \Hb^{\infty;1+b_0,b'_I,1+b_+}\Diffb^1.
  \end{equation}

  Next, the only parts of $\sC_g$ which will contribute leading terms to~\eqref{EqEinNOp} come from those components $C^{\bar\lambda}_{\bar\mu\bar\nu}$ which are of size at least $\rho_I$ at $\scri^+$; these are, modulo $\Hb^{\infty;2+b_0,2-0,2+b_+}$,
  \begin{equation}
  \label{EqEinNscriCTensor}
  \begin{gathered}
    C^0_{1 1} \equiv r^{-1}\pa_1 h_{1 1},\quad
    C^1_{1 1} \equiv 2 r^{-1}\pa_1 h_{0 1},\quad
    C^{\bar c}_{1 1} \equiv -r^{-1}\pa_1 h_1{}^{\bar c}, \\
    C^{\bar c}_{1\bar b} \equiv -\half r^{-1}\pa_1 h_{\bar b}{}^{\bar c}, \quad
    C^0_{\bar a\bar b} \equiv -r^{-1}\pa_1 h_{\bar a\bar b},
  \end{gathered}
  \end{equation}
  while all other components of $C^{\bar\lambda}_{\bar\mu\bar\nu}$ lie in $\Hb^{\infty;2+b_0,1+b'_I,2+b_+}$. Therefore, writing sections of $\beta^*\,\Tsc^*\ol{\R^4}$ in terms of the splitting \eqref{EqCptASpl}, we have
  \[
    \sC_g \in
      \begin{pmatrix}
        4 r^{-1}\pa_1 h_{0 1} & 0 & 0 & 0 & 0 & 0 & 0 \\
        2 r^{-1}\pa_1 h_{1 1} & 0 & 0 & 0 & 0 & 0 & -\half r^{-1}\pa_1 h^{\bar a\bar b} \\
        4 r^{-1}\pa_1 h_{1\bar b} & 0 & -2 r^{-1}\pa_1 h_{\bar b}{}^{\bar a} & 0 & 0 & 0 & 0
      \end{pmatrix}
      + \Hb^{\infty;2+b_0,1+b'_I,2+b_+},
  \]
  and then \eqref{EqCoSchwDelta}, \eqref{EqEinNscriTdeldiff}, and \eqref{EqEinNscriTdel} give
  \begin{align}
  \label{EqEinNscrisC}
    \rho^{-3}\tdel^*\sC_g\rho &\in
      \rho^{-1}\pa_1\circ
      \begin{pmatrix}
        0 & 0 & 0 & 0 & 0 & 0 & 0 \\
        2\pa_1 h_{0 1} & 0 & 0 & 0 & 0 & 0 & 0 \\
        0 & 0 & 0 & 0 & 0 & 0 & 0 \\
        2\pa_1 h_{1 1} & 0 & 0 & 0 & 0 & 0 & -\half\pa_1 h^{\bar a\bar b} \\
        2\pa_1 h_{1\bar b} & 0 & -\pa_1 h_{\bar b}{}^{\bar a} & 0 & 0 & 0 & 0 \\
        0 & 0 & 0 & 0 & 0 & 0 & 0 \\
        0 & 0 & 0 & 0 & 0 & 0 & 0
      \end{pmatrix} \\
      &\qquad + \Hb^{\infty;1+b_0,-1+b'_I,1+b_+}\cM + \Hb^{\infty;1+b_0,-0,1+b_+}\Diffb^1; \nonumber
  \end{align}
  the only terms of $\tdel^*$ which contribute leading terms to this operator are the $\pa_1$ derivatives in $\delta_{g_m}^*$.

  For the second summand in \eqref{EqEinNOp}, we note that $G_{g_m}\in\CI(\ol{\R^4},\End(S^2\,\Tsc^*\ol{\R^4}))$, while equation~\eqref{EqEinFMetricRough} gives $G_g\in G_{g_m}+\Hb^{\infty;1+b_0,1-0,1+b_+}(\beta^*S^2)$. Further, using the notation~\eqref{EqCptASplSphWeight} and setting $\Gamma^{\bar\kappa}_{\bar\mu\bar\nu}:=r^{s(\kappa)-s(\mu,\nu)}\Gamma^\kappa_{\mu\nu}$, we have
  \begin{equation}
  \label{EqEinNscriDelta}
    (\delta_g u)_{\bar\mu} = -r^{-s(\mu,\nu,\lambda)}g^{\bar\nu\bar\lambda}\pa_\lambda(r^{s(\mu,\nu)}u_{\bar\mu\bar\nu}) + g^{\bar\nu\bar\lambda}(\Gamma^{\bar\kappa}_{\bar\mu\bar\lambda}u_{\bar\kappa\bar\nu} + \Gamma^{\bar\kappa}_{\bar\nu\bar\lambda}u_{\bar\mu\bar\kappa});
  \end{equation}
  now $r^{-s(\lambda)}\pa_\lambda\in\rho\,\Vb(M)$ unless $\lambda=1$, and moreover
  \begin{equation}
  \label{EqEinNscriGamma}
    \Gamma_{\bar\mu\bar\nu}^{\bar\kappa}\in \rho\,\CI + \Hb^{\infty;2+b_0,1-0,2+b_+}
  \end{equation}
  for all indices, and $g^{0 1}-2\in\rho\,\CI+\Hb^{\infty;1+b_0,1-0,1+b_+}$, hence only the terms with $g^{0 1}\pa_1$ survive to leading order:
  \[
    \delta_g \in
    \begin{pmatrix}
      -2\pa_1 & 0 & 0 & 0 & 0 & 0 & 0 \\
      0 & -2\pa_1 & 0 & 0 & 0 & 0 & 0 \\
      0 & 0 & -2\pa_1 & 0 & 0 & 0 & 0
    \end{pmatrix}
    + (\rho\,\CI + \Hb^{\infty;2+b_0,1-0,2+b_+})\Diffb^1.
  \]
  Now $((\delta_g^*-\delta_{g_m}^*)u)_{\bar\mu\bar\nu}=-C^{\bar\kappa}{}_{\bar\mu\bar\nu}u_{\bar\kappa}$ can be calculated using~\eqref{EqEinNscriCTensor}; hence, we can now use the expressions \eqref{EqCoSchwDelta} and \eqref{EqCoSchwTdeldiff} for $G_{g_m}$ and $\tdel^*-\delta_{g_m}^*$ to evaluate $\tdel^*-\delta_g^*=(\tdel^*-\delta_{g_m}^*)-(\delta_g^*-\delta_{g_m}^*)$ and thus obtain
  \begin{equation}
  \label{EqEinNscriCD}
  \begin{split}
    \rho^{-3}(\tdel^*-\delta_g^*)\delta_g G_g\rho &\in
     -\rho^{-1}\pa_1
      \openbigpmatrix{2pt}
        2\gamma & 0 & 0 & 0 & 0 & 0 & 0 \\
        0 & 0 & 0 & 0 & 0 & 0 & 0 \\
        0 & 0 & \gamma & 0 & 0 & 0 & 0 \\
        2\pa_1 h_{1 1} & 0 & -2\pa_1 h_1{}^{\bar a} & 0 & 0 & \gamma+2\pa_1 h_{0 1} & 0 \\
        0 & 0 & \gamma-\pa_1 h_{\bar a}{}^{\bar b} & 0 & 0 & 0 & 0 \\
        2\gamma & 0 & 0 & 0 & 0 & \gamma & 0 \\
        -2\pa_1 h_{\bar a\bar b} & 0 & 0 & 0 & 0 & 0 & 0
      \closebigpmatrix \\
      &\qquad + (\CI+\Hb^{1+b_0,-0,1+b_+})\Diffb^1.
  \end{split}
  \end{equation}
  Finally, we determine the leading terms of
  \begin{equation}
  \label{EqEinNscriBox}
  \begin{split}
    (\Box_g u)_{\bar\mu\bar\nu} &= -r^{-s(\mu,\nu,\kappa,\lambda)}g^{\bar\kappa\bar\lambda}\pa_\lambda(r^{s(\mu,\nu,\kappa)}u_{\bar\mu\bar\nu;\bar\kappa}) \\
      &\qquad + g^{\bar\kappa\bar\lambda}(\Gamma_{\bar\mu\bar\lambda}^{\bar\sigma}u_{\bar\sigma\bar\nu;\bar\kappa}+\Gamma_{\bar\nu\bar\lambda}^{\bar\sigma}u_{\bar\mu\bar\sigma;\bar\kappa}+\Gamma_{\bar\kappa\bar\lambda}^{\bar\sigma}u_{\bar\mu\bar\nu;\bar\sigma}).
  \end{split}
  \end{equation}
  Consider $u_{\bar\mu\bar\nu;\bar\kappa}=r^{-s(\mu,\nu,\kappa)}\pa_\kappa(r^{s(\mu,\nu)}u_{\bar\mu\bar\nu}) - \Gamma_{\bar\mu\bar\kappa}^{\bar\lambda}u_{\bar\lambda\bar\nu} - \Gamma_{\bar\kappa\bar\nu}^{\bar\lambda}u_{\bar\mu\bar\lambda}$. For $\kappa=0$, all Christoffel symbols except those with $\mu,\lambda$ both spherical (second summand) or $\nu,\lambda$ both spherical (third summand) lie in $\rho^2\,\CI+\Hb^{\infty;2+b_0,1+b'_I,2+b_+}$, while $\Gamma^{\bar c}_{0\bar b}\in\half r^{-1}\delta_b^c+\rho^2\,\CI+\Hb^{\infty;2+b_0,2-0,2+b_+}$; the contributions of the latter cancel the leading part of the term coming from differentiating the weight $r^{-s(\mu,\nu)}\pa_0(r^{s(\mu,\nu)})=\half s(\mu,\nu)r^{-1}+r^{-2}\CI$. For $\kappa\neq 0$, we use the rough estimate \eqref{EqEinNscriGamma}, and obtain
  \begin{equation}
  \label{EqEinNscriD}
  \begin{split}
    u_{\bar\mu\bar\nu;0} &\in \pa_0 u_{\bar\mu\bar\nu} + (\rho^2\,\CI+\Hb^{\infty;2+b_0,1+b'_I,2+b_+})u \\
      &\qquad\qquad\qquad \subset (\rho\,\CI+\Hb^{\infty;2+b_0,1+b'_I,2+b_+})\Diffb^1 u, \\
    u_{\bar\mu\bar\nu;1} &\in \pa_1 u_{\bar\mu\bar\nu}+(\rho\,\CI+\Hb^{\infty;2+b_0,1-0,2+b_+})u, \\
    u_{\bar\mu\bar\nu;\bar c} &\in (\rho\,\CI+\Hb^{\infty;2+b_0,1-0,2+b_+})\Diffb^1\,u.
  \end{split}
  \end{equation}
  In the second line of \eqref{EqEinNscriBox} then, the only relevant terms (namely, with coefficients not decaying faster than $\rho_I$) are those with $u$ differentiated along $\pa_1$ and the corresponding prefactor being of size at least $\rho_I$; using
  \begin{equation}
  \label{EqEinNscriContractedGamma}
    g^{\bar\kappa\bar\lambda}\Gamma^1_{\bar\kappa\bar\lambda}\in -2 r^{-1}+\rho^2\,\CI+\Hb^{\infty;2+b_0,1+b'_I,2+b_+},
  \end{equation}
  this leaves us with
  \begin{align*}
    &g^{1 0}\Gamma_{\bar\mu 0}^{\bar\sigma}u_{\bar\sigma\bar\nu;1} + g^{1 0}\Gamma_{\bar\nu 0}^{\bar\sigma}u_{\bar\mu\bar\sigma;1} + g^{\bar\kappa\bar\lambda}\Gamma^1_{\bar\kappa\bar\lambda}u_{\bar\mu\bar\nu;1} + (\rho^2\,\CI+\Hb^{\infty;3+b_0,2-0,3+b_+})\Diffb^1 u \\
    &\quad\subset (s(\mu,\nu)-2)r^{-1}\pa_1 u_{\bar\mu\bar\nu} + \Hb^{3+b_0,1+b'_I,3+b_+}\cM u + (\rho^2\,\CI+\Hb^{\infty;3+b_0,2-0,3+b_+})\Diffb^1 u.
  \end{align*}
  Turning to the first line of \eqref{EqEinNscriBox}, for $\lambda=0$, indices $\kappa\neq 1$ contribute terms of the form $\Hb^{\infty;3+b_0,3-0,3+b_+}\Diffb^2 u$ due to \eqref{EqEinNscriD} and the decay of $g^{\bar\kappa\bar\lambda}$, while $\kappa=1$ gives a term $-2 r^{-s(\mu,\nu)}\pa_0 r^{s(\mu,\nu)}\pa_1 u_{\bar\mu\bar\nu}+(\rho^2\,\CI+\Hb^{\infty;3+b_0,2-0,3+b_+})\Diffb^2 u$. For $\lambda=1$, the term with $\kappa=0$ is equal to $-2\pa_1\pa_0 u_{\bar\mu\bar\nu}+(\rho^2\,\CI+\Hb^{\infty;3+b_0,2-0,3+b_+})\Diffb^2 u$; $\kappa=1$ produces (due to the decay of the long range component $h_{0 0}$)
  \[
    -r^{-s(\mu,\nu)}g^{1 1}\pa_1(r^{s(\mu,\nu)}u_{\bar\mu\bar\nu;1}) \in \Hb^{\infty;3+b_0,1+b'_I,3+b_+}\cM^2 u
  \]
  and spherical $\kappa$ give $\Hb^{\infty;3+b_0,2-0,3+b_+}\Diffb^2 u$. Lastly, if $\lambda$ is a spherical index and $\kappa=0,1$, we get a term in $\Hb^{\infty;3+b_0,2-0,3+b_+}\Diffb^2 u$, while for spherical $\kappa$, we use \eqref{EqEinFMetricRough} to deduce that the nontrivial spherical components of $g^{-1}$ give a term in $(\rho^2\CI+\Hb^{\infty;3+b_0,2-0,3+b_+})\Diffb^2 u$. Putting everything together, and conjugating by weights, we obtain
  \begin{equation}
  \label{EqEinNscriBoxForm}
    \rho^{-3}\Box_g \rho \in -4\rho^{-2}\pa_0\pa_1 + \Hb^{1+b_0,-1+b'_I,1+b_+}\cM^2 + (\CI+\Hb^{\infty;1+b_0,-0,1+b_+})\Diffb^2.
  \end{equation}
  (Note that due to the discussion after \eqref{EqCptScriNull}, the first term here is well-defined modulo $\Diffb^1(M;\beta^*S^2)$.) Together with the expressions \eqref{EqEinNscriRg}, \eqref{EqEinNscrisY}, \eqref{EqEinNscrisC}, and \eqref{EqEinNscriCD}, this proves the lemma.
\end{proof}

\bibliographystyle{alpha}

\begin{thebibliography}{}

\end{thebibliography}


\begin{thebibliography}{GCHMG05}

\bibitem[AC93]{AnderssonChruscielHypPhg}
Lars Andersson and Piotr~T. Chru{\'s}ciel.
\newblock {H}yperboloidal {C}auchy data for vacuum {E}instein equations and
  obstructions to smoothness of null infinity.
\newblock {\em Physical review letters}, 70(19):2829, 1993.

\bibitem[AC96]{AnderssonChruscielHyp}
Lars Andersson and Piotr~T. Chru{\'s}ciel.
\newblock Solutions of the constraint equations in general relativity
  satisfying ``hyperboloidal boundary conditions''.
\newblock {\em Dissertationes Mathematicae (Rozprawy Matematyczne)},
  355:1--100, 1996.

\bibitem[AC05]{AndersonChruscielSimple}
Michael~T. Anderson and Piotr~T. Chru{\'s}ciel.
\newblock {A}symptotically simple solutions of the vacuum {E}instein equations
  in even dimensions.
\newblock {\em Communications in Mathematical Physics}, 260(3):557--577, 2005.

\bibitem[ACF92]{AnderssonChruscielFriedrich}
Lars Andersson, Piotr~T. Chru{\'s}ciel, and Helmut Friedrich.
\newblock {O}n the regularity of solutions to the {Y}amabe equation and the
  existence of smooth hyperboloidal initial data for {E}instein's field
  equations.
\newblock {\em Communications in Mathematical Physics}, 149(3):587--612, 1992.

\bibitem[AL14]{AnLukTrapped}
Xinliang An and Jonathan Luk.
\newblock Trapped surfaces in vacuum arising dynamically from mild incoming
  radiation.
\newblock {\em Preprint, arXiv:1409.6270}, 2014.

\bibitem[Ali03]{AlinhacBlowupAtInfty}
Serge Alinhac.
\newblock An example of blowup at infinity for a quasilinear wave equation.
\newblock {\em Ast{\'e}risque}, 284:1--91, 2003.

\bibitem[BC07]{BeigChruscielElectroVac}
Robert Beig and Piotr~T. Chru{\'s}ciel.
\newblock {T}he asymptotics of stationary electro-vacuum metrics in odd
  spacetime dimensions.
\newblock {\em Classical and Quantum Gravity}, 24(4):867, 2007.

\bibitem[BC16]{BieriChruscielADMBondi}
Lydia Bieri and Piotr~T Chru{\'s}ciel.
\newblock Future-complete null hypersurfaces, interior gluings, and the
  trautman-bondi mass.
\newblock {\em Proceedings of the Harvard Center for Mathematical Sciences and
  Applications, to appear}, 2016.

\bibitem[Bla14]{BlanchetRadiation}
Luc Blanchet.
\newblock {G}ravitational radiation from post-{N}ewtonian sources and
  inspiralling compact binaries.
\newblock {\em Living Reviews in Relativity}, 17(1):2, 2014.

\bibitem[Bon60]{BondiGravity}
Hermann Bondi.
\newblock Gravitational waves in general relativity.
\newblock {\em Nature}, 186(4724):535--535, 1960.

\bibitem[BSB15]{BaskinSaBarretoRad}
Dean Baskin and Ant{\^o}nio S{\'a}~Barreto.
\newblock {R}adiation fields for semilinear wave equations.
\newblock {\em Transactions of the American Mathematical Society},
  367(6):3873--3900, 2015.

\bibitem[BvdBM62]{BondivdBMetznerGravity}
H.~Bondi, M.~G.~J. van~der Burg, and A.~W.~K. Metzner.
\newblock {G}ravitational waves in general relativity. {V}{I}{I}. {W}aves from
  axi-symmetric isolated systems.
\newblock {\em Proceedings of the Royal Society of London. Series A,
  Mathematical and Physical Sciences}, pages 21--52, 1962.

\bibitem[BVW15]{BaskinVasyWunschRadMink}
Dean Baskin, Andr{\'a}s Vasy, and Jared Wunsch.
\newblock Asymptotics of radiation fields in asymptotically {M}inkowski space.
\newblock {\em Amer. J. Math.}, 137(5):1293--1364, 2015.

\bibitem[BVW16]{BaskinVasyWunschRadMink2}
Dean Baskin, Andr{\'a}s Vasy, and Jared Wunsch.
\newblock {A}symptotics of scalar waves on long-range asymptotically
  {M}inkowski spaces.
\newblock {\em Preprint, arXiv:1602.04795}, 2016.

\bibitem[BW14]{BaskinWangRad}
Dean Baskin and Fang Wang.
\newblock {R}adiation fields on {S}chwarzschild spacetime.
\newblock {\em Communications in Mathematical Physics}, 331(2):477--506, 2014.

\bibitem[BZ09]{BieriZipserStability}
Lydia Bieri and Nina Zipser.
\newblock {\em Extensions of the stability theorem of the {M}inkowski space in
  general relativity}, volume~45.
\newblock American Mathematical Society, 2009.

\bibitem[CB52]{ChoquetBruhatLocalEinstein}
Yvonne Choquet-Bruhat.
\newblock Th{\'e}or{\`e}me d'existence pour certains syst{\`e}mes
  d'{\'e}quations aux d{\'e}riv{\'e}es partielles non lin{\'e}aires.
\newblock {\em Acta mathematica}, 88(1):141--225, 1952.

\bibitem[CBCL06]{ChoquetBruhatChruscielLoizeletEinsteinMaxwell}
Yvonne Choquet-Bruhat, Piotr~T. Chru{\'s}ciel, and Julien Loizelet.
\newblock {G}lobal solutions of the {E}instein--{M}axwell equations in higher
  dimensions.
\newblock {\em Classical and Quantum Gravity}, 23(24):7383, 2006.

\bibitem[CBG69]{ChoquetBruhatGerochMGHD}
Yvonne Choquet-Bruhat and Robert Geroch.
\newblock {G}lobal aspects of the {C}auchy problem in general relativity.
\newblock {\em Communications in Mathematical Physics}, 14(4):329--335, 1969.

\bibitem[CBM76]{ChoquetBruhatMarsdenMass}
Yvonne Choquet-Bruhat and Jerrold~E. Marsden.
\newblock Solution of the local mass problem in general relativity.
\newblock {\em Comm. Math. Phys.}, 51(3):283--296, 1976.

\bibitem[CD03]{ChruscielDelayMapping}
Piotr~T. Chru{\'s}ciel and Erwann Delay.
\newblock On mapping properties of the general relativistic constraints
  operator in weighted function spaces, with applications.
\newblock {\em M\'em. Soc. Math. Fr. (N.S.)}, (94):vi+103, 2003.

\bibitem[Chr86]{ChristodoulouGlobalSolutionsSmallData}
Demetrios Christodoulou.
\newblock Global solutions of nonlinear hyperbolic equations for small initial
  data.
\newblock {\em Communications on Pure and Applied Mathematics}, 39(2):267--282,
  1986.

\bibitem[Chr91]{ChristodoulouNonlinear}
Demetrios Christodoulou.
\newblock Nonlinear nature of gravitation and gravitational-wave experiments.
\newblock {\em Physical review letters}, 67(12):1486, 1991.

\bibitem[Chr02]{ChristodoulouNoPeeling}
Demetrious Christodoulou.
\newblock {{T}he {G}lobal {I}nitial {V}alue {P}roblem in {G}eneral
  {R}elativity}.
\newblock In V.~G. {Gurzadyan}, R.~T. {Jantzen}, and R.~{Ruffini}, editors,
  {\em The Ninth Marcel Grossmann Meeting}, pages 44--54, December 2002.

\bibitem[Chr09]{ChristodoulouTrapped}
Demetrios Christodoulou.
\newblock {\em {T}he {F}ormation of {B}lack {H}oles in {G}eneral {R}elativity}.
\newblock European Mathematical Society, 2009.

\bibitem[Chr17]{ChruscielScriPiece}
Piotr~T. Chru{\'s}ciel.
\newblock Long time existence from interior gluing.
\newblock {\em Classical and Quantum Gravity}, 34(14):145016, 2017.

\bibitem[CK93]{ChristodoulouKlainermanStability}
Demetrios Christodoulou and Sergiu Klainerman.
\newblock {\em The global nonlinear stability of the {M}inkowski space},
  volume~41 of {\em Princeton Mathematical Series}.
\newblock Princeton University Press, Princeton, NJ, 1993.

\bibitem[CL00]{ChruscielLengardPhg}
Piotr~T. Chru{\'s}ciel and Olivier Lengard.
\newblock Polyhomogeneous solutions of wave equations in the radiation regime.
\newblock {\em Journ{\'e}es {\'e}quations aux d{\'e}riv{\'e}es partielles},
  2000:1--17, 2000.

\bibitem[C{\L}06]{ChruscielLeskiPolyhomogeneous}
Piotr~T. Chru{\'s}ciel and Szymon {\L}{\c{e}}ski.
\newblock Polyhomogeneous solutions of nonlinear wave equations without corner
  conditions.
\newblock {\em Journal of Hyperbolic Differential Equations}, 3(01):81--141,
  2006.

\bibitem[CMS95]{ChruscielMacCallumSingletonPhg}
Piotr~T. Chru{\'s}ciel, Malcolm~A.H. MacCallum, and David~B. Singleton.
\newblock {G}ravitational {W}aves in {G}eneral {R}elativity {X}{I}{V}. {B}ondi
  {E}xpansions and the ``{P}olyhomogeneity'' of {S}cri.
\newblock {\em Philosophical Transactions of the Royal Society of London A:
  Mathematical, Physical and Engineering Sciences}, 350(1692):113--141, 1995.

\bibitem[Cor00]{CorvinoScalar}
Justin Corvino.
\newblock Scalar curvature deformation and a gluing construction for the
  {E}instein constraint equations.
\newblock {\em Comm. Math. Phys.}, 214(1):137--189, 2000.

\bibitem[CP15]{ChruscielPaetzCharScriI}
Piotr~T. Chru{\'s}ciel and Tim-Torben Paetz.
\newblock {C}haracteristic initial data and smoothness of {S}cri. {I}.
  {F}ramework and results.
\newblock 16(9):2131--2162, 2015.

\bibitem[CS16]{CarlottoSchoenData}
Alessandro Carlotto and Richard Schoen.
\newblock {L}ocalizing solutions of the {E}instein constraint equations.
\newblock {\em Inventiones mathematicae}, 205(3):559--615, 2016.

\bibitem[CW11]{ChruscielWafoPhg}
Piotr~T. Chru{\'s}ciel and Roger~T. Wafo.
\newblock Solutions of quasi-linear wave equations polyhomogeneous at null
  infinity in high dimensions.
\newblock {\em Journal of Hyperbolic Differential Equations}, 8(02):269--346,
  2011.

\bibitem[Daf12]{DafermosChristodoulouExpose}
Mihalis Dafermos.
\newblock {T}he formation of black holes in {G}eneral {R}elativity [after {D}.
  {C}hristodoulou].
\newblock {\em S{\'e}minaire Bourbaki}, page~64, 2012.

\bibitem[DeT82]{DeTurckPrescribedRicci}
Dennis~M. DeTurck.
\newblock Existence of metrics with prescribed {R}icci curvature: local theory.
\newblock {\em Invent. Math.}, 65(1):179--207, 1981/82.

\bibitem[DR08]{DafermosRodnianskiLectureNotes}
Mihalis Dafermos and Igor Rodnianski.
\newblock Lectures on black holes and linear waves.
\newblock {\em Evolution equations, Clay Mathematics Proceedings}, 17:97--205,
  2008.

\bibitem[FJS17]{FajmanJoudiouxSmuleviciEinsteinVlasov}
David Fajman, J{\'e}r{\'e}mie Joudioux, and Jacques Smulevici.
\newblock {T}he {S}tability of the {M}inkowski space for the
  {E}instein-{V}lasov system.
\newblock {\em Preprint, arXiv:1707.06141}, 2017.

\bibitem[Fri80]{FriedlanderRadiation}
F.~Gerard Friedlander.
\newblock Radiation fields and hyperbolic scattering theory.
\newblock {\em Mathematical Proceedings of the Cambridge Philosophical
  Society}, 88(3):483--515, 1980.

\bibitem[Fri83]{FriedrichConformalFE}
Helmut Friedrich.
\newblock Cauchy problems for the conformal vacuum field equations in general
  relativity.
\newblock {\em Communications in Mathematical Physics}, 91(4):445--472, 1983.

\bibitem[Fri85]{FriedrichHyperbolicityEinstein}
Helmut Friedrich.
\newblock On the hyperbolicity of {E}instein's and other gauge field equations.
\newblock {\em Communications in Mathematical Physics}, 100(4):525--543, 1985.

\bibitem[Fri86]{FriedrichStability}
Helmut Friedrich.
\newblock On the existence of {$n$}-geodesically complete or future complete
  solutions of {E}instein's field equations with smooth asymptotic structure.
\newblock {\em Comm. Math. Phys.}, 107(4):587--609, 1986.

\bibitem[Fri91]{FriedrichEinsteinMaxwellYangMills}
Helmut Friedrich.
\newblock {O}n the global existence and the asymptotic behavior of solutions to
  the {E}instein--{M}axwell--{Y}ang-{M}ills equations.
\newblock {\em Journal of Differential Geometry}, 34(2):275--345, 1991.

\bibitem[Fri98]{FriedrichSpaceConformal}
Helmut Friedrich.
\newblock Gravitational fields near space-like and null infinity.
\newblock {\em Journal of Geometry and Physics}, 24(2):83--163, 1998.

\bibitem[Fri04]{FriedrichSmoothScriReview}
Helmut Friedrich.
\newblock Smoothness at null infinity and the structure of initial data.
\newblock {\em The Einstein equations and the large scale behavior of
  gravitational fields}, pages 121--203, 2004.
\newblock 50 years of the Cauchy problem in general relativity.

\bibitem[GCHMG05]{GundlachCalabreseHinderMartinConstraintDamping}
Carsten Gundlach, Gioel Calabrese, Ian Hinder, and Jos{\'e}~M.
  Mart{\'\i}n-Garc{\'\i}a.
\newblock {C}onstraint damping in the {Z}4 formulation and harmonic gauge.
\newblock {\em Classical and Quantum Gravity}, 22(17):3767, 2005.

\bibitem[GL91]{GrahamLeeConformalEinstein}
C.~Robin Graham and John~M. Lee.
\newblock Einstein metrics with prescribed conformal infinity on the ball.
\newblock {\em Adv. Math.}, 87(2):186--225, 1991.

\bibitem[GSS00]{GilSchulzeSeilerEdge}
Juan~B. Gil, Bert-Wolfgang Schulze, and J{\"o}rg Seiler.
\newblock Cone pseudodifferential operators in the edge symbolic calculus.
\newblock {\em Osaka J. Math.}, 37(1):221--260, 2000.

\bibitem[GZ03]{GrahamZworskiScattering}
C.~Robin Graham and Maciej Zworski.
\newblock Scattering matrix in conformal geometry.
\newblock {\em Inventiones mathematicae}, 152(1):89--118, 2003.

\bibitem[Had17]{HadfieldSymm}
Charles Hadfield.
\newblock Resonances for symmetric tensors on asymptotically hyperbolic spaces.
\newblock {\em Analysis \& PDE}, 10(8):1877--1922, 2017.

\bibitem[Ham82]{HamiltonNashMoser}
Richard~S. Hamilton.
\newblock The inverse function theorem of {N}ash and {M}oser.
\newblock {\em Bull. Amer. Math. Soc. (N.S.)}, 7(1):65--222, 1982.

\bibitem[Hin16]{HintzQuasilinearDS}
Peter Hintz.
\newblock Global analysis of quasilinear wave equations on asymptotically de
  {S}itter spaces.
\newblock {\em Annales de l'Institut Fourier}, 66(4):1285--1408, 2016.

\bibitem[Hin18]{HintzKNdSStability}
Peter Hintz.
\newblock {N}on-linear {S}tability of the {K}err--{N}ewman--de {S}itter
  {F}amily of {C}harged {B}lack {H}oles.
\newblock {\em Annals of PDE}, 4(1):11, Apr 2018.

\bibitem[H{\"o}r87]{HormanderLifespan}
Lars H{\"o}rmander.
\newblock The lifespan of classical solutions of nonlinear hyperbolic
  equations.
\newblock {\em Lecture notes in Mathematics}, 1256:214--280, 1987.

\bibitem[H{\"o}r97]{HormanderNonlinearLectures}
Lars H{\"o}rmander.
\newblock {\em Lectures on Nonlinear Hyperbolic Differential Equations}.
\newblock Math{\'e}matiques et Applications. Springer, 1997.

\bibitem[H{\"o}r07]{HormanderAnalysisPDE3}
Lars H{\"o}rmander.
\newblock {\em The analysis of linear partial differential operators. {III}}.
\newblock Classics in Mathematics. Springer, Berlin, 2007.

\bibitem[HV13]{HaberVasyPropagation}
Nick Haber and Andr{\'a}s Vasy.
\newblock Propagation of singularities around a {L}agrangian submanifold of
  radial points.
\newblock In {\em Microlocal Methods in Mathematical Physics and Global
  Analysis}, pages 113--116. Springer, 2013.

\bibitem[HV15]{HintzVasySemilinear}
Peter Hintz and Andr{\'a}s Vasy.
\newblock Semilinear wave equations on asymptotically de {S}itter, {K}err--de
  {S}itter and {M}inkowski spacetimes.
\newblock {\em Anal. PDE}, 8(8):1807--1890, 2015.

\bibitem[HV16]{HintzVasyQuasilinearKdS}
Peter Hintz and Andr{\'a}s Vasy.
\newblock {G}lobal {A}nalysis of {Q}uasilinear {W}ave {E}quations on
  {A}symptotically {K}err--de {S}itter {S}paces.
\newblock {\em International Mathematics Research Notices},
  2016(17):5355--5426, 2016.

\bibitem[HV18]{HintzVasyKdSStability}
Peter Hintz and Andr{\'a}s Vasy.
\newblock {T}he global non-linear stability of the {K}err--de {S}itter family
  of black holes.
\newblock {\em Acta mathematica}, 220:1--206, 2018.

\bibitem[HZ18]{HintzZworskiHypObs}
Peter Hintz and Maciej Zworski.
\newblock Resonances for obstacles in hyperbolic space.
\newblock {\em Comm. Math. Phys.}, 359(2):699--731, 2018.

\bibitem[Joh81]{JohnBlowupQuasi}
Fritz John.
\newblock Blow-up for quasi-linear wave equations in three space dimensions.
\newblock {\em Communications on Pure and Applied Mathematics}, 34(1):29--51,
  1981.

\bibitem[Kei18]{KeirWeak}
Joseph Keir.
\newblock {T}he weak null condition and global existence using the p-weighted
  energy method.
\newblock {\em Preprint, arXiv:1808.09982}, 2018.

\bibitem[Kla86]{KlainermanNullCondition}
Sergiu Klainerman.
\newblock The null condition and global existence to nonlinear wave equations.
\newblock In {\em Nonlinear systems of partial differential equations in
  applied mathematics, {P}art 1 ({S}anta {F}e, {N}.{M}., 1984)}, Lectures in
  Appl. Math., pages 293--326. Amer. Math. Soc., Providence, RI, 1986.

\bibitem[KLR14]{KlainermanLukRodnianskiTrapped}
Sergiu Klainerman, Jonathan Luk, and Igor Rodnianski.
\newblock A fully anisotropic mechanism for formation of trapped surfaces in
  vacuum.
\newblock {\em Inventiones mathematicae}, 198(1):1--26, 2014.

\bibitem[KN03a]{KlainermanNicoloEvolution}
Sergiu Klainerman and Francesco Nicol\`o.
\newblock {\em The evolution problem in general relativity}, volume~25 of {\em
  Progress in Mathematical Physics}.
\newblock Birkh\"auser Boston, Inc., Boston, MA, 2003.

\bibitem[KN03b]{KlainermanNicoloPeeling}
Sergiu Klainerman and Francesco Nicol\`o.
\newblock Peeling properties of asymptotically flat solutions to the {E}instein
  vacuum equations.
\newblock {\em Classical Quantum Gravity}, 20(14):3215--3257, 2003.

\bibitem[KR12]{KlainermanRodnianskiTrapped}
Sergiu Klainerman and Igor Rodnianski.
\newblock On the formation of trapped surfaces.
\newblock {\em Acta mathematica}, 208(2):211--333, 2012.

\bibitem[Len01]{LengardThesis}
Olivier Lengard.
\newblock {\em Solutions des {\'e}quations d'Einstein, des applications d'onde
  et de l'{\'e}quation d'onde semi-lin{\'e}aire en r{\'e}gime de rayonnement}.
\newblock PhD thesis, Ph. D. thesis, Universit{\'e} de Tours, 2001.

\bibitem[{LIG}16]{LIGOBlackHoleMerger}
{LIGO Scientific Collaboration and Virgo Collaboration}.
\newblock {O}bservation of {G}ravitational {W}aves from a {B}inary {B}lack
  {H}ole {M}erger.
\newblock {\em Phys. Rev. Lett.}, 116:061102, 2016.

\bibitem[Lin90]{LindbladLifespan}
Hans Lindblad.
\newblock On the lifespan of solutions of nonlinear wave equations with small
  initial data.
\newblock {\em Communications on Pure and Applied Mathematics}, 43(4):445--472,
  1990.

\bibitem[Lin92]{LindbladGlobalNonlinear}
Hans Lindblad.
\newblock Global solutions of nonlinear wave equations.
\newblock {\em Communications on pure and applied mathematics},
  45(9):1063--1096, 1992.

\bibitem[Lin08]{LindbladGlobalQuasilinear}
Hans Lindblad.
\newblock Global solutions of quasilinear wave equations.
\newblock {\em Amer. J. Math.}, 130(1):115--157, 2008.

\bibitem[Lin17]{LindbladAsymptotics}
Hans Lindblad.
\newblock {O}n the {A}symptotic {B}ehavior of {S}olutions to the {E}instein
  {V}acuum {E}quations in {W}ave {C}oordinates.
\newblock {\em Communications in Mathematical Physics}, 353(1):135--184, Jul
  2017.

\bibitem[LM15]{LeFlochMaEinsteinMassive}
Philippe~G. LeFloch and Yue Ma.
\newblock {T}he global nonlinear stability of {M}inkowski space for
  self-gravitating massive fields.
\newblock {\em Communications in Mathematical Physics}, pages 1--63, 2015.

\bibitem[Loi06]{LoizeletEinsteinMaxwell}
Julien Loizelet.
\newblock {S}olutions globales des {\'e}quations d'{E}instein--{M}axwell avec
  {J}auge harmonique et jauge de {L}orenz.
\newblock {\em Comptes Rendus Mathematique}, 342(7):479--482, 2006.

\bibitem[Loi08]{LoizeletPhD}
Julien Loizelet.
\newblock {\em Probl{\`e}mes globaux en relativit{\'e} g{\'e}n{\'e}rale}.
\newblock PhD thesis, Ph. D. thesis, Universit{\'e} de Tours, 2008.

\bibitem[LR03]{LindbladRodnianskiWeakNull}
Hans Lindblad and Igor Rodnianski.
\newblock {T}he weak null condition for {E}instein's equations.
\newblock {\em Comptes Rendus Mathematique}, 336(11):901--906, 2003.

\bibitem[LR05]{LindbladRodnianskiGlobalExistence}
Hans Lindblad and Igor Rodnianski.
\newblock Global existence for the {E}instein vacuum equations in wave
  coordinates.
\newblock {\em Comm. Math. Phys.}, 256(1):43--110, 2005.

\bibitem[LR10]{LindbladRodnianskiGlobalStability}
Hans Lindblad and Igor Rodnianski.
\newblock The global stability of {M}inkowski space-time in harmonic gauge.
\newblock {\em Ann. of Math. (2)}, 171(3):1401--1477, 2010.

\bibitem[LT17]{LindbladTaylorVlasov}
Hans Lindblad and Martin Taylor.
\newblock {G}lobal stability of {M}inkowski space for the {E}instein--{V}lasov
  system in the harmonic gauge.
\newblock {\em Preprint, arXiv:1707.06079}, 2017.

\bibitem[Luk13]{LukKerrNonlinear}
Jonathan Luk.
\newblock The null condition and global existence for nonlinear wave equations
  on slowly rotating {K}err spacetimes.
\newblock {\em Journal of the European Mathematical Society}, 15(5):1629--1700,
  2013.

\bibitem[LY15]{LiYuTrapped}
Junbin Li and Pin Yu.
\newblock Construction of {C}auchy data of vacuum {E}instein field equations
  evolving to black holes.
\newblock {\em Ann. of Math. (2)}, 181(2):699--768, 2015.

\bibitem[Maz91]{MazzeoEdge}
Rafe Mazzeo.
\newblock {E}lliptic theory of differential edge operators {I}.
\newblock {\em Communications in Partial Differential Equations},
  16(10):1615--1664, 1991.

\bibitem[Mel93]{MelroseAPS}
Richard~B. Melrose.
\newblock {\em The {A}tiyah--{P}atodi--{S}inger index theorem}, volume~4 of
  {\em Research Notes in Mathematics}.
\newblock A K Peters, Ltd., Wellesley, MA, 1993.

\bibitem[Mel94]{MelroseEuclideanSpectralTheory}
Richard~B. Melrose.
\newblock Spectral and scattering theory for the {L}aplacian on asymptotically
  {E}uclidian spaces.
\newblock In {\em Spectral and scattering theory ({S}anda, 1992)}, volume 161
  of {\em Lecture Notes in Pure and Appl. Math.}, pages 85--130. Dekker, New
  York, 1994.

\bibitem[Mel96]{MelroseDiffOnMwc}
Richard~B. Melrose.
\newblock Differential analysis on manifolds with corners.
\newblock {\em Book, in preparation, available online}, 1996.

\bibitem[MM87]{MazzeoMelroseHyp}
Rafe~R. Mazzeo and Richard~B. Melrose.
\newblock Meromorphic extension of the resolvent on complete spaces with
  asymptotically constant negative curvature.
\newblock {\em Journal of Functional Analysis}, 75(2):260--310, 1987.

\bibitem[MVW13]{MelroseVasyWunschDiffraction}
Richard Melrose, Andr\'as Vasy, and Jared Wunsch.
\newblock Diffraction of singularities for the wave equation on manifolds with
  corners.
\newblock {\em Ast\'erisque}, (351):vi+135, 2013.

\bibitem[MW04]{MelroseWunschConic}
Richard Melrose and Jared Wunsch.
\newblock Propagation of singularities for the wave equation on conic
  manifolds.
\newblock {\em Inventiones mathematicae}, 156(2):235--299, 2004.

\bibitem[MW16]{MaedlerWinicourBondi}
Thomas M\"adler and Jeffrey Winicour.
\newblock {B}ondi--{S}achs {F}ormalism.
\newblock {\em Preprint, arXiv:1609.01731}, 2016.

\bibitem[NP62]{NewmanPenroseSpin}
Ezra Newman and Roger Penrose.
\newblock {A}n {A}pproach to {G}ravitational {R}adiation by a {M}ethod of
  {S}pin {C}oefficients.
\newblock {\em Journal of Mathematical Physics}, 3(3):566--578, 1962.

\bibitem[Pae14]{PaetzCharScriII}
Tim-Torben Paetz.
\newblock {C}haracteristic initial data and smoothness of {S}cri. {I}{I}.
  {A}symptotic expansions and construction of conformally smooth data sets.
\newblock {\em Journal of Mathematical Physics}, 55(10):102503, 2014.

\bibitem[Pen65]{PenroseAsymptotics}
Roger Penrose.
\newblock Zero rest-mass fields including gravitation: asymptotic behaviour.
\newblock In {\em Proceedings of the Royal Society of London A: Mathematical,
  Physical and Engineering Sciences}, volume 284, pages 159--203. The Royal
  Society, 1965.

\bibitem[Pre05]{PretoriusBinaryBlackHole}
Frans Pretorius.
\newblock {E}volution of {B}inary {B}lack-{H}ole {S}pacetimes.
\newblock {\em Phys. Rev. Lett.}, 95:121101, 2005.

\bibitem[Sac62a]{SachsAsymptoticSymm}
Rainer~K. Sachs.
\newblock {A}symptotic {S}ymmetries in {G}ravitational {T}heory.
\newblock {\em Phys. Rev.}, 128:2851--2864, Dec 1962.

\bibitem[Sac62b]{SachsGravity}
Rainer~K. Sachs.
\newblock {G}ravitational waves in general relativity. {V}{I}{I}{I}. {W}aves in
  asymptotically flat space-time.
\newblock In {\em Proceedings of the Royal Society of London A: Mathematical,
  Physical and Engineering Sciences}, volume 270, pages 103--126. The Royal
  Society, 1962.

\bibitem[Sch91]{SchulzePsdoSing}
B.-W. Schulze.
\newblock {\em Pseudo-differential operators on manifolds with singularities},
  volume~24 of {\em Studies in Mathematics and its Applications}.
\newblock North-Holland Publishing Co., Amsterdam, 1991.

\bibitem[Spe14]{SpeckEinsteinMaxwell}
Jared Speck.
\newblock The global stability of the {M}inkowski spacetime solution to the
  {E}instein-nonlinear system in wave coordinates.
\newblock {\em Anal. PDE}, 7(4):771--901, 2014.

\bibitem[SR89]{SaintRaymondNashMoser}
Xavier Saint-Raymond.
\newblock A simple {N}ash--{M}oser implicit function theorem.
\newblock {\em Enseign. Math. (2)}, 35(3-4):217--226, 1989.

\bibitem[SY79]{SchoenYauPMT}
Richard Schoen and Shing-Tung Yau.
\newblock On the proof of the positive mass conjecture in general relativity.
\newblock {\em Communications in Mathematical Physics}, 65(1):45--76, 1979.

\bibitem[Tay96]{TaylorPDE}
Michael~E. Taylor.
\newblock {\em Partial Differential Equations {I-III}}.
\newblock Springer-Verlag, 1996.

\bibitem[Tay16]{TaylorEinsteinVlasov}
Martin Taylor.
\newblock {T}he global nonlinear stability of {M}inkowski space for the
  massless {E}instein--{V}lasov system.
\newblock {\em Preprint, arXiv:1602.02611}, 2016.

\bibitem[Vas00]{VasyThreeBody}
Andr{\'a}s Vasy.
\newblock Propagation of singularities in three-body scattering.
\newblock {\em Ast{\'e}risque}, 262:1--157, 2000.

\bibitem[Vas08]{VasyPropagationCorners}
Andr{\'a}s Vasy.
\newblock Propagation of singularities for the wave equation on manifolds with
  corners.
\newblock {\em Annals of Mathematics}, pages 749--812, 2008.

\bibitem[Vas10]{VasyWaveOndS}
Andr{\'a}s Vasy.
\newblock The wave equation on asymptotically de {S}itter-like spaces.
\newblock {\em Advances in Mathematics}, 223(1):49--97, 2010.

\bibitem[Vas13]{VasyMicroKerrdS}
Andr{\'a}s Vasy.
\newblock Microlocal analysis of asymptotically hyperbolic and {K}err--de
  {S}itter spaces (with an appendix by {S}emyon {D}yatlov).
\newblock {\em Invent. Math.}, 194(2):381--513, 2013.

\bibitem[VK04]{ValienteKroonNonsmooth}
Juan~Antonio Valiente~Kroon.
\newblock A new class of obstructions to the smoothness of null infinity.
\newblock {\em Comm. Math. Phys.}, 244(1):133--156, 2004.

\bibitem[Wal10]{WaldGR}
Robert~M. Wald.
\newblock {\em General relativity}.
\newblock University of Chicago press, 2010.

\bibitem[Wan10]{WangThesis}
Fang Wang.
\newblock {\em Radiation field for vacuum {E}instein equation}.
\newblock PhD thesis, Massachusetts Institute of Technology, 2010.

\bibitem[Wan13]{WangRadiation}
Fang Wang.
\newblock {R}adiation field for {E}instein vacuum equations with spacial
  dimension $n\geq 4$.
\newblock {\em Preprint, arXiv:1304.0407}, 2013.

\bibitem[Wan16]{WangEinsteinKleinGordon}
Qian Wang.
\newblock An intrinsic hyperboloid approach for einstein klein-gordon
  equations.
\newblock {\em Preprint, arXiv:1607.01466}, 2016.

\bibitem[Wit81]{WittenPMT}
Edward Witten.
\newblock A new proof of the positive energy theorem.
\newblock {\em Communications in Mathematical Physics}, 80(3):381--402, 1981.

\bibitem[Zwo16]{ZworskiRevisitVasy}
Maciej Zworski.
\newblock Resonances for asymptotically hyperbolic manifolds: {V}asy's method
  revisited.
\newblock {\em J. Spectr. Theory}, 2016(6):1087--1114, 2016.

\end{thebibliography}

\end{document}